\definecolor{myred}{rgb}{0.75,0,0}
\definecolor{mygreen}{rgb}{0,0.5,0}
\definecolor{myblue}{rgb}{0,0,0.65}
\theoremstyle{plain}
\newtheorem{theorem}{Theorem}[section]
\newtheorem{proposition}[theorem]{Proposition}
\newtheorem{lemma}[theorem]{Lemma}
\newtheorem{corollary}[theorem]{Corollary}
\theoremstyle{definition}
\newtheorem{definition}[theorem]{Definition}
\newtheorem{remark}[theorem]{Remark}
\newtheorem{example}[theorem]{Example}
\newtheorem{exercise}[theorem]{Exercise}
\newtheorem{question}[theorem]{Question}
\newtheorem{warn}[theorem]{Warning}
\theoremstyle{remark}
\numberwithin{equation}{section}
\newcommand\nc{\newcommand}
\nc\on{\operatorname}
\nc\renc{\renewcommand}
\newcommand\ssec{\subsection}
\newcommand\sssec{\subsubsection}
\newcommand\bq{{\mathbb Q}}
\newcommand\bp{{\mathbb P}}
\newcommand\bz{{\mathbb Z}}
\newcommand\ba{{\mathbb A}}
\newcommand\fp{{\mathfrak p}}
\newcommand\fq{{\mathfrak q}}
\newcommand\fm{{\mathfrak m}}
\newcommand\so{{\mathscr O}}
\newcommand\scc{\mathscr C}
\newcommand\sce{\mathscr E}
\newcommand\scf{\mathscr F}
\newcommand\scg{\mathscr G}
\newcommand\sch{\mathscr H}
\newcommand\sci{\mathscr I}
\newcommand\scl{\mathscr L}
\newcommand\scm{\mathscr M}
\newcommand\sco{\mathscr O}
\newcommand\scq{\mathscr Q}
\newcommand\scs{\mathscr Q}
\newcommand\scu{\mathscr U}
\newcommand\scv{\mathscr V}
\newcommand\scx{\mathscr X}
\newcommand\scy{\mathscr Y}
\newcommand\scz{\mathscr Z}
\newcommand \ra{\rightarrow}
\newcommand{\id}{\mathrm{id}}
\newcommand \spec{\text{Spec }}
\newcommand \proj{\text{Proj }}
\newcommand \stor{\textit{Tor}}
\newcommand \mg{{\mathscr M_g}}
\newcommand \minhilb[2]{\sch^{\text{scroll}}_{#1, #2}}
\newcommand \hilb[1]{\sch_{#1}}
\newcommand \ringhilb[2]{\sch_{#1, #2}}
\newcommand \uhilb[1]{\scv_{#1}}
\newcommand \ringuhilb[2]{\scv_{#1, #2}}
\newcommand \ci[3]{\sch^{ci}_{#1, #2, #3}}
\newcommand \scroll[1]{\text{S}_{#1}}
\newcommand \minhilbsmooth[2]{\sch^{\text{scroll}}_{#1, #2, \text{sm}}}
\newcommand \minhilbsing[2]{\sch^{scroll}_{#1, #2, sing}}
\newcommand \broken[2]{\sch^{broken}_{#1, #2}}
\newcommand \brokengeneral[2]{\sch^{broken, \circ}_{#1, #2}}
\newcommand \gbundle [2]{\mathbb G (#1, #2)}
\newcommand \flag[2]{\sch(#1 \subset #2)}
\newcommand{\customlabel}[2]{%
   \protected@write \@auxout {}{\string \newlabel {#1}{{#2}{\thepage}{#2}{#1}{}} }%
   \hypertarget{#1}{#2}
}
\DeclareMathOperator\Hilb{Hilb}
\DeclareMathOperator\rk{rk}
\DeclareMathOperator\di{div}
\DeclareMathOperator\pic{Pic}
\DeclareMathOperator\codim{codim}
\DeclareMathOperator\supp{Supp}
\DeclareMathOperator\spn{Span}
\DeclareMathOperator\im{im}
\DeclareMathOperator\End{End}
\DeclareMathOperator\sym{Sym}
\DeclareMathOperator\pgl{PGL}
\DeclareMathOperator\blow{Bl}
\DeclareMathOperator\ch{char}
\newcommand\bbf{{\mathbb F}}
\newcommand\bk{{\Bbbk}}
\def\listtodoname{List of Todos}
\def\listoftodos{\@starttoc{tdo}\listtodoname}
\title{A Thesis of Minimal Degree}
\author{Aaron Landesman}
\begin{document}

\includepdf[pages={1}]{./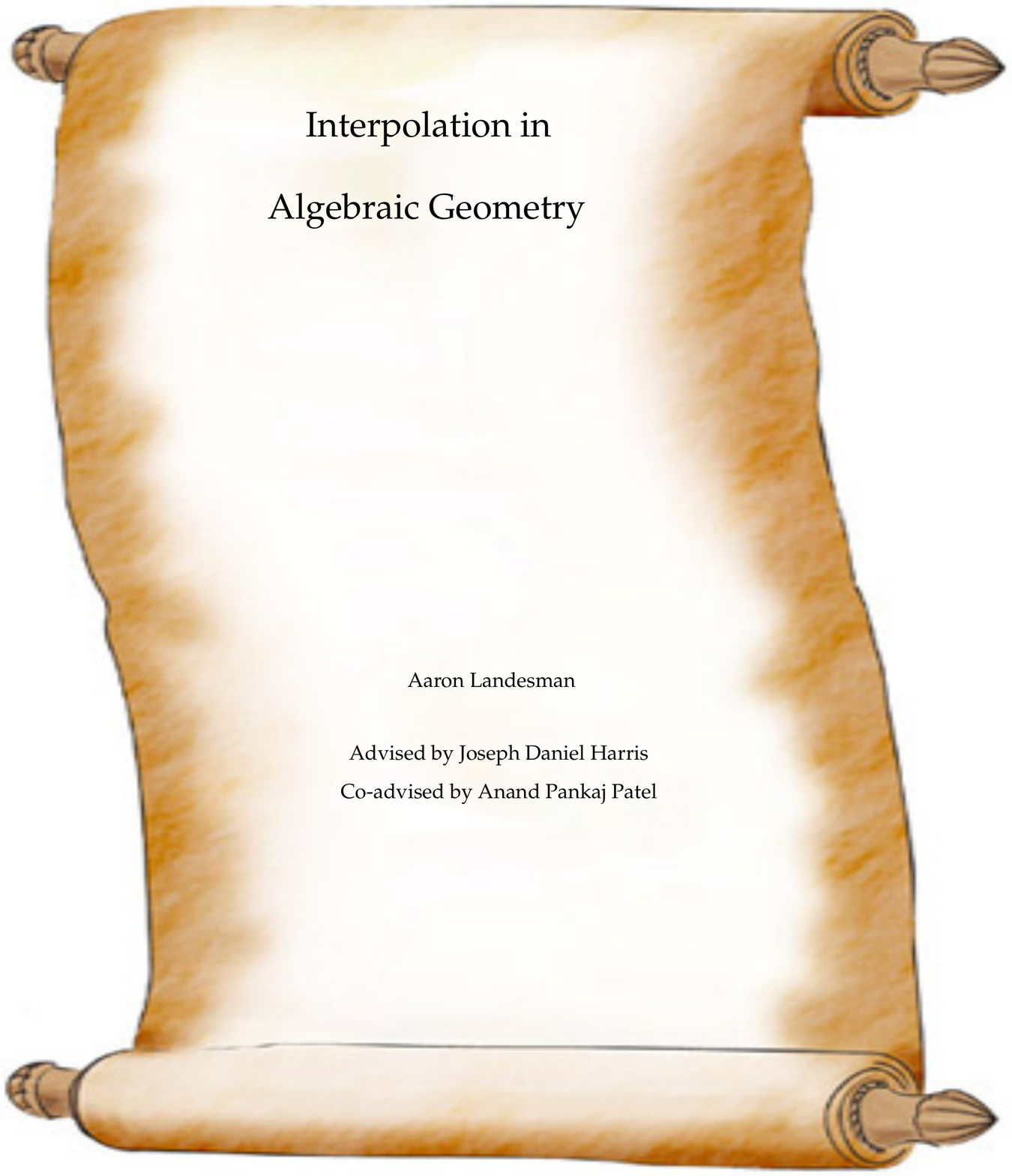}

\tableofcontents

\chapter*{Abstract}
\addcontentsline{toc}{chapter}{Abstract}

This thesis is an expanded version of the two papers
\cite{landesman:interpolation-of-varieties-of-minimal-degree}
and
\cite{landesmanP:interpolation-problems-del-pezz-surfaces}.
In this thesis, we discuss interpolation of projective varieties through points.
It is well known that one can find
a rational normal curve in $\mathbb P^n$ 
through $n+3$ general points. More recently, it
was shown that one can always find nonspecial curves
through the expected number of general points and linear
spaces.
We consider the generalization of this question
to varieties of all dimensions and explain why
smooth varieties of minimal degree satisfy interpolation.
We continue to develop the theory of interpolation,
giving twenty-two equivalent formulations of interpolation.
We also classify when Castelnuovo curves satisfy weak interpolation.
In the appendix, cowritten 
with Anand Patel, we prove that del Pezzo surfaces
satisfy weak interpolation.
Our techniques for proving interpolation include
deformation theory, degeneration and specialization, and association.

\chapter*{Acknowledgements}
\addcontentsline{toc}{chapter}{Acknowledgements}

I would like to start by thanking my thesis advisor, Joe Harris,
for all the time he has invested in teaching me and answering my questions.
He taught me how to ``feel'' the geometry, a viewpoint so often
glossed over in this increasingly technical world.
His endless pool of questions has inspired 
the work in this thesis. In particular,
I would like to thank Joe Harris for arranging weekly meetings with me in my
junior spring, even when I hadn't yet chosen him as my thesis adviser.

Second, I thank my co-advisor, Anand Patel,
for the countless hours he spent helping me think
through my ideas, filling in details, and
always tailoring his explanations so that I could
best understand them.

I thank all my teachers from mathematics,
computer science, and other fields, from Harvard, summer programs, and from high school.
In particular, I thank Professors Dennis Gaitsgory, Joe Harris,
Vic Reiner, and David Zureick-Brown for advising me at Harvard, Minnesota, and Emory.
In particular, I thank David Zureick-Brown for spurring my interest in moduli spaces,
which ultimately led to the genesis of the problems I explored in this thesis.
I also thank Joe Harris, Noam Elkies, Benedict Gross, Mike Hopkins, Mark Kisin, Jacob Lurie,
and David Zureick-Brown for meeting with me to help me choose the topic of my senior thesis.

I thank many more people for especially helpful conversations and
guidance. I thank Brian Conrad for guiding me to Joshua Greene's thesis on 
the construction of the Hilbert scheme, which was the starting point of work on my thesis.
I also thank Izzet Coskun for pointing out many methods to streamline several
arguments.
I thank 
J\'anos Koll\'ar for the suggestion
that the normal bundle to the $2$-Veronese may fail to satisfy interpolation
in characteristic $2$.
I thank Brian Conrad and Ravi Vakil for helping me resolve several technical
details. I thank Eric Larson 
for numerous meetings and for explaining
the subtleties of interpolation.
I also thank 
Levent Alpoge,
Atanas Atanasov,
Francesco Cavazzani, 
Atticus Christensen,
Aise Johan de Jong,
Anand Deopurkar,
Ashwin Deopurkar,
Phillip Engel, 
Changho Han,
Brendan Hassett,
Allen Knutsen,
Carl Lian,
John Lesieutre,
Alex Perry,
Geoffrey Smith,
Hunter Spink,
Jason Starr,
Adrian Zahariuc,
Yifei Zhao, and
Yihang Zhu for helpful conversations.
I thank Brendan Hassett and Rahul Pandharipande
for explaining why the $2$-Veronese surface satisfies interpolation via email correspondence with Francesco Cavazzani.
For helpful comments on drafts of my thesis, I thank Mboyo Esole, Peter Landesman, and Eric Larson.

I deeply thank my parents, Peter and Susan Landesman, for their love and support.

I thank anyone I may have missed above, including
all my friends and family for helping me learn and develop.

\newpage
 
\chapter{Introduction}
\label{section:introduction}

\section{Ancient history}

The question of interpolation is one of the most classical
questions in algebraic geometry. The origins of interpolation date back to the
very beginning of geometry, starting in Chapter 1 of Book 1 of Euclid's
``The Elements,'' written in Alexandria, Ptolemaic Egypt, around $300$ BCE.
After opening with a series of $23$ definitions, Euclid states the following $5$ postulates,
which are the basis for Euclidean geometry:

\begin{center}
\includegraphics[scale=.3]{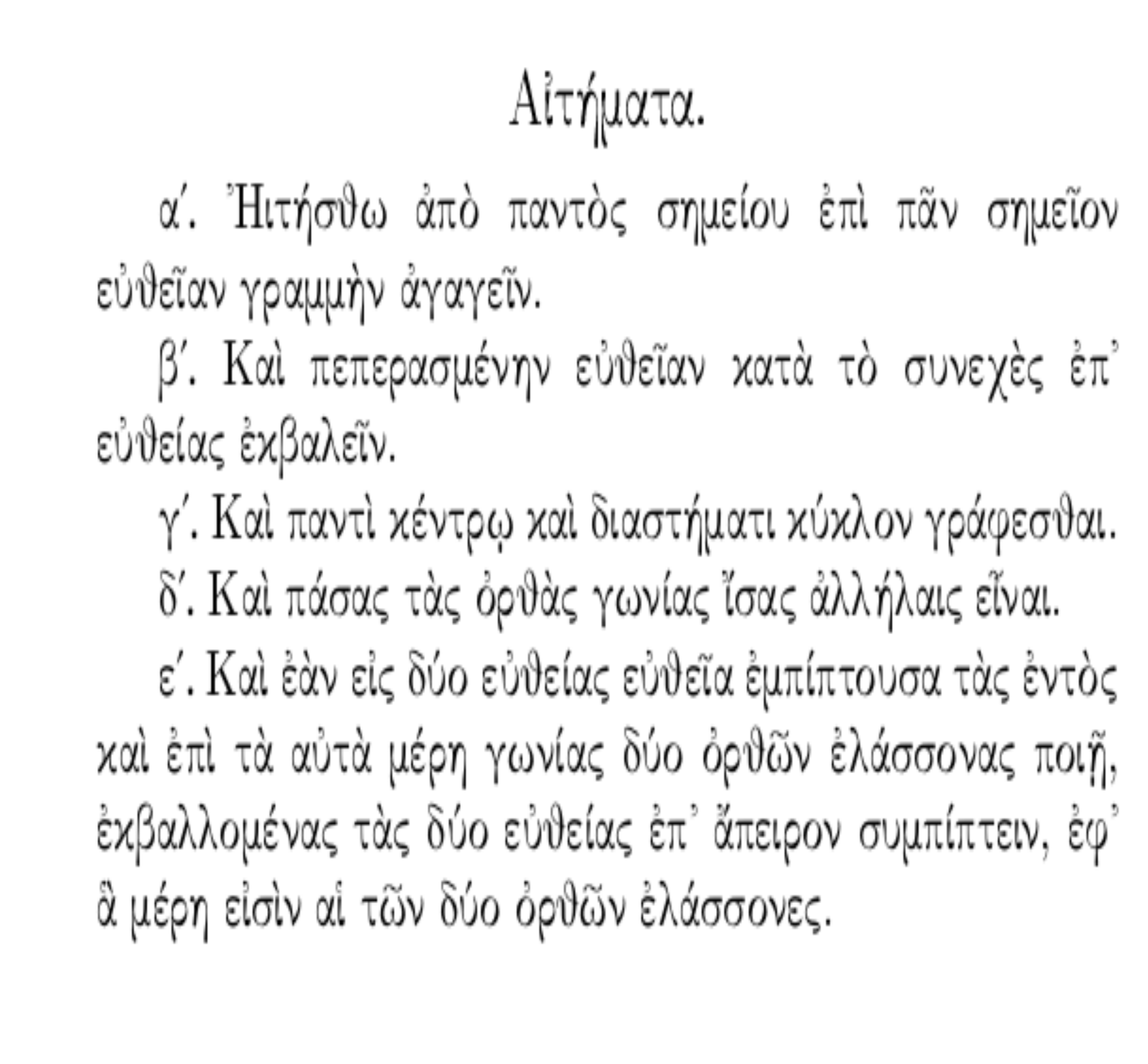}
\end{center}

We concentrate on the first postulate,
which is translated literally
in \cite[p.\ 7]{fitzpatrick:euclids-elements-of-geometry}
as 
\begin{displayquote}
Let it have been postulated to draw
a straight-line from any point to any point.
\end{displayquote}
Or, perhaps in its more well known form,
one might translate Euclid's first postulate as
\begin{displayquote}
Through any two points there passes a line.
\end{displayquote}
This is the first instance of interpolation,
which, loosely speaking, is a souped-up game of connect the dots.
That is, we are asking if there is some type of algebraic object
passing through a given collection of general points.

For example, a natural generalization of the fact that a line
passes through two points is that if we specify a general
set of three points in space, we can always find a plane
passing through them.

To generalize this in another direction, given a triangle, one
can always find a circumscribed circle. This is the same as saying
that one can interpolate a circle through any three points in the real plane:
If we start with three points, we can draw the triangle whose vertices
are those three points.
Then, the circle circumscribed about the triangle
will pass through those three points.

\begin{figure}
	\centering
	\includegraphics[scale=.8]{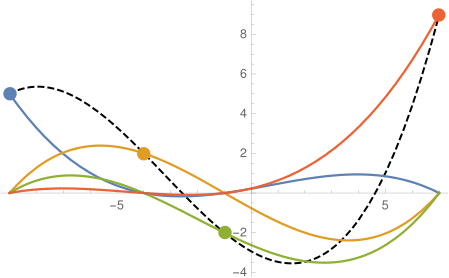}
	\caption{A visualization of Lagrangian interpolation}
	\label{figure:lagrange-interpolation}
\end{figure}

Fast forwarding to the eighteenth century, a new question
of interpolation was brought to the forefront of mathematics:
that of Lagrangian interpolation.
This question asks whether we can find a polynomial in the plane
passing through a given collection of points.
More precisely:
\begin{question}
	\label{question:lagrangian-interpolation}
	Given $n$ points $(x_1, y_1), \ldots, (x_n, y_n)$
	in the $xy$-plane, does there exist some polynomial
	$p(x)$
	of degree at most $n-1$
	so that $p(x_i) = y_i$ for $1 \leq i \leq n$?
\end{question}
Although typically associated with Lagrange,
this question was first answered affirmatively
in 1779 by Edward Warring
\cite{wikipedia:lagrange-interpolation}.
It is also an immediate consequence of one
of Euler's works, published in 1783
\cite{wikipedia:lagrange-interpolation}.
Nevertheless, Lagrange published this result in
1795, and it is usually attributed to him
\cite{wikipedia:lagrange-interpolation}.

Lagrangian interpolation has proven essential not only in mathematics,
but throughout numerous other scientific fields.
To name a few applications, Lagrangian interpolation has led to 
the creation of shapes of letters in typography,
protection of secret information, like
bank accounts and missile codes,
and developments in algorithmic computer science
\cite{wikipedia:polynomial-interpolation}.

\section{Description of interpolation}

In simple terms, an interpolation problem involves two pieces of data:
\begin{enumerate}
	\item a class $\sch$ of varieties in projective space (e.g. ``rational normal curves'') often specified by a component of a Hilbert scheme,
	\item a collection of (usually linear) incidence conditions (e.g. ``passing through five fixed points and incident to a fixed $2$-plane'').
\end{enumerate}
The problem is then to determine whether there exists a variety $[X] \in \sch$ meeting a general choice of conditions of the specified type. 

In a related vein, we can also ask about the number
of these algebraic objects passing through the specified collection
of points. For example, one might be interested in knowing not
only whether there exists a line passing through two points,
but also how many lines pass through two points.
Of course, there is only 1 such line.
In this thesis, we will mostly be concerned
with whether there exists some algebraic object passing through
a specified number of points, for the simple reason that counting
the number proves much harder.
Nevertheless, when possible, we will also address the questions
of how many such objects pass through a specified number of points.

The first nontrivial case of interpolation in higher dimensional projective space is that rational
normal curves satisfy interpolation.
This means that through any 
$n + 3$ points in $\bp^n$, there is a rational normal curve
passing through them, see \autoref{sssec:castelnuovos-lemma}.
Interpolation of higher genus curves in projective space is extensively
studied in 
\cite{stevens:on-the-number-of-points-determining-a-canonical-curve}, 
\cite[Chapter 13]{stevens:deformations-of-singularities},
\cite{atanasovLY:interpolation-for-normal-bundles-of-general-curves}, and 
\cite{larson:interpolation-for-restricted-tangent-bundles-of-general-curves}. We review interpolation for rational normal curves and results of interpolation for higher genus curves in \autoref{section:lay-of-land}.

\section{Main results}

Surprisingly, despite being such a  fundamental problem,
very little is known about interpolation
of higher dimensional varieties in projective space.  To our knowledge, the work of Coble in \cite{coble:associated-sets-of-points}, of
Coskun in \cite{coskun:degenerations-of-surface-scrolls}, and of Eisenbud and Popescu in
\cite[Theorem 4.5]{eisenbudP:the-projective-geometry-of-the-gale-transform} 
are the only places where a higher dimensional interpolation problem is addressed.

In this thesis, we study interpolation problems for higher dimensional
varieties,
particularly those of minimal and almost minimal degree.

\begin{restatable}{theorem}{minimalInterpolation}
	\label{theorem:interpolation-minimal-surfaces}
	Smooth varieties of minimal degree satisfy interpolation.
\end{restatable}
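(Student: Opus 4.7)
The first step is to invoke the classical classification of varieties of minimal degree due to del Pezzo and Bertini: any smooth irreducible nondegenerate variety $X \subset \bp^n$ with $\deg(X) = \codim(X) + 1$ is either a quadric hypersurface, the Veronese surface $\nu_2(\bp^2) \subset \bp^5$, or a smooth rational normal scroll. The plan is therefore to reduce the theorem to a case-by-case verification of interpolation in each of these three families.

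The quadric hypersurface case is essentially linear algebra: quadrics in $\bp^n$ form a linear system of (projective) dimension $\binom{n+2}{2}-1$, and passing through a general point imposes one independent condition, so the existence of a quadric through the expected number of points is automatic; smoothness of a general such quadric then follows from openness of the smooth locus. For the Veronese surface I would either compute the normal bundle $N_{\nu_2(\bp^2)/\bp^5}$ directly and check the required $H^1$-vanishing after twisting by the ideal sheaf of the general points, or else appeal to association: using the Gale transform, the condition that a general configuration of points in $\bp^5$ lies on a Veronese surface translates into a condition on the associated configuration that can be verified by hand, along the lines of Eisenbud-Popescu's Theorem 4.5.

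The heart of the proof, and the main obstacle, is the case of smooth rational normal scrolls. My plan is to induct on the dimension (and, within each dimension, on the degree) of the scroll, using a degeneration and specialization argument. Specifically, I would degenerate a smooth rational normal scroll $S \subset \bp^n$ to a ``broken scroll,'' a reducible union $S_1 \cup S_2$ of two smooth scrolls of smaller dimension or degree, glued transversally along a common rational normal scroll $D$ of one lower dimension. The normal bundle of the broken scroll fits into a short exact sequence relating $N_{S/\bp^n}$ on the two pieces to the normal bundle of $D$, so by distributing the prescribed general points among $S_1$ and $S_2$ in proportion to the dimensions of their Hilbert schemes and invoking the inductive hypothesis on each component, I expect to produce sections of the normal bundle on the broken scroll with the correct values at the chosen points. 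Since interpolation is an open condition in flat families, the conclusion for the general smooth scroll in the same Hilbert scheme component then follows by specialization.

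The hardest part will be engineering this degeneration so that (i) the broken scroll lies in the closure of the component of the Hilbert scheme parameterizing the smooth scrolls under consideration, (ii) the general interpolation points can be split among the components in a way compatible with the inductive hypothesis, and (iii) the gluing contribution in the normal bundle sequence does not obstruct lifting of sections across $D$. I also expect to have to handle small-dimensional base cases, such as rational normal curves and smooth quadric surfaces in $\bp^3$, by hand, and to keep track of the characteristic-$2$ subtlety flagged in the acknowledgments concerning the normal bundle of the $2$-Veronese.
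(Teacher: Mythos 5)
Your overall skeleton matches the paper's: classify via Eisenbud--Harris into quadrics, the $2$-Veronese, and scrolls, handle quadrics via the linear system, handle the Veronese, and attack scrolls by degeneration and induction. But the technical engine you propose for the scroll case is genuinely different from what the paper uses, and it has a gap that the paper is carefully engineered to avoid. You want to run a normal-bundle cohomology argument on the reducible degeneration (distributing points across the two pieces and lifting sections across the gluing locus $D$). The paper never forms or computes the normal bundle of the broken scroll at all. Instead it fixes the dimension $k$, degenerates only the \emph{degree}, and degenerates to the union $X \cup Y$ of a degree-$(d-1)$, dimension-$k$ scroll $X$ contained in a hyperplane and a $k$-plane $Y$, meeting along a $(k-1)$-plane of the ruling of $X$; it then proves that this configuration corresponds to an \emph{isolated point} of the incidence correspondence $\Phi_\lambda \to \prod Gr$ by combining (i) the fact that the broken locus is a full irreducible component of the singular locus of $\minhilb d k$ (established via a delicate Galois/Hilbert-polynomial argument, \autoref{proposition:distinct-hilbert-polynomials-implies-reducible-source}), and (ii) explicit finiteness counts for broken and smooth scrolls through the specialized configuration. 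This isolated-point criterion is one of the equivalent formulations of interpolation in \autoref{theorem:equivalent-conditions-of-interpolation}, and it sidesteps entirely the cohomological subtleties of normal bundles of reducible schemes.

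There are two further places where your plan would run into trouble as stated. First, the base of the scroll induction is not rational normal curves and quadric surfaces but the Segre threefold $\bp^1 \times \bp^{k-1} \subset \bp^{2k-1}$ for every $k$; the paper treats this as ``the most subtle case of all,'' via a specialization of triples of points to lines together with a Schubert-calculus count of $(k-1)$-planes meeting $k$ lines, and you would need some substitute for this. Second, for the Veronese you allow either a normal-bundle computation or association; the paper uses association, and in fact proves (\autoref{corollary:failure-of-2-veronese-vector-bundle-interpolation}) that the normal bundle of the $2$-Veronese \emph{fails} interpolation in characteristic $2$ even though the Hilbert scheme component \emph{does} satisfy interpolation, so the normal-bundle route for the Veronese is not merely harder but actually false over $\overline{\mathbb F}_2$ --- you correctly flagged this subtlety, but it means the cohomological path is unavailable if you want a characteristic-free theorem.
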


\begin{remark}
	\label{remark:}
	Parts of ~\autoref{theorem:interpolation-minimal-surfaces}
	have been previously established. For example,
	the dimension $1$ case is that there is a rational curve through $n + 3$ points in $\bp^n$.
	The Veronese surface was shown to satisfy interpolation
	in \cite[Theorem 19]{coble:associated-sets-of-points},
	see ~\autoref{theorem:counting-veronese-interpolation}	
	for a more detailed description of this proof.
	It was already established that $2$-dimensional
	scrolls satisfy interpolation in
	Coskun's thesis ~\cite[Example, p.\ 2]{coskun:degenerations-of-surface-scrolls}, and furthermore,
	Coskun gives a method for computing the number of scrolls
	meeting a specified collection of general linear spaces.
	Finally, weak interpolation was established for
	scrolls of degree $d$ and dimension $k$ with
	$d \geq 2k - 1$ in \cite[Theorem 4.5]{eisenbudP:the-projective-geometry-of-the-gale-transform}.

Although the three works cited above all prove bits and pieces of
\autoref{theorem:interpolation-minimal-surfaces}, the remaining
unproven cases were some of the trickiest to deal with in
the proof we present.
While our methods are similar to those of \cite{coskun:degenerations-of-surface-scrolls},
they differ drastically from those of 
\cite[Theorem 4.5]{eisenbudP:the-projective-geometry-of-the-gale-transform}.
\end{remark}

Following our proof that varieties of minimal degree satisfy interpolation,
we look at surfaces of almost minimal degree. If such a surface
is smooth and linearly normal, it is a del Pezzo surface.
This leads to the following result, proven in the appendix.

\begin{restatable}{theorem}{main}
	\label{theorem:main}
All del Pezzo surfaces over a field of characteristic $0$ satisfy weak interpolation.
\end{restatable}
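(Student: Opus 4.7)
The plan is to prove weak interpolation for del Pezzo surfaces case by case, organized by the anticanonical degree $d$. Recall that a del Pezzo surface of degree $d$ is either $\mathbb{P}^1 \times \mathbb{P}^1$ or $\mathrm{Bl}_{p_1, \ldots, p_{9-d}} \mathbb{P}^2$ at $9-d$ general points, and for $d \geq 3$ is anticanonically embedded as a surface in $\mathbb{P}^d$. For each such $d$ I would first compute the dimension of the corresponding Hilbert scheme component $\sch_d$ from the $\pgl_{d+1}$-orbit dimension together with the abstract moduli dimension $\max(0, 10-2d)$, and read off the expected number $N_d = \lfloor \dim \sch_d / (d-2) \rfloor$ of general points through which a member should pass. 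The goal of weak interpolation is to produce some positive fraction of this $N_d$.

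The principal technique I would use is the deformation--degeneration package already exploited in the proof of \autoref{theorem:interpolation-minimal-surfaces}. Given general points $q_1, \ldots, q_N \in \mathbb{P}^d$, I would construct a reducible limit $X_0 = Y \cup Z \subset \mathbb{P}^d$ whose components $Y$ and $Z$ are varieties of minimal degree (typically a rational normal scroll, a linearly embedded plane, a smooth quadric, or the Veronese), distribute the $q_i$ between $Y$ and $Z$ so that each receives at most the number of points it can interpolate by \autoref{theorem:interpolation-minimal-surfaces}, and then smooth $X_0$ to a member of $\sch_d$ by exhibiting a one-parameter smoothing, using a normal-bundle surjectivity computation together with semicontinuity of the evaluation map
\[
 H^0(X_t, N_{X_t/\mathbb{P}^d}) \longrightarrow \bigoplus_{i} T_{q_i}\mathbb{P}^d / T_{q_i} X_t
\]
to conclude that the general fiber also passes through the $q_i$. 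An alternative (and in some ranges simpler) tack for $d \leq 8$ is to realize the del Pezzo by explicitly specifying the $9-d$ blowup points in an auxiliary $\mathbb{P}^2$ and pulling back the linear system $|-K|$: by a parameter count and a transversality argument, one shows that for a generic choice of blowup points the anticanonical image can be moved by $\pgl_{d+1}$ to pass through the required number of target points.

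The strategy I would carry out, then, is to first dispatch the high-degree cases $d = 9, 8, 7, 6$ directly (here the del Pezzo is rigid up to projective equivalence, and a direct normal-bundle analysis suffices), and then induct downward in $d$ via the projection construction: projecting a degree $d$ del Pezzo from a general point on itself gives a del Pezzo of degree $d-1$, so a degeneration to the union of a del Pezzo of degree $d-1$ and a plane or small scroll lets information propagate from larger to smaller $d$. For $d=1, 2$, where the anticanonical map fails to be an embedding, I would pass to the bianticanonical (resp.\ trianticanonical) model and apply the same degeneration argument in the appropriate projective space, falling back on the blowup description when needed.

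The hard part will be verifying the two delicate ingredients of the deformation step: (i) producing a \emph{smoothable} reducible limit $X_0$ whose components meet along a curve for which the relevant $H^1$ of the normal bundle vanishes, and (ii) showing that the evaluation map on $H^0(N_{X_0/\mathbb{P}^d})$ at the chosen general points has full rank, so that the interpolation property survives the smoothing. These are precisely the normal-bundle cohomology computations that required care in \autoref{theorem:interpolation-minimal-surfaces}, and they are subtle for del Pezzos because the exceptional $(-1)$-curves can obstruct naive degenerations. The characteristic-$0$ hypothesis is likely to enter through Kodaira-type vanishing and to ensure that generic smoothness of families holds; I would expect the low-degree del Pezzos $d = 1, 2$ and the degree-$3$ cubic surface to be the most technically demanding cases because $N_d$ is smallest relative to the moduli dimension there, leaving the least slack in the deformation-theoretic bookkeeping.
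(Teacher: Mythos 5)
Your overall plan---case-by-case in degree, using degeneration and smoothing arguments modeled on the proof of \autoref{theorem:interpolation-minimal-surfaces}---is a genuinely different route from what the paper does, and the comparison is instructive. The paper does \emph{not} degenerate a del Pezzo to a union of minimal-degree surfaces at all. Instead it uses a grab-bag of construction techniques tailored to each degree: for $d=3,4$ the surfaces are complete intersections, so interpolation is elementary (\autoref{lemma:balanced-complete-intersection}); for $d=5$ it interpolates a three-dimensional scroll $\scroll{1,1,1}$ through the $11$ points using \autoref{theorem:interpolation-minimal-surfaces} and then slices it by a quadric vanishing along a ruling plane (\autoref{thm:quinticDPinterpolation}); for $d=6$ and the $\bp^1\times\bp^1$ case of $d=8$ it first interpolates a curve of the right degree and genus through the points using Atanasov--Larson--Yang (\autoref{thm:NaskoEricDavid}) and then builds the surface around the curve by analyzing divisor classes on the curve (\autoref{lem:sexticcontainingC}, \autoref{lem:buildP1P1}); and for $d=9$, $d=8$ type $1$, and $d=7$ it uses \emph{association} (the Gale transform), reducing to the existence of a ``singular triad'' for $13$ general points in $\bp^2$, which is itself proved by a delicate one-parameter degeneration in the Hilbert scheme of $13$ points in $\bp^2$ (\autoref{theorem:DegenerateConfigurationInClosure}). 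The paper's approach thus never confronts the two steps you correctly identify as hard---producing a smoothable reducible limit with good normal-bundle cohomology, and checking surjectivity of the evaluation map---because it never smooths a reducible surface.

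Two details in your plan are worth flagging. First, your difficulty ordering is reversed relative to what the paper encounters: in the paper, $d=3,4$ are trivial and $d=9$ is by far the most technically demanding case, whereas you anticipate $d=1,2,3$ being hardest. Second, your proposed downward induction via projection from a point has a parameter-count mismatch in the low-degree range: the number of interpolation points $N_d$ from \autoref{table:del-Pezzo-conditions} is $13,12,12,11,11,11,13,19$ for $d=9,\dots,3$, so $N_d$ \emph{increases} as $d$ drops below $6$, and a degree-$d$ surface projecting to a degree-$(d-1)$ surface through $N_{d-1}$ points doesn't obviously supply the extra point conditions needed. Finally, note the theorem as stated in the paper only treats anticanonically \emph{embedded} del Pezzos, i.e.\ $d\geq 3$; your discussion of $d=1,2$ via the bi- and tri-anticanonical models goes beyond the claimed scope. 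None of this means your degeneration approach cannot be made to work---Coskun's degenerations of del Pezzos suggest it should---but as written it is a program rather than a proof, and the paper sidesteps its hardest ingredients entirely rather than verifying them.
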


We also prove several additional results.
For example, we characterize which Castelnuovo curves satisfy interpolation in
\autoref{theorem:castelnuovo-interpolation}.

\section{Relevance of interpolation}
\label{subsection:relevance-of-interpolation}
Before detailing what is currently known about interpolation,
we pause to describe several ways in which interpolation
arises in algebraic geometry.

First, interpolation arises naturally when studying families of varieties.  As an example, we consider the problem of producing {\sl moving curves} in the moduli space of genus $g$ curves, 
$\mg$. 
Suppose we know, for example, that canonical (or multi-canonical) curves
satisfy interpolation through a collection of points
and linear spaces. Then, after imposing the correct number of incidence conditions, one obtains a moving curve in $\mg$.
Indeed, as one varies the incidence conditions, these
curves sweep out a dense open set in $\mg$, and hence
determine a moving curve.
One long-standing open problem in this area  is that of determining the least upper bound for
the slope $\delta/\lambda$ of a moving
curve in $\mg$.
In low genera, moving
curves constructed via interpolation realize the least upper bounds. Establishing interpolation is a necessary first step
in the construction of such moving curves. For a more in depth discussion of slopes, see \cite[Section 3.3]{chenFM:effective-divisors-on-moduli-spaces-of-curves-and-abelian-varieties}.
This application is also outlined in the second
and third
paragraphs of \cite{atanasov:interpolation-and-vector-bundles-on-curves}.

Interpolation can be used to construct explicit degenerations of
 varieties in projective space. By specializing the incidence conditions, the varieties interpolating through them may be forced to degenerate as well.  This could potentially shed light on the boundary
of a component of a Hilbert scheme. 

We next provide an application of interpolation to the problems in Gromov-Witten theory.
Gromov-Witten theory can be used to count the number of curves satisfying incidence or tangency conditions.
Techniques in interpolation can also be used to count this number,
and we now explain how interpolation techniques can sometimes
lead to solutions where Gromov-Witten Theory fails.
When the Kontsevich space is irreducible and of the correct dimension 
one can employ Gromov-Witten theory without too much
difficulty to count the number of varieties meeting a certain
number of general points. 
In more complicated cases, one needs a virtual fundamental class,
and then needs to find the contributions of this
virtual fundamental class from nonprincipal components
and subtract the contributions from these components.
However, arguments in interpolation can very often be used
to count the number of varieties containing a general set of points,
as is done for surface scrolls in \cite[Results, p.\ 2]{coskun:degenerations-of-surface-scrolls}.
Coskun's technique also allows one to efficiently compute
Gromov-Witten invariants for curves in $\mathbb G(1,n)$.
Although there was a prior algorithm to compute this using
Gromov-Witten theory, Coskun notes that his method
is exponentially faster. The
standard algorithm, when run on Harvard's MECCAH cluster
``took over four weeks to determine the cubic invariants
of $\mathbb G(1,5)$. The algorithm we prove here allows
us to compute some of these invariants by hand'' \cite[p.\ 2]{coskun:degenerations-of-surface-scrolls}.
 
 Interpolation also distinguishes components of Hilbert schemes. For a typical example of this phenomenon,
consider the Hilbert scheme of twisted cubics in $\bp^3$.
This connected component of the Hilbert scheme has two
irreducible components.
One of these components has general member which is a smooth
rational normal curve in $\bp^3$ and is $12$ dimensional.
The other component
has general member corresponding to the union of a plane
cubic and a point in $\bp^3$, which is $15$ dimensional.
While the first component parameterizing smooth
rational normal curves satisfies interpolation
through $6$ points, there won't 
be a single member of the second component 
passing through
$5$ general points,
even though this second component has a larger dimension than the first.

\section{Interpolation: a lay of the land}\label{section:lay-of-land}

In this section we survey what is known about interpolation so far.
We start off with interpolation for rational normal curves, move on to describing what is known about
higher genus curves, and conclude with what is known about higher dimensional varieties.

\subsection{Interpolation for rational normal curves}
\label{sssec:castelnuovos-lemma}

Through $r+3$ general points in $\bp^{r}$ there exists a unique rational normal curve $\bp^{1} \subset \bp^{r}$.  A dimension count provides evidence for existence: the (main component of the) Hilbert scheme of rational normal curves is $r^{2}+2r-3 = (r+3)(r-1)$ dimensional, and the requirement of passing through a point imposes $r-1$ conditions on rational normal curves. Therefore we {\sl expect} finitely many rational normal curves through $r+3$ general points.  

``Counting constants'' as above  only provides a plausibility argument for existence of rational curves interpolating through the required points -- it is not a proof. To illustrate this, we give an example where interpolation is not satisfied, even though the dimension count says otherwise.

\begin{example}
	\label{example:}
	A parameter count suggests there should be a genus $4$ canonical curve through $12$ general points in $\bp^{3}$. However, such a canonical
	curve is a complete intersection of a quadric and a cubic.
	Since
	a quadric is determined by $9$ general points,
	the curve,
	which lies on the quadric, cannot contain $12$ general points.
	In other words, genus $4$ canonical curves do not satisfy interpolation.
\end{example}

There are many proofs of interpolation for rational normal curves.  One proof proceeds by directly constructing a rational normal curve using explicit equations.  Another approach is via a degeneration argument, as in \autoref{example:rational-normal-curve-interpolation}. One can also use {\sl association}, also known as the Gale transform, (see \cite{eisenbudP:the-projective-geometry-of-the-gale-transform}) to deduce the lemma.  A purely synthetic proof also exists, as found in \cite[Proposition 2.4.4]{pereiraP:an-invitation-to-web-geometry}.

\subsection{Higher genus curves} One way to generalize interpolation for rational normal curves is to consider higher genus curves in projective space. For many reasons it is simpler to consider curves embedded via nonspecial linear systems.  By far, the most comprehensive theorem involving interpolation for nonspecial curves in projective space is the following recent result of Atanasov--Larson--Yang: 

\begin{theorem}[Theorem 1.3, \cite{atanasovLY:interpolation-for-normal-bundles-of-general-curves}]\label{thm:NaskoEricDavid}
Strong interpolation holds for the main component of the Hilbert scheme parameterizing nonspecial curves of degree $d$ and genus $g$ in projective space $\bp^{r}$, unless
\begin{align*}
	(d,g,r) \in \left\{ (5,2,3), (6,2,4), (7,2,5) \right\}.
\end{align*}
\end{theorem}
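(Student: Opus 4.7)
The plan is to reduce strong interpolation to a cohomological vanishing statement on the normal bundle, then establish that vanishing by a carefully engineered degeneration and induction on the invariants $(d,g,r)$.

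First I would reformulate. A smooth irreducible curve $C\subset\bp^r$ of degree $d$ and genus $g$ satisfies (strong) interpolation through a general configuration of points and linear spaces of the expected total codimension precisely when the normal bundle $N_{C/\bp^r}$ satisfies interpolation in the sense of vanishing of appropriate $H^1$'s of twists by ideals of general collections of points on $C$ (plus a quotient bundle modification to encode incidence with higher dimensional linear spaces). Since $\deg N_{C/\bp^r}=(r+1)d+2g-2$ and $\rk N_{C/\bp^r}=r-1$, the expected number of general points imposed is $\lfloor ((r+1)d+(r-1)(1-g))/(r-1)\rfloor$, and the vanishing to establish is $H^1(N_{C/\bp^r}(-D))=0$ for a general effective divisor $D$ of appropriate degree (and analogously for strong interpolation, allowing a few points with higher multiplicity).

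Second, I would set up a double induction on $(d,g)$ and degenerate. The core move is to specialize $C$ to a nodal curve $C_0=C'\cup R$ where $R$ is either a line, a $2$-secant rational normal curve, or an elliptic curve chosen so that $C'$ has invariants $(d',g',r)$ strictly smaller in the induction. On such a reducible curve one has the standard exact sequence
\[
0 \longrightarrow N_{C_0}|_{C'}(-\Sigma) \longrightarrow N_{C_0} \longrightarrow N_{C_0}|_R \longrightarrow 0,
\]
where $\Sigma$ is the set of nodes, together with analogous sequences for $R$. Specializing some of the general incidence points to lie on $R$ in a prescribed way and the rest to $C'$, the required $H^1$ vanishing reduces to two simpler vanishing statements: one on $C'$ (handled by the induction hypothesis after accounting for the twist by $\Sigma$) and one on $R$ (a statement about low genus or rational curves, established directly via Euler sequences or explicit normal bundle splitting). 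Semicontinuity of $h^1$ then pushes the vanishing back to the general fiber. Strong interpolation, as opposed to merely weak, is handled by choosing the specialization so that one of the incidence conditions becomes a tangency, which is what allows one to inductively increase the number of "stronger" conditions.

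Third, I would dispatch the base cases by hand. For small $(d,g,r)$ one constructs explicit curves through the requisite general data: rational normal curves via \autoref{sssec:castelnuovos-lemma}, elliptic normal curves via linkage to a rational normal curve on a quadric scroll, and a handful of genus $2$ cases by exhibiting the curve as a hyperplane section of a scroll meeting prescribed points. The three exceptional triples $(5,2,3),(6,2,4),(7,2,5)$ must be certified as genuinely failing, which one does by observing that in those cases the curve lies on a low-degree surface (a quadric, a cubic scroll, a Del Pezzo) that itself cannot contain the required number of general points, exactly parallel to the genus $4$ canonical curve example in the excerpt.

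The main obstacle, by a wide margin, will be controlling the normal bundle across the degeneration. The sheaf $N_{C_0}$ is not simply the direct sum of the normal bundles of the components: at each node $p\in\Sigma$ there is a delicate interaction, and in fact $N_{C_0}$ fails to be locally free there, requiring one to pass to $N_{C_0}/\text{torsion}$ and track a "correction" quotient supported at the nodes. Choosing the degeneration so that this correction is controllable, so that the points one specializes to $R$ impose independent conditions on $N_{C_0}|_R$, and so that the induction actually terminates outside the exceptional list, is the technical heart of the argument; this is precisely the sort of bookkeeping that forces the exclusion of those three sporadic triples.
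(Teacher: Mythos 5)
The paper does not prove this statement; it is quoted as Theorem 1.3 of Atanasov--Larson--Yang and used as a black box. So there is no internal proof to compare against, only the cited argument. Your outline does recover the broad shape of the ALY approach: reformulate interpolation as a cohomological vanishing condition on the normal bundle, induct via degenerations to reducible nodal curves $C_0 = C' \cup R$ using the Hartshorne--Hirschowitz exact sequence, and handle small cases by hand, while the three exceptional triples fail because the curve is forced onto a low-degree surface that cannot contain enough general points.

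There are, however, two genuine slips. First, your claim that ``$N_{C_0}$ fails to be locally free there, requiring one to pass to $N_{C_0}/\text{torsion}$'' is incorrect: a nodal curve $C_0 = C' \cup R$ in $\bp^r$ is a local complete intersection (at a node it is locally cut out by $r-1$ equations, one of which is $xy$), so $\sci_{C_0}/\sci_{C_0}^2$ and hence $N_{C_0/\bp^r}$ are honestly locally free, and there is no torsion to strip. The actual subtlety is that the restriction $N_{C_0}|_{C'}$ is \emph{not} $N_{C'/\bp^r}$; rather it is a positive elementary modification of $N_{C'/\bp^r}$ at the nodes, pointing along the other branch. Tracking that modification through the degeneration is the real bookkeeping burden, not any failure of local freeness. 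Second, the mechanism you propose for upgrading from ordinary to strong interpolation --- specializing incidence conditions to tangencies --- is not how this is done and would add considerable difficulty. As recorded in \autoref{theorem:equivalent-conditions-of-interpolation}, interpolation for the normal bundle already implies strong interpolation for it (the equivalence of \ref{cohomological-definition} and \ref{cohomological-strong}), and this reduction is purely linear-algebraic (Atanasov's \cite[Theorem 8.1]{atanasov:interpolation-and-vector-bundles-on-curves}). So once the balanced vanishing $H^1(N_{C/\bp^r}\otimes\sci_D)=0$ is established, strong interpolation is free and no tangency engineering is needed. You also dropped the degree term when computing $h^0(N_{C/\bp^r})$; the correct Euler characteristic is $(r+1)d + 2g - 2 + (r-1)(1-g)$, not $(r+1)d + (r-1)(1-g)$, though this does not affect the structure of the argument.
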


It is also shown in 
\cite[p.\ 108]{stevens:deformations-of-singularities}
(which combines the work in ~\cite{stevens:on-the-number-of-points-determining-a-canonical-curve},
dealing with the canonical curves of genus not equal to $8$ and
~\cite[Proposition, p.\ 3715]{stevens:on-the-computation-of-versal-deformations},
dealing with canonical curves of genus 8)
that canonical curves of genus at least $3$ fail to satisfy weak interpolation if and only if their genus is $4$ or $6$.

Adding on to the work of Stevens ~\cite{stevens:on-the-number-of-points-determining-a-canonical-curve}
and \autoref{thm:NaskoEricDavid},
we are able to give an complete description of which
Castelnuovo curves satisfy weak interpolation.
This shows that canonical curves approximately
``form the boundary'' between Castelnuovo curves satisfying
interpolation and 
Castelnuovo curves not satisfying interpolation.
See \autoref{section:castelnuovo-curves}
for a definition and discussion of Castelnuovo curves.

\begin{restatable}{theorem}{castelnuovo}
	\label{theorem:castelnuovo-interpolation}
	Let $\bk$ be an algebraically closed field of characteristic $0$.
	Castelnuovo curves of degree $d$ and genus $g$ in $\bp^r_\bk$
	satisfy weak interpolation if and only if
	$d \leq 2r$ and
	\begin{align*}
		(d,g,r) \notin \left\{ (5,2,3), (6,2,4), (7,2,5), (6,4,3), (10,6,5) \right\}.
	\end{align*}
	Further, a Castelnuovo curve of degree $d$ and genus $g$
	in $\bp^r_\bk$ of degree not equal
	to $2r$ satisfies interpolation if and only if
	$d < 2r$ and 
	\begin{align*}
		(d,g,r) \notin \left\{ (5,2,3), (6,2,4), (7,2,5) \right\}.
	\end{align*}
\end{restatable}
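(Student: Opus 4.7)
The plan is to partition Castelnuovo curves by degree into three regimes: $d < 2r$, $d = 2r$, and $d > 2r$, and in each to either reduce to a result cited earlier or prove failure by a concrete dimension obstruction. First I would record the standard preliminaries. A brief Riemann--Roch computation using the Castelnuovo formula $\pi(d,r) = \binom{m}{2}(r-1) + m\epsilon$ with $d-1 = m(r-1) + \epsilon$ and $0 \leq \epsilon < r - 1$ shows that a linearly normal Castelnuovo curve is nonspecial exactly when $d \leq 2r - 1$, is canonical (with $g = r + 1$) when $d = 2r$, and is strictly special when $d \geq 2r + 1$. Moreover, by the classical Castelnuovo--Enriques--Harris theorem, every curve of maximal genus with $d \geq 2r + 1$ lies on a surface of minimal degree (a rational normal scroll, or the Veronese in isolated cases). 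This structural dichotomy drives the rest of the argument.

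For $d < 2r$ the curves are nonspecial, so Theorem~\ref{thm:NaskoEricDavid} of Atanasov--Larson--Yang applies and gives strong interpolation outside the excluded triples $\{(5,2,3),(6,2,4),(7,2,5)\}$; each of these has $g = 2 = \pi(d,r)$ and so is indeed Castelnuovo, accounting for the corresponding entries of both exception lists. For $d = 2r$ the curves are canonical of genus $r + 1$ in $\bp^r$, so the Stevens classification cited in the excerpt applies: weak interpolation fails exactly for $g \in \{4, 6\}$, which translates to the additional triples $(6,4,3)$ and $(10,6,5)$ in the weak-interpolation list. Since these two exceptions live only at $d = 2r$, it is precisely this degree that the refined strong-interpolation statement excludes.

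The main new content is the regime $d > 2r$, where I claim weak interpolation fails uniformly. Let $C$ be such a Castelnuovo curve, forced by the structural input onto a minimal-degree surface $S$. Any deformation of $C$ through general points $p_1, \ldots, p_n$ drags with it a deformation of a containing minimal surface $S'$, and each $p_i$ imposes $r - 2$ independent conditions on $S'$. Letting $D_C$ and $D_S$ denote the dimensions of the relevant Hilbert components, weak interpolation for $C$ demands $n = \lfloor D_C/(r-1) \rfloor$ points, which is compatible with containment only if $n(r-2) \leq D_S$. The argument then reduces to verifying the strict inequality $\lfloor D_C/(r-1) \rfloor (r-2) > D_S$ for every $d \geq 2r+1$, which I would do by a direct case analysis in the parameters $(m, \epsilon)$ using the Castelnuovo formula and standard expressions for the dimension of the Hilbert scheme of scrolls respectively the Veronese.

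The principal obstacle is this final inequality, together with the bookkeeping that surrounds it. One must be careful that $C$ might lie on more than one type of minimal surface (for instance both $S(a,b)$ and $S(a+1,b-1)$), which enlarges the effective $D_S$ one must beat; one must also deal separately with the Veronese exception in $\bp^5$. I expect the inequality to hold robustly, the ``gap'' widening with $m$, so that the hard part is not the asymptotic case but rather a finite check just above the threshold $d = 2r+1$, where the dimension estimates are tightest and any sloppiness in the count of scroll types would produce a false positive. Once this case-by-case check is completed, no deformation of $C$ can realize the number of general points demanded by weak interpolation, finishing the only-if direction and closing the trichotomy.
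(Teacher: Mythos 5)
Your proposal follows essentially the same route as the paper: the same trichotomy in $d$, the same reduction to Atanasov--Larson--Yang when $d < 2r$ and to Stevens when $d = 2r$, and for $d > 2r$ the same observation that a Castelnuovo curve of large degree is trapped on a surface of minimal degree which cannot accommodate the requisite number of general points. The one thing worth flagging is that the final dimension inequality you isolate as the crux is actually a single, uniform check rather than a delicate case analysis: from the Ciliberto dimension formula, $\lfloor D_C/(r-1) \rfloor \geq \binom{m+1}{2}+r+2 \geq r+5$ as soon as $m \geq 2$, while a surface scroll of degree $r-1$ in $\bp^r$ has Hilbert-scheme dimension $(r+1)^2-7 = (r+4)(r-2)+2$ and hence can contain at most $r+4$ general points, a quantity independent of $(m,\epsilon)$. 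Your worry about a scroll being of type $S(a,b)$ versus $S(a+1,b-1)$ enlarging the effective $D_S$ is also unfounded, since all smooth surface scrolls of fixed degree and dimension lie on a single irreducible component of the Hilbert scheme; the only genuine extra cases are $r=3$ (the quadric surface, which contains at most $9$ general points) and the Veronese in $\bp^5$ (again at most $9$ general points), which the paper dispatches separately exactly as you anticipate.
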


\subsection{Higher dimensional varieties: varieties of minimal degree and del Pezzo surfaces}

In this thesis, we establish interpolation for all varieties of minimal degree, see \autoref{theorem:interpolation-minimal-surfaces}, and for smooth linearly normal surfaces
of almost minimal degree, i.e., del Pezzo surfaces, see \autoref{theorem:main}. Recall
that a variety of dimension $k$ and degree $d$ in $\bp^n$
is of minimal degree
if it is not contained in a hyperplane and
$d  = n + 1 - k$.
Further, by \autoref{exercise:reason-for-minimal-degree},
any nondegenerate variety in $\bp^n$ of dimension $k$ 
cannot have degree less than $n + 1 - k$.
By \cite[Theorem 1]{eisenbudH:on-varieties-of-minimal-degree}, an irreducible variety is of minimal degree if and only if
it is a degree 2 hypersurface, the $2$-Veronese in $\bp^5$
or a rational normal scroll.
A variety is of almost minimal degree if its degree satisfies $d = n + 2 - k$.
That is, if its degree is one more than minimal.

\subsection{Approaches to interpolation}
There are at least three approaches to solving interpolation problems.

The first approach is to {\bf directly construct} a variety $[Y] \in \sch$ meeting the specified constructions.  This method is quite ad hoc: For one, we would need ways of constructing varieties in projective space.  Our ability to do so is very limited and always involves special features of the variety. For examples of this approach, see
\autoref{proposition:segre-lines-interpolation}, \autoref{proposition:segre-plane}
as well as
\autoref{sec:degree-5}, \autoref{sec:degree-6}, and \autoref{sec:degree-8-type-0}.

The second standard approach is via {\bf specialization
and degeneration}. In this approach,
we specialize the points to a configuration
for which it is easy to see there is an isolated point
of $\sch$ containing such a configuration.
Often, although not always, the isolated point of $\sch$
corresponds to
a singular variety.
Finding singular varieties may often be easier than
finding smooth ones, particularly if those singular
varieties have multiple components, because we may
be able to separately interpolate each of the components
through two complementary subsets of the points.
See \autoref{example:rational-normal-curve-interpolation}
for an example.
The proof of \autoref{theorem:interpolation-minimal-surfaces}
also involves many examples of degeneration arguments.

The third approach is via {\bf association}, also known as the Gale transform. See
~\autoref{section:association} for more details on
what this means. The general picture
is that association determines a natural way of
identifying a set of $t$ points in $\bp^a$ with a collection of $t$ points
in $\bp^b$, up to the action of $PGL_{a+1}(\bk)$ on $\bp^a$
and $PGL_{b+1}(\bk)$ on $\bp^b$. 
Then, if one can find a certain variety through the $t$ points
in $\bp^a$, one may be able to use association to find
the desired variety through the $t$ associated points in
$\bp^b$. For an example of this approach, see
\autoref{ssec:2-veronese-interpolation} and \autoref{sec:degree-7-8-9}.

\section{Overview}
\label{subsection:overview}

We now present an overview of this thesis.
First, in \autoref{section:background}, we list our conventions, notation, and some
additional fairly well known results which will be used frequently throughout the thesis.
We also include a discussion on Hilbert schemes.
In \autoref{section:interpolation-in-general},
we formally define interpolation and give $22$ equivalent formulations of interpolation
in \autoref{theorem:equivalent-conditions-of-interpolation}.
In \autoref{section:basics-of-scrolls}, we define scrolls and show four
descriptions of scrolls are equivalent.
Then, in \autoref{section:preliminaries-on-varieties-of-minimal-degree},
we detail how to show that smooth varieties of minimal degree correspond to smooth points
of the Hilbert scheme.
After, in \autoref{section:degenerations-of-varieties-of-minimal-degree},
we focus on a particular degeneration of a scroll into the union of a scroll of 
one lower degree and a plane, showing that this is indeed a degeneration
of a smooth scroll and examining how the locus of such degenerate scrolls
lies in the Hilbert scheme.
Using the understanding of varieties of minimal degree from
\autoref{section:preliminaries-on-varieties-of-minimal-degree}
and \autoref{section:degenerations-of-varieties-of-minimal-degree},
we prove that varieties of minimal degree satisfy interpolation
in \autoref{section:interpolation-of-varieties-of-minimal-degree}.
In \autoref{section:castelnuovo-curves}, we classify whether
Castelnuovo curves satisfy interpolation.
We then present several interesting open questions regarding interpolation
in \autoref{section:further-questions}.
Finally, in \autoref{section:appendix}, we
prove that del Pezzo surfaces satisfy weak interpolation.

\chapter{Background and notation}
\label{section:background}

Here, we include general preliminaries, background on the Hilbert scheme, and idiosyncratic notation.

\section{General preliminaries}
\label{subsection:background-preliminaries}

In this section, we briefly recall some fairly standard notation which will be used in this thesis.
As a general rule, our conventions follow those given in \cite{vakil:foundations-of-algebraic-geometry}.

Unless otherwise stated, we work over an algebraically closed field $\bk$ of arbitrary characteristic.
In particular, we do not restrict $\bk$ be of characteristic $0$ except in \autoref{section:appendix}.

We now selectively recall a few commonly used conventions, which we will use frequently.

\begin{itemize}
	\item A variety over a field $\bk$ is a separated, reduced scheme of finite type over $\spec \bk$. In particular,
		we do not assume a variety is irreducible.
	\item If $\iota: X \ra \bp^n$ is a map to projective space, we denote
		$\sco_X(m) := \iota^* \sco_{\bp^n}(m)$, where $\sco_{\bp^n}(m)$
		is the the invertible sheaf whose global sections are degree $m$ polynomials, as defined in \cite[Section 14.1]{vakil:foundations-of-algebraic-geometry}.
		In more generality, if $\scf$ is a sheaf on $X$, then $\scf(m) := \scf \otimes \sco_X(m)$.
	\item If $\pi: X \hookrightarrow Y$ is a closed subscheme, we let $\sci_{X/Y}$ denote the ideal sheaf of $X$ in $Y$,
		which is by definition the kernel of the map $\sco_Y \ra \pi^* \sco_X$. 
		We often write $\sci_X$ for $\sci_{X/Y}$ when reference to $Y$ is clear.
	\item We say a general point of $X$ satisfies a property $P$ if there is a dense open subset
		$U \subset X$ so that every point in $U$ satisfies property $P$.
	\item If $\sce$ is a sheaf over a scheme $X$,
and $x$ is a point of $X$, we use $\sce|_x$ to denote the fiber of $\sce$ at $x$ and $\sce_x$ to denote
the stalk of $\sce$ at $x$.
\item We use $\sce^\vee$ to denote the dual of $\sce$.
\item Throughout, we take the ``Grothendieck'' convention that 
$\bp \sce := \proj \sym^\bullet \sce$ (crucially, we take $\sce$ instead of $\sce^\vee$).
\item For $x \in X$, we let $\kappa(x)$ denote the fraction field of $x$.
\item If $X$ is a variety, we use $p_X$ to denote the Hilbert polynomial
of $X$ and $h_X$ to denote the Hilbert function of $X$.
\item We define $h^i(X, \scf) := \dim H^i(X, \scf)$ for $i \in \bz_{\geq 0}$.
 \item To ease notation, we notate a sequence $(a, \ldots, a)$ with $a$ repeated $b$ times as $(a^b)$. So, for example,
we would notate $(1,1,1,2,3,3)$ as $(1^3, 2, 3^2)$.
\item For notational convenience, if we have a map of schemes $f: X \ra Y$ defined over $\bk$
	and a map of fields $g:\spec L \ra \spec \bk$, let $f_L$ denote the base change of $f$ by $g$
	and let $Y_L$ denote the base change of $Y$ by $g$
\end{itemize}

The following classical terminology is used pervasively throughout this thesis 
\begin{itemize}
	\item A {\bf quadric, cubic, quartic, etc.,} refers to an equation or hypersurface in some projective space of degree $2,3,4$, etc.
	\item A {\bf rational normal curve} refers to an embedding of $\bp^1$ by the linear system $\sco_{\bp^1}(d)$ into $\bp^d$.
	\item A {\bf conic} or plane conic refers to a degree 2 rational normal curve or, equivalently, a quadric in $\bp^2$.
	\item A {\bf twisted cubic} refers to a degree 3 rational normal curve.
\end{itemize}

Next, we recall some standard results from algebraic geometry which will be central
to the remainder of this text.

First, because varieties of minimal degree which are not the $2$-Veronese or a
quadric surface are projective bundles over $\bp^1$,
we will need to understand the structure of these projective bundles.
The structure turns out to be as simple as possible.
It is often attributed to Grothendieck, as it is a special case of one of his theorems,
although it was independently proven several times, as detailed in the discussion following
\cite[Theorem 18.5.6]{vakil:foundations-of-algebraic-geometry}.

\begin{theorem}[\protect{\cite[Theorem 18.5.6]{vakil:foundations-of-algebraic-geometry}}]
	\label{theorem:locally-free-sheaves-on-p1}
	If $\sce$ is a rank $r$ invertible sheaf on $\bp^1$ then
	\begin{align*}
		\sce \cong \sco_{\bp^1}(a_1) \oplus \cdots \oplus \sco_{\bp^1}(a_r)
	\end{align*}
	for a unique nondecreasing sequence of integers $a_1, \ldots, a_r$.
\end{theorem}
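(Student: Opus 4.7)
The plan is to prove existence by induction on the rank $r$, with the base case $r=1$ being the classification of line bundles on $\mathbb{P}^1$ (namely $\operatorname{Pic}(\mathbb{P}^1) \cong \mathbb{Z}$), and uniqueness being a short cohomological computation.

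For the inductive step, I would first exhibit a line subbundle of maximal degree. Choose $a$ to be the largest integer such that $H^0(\mathbb{P}^1, \sce(-a)) \neq 0$; this $a$ exists because Serre vanishing (or direct computation with Hilbert polynomials) shows $H^0(\sce(-a)) = 0$ for $a \gg 0$, while it is nonzero for $a \ll 0$. Pick a nonzero section $s \in H^0(\sce(-a))$, viewed as a map $\sco_{\bp^1}(a) \to \sce$. The key claim is that this map is a subbundle inclusion, i.e., that $s$ is nowhere vanishing. Indeed, if $s$ vanished at some $p \in \bp^1$, then $s$ would factor through $\sce(-a) \otimes \sco_{\bp^1}(-p) \cong \sce(-a-1)$, yielding a nonzero section of $\sce(-a-1)$ and contradicting maximality of $a$.

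Thus we obtain a short exact sequence
\begin{equation*}
0 \to \sco_{\bp^1}(a) \to \sce \to \scq \to 0
\end{equation*}
with $\scq$ locally free of rank $r-1$, and by induction $\scq \cong \bigoplus_{i=2}^r \sco_{\bp^1}(a_i)$. The next step is to show the sequence splits. Twisting the sequence by $\sco_{\bp^1}(-a-1)$ and taking the long exact sequence in cohomology gives the injection $H^0(\scq(-a-1)) \hookrightarrow H^1(\sco_{\bp^1}(-1)) = 0$, so each $a_i \leq a$. Then
\begin{equation*}
\on{Ext}^1(\scq, \sco_{\bp^1}(a)) \;=\; \bigoplus_{i=2}^r H^1(\bp^1, \sco_{\bp^1}(a - a_i)) \;=\; 0,
\end{equation*}
since $a - a_i \geq 0 \geq -1$ forces the $H^1$'s to vanish. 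So the sequence splits and $\sce \cong \bigoplus_{i=1}^r \sco_{\bp^1}(a_i)$ (with $a_1 := a$, after reordering to be nondecreasing).

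For uniqueness, the multiset $\{a_1, \ldots, a_r\}$ is determined by the numerical data $h^0(\sce(n))$ for all $n \in \bz$: from $h^0(\bigoplus \sco(a_i + n)) = \sum_i \max(a_i + n + 1, 0)$, the jumps as $n$ varies recover the individual $a_i$. The main obstacle is really just the splitting step, which hinges on having chosen $a$ \emph{maximal} so that the cohomological obstruction $\on{Ext}^1(\scq, \sco_{\bp^1}(a))$ automatically vanishes; the rest of the argument is induction and standard cohomology of line bundles on $\bp^1$.
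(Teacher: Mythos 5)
The paper does not actually prove this theorem; \autoref{theorem:locally-free-sheaves-on-p1} is simply cited to Vakil's \emph{Foundations of Algebraic Geometry} (Theorem 18.5.6), with the surrounding text noting only that the result is usually attributed to Grothendieck but was proved independently several times. Your proof is therefore supplying an argument where the paper supplies none, and the argument you give is the standard one (and essentially what appears in the cited reference): choose $a$ maximal with $H^0(\sce(-a)) \neq 0$, observe that maximality forces the resulting section $\sco_{\bp^1}(a) \to \sce$ to be a subbundle inclusion, note that the quotient splits by induction and that $a_i \leq a$ from the same maximality, so that $\operatorname{Ext}^1\bigl(\scq, \sco_{\bp^1}(a)\bigr) = \bigoplus_i H^1\bigl(\sco_{\bp^1}(a - a_i)\bigr) = 0$; uniqueness follows by reading off the $a_i$ from the jump function $n \mapsto h^0(\sce(n))$.

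All the steps check out. The one place that is stated slightly loosely is the justification that $H^0(\sce(-a)) = 0$ for $a \gg 0$: ``Serre vanishing'' by itself controls higher cohomology, not $H^0$, and the Hilbert polynomial only gives the Euler characteristic. What you want is either Serre vanishing combined with Serre duality ($H^0(\sce(-a)) \cong H^1(\sce^\vee(a-2))^\vee$, which vanishes for $a \gg 0$), or the global-generation form of Serre's theorem applied to $\sce^\vee$, giving an inclusion $\sce(-m) \hookrightarrow \sco_{\bp^1}^{\oplus N}$ and hence $H^0(\sce(-a)) \hookrightarrow H^0(\sco_{\bp^1}(m-a))^{\oplus N} = 0$ for $a > m$. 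This is a cosmetic point; the claim is true and the rest of your argument is complete and correct.
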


Next, we include a simple lemma on Hilbert polynomials, which
will come in handy at several later points.

\begin{lemma}
\label{lemma:hilbert-polynomial-intersection}
Suppose $X, Y$ are closed subschemes of $\bp^n$.
Then $X \cup Y$ has Hilbert polynomial given by the inclusion-exclusion formula
\begin{align*}
	p_{X \cup Y} = p_X + p_Y - p_{X \cap Y}.
\end{align*}
\end{lemma}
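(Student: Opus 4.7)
The plan is to deduce the identity from the additivity of Euler characteristics applied to the Mayer--Vietoris short exact sequence of structure sheaves. Recall that, by definition of scheme-theoretic union and intersection, the ideal sheaves satisfy $\sci_{X \cup Y} = \sci_X \cap \sci_Y$ and $\sci_{X \cap Y} = \sci_X + \sci_Y$. From this one obtains the short exact sequence
\begin{equation*}
0 \to \sco_{X \cup Y} \to \sco_X \oplus \sco_Y \to \sco_{X \cap Y} \to 0,
\end{equation*}
where the first map sends a local section $s$ to $(s|_X, s|_Y)$ and the second sends $(a,b)$ to $a|_{X \cap Y} - b|_{X \cap Y}$. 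Exactness is a local check: the kernel of subtraction consists of pairs of sections that agree on the overlap, which by the $\sci_X \cap \sci_Y$ description of $\sci_{X \cup Y}$ glue to a section on $X \cup Y$, and surjectivity follows from $\sci_{X \cap Y} = \sci_X + \sci_Y$.

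Next, I would twist the sequence by $\sco_{\bp^n}(m)$, which preserves exactness since $\sco_{\bp^n}(m)$ is locally free. Taking Euler characteristics and using additivity over short exact sequences gives
\begin{equation*}
\chi(\sco_{X \cup Y}(m)) = \chi(\sco_X(m)) + \chi(\sco_Y(m)) - \chi(\sco_{X \cap Y}(m))
\end{equation*}
for every integer $m$. By Serre vanishing, for $m$ sufficiently large each of these Euler characteristics equals $h^0$ of the corresponding sheaf, which is by definition the Hilbert function evaluated at $m$; since the Hilbert function agrees with the Hilbert polynomial for large $m$, the identity of polynomials
\begin{equation*}
p_{X \cup Y}(m) = p_X(m) + p_Y(m) - p_{X \cap Y}(m)
\end{equation*}
holds for infinitely many $m$, hence as polynomials.

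The only mildly delicate step is justifying the exactness of the Mayer--Vietoris sequence at the level of structure sheaves; everything else is formal. Since the whole argument is sheaf-theoretic and uses only standard facts about coherent cohomology on projective space, I expect no serious obstacle.
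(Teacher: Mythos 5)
Your proof is correct and takes essentially the same approach as the paper: both hinge on the Mayer--Vietoris short exact sequence $0 \to \sco_{X \cup Y} \to \sco_X \oplus \sco_Y \to \sco_{X \cap Y} \to 0$ and then pass to Hilbert functions for $m \gg 0$ via Serre vanishing. The only cosmetic difference is that you take Euler characteristics (which are additive in any short exact sequence) while the paper passes directly to global sections after observing $H^1(\sco_{X \cup Y}(m)) = 0$; these amount to the same thing once $m$ is large.
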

\begin{proof}
Say $X \cup Y \subset \bp^n$.
Observe that we have an exact sequence of sheaves
on $\bp^n$
\begin{equation}
	\nonumber
	\begin{tikzcd}
		0 \ar {r} &  \sco_{X \cup Y} \ar {r} & \sco_X \oplus \sco_Y \ar {r} & \sco_{X \cap Y} \ar {r} & 0.
	\end{tikzcd}\end{equation}
	Here, $\sco_Z(m)$ refers to the invertible sheaf on $Z$ as described in
	the notation at the beginning of
	\autoref{subsection:background-preliminaries}.
To see this sequence is exact, we can check it on the level of rings.
Let $X \cup Y = \spec R$, let $I$ correspond to the ideal sheaf of $X$ on $R$, and let $J$ correspond to the ideal sheaf of $Y$ on $R$. 
It is an elementary diagram chase to verify that
\begin{equation}
	\nonumber
	\begin{tikzcd}
		0 \ar {r} &  R/I \cap J \ar {r} & R/I \oplus R/J \ar {r} & R/(I+J) \ar {r} & 0 
	\end{tikzcd}\end{equation}
is exact.

Next, by Serre vanishing \cite[Theorem 18.1.4(ii)]{vakil:foundations-of-algebraic-geometry}, we have $H^1(\sco_{X \cup Y}(m)) = 0$ for $m \gg 0$.
In particular, for $m \gg 0$, we obtain that
\begin{align*}
	h_{X \cup Y}(m) = h_X(m) + h_Y(m) - h_{X \cap Y}(m),
\end{align*}
where $h_Z$ is the Hilbert function of $Z$. Since $h_Z(m) = p_Z(m)$ for $m \gg 0$,
by \cite[Theorem 18.6.1]{vakil:foundations-of-algebraic-geometry}
we have
\begin{align*}
	p_{X \cup Y} = p_X + p_Y - p_{X \cap Y}.
\end{align*}
\end{proof}

\section{Background on Hilbert schemes}
\epigraph{Having thus refreshed ourselves in the oasis of a proof, we now turn again into the desert of definitions.
}{\textit{Br{\"o}cker J{\"a}nich \cite[p.\ 25]{brocker:introduction-to-differential-topology}}}

Next, we briefly recall the definition and key properties of the Hilbert scheme and its variants.
Our personal favorite reference for the construction of the Hilbert scheme is
the wonderfully detailed Harvard senior thesis
\cite{greene:the-existence-of-hilbert-schemes}.
Some other references include \cite[Chapter 5]{FantechiGIK:fundamentalAlgebraicGeometry},
\cite[Chapter 4]{sernesi:deformations-of-algebraic-schemes}, and
\cite[Proposition-Definition 6.8]{Eisenbud:3264-&-all-that}.

In particular, here, we collect definitions of the Hilbert scheme, the universal family,
the Fano scheme, and the flag Hilbert scheme.
We are mostly following
\cite{greene:the-existence-of-hilbert-schemes},
although we follow
\cite[Chapter 4]{sernesi:deformations-of-algebraic-schemes}
for the description of the flag Hilbert scheme.
These are all defined as schemes representing certain functors.

\begin{definition}
	\label{definition:}
	Let $S$ be a locally Noetherian scheme and $X$ a locally projective $S$ scheme,
	meaning there is an open cover $\left\{ S_i \right\}_{i \in I}$ of $S$ so that,
	for each $i$, the map
	$X \times_S S_i \ra S_i$ is projective.
	Let $\scc_S$ be the category of locally Noetherian $S$-schemes
	and let Set denote the category of sets.
	Define the {\bf Hilbert functor} by
	\begin{align*}
		\Hilb(X/S) : \scc_S & \rightarrow \text{ Set }\\
		Z & \mapsto \left\{ \text{closed immersions } Y \hookrightarrow Z \times_S X \text{ flat over }Z \right\}
	\end{align*}
\end{definition}
\begin{definition}
	\label{definition:hilbert-scheme}
	If the Hilbert functor is representable, meaning that there is a natural isomorphism
	\begin{align*}
		 Hom_S(\bullet, \sch(X/S)) \xrightarrow \eta \Hilb(X/S)
	\end{align*}
	for some $S$-scheme $\sch(X/S)$, then we define
	$\sch(X/S)$, to be the {\bf Hilbert scheme} associated to $X$ over $S$.
	
	Further, suppose there exists a closed subscheme $V$ with
	\begin{equation}
		\nonumber
		\begin{tikzcd} 
			V \ar {rd}{\pi} \ar {r} & X \times_S \sch(X/S) \ar {d} \\
			 & \sch(X/S)
		\end{tikzcd}\end{equation}
	so that $\pi$ is flat.
	Also, assume that the natural isomorphism $\eta$ sends a map
	$g : Z \rightarrow \sch(X/S)$ to the subscheme $W \hookrightarrow Z \times X$,
	defined as the fiber product
	\begin{equation}
		\nonumber
		\begin{tikzcd} 
			W \ar {r} \ar {d} & Z \times_S X \ar {d}{g \times \id} \\
			V \ar {r} & \sch(X/S) \times_S X.
		\end{tikzcd}\end{equation}
	Then we define $V$ to be {\bf the universal family} of closed subschemes of $X$ over $S$.	
\end{definition}

\begin{definition}
	\label{definition:}
Let $S$ be a locally Noetherian scheme and $X$ a locally projective $S$ scheme.
Fix a very ample invertible sheaf $\sco_X(1)$ on $X$. Let $P \in \bq[t]$ be a polynomial,
and recall that we are writing $\scc_S$ for the category of locally Noetherian $S$-schemes.
	Define the functor
	\begin{align*}
		\Hilb^P(X/S) : \scc_S \rightarrow &\text{ Set }\\
		Z \mapsto &\left\{ \text{closed immersions } Y \hookrightarrow Z \times_S X \text{ flat over }Z \right. \\
		& \left. \text{ so that for all }z \in Z, Y \times_Z \spec \kappa(z) \text{ has Hilbert} \right. \\
		& \left. \text{ polynomial $P$ over $\spec \kappa(z)$ with respect to the } \right. \\
	& \left. \text{ pullback of $\sco_X(1)$ to $\spec \kappa(z)$}\right\}
	\end{align*}
	When $\Hilb^P(X/S)$ is representable, we define $\sch^P(X/S)$ to be the scheme representing it.
\end{definition}

\begin{theorem}[\protect{\cite[Subsection 1.2]{greene:the-existence-of-hilbert-schemes}}]
	\label{theorem:}
	Let $S$ be a locally Noetherian scheme, let $X$ be a locally projective $S$ scheme, let $P \in \bq[t]$ be a polynomial,
	and let $\sco_X(1)$ be a very ample invertible sheaf on $X$ used to define $\sch^P(X/S)$. 
	Then, the Hilbert scheme $\sch(X/S)$, the universal family of $X$ over $S$, and $\sch^P(X/S)$ exist
	and are locally Noetherian $S$-schemes.
	Further, if $X \rightarrow S$ is projective, then 
	$\sch^P(X/S)$ is projective.
\end{theorem}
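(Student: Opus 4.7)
The plan is to follow Grothendieck's classical strategy, which realizes $\sch^P(X/S)$ as a closed subscheme of a suitable Grassmannian, and then constructs $\sch(X/S)$ as a disjoint union over Hilbert polynomials. First I would perform several reductions. By the sheaf property of the Hilbert functor, representability is local on $S$, so I may assume $S$ is affine and Noetherian. Since $X \to S$ is locally projective, I choose a closed immersion $X \hookrightarrow \bp^n_S$ compatible with $\sco_X(1)$, which identifies $\Hilb^P(X/S)$ with a subfunctor of $\Hilb^P(\bp^n_S/S)$ cut out by the condition ``factors through $X$.'' Since this condition will turn out to be closed, the problem reduces to representing $\Hilb^P(\bp^n_S/S)$, and moreover by base change it suffices to treat $S = \spec \bz$.

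The heart of the argument is a uniform Castelnuovo--Mumford regularity bound: there exists an integer $m_0 = m_0(P,n)$ such that for every field $k$ and every closed subscheme $Y \subset \bp^n_k$ with Hilbert polynomial $P$, the ideal sheaf $\sci_{Y/\bp^n_k}$ is $m_0$-regular. This is the main technical obstacle and is the step I expect to be hardest; it is essentially Gotzmann's regularity theorem, proved by an inductive argument on hyperplane sections controlling the growth of the Hilbert function in terms of its values at a single degree. Given $m_0$, for any $m \geq m_0$ the map $H^0(\bp^n,\sco(m)) \to H^0(Y,\sco_Y(m))$ is surjective, both sides have dimensions depending only on $n,P,m$ (namely $\binom{n+m}{n}$ and $P(m)$), and $Y$ is recovered from the kernel. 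This yields a natural transformation from $\Hilb^P(\bp^n_S/S)$ to the Grassmannian functor $\mathrm{Gr}\bigl(P(m), H^0(\bp^n,\sco(m))\bigr) \times S$ sending a flat family $\mathcal{Y} \subset \bp^n_Z$ to the locally free quotient $\pi_*\sco_{\mathcal{Y}}(m)$; here I use flat base change and cohomology and base change to ensure this pushforward is locally free of the expected rank.

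The next step is to show this natural transformation identifies $\Hilb^P$ with a locally closed, in fact closed, subfunctor of the Grassmannian. Over the Grassmannian $G$, the tautological quotient $\mathcal{Q}$ produces via multiplication maps $\mathcal{Q} \otimes H^0(\sco(k)) \to \mathcal{Q}(k)_{\text{nominal}}$ a universal family of graded ideals in all degrees $\geq m$; the locus where the resulting graded module has the correct Hilbert function (equivalently, where the induced maps have locally constant rank $P(m+k)$ for finitely many $k$) is a locally closed condition. Using the uniformity from regularity and Gotzmann's persistence theorem, this locally closed subscheme is in fact closed in $G$. Hence $\sch^P(\bp^n/S)$ exists as a closed subscheme of a Grassmannian bundle over $S$, giving a projective $S$-scheme. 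The universal family is then obtained by pulling back the incidence correspondence from the Grassmannian, and $\sch(X/S)$ is defined as the disjoint union $\coprod_P \sch^P(X/S)$, which is locally Noetherian since each component is. Finally, the projectivity claim when $X \to S$ is projective follows because closed subfunctors of $\Hilb^P(\bp^n_S/S)$ are represented by closed subschemes of a projective $S$-scheme, hence are projective.
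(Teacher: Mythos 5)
The paper does not give its own proof of this theorem; it cites it directly from Greene's thesis (and the theorem is the standard Grothendieck existence theorem for Hilbert schemes, so the cited source carries out the classical construction). Your sketch is a correct high-level outline of that construction and follows what I would expect the cited reference to do: reduce to Noetherian affine base and ambient projective space, invoke a uniform Castelnuovo--Mumford bound (Gotzmann's regularity theorem, or Mumford's original bound), map into a Grassmannian by taking degree-$m_0$ graded pieces, cut out the Hilbert functor as a closed subscheme, and pull back the universal family. There is no genuinely different route being compared, since the paper presents no proof.

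Two points where the sketch is thinner than a real proof would need to be: first, the assertion that the locus inside the Grassmannian is \emph{closed} (not merely locally closed) deserves care. The classical route proves local closedness via a flattening-stratification argument and then deduces closedness from the valuative criterion of properness applied to the Hilbert functor itself (flat limits over a DVR exist and are unique); the Gotzmann-persistence route you gesture at is a legitimate alternative, but ``equivalently, where the induced maps have locally constant rank'' describes a locally closed condition, and you would still need persistence precisely to promote it to a closed one, which merits spelling out. Second, to get the universal family you must verify that the tautological subsheaf over the Grassmannian actually yields a closed subscheme of $\bp^n$ that is \emph{flat} over the base with the prescribed Hilbert polynomial on every fiber; this is where cohomology-and-base-change and the uniformity of $m_0$ across all fibers enter essentially, and it is not automatic from the Grassmannian construction alone. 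Neither of these is a wrong idea, just a place where the sketch would need to be filled in to be a proof rather than an outline.
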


\begin{theorem}[Hartshorne \protect{\cite[Corollary 5.9]{hartshorne:connectedness-of-the-hilbert-scheme}}]
	\label{theorem:connected-components-of-hilbert-scheme}
	Under the identification of $\sch^P(X/S)$ as a subscheme of $\sch(X/S)$ by viewing $\Hilb^P(X/S)$
	as a subfunctor of $\Hilb(X/S)$, the schemes $\sch^P(X/S)$ are precisely the connected components of
	$\sch(X/S)$.
\end{theorem}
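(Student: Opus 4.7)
The plan is to establish two complementary facts: first, that $\sch(X/S)$ decomposes as a disjoint union $\coprod_P \sch^P(X/S)$ of open and closed subschemes; and second, that each individual $\sch^P(X/S)$ is connected. Together these identify the $\sch^P(X/S)$ with the connected components of $\sch(X/S)$.

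For the first fact, the decisive input is the classical result that in a flat family of closed subschemes of projective space, the Hilbert polynomial of the fibers is locally constant on the base. Applied to the universal family $\pi \colon V \to \sch(X/S)$ of \autoref{definition:hilbert-scheme}, this implies that the locus in $\sch(X/S)$ where the fiber of $\pi$ has Hilbert polynomial $P$ is both open and closed. The functorial inclusion $\Hilb^P(X/S) \hookrightarrow \Hilb(X/S)$ identifies this clopen locus with $\sch^P(X/S)$ as schemes. Since distinct polynomials give disjoint subschemes, and every point of $\sch(X/S)$ parametrizes a closed subscheme with \emph{some} Hilbert polynomial, we obtain the disjoint union decomposition.

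For the second fact, which is the content of Hartshorne's original theorem, the strategy is to show that any two $\bk$-points $[Y_0], [Y_1] \in \sch^P(X/S)$ lying over a common point of $S$ can be joined by a connected chain of flat families inside $\sch^P(X/S)$. After reducing to the absolute case $X = \bp^n_\bk$ by locally embedding $X$ into relative projective space over $S$ and arguing componentwise on $S$, one first degenerates each $Y_i$ to its initial monomial subscheme with respect to a fixed term order, using a generic one-parameter subgroup of $\pgl_{n+1}$. Flatness of Gröbner degeneration produces a morphism $\ba^1 \to \sch^P(\bp^n/\bk)$ realizing this specialization. One then argues combinatorially that the finite collection of saturated monomial ideals of $\bk[x_0,\ldots,x_n]$ with Hilbert polynomial $P$ can be connected by a sequence of elementary ``polarization'' or ``distraction'' moves swapping generators of equal degree, each of which extends to a $\bp^1$-family inside $\sch^P(\bp^n/\bk)$. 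Concatenating these families yields the desired path from $Y_0$ to $Y_1$.

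The first step is essentially formal once the local constancy of Hilbert polynomials in flat families is available, and would be dispatched quickly. The genuine obstacle is the second step: constructing the explicit flat deformations that connect all monomial ideals of a given Hilbert polynomial, and verifying that the combinatorial graph of such ideals under polarization moves is connected. This combinatorial-algebraic analysis, together with the Gröbner-theoretic reduction from an arbitrary subscheme to a monomial one, is the technical heart of Hartshorne's argument and is where essentially the entire difficulty of the theorem is concentrated.
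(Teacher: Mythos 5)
The paper gives no proof for this statement; it is a citation to Hartshorne's paper, so there is nothing in-text to compare against. Your two-step organization is sound as far as it goes: local constancy of the Hilbert polynomial in flat families gives the clopen decomposition $\sch(X/S) = \coprod_P \sch^P(X/S)$, and your sketch of the $\bp^n_\bk$ case—Gr\"obner degeneration to an initial monomial subscheme, followed by combinatorial moves connecting the finite set of saturated monomial ideals with a given Hilbert polynomial—is a fair high-level account of Hartshorne's argument.

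The genuine gap is in the sentence ``reducing to the absolute case $X = \bp^n_\bk$ by locally embedding $X$ into relative projective space over $S$.'' A closed embedding $X \hookrightarrow \bp^n_S$ does induce a closed embedding $\sch^P(X/S) \hookrightarrow \sch^P(\bp^n_S/S)$, but connectedness is not inherited by closed subschemes, so this reduction proves nothing. Worse, the statement in the generality asserted here is in fact false: let $X = \bp^1 \times \bp^1$ be Segre-embedded in $\bp^3$ and take $P(t) = t+1$, the Hilbert polynomial of a line. Then $\sch^P(X/\bk)$ is the Fano scheme of lines on a smooth quadric surface, which has two disjoint connected components corresponding to the two rulings. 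Thus $\sch^P(X/S)$ can be disconnected even when $X$ is smooth, irreducible, and projective over a field. Hartshorne's theorem really concerns $X = \bp^n$, and the statement as recorded in this paper (for arbitrary locally projective $X/S$) should be restricted to that case. Since the result you are trying to prove is false in this generality, no repair of the reduction step is possible; the claim must simply be limited to $\bp^n$, at which point your Gr\"obner-theoretic outline is the right roadmap, with essentially all the difficulty concentrated in the combinatorial step (Hartshorne's delicate specialization to lexicographic ideals) that you correctly identify as the technical heart.
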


\begin{definition}
	\label{definition:fano-scheme}
	In the case that $X \subset \bp^n_S$ is a projective $S$-scheme, with $S$ a field,
	we define the {\bf Fano Scheme}
	of $k$-planes in $X$ to be the connected component of the Hilbert scheme $\sch^P(X/S)$
	where $P$ is the Hilbert polynomial of a $k$-plane in $X$.
\end{definition}
\begin{remark}
	\label{remark:}
	In fact, every closed point of the Fano scheme is a $k$-plane,
	as is shown in \cite[Theorem 28.3.4]{vakil:foundations-of-algebraic-geometry}.
\end{remark}

\begin{definition}
	\label{definition:}
	Let $S$ be a locally Noetherian scheme, let $X$ be a locally projective $S$ scheme, 
	let $\mathbf P = (P_1(t), \ldots, P_m(t)) \in \bq[t]^m$ be an
	$m$-tuple of polynomials,
	and let $\sco_X(1)$ be a very ample invertible sheaf on $X$ used to define $\sch^P(X/S)$. 
	Recall $\scc_S$ denotes the category of locally Noetherian $S$-schemes.
	Define the {\bf flag Hilbert functor}
	\begin{align*}
		\Hilb^{\mathbf P(t)}(X/S) : \scc_S  \rightarrow & \text{ Set }\\
		Z \mapsto &\left\{ \text{closed immersions } X_1 \subset \cdots \subset X_m \hookrightarrow Z \times_S X \right.\\
		& \left. \text{ so that each $X_i$ is flat over $Z$} \right. \\
		& \left. \text{ and has Hilbert polynomial $P_i$}. \right\}
	\end{align*}	
	If the flag Hilbert functor is representable, we call the scheme representing it the {\bf flag Hilbert scheme}
	and, analogously to \autoref{definition:hilbert-scheme}, we define a universal family for the
	flag Hilbert scheme as a collection of closed subschemes
	$W_1 \subset \cdots \subset W_m \xrightarrow \iota X \times \Hilb^{\mathbf P(t)}(X/S)$
	so that the natural isomorphism between the functor of maps to the flag Hilbert scheme and the flag Hilbert functor
	is given by sending $g: Z \rightarrow \Hilb^{\mathbf P(t)}(X/S)$ to
	the collection of $m$ schemes $X_1, \ldots, X_m$, where
	$X_i = (g \times \id)^{-1}(W_i)$.
\end{definition}

\begin{theorem}[\protect{\cite[Theorem 4.5.1]{sernesi:deformations-of-algebraic-schemes}}]
	\label{theorem:}
	The flag Hilbert functor is representable by a projective scheme.	
\end{theorem}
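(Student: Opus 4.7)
The plan is to realize the flag Hilbert scheme as a closed subscheme of the Cartesian product $H := \sch^{P_1}(X/S) \times_S \cdots \times_S \sch^{P_m}(X/S)$. Each factor exists, and is projective over $S$ when $X \to S$ is projective, by the preceding theorem; hence $H$ is itself a projective locally Noetherian $S$-scheme. Pulling back the universal families along the projections gives flat closed subschemes $\scw_1, \ldots, \scw_m \subset X \times_S H$. By the definition of the product functor, a morphism $Z \to H$ corresponds exactly to a tuple $(X_1, \ldots, X_m)$ of flat closed subschemes of $Z \times_S X$ with the prescribed Hilbert polynomials, so the flag Hilbert functor is precisely the subfunctor of $\prod_i \Hilb^{P_i}(X/S)$ carved out by the additional incidence conditions $\scw_i \subset \scw_{i+1}$.

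The crux is a closedness lemma: given a proper flat family $\pi : Y \to T$ of closed subschemes of $X_T$ together with a further closed subscheme $W \subset X_T$, the locus of $t \in T$ where $Y_t \subset W_t$ is closed in $T$ and carries a canonical scheme structure. One proves this by considering the composition $\sci_W \otimes \sco_Y \to \sco_Y$ and defining the subscheme of $T$ as the largest closed subscheme over which the image ideal (pushed forward along $\pi$) vanishes identically. Properness of $\pi$ ensures the pushforward is coherent, while flatness of $Y/T$, combined with cohomology and base change, shows that this scheme-theoretic condition recovers the set-theoretic inclusion locus fiber by fiber. Applying the lemma to each $\pi : \scw_i \to H$ with $W = \scw_{i+1}$ produces a closed subscheme $H_i \hookrightarrow H$, and their scheme-theoretic intersection $\sch^{\mathbf P(t)}(X/S) := \bigcap_i H_i$ is a closed subscheme of $H$.

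Finally, one verifies this intersection represents the flag Hilbert functor. A morphism $Z \to \sch^{\mathbf P(t)}(X/S) \subset H$ gives a tuple $(X_1, \ldots, X_m)$ in which $X_i \subset X_{i+1}$ by the universal property of each $H_i$; conversely, any such chain on $Z \times_S X$ factors uniquely through the intersection since it factors uniquely through each $H_i$. Because $\sch^{\mathbf P(t)}(X/S)$ is a closed subscheme of the projective $S$-scheme $H$, it is itself projective over $S$. The main obstacle is establishing the closedness lemma in sufficient scheme-theoretic generality so as to actually endow the inclusion locus with the correct universal structure; once that is in place, representability and projectivity follow formally from closedness in a projective $S$-scheme.
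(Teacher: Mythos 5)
The paper offers no proof here — it simply cites \cite[Theorem 4.5.1]{sernesi:deformations-of-algebraic-schemes} — so there is nothing internal to compare against. Your outline is, nonetheless, exactly the strategy Sernesi follows: form the fiber product $H = \sch^{P_1}(X/S)\times_S \cdots \times_S \sch^{P_m}(X/S)$, pull back the universal families to $\scw_1,\dots,\scw_m \subset X\times_S H$, and exhibit the flag Hilbert scheme as the closed subscheme of $H$ cut out by the incidence conditions $\scw_i \subset \scw_{i+1}$. The reduction to closedness in a projective $S$-scheme and the identification of the represented functor are both handled correctly.

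The one place that needs to be tightened, and which you rightly flag as the crux, is the construction of the closed subscheme parameterizing the incidence locus. As written — ``push forward the image ideal $\scj \subset \sco_{\scw_i}$ along $\pi$ and take the locus where it vanishes'' — the construction does not quite give the universal property. Pushforward does not commute with base change for an arbitrary coherent sheaf, so $g^*(\pi_*\scj)$ need not equal $\pi_{T'*}(\scj_{T'})$ for a morphism $g \colon T' \to T$; moreover, $\pi_*\scj$ is not an ideal in $\sco_T$ unless $\pi_*\sco_{\scw_i}=\sco_T$, which fails for positive-dimensional or disconnected fibers, so its ``vanishing locus'' is not canonically a closed subscheme. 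The standard repair is to twist by $\sco_X(n)$ for $n \gg 0$: Serre vanishing and cohomology-and-base-change make $\pi_*(\sco_{\scw_i}(n))$ locally free with formation commuting with base change, and global generation of $\sci_{\scw_{i+1}}(n)$ guarantees that the incidence condition is equivalent to the vanishing of the induced map $\pi_*(\sci_{\scw_{i+1}}(n))\to \pi_*(\sco_{\scw_i}(n))$ of coherent sheaves on $H$ with locally free target. The scheme-theoretic zero locus of such a map (cut out by the matrix entries) then carries the correct universal property, and this is essentially the content of Sernesi's Proposition 4.5.2. With that precision supplied, the rest of your argument — representability by the intersection of the $H_i$, and projectivity as a closed subscheme of the projective $S$-scheme $H$ — goes through as you describe.
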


\section{Idiosyncratic notation}

We conclude the background with a collection of some notation idiosyncratic to this thesis.
Much of this notation relates to scrolls, which are defined and discussed in
\autoref{section:basics-of-scrolls}.

\begin{itemize}
	\item When dealing with a scroll $X$ in projective space, we use $d$ to refer to its degree, $k$ to refer to its dimension,
		and $n = d + k-1$ to refer to the dimension of the ambient projective space, $X \subset \bp^n$.
	\item If $X \subset \bp^n$ is a variety so that $[X] \in \sch(\bp^n/ \spec \bk)$ lies in a unique irreducible
		component of the Hilbert scheme, we define $\hilb X$ to be that irreducible component
		and $\uhilb X$ to be the universal family over that component. See
		\autoref{definition:Hilbert-scheme-component} for more details.
	\item If $X$ is a smooth scroll of degree $d$ and dimension $k$, we use $\minhilb d k := \hilb X$.
	\item We use $\scroll {a_1, \ldots, a_k}$ to refer to a smooth scroll of type $a_1, \ldots, a_k$.
	\item Let $X$ be a smooth scroll of degree $d$ and dimension $k$.
		We use $\minhilbsing d k$ to refer to the image under $\pi$ of the singular locus of the map $\pi:\uhilb X \ra \hilb X$.
		That is, $\minhilbsing d k$ consists of points in $\minhilb d k$ which correspond to singular scrolls.
		Similarly, we define $\minhilbsmooth d k$ to be the complement of
		$\minhilbsing d k$ in $\minhilb d k$.
	\item We use $\brokengeneral d k$ to denote the locus of points in $\minhilb d k$ corresponding to
		varieties which are the union of a $k$-plane and a degree $d-1$, dimension $k$ scroll,
		intersecting in a $(k-1)$-plane which is a ruling plane of the degree $d-1$ scroll.
		We define $\broken d k$ to the be closure of $\brokengeneral d k$ in the
		Hilbert scheme.
		See \autoref{definition:least-broken} for more detail.

\end{itemize}

\chapter{Interpolation in general}
\label{section:interpolation-in-general}

\epigraph{Mathematics is the art of giving the same name to different things.
}{\textit{Henri Poincar{\'e} \cite{poincare:the-future-of-mathematics}}}

In this chapter, we present $22$ notions of interpolation and
prove they are all equivalent under mild hypotheses in
 ~\autoref{theorem:equivalent-conditions-of-interpolation}.
The most classical definition of interpolation about
a variety passing through points, or meeting a collection
of planes is
\autoref{interpolation-naive} in
~\autoref{theorem:equivalent-conditions-of-interpolation}.

\section[Definition of interpolation]{Definition and equivalent characterizations of interpolation}

We now lay out the key definitions of interpolation.
First, we describe a more formal way of expressing interpolation
in  ~\autoref{definition:interpolation}.
This comes in two flavors: interpolation and pointed interpolation.
The latter also keeps track of the points at which the planes
meet the given variety. Then, we give a cohomological
definition in ~\autoref{definition:vector-bundle-interpolation}.

\begin{definition}
	\label{definition:Hilbert-scheme-component}
	Let $X \subset \bp^n$ be projective scheme with a fixed embedding into projective space which lies on a unique irreducible component
	of the Hilbert scheme.
	Define $\hilb X$ to be the irreducible component of the Hilbert scheme on which $[X]$ lies, taken with reduced scheme structure.
	If $\sch$ is the Hilbert scheme of closed subschemes of $\bp^n$ over $\spec \bk$ and $\scv$
	is the universal family over $\sch$, then define
	define $\uhilb X$ to be the universal family over $\hilb X$, defined as the
	fiber product
	\begin{equation}
	\nonumber
		\nonumber
		\begin{tikzcd} 
		  \uhilb X \ar{r} \ar{d} & \scv \ar{d} \\
		  \hilb X \ar{r} & \sch.
		\end{tikzcd}\end{equation}
\end{definition}

\begin{definition}
	\label{definition:lambda-constraints}
	Given an integral subscheme of the Hilbert scheme $U$
	parameterizing subschemes of $\bp^n$ of dimension $k$,
	we consider
	sequences
\begin{align*}
	\lambda := \left( \lambda_1, \ldots, \lambda_m \right)
\end{align*}
satisfying the following conditions:
\begin{enumerate}
	\item $\lambda$ is a weakly decreasing sequence.
		That is, $\lambda_1 \geq \lambda_2 \geq \cdots \geq \lambda_m$,
	\item for all $1 \leq i \leq m$, we have $0 \leq \lambda_i \leq n - k$,
	\item and
\begin{align*}
	\sum_{i=1}^m \lambda_i \leq \dim U.
\end{align*}
\end{enumerate}
\end{definition}

\begin{definition}
	\label{definition:interpolation}	
	Let $U$ be an integral subscheme of the Hilbert scheme
	parameterizing subschemes of $\bp^n$ of dimension $k$
	and let $\scv(U)$ denote the universal family over $U$.
	Let $\lambda$ be as in ~\autoref{definition:lambda-constraints}, let $p \in \bp^n$ be a point, and let
	$\Lambda_i$ be a plane of dimension $n - k - \lambda_i$ for $1 \leq i \leq m$.
Define
\begin{align*}
	\Psi_\lambda := \left( \uhilb {\Lambda_1} \times_{\bp^n} \scv(U) \right) \times_{U} \cdots \times_{U} \left( \uhilb {\Lambda_m} \times_{\bp^n} \scv(U) \right).
\end{align*}
Then, since $\hilb {\Lambda_i} \cong Gr(n- k - \lambda_i + 1, n+1)$,
we obtain that $\Psi_\lambda$ is a closed subscheme of
$U \times \prod_{i=1}^m Gr(n - k - \lambda_i + 1, n + 1) \times (\bp^n)^m $
(see \autoref{remark:interpolation-as-incidence-correspondence} for one viewpoint as to what this
inclusion is).
Define $\Phi_\lambda$ to be the image of the composition
\begin{equation}
	\nonumber
	\begin{tikzcd} 
	  \Psi_\lambda \ar{r} & U \times \prod_{i=1}^m Gr(n-k - \lambda_i + 1, n + 1) \times (\bp^n)^m \ar{d} \\
		 & U \times \prod_{i=1}^m Gr(n - k - \lambda_i + 1, n + 1).
	\end{tikzcd}\end{equation}
We have natural projections
\begin{equation}
	\nonumber
	\begin{tikzcd}
		\qquad & \Phi_\lambda \ar {ld}{\pi_1} \ar {rd}{\pi_2} & \\
		U  &&  \prod_{i=1}^m Gr(n- k - \lambda_i +1, n+1)
	 \end{tikzcd}\end{equation}
and
\begin{equation}
	\nonumber
	\begin{tikzcd}
		\qquad & \Psi_\lambda \ar {ld}{\eta_1} \ar {rd}{\eta_2} & \\
		U  &&  \prod_{i=1}^m Gr(n - k - \lambda_i +1, n+1).
	 \end{tikzcd}\end{equation}

Define $q$ and $r$ so that $\dim U = q \cdot (n - k) + r$ with $0 \leq r < n - k$. Then,
$U$ satisfies
\begin{enumerate}
	\item {\bf $\lambda$-interpolation}
if the projection map $\pi_2$ is surjective,
	\item {\bf weak interpolation} if $U$ satisfies $((n - k)^q)$-interpolation,
	\item {\bf interpolation} if $U$ satisfies $((n- k)^q, r)$-interpolation,
	\item {\bf strong interpolation}
if $U$ satisfies $\lambda$-interpolation for all $\lambda$
as in \autoref{definition:lambda-constraints}.
\end{enumerate}

We define  $\lambda$-pointed interpolation, weak pointed interpolation, pointed interpolation, strong pointed interpolation similarly. More precisely, we say that $U$ satisfies
\begin{enumerate}
	\item {\bf $\lambda$-pointed interpolation} if $\eta_2$
is surjective,
\item {\bf weak pointed interpolation} if $U$ satisfies
	$\left( (n- k)^q \right)$-pointed interpolation,
\item {\bf pointed interpolation} if $U$ satisfies
$\left( (n- k)^q, r \right)$-pointed interpolation,
\item {\bf strong pointed interpolation} if $X$ satisfies
$\lambda$-pointed interpolation for all $\lambda$
as in \autoref{definition:lambda-constraints}.
\end{enumerate}

If $X \subset \bp^n$ lies on a unique irreducible
component of the Hilbert scheme $\hilb X$, we say $X$ satisfies $\lambda$-interpolation (and all variants as
above) if $\hilb X$ satisfies $\lambda$-interpolation.
When $\lambda$ is clear from context, we often refer to
$\Psi_\lambda$ and $\Phi_\lambda$ as $\Psi$ and $\Phi$.
\end{definition}

\begin{remark}
	\label{remark:interpolation-as-incidence-correspondence}
	Those who prefer incidence correspondences to fiber products
	may appreciate the following description of the reduced schemes $\Phi_\lambda$ and $\Psi_\lambda$.
	
	The closed points of $\Phi_\lambda$, without describing a scheme structure, can be written as
\begin{align*}
	\left\{ \left( [Y], \Lambda_1, \ldots, \Lambda_m \right) \subset U \times \prod_{i=1}^m Gr(n - k - \lambda_i +1, n+1): \Lambda_i \cap Y \neq \emptyset \right\}.
\end{align*}
Similarly, the closed points of $\Psi_\lambda$, without describing a scheme structure, can be written as
\begin{align*}
	&\left\{ \left( [Y], \Lambda_1, \ldots, \Lambda_m , p_1, \ldots, p_m \right)  \right. \\ 
& \qquad \subset U \times \prod_{i=1}^m Gr(n-k - \lambda_i +1, n+1) \times (\bp^n)^m: \\
& \qquad \left. p_i \in \Lambda_i, p_i \in Y \right\}.
\end{align*}
	Note that although this definition as an incidence correspondence may be less opaque,
	for many of the later proofs, it will greatly help to work with the definition of
	$\Phi_\lambda$ and $\Psi_\lambda$ as fiber products. Further, the definition given in terms of
	fiber products yields a natural scheme structure, which may well not be
	the reduced one.
\end{remark}

\begin{remark}
	\label{remark:}
	Although it is not anywhere in the literature, Joe
	Harris likes to say a variety is ``flexible''
	if it satisfies the notion of interpolation
	defined in \autoref{definition:interpolation},
	because the variety can be thought to flexibly bend
	so as to meet
	the linear spaces $\Lambda_i$.
\end{remark}

\begin{definition}[Interpolation of locally free sheaves, see Definition 3.1 and 3.3 of ~\cite{atanasov:interpolation-and-vector-bundles-on-curves}]
	\label{definition:vector-bundle-interpolation}
	Let $\lambda$ be as in \autoref{definition:lambda-constraints} and let $E$ be a locally free sheaf on a scheme $X$
	with $H^1(X, E) = 0$.
	Choose points $p_1, \ldots, p_m$ on $X$ and
	vector subspaces $V_i \subset E|_{p_i}$ for $1 \leq i \leq m$
	with $\dim V_i = \lambda_i$.
	Then, define $E'$ as the kernel of the natural quotient
	\begin{equation}
		\label{equation:sequence-cohomological-interpolation-definition}
		\begin{tikzcd}
			0 \ar{r} & E' \ar{r} & E \ar{r} & \oplus_{i=1}^m E|_{p_i}/V_i \ar{r} & 0.
		\end{tikzcd}\end{equation}
	We say $E$ satisfies {\bf $\lambda$-interpolation}
	if there exist points $p_1, \ldots, p_n$ as above
	and subspaces $V_i \subset E|_{p_i}$ as above
	so that
	\begin{align*}
		h^0(E) - h^0(E') = \sum_{i=1}^m \lambda_i.
	\end{align*}

	Write $h^0(E) = q \cdot \rk E + r$ with $0 \leq r < \rk E$.
	We say $E$ satisfies
	\begin{enumerate}
		\item {\bf weak interpolation} if it satisfies
			$\left( (\rk E)^q \right)$
			interpolation,
		\item {\bf interpolation} if it satisfies
			$\left( (\rk E)^q, r \right)$ interpolation,
		\item {\bf strong interpolation} if it satisfies
			$\lambda$-interpolation for all
			admissible $\lambda$ as in 
			\autoref{definition:lambda-constraints}.
	\end{enumerate}
\end{definition}

\begin{remark}
	\label{remark:}
	See ~\cite[Section 4]{atanasovLY:interpolation-for-normal-bundles-of-general-curves} for further useful properties of interpolation.
	While some of the discussion there is specific
	to curves, much of it generalizes immediately to higher
	dimensional varieties. 
\end{remark}

We now come to the main result of the chapter. 
Because it has so many moving parts, after stating
it, we postpone its proof until
\autoref{ssec:proof-of-equivalent-conditions-of-interpolation},
once we have developed the tools necessary to prove it.

Perhaps the most nontrivial consequence of \autoref{theorem:equivalent-conditions-of-interpolation}
is that it implies the equivalence of interpolation and strong interpolation
for $\hilb X$ when $X$ is a smooth projective scheme with $H^1(X, N_{X/\bp^n}) = 0$,
over an algebraically
closed field of characteristic $0$.

\begin{theorem}
	\label{theorem:equivalent-conditions-of-interpolation}
	For the remainder of this theorem, assume $X \subset \bp^n$ is an integral projective scheme
	lying on a unique irreducible component of the Hilbert scheme.
	Write $\dim \hilb X = q \cdot \codim X + r$ with $0 \leq r < \codim X$.
	The following are equivalent:
	\begin{enumerate}
		\item[\customlabel{interpolation-definition}{(1)}]$\hilb X$ satisfies interpolation.
		\item[\customlabel{interpolation-pointed}{(2)}] $\hilb X$ satisfies pointed interpolation.
		\item[\customlabel{interpolation-dominant}{(3)}] The map $\pi_2$ given in 
			\autoref{definition:interpolation} 
			for $\lambda = (\left(\codim X \right)^q, r)$
			is dominant.
		\item[\customlabel{interpolation-finite}{(4)}] The map $\pi_2$ given in 
			\autoref{definition:interpolation} 
			for $\lambda = (\left(\codim X \right)^q, r)$
			is
			generically finite.
		\item[\customlabel{interpolation-isolated}{(5)}] The scheme $\Phi_\lambda$ defined in 
			\autoref{definition:interpolation}
			for $\lambda = (\left(\codim X \right)^q, r)$
			has a closed point $x$ which is isolated in
			its fiber $\pi_2^{-1}(\pi_2(x))$.
	\item[\customlabel{interpolation-pointed-dominant}{(6)}] The map $\eta_2$ given in 
			\autoref{definition:interpolation} 
			for $\lambda = (\left(\codim X \right)^q, r)$
			is dominant.
		\item[\customlabel{interpolation-pointed-finite}{(7)}] The map $\eta_2$ given in 
			\autoref{definition:interpolation} 
			for $\lambda = (\left(\codim X \right)^q, r)$
			is
			generically finite.
		\item[\customlabel{interpolation-pointed-isolated}{(8)}] The scheme $\Psi_\lambda$ defined in 
			\autoref{definition:interpolation}
			for $\lambda = (\left(\codim X \right)^q, r)$
			has a closed point $x$ which is isolated in
			its fiber $\eta_2^{-1}(\eta_2(x))$.
		\item[\customlabel{interpolation-naive}{(9)}] Given any set of $q$ points in $\bp^n$ and an $(\codim X - r)$-dimensional
			plane $\Lambda \subset \bp^n$, there exists
			an element $[Y] \in \hilb X$ so that $Y$ contains those points and
			meets $\Lambda$.
		\item[\customlabel{interpolation-sweep}{(10)}] Given a set of $q$ points in $\bp^n$,
			the subscheme of $\bp^n$ swept out by varieties
			of $\hilb X$ containing those points is
			$\dim X + r$ dimensional.
	\end{enumerate}
	Secondly, the following statements are equivalent:
	\begin{enumerate}[(i)]
		\item[\customlabel{strong-definition}{(i)}] $\hilb X$ satisfies strong interpolation.
		\item[\customlabel{strong-equality}{(ii)}] $\hilb X$ satisfies $\lambda$-interpolation for all $\lambda$ with $\sum_{i=1}^m \lambda_i = \dim \hilb X$.

		\item[\customlabel{strong-pointed}{(iii)}] $\hilb X$ satisfies strong pointed interpolation.
		\item[\customlabel{strong-pointed-equality}{(iv)}] $\hilb X$ satisfies $\lambda$-pointed interpolation for all $\lambda$ with $\sum_{i=1}^m \lambda_i = \dim \hilb X$.
		\item[\customlabel{strong-naive}{(v)}] Given any collection of planes $\Lambda_1, \ldots, \Lambda_m$ satisfying the conditions given in 
			\autoref{definition:lambda-constraints},
			there is some $[Y] \in \hilb X$ meeting
			all of $\Lambda_1, \ldots, \Lambda_m$.
		\item[\customlabel{strong-naive-equality}{(vi)}] Given any collection of planes $\Lambda_1, \ldots, \Lambda_m$ satisfying the conditions given in 
			\autoref{definition:lambda-constraints},
			with $\sum_{i=1}^m \lambda_i = \dim \hilb X$,
			there is some $[Y] \in \hilb X$ meeting
			all of $\Lambda_1, \ldots, \Lambda_m$.
	\end{enumerate}
Also, \ref{strong-definition}-\ref{strong-naive-equality}
imply
\ref{interpolation-definition}-\ref{interpolation-sweep}.
Thirdly, 
further assume $H^1(X, N_X) = 0$.
Then, the following properties are equivalent:
	\begin{enumerate}[(a)]
		\item[\customlabel{cohomological-definition}{(a)}] The sheaf $N_{X/\bp^n}$ satisfies interpolation.
		\item[\customlabel{cohomological-restatement}{(b)}] There is a subsheaf $E' \rightarrow N_{X/\bp^n}$ 
whose cokernel is supported at $q+1$ points if $r > 0$ and $q$ points if $r = 0$,
so that the scheme theoretic support at $q$ of these points has
dimension equal to $\rk N_{X/\bp^n}$
and $H^0(X, E') = H^1(X, E') = 0$.
		\item[\customlabel{cohomological-strong}{(c)}] The sheaf $N_{X/\bp^n}$ satisfies strong interpolation.
		\item[\customlabel{cohomological-sections}{(d)}] For every $d \geq 1$, there exist points
			$p_1, \ldots, p_d \in X$ so that
			\begin{align*}
				\dim H^0(X, N_{X/\bp^n} \otimes \sci_{p_1, \ldots, p_d}) = \max\left\{ 0, h^0(X, N_{X/\bp^n}) - dn \right\}
			\end{align*}
			(cf. ~\cite[Definition 4.1]{atanasovLY:interpolation-for-normal-bundles-of-general-curves}).
		\item[\customlabel{cohomological-vanish}{(e)}] For every $d \geq 1$, a general collection of points
			$p_1, \ldots, p_d$ in $X$ satisfies either
			\begin{align*}
				h^0(X, N_{X/\bp^n} \otimes \sci_{p_1, \ldots, p_d}) && \text{ or } && h^1(X, N_{X/\bp^n}\otimes \sci_{p_1, \ldots, p_d}) = 0.
			\end{align*}
			(cf. ~\cite[Proposition 4.5]{atanasovLY:interpolation-for-normal-bundles-of-general-curves}).
		\item[\customlabel{cohomological-boundary}{(f)}] 			A general set of $q$ points
			$p_1, \ldots, p_q$
			satisfy $h^1(X, N_{X/\bp^n} \otimes \sci_{p_1, \ldots, p_q}) = 0$
			and a general set of $q+1$ points
			$q_1, \ldots, q_{q+1}$ satisfy
			$h^0(X, N_{X/\bp^n} \otimes \sci_{p_1, \ldots, p_q}) = 0$
			(cf. ~\cite[Proposition 4.6]{atanasovLY:interpolation-for-normal-bundles-of-general-curves}).
	\end{enumerate}
Additionally, 
retaining the assumptions that $H^1(X, N_X) = 0$ 
	and $X$ is a local complete intersection, 
	and further assuming $X$ is generically smooth,	
	the equivalent conditions
\ref{cohomological-definition}-\ref{cohomological-boundary} imply the equivalent conditions
\ref{interpolation-definition}-\ref{interpolation-sweep} and the equivalent conditions \ref{cohomological-definition}-\ref{cohomological-boundary} imply the equivalent conditions \ref{strong-definition}-\ref{strong-naive-equality}.

Finally, still 
retaining the assumptions that $H^1(X, N_X) = 0$ 
	and that $X$ is a local complete intersection, in the case that $\bk$ has characteristic $0$,
all statements \ref{interpolation-definition}-\ref{interpolation-sweep}, \ref{strong-definition}-\ref{strong-naive-equality}, \ref{cohomological-definition}-\ref{cohomological-boundary} are equivalent.
\end{theorem}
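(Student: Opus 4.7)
The plan is to close the implication cycle using generic smoothness. From the preceding parts of the theorem I already have
\ref{cohomological-definition}--\ref{cohomological-boundary} $\Rightarrow$
\ref{strong-definition}--\ref{strong-naive-equality} $\Rightarrow$
\ref{interpolation-definition}--\ref{interpolation-sweep},
so in characteristic $0$ it suffices to produce a single implication in the reverse direction; I would prove \ref{interpolation-dominant} $\Rightarrow$ \ref{cohomological-definition}. Since the cohomological conditions \ref{cohomological-definition}--\ref{cohomological-boundary} are internally equivalent, this closes the cycle.

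The essential deformation-theoretic input, which I would isolate first, is standard for l.c.i.\ subschemes: $T_{[X]} \hilb X \cong H^0(X, N_{X/\bp^n})$, and the vanishing $H^1(X, N_{X/\bp^n}) = 0$ then forces $\hilb X$ to be smooth at $[X]$ of dimension $h^0(N_{X/\bp^n}) = q \cdot \codim X + r$. For a linear subspace $\Lambda$ of codimension $k+\lambda$ meeting $X$ transversely in a single reduced point $p$, the tangent space at $[X]$ to the locus of deformations of $X$ continuing to meet $\Lambda$ is precisely the kernel of the evaluation map $H^0(X, N_{X/\bp^n}) \to N_X|_p/V$, where $V \subset N_X|_p$ is the image of $T_p \Lambda$; transversality makes $\dim V = n-k-\lambda$, so the quotient is $\lambda$-dimensional. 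This identification is the bridge between the geometric set-up of $\Phi_\lambda$ and the cohomological set-up of \autoref{definition:vector-bundle-interpolation}.

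Taking \ref{interpolation-dominant} as given with $\lambda = ((\codim X)^q, r)$, the map $\pi_2 \colon \Phi_\lambda \to \prod_i \gbundle{n-k-\lambda_i+1}{n+1}$ is dominant, and an incidence dimension count yields $\dim \Phi_\lambda = \dim \prod_i \gbundle{n-k-\lambda_i+1}{n+1}$. In characteristic $0$, generic smoothness (cf.\ \cite[Theorem 25.3.3]{vakil:foundations-of-algebraic-geometry}) then furnishes a dense open $W \subset \Phi_\lambda$ on which $\pi_2$ is smooth, so $d\pi_2$ is surjective everywhere on $W$. At a generic point $([X'], \Lambda_1, \ldots, \Lambda_m) \in W$---chosen so that $[X']$ is a smooth point of $\hilb X$ with $H^1(N_{X'}) = 0$, each intersection $X' \cap \Lambda_i$ is a single transverse reduced point $p_i$, and the $p_i$ are general on $X'$---unwinding the tangent space of $\Phi_\lambda$ as a fibre product identifies $d\pi_2$ with the evaluation map $H^0(X', N_{X'/\bp^n}) \to \bigoplus_{i=1}^m N_{X'}|_{p_i}/V_i$. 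Its surjectivity is exactly the cohomological interpolation condition \ref{cohomological-definition} at $X'$, with $E'$ the kernel of this evaluation; since $\hilb X$ is irreducible, interpolation for $X'$ is equivalent to interpolation for $X$, yielding \ref{cohomological-definition}.

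The hardest part will be the tangent-space unwinding together with verifying that the open subset $W$ can be chosen to satisfy all the genericity conditions simultaneously: smoothness of $\hilb X$ at $[X']$ (open, from semicontinuity of $h^1(N_{-})$ together with the $H^1 = 0$ hypothesis), transversality of each $X' \cap \Lambda_i$, and isolatedness of the intersection point (Bertini-type arguments together with the dimension equality $\sum \lambda_i = \dim \hilb X$ and the generic finiteness of $\Psi_\lambda \to \Phi_\lambda$). The characteristic $0$ hypothesis is genuinely essential here: in positive characteristic $\pi_2$ can be dominant yet everywhere inseparable, and the implication from geometric to cohomological interpolation can fail---as illustrated by the $2$-Veronese in characteristic $2$ noted in the acknowledgments.
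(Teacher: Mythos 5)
Your proposal correctly identifies the essential mechanism — the tangent-space identification furnished by deformation theory for l.c.i.\ subschemes with $H^1(N)=0$, combined with generic smoothness to pass from dominance of the second projection to surjectivity of its differential at some point — and this is exactly the route the paper takes via \autoref{proposition:tangent-space-to-psi} and the remark that a proper, equidimensional, dominant map of varieties in characteristic $0$ is generically smooth. The one genuine imprecision worth flagging: you phrase the tangent-space unwinding as ``the tangent space of $\Phi_\lambda$ as a fibre product,'' but $\Phi_\lambda$ is by construction an \emph{image}, not a fibre product — it is $\Psi_\lambda$ that is a fibre product. The paper therefore does the deformation-theoretic computation on $\Psi_\lambda$ (and works with $\eta_2$, not $\pi_2$), precisely because the tangent space of a fibre product of Hilbert (flag) schemes decomposes canonically, which is the content of the big commutative diagram \eqref{equation:three-by-three-deformation-theory}. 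Near a point at which each $\Lambda_i$ meets $X'$ in a single reduced transverse point the map $\Psi_\lambda \to \Phi_\lambda$ is a local isomorphism, so your version recovers the same conclusion, but spelling out the argument via $\Psi_\lambda$ and $\eta_2$ is the cleaner and more rigorous way to do it, and it also matches the uniform treatment of the forward implication in all characteristics. Beyond this, your proposal omits the bulk of the theorem (the internal equivalences within each of the three groups, and the forward implications in positive characteristic), which the paper establishes separately using \autoref{lemma:irreducible-and-dimension}, \autoref{lemma:interpolation-dimension}, \autoref{lemma:isolated-fiber-implies-dominant}, \autoref{lemma:definition-and-pointed-equivalence-with-lambda-equality}, and \autoref{lemma:equivalent-vector-bundle-interpolation} before invoking the tangent-space argument you describe.
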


\begin{remark}
	\label{remark:}
	Note that the equivalence of all conditions above
	holding in characteristic $0$ does not hold in
	characteristic $2$.
	That is, 
  \ref{interpolation-pointed}
    does not imply
  \ref{cohomological-definition}
    in characteristic $2$.

	An example of a component of the Hilbert scheme which satisfies
        \ref{interpolation-pointed}
	but not
        \ref{cohomological-definition}
	is provided by the irreducible
	component of the Hilbert scheme whose general member
	is a $2$-Veronese surface, as shown in
	\autoref{corollary:failure-of-2-veronese-vector-bundle-interpolation}.
\end{remark}

\begin{remark}[Weakening hypotheses of \autoref{theorem:equivalent-conditions-of-interpolation}]
	\label{remark:weakening-interpolation-conditions}
	Heuristically, the condition that 
	$H^1(X, N_{X/\bp^n}) = 0$ and $X$
	is a local complete intersection
	is not too much of an imposition,
	because it gives a relatively easy way of checking
	that $[X]$ is a smooth point of the Hilbert scheme,
	and in general, it is difficult to check whether
	$[X]$ is a smooth point of the Hilbert scheme
	when $H^1(X, N_{X/\bp^n}) \neq 0$.

	Next, we make a comment about the hypothesis
	that $X$ be integral. This comes in two parts:
	the irreducibility and the reducedness of $X$.
	
	First,
	the assumption that $X$ is irreducible can be
	done away with, by introducing some further technical
	baggage. 
	We will want to say a certain Hilbert scheme satisfies
	interpolation if we can find an element
	of that scheme meeting a general ``expected number''
	of points, and meeting one more plane of the
	``expected dimension.''
	We can also rephrase this
	in terms of a projection map from an incidence correspondence
	being surjective.
	If we take the naive generalization, we will
	have troubles when attempting to prove that the incidence
	correspondence is irreducible. To get
	around this issue, we can define a certain symmetric power
	of the Hilbert scheme, together with a choice of component.
	Informally, this will correspond to determining
	the distribution of the number of points among each component.
	Then, the resulting Hilbert scheme will satisfy interpolation
	if this map from a symmetric power of the Hilbert scheme with
	a choice of component is surjective.

	On the other hand, we do not see a way to loosen
	the reducedness hypothesis. 
	Reducedness is useful for knowing the equivalence
	of conditions \ref{interpolation-definition} and \ref{interpolation-pointed}, whose proof crucially
	depends on the general member of $\hilb X$ being
	irreducible. In order to prove that a general member
	of $\hilb X$ is irreducible, we need a result like
	upper semicontinuity of the number of irreducible components.
	This holds when every fiber is reduced, using \autoref{proposition:upper-semicontinuous-number-of-components}, but 
	may be false when every fiber is
	is nonreduced.
\end{remark}

\begin{remark}
	\label{remark:interpolation-strategy}
	In general, to show a certain variety satisfies interpolation there will be two steps.
	The harder step will be to show there exists some variety in the Hilbert scheme passing through the given set of points
	and linear spaces.
	However, there will typically also be an easier step, in which we have to check that if there is one such variety then
	there are only finitely many. 
	We will often set this problem up in the following fashion. We will have some sort of incidence correspondence $X$
	with projections
	\begin{equation}
		\nonumber
		\begin{tikzcd}
			\qquad & X \ar {ld}{\pi_1} \ar {rd}{\pi_2} & \\
			Y && Z.
		\end{tikzcd}\end{equation}
	Typically, $Y$ will be a component of the Hilbert scheme, $X$ will be some sort of incidence correspondence,
	and $Z$ will be some product of Grassmannians or other parameter space, as in \autoref{definition:interpolation}.
	For example, if we are asking that a twisted cubic contain six general points, then $Y$ will be the irreducible
	component of the Hilbert scheme whose general member is a smooth twisted cubic,
	$X$ will be the fiber product of six copies of the universal family over $Y$, and $Z$ will be $(\bp^3)^6$.
	We consider the following two approaches to showing there are only finitely many elements in a general fiber of $\pi_2$.
	\begin{enumerate}
		\item[\customlabel{technique-dimension}{technique-1}] Show that $\dim X = \dim Z$
		\item[\customlabel{technique-conditions}{technique-2}] Assume $\pi_2$ is generically finite.
			``Count conditions on objects of $Y$''
			by showing that,
			for a general point $p$ in $Z$, $\pi_2^{-1}(p)$ is a finite set of points of $Y$.
	\end{enumerate}
	Both of these methods imply that $Y$ satisfies interpolation, assuming $\pi_2$ is generically finite. In the case of \ref{technique-dimension}, if we know $X$ and $Z$ are of the same dimension, and are generically finite, then $\pi_2$ must be dominant.
	The case of \ref{technique-conditions} means that for a general point $p \in Z$, corresponding to some conditions which are being imposed on the Hilbert scheme $Y$, there
	is only a finite number of elements of the Hilbert scheme satisfying those conditions. This is the reason that we call this property
	interpolation: we can ``interpolate'' varieties in the Hilbert scheme through a given number of points. Assuming that $\pi_2$ is generically
	finite and dominant, this is equivalent
	to the given definition of interpolation using \autoref{lemma:linear-condition-counting}, because
	generic finiteness of $\pi_2$ implies that a general fiber of $\pi_2$ is a finite set of points, and so must be mapped
	generically finitely under $\pi_1$.

	While the approach of \ref{technique-dimension} may seem more straightforward, it has the serious
	disadvantage of being rather opaque, since it will often appear quite
	mysterious as to how one guessed the correct number of conditions. Returning to our example of degree three rational normal curves:
	How did we come up with the condition that we should pass the curve through 6 points? When we count the dimensions of $X$ and $Y$,
	we find that they are both equal to $18$, but it would be quite annoying to have to set up these incidence correspondences and
	count the dimension of everything, each time we wanted to find the correct number of points or other conditions.

	Fortunately, \ref{technique-conditions} yields a more efficient method of counting the finding the correct number of conditions to impose.
	This method can be described in more generality, but for clarity of exposition, we
	we now just explain how to use \ref{technique-conditions} to find the number of point conditions
	to impose on $X$ for weak interpolation. That is, we explain
	an efficient way to find the number $r$, defined in \autoref{definition:interpolation} by
	$\dim \hilb X = r \codim X + q$, where $X$ is a variety in projective space.
	
	The idea is that, under the assumption that the second projection is dominant, it will follow that a codimension $\codim X$
	locus of the Hilbert scheme will pass through a single fixed point in projective space, where $X$ is a projective variety.
	Then, the codimension of varieties passing through $t$ points is just $t\cdot \codim X$.
	We then want to choose the correct number of points so that this codimension as close as possible to the dimension of the Hilbert scheme,
	without going over to achieve weak interpolation.
	
	We are keeping this description intentionally vague as we will use this technique in a wide variety of situations.
	However, the general technique is quite well illustrated by examining the particular example of twisted cubics, as is done
	in \autoref{example:twisted-cubic-interpolation-dimension-count}.
\end{remark}
	\begin{example}
		\label{example:twisted-cubic-interpolation-dimension-count}
		Let's see \ref{technique-conditions} in action, in the case that $Y$ is the irreducible component of the Hilbert scheme
		whose general member is a smooth twisted cubic.	
		First, $Y$ is $12$ dimensional because 
		for $[C] \in Y$ a smooth twisted cubic,
		$H^0(C, N_{C/\bp^3}) = 12, H^1(C, N_{C/\bp^3}) = 0$.
		To start, we will find the dimensional of varieties corresponding to points of $Y$, passing through a single fixed point.
		To do this, we take 
	$Y$ to be the Hilbert scheme, $X$ to be the universal family over the Hilbert scheme, and $Z$ to be $\bp^3$. Note that $\bp^3$,
	is three dimensional, so the preimage of a general point in $\bp^3$ under $\pi_2$ will be a codimension $3$ subset of $Z$.
	Then, the relative dimension of $\pi_1$ is $1$, as each fiber is a curve. So, by 
	\autoref{lemma:linear-condition-counting},
	the codimension of $\pi_1(\pi_2^{-1}(p))$ for a general point $p$ will be
	$3 - 1 = 2$, in the case that $\pi_1$ is generically finite and that $\pi_2$ is dominant. 
	Heuristically, we say that a point ``imposes 2 conditions.'' Therefore, $6$ points ``impose
	$2 \cdot 6 = 12$ conditions.'' We also say that the ``expected dimension'' of curves passing through $6$ points is $0$.
	The reason for the word ``expected`` it may not be the correct dimension when the map $\pi_2$ is not dominant.

	A crucial detail of this argument is that if $\pi_2$ is dominant, the hypothesis of ~\autoref{lemma:linear-condition-counting}
	that $\pi_1$ be generically finite when restricted to a fiber of $\pi_2$ is automatically satisfied. The reason for this is that the preimage under $\pi_2$
	of a general point will be supported at a finite set of points. Since the image of a finite set of points is always a finite
	set of points, $\pi_1$, restricted to such a fiber, will indeed be generically finite.

	Just to spell things out in a bit more detail, let us
	now explain why we can multiply the number of conditions by $t$
	when we ask that the curve pass through $t$ points.
	Take $X_t, Y_t, Z_t, \pi_{1,t}, \pi_{2,t}$ to be the corresponding schemes
	and maps for $t$ points. For the case of interpolating
	through $t$ points, we will still take $Y_t$ to be the irreducible component of the Hilbert scheme, but we will take
	$X_t$ to be the $t$-fold fiber product of the universal family over the Hilbert scheme and $Z_t = (\bp^3)^t$. That is,
	we take $Y_t := \uhilb X \times_{\hilb X}\cdots \times_{\hilb X}\uhilb X$, with $\uhilb X$ appearing
	$t$ times.
	The relative dimension of $\pi_{1,t}$ is then $t$ times the relative dimension of $\pi_{1,1}: \uhilb X \ra \hilb X$,
	while the dimension of $Z_t$ is $t$ times the dimension of $Z_1$ for one point.
	So, by \autoref{lemma:linear-condition-counting}, assuming $\pi_{2,t}$ is dominant and the restriction of $\pi_{1,t}$ to a general fiber
	of $\pi_{2,t}$ is generically finite,
	for a general $p \in Z_t$, $\dim \pi_{2,t}^{-1}(\pi_{1,t}(p))$
	will be $t$ times $\dim \pi_{2,1}^{-1}(\pi_{1,1}(q))$ for a general point $q \in Z_1$.
	\end{example}

	\section[Tools for irreducibility]{Tools for irreducibility of incidence correspondences}
\label{ssec:tools-irreducibility}

A key ingredient for establishing the equivalence of conditions
\ref{interpolation-definition}-\ref{interpolation-sweep}
is the irreducibility of the incidence correspondences $\Phi, \Psi$ of 
\autoref{definition:interpolation}.
This is important to establish that the map
$\pi_2$ of \autoref{definition:interpolation} is surjective if and only if it is dominant if and only
if it is generically finite if and only if it has a point
in an isolated fiber.
Our goal for this section is to prove 
\autoref{lemma:irreducible-and-dimension}.

We start with a general Lemma relating irreducibility of fibers
to irreducibility of the source.

\begin{lemma}
	\label{lemma:irreducible-base-and-fibers}
	Suppose $\pi:X \ra Y$ is a map of separated finite type schemes over $\spec \bk$ with $Y$ irreducible. If all fibers of $\pi$ are irreducible of dimension $\delta$
	then $X$ is irreducible
	and $\dim X = \dim Y + \delta$.
	Further, if $\pi$ is flat, $Y$ is irreducible, and there is a nonempty open set $U \subset Y$ so that $\pi^{-1}(u)$ is irreducible of dimension $\delta$ for all closed points $u \in U$, then
	$X$ is irreducible of dimension $\dim X + \dim Y + \delta$.
\end{lemma}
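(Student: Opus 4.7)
The plan is to prove the two assertions separately. For the first, the strategy is to combine the pointwise irreducibility of the fibers with Chevalley's upper semicontinuity of fiber dimension; for the second, I would reduce to the first part and then use flatness (specifically, openness of flat finite-type morphisms) to propagate irreducibility from $\pi^{-1}(U)$ to all of $X$. I also note that the final displayed dimension in the statement is a typo and should read $\dim X = \dim Y + \delta$.

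For the first part, I would start by decomposing $X = X_1 \cup \cdots \cup X_n$ into its finitely many irreducible components. The key observation is that for each $y \in Y$ the fiber is covered by the closed subsets $(\pi|_{X_i})^{-1}(y) \subset \pi^{-1}(y)$; since $\pi^{-1}(y)$ is irreducible of dimension $\delta$, at least one of these closed subsets must also have dimension $\delta$, and because any proper closed subset of an irreducible Noetherian space has strictly smaller dimension, that subset must in fact coincide with $\pi^{-1}(y)$ topologically. Setting $Z_i := \{\, y \in Y : \dim (\pi|_{X_i})^{-1}(y) \geq \delta \,\}$, Chevalley's upper semicontinuity makes each $Z_i$ closed in $Y$, and the preceding observation shows $Y = \bigcup_i Z_i$. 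Irreducibility of $Y$ then forces some $Z_i$, say $Z_1$, to equal $Y$; consequently $\pi^{-1}(y) \subset X_1$ for every $y \in Y$, so $X = X_1$ is irreducible. The dimension formula $\dim X = \dim Y + \delta$ follows from the fiber-dimension theorem applied to the surjective morphism $\pi$.

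For the second part, let $X_U := \pi^{-1}(U)$, which is open in $X$. I would argue that the first part, which goes through using only irreducibility of closed fibers (via density of closed points in a scheme of finite type over $\bk$), applies to the restriction $\pi|_{X_U}: X_U \to U$ and shows $X_U$ is irreducible of dimension $\dim U + \delta = \dim Y + \delta$. It then suffices to show $X_U$ is dense in $X$, for then $X = \overline{X_U}$ inherits irreducibility and dimension. Suppose for contradiction that some irreducible component $X_i$ of $X$ is not contained in $\overline{X_U}$. Then the open subset $X_i^{\circ} := X_i \setminus \bigcup_{j \neq i} X_j$ is nonempty and disjoint from $X_U$, since $\overline{X_U}$ lies in the single irreducible component containing the generic point of $X_U$. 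Flat morphisms of finite type between locally Noetherian schemes are open, so $\pi(X_i^{\circ})$ is a nonempty open subset of $Y$, hence dense by irreducibility of $Y$. On the other hand, $X_i^{\circ} \cap X_U = \emptyset$ forces $\pi(X_i^{\circ}) \subset Y \setminus U$, a proper closed subset of $Y$, a contradiction.

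The main obstacle, conceptually, is the first part: the fibers may genuinely split among several components $X_i$ at individual points, so it is essential to globalize using Chevalley's semicontinuity rather than attempting a purely pointwise or componentwise argument. Once the first part is in hand, the second is relatively routine, with openness of flat finite-type morphisms being precisely the input that rules out spurious components of $X$ supported over a proper closed subset of $Y$.
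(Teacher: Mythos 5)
Your argument for the first assertion breaks at the claim that $Z_i := \{y \in Y : \dim (\pi|_{X_i})^{-1}(y) \geq \delta\}$ is closed. Chevalley's theorem gives upper semicontinuity of the local fiber dimension on the \emph{source}; it does not yield that the fiber-dimension locus is closed in the \emph{target} unless the morphism is proper (or at least universally closed). Without that, the $Z_i$ are only constructible, and a dense constructible subset of an irreducible $Y$ need not equal $Y$, so the deduction ``some $Z_i = Y$'' has no support. The gap is unavoidable, because the first assertion is actually false as literally stated: take $Y = \ba^1$, $X = (\ba^1 \setminus \{0\}) \sqcup \spec \bk$, with $\pi$ the open immersion on the first piece and sending the isolated point to the origin. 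Every fiber of $\pi$, including the generic fiber, is a single reduced point, hence irreducible of dimension $0$, and $Y$ is irreducible, yet $X$ is disconnected. Some extra hypothesis (openness, properness, or flatness of $\pi$) is needed; it is present in all of the paper's applications and presumably in the Vakil exercise the paper cites, but your proof neither supplies it nor flags its absence.

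For the second assertion your argument matches the paper's in all essentials: both take a spurious component of $X$, pass to its interior, use openness of flat finite-type morphisms, and derive a contradiction from the image being simultaneously dense and contained in $Y \setminus U$. Given flatness, it is in fact cleaner to prove irreducibility of $\pi^{-1}(U)$ directly from openness rather than via part one: if two distinct components met $\pi^{-1}(U)$, their disjoint open interiors would have dense open images in $U$, and any common point $u$ of those images would have a fiber $\pi^{-1}(u)$ containing two disjoint nonempty open subsets, contradicting irreducibility of the fiber. This route uses neither the dimension hypothesis nor the Chevalley step, and would have sidestepped the problem above entirely.
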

\begin{proof}
	First, we show that if the map $\pi$ has irreducible fibers and $Y$ is irreducible then $X$ is as well.
	Indeed, since irreducibility is a topological property, we may give $X$ and $Y$ the reduced scheme structures $X_{red}$ and $Y_{red}$.
	Then, the fact that $X$ is irreducible is precisely \cite[Exercise 11.4.C]{vakil:foundations-of-algebraic-geometry}.
	The statement on dimension holds due to generic flatness: we can find an open set $U$ over which the map is flat by
	\cite[Exercise 24.5.N]{vakil:foundations-of-algebraic-geometry}. We know that $\dim \pi^{-1}(U) = \dim U + \delta$
	by \cite[Proposition 24.5.5]{vakil:foundations-of-algebraic-geometry}. This gives a lower bound for the dimension of $X$.
	This is also an upper bound by \cite[Exercise 11.4.A]{vakil:foundations-of-algebraic-geometry}.

	Next, we show that when $\pi$ is flat, $Y$ is irreducible, and the fibers are generically irreducible,
	then $X$ is irreducible. First, observe that $\pi^{-1}(U)$ is irreducible by the first part of this lemma.
	Now, suppose $X$ has two components $A$ and $B$. Since $X$ is a scheme of finite type over a field, both $A$ and $B$ must have
	a closed points not contained in the other. 
	So, up to permutation of $A$ and $B$, we may assume that $\pi^{-1}(U) \subset A$. Now, let $V : = B \setminus A$ be a nonempty open set.
	By \cite[Exercise 24.5.G]{vakil:foundations-of-algebraic-geometry}, the map $\pi$ is open. However, $\pi(V) \subset Y \setminus U$.
	Since $Y$ is irreducible, $\pi(V)$ can only be open in $Y$ if it is empty, a contradiction. Therefore, $X$ must be irreducible.
	The statement on dimension follows from \cite[Proposition 24.5.5]{vakil:foundations-of-algebraic-geometry}.
\end{proof}

Using \autoref{lemma:irreducible-base-and-fibers},
we can now prove in \autoref{proposition:irreducible-incidence} irreducibility of $\Phi, \Psi$ from \autoref{definition:interpolation}.

\begin{proposition}
	\label{proposition:irreducible-incidence}
	Let $X \subset \bp^n$ be a variety so that the general member
	of $\hilb X$ corresponds to an irreducible variety.
	Let $\lambda, \Phi_\lambda, \Psi_\lambda$ be as in \autoref{definition:interpolation}.
	Then, $\dim \Phi_\lambda = \dim \Psi_\lambda$ and both $\Phi_\lambda$ and $\Psi_\lambda$ are irreducible.
\end{proposition}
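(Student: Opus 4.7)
The plan is to first establish irreducibility of $\Psi_\lambda$ by fibering it over $U = \hilb X$, then deduce irreducibility of $\Phi_\lambda$ since it is the image of $\Psi_\lambda$, and finally show that the forgetful map $\Psi_\lambda \to \Phi_\lambda$ has $0$-dimensional fibers over a generic point so that the dimensions agree.

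For the first step, I would consider the natural projection $\Psi_\lambda \to U$. Each factor $\uhilb{\Lambda_i} \times_{\bp^n} \scv(U)$ fits over $U$ through $\scv(U) \to U$. Now $\uhilb{\Lambda_i}$ is the universal family over the Grassmannian $Gr(n-k-\lambda_i+1, n+1)$; concretely, it is the incidence variety $\{(p,\Lambda) : p \in \Lambda\} \subset \bp^n \times Gr$. The projection $\uhilb{\Lambda_i} \to \bp^n$ is a Grassmann bundle with fiber $Gr(n-k-\lambda_i, n)$, hence smooth with irreducible fibers, and in particular flat. Base changing along $\scv(U) \to \bp^n$, the map $\uhilb{\Lambda_i} \times_{\bp^n} \scv(U) \to \scv(U)$ is flat with irreducible fibers; composing with the flat structural morphism $\scv(U) \to U$ gives a flat morphism $\uhilb{\Lambda_i} \times_{\bp^n} \scv(U) \to U$ whose fiber over $[Y] \in U$ with $Y$ irreducible is a Grassmann bundle over $Y$, hence irreducible. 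Taking the fiber product of $m$ such schemes over $U$ preserves flatness and (over an algebraically closed field) irreducibility of fibers. By hypothesis there is a nonempty open $U^{\circ} \subset U$ over which the fiber $Y$ of $\scv(U)$ is irreducible, so the fibers of $\Psi_\lambda \to U$ over $U^{\circ}$ are irreducible. Applying the flat version of \autoref{lemma:irreducible-base-and-fibers}, $\Psi_\lambda$ is irreducible, and the same lemma pins down its dimension.

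For irreducibility of $\Phi_\lambda$, observe that by the very definition in \autoref{definition:interpolation}, $\Phi_\lambda$ is the scheme-theoretic image of $\Psi_\lambda$ under the forgetful projection to $U \times \prod_i Gr(n-k-\lambda_i+1, n+1)$. Since the image of an irreducible scheme is irreducible, $\Phi_\lambda$ is irreducible.

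For the dimension equality, I analyze the fibers of the surjection $\Psi_\lambda \to \Phi_\lambda$ that forgets the incidence points $p_i$. Over a point $([Y],\Lambda_1,\ldots,\Lambda_m)$ the fiber is $\prod_i (Y \cap \Lambda_i)$, so it suffices to check that each intersection $Y \cap \Lambda_i$ is $0$-dimensional over a general point of $\Phi_\lambda$. This is the main obstacle: for indices with $\lambda_i > 0$, the intersection is not expected to occur at all, and one must confirm that \emph{when} it occurs generically, it occurs in dimension $0$ rather than in some excess dimension. I would verify this by counting dimensions on the incidence $\{(\Lambda,p) : p \in Y \cap \Lambda\} \subset Gr \times Y$: it is a $Gr(n-k-\lambda_i, n)$-bundle over $Y$, of dimension $k + (n-k-\lambda_i)(k+\lambda_i)$, while $Gr$ has dimension $(n-k-\lambda_i+1)(k+\lambda_i)$. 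The image in $Gr$ therefore has codimension exactly $\lambda_i$, so over a generic point of the image the fiber is $0$-dimensional. For $\lambda_i = 0$, the intersection is automatically $0$-dimensional (a finite set of $\deg Y$ points) for general $\Lambda_i$. Letting $Y$ vary in $U^{\circ}$ and invoking generic smoothness, the fiber of $\Psi_\lambda \to \Phi_\lambda$ over a general point is $0$-dimensional, and hence $\dim \Psi_\lambda = \dim \Phi_\lambda$.
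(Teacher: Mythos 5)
Your overall strategy mirrors the paper's: establish irreducibility of $\Psi_\lambda$ by fibering it over $\hilb X$ with irreducible fibers and invoking \autoref{lemma:irreducible-base-and-fibers}, then get irreducibility of $\Phi_\lambda$ as the image of $\Psi_\lambda$, and finally argue that the forgetful map $\Psi_\lambda \to \Phi_\lambda$ is generically finite. The paper packages the irreducibility of $\Psi_\lambda$ slightly differently --- it first shows $\uhilb X$ is irreducible and then realizes $\Psi_\lambda$ as a Grassmann bundle over $\uhilb X$ --- but this is an organizational, not a mathematical, difference, and your more direct fibering over $\hilb X$ is perfectly fine. The reduction of the general case to the single-factor case via fiber products over $U$ and the appeal to the flat version of \autoref{lemma:irreducible-base-and-fibers} are both handled correctly.

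The gap is in the dimension count for the equality $\dim\Psi_\lambda = \dim\Phi_\lambda$. You compute that the incidence $I = \{(\Lambda,p) : p \in Y \cap \Lambda\}$ has dimension $k + (n-k-\lambda_i)(k+\lambda_i)$ while the Grassmannian has dimension $(n-k-\lambda_i+1)(k+\lambda_i)$, and then assert that ``the image in $Gr$ therefore has codimension exactly $\lambda_i$.'' That is circular: from the dimension count alone you get only that the image has codimension \emph{at least} $\lambda_i$. Equality holds precisely when $I \to Gr$ is generically finite onto its image, which is the very thing you are trying to conclude from it. (The appeal to ``generic smoothness'' at the end does not help, and in any case the paper works in arbitrary characteristic at this point.) What is needed is a separate argument that there exists at least one $\Lambda$ in the image whose intersection with $Y$ is $0$-dimensional; then upper semicontinuity of fiber dimension gives that the generic fiber of $\Psi_\lambda \to \Phi_\lambda$ is finite. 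This is exactly the route the paper takes: it exhibits a point $(Y,\Lambda) \in \Phi$ with $Y\cap\Lambda$ finite and concludes via upper semicontinuity that the map is generically finite, so $\dim\Phi = \dim\Psi$. To make your argument airtight, either exhibit such a good $\Lambda$ directly (for instance, a general plane through a general smooth point $p$ of $Y$ meets $Y$ only at $p$, by a second, separate dimension count on the incidence of pairs $(q,\Lambda)$ with $q \in Y$, $q \neq p$, $p,q \in \Lambda$), or simply cite the paper's upper-semicontinuity step.
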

\begin{proof}
We will prove this in the case that $m = 1$ as
the general case is completely analogous. We write
$\lambda := \lambda_1, \Lambda := \Lambda_1, p := p_1, \Phi := \Phi_\lambda, \Psi := \Psi_\lambda$ for notational
convenience.
Observe that we have a commutative diagram of natural projections
\begin{equation}
	\nonumber
	\begin{tikzcd} 
		\qquad & \Psi \ar {ld}{\mu_1} \ar {rd}{\mu_2} & \\
		\uhilb X \ar {rd}{\mu_3} & & \Phi \ar{ld}{\mu_4} \\
		\qquad & \hilb X &
	\end{tikzcd}\end{equation}
Note that $\Psi = \uhilb \Lambda \times_{\bp^n} \uhilb X$ and $\mu_1$ is simply the second projection map.
Observe that since the map $\mu_2$ is surjective, once
we know $\Psi$ is irreducible, $\Phi$ will be too.

Next, we check that
$\dim \Psi = \dim \Phi$.
Note that if we take the point $(Y, \Lambda)$ in $\Phi$ chosen
so that $\Lambda$ meets $Y$ at finitely many points, the fiber
of $\mu_2$ over that point is necessarily $0$ dimensional.
By upper semicontinuity of fiber dimension for proper maps, there is an open
set of $\Psi$ on which the fiber is $0$ dimensional,
implying they map is generically finite, so $\dim \Phi = \dim \Psi$.

To complete the proof, we need only show $\Psi$ is irreducible.
Note that the map $\mu_3$ is flat. 
The assumption that the general member of $\hilb X$ is irreducible
precisely says that the general fiber of $\mu_3$ is irreducible.
So, by
\autoref{lemma:irreducible-base-and-fibers},
$\uhilb X$ is irreducible.
If we knew that $\Psi$ were a Grassmannian bundle over $\uhilb X$, we would then
obtain that $\Psi$ is also irreducible. That
$\Psi$ is a Grassmannian bundle over $\uhilb X$ follows from \autoref{lemma:grassmannian-bundle-over-universal-family}.
\end{proof}
\begin{lemma}
	\label{lemma:grassmannian-bundle-over-universal-family}
	Let $\Psi_{\lambda_1}$ be as in \autoref{definition:interpolation} for the single element
	partition $\lambda_1$. 	
	Then, the projection map
	\begin{align*}
		 \Psi & \rightarrow \uhilb X \\
		 (X, \Lambda, p) & \mapsto (X, p)
	\end{align*}
realizes $\Psi$ as a Grassmannian bundle over $\uhilb X$.
\end{lemma}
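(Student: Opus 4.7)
The plan is to realize $\Psi$ as the pullback to $\uhilb X$ of a Grassmannian bundle over $\bp^n$ whose fibers parameterize planes through a given point.

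First, I would identify the universal family $\uhilb{\Lambda_1}$ with the incidence variety
\begin{equation*}
I := \{(\Lambda, p) : p \in \Lambda\} \subset \hilb{\Lambda_1} \times \bp^n,
\end{equation*}
equipped with its two projections to $\hilb{\Lambda_1}$ and to $\bp^n$. The key geometric claim is that the projection $I \to \bp^n$ is a Grassmannian bundle. To see this, pull back the Euler sequence
\begin{equation*}
0 \to \sco_{\bp^n}(-1) \to \sco_{\bp^n}^{\oplus n+1} \to \scq \to 0
\end{equation*}
and form the relative Grassmannian $\mathrm{Gr}(n-k-\lambda_1, \scq) \to \bp^n$. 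Fiberwise over $p \in \bp^n$, an $(n-k-\lambda_1)$-dimensional subspace of $\scq|_p = \bk^{n+1}/\widehat{p}$ (where $\widehat{p}$ denotes the line underlying $p$) corresponds to an $(n-k-\lambda_1+1)$-dimensional vector subspace of $\bk^{n+1}$ containing $\widehat{p}$, i.e., to an $(n-k-\lambda_1)$-plane $\Lambda \subset \bp^n$ containing $p$. Globally, this correspondence gives an isomorphism $\mathrm{Gr}(n-k-\lambda_1, \scq) \cong I$ over $\bp^n$ via the universal property of the Hilbert scheme of $(n-k-\lambda_1)$-planes.

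Finally, let $\sigma: \uhilb X \to \bp^n$ be the natural map obtained as the composition $\uhilb X \hookrightarrow \hilb X \times \bp^n \to \bp^n$. Unwinding the definition of $\Psi$ for the single-element partition $\lambda_1$ yields
\begin{equation*}
\Psi = \uhilb{\Lambda_1} \times_{\bp^n} \uhilb X = I \times_{\bp^n} \uhilb X,
\end{equation*}
and the projection $\Psi \to \uhilb X$ is exactly the base change of the Grassmannian bundle $I \to \bp^n$ along $\sigma$. Since the base change of a Zariski-locally trivial Grassmannian bundle is again one, this gives the desired structure on $\Psi \to \uhilb X$.

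The only real subtlety I would expect is verifying the scheme-theoretic isomorphism $\mathrm{Gr}(n-k-\lambda_1, \scq) \cong I$ over $\bp^n$, as opposed to just a bijection on closed points. This reduces to comparing two universal properties, namely rank $(n-k-\lambda_1)$ quotient bundles of $\scq$ versus flat families of $(n-k-\lambda_1)$-planes in $\bp^n$ equipped with a marked section, and is routine once one carefully tracks the tautological subbundles on each side.
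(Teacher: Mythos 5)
Your proof is correct and follows the same overall strategy as the paper: you both reduce via the fiber square $\Psi = \uhilb{\Lambda_1} \times_{\bp^n} \uhilb X$ to showing that $\uhilb{\Lambda_1} \to \bp^n$ is a Grassmannian bundle, and then conclude by base change. The only difference is in how that key sublemma is established: the paper relegates it to an exercise proved by checking local triviality over the standard affine charts of $\bp^n$, whereas you exhibit $\uhilb{\Lambda_1}$ globally and canonically as the relative Grassmannian $\mathrm{Gr}(n-k-\lambda_1, \scq)$ of the tautological quotient bundle from the Euler sequence. Your version of the sublemma is a bit cleaner and gives more structural information (it identifies the specific Grassmannian bundle rather than just asserting local triviality), but the proof architecture is the same.
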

\begin{proof}
Note that we have a fiber square
\begin{equation}
	\nonumber
	\begin{tikzcd} 
		\Psi \ar {r} \ar {d} &  \uhilb \Lambda \ar {d} \\
		\uhilb X \ar {r} & \bp^n.
	\end{tikzcd}\end{equation}
To show the left vertical map is a Grassmannian bundle, it suffices to show the right vertical map is a Grassmannian bundle.
This follows from \autoref{exercise:grassmannian-bundle-planes}.
\begin{exercise}
	\label{exercise:grassmannian-bundle-planes}
		The map
	\begin{align*}
		\left\{ \left( \Lambda, p \right) \subset Gr(\lambda, n+1) \times \bp^n: p \in \Lambda \right\} &\ra \bp^n \\
		(\Lambda, p) & \mapsto p
	\end{align*}
	realizes the source as a Grassmannian bundle over $\bp^n$.
	{\it Hint:} Show that over each standard affine open
	chart in $\bp^n$, this bundle is locally trivial.
	Possibly do this by describing an open covering of the
	Grassmannian on each such open set.
\end{exercise}
\end{proof}

We can almost apply \autoref{proposition:irreducible-incidence}
to prove \autoref{lemma:irreducible-and-dimension}.
However, in order to apply \autoref{proposition:irreducible-incidence}, we will have to know that the general member of $\hilb X$
is irreducible only from the information that $X$ itself is integral.
This is why we need the following \autoref{proposition:upper-semicontinuous-number-of-components}.

\begin{proposition}
	\label{proposition:upper-semicontinuous-number-of-components}
	Let $f: X \ra Y$ be a flat proper map of finite type schemes over an algebraically closed field
	so that the fibers over the closed points of $Y$ are geometrically reduced. Then, the number of irreducible components of the geometric fiber of a point in $Y$
	is upper semicontinuous on $Y$.
\end{proposition}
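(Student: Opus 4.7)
The approach is to reduce to the case $Y = \operatorname{Spec} R$ for a DVR $R$ and exploit flatness at a generic point of the special fiber. Let $\phi(y) := \#\{\text{geometric irreducible components of } X_y\}$. Since the locus where a flat proper morphism has geometrically reduced fibers is open (EGA IV) and every point of the Jacobson scheme $Y$ generizes a closed point, all geometric fibers of $f$ are reduced. A standard constructibility result shows $\phi$ is constructible; hence upper semicontinuity is equivalent to stability under specialization, so given a specialization $y_1 \leadsto y_0$, it suffices to prove $\phi(y_0) \geq \phi(y_1)$.

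Choose a DVR $R$ with a morphism $\operatorname{Spec} R \to Y$ sending the closed point to $y_0$ and the generic point to $y_1$. Via Krull--Akizuki and Cohen structure, enlarge $R$ to a DVR $R'$ such that $K' := \operatorname{Frac}(R')$ contains the (finite) field of definition of every geometric component of $X_{y_1}$ and the residue field $k' := R'/\mathfrak{m}_{R'}$ equals $\overline{\kappa(y_0)}$. Setting $X' := X \times_Y \operatorname{Spec} R'$, the problem reduces to the DVR statement: if $X' \to \operatorname{Spec} R'$ is flat proper with reduced fibers, then the number of irreducible components of $X'_{k'}$ is at least that of $X'_{K'}$.

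Flatness together with reducedness of fibers forces $X'$ itself to be reduced. Decompose $X' = \bigcup_{i=1}^n X^{(i)}$ into reduced irreducible components. Flatness ensures each $X^{(i)}$ dominates $\operatorname{Spec} R'$, so each $X^{(i)}$ is integral and flat proper over $R'$, and the $X^{(i)}_{K'}$ are exactly the $n$ irreducible components of $X'_{K'}$. The key claim is that for each generic point $\xi$ of an irreducible component of $X'_{k'}$, the local ring $\mathcal{O}_{X',\xi}$ is a domain, so only one $X^{(i)}$ passes through $\xi$. To see this: $\mathcal{O}_{X',\xi}$ is one-dimensional Noetherian local (since $X'$ is equidimensional of dimension $\dim X'_{K'} + 1$ by flatness and $\xi$ has codimension one in $X'$); its quotient $\mathcal{O}_{X',\xi}/\pi$ is the local ring of the reduced scheme $X'_{k'}$ at the generic point $\xi$, hence a field; thus the maximal ideal of $\mathcal{O}_{X',\xi}$ is principal, generated by the uniformizer $\pi$, and $\pi$ is a non-zero-divisor by flatness; a Noetherian one-dimensional local ring with a principal maximal ideal generated by a non-zero-divisor is a DVR, hence a domain. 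Since each $X^{(i)}_{k'}$ is nonempty (properness) and the $X^{(i)}$ contribute pairwise disjoint sets of generic points to $X'_{k'}$, we conclude that $X'_{k'}$ has at least $n$ irreducible components. The main obstacle is the initial enlargement of the DVR so that irreducible components of its fibers agree with the geometric irreducible components of the fibers of $f$; once this is set up, the local-ring argument is short.
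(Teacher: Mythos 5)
Your reduction to the DVR case via constructibility and closure under specialization matches the paper's first step exactly. But where the paper then invokes the deep EGA~IV.3, Th\'eor\`eme~12.2.4(ix) result on upper semicontinuity of total multiplicity (combined with the observation that for a reduced scheme with no embedded points, total multiplicity equals the number of irreducible components), you instead attempt a hands-on argument via the decomposition $X' = \bigcup X^{(i)}$ and a local-ring calculation. That is a genuinely different and more elementary route, and the local-ring piece of it is correct: the principal-maximal-ideal argument at a generic point $\xi$ of $X'_{k'}$ does show $\mathcal{O}_{X',\xi}$ is a DVR (note that you do not even need the equidimensionality claim --- once the maximal ideal is $(\pi)$ you get $\dim \leq 1$ from Krull and $\dim \geq 1$ from $\pi$ being a nonzerodivisor), so each generic point of $X'_{k'}$ lies on a unique $X^{(i)}$.

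However, the final step has a genuine gap. You conclude ``since each $X^{(i)}_{k'}$ is nonempty and the $X^{(i)}$ contribute pairwise disjoint sets of generic points, $X'_{k'}$ has at least $n$ components.'' For this you need that \emph{each} $X^{(i)}$ actually contributes at least one generic point of $X'_{k'}$, i.e.\ that $X^{(i)}_{k'}$ is not entirely contained in $\bigcup_{j\neq i} X^{(j)}_{k'}$. Nonemptiness of $X^{(i)}_{k'}$ by itself does not give this; a generic point of $X^{(i)}_{k'}$ could a priori be a non-generic point of $X'_{k'}$, sitting inside the intersection locus, and your local-ring argument only applies at points that \emph{are} generic in $X'_{k'}$. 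The gap is fillable: if $X^{(i)}_{k'}\subset W_{k'}$ with $W:=\bigcup_{j\neq i}X^{(j)}$, then $|W_{k'}|=|X'_{k'}|$, and since $X'_{k'}$ is reduced this forces $W_{k'}=X'_{k'}$ as schemes; but $W$ is itself $R'$-flat (its structure sheaf is $\pi$-torsion-free, as each $X^{(j)}$ is), and then $I_{W/X'}\subset \pi\, I_{W/X'}$ combined with Nakayama at points of the special fiber, followed by properness, forces $W=X'$, contradicting that $X^{(i)}$ is a distinct component. You should supply an argument along these lines (or an equivalent Hilbert-polynomial comparison) to close the proof.
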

This proof is that outlined in nfdc23's comments in ~\cite{MO:why-is-the-number-of-irreducible-components-upper-semicontinuous-in-nice-situations}.

\begin{proof}
	First, we reduce to the case that $Y$ is a discrete valuation ring.
	By \cite[Corollaire 9.7.9]{EGAIV.3}, the set of points in $Y$ with
	a given number $n\in \bz$ of geometrically irreducible components
	is a constructible subset of $Y$. 
	This implies that the set of points in $Y$ with at least $n$ geometrically irreducible
	components is constructible as follows.
	The set of points in $Y$ with $j$ geometrically irreducible components with $j < n$ is 
	constructible. Therefore the set of points with less than $n$ geometrically
	irreducible components is constructible, as it is a finite union of constructible
	sets. Finally, the set of points in $Y$ with at least $n$ geometrically irreducible
	components is constructible as it is the complement of a constructible set. Then, in order to show a constructible set is closed, it suffices
	to show it is closed under specialization by
	\cite[Exercise 7.4.C(a)]{vakil:foundations-of-algebraic-geometry}.

	Next, suppose $\eta \in Y, x \in \overline \eta$ are two points in $Y$.
	In order to show upper semicontinuity, it suffices to show that
	the number of geometrically irreducible components in the fiber over $x$ is
	at least as big as the number of geometrically irreducible components in
	the fiber over $\eta$.
	There exists a discrete valuation ring $R$ so that there is a map
	$\phi:\spec R \ra Y$ sending the generic point of $R$ to $\eta$ and the
	closed point to $x$ by \cite[Lemma 27.5.10]{stacks-project}.
	Because the fiber over a point $\spec \bk \ra \spec R$ of $\spec R \times_Y X \ra \spec R$
	is isomorphic to the corresponding point $\spec \bk \ra \spec R \ra X$ of $X \ra Y$,
	in order to show the fiber over $x$ has at least as many geometrically irreducible
	components as the fiber over $\eta$, it suffices to show the same statement for
	the preimage of $x,\eta$ in $\spec R$. 
	
	Hence, we have reduced to the case $Y = \spec R$ for
	$R$ a discrete valuation ring. We now
	demonstrate the proposition in this case.
	First, by \cite[Th\'eor\`eme 12.2.4(v)]{EGAIV.3}, the set of points in $Y$ with geometrically reduced
fiber is open. By assumption the fiber over the closed point of $Y$ is geometrically reduced,
and so the fiber over the generic point of $Y$ is also geometrically reduced.
In particular, both these fibers have no embedded points, as they are reduced.

Then, by \cite[Th\'eor\`eme 12.2.4(ix)]{EGAIV.3}, since the fibers of $f$ over both points of $Y$
are reduced and have no embedded points, we obtain that the total multiplicity,
as defined in \cite[D\'efinition 4.7.4]{EGAIV.2},
is upper semicontinuous. Since the scheme is reduced,
the total multiplicity is equal to the number of irreducible components.
In other words, the number of irreducible components over 
the generic point of $Y$ is at most the number of irreducible components over the closed point of $Y$.

Therefore, the number of irreducible components is upper semicontinuous on the target.
\end{proof}

\begin{lemma}
	\label{lemma:irreducible-and-dimension}
	Suppose $X$ is an integral scheme. Then, $\Phi_\lambda, \Psi_\lambda$
	as defined in \autoref{definition:interpolation} are irreducible and $\dim \Phi_\lambda= \dim \Psi_\lambda$.
\end{lemma}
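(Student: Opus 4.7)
The plan is to deduce this from Proposition irreducible-incidence, whose hypothesis requires that the general member of $\hilb X$ correspond to an \emph{irreducible} variety. So the real content to prove is: if $X$ is integral, then the general fiber of the universal family $\pi \colon \uhilb X \to \hilb X$ is irreducible. Once that is in hand, Proposition irreducible-incidence directly gives both irreducibility of $\Phi_\lambda, \Psi_\lambda$ and the equality of their dimensions, so the work is concentrated in this single reduction.

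To establish irreducibility of the general fiber of $\pi$, I would combine two openness/upper-semicontinuity results. First, $\pi$ is flat and proper by construction of the universal family. The fiber over $[X]$ is isomorphic to $X$, which is reduced (and, since $\bk$ is algebraically closed, geometrically reduced). By the openness of geometric reducedness in flat proper families (EGA IV, Th\'eor\`eme 12.2.4(v), which is the same tool invoked in the proof of Proposition upper-semicontinuous-number-of-components), there is an open neighborhood $U \subset \hilb X$ of $[X]$ over which all fibers of $\pi$ are geometrically reduced. Restricting $\pi$ to $\pi^{-1}(U) \to U$, Proposition upper-semicontinuous-number-of-components applies, so the number of geometric irreducible components of the fibers is upper semicontinuous on $U$. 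Because $X$ is irreducible, the fiber over $[X]$ has a single component; by upper semicontinuity there is a (possibly smaller) open neighborhood of $[X]$ in $U$ on which every fiber has exactly one irreducible component. Since $\hilb X$ is irreducible, this open set is dense, which is precisely the statement that the general member of $\hilb X$ is irreducible.

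With that in place, Proposition irreducible-incidence applies verbatim to $\hilb X$, yielding that $\Phi_\lambda$ and $\Psi_\lambda$ are irreducible and $\dim \Phi_\lambda = \dim \Psi_\lambda$. The main (and essentially only) obstacle is ensuring we can invoke Proposition upper-semicontinuous-number-of-components, whose hypothesis demands geometrically reduced closed fibers. This is exactly what the openness result from EGA IV buys us after shrinking to a neighborhood of $[X]$; the reducedness of $X$ (built into the integrality hypothesis) is the essential input, matching the heuristic in Remark weakening-interpolation-conditions that reducedness is the feature one cannot drop.
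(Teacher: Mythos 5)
Your proof is correct and follows essentially the same route as the paper: both invoke EGA IV, Th\'eor\`eme 12.2.4(v) to get reducedness of the general fiber, then apply Proposition \ref{proposition:upper-semicontinuous-number-of-components} to conclude the general fiber is irreducible, and finally apply Proposition \ref{proposition:irreducible-incidence}. You are slightly more explicit about the two-step shrinking (first to where fibers are geometrically reduced, then to where they are irreducible), but the underlying argument is identical.
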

\begin{proof}
	To see this, note that by the assumption that $X$ is reduced, the map
$\uhilb X \rightarrow \hilb X$ has general member which is reduced by
\cite[Th\'eor\`eme 12.2.4(v)]{EGAIV.3}.
Therefore, applying 
\autoref{proposition:upper-semicontinuous-number-of-components},
the general point of $\hilb X$ has preimage in $\uhilb X$ which
is integral.
So, applying \autoref{proposition:irreducible-incidence},
we conclude that the incidence correspondences $\Phi_\lambda$ and $\Psi_\lambda$ given
in \autoref{definition:interpolation} 
are irreducible of the same 
dimension.
\end{proof}

\section[Tools for dimension counting]{Tools for showing equality of dimensions of the source and target}
\label{ssec:tools-dimensions}

In this section we develop some more technical tools for proving
\autoref{theorem:equivalent-conditions-of-interpolation}.
Our goal for this section is to prove \autoref{lemma:interpolation-dimension}.
Before embarking on this task, we start with a simple tool for proving the equivalence of
\ref{interpolation-dominant} and \ref{interpolation-isolated}.

\begin{lemma}
	\label{lemma:isolated-fiber-implies-dominant}
	Let $\pi: X \ra Y$ be a proper map of locally Noetherian schemes of the same pure dimension. If there is some point $x \in X$ which is isolated in its fiber, then $\dim (\im \pi) = \dim Y$.
\end{lemma}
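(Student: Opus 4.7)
The plan is to exhibit an irreducible component of $Y$ of top dimension inside $\im \pi$. Since $\pi$ is proper, $\im \pi$ is closed in $Y$, so it is enough to produce a closed irreducible $W \subset \im \pi$ with $\dim W = \dim Y$.

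First, I would pick an irreducible component $Z$ of $X$ passing through the distinguished point $x$. Writing $n := \dim Y$, the hypothesis that $X$ has pure dimension $n$ gives $\dim Z = n$. Because $\pi$ is proper, its restriction $\pi|_Z \colon Z \to Y$ is also proper, so $W := \pi(Z)$ is closed in $Y$; and since $W$ is the continuous image of the irreducible set $Z$, it is irreducible. Giving $Z$ and $W$ their reduced induced structures, we may regard $\pi|_Z \colon Z \to W$ as a proper, surjective morphism of integral locally Noetherian schemes, and $x$ remains isolated in the fiber $(\pi|_Z)^{-1}(\pi(x))$ since it was isolated in the larger fiber $\pi^{-1}(\pi(x))$.

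Next, I would apply the lower-bound form of the fiber-dimension theorem to $\pi|_Z$. Since $\pi|_Z$ is of finite type (it is proper) between integral locally Noetherian schemes and is surjective onto $W$, Chevalley's dimension inequality (EGA IV, 5.6, or \cite[Theorem 11.4.1]{vakil:foundations-of-algebraic-geometry}) yields
\[
\dim_x (\pi|_Z)^{-1}(\pi(x)) \;\geq\; \dim_x Z - \dim_{\pi(x)} W.
\]
The left-hand side is $0$ because $x$ is isolated in the fiber, and $\dim_x Z = n$. Therefore $\dim W \geq \dim_{\pi(x)} W \geq n$. Combined with $W \subset Y$ and $\dim Y = n$, this forces $\dim W = n$, so $W$ is a top-dimensional irreducible component of $Y$, and $\im \pi \supset W$ gives $\dim(\im \pi) = n = \dim Y$.

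The only step that requires any care is the invocation of the fiber-dimension inequality in the locally Noetherian setting, rather than the more familiar finite-type-over-a-field setting. However, since $\pi$ is proper, $\pi|_Z$ is automatically of finite type, so the standard statement applies and this is not a genuine obstacle. The rest of the argument is a straightforward combination of properness (to make the image closed and irreducible) with the pure-dimensionality hypothesis on $X$.
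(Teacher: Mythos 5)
Your approach is genuinely different from the paper's. The paper applies Zariski's Main Theorem to spread the ``isolated in its fiber'' condition to a dense open $X_0 \subset X$, then uses the codimension \emph{upper} bound $\codim_X p \le \codim_Y \pi(p) + \codim_{\pi^{-1}(\pi(p))} p$ to bound $\dim X_0$ from above and derive a contradiction; notably, it tacitly assumes $X$ is irreducible, which is not in the hypothesis. You instead work directly at the distinguished point $x$, passing to an irreducible component $Z \ni x$, using properness to control the image, and applying Chevalley's fiber-dimension \emph{lower} bound. This avoids ZMT entirely, avoids the proof by contradiction, and correctly handles the component structure rather than assuming irreducibility --- a cleaner and more self-contained argument.

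However, there is a gap at ``$\dim_x Z = n$''. Pure-dimensionality of $X$ yields $\dim Z = n$, but the local dimension $\dim_x Z = \inf_{U \ni x \text{ open}} \dim U$ can be strictly smaller than $\dim Z$ for an irreducible locally Noetherian $Z$ when $x$ is not a closed point: if $Z = \spec R$ with $R$ a discrete valuation ring and $x$ is the generic point, then $\{x\}$ is open in $Z$, so $\dim_x Z = 0 < 1 = \dim Z$. Since the hypothesis permits $x$ to be any point, the equality $\dim_x Z = n$ must be argued, not asserted; without it the inequality gives only $\dim_{\pi(x)} W \ge \dim_x Z$, which may fall short of $n$. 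The dismissal ``since $\pi$ is proper, $\pi|_Z$ is of finite type, so the standard statement applies'' misses the issue: Vakil's Theorem 11.4.1 is stated for irreducible $k$-varieties, and the relevant hypothesis there --- the one forcing $\dim_x Z = \dim Z$ for all $x$ --- is that $Z$ and $W$ be of finite type over a field, not merely that $\pi|_Z$ be of finite type. For what it is worth, the paper's own proof has the analogous unjustified step $\dim X = \dim X_0$ for a nonempty open of an irreducible locally Noetherian scheme, which can fail for the same reason; both arguments go through in the finite-type-over-a-field setting that the paper actually uses, but the stated locally Noetherian generality requires more care than either supplies.
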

\begin{proof}
By Zariski's Main Theorem in Grothendieck's form 
	\cite[Theorem 29.6.1(a)]{vakil:foundations-of-algebraic-geometry}
	there is a nonempty open subscheme $X_0 \subset X$ so that all closed point
	of $X_0$ are isolated in their fibers.
	Now, since $X$ is irreducible, $X_0$ is dense. Then, consider the map $\pi|_{X_0} : X_0 \ra Y$.
	It suffices to show this map is dominant. Suppose not. Then, there is a map
	$X_0 \ra Y_0 \subset Y$ where $\dim Y_0 < \dim Y$. This implies $\dim X_0 = \dim Y_0$
	by \cite[Exercise 11.4.A]{vakil:foundations-of-algebraic-geometry}, since if if $p \in X$ and $q:=\pi(p)$ we obtain
	\begin{align*}
		\codim_{X_0} p \leq \codim_{Y_0} q + \codim_{\pi^{-1}(q)}p \leq \dim Y_0 + 0 = \dim Y_0
	\end{align*}
	Since the dimension of $X_0$ is the supremum over all points $p$ of $\codim_{X_0} p$
	we have that $\dim X_0 \leq \dim Y_0$, which is a contradiction because
	we would then obtain
	\begin{align*}
		\dim X = \dim X_0 \leq \dim Y_0 < \dim Y = \dim X.
	\end{align*}
\end{proof}

In order to accomplish our goal of proving \autoref{lemma:definition-and-pointed-equivalence-with-lambda-equality}, we will need an efficient way
of counting the number of conditions imposed a point of the Hilbert
scheme, so that we can apply \autoref{technique-conditions}.
This is established in \autoref{lemma:linear-condition-counting},
and to prove that, we will first need \autoref{lemma:preimage-dimension}.

\begin{lemma}
	\label{lemma:preimage-dimension}
	Suppose $\pi: X \rightarrow Z$ is a dominant map of irreducible schemes of finite type over a field.
	For a general closed point $p$ of $Z$, we have that $\pi^{-1}(p) \subset X$
	is a closed subscheme of dimension $\dim X - \dim Z$.
\end{lemma}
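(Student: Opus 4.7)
The plan is to combine two standard ingredients: the fact that $\pi^{-1}(p)$ is automatically a closed subscheme of $X$ (being the base change of the closed immersion $\{p\} \hookrightarrow Z$), and the dimension formula for flat morphisms applied after passing to a flat locus via generic flatness.

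More concretely, I would first invoke generic flatness (Vakil's Exercise 24.5.N) to produce a dense open subscheme $U \subset Z$ such that the restriction $\pi|_{\pi^{-1}(U)} : \pi^{-1}(U) \to U$ is flat. Since $\pi$ is dominant and both $X$ and $Z$ are irreducible, $\pi^{-1}(U)$ is a nonempty open subscheme of $X$, hence irreducible of dimension $\dim X$, and $U$ is open and nonempty in $Z$, hence irreducible of dimension $\dim Z$. For a closed point $p \in U$, Vakil's Proposition 24.5.5 (the dimension-of-fibers formula for flat morphisms between irreducible finite-type $\bk$-schemes) then gives
\begin{align*}
\dim \pi^{-1}(p) = \dim \pi^{-1}(U) - \dim U = \dim X - \dim Z.
\end{align*}

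To match the paper's convention that ``general'' means ``in a dense open subset,'' I would observe that closed points are dense in $U$ by the Nullstellensatz (since $U$ is a finite-type scheme over a field), so the dense open $U \subset Z$ itself witnesses the conclusion: every closed point of $U$ has fiber of the asserted dimension, and $\pi^{-1}(p)$ is closed in $X$ simply because closed immersions are stable under base change along $\pi$.

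The only delicate point is ensuring the flat fiber-dimension formula genuinely applies, which requires knowing $\pi^{-1}(U)$ and $U$ are both irreducible of the expected dimensions; this follows from $\pi$ being dominant with irreducible source and target, so it is not really an obstacle. There is no serious step here — the lemma is a direct consequence of generic flatness together with the flat dimension formula, and the argument should take only a few lines.
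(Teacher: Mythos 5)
Your proposal is correct and follows essentially the same route as the paper's proof: pass to a flat locus via generic flatness (the paper also cites Vakil's Exercise 24.5.N) and then apply the flat fiber-dimension formula (the paper cites Vakil's Exercise 24.5.J where you cite Proposition 24.5.5, but these are the same ingredient). The additional remarks you make about closedness of the fiber, irreducibility of the open pieces, and density of closed points are all sound but are left implicit in the paper's version.
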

\begin{proof}
First, by ~\cite[Easy Exercise 24.5.N]{vakil:foundations-of-algebraic-geometry},
there is an open set $V \subset Z$ on which $\pi|_{\pi^{-1}(V)}$ is flat.
Now, consider the map $\pi^{-1}(V) \ra V$. It suffices to show that for any $p \in V$, we have $\dim \pi^{-1}(p) = \dim X - \dim Z$.
So, we have reduced to the problem to the case that the map is flat.
The result then follows from ~\cite[Exercise 24.5.J]{vakil:foundations-of-algebraic-geometry}.
\end{proof}

\begin{lemma}
	\label{lemma:linear-condition-counting}
	Suppose we have two maps
	\begin{equation}
		\nonumber
		\begin{tikzcd}
			\qquad & X \ar {ld}{\pi_1} \ar {rd}{\pi_2} & \\
			Y && \bp^n
		\end{tikzcd}\end{equation}
	where $X, Y$ are both irreducible schemes of finite type over a field,
	and $\pi_1, \pi_2$ are dominant maps.
	Suppose further that for a general closed point $p \in \bp^n$, $\pi_1|_{\pi_2^{-1}(p)}$ is generically finite.
	Then, for a general $t$-plane $\Lambda \subset \bp^n$, 
	(meaning a general point of the Grassmannian of $t$-planes)	
	we have
	\begin{align*}
		\codim_Y \pi_1(\pi_2^{-1}(\Lambda)) = (n-t) - \left( \dim X - \dim Y \right).
	\end{align*}
	That is, the codimension of $\pi_1(\pi_2^{-1}(\Lambda))$ is the codimension of $\Lambda$, minus the relative dimension of $\pi_1$.	
\end{lemma}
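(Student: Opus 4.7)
The plan is to translate the codimension statement into the dimension of the image of a map, and then to double-fiber a carefully chosen incidence correspondence $T \subset Y \times G$, where $G := Gr(t+1,n+1)$ is the Grassmannian of $t$-planes in $\bp^n$. Both key dimension counts will come from \autoref{lemma:preimage-dimension} applied to dominant maps of irreducible schemes.

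First I would establish the auxiliary claim that for a general closed point $y \in Y$, the image $\pi_2(\pi_1^{-1}(y)) \subset \bp^n$ has dimension exactly $\dim X - \dim Y$. To do so, consider the graph morphism $\psi := (\pi_1,\pi_2) \colon X \to Y \times \bp^n$ with image $J$. The hypothesis that $\pi_1|_{\pi_2^{-1}(p)}$ is generically finite for general $p$ is equivalent to saying that for a general $x \in X$ the fiber $\psi^{-1}(\psi(x)) = \pi_1^{-1}(\pi_1(x)) \cap \pi_2^{-1}(\pi_2(x))$ is finite, so $\psi$ is generically finite onto $J$ and $\dim J = \dim X$. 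Since $\pi_1$ is dominant, the projection $J \to Y$ is dominant, and \autoref{lemma:preimage-dimension} tells us that its general fiber, namely $\{y\} \times \pi_2(\pi_1^{-1}(y))$, has dimension $\dim X - \dim Y$, as claimed.

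Next I would set
\[
T := \overline{\bigl\{(y,\Lambda) \in Y \times G : \Lambda \cap \pi_2(\pi_1^{-1}(y)) \neq \emptyset\bigr\}}
\]
and fiber it two ways. Over $Y$: for general $y$, the fiber of $T \to Y$ is the locus of $t$-planes meeting a fixed subvariety of dimension $\dim X - \dim Y$, a Schubert-type incidence in $G$ of codimension $(n-t)-(\dim X - \dim Y)$ (itself obtained by a short incidence-plus-\autoref{lemma:preimage-dimension} argument applied to $\{(p,\Lambda) : p \in \pi_2(\pi_1^{-1}(y)) \cap \Lambda\}$). Combined with \autoref{lemma:irreducible-base-and-fibers}, this gives
\[
\dim T = \dim Y + \dim G - \bigl[(n-t)-(\dim X - \dim Y)\bigr].
\]
Over $G$: the projection is dominant because $\pi_2$ is dominant and the fiber over $\Lambda$ is precisely $\pi_1(\pi_2^{-1}(\Lambda))$. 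A second application of \autoref{lemma:preimage-dimension} then yields
\[
\dim \pi_1(\pi_2^{-1}(\Lambda)) = \dim T - \dim G = \dim Y - \bigl[(n-t) - (\dim X - \dim Y)\bigr],
\]
which rearranges to the desired codimension formula.

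The main obstacle is the auxiliary claim in the first step: the generic-finiteness hypothesis is consumed precisely to force $\psi$ to be generically finite onto $J$ and hence to pin down the dimension of $\pi_2(\pi_1^{-1}(y))$ at the generic $y$; without this input the formula fails. A secondary technical point is confirming that the ``meets a given subvariety'' locus in $G$ has the expected codimension and is irreducible enough for \autoref{lemma:irreducible-base-and-fibers} to apply, which one handles by the Schubert-incidence argument sketched in step two; edge cases in which $(n-t)-(\dim X - \dim Y) \le 0$ correspond to $\pi_1 \circ \pi_2^{-1}(\Lambda)$ being all of $Y$, matching the formula under the convention that a negative codimension means dominance.
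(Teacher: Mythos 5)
Your proof is correct in substance but takes a genuinely different route from the paper's. The paper argues directly: it applies \autoref{lemma:preimage-dimension} to conclude $\codim_X \pi_2^{-1}(\Lambda) = \codim \Lambda$, and then asserts (somewhat tersely) that generic finiteness of $\pi_1|_{\pi_2^{-1}(p)}$ propagates to generic finiteness of $\pi_1|_{\pi_2^{-1}(\Lambda)}$, so that $\pi_1$ preserves dimension there. You instead first prove the ``transposed'' statement that $\pi_2|_{\pi_1^{-1}(y)}$ is generically finite for general $y$ (via the graph $\psi$, a nice symmetry observation), and then route everything through the incidence correspondence $T \subset Y \times G$, doubly fibered. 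This is longer but has a real advantage: the paper's ``Hence'' in its penultimate step implicitly uses exactly the dimension of $\pi_2(\pi_1^{-1}(y))$ that you establish explicitly in your first step, so your version makes transparent a point the paper glosses over.

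One citation to fix: \autoref{lemma:irreducible-base-and-fibers} cannot be invoked as stated to conclude $\dim T = \dim Y + \dim(\text{fiber})$, since its hypotheses (all fibers irreducible, or flatness) need not hold for $T \to Y$: the set $\pi_2(\pi_1^{-1}(y))$ need not be irreducible, so neither need the Schubert-type fiber over $y$. Instead, observe that $T$ is the image of the Grassmannian bundle $\Sigma := \{(x,\Lambda) \in X \times G : \pi_2(x) \in \Lambda\}$ under the map $(x,\Lambda) \mapsto (\pi_1(x), \Lambda)$; since $\Sigma$ is irreducible (it is a bundle over the irreducible $X$ with irreducible fibers), $T$ is irreducible, and then both dimension counts can be carried out with \autoref{lemma:preimage-dimension} alone applied to the two projections of $T$. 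With that substitution your argument is complete.
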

\begin{proof}
	Let $p \subset \bp^n$ be a general point. By \autoref{lemma:preimage-dimension}, we know $\codim \pi_2^{-1}(p) = n$.
	Therefore, since a general $t$-plane will contain a general point,
	we also obtain that the preimage of a general $t$-plane will
	$\Lambda$ will satisfy $\codim \pi_2^{-1}(\Lambda) = \codim \Lambda$.
	Then, since $\pi_1|_{\pi_2^{-1}(p)}$ is generically finite,
	for a general point p, we have 	
	$\dim \pi_2^{-1}(p) = \dim \pi_1 (\pi_2^{-1}(p))$.
	Hence, for a general plane $\Lambda$, we obtain
	$\dim \pi_2^{-1}(\Lambda) = \dim \pi_1 (\pi_2^{-1}(\Lambda))$.
	Finally, since the dimension of $Y$ is equal to the sum of the dimension and codimension of $\pi_1 (\pi_2^{-1}(\Lambda))$,
	we obtain 
	\begin{align*}
		\codim_Y \pi_1(\pi_2^{-1}(\Lambda)) &= \dim Y - \left( \dim X - (n-t) \right) \\
		&= \codim \Lambda - \left( \dim X - \dim Y \right).
	\end{align*}
\end{proof}

\begin{lemma}
	\label{lemma:interpolation-dimension}
	With notation as in \autoref{definition:interpolation}, 
	if $\sum_{i=1}^m \lambda_i = \dim \hilb X$, we have $\dim \Phi = \dim \prod_{i=1}^m Gr(\codim X - \lambda_i +1, n+1)$.
	In particular, the source and target of the map $\pi_2$ have the same dimension.
\end{lemma}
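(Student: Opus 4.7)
The plan is to exploit the irreducibility machinery already established. By Lemma \ref{lemma:irreducible-and-dimension}, we have $\dim \Phi_\lambda = \dim \Psi_\lambda$, so it suffices to compute $\dim \Psi_\lambda$ and check that it equals $\dim \prod_{i=1}^m Gr(\codim X - \lambda_i + 1, n+1)$.

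To compute $\dim \Psi_\lambda$, I would analyze the natural projection $\Psi_\lambda \to U = \hilb X$. Writing $k = \dim X$, the fiber of this projection over a point $[Y] \in U$ is $\prod_{i=1}^m \{(\Lambda_i, p_i) : p_i \in Y \cap \Lambda_i\}$. For each factor, one may first choose $p_i \in Y$ (contributing $k$ dimensions, since the fiber of $\scv(U) \to U$ is $Y$) and then choose an $(n-k-\lambda_i)$-plane through $p_i$, which is parametrized by $Gr(n-k-\lambda_i, n)$ of dimension $(n-k-\lambda_i)(k+\lambda_i)$. An iterated application of Lemma \ref{lemma:grassmannian-bundle-over-universal-family} realizes $\Psi_\lambda$ as a tower of Grassmannian bundles built over $\uhilb X$, so every fiber over a closed point of $U$ is irreducible of the same dimension, and by Lemma \ref{lemma:irreducible-base-and-fibers} we obtain
\begin{align*}
\dim \Psi_\lambda = \dim U + \sum_{i=1}^m\bigl[k + (n-k-\lambda_i)(k+\lambda_i)\bigr].
\end{align*}

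On the other hand, expanding
\begin{align*}
\dim \prod_{i=1}^m Gr(n-k-\lambda_i+1, n+1) &= \sum_{i=1}^m (n-k-\lambda_i+1)(k+\lambda_i) \\
&= \sum_{i=1}^m (n-k-\lambda_i)(k+\lambda_i) + \sum_{i=1}^m(k+\lambda_i) \\
&= \sum_{i=1}^m (n-k-\lambda_i)(k+\lambda_i) + mk + \sum_{i=1}^m \lambda_i.
\end{align*}
Comparing the two expressions, the equality $\dim \Psi_\lambda = \dim \prod_i Gr(n-k-\lambda_i+1, n+1)$ holds precisely when $\dim U = \sum_{i=1}^m \lambda_i$, which is exactly the hypothesis.

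No step really qualifies as a main obstacle here; the only subtlety is that I must know the fiber of $\Psi_\lambda \to U$ has constant irreducible dimension in order to apply Lemma \ref{lemma:irreducible-base-and-fibers}, but this is immediate from the fact that each stage of the construction is a Grassmannian bundle over the universal family, as guaranteed by Lemma \ref{lemma:grassmannian-bundle-over-universal-family} together with the irreducibility established in Lemma \ref{lemma:irreducible-and-dimension}. The conclusion that the source and target of $\pi_2$ have the same dimension then follows because $\pi_2$ maps into $\prod_i Gr(\codim X - \lambda_i + 1, n+1)$.
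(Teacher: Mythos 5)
Your proof is correct, and it takes a genuinely different route from the paper's. The paper applies \autoref{technique-conditions} together with \autoref{lemma:linear-condition-counting}: it writes $\Phi = \Phi_{(\lambda_1)}\times_{\hilb X}\cdots\times_{\hilb X}\Phi_{(\lambda_m)}$, uses that lemma to compute the codimension in $\hilb X$ of the locus of $[Y]$ meeting a fixed $\Lambda_i$ (which comes out to $\lambda_i$), sums these codimensions, and concludes that a general fiber of $\pi_2$ is zero-dimensional. You instead compute $\dim \Psi_\lambda$ directly by fibering over $U = \hilb X$, identifying each factor as a Grassmannian bundle over the universal family, and then subtract $\dim \prod_i Gr(\codim X - \lambda_i + 1, n+1)$ to reveal the hypothesis $\dim U = \sum \lambda_i$ as exactly the discrepancy. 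Both are dimension counts, but yours is a more transparent and unconditional computation: the paper's argument is phrased with the caveat ``assuming the projections $\pi_1^i$ are generically finite,'' which the reader has to recognize is in fact automatic (since the restriction of $\pi_1$ to a fiber of the map to $\bp^n$ is injective), whereas your approach never needs any such hypothesis because it never reasons through the projection to the Grassmannian factors.

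One small imprecision worth flagging: you assert that ``every fiber over a closed point of $U$ is irreducible.'' This is not literally true, since the fiber over $[Y]$ is built over $Y^m$, and $Y$ may fail to be irreducible for special $[Y] \in U$. What is true — and suffices — is that the map $\Psi_\lambda \to U$ is flat (a fiber product of flat maps), all fibers have the same dimension $\sum_i\bigl[k + (n-k-\lambda_i)(k+\lambda_i)\bigr]$ (which uses that $\dim Y = k$ is constant, being determined by the Hilbert polynomial), and the \emph{general} fiber is irreducible because the general $[Y]$ is integral by \autoref{proposition:upper-semicontinuous-number-of-components}. This places you in the second (flat) case of \autoref{lemma:irreducible-base-and-fibers}, which is what the application actually requires. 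The computation itself and the final comparison are correct.
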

\begin{proof}
	This is purely a dimension counting argument, and so we will use
	~\autoref{technique-conditions}.
	Note that we have $m$ independent conditions, one for each
	$1 \leq i \leq m$. Or more precisely,
	
	$\Phi$ is a fiber product
	\begin{align*}
		\Phi = \Phi_{(\lambda_1)}\times_{\hilb X} \Phi_{(\lambda_2)} \times_{\hilb X} \cdots \times_{\hilb X} \Phi_{(\lambda_m)}.
	\end{align*}
	of incidence correspondences, $\Phi_{(\lambda_i)}$ as defined in \autoref{definition:interpolation}
	with natural projections
	\begin{equation}
		\nonumber
		\begin{tikzcd}
			\qquad & \Phi_{(\lambda_i)} \ar {ld}{\pi^i_1} \ar {rd}{\pi^i_2} & \\
			 \hilb X && \bp^n. 
		 \end{tikzcd}\end{equation}
	Therefore, the sum of the codimensions for each
	such condition is the total number of conditions in $\hilb X$.
	In particular, the fiber of $\pi_2$ over a given collection
	of planes $\Lambda_1, \ldots, \Lambda_m$ is the product of the
	preimages of $\pi^i_2$ for all $1 \leq i \leq m$.
	Using \autoref{lemma:linear-condition-counting}, for each $i$, we have 
	\begin{align*}
		\codim_{\hilb X} (\pi_2^i)^{-1}(\Lambda_i) &= (n- (\codim X - \lambda_i)) - \dim X \\
		&= \dim X + \lambda_i - \dim X \\
		&= \lambda_i,
	\end{align*}
	assuming the projections $\pi_1^i$ are generically finite.
	Therefore,
	a fiber of $\pi_2$ has codimension $\sum_{i=1}^m \lambda_i = \dim \hilb X$,
	by assumption from \autoref{definition:lambda-constraints}
	if the map $\pi_1$ is generically finite.
	So, if $\pi_1$ were generically finite, then the image
	of a fiber of $\pi_2$, would be zero dimensional.
	This implies that 
	a fiber of $\pi_2$ is zero dimensional, and so the source
	and target of $\pi_2$ have the same dimension.
\end{proof}

\section[Interpolation of locally free sheaves]{Equivalent formulations of interpolation of locally free sheaves}
\label{ssec:vector-bundle-interpolation-properties}

The goal of this section is to prove
\autoref{lemma:equivalent-vector-bundle-interpolation},
which
gives generalizations to higher dimensional varieties
of the
equivalent formulations for interpolation of locally free sheaves, as detailed in
\cite[Section 4]{atanasovLY:interpolation-for-normal-bundles-of-general-curves} 
and \cite[Section 3]{atanasov:interpolation-and-vector-bundles-on-curves}.
We omit much of the proofs, since they are nearly
identical to those given in \cite[Section 4]{atanasovLY:interpolation-for-normal-bundles-of-general-curves}, mutatis mutandis. However, we restate
these generalizations here for clarity.

In order to state a generalization of Proposition \cite[Proposition 4.23]{atanasovLY:interpolation-for-normal-bundles-of-general-curves},
we will need to give some definitions.

\begin{definition}[Definition 4.21 of \cite{atanasovLY:interpolation-for-normal-bundles-of-general-curves}]
	\label{definition:}
Let $V$ be a vector space and let
$\left\{ W_b \subset V : b \in B \right\}$ be a collection
of subspaces indexed by a set $B$. Call the collection
$\left\{ W_b : b \in B \right\}$ {\bf linearly general}
if for each subspace $W \subset V$ there is
some $b \in B$ so that $W_b$ intersects $W$ transversely.
\end{definition}

Recall that for $p_1, \ldots, p_d \in X$ closed points, $\sci_{p_1, \ldots, p_d}$ denotes the ideal sheaf of
$\left\{ p_1 \right\} \cup \cdots \cup \left\{ p_d \right\}$ in $X$.

\begin{definition}
	\label{definition:coherent-sheaf-interpolation}
	We will say a coherent sheaf $G$ (not necessarily a locally free sheaf)
	on a variety $X$
	satisfies property $(*)$ if
	for all $d \geq 1$, for a general collection of points 
	$p_1, \ldots, p_d$ in $X$, we have $H^0(X, G \otimes \sci_{p_1, \ldots, p_d}) = 0$ or $H^1(X, G \otimes \sci_{p_1, \ldots, p_d}) = 0$.
\end{definition}

\begin{lemma}
	\label{lemma:4-23-generalization-from-ALY}
	Suppose $E$ is a locally free sheaf over a variety
	$X$ and $p$ is a smooth point in $X$.
	Suppose we have a collection of subsheaves
	$\left\{ G_b \subset E : b \in B \right\}$ indexed
	by a set $B$ and $F \subset E$ a subsheaf
	so that
	\begin{enumerate}[(a)]
		\item $F|_p \subset G_b|_p$ for all $b \in B$
		\item $\left\{ G_b/F|_p : b \in B \right\}$
			is linearly general in $E/F|_p$.
	\end{enumerate}
	If $E$ satisfies $(*)$ and
	the kernel of the restriction map
	$E \rightarrow E/F|_p$ satisfies $(*)$ of \autoref{definition:coherent-sheaf-interpolation},
	then there is some $b \in B$ so that
	the kernel of the restriction map $E \rightarrow E/G_b|_p$ satisfies $(*)$ of \autoref{definition:coherent-sheaf-interpolation}.
\end{lemma}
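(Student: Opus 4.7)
The plan is to relate $E_{G_b}$ to $E_F$ via a short exact sequence of sheaves on $X$. Because $F|_p \subset G_b|_p$, we have $E_F \subset E_{G_b}$ with skyscraper quotient
\[ 0 \to E_F \to E_{G_b} \to V_b \to 0, \qquad V_b := G_b|_p/F|_p,\]
supported at $p$. For $p_1, \ldots, p_d$ general, chosen so they avoid $p$, tensoring with $\sci := \sci_{p_1,\ldots,p_d}$ leaves $V_b$ unchanged, and the long exact cohomology sequence becomes
\[ 0 \to H^0(E_F \otimes \sci) \to H^0(E_{G_b} \otimes \sci) \to V_b \xrightarrow{\delta_d^b} H^1(E_F \otimes \sci) \to H^1(E_{G_b} \otimes \sci) \to 0.\]
Reading off, $H^0(E_{G_b} \otimes \sci) = 0$ iff $H^0(E_F \otimes \sci) = 0$ and $\delta_d^b$ is injective, while $H^1(E_{G_b} \otimes \sci) = 0$ iff $\delta_d^b$ is surjective.

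Using that $E_F$ satisfies $(*)$, for each $d$ either $H^1(E_F\otimes\sci) = 0$, in which case $\delta_d^b$ is vacuously surjective and $(*)$ for $E_{G_b}$ at $d$ is automatic for every $b$, or else $H^0(E_F\otimes\sci) = 0$, and I must arrange that $\delta_d^b$ is injective or surjective. Comparing with the outer sequence $0 \to E_F \to E \to E|_p/F|_p \to 0$ identifies $\delta_d^b$ as the restriction to $V_b \subset E|_p/F|_p$ of the connecting map $\delta_d \colon E|_p/F|_p \to H^1(E_F \otimes \sci)$, whose kernel is the image $\overline{W_d}$ of the evaluation map $H^0(E \otimes \sci) \to E|_p/F|_p$. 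Thus injectivity of $\delta_d^b$ is the condition $V_b \cap \overline{W_d} = 0$, and (using $H^1(E\otimes \sci)=0$, a consequence of $(*)$ for $E$ in the regime where $\overline{W_d}\neq 0$) surjectivity is the condition $V_b + \overline{W_d} = E|_p/F|_p$.

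The final step is to reduce to finitely many critical $d$ and then invoke the linearly general hypothesis. For $d$ sufficiently large that $H^0(E \otimes \sci) = 0$ (which holds eventually since $E$ satisfies $(*)$), one has $\overline{W_d} = 0$, so $\delta_d^b$ is injective for any $b$; only the finitely many "transition" values of $d$ where $H^0(E_F\otimes\sci) = 0$ but $H^0(E\otimes\sci)\neq 0$ remain. For these, I would take $W \subset E|_p/F|_p$ to be a single auxiliary subspace controlling all the $\overline{W_d}$ — for instance $W := \sum_d \overline{W_d}$ — and apply the linearly general hypothesis to $W$ to produce a $b \in B$ with $V_b$ transverse to $W$; in the regime $\dim V_b + \dim W \leq \dim(E|_p/F|_p)$ this gives injectivity of $\delta_d^b$ for each relevant $d$, and in the complementary regime it gives the surjectivity alternative. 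The main obstacle I expect is precisely this last step: the linearly general hypothesis as stated handles only one subspace at a time, and the two alternatives (injectivity vs.\ surjectivity of $\delta_d^b$) may hold for different $d$, so one likely needs to iterate the hypothesis in the style of \cite[Proposition 4.23]{atanasovLY:interpolation-for-normal-bundles-of-general-curves}, repeatedly passing to subsets of $B$ while shrinking the target subspace, to accommodate all the critical $d$ simultaneously.
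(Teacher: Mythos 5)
Your proof takes a genuinely different route from the paper's: the paper simply declares that the argument of \cite[Proposition 4.23]{atanasovLY:interpolation-for-normal-bundles-of-general-curves} goes through verbatim, and the entire content of its proof is the verification that twisting the skyscraper-quotient sequence by $\sci_{p_1,\ldots,p_d}$ stays exact (a $\stor^1$ computation, localized at the points). You instead attempt to reconstruct the ALY argument itself, and your skeleton is the right one: the short exact sequence $0 \to E_F \to E_{G_b} \to V_b \to 0$, the long exact sequence, the identification of $\delta_d^b$ with the restriction of the evaluation map's connecting morphism, and the reduction to a transversality condition between $V_b$ and $\overline{W_d}$.

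The gap you flag at the end is, however, real and not closed, so the proposal does not stand on its own. Replacing the collection $\{\overline{W_d}\}$ by the single subspace $W := \sum_d \overline{W_d}$ does not work: since the $\overline{W_d}$ are nested and decreasing, $\sum_d\overline{W_d}$ is just the largest one, and transversality of $V_b$ to the largest $\overline{W_d}$ does not imply transversality to the smaller ones (the surjectivity alternative $V_b + \overline{W_d} = E|_p/F|_p$ gets \emph{harder}, not easier, as $\overline{W_d}$ shrinks, while the injectivity alternative gets easier; around the transition in dimension you need control of at least two distinct $\overline{W_d}$'s, and the linearly general hypothesis only hands you one $b$ per subspace). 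So the iterative argument you gesture at really is needed and not a formality. Separately, you take for granted that tensoring $0 \to E_F \to E_{G_b} \to V_b \to 0$ by $\sci_{p_1,\ldots,p_d}$ stays exact and "leaves $V_b$ unchanged"; this is precisely the one point the paper isolates as needing an argument (the points are general, hence disjoint from $\supp V_b$, so the relevant $\stor^1$ vanishes), and it should be stated.
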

\begin{proof}
	The exact same proof given in \cite[Proposition 4.23]{atanasovLY:interpolation-for-normal-bundles-of-general-curves}
	goes through, except with
	one minor issue:
	We need to check that if we start with a sequence of
	sheaves
	\begin{equation}
		\nonumber
		\begin{tikzcd}
			0 \ar {r} & F \ar {r} & E \ar {r} & A \ar {r} & 0 
		\end{tikzcd}\end{equation}
	where $A$ has zero dimensional support, then for a general
	collection of points $p_1, \ldots, p_d$ the twisted
	sequence
	\begin{equation}
		\nonumber
		\begin{tikzcd}
			0 \ar {r} & F \otimes \sci_{p_1, \ldots, p_d} \ar {r} & E \otimes \sci_{p_1, \ldots, p_d}\ar {r} & A \otimes \sci_{p_1, \ldots, p_d}\ar {r} & 0 
		\end{tikzcd}\end{equation}
	remains exact.
	Since the points are general, we may assume $p_1, \ldots, p_d$
	are all distinct and do not intersect $\supp A$.
	To check this is exact, we only need verify that
	$\stor^1(\sci_{p_1, \ldots, p_d}, A) = 0$.

	Indeed, since $Hom$ commutes with localization,
	\cite[Exercise 1.6.G]{vakil:foundations-of-algebraic-geometry},
	$\stor^1$ does as well, and so it suffices to check
	\begin{align*}
		(\stor^1(\sci_{p_1, \ldots, p_d}, A))_{\fm_{\overline p}} = \stor^1\left( (\sci_{p_1, \ldots, p_d})_{\fm_{\overline p}}, A_{\fm_{\overline p}} \right) = 0,
	\end{align*}
	localized at maximal ideals $\fm_{\overline p}$ as $\overline p$
	ranges over all closed points $\overline p$ of
	$X$. In the case that $\overline p \notin \left\{ p_1, \ldots, p_d \right\}$, we have that $(\sci_{p_1, \ldots, p_d})_{\fm_{\overline p}}$
	is locally free, and so 
	$\stor^1\left( (\sci_{p_1, \ldots, p_d})_{\fm_{\overline p}}, A_{\fm_{\overline p}} \right) = 0$.
	In the case that $\overline p \in \left\{ p_1, \ldots, p_d \right\}$,
	using the assumption that $p_1, \ldots, p_d$ does not intersect
	$\supp A$, we obtain that $A_{\fm_{\overline p}} = 0$, and so again
	$\stor^1((\sci_{p_1, \ldots, p_d})_{\fm_{\overline p}}, A_{\fm_{\overline p}} )= 0$.
\end{proof}

\begin{corollary}
	\label{corollary:equivalent-vector-bundle-interpolation-with-only-points}
	Let $E$ be a locally free sheaf on a variety $X$
	satisfying $(*)$ of \autoref{definition:coherent-sheaf-interpolation},
	let $p \in X$ be a general smooth point, and let
	$0 \leq t \leq \rk E$.
	There exists a vector subspace $V \subset E|_p$
	with $\dim V = t$ so that the subsheaf
	$G \rightarrow E$ with corresponding exact sequence
	\begin{equation}
		\nonumber
		\begin{tikzcd}
			0 \ar {r} & G \ar {r} & E \ar {r} & E|_p/V \ar {r} & 0 
		\end{tikzcd}\end{equation}
	is a sheaf
satisfying $(*)$ of \autoref{definition:coherent-sheaf-interpolation}.
\end{corollary}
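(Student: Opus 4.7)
The plan is to apply Lemma~\ref{lemma:4-23-generalization-from-ALY} directly, with a single well-chosen subsheaf $F$ and family $\{G_W\}$, rather than proceeding by induction on $t$.

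I would take $F := \sci_p E \subset E$, and let the family $\{G_W\}$ be indexed by the $t$-dimensional subspaces $W \subset E|_p$, where $G_W := \ker(E \to E|_p/W)$ with the quotient viewed as a skyscraper sheaf supported at $p$. Note that $G_W$ is exactly the candidate subsheaf appearing in the statement of the corollary, so producing a single such $W$ for which $G_W$ satisfies $(*)$ is precisely what is required.

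The four hypotheses of Lemma~\ref{lemma:4-23-generalization-from-ALY} are then checked as follows. First, since $F_q = E_q$ for $q \neq p$ and $F_p = \fm_p E_p \subset G_{W,p}$ for every $W$, we have $F \subset G_W$ as subsheaves; moreover, the image of the natural map $F \otimes \kappa(p) \to E|_p$ is zero, so interpreting $F|_p$ as this image gives $F|_p = 0 \subset W = G_W|_p$ trivially. Second, the family $\{G_W|_p / F|_p\}$ coincides with the family of all $t$-dimensional subspaces of $E|_p$, which is linearly general since a generic member of the Grassmannian $Gr(t, E|_p)$ meets any prescribed subspace in the expected dimension. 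Third, $E$ satisfies $(*)$ by hypothesis. Fourth, the kernel of $E \to E|_p/F|_p = E|_p$ is precisely $F = \sci_p E$, and I claim this sheaf satisfies $(*)$: for any $d \geq 1$ and general $p_1, \ldots, p_d \in X$, the $d+1$ points $p, p_1, \ldots, p_d$ form a general collection because $p$ itself is general by hypothesis, and the identification $\sci_p E \otimes \sci_{p_1, \ldots, p_d} = E \otimes \sci_{p, p_1, \ldots, p_d}$ (valid since $E$ is locally free and the points are distinct) combined with the $(*)$ property of $E$ gives the required vanishing of $H^0$ or $H^1$.

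Applying Lemma~\ref{lemma:4-23-generalization-from-ALY} then produces some $W$ of dimension $t$ for which $G_W$ satisfies $(*)$, and setting $V := W$ completes the argument. The main subtlety is the interpretation of $F|_p$: because $F = \sci_p E$ is not locally free at $p$ when $\dim X > 1$, the literal fiber $F \otimes \kappa(p)$ picks up spurious Tor contributions, and for Lemma~\ref{lemma:4-23-generalization-from-ALY} to apply one must read $F|_p$ as the image of $F \otimes \kappa(p) \to E|_p$ rather than as the fiber itself. Once this convention is fixed, the entire argument collapses to a single invocation of the lemma, and no iterative construction is needed.
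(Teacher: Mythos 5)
Your proposal is correct and follows essentially the same route as the paper: both take $F := \sci_p E$ and index the family of candidate subsheaves by $t$-dimensional subspaces of $E|_p$, then invoke Lemma~\ref{lemma:4-23-generalization-from-ALY}. The paper's proof is terser, asserting that conditions (a) and (b) are trivially satisfied and that $F = E \otimes \sci_p$ satisfies $(*)$ by definition. Your write-up spells out the check of linear generality and, more usefully, flags the genuine notational subtlety that on a higher-dimensional $X$ the sheaf $\sci_p E$ is not locally free at $p$, so $F|_p$ in the lemma's hypotheses must be read as the image of $F \otimes \kappa(p)$ in $E|_p$ (which is zero), not as the literal fiber. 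That reading is what makes condition (a) both well-posed and vacuous, and it is implicitly assumed in the paper. No gap in your argument.
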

\begin{proof}
	In the statement of \autoref{lemma:4-23-generalization-from-ALY},	
	take $B := Gr(t, \rk E), F := E \otimes \sci_p,
	G_{[V]} := E|_p/V$.
	Here, $[V] \in Gr(t,\rk E)$ is the point corresponding to the
	vector subspace $V \subset E|_p$. Next, we wish to apply 
	\autoref{lemma:4-23-generalization-from-ALY}.
	From the definition of $(*)$, since $E$ satisfies $(*)$, we obtain that 
	$F = E \otimes \sci_p$ also satisfies $(*)$.
Then, since conditions $(a)$ and $(b)$ of
\autoref{lemma:4-23-generalization-from-ALY} are trivially satisfied, $G$ satisfies $(*)$.
\end{proof}

\begin{lemma}[A generalization of Propositions 4.5 and 4.6 and 4.23 of \cite{atanasovLY:interpolation-for-normal-bundles-of-general-curves}]
	\label{lemma:equivalent-vector-bundle-interpolation}
	Let $E$ be a locally free sheaf on $X$ with $H^1(X, E) = 0$ and $H^0(X, E) = q \cdot \rk E + r$.
	The following statements are equivalent:
	\begin{enumerate}
		\item The locally free sheaf $E$ satisfies interpolation.
		\item There is a subsheaf $E' \rightarrow E$ 
whose cokernel is supported at $q+1$ points if $r > 0$ and at $q$ points if
$r = 0$,
so that the scheme theoretic support at $q$ of these points has
dimension equal to $\rk E$
and $H^0(X, E') = H^1(X, E') = 0$.
		\item For every $d \geq 1$
			and points $p_1, \ldots, p_d \in X$,
			we have
			\begin{align*}
				h^0(X, E \otimes \sci_{p_1, \ldots, p_d}) = \max\left\{ 0, h^0(X, E) - dn \right\}
			\end{align*}
		\item For every $d \geq 1$, a general collection of points
			$p_1, \ldots, p_d$ satisfies either
			\begin{align*}
				h^0(X, E) && \text{ or } && h^1(X, E) = 0.
			\end{align*}
	\item A general set of $q$ points
			$p_1, \ldots, p_q$
			satisfy $h^0(X, E \otimes \sci_{p_1, \ldots, p_q}) = 0$
			and a general set of $q+1$ points
			$p_1, \ldots, p_{q+1}$ satisfy
			$h^1(X, E \otimes \sci_{p_1, \ldots, p_q}) = 0$.
	\end{enumerate}
\end{lemma}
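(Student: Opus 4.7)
The plan is to establish the equivalences by mimicking the argument in Section 4 of \cite{atanasovLY:interpolation-for-normal-bundles-of-general-curves}, exploiting the generalization of their Proposition 4.23 already proved as \autoref{lemma:4-23-generalization-from-ALY} and its consequence \autoref{corollary:equivalent-vector-bundle-interpolation-with-only-points}. I would split the implications into routine cohomological bookkeeping on the one hand and a single constructive induction on the other, following a cycle of implications $(1) \Leftrightarrow (2)$, then $(3) \Leftrightarrow (4) \Leftrightarrow (5)$ among themselves, and finally $(2) \Leftrightarrow (5)$ to close the loop.

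First I would dispense with the direct equivalences. For $(1) \Leftrightarrow (2)$ one unwinds \autoref{definition:vector-bundle-interpolation}: given the defining exact sequence
\begin{equation*}
0 \to E' \to E \to \bigoplus_{i=1}^{m} E|_{p_i}/V_i \to 0,
\end{equation*}
take cohomology and use $H^1(X,E)=0$ to identify the interpolation equality $h^0(E) - h^0(E') = \sum \lambda_i$ with the vanishing $H^0(E') = 0$, together with the automatic consequence $H^1(E') = 0$ when the evaluation map is surjective. For $(3) \Leftrightarrow (4) \Leftrightarrow (5)$, the key observation is that $(p_1,\ldots,p_d) \mapsto h^0(X, E \otimes \sci_{p_1,\ldots,p_d})$ is upper semicontinuous while $\chi(E \otimes \sci_{p_1,\ldots,p_d}) = \chi(E) - d \cdot \rk E$ is constant, so the generic value of $h^0$ (respectively $h^1$) is determined by whichever of $h^0$ or $h^1$ is forced positive by Euler characteristic; this directly matches the formulas of (3) and the dichotomy of (4), and specializing to $d = q$ and $d = q+1$ recovers (5).

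The main obstacle, as in \cite{atanasovLY:interpolation-for-normal-bundles-of-general-curves}, is producing the subsheaf $E'$ of (2) starting from the pointwise vanishing of (4) or (5). For this I would induct on the number of conditions imposed, using \autoref{corollary:equivalent-vector-bundle-interpolation-with-only-points} at each step to produce a subspace $V_i \subset E|_{p_i}$ of the prescribed dimension whose associated kernel in the exact sequence still satisfies property $(*)$ of \autoref{definition:coherent-sheaf-interpolation}. Starting from $E$ itself (which satisfies $(*)$ by (4)), one iterates $q$ times with $V_i = E|_{p_i}$ and then, if $r > 0$, once more with a general subspace of codimension $r$; at each stage smoothness of the chosen point is needed to invoke the corollary, which is harmless because the smooth locus of the integral variety $X$ is dense. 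After $q$ (or $q+1$) steps the resulting kernel $E'$ has cokernel supported as required, automatically satisfies $H^1(E') = 0$ by the inductive $(*)$ property, and has $H^0(E') = 0$ by the dimension count $h^0(E) = q \cdot \rk E + r$, giving (2).

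Finally, I would verify that the proof really is identical to the curve case once the adaptations made in \autoref{lemma:4-23-generalization-from-ALY} are in place, in particular the $\stor^1$-vanishing argument that lets one twist by ideals of general points while preserving exactness, which is the only genuinely higher-dimensional subtlety. The hardest bookkeeping is tracking the exact remainder term $r$ in the last step of the induction, where one must choose a $V$ of the correct intermediate dimension rather than either $0$ or all of $E|_p$; but this is precisely what the flexibility in the statement of \autoref{corollary:equivalent-vector-bundle-interpolation-with-only-points} (allowing any $0 \leq t \leq \rk E$) was designed to provide.
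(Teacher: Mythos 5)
Your proposal takes essentially the same route as the paper: $(1)\Leftrightarrow(2)$ by unwinding \autoref{definition:vector-bundle-interpolation}, the cohomological equivalences $(3)\Leftrightarrow(4)\Leftrightarrow(5)$ by semicontinuity and an Euler-characteristic-type count, and the bridge between the two blocks by iterated application of \autoref{corollary:equivalent-vector-bundle-interpolation-with-only-points}. The only difference is cosmetic: the paper closes the cycle via $(4)\Rightarrow(1)$ and $(2)\Rightarrow(5)$, whereas you propose $(2)\Leftrightarrow(5)$, but these are interchangeable once $(1)\Leftrightarrow(2)$ is in hand.

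One small point worth tightening: the constancy of $\chi(E\otimes\sci_{p_1,\ldots,p_d})$ is not by itself what forces the dichotomy in higher dimensions, since $h^i$ for $i\geq 2$ may be nonzero (the paper explicitly flags this as the one part of the ALY argument that does not transfer verbatim). What one should use instead is the identity $h^0(E\otimes\sci_{p_1,\ldots,p_d}) - h^1(E\otimes\sci_{p_1,\ldots,p_d}) = h^0(E) - d\cdot\rk E$, which follows from the long exact sequence of the twist (the higher $h^i$ are unchanged and cancel) together with the standing hypothesis $H^1(X,E)=0$. With that substitution your argument goes through exactly as written.
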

\begin{proof}
First, we show that $(1)$ and $(2)$ are equivalent.
Their equivalence follows almost immediately from their definition.
The only slight difference
is that we must check that the sheaf $E'$ from 
\autoref{definition:vector-bundle-interpolation} has $H^1(X, E') = 0$, which follows
from the sequence on cohomology associated to the exact sequence
\eqref{equation:sequence-cohomological-interpolation-definition}.

	The equivalence of $(3)$ and $(4)$ is an immediate
	generalization of the statement and proof of \cite[Proposition 4.5]{atanasovLY:interpolation-for-normal-bundles-of-general-curves}.

	The equivalence of $(3)$ and $(5)$ is an immediate generalization of the statement and proof of \cite[Proposition 4.6]{atanasovLY:interpolation-for-normal-bundles-of-general-curves}.
Note also that the last part of \cite[Proposition 4.5]{atanasovLY:interpolation-for-normal-bundles-of-general-curves} regarding
Euler characteristics does not hold for higher dimensional varieties
because it may be that $H^i(X, N_{X/\bp^n}) \neq 0$ for $i > 1$.

	To complete the proof, we will show
	$(4)$ implies $(1)$ and $(2)$ implies $(5)$. 
	For notational convenience,
	for the remainder of this proof, we shall deal with the case that
	$r \neq 0$, as the case $r = 0$ is completely analogous.
	Take $p_1, \ldots, p_q, p_{q+1}$
	to be $q + 1$ general points in $X$.
	
	First, we show that $(4)$ implies $(1)$ by $q+1$ applications of
	\autoref{corollary:equivalent-vector-bundle-interpolation-with-only-points}.
	Let $G_1$ be the sheaf which is the kernel of
	$E \ra E|_{p_1}$.
	By \autoref{corollary:equivalent-vector-bundle-interpolation-with-only-points}
	$G_1$ satisfies $(*)$.
	For $2 \leq i \leq q$, let
	$G_i$ be the kernel of $G_{i-1} \ra E|_{p_i}$.
	Then, $G_i$ satisfies $(*)$ 
	using \autoref{corollary:equivalent-vector-bundle-interpolation-with-only-points}
	and the inductive assumption that $G_{i-1}$ satisfies $(*)$.
	Finally, by 
	\autoref{corollary:equivalent-vector-bundle-interpolation-with-only-points}
	there exists $V \subset E|_{p_{q+1}}$ with $\dim V = \codim X - r$ so that
	$G_{q+1} := \ker(G_q \ra E|_{p_{q+1}}/V)$ satisfies $(*)$.
	Then, we obtain an exact sequence
	\begin{equation}
		\label{equation:sequence-for-points-to-linear-spaces}
		\begin{tikzcd}
			0 \ar {r} &  G_{q+1} \ar {r} & E \ar {r} & E|_{q+1}/V \oplus \left(\bigoplus_{i=1}^q E|_{p_i} \right)\ar {r} & 0 
		\end{tikzcd}\end{equation}
	as in \eqref{equation:sequence-cohomological-interpolation-definition}
	so that either $H^1(X, G_{q+1}) = 0$ or $H^0(X, G_{q+1}) = 0$.
	But then, by the associated long exact sequence to \eqref{equation:sequence-for-points-to-linear-spaces}, we have
	$H^1(X, G_{q+1}) = 0$ if and only if $H^0(X, G_{q+1}) = 0$,
	implying that $H^1(X, G_{q+1}) = 0$ and so $E$ satisfies interpolation.

	Finally, we show, $(2)$ implies $(5)$ from a fairly straightforward exact sequence.
Suppose that $E$ satisfies interpolation
with corresponding subsheaf $E'$ 
and points $p_1, \ldots, p_q,p_{q+1}$,
as in 
\autoref{definition:vector-bundle-interpolation}.
Assuming that $E$ satisfies interpolation, 
so that $H^0(E') = H^1(E') = 0$, we see
we see
that $E$ satisfies $(5)$, by considering
cohomology associated to the exact sequence beginning with
$E' \rightarrow E \otimes \sci_{p_1, \ldots, p_q}$
and the exact sequence beginning with
$E \otimes \sci_{p_1, \ldots, p_{q+1}} \rightarrow E'$.
\end{proof}

\section{Additional tools}
\label{ssec:tools-additional}

In this section, we introduce a couple more tools to prove
\autoref{theorem:equivalent-conditions-of-interpolation}, one
easy and one more difficult.

We start with an extremely elementary lemma, useful
for establishing the equivalence between pointed interpolation and interpolation.

\begin{lemma}
	\label{lemma:definition-and-pointed-equivalence-with-lambda-equality}
	Suppose $\sum_{i=1}^m \lambda_i = \dim \hilb X$. Then,
	$\lambda$-interpolation is equivalent to $\lambda$-pointed interpolation.
\end{lemma}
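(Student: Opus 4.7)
The plan is to observe that there is a natural forgetful morphism $f\colon \Psi_\lambda \to \Phi_\lambda$ obtained by dropping the point coordinates $(p_1,\ldots,p_m)$, that $\eta_2 = \pi_2 \circ f$, and that $f$ is surjective on the underlying sets of closed points. Given all three of these facts, surjectivity of $\pi_2$ and of $\eta_2$ are immediately equivalent, which is the content of the lemma.

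For the existence of the factorization and surjectivity of $f$: by the very definition of $\Phi_\lambda$ in \autoref{definition:interpolation}, $\Phi_\lambda$ is the image of $\Psi_\lambda$ under the projection
\[
\Psi_\lambda \hookrightarrow U \times \prod_{i=1}^m Gr(n-k-\lambda_i+1,n+1) \times (\bp^n)^m \longrightarrow U \times \prod_{i=1}^m Gr(n-k-\lambda_i+1,n+1).
\]
This composition is proper, since the only non-trivial factor is projection away from the proper variety $(\bp^n)^m$. Hence the scheme-theoretic image coincides with the set-theoretic image on underlying topological spaces, and in particular every closed point of $\Phi_\lambda$ admits a closed-point lift in $\Psi_\lambda$. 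The map $\eta_2$ is the projection to the Grassmannian factor and $\pi_2$ is the same projection after forgetting points, so $\eta_2 = \pi_2 \circ f$ by construction.

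For the equivalence itself: the implication that $\lambda$-pointed interpolation implies $\lambda$-interpolation is immediate from $\eta_2 = \pi_2 \circ f$. Conversely, suppose $\pi_2$ is surjective and let $(\Lambda_1,\ldots,\Lambda_m)$ be an arbitrary closed point of $\prod_{i=1}^m Gr(n-k-\lambda_i+1,n+1)$. Pick a preimage $([Y],\Lambda_1,\ldots,\Lambda_m) \in \Phi_\lambda$ under $\pi_2$; by surjectivity of $f$ on closed points, lift this to a closed point $([Y],\Lambda_1,\ldots,\Lambda_m,p_1,\ldots,p_m) \in \Psi_\lambda$, which then maps to $(\Lambda_1,\ldots,\Lambda_m)$ under $\eta_2$. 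Thus $\eta_2$ is surjective, completing the equivalence.

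The main ``obstacle'' is merely a careful unwinding of the definitions; there is no genuine difficulty. It is worth noting that the dimension hypothesis $\sum_i \lambda_i = \dim \hilb X$ is not actually used in this argument. It appears in the statement because this is the regime relevant to \autoref{theorem:equivalent-conditions-of-interpolation}, where one subsequently wishes to upgrade surjectivity of $\pi_2$ and $\eta_2$ to generic finiteness using the dimension equality supplied by \autoref{lemma:interpolation-dimension}.
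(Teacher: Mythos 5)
Your proof is correct and takes essentially the same approach as the paper, which likewise factors $\eta_2$ through the surjection $\Psi_\lambda \to \Phi_\lambda$ (there denoted $\tau$) and deduces that surjectivity of $\pi_2$ and $\eta_2$ are equivalent. Your observation that the hypothesis $\sum_i \lambda_i = \dim \hilb X$ is not actually used in this argument is accurate; it is present only because this is the regime in which the lemma is invoked.
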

\begin{proof}
	The map
$\eta_2$ factors as
\begin{equation}
	\nonumber
	\begin{tikzcd}
		\Psi \ar {rr}{\tau} \ar {rd}{\eta_2} && \Phi \ar {ld}{\pi_2} \\
		 &  \prod_{i=1}^m Gr(\codim X - \lambda_i +1, n+1).
& 
	 \end{tikzcd}\end{equation}
Since $\tau$ is surjective, we have that $\pi_2$ is surjective
if and only if $\eta_2$ is surjective, and so
$\lambda$-interpolation is equivalent to $\lambda$-pointed interpolation.
\end{proof}

In the remainder of this section, we prove a result from
deformation theory 
crucial in establishing
the equivalence between interpolation of
a locally free sheaf and interpolation of a Hilbert scheme.
This is the crux of the proof of
the equivalence of the distinct groups of conditions in
\autoref{theorem:equivalent-conditions-of-interpolation},

\begin{proposition}
	\label{proposition:tangent-space-to-psi}
	Let $\Psi, \eta_2$ be as in \autoref{definition:interpolation}
	and let 
	\begin{align*}
	  \overline p := (X, \Lambda_1, \ldots, \Lambda_m, p_1, \ldots, p_m) \in \Psi
	\end{align*}
	be a closed point of $\Psi$,
	so that $\Lambda_i$ meets $X$ quasi-transversely
	and so that the $p_i$ are distinct smooth points of $X$. Choose subspaces $V_i \subset N_{X/\bp^n}|_{p_i}$ where
$V_i$ is the image of the composition
\begin{equation}
	\nonumber
	\begin{tikzcd}
	  N_{p_i/\Lambda_i} \ar{r} & N_{p_i/\bp^n} \ar{r} & N_{X/\bp^n}|_{p_i}. 
	\end{tikzcd}\end{equation}
For any closed point $\overline q$  of $\Psi$, let \[d\eta_2|_{\overline{q}}: T_{\overline{q}} \Psi \ra T_{\eta_2(\overline q)}\prod_{i=1}^m Gr(\codim X - \lambda_i +1, n+1)\] be the
induced map on tangent spaces. Then, $d{\eta_2}$ is surjective if and only if
if and only if the map
\begin{equation}
\nonumber
	\begin{tikzcd}
		H^0(X, N_{X/\bp^n}) \ar{r}{\tau} & H^0(X, \oplus_{i=1}^m N_{X/\bp^n}|_{p_i}/ V_i)
	\end{tikzcd}\end{equation}
is surjective.
\end{proposition}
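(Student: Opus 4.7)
The plan is to compute $T_{\overline p}\Psi$ via the fiber-product structure of $\Psi$, identify $d\eta_2$ as a natural linear projection, and then reduce its surjectivity to that of $\tau$ by means of an auxiliary evaluation map.

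First I would unwind the tangent space. Since $\Psi$ is built from fiber products involving $\uhilb X$ and $\uhilb{\Lambda_i}$ over $\bp^n$, a tangent vector at $\overline p$ is a tuple
\[
(\phi, \psi_1, \ldots, \psi_m, v_1, \ldots, v_m) \in H^0(X, N_{X/\bp^n}) \oplus \bigoplus_{i=1}^m H^0(\Lambda_i, N_{\Lambda_i/\bp^n}) \oplus \bigoplus_{i=1}^m T_{p_i}\bp^n
\]
subject, for each $i$, to two compatibilities expressing that the deformed point $p_i$ lies on both the deformed $X$ and the deformed $\Lambda_i$: the image of $v_i$ in $N_{X/\bp^n}|_{p_i}$ equals $\phi(p_i)$, and the image of $v_i$ in $N_{\Lambda_i/\bp^n}|_{p_i}$ equals $\psi_i(p_i)$. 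Under this description, $d\eta_2$ is projection to the $(\psi_1, \ldots, \psi_m)$-coordinates.

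Next, the plan is to introduce an auxiliary map $\beta_i \colon H^0(\Lambda_i, N_{\Lambda_i/\bp^n}) \to N_{X/\bp^n}|_{p_i}/V_i$ defined by evaluating $\psi_i$ at $p_i$ in $N_{\Lambda_i/\bp^n}|_{p_i} = T_{p_i}\bp^n/T_{p_i}\Lambda_i$, lifting arbitrarily to $T_{p_i}\bp^n$, and projecting to the quotient $N_{X/\bp^n}|_{p_i}/V_i$. The map is well-defined because the lifting ambiguity lies in $T_{p_i}\Lambda_i$, whose image in $N_{X/\bp^n}|_{p_i}$ is exactly $V_i$ by the definition of $V_i$. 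Each $\beta_i$ is surjective: evaluation $H^0(\Lambda_i, N_{\Lambda_i/\bp^n}) \to N_{\Lambda_i/\bp^n}|_{p_i}$ is surjective since $N_{\Lambda_i/\bp^n}$ is a direct sum of copies of $\sco(1)$ on a projective space, and the subsequent map $T_{p_i}\bp^n/T_{p_i}\Lambda_i \to N_{X/\bp^n}|_{p_i}/V_i$ is surjective because $T_{p_i}\bp^n \to N_{X/\bp^n}|_{p_i}$ already is. Hence $\beta := \bigoplus_i \beta_i$ is surjective.

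The key step is then the following linear-algebra unwinding. The quasi-transversality hypothesis implies $T_{p_i}X \cap T_{p_i}\Lambda_i = 0$, so any $v_i \in T_{p_i}\bp^n$ prescribed to map to $\psi_i(p_i)$ in $N_{\Lambda_i/\bp^n}|_{p_i}$ is determined modulo $T_{p_i}\Lambda_i$, and hence its image in $N_{X/\bp^n}|_{p_i}$ is determined modulo $V_i$ and equals $\beta_i(\psi_i)$ in the quotient. Thus the two compatibilities can be met if and only if $\phi(p_i) \equiv \beta_i(\psi_i) \pmod{V_i}$ for every $i$, which is precisely the condition $\tau(\phi) = (\beta_1(\psi_1), \ldots, \beta_m(\psi_m))$. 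Therefore $d\eta_2$ is surjective if and only if every $(\psi_1, \ldots, \psi_m)$ admits such a $\phi$, and since $\beta$ is surjective this is equivalent to surjectivity of $\tau$. The main obstacle is the first paragraph's identification of $T_{\overline p}\Psi$ as the space of compatible tuples; once this unwinding of the fiber product is justified, the rest is essentially formal.
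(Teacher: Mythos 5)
Your proposal is correct in outline and takes essentially the same approach as the paper, though you package the comparison of $d\eta_2$ with $\tau$ as an explicit linear-algebra unwinding rather than via the paper's commutative diagram of fiber squares of tangent spaces. Both routes rest on the same deformation-theoretic inputs: that $T_{[X]}\hilb X \cong H^0(X, N_{X/\bp^n})$, that the universal families are flag Hilbert schemes whose tangent spaces carry the stated compatibility conditions (Sernesi, Remark 4.5.4(ii)), and that tangent spaces commute with fiber products. You correctly identify the first of these as the crux. What you do not address, and where the paper spends effort, is that $\hilb X$ is the irreducible component with its reduced structure, not the full Hilbert scheme, so $T_{[X]}\hilb X = H^0(X, N_{X/\bp^n})$ is not automatic. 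The paper handles this by introducing an auxiliary scheme $\scf$ built over the connected component $\sch$, and uses the hypotheses $H^1(X, N_{X/\bp^n}) = 0$ and $X$ a local complete intersection to conclude that $[X]$ is a smooth point, that $\scf$ and $\Psi$ agree in a neighborhood of $\overline p$, and hence that their tangent spaces coincide. Without some such argument the identification in your first paragraph is unjustified, so you should fold in these hypotheses explicitly.

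One small efficiency: you invoke quasi-transversality to get $T_{p_i}X \cap T_{p_i}\Lambda_i = 0$, but your argument never actually uses that vanishing. Both the well-definedness of $\beta_i$ (lifts of $\psi_i(p_i)$ differ by $T_{p_i}\Lambda_i$, whose image in $N_{X/\bp^n}|_{p_i}$ is by definition $V_i$) and the surjectivity of $\beta_i$ hold without it. In the paper's diagram chase the quasi-transversality is what makes the lower-right square a genuine pullback of tangent spaces; your elementwise argument sidesteps that verification, which is a modest simplification. The remaining linear algebra, reducing surjectivity of $d\eta_2$ to $\operatorname{im}(\beta) \subset \operatorname{im}(\tau)$ and then to surjectivity of $\tau$ once $\beta$ is known surjective, is correct as written.
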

\begin{proof}
To set things up properly, we will need some definitions.
Recall that $\uhilb {\Lambda_i}$ is the universal family over the Hilbert scheme of $\dim \Lambda_i$ planes in $\bp^n$.
That is, it is the universal family over $Gr(\codim X - \lambda_i + 1, n+1)$.
Next, take $\sch$ to be the connected component of the Hilbert
scheme containing $X$ and let $\scv$ be the universal family over $\sch$.
Next, define the scheme
\begin{align*}
	\scf &:= (\sch \times_{\bp^n} \uhilb {\Lambda_1}) \times_\scv \cdots \times_\scv (\sch \times_{\bp^n} \uhilb {\Lambda_m}) \\
	& \cong (\scu \times_\sch \cdots \times_\sch \scu) \times_{(\bp^n)^m} (\uhilb {\Lambda_1} \times \cdots \times \uhilb {\Lambda_m}),
\end{align*}
where there are $m$ copies of $\scu$ in the first parenthesized expression on the second line.

Note that here $\scf$ is not necessarily the same as $\Psi$ because we need not have $\sch = \hilb X$:
The former is the connected component of the Hilbert scheme containing $X$ while $\hilb X$ is the
irreducible component of the Hilbert scheme containing $X$.
However, we will later explain why the tangent spaces of these two schemes are identical, which
is enough for our purposes.

Now, under our assumption that $p_1, \ldots, p_n$ are distinct, we have a diagram

\begin{equation}
\label{equation:three-by-three-deformation-theory}
\hspace*{-2cm}
\includegraphics{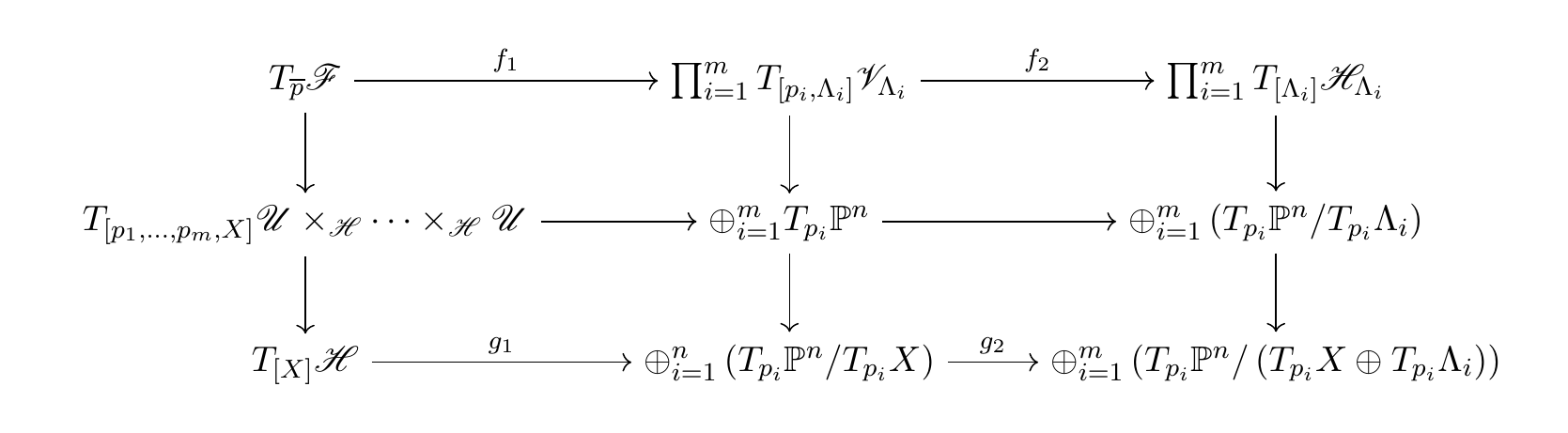}
\end{equation}
\noindent	
in which every square is a fiber square.

First, let us justify why the four small squares of \eqref{equation:three-by-three-deformation-theory} are fiber squares.
The lower right hand square of \eqref{equation:three-by-three-deformation-theory} is a fiber square by elementary linear algebra and the assumption that
$\Lambda_i$ meet $X$ quasi-transversely.
The upper right square of \eqref{equation:three-by-three-deformation-theory} is a fiber square for each $i$
by \cite[Remark 4.5.4(ii)]{sernesi:deformations-of-algebraic-schemes},
as the universal family over the Hilbert scheme is precisely the Hilbert flag scheme of points inside that Hilbert scheme.
Next, the lower left hand square of \eqref{equation:three-by-three-deformation-theory} is a fiber
square because when the points $p_1, \ldots, p_n$ are distinct, the tangent space to this
$n$-fold fiber product of universal families over the Hilbert scheme is the same as the tangent space of
to the Hilbert flag scheme of degree $n$ schemes inside schemes with the same Hilbert polynomials as $X$.
Then, the fiber square follows from \cite[Remark 4.5.4(ii)]{sernesi:deformations-of-algebraic-schemes} for this flag Hilbert scheme.
Finally, the upper left square of \eqref{equation:three-by-three-deformation-theory} is a fiber square
because $\scf$ is defined as a fiber product of
$(\scu \times_\sch \cdots \times_\sch \scu)$ and $(\uhilb {\Lambda_1} \times \cdots \times \uhilb {\Lambda_m})$,
and the fiber product of the tangent spaces is the tangent space of the fiber product.

Now, observe that the composition $f_2 \circ f_1$ is precisely the map on tangent spaces $d\eta_2|_{\overline{p}}$.
To make this identification, we need to know that we can naturally identify
$T_{\overline p} \scf \cong T_{\overline p}\Psi$.
However, the assumption that $H^1(X, N_{X/\bp^n}) = 0$
and $X$ is a locally complete intersection means that $[X]$ is a smooth point of the Hilbert scheme.
Because the fiber over $[X]$ of the projection $\Psi \ra \hilb X$ is smooth, it follows that $\Psi$ is smooth at $\overline p$.
For the same reason, it follows that $\scf$ is smooth at the corresponding point $\overline p$.
Therefore, both $\scf$ and $\Psi$ are smooth on some open neighborhood $U$ containing
$\overline p$.
Now, since both $\Psi$ and $\scf$ are defined in terms of fiber products, which agree on some open neighborhood $V$
contained in $U$, it follows that on $V$ we have an isomorphism $\scf|_V \cong \Psi|_V$, and in particular their
tangent spaces are isomorphic. So, we can identify $f_2 \circ f_1$ with $d\eta_2|_{\overline{p}}$.

Since all four subsquares of \eqref{equation:three-by-three-deformation-theory}
are fiber squares, the full square \eqref{equation:three-by-three-deformation-theory} is a fiber square,
and hence $f_2 \circ f_1$ is an isomorphism if and only if $g_2 \circ g_1$ is an isomorphism.

To complete the proof, we only need identify the map $g_2 \circ g_1$ with $\tau$.
But this follows from the identifications
\begin{align*}
	T_{[X]} \sch &\cong H^0(X, N_{X/\bp^n}) \\
	T_{\bp_i} \bp^n/T_{p_i} X &\cong H^0(X, N_{X/\bp^n}|_{p_i}) \\
	T_{\bp_i} \bp^n/\left(T_{p_i} X \oplus T_{p_i} Y  \right) &\cong H^0(X, N_{X/\bp^n}|_{p_i}/V_i).
\end{align*}
The first isomorphism follows from \cite[Theorem 1.1(b)]{Hartshorne:deformation}.
The second isomorphism holds because the normal exact sequence
\begin{equation}
\nonumber
	\begin{tikzcd}
	  0 \ar{r} &  T_{p_i}X \ar{r} & T_{p_i} \bp^n \ar{r} & N_{X/\bp^n}|_{p_i} \ar{r} & 0 
	\end{tikzcd}\end{equation}
is exact on global sections, as all sheaves are supported at $p_i$.
The third isomorphism holds because
$\left( T_{p_i} \bp^n / \left( T_{p_i} X \oplus T_{p_i} \Lambda_i \right) \right)$ can be viewed as the
quotient of $T_{p_i} \bp^n$ first by $T_{p_i} X$ and then by the image of $T_{p_i} \Lambda_i$ in that quotient.
However, $T_{p_i} \bp^n / T_{p_i} X \cong N_{X/\bp^n}|_{p_i}$, and then
$V_i$ is by definition the image of $T_{p_i} \Lambda_i$ in $N_{X/\bp^n}|_{p_i}$.
\end{proof}

\section{Proof of Theorem ~\getrefnumber{theorem:equivalent-conditions-of-interpolation}}
\label{ssec:proof-of-equivalent-conditions-of-interpolation}

\begin{proof}[Proof of \autoref{theorem:equivalent-conditions-of-interpolation}]

The structure of proof is as follows:
\begin{enumerate}
	\item Show equivalence of conditions \ref{interpolation-definition}-\ref{interpolation-sweep}
	\item Show equivalence of conditions \ref{strong-definition}-\ref{strong-naive-equality}
	\item Show equivalence of conditions \ref{cohomological-definition}-\ref{cohomological-boundary}
	\item Demonstrate the implications that \ref{cohomological-definition}-\ref{cohomological-boundary} imply \ref{interpolation-definition}-\ref{interpolation-sweep},  \ref{cohomological-definition}-\ref{cohomological-boundary} imply
\ref{strong-definition}-\ref{strong-naive-equality}, and
\ref{strong-definition}-\ref{strong-naive-equality}
imply
\ref{interpolation-definition}-\ref{interpolation-sweep}, in all characteristics. Further, all statements are equivalent in characteristic $0$.

\end{enumerate}
\ssec{Equivalence of conditions \ref{interpolation-definition}-\ref{interpolation-sweep}}

First, \ref{interpolation-definition} and
\ref{interpolation-pointed} are equivalent 
by \autoref{lemma:definition-and-pointed-equivalence-with-lambda-equality}
applied to $\lambda = ((\codim X)^q, r)$.

Next, note that a proper map of irreducible schemes of the same dimension is
surjective if and only if it is dominant if and only if it is generically
finite if and only if there is some point isolated in its fiber.
The first three equivalences are immediate, the last follows from
\autoref{lemma:isolated-fiber-implies-dominant}.
Since $\dim \prod_{i=1}^m Gr(\codim X - \lambda_i +1, n+1) = \dim \Phi$,
by \autoref{lemma:interpolation-dimension},
we have that \ref{interpolation-definition}, \ref{interpolation-dominant},
\ref{interpolation-finite}, \ref{interpolation-isolated} are equivalent.

Next, since $\dim \Phi = \dim \Psi$, and $\Psi$ is irreducible, 
by Lemma
\ref{lemma:irreducible-and-dimension},
we have
$\dim \prod_{i=1}^m Gr(\codim X - \lambda_i +1, n+1) = \dim \Psi$.
So, by reasoning analogous to that of the previous paragraph, we obtain that \ref{interpolation-pointed}, \ref{interpolation-pointed-dominant}, \ref{interpolation-pointed-finite}, and \ref{interpolation-pointed-isolated} are equivalent.

Next, \ref{interpolation-definition} is equivalent to
\ref{interpolation-naive} because surjectivity of a proper map
of varieties is equivalent to surjectivity on closed points of the varieties.
Since the fibers of the map $\pi_2$ precisely consists of those
elements of $\hilb X$ meeting a specified collection of $q$ points
and a plane $\Lambda$, being surjective is equivalent to there
being some element of $\hilb X$ passing through these $q$ points and meeting $\Lambda$.

Finally, \ref{interpolation-naive} is equivalent to \ref{interpolation-sweep}
because the condition that the variety swept out by the elements of $\hilb X$
containing $q$ points
meet a general plane $\Lambda$ of dimension $\codim X - r$ is equivalent
to the variety swept out by the elements of $\hilb X$ being
$\dim X + r$ dimensional.
This is just using the fact that a variety of dimension $d$ in $\bp^n$
meets a general plane of dimension $d'$ if and only if $d + d' \geq n$.
But, of course, the dimension swept out by the elements of $\hilb X$
containing $q$ general points is at most $\dim X + r$ dimensional, because
there is at most an $r$ dimensional space of varieties in $\hilb X$
containing $r$ general points, using \autoref{lemma:preimage-dimension}.

This shows the equivalence of properties \ref{interpolation-definition}
through \ref{interpolation-sweep}.

\ssec{Equivalence of conditions \ref{strong-definition}-\ref{strong-naive-equality}
}

By \autoref{lemma:definition-and-pointed-equivalence-with-lambda-equality},
for all $\lambda$ with $\sum_{i=1}^m \lambda_i = \dim \hilb X$, $\lambda$-interpolation is equivalent
to $\lambda$-pointed interpolation. This establishes the equivalence
of \ref{strong-equality} and \ref{strong-pointed-equality}
and the equivalence of \ref{strong-definition} and \ref{strong-pointed}.

Next, \ref{strong-definition} is equivalent to \ref{strong-naive},
because the map $\pi_2$ contains a point corresponding to a
collection of planes $\Lambda_1, \ldots, \Lambda_m$ in its image
if and only if there is some element of the Hilbert
schemes meeting those planes. Similarly,
\ref{strong-equality} is equivalent to \ref{strong-naive-equality}.

To complete these equivalences, we only need show \ref{strong-naive} is equivalent to \ref{strong-naive-equality}. Clearly \ref{strong-naive} implies \ref{strong-naive-equality}. For the reverse implication, observe that if we start with a collection
of planes $\Lambda_1, \ldots, \Lambda_s$ with $\Lambda_i \in Gr(\codim X - \lambda_i  + 1, n+1)$, so that $\sum_{i=1}^s \lambda_i < \dim \hilb X$, we can extend the sequence $\lambda$ to a sequence $\mu = \left( \mu_1, \ldots, \mu_m \right)$
for $m > s$,
with
$0 \leq \mu_i \leq \codim X$, 
$\mu_i = \lambda_i$ for $i \leq s$, and $\sum_{i=1}^m \mu_i = \dim \hilb X$. Then, if some element of $\hilb X$ meets planes $\Lambda_1, \ldots, \Lambda_m$ corresponding to the sequence $\mu$, it certainly
also meets $\Lambda_1, \ldots, \Lambda_s$. Hence, \ref{strong-naive-equality} implies \ref{strong-naive}.

\ssec{Equivalence of conditions \ref{cohomological-definition}-\ref{cohomological-boundary}}

The equivalence of \ref{cohomological-definition},
\ref{cohomological-restatement}, \ref{cohomological-sections},
\ref{cohomological-vanish}
\ref{cohomological-boundary} is immediate from
\autoref{lemma:equivalent-vector-bundle-interpolation}, taking $E := N_{X/\bp^n}$.

Perhaps also
most surprising, part of these equivalences is
the equivalence of \ref{cohomological-definition}
and \ref{cohomological-strong}. This is an immediate
generalization of \cite[Theorem 8.1]{atanasov:interpolation-and-vector-bundles-on-curves} to higher dimensional varieties.
The proof is almost the verbatim the same, replacing
curves with arbitrary varieties.
Note that the key ingredient in the proof of
\cite[Theorem 8.1]{atanasov:interpolation-and-vector-bundles-on-curves}
is
\cite[Proposition 8.3]{atanasov:interpolation-and-vector-bundles-on-curves}, which is just an elementary linear algebraic fact.

\ssec{Implications among all conditions}

By definition \ref{strong-definition} implies \ref{interpolation-definition}.

To complete the proof, we only need to show \ref{cohomological-definition}
implies \ref{strong-pointed} and \ref{interpolation-pointed} (in all characteristics)
and that the reverse implications hold true in characteristic $0$.

For this, choose $\lambda$ with $\sum_{i=1}^m \lambda_i = \dim \hilb X$. We will show that $\lambda$-interpolation of $N_{X/\bp^n}$ implies $\lambda$-pointed
interpolation in all characteristics, and the reverse
implication holds in characteristic $0$.
It suffices to prove this, as this will yield the desired implications.
For example, this implies the relation between \ref{interpolation-pointed} and
\ref{cohomological-definition}, by taking $\lambda = \left( (\codim X)^q, r \right)$.

To see this statement about $\lambda$-pointed interpolation and $\lambda$-interpolation of $N_{X/\bp^n}$,
let $\overline p := (Y, \Lambda_1, \ldots, \Lambda_m, p_1, \ldots, p_m), V_i, \tau$ be as in
\autoref{proposition:tangent-space-to-psi}.

By \autoref{proposition:tangent-space-to-psi}, we have that the map
$d\eta_2|_{\overline{p}}$ is surjective if and only if
the corresponding map $\tau$
is surjective. But this latter map is precisely that from ~\eqref{equation:sequence-cohomological-interpolation-definition}
in the definition of interpolation for vector bundles,
taking $E := N_{X/\bp^n}$.

So, to complete the proof, it suffices to show that if $d\eta_2|_{\overline{p}}$
is surjective, then $\eta_2$ is surjective, and the converse holds in characteristic $0$.

But now we have reduced this to a general statement about varieties.
Note that $\eta_2$ is a map between two varieties of the same dimension, 
by \autoref{lemma:interpolation-dimension} and that $\overline p$
is a smooth point of $\Psi$ by assumption.
So, it suffices to show that a map between two proper varieties of the same
dimension is surjective if it is surjective on tangent spaces,
and that the converse holds in characteristic $0$.
For the forward implication,
if the map is surjective on tangent spaces,
the map is smooth of
relative dimension $0$ at $\overline p$ by
\cite[Exercise 25.2.F(b)]{vakil:foundations-of-algebraic-geometry}.
But, this means that $\overline p$ is isolated in its fiber, and so by \autoref{lemma:isolated-fiber-implies-dominant},
we obtain that $\eta_2$ is surjective.

To complete the proof, we only need to show that if $\eta_2$ is surjective and $\bk$ has characteristic
$0$, then there is a point at which $d\eta_2|_{\overline{p}}$ is surjective.
That is, we only need to show there is a point at which $\eta_2$ is smooth.
But, this follows by generic smoothness, which crucially uses the characteristic $0$ hypothesis!
\end{proof}

\section{Complete intersections}

\begin{definition}
	\label{definition:}
	Define $\ci k d n$ to be the closure in the 
	Hilbert scheme of the locus of complete intersections
	of $k$ polynomials of degree $d$ in $\bp^n$.
\end{definition}

\begin{warn}
	\label{warning:}
	If $[X] \in \ci k d n$ is a general complete intersection, it is not
	necessarily the case that $\hilb X = \ci k d n$. 
	In the case they are not equal, we are applying a slight variant
	of the interpolation problem, where we generalize the question
	from an irreducible component
	of the Hilbert scheme satisfying interpolation to an arbitrary integral subscheme of the
	Hilbert scheme satisfying interpolation.
\end{warn}

\begin{lemma}
	\label{lemma:balanced-complete-intersection}
	Let $k, d, n$ be positive integers.
	Then, $\ci k d n$ satisfies interpolation.
	In particular, any Hilbert scheme of hypersurfaces
	$\ci 1 d n$
	satisfies interpolation.
	Furthermore, interpolation
	is equivalent to meeting $\binom{d+n}{d}-k$
	general points in $\bp^n$.
\end{lemma}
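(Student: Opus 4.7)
The plan is to identify (the main component of) $\ci k d n$ with a Grassmannian, deduce the dimension, reduce interpolation to a statement about points, and then construct the required complete intersection directly by inverting an evaluation map.

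First, let $V := H^0(\bp^n, \sco(d))$, so $\dim V = \binom{d+n}{d}$. The assignment $W \mapsto V(W)$ gives a rational map $Gr(k, V) \dashrightarrow \ci k d n$ that is birational onto the main component. Thus $\dim \ci k d n = k\left(\binom{d+n}{d} - k\right)$. Setting $N := \binom{d+n}{d} - k$ and noting $\codim X = k$ for $[X] \in \ci k d n$, we have $\dim \ci k d n = k \cdot N + 0$, so in the notation of \autoref{definition:interpolation} we get $q = N$ and $r = 0$. By the equivalence \ref{interpolation-definition} $\Leftrightarrow$ \ref{interpolation-naive} of \autoref{theorem:equivalent-conditions-of-interpolation}, interpolation is equivalent to producing an element of $\ci k d n$ through any $N = \binom{d+n}{d} - k$ general points in $\bp^n$, which also establishes the ``furthermore'' clause of the lemma.

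Second, given $N$ general points $p_1, \ldots, p_N \in \bp^n$, the evaluation map $\mathrm{ev}: V \to \bk^N$ is surjective, because under the Veronese embedding $\nu_d: \bp^n \hookrightarrow \bp^{\binom{d+n}{d}-1}$ (whose image is nondegenerate) any $N \leq \dim V$ general points are linearly independent. Therefore $W := \ker(\mathrm{ev}) \subset V$ is a $k$-dimensional subspace, and $V(W) \supset \{p_1, \ldots, p_N\}$.

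The remaining and main obstacle is to show that for generic $(p_1, \ldots, p_N)$ the subspace $W$ actually defines a complete intersection, i.e., that $V(W)$ has pure codimension $k$. The locus $B \subset Gr(k, V)$ where the base scheme has codimension strictly less than $k$ is a proper closed subset, by upper semicontinuity of fiber dimension applied to the universal subscheme $\{(W, x) : x \in V(W)\} \subset Gr(k, V) \times \bp^n$. So it suffices to exhibit one pair $(p_1, \ldots, p_N)$ with $\psi(p_1, \ldots, p_N) := \ker(\mathrm{ev}) \notin B$. Take any smooth $[X] \in \ci k d n$ defined by $f_1, \ldots, f_k$; then $I_X(d) = \langle f_1, \ldots, f_k \rangle$ is exactly $k$-dimensional (degree $d$ elements of $(f_1, \ldots, f_k)$ are their linear span). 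For $N$ general points on $X$, the inclusion $I_X(d) \subset W$ is an equality by dimension count (the reverse inequality $\dim W \leq k$ coming from projective normality of $X$ in positive dimension, the zero-dimensional case being handled directly by taking $N$ general among the $d^n$ points of $X$). Thus $\psi$ hits a point outside $B$, so by irreducibility of $(\bp^n)^N$ a generic $(p_1, \ldots, p_N)$ lies in $\psi^{-1}(Gr(k,V) \setminus B)$, producing the desired complete intersection through the points. This proves that $\ci k d n$ satisfies interpolation, and the hypersurface case $k=1$ follows as a special case.
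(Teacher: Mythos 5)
Your proof is correct, and its overall skeleton matches the paper's: identify $\ci k d n$ birationally with $Gr(k, H^0(\bp^n,\sco(d)))$ to get the dimension, observe $r=0$ so that interpolation reduces to passing through $N = \binom{d+n}{d}-k$ points, note that $N$ general points give a $k$-dimensional kernel $W$ of the evaluation map, and then argue that for general points $W$ actually defines a complete intersection. The one place where you take a genuinely different route is the last step. The paper establishes it abstractly: it observes that the relevant $\pi_2$ is a (proper, dominant) map between irreducible varieties of the same dimension, hence generically finite, so a general point configuration pulls back to a general $W \in Gr(k,V)$, which cuts out a complete intersection. You instead make the nondegeneracy of the generic-kernel map $\psi\colon (\bp^n)^N \dashrightarrow Gr(k,V)$ concrete: you describe the bad locus $B$ (where $V(W)$ has too-big dimension) and exhibit an explicit point in $\psi^{-1}(Gr(k,V)\setminus B)$ by taking $N$ general points on a fixed smooth complete intersection $X$ and checking, via $h^1(\sci_X(d))=0$ and the fact that $\leq h^0$ general points impose independent conditions on a line bundle, that $\ker(\mathrm{ev}) = I_X(d)$ exactly. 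Your version buys explicitness and avoids appealing to the incidence-correspondence formalism, at the small cost of having to treat the zero-dimensional case $k=n$ separately (where ``$N$ general among the $d^n$ points'' really means ``some $N$-element subset on which evaluation has rank $N$'', which exists since the image of $V$ in $\bk^{d^n}$ has rank exactly $N$). Both arguments are sound.
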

\begin{proof}
	First, observe that $\dim \ci k d n = k(\binom{d+ n}{d} - k)$
	because a point of $\ci k d n$ corresponds to the
	variety cut out by the intersection of all degree $d$ polynomials
	in a k dimensional subspace of $H^0(\bp^n, \sco_{\bp^n}(d))$.
	In other words, there is a birational map between
	the locus of complete intersections and $G(k, H^0(\bp^n, \sco_{\bp^n}(d)))$,
	which is $k(\binom{d+n}{d}-k)$ dimensional.
	So, to show $\ci k d n$ satisfies interpolation, it suffices
	to show there exists such a complete intersection through
	$\binom{d + n}{d} - k$ general points.
	First, since points impose independent conditions on
	degree $d$ hypersurfaces in $\bp^n$, there will indeed be a $k$
	dimensional subspace of $H^0(\bp^n, \sco_{\bp^n}(d))$
	passing through the any collection
	of $\binom{d+n}{d} - k$ points.

	It remains to verify that if the points are chosen
	generally, then the intersection of degree
	$d$ hypersurfaces in the subspace passing through
	the points is a complete intersection.
	To see this, note that the map $\pi_2$
	from \autoref{definition:interpolation}
	is a generically finite map between varieties of
	the same dimension. In particular,
	the element of	
	$G(k, H^0(\bp^n, \sco_{\bp^n}(d)))$
	through a general collection of
	$\binom{d+n}{d}-k$ points
	will be general in
	$G(k, H^0(\bp^n, \sco_{\bp^n}(d)))$.
	Then, since a general element of 
	$G(k, H^0(\bp^n, \sco_{\bp^n}(d)))$
	corresponds to a complete intersection,
	there will indeed be a complete intersection
	passing through a general collection of
	$\binom{d+n}{d}-k$ points.
\end{proof}

\chapter{Basics of scrolls}
\label{section:basics-of-scrolls}

\epigraph{Rational normal scrolls \dots occur throughout projective and algebraic geometry, and the student will never regret the investment of time studying them.}{\textit{Miles Reid \cite[p.\ 19]{kollar:complex-algebraic-geometry}}}

\section{The definition of scrolls}

After defining scrolls,
we give an alternate construction of a scroll as the planes
joining several rational normal curves.
We start by describing this construction
in the case
of the smooth degree $3$ surface scroll in $\bp^4$. We then generalize this
construction to scrolls of all dimensions and degrees. Finally, we
explain the equivalence of various geometric descriptions of scrolls.
A good reference for the equivalent geometric descriptions of scrolls is
~\cite[Section 1]{eisenbudH:on-varieties-of-minimal-degree}.
Another useful reference is ~\cite[Section 9.1.1]{Eisenbud:3264-&-all-that}.

\begin{figure}
\centering
\includegraphics[scale=.3]{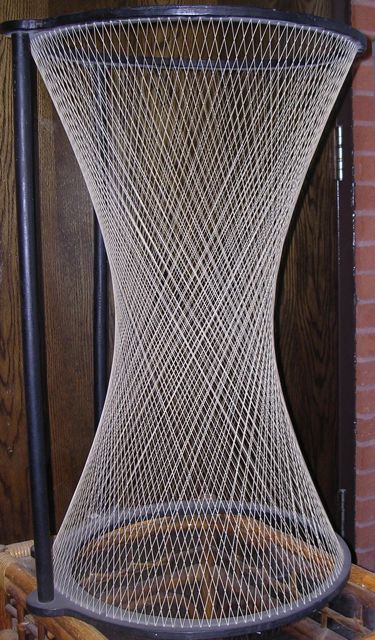}  
\includegraphics[scale=.06]{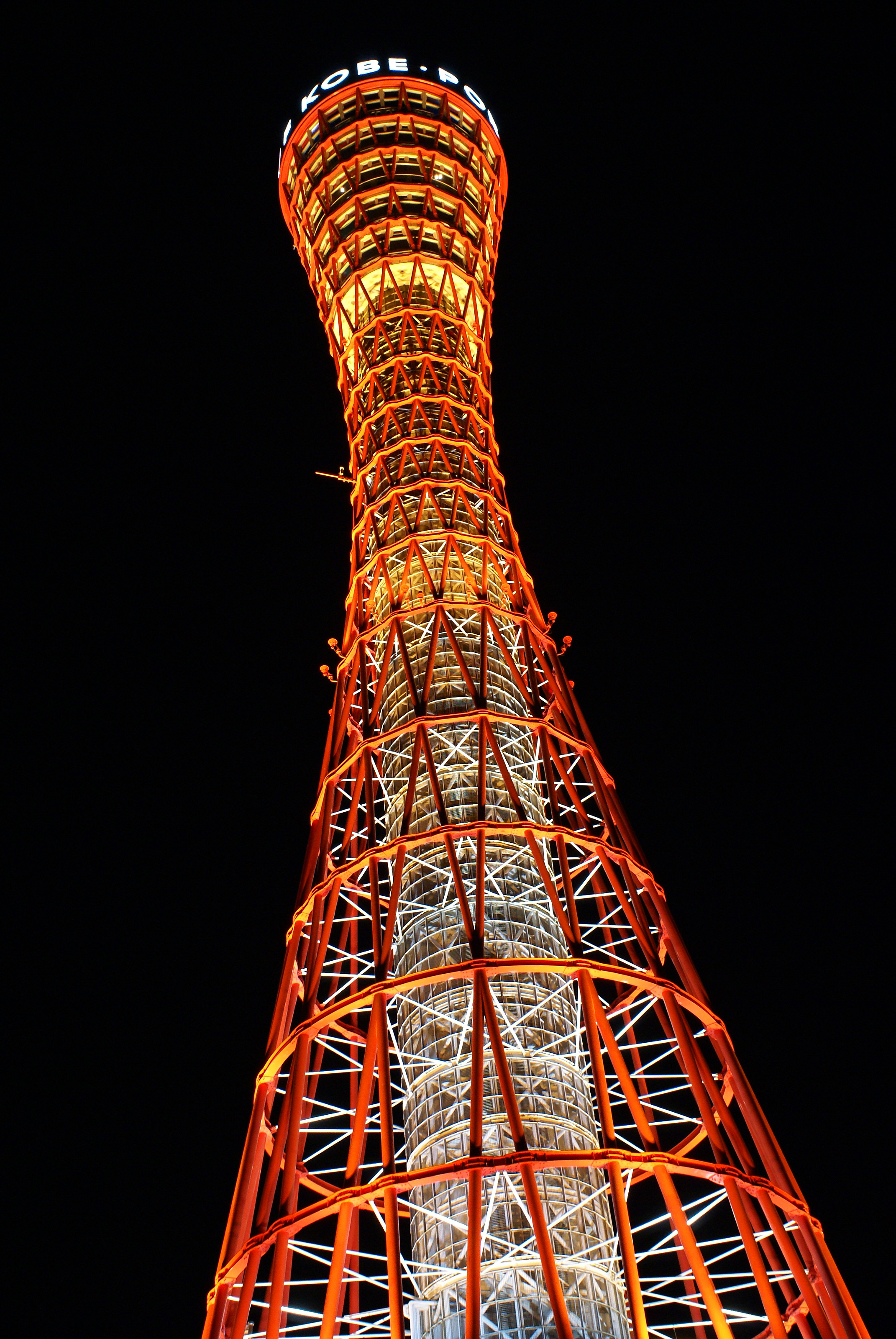}
\includegraphics[scale=.6]{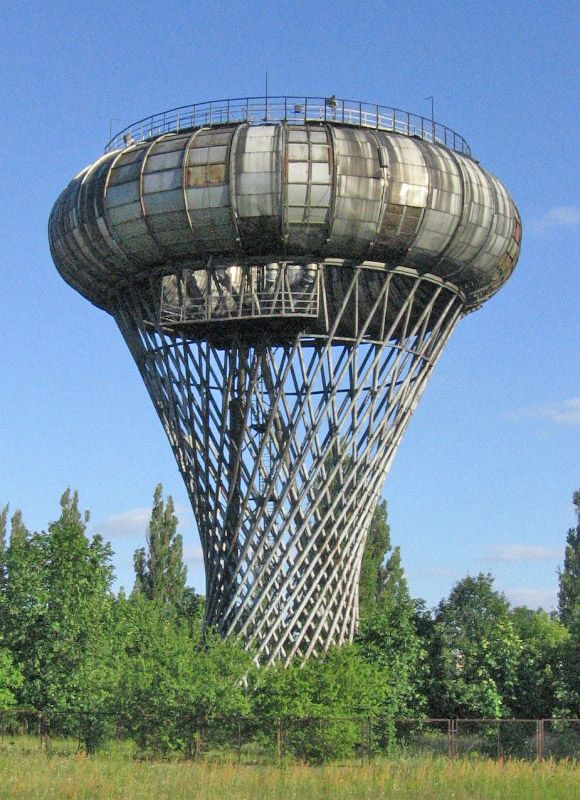}
\caption{Scrolls ``in the wild'' cf. \cite[Section 2]{eisenbudH:on-varieties-of-minimal-degree}.
From left to right: A hyperboloid model, a hyperboloid at Kobe Port Tower, Kobe, Japan, and cooling hyperbolic towers at Didcot Power Station, UK.
These are all examples of quadric surface scrolls with a double ruling.}
\end{figure}

\begin{definition}
  \label{definition:scroll}
  Suppose we have projective scheme $X \subset \bp^n$ which is abstractly
  isomorphic to a projective bundle $X \cong \bp \sce \xrightarrow \pi \bp^1$,
  where $\sce \cong \sco_{\bp^1}(a_1) \oplus \cdots \oplus \sco_{\bp^1}(a_k)$
  with $a_1 \geq \cdots \geq a_k \geq 1$.
  Then, $X$ is a {\bf scroll of type $a_1, \ldots, a_k$} 
  if it is embedded into $\bp^n$ by the complete linear series
  of the ``relative $\sco_{\bp \sce}(1)$ for $\pi$.''
  Here, the ``relative $\sco_{\bp \sce}(1)$ for $\pi$'' denotes the invertible
  sheaf as defined in \cite[Exercise 17.2.D]{vakil:foundations-of-algebraic-geometry}.
  We notate a scroll of type $a_1, \ldots, a_k$ as $\scroll {a_1, \ldots, a_k}$.
  A {\bf scroll} is any projective scheme for which there exists
  some sequence $a_1, \ldots, a_k$ so that $X$ is a scroll of
  type $a_1, \ldots, a_k$.
  We call a scroll {\bf balanced} if $a_1 - a_k \leq 1$.
  \end{definition}

\section[Construction of the degree 3 surface scroll]{An extended example: the construction of the degree 3 surface scroll}
\label{subsection:segre-cubic-surface-construction}

It is a fairly well known statement that every smooth surface scroll 
in $\bp^4$ is ``swept out''
by ``the lines connecting'' two curves $L, M,$
both isomorphic to $\bp^1$.
Here, $L$ is embedded as a line 
(by $\sco_{\bp^1}(1)$)
and $M$ is embedded as a conic
(by $\sco_{\bp^1}(2)$).
Let $H$ denote the two plane spanned
by the conic $M$.

We will see later in \autoref{proposition:equivalence-vector-bundle-swept-planes}
that this description of a scroll in terms of a variety swept out by linear spaces
is equivalent to the definition given in \autoref{definition:scroll}.

\begin{figure}
	\centering
	\includegraphics[scale=.25]{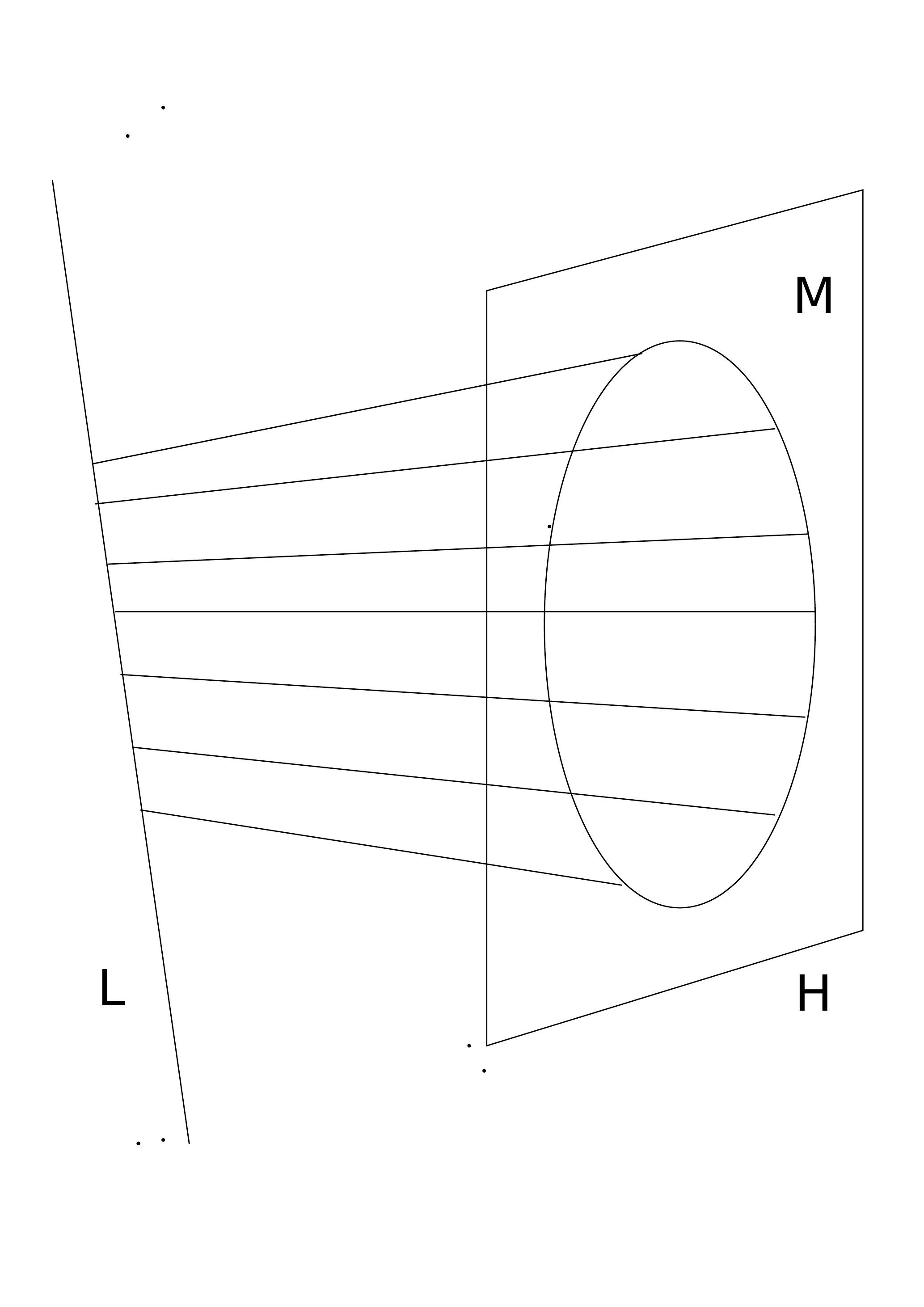}
	\caption{A degree three surface scroll $\scroll {2,1}$ made by
lines joining a line $L$ and a plane conic $M$ which spans a $2$-plane $H$.}
\end{figure}

The purpose of this section is not to prove that every scroll
appears as such, or even to define what a scroll is. Instead,
the purpose is to make sense of the statement
``the lines connecting two copies of $\bp^1$.''

This is a surprisingly tricky, but standard argument, which
is often glossed over. The key inputs are Grauert's theorem
and the universal property of the Grassmannian.

The idea will be to first construct a map $\bp^1 \rightarrow G(2,5)$,
and then use this map to describe a closed subscheme
$X \subset \bp^4$ whose closed points are in bijection with closed points
on the lines connecting points
on $M$ and $L$.
Finally, we show this scheme
in $\bp^4$ is smooth of dimension 2 and degree 3.

\ssec{Constructing a map to the Grassmannian}

In order to construct a map to the Grassmannian, $G(2,5)$ we will have to construct
a surjective map from the trivial sheaf of rank $5$ on $\bp^1$ to a sheaf
of rank $2$ on $\bp^1$.

To start, 
let $L$ be a line and let $H$ be a two plane spanning $\bp^4$. By spanning
$\bp^4$, we mean that there are no hyperplanes containing both $L$
and $H$.
Hence, $L \cap H = \emptyset$. Then, let $M$ be a smooth conic in $H$.
By assumption, we have maps $f_1:L \cong \bp^1$ and $f_2:M \cong \bp^1$, so these isomorphisms,
together with the embeddings of $L$ and $M$ into $\bp^4$ induce embeddings
of $L$ and $M$ into $\bp^1 \times \bp^4$. Let $L \cup M$ be the scheme theoretic disjoint union of $L$, $M$,
embedded in $\bp^1 \times \bp^4$. 
We have an exact sequence
\[\begin{tikzcd}
		0 \ar {r} & \sci_{L \cup M} \ar {r} & \sco_{\bp^4 \times \bp^1} \ar {r} & \sco_{L \cup M} \ar {r} & 0.
 \end{tikzcd}\]

Now, the next step will be to construct a rank 3 locally free sheaf whose fiber at a point will
be the hyperplanes containing the line joining $f_1(p)$ and $f_2(p)$,
and the dual locally free sheaf will will have fiber
consisting of the set of points in that line.

Let $\pi_1: \bp^1 \times \bp^4 \rightarrow \bp^1, \pi_2 : \bp^1 \times \bp^4 \rightarrow \bp^4$ be the natural projections.
Let us now twist the above exact sequence by $(\pi_2)^* \sco_{\bp^4}(1)$.
The pullback of an locally free sheaf is locally free, and
so we obtain an exact sequence
\begin{equation}
	\label{equation:product-sequence}	
\begin{tikzcd}
	0 \ar {r} & \sci_{L \cup M} \otimes  (\pi_2)^* \sco_{\bp^4}(1)\ar {r}
	& \sco_{\bp^4 \times \bp^1} \otimes  (\pi_2)^* \sco_{\bp^4}(1) \\
		\ar {r}{\eta} 
		& \sco_{L \cup M} \otimes  (\pi_2)^* \sco_{\bp^4}(1)\ar {r} & 0 
\end{tikzcd}
\end{equation}

Now, in order to obtain a map from $\bp^1$ 
to the Grassmannian, we will have to push this
forward to $\bp^1$.
Applying the pushforward functor, we obtain
an exact sequence

\begin{equation}
	\label{equation:pushforward-sequence}	
	\begin{tikzcd}
		0 \ar {r} & (\pi_1)_*\left(  \sci_{L \cup M}\otimes  (\pi_2)^* \sco_{\bp^4}(1)\right)\ar {r}
		& (\pi_1)_*\left( \sco_{\bp^4 \times \bp^1} \otimes  (\pi_2)^* \sco_{\bp^4}(1) \right) \\
		\ar {r} & (\pi_1)_*\left( \sco_{L \cup M} \otimes  (\pi_2)^* \sco_{\bp^4}(1)  \right) \ar {r} & R^1(\pi_1)_*\left(\sci_{L \cup M}\otimes  (\pi_2)^* \sco_{\bp^4}(1)  \right)
\end{tikzcd}
\end{equation}

Our map to the Grassmannian will eventually be determined by the dual
of the first three nonzero terms of the above exact sequence \eqref{equation:pushforward-sequence}, but 
there are a few things we have to check first.

First, we will show these three sheaves in ~\eqref{equation:product-sequence} are locally free by verifying
the hypotheses of Grauert's theorem (see \cite[Theorem 28.1.5]{vakil:foundations-of-algebraic-geometry}).
Observe that the projection
$\pi_1$ is proper.
There are many ways to see this, but one is by the cancellation theorem for proper maps 
\cite[Theorem 10.1.19(ii)]{vakil:foundations-of-algebraic-geometry}, because
both $\bp^1 \times \bp^4, \bp^1$ are proper over $\spec \bk$.
Next, note that $\bp^1$ is reduced and locally Noetherian. Next, we will check that
each of the three sheaves in \eqref{equation:product-sequence} is flat over $\bp^1$ and has constant
Hilbert polynomials on the fibers.

First, to check
$\left( (\pi_2)^* \sco_{\bp^4}(1) \right)$
is flat, note that it is a invertible sheaf on $\bp^4 \times \bp^1$. So,
by \cite[Exercise 24.2.D]{vakil:foundations-of-algebraic-geometry}, since $\bp^4 \times \bp^1$ is flat over $\bp^1$,
we obtain this sheaf is flat over $\bp^1$.
Furthermore, since the Hilbert polynomial of $\left( (\pi_2)^* \sco_{\bp^4}(1) \right)$
on each fiber over $\bp^1$ is that of a hyperplane in $\bp^4$.
Hence, the pushforward of this sheaf to $\bp^1$ is flat, by 
Grauert's theorem in the form of \cite[Theorem III.9.9]{Hartshorne:AG}.

Second, to check
$\sco_{L \cup M} \otimes  (\pi_2)^* \sco_{\bp^4}(1)$
is flat over $\bp^1$, note that for $t \in \bp^1$
the restriction of this sheaf to $\pi_1^{-1}(t)$ is
$\sco_{f_1^{-1}(t) \cup f_2^{-1}(t)} \otimes \sco_{\bp^4}(1)$.
This sheaf is supported at the two distinct points
$f_1^{-1}(t)$ and $f_2^{-1}(t)$,
that is, it is a union of two skyscraper sheaves. This evidently
has Hilbert polynomial 2. Then, since we are working over the reduced
scheme $\bp^1$, and all fibers have Hilbert polynomial $2$, the sheaf
is flat, again by \cite[Theorem III.9.9]{Hartshorne:AG}.

Hence, the second and third nonzero sheaves from 
~\eqref{equation:product-sequence} are locally free and satisfy
\begin{align}
	\nonumber
	H^0(\bp^1,(\pi_1)_* \scf) \otimes \kappa(q) \cong H^0(q, \scf|_{\pi_1^{-1}(q)}),
\end{align}
where $\scf$ ranges over the second and third nonzero sheaves from ~\eqref{equation:product-sequence}.

So, taking the fiber of $\eta$
over a closed point of $\bp^1$ we obtain a map
\begin{align}
	\nonumber
	H^0(\bp^4, \pi_2^* \sco_{\bp^4}(1)) \rightarrow H^0(\bp^4, \sco_{f_1^{-1}(t) \cup f_2^{-1}(t)} \otimes \sco_{\bp^4}(1))).
\end{align}
This map is surjective because there is a hyperplane in $\bp^4$ whose pullback vanishes on
$f^{-1}_1(t)$ but not on $f^{-1}_2(t)$, and visa versa. Hence, we
obtain that on the level of fibers of closed points, the sequence

\begin{equation}
	\label{equation:pushforward-left-sequence}	
	\begin{tikzcd}
		0 \ar {r} & (\pi_1)_*\left(  \sci_{L \cup M}\otimes  (\pi_2)^* \sco_{\bp^4}(1)\right)\ar {r} 
		& (\pi_1)_*\left( \sco_{\bp^4 \times \bp^1} \otimes  (\pi_2)^* \sco_{\bp^4}(1) \right) \\
		\ar {r} & (\pi_1)_*\left( \sco_{L \cup M} \otimes  (\pi_2)^* \sco_{\bp^4}(1)  \right) \ar {r} & 0
\end{tikzcd}
\end{equation}
is right exact, since the penultimate map is surjective. Since it holds
on the level of fibers of closed points, by Nakayama's lemma, it holds
on the level of stalks of closed points. 
Now, since the cokernel is supported on a closed set and not
supported on any closed points, it has empty support, so the
cokernel is $0$.

Therefore, 
$(\pi_1)_*\left(  \sci_{L \cup M}\otimes  (\pi_2)^* \sco_{\bp^4}(1)\right)$
is the kernel of a surjective map of locally free sheaves,
hence locally free.

So, we have finally checked that we have an exact sequence of locally free
sheaves. Furthermore, the fiber of the first sheaf consists of hyperplanes
through $f_1^{-1}(t), f_2^{-1}(t)$. Therefore, dualizing the sequence,
and working over dual projective space, we obtain that the fibers
of the corresponding dual sheaf are the points contained in all
such hyperplanes, or, in other words, the line through
$f_1^{-1}(t), f_2^{-1}(t)$.

Now, to show that this induced a map to the Grassmannian, it suffices
to show that, over $\bp^1$, the sheaf
\begin{align}
	\nonumber
	(\pi_1)_*\left( \sco_{\bp^4 \times \bp^1} \otimes  (\pi_2)^* \sco_{\bp^4}(1) \right) 
	\cong (\pi_1)_* (\pi_2)^* \sco_{\bp^4}(1)
\end{align}
is the trivial sheaf on $\bp^1$ of rank $5$. To do this, we produce
an explicit isomorphism. Let $x_0,\ldots, x_4$ be the coordinate functions
on $\bp^4$ and let $y_i := \pi_2^* x_i$ be their pullbacks. Then,
I claim that $y_i$ determine an isomorphism
$(\pi_1)_* (\pi_2)^* \sco_{\bp^4}(1) \cong \sco_{\bp^1}^{\oplus 5}$.
By the universal property of maps to the structure sheaf, a map
to $\sco_{\bp^1}^{\oplus 5}$ is determined by 5 global sections of the
sheaf. Furthermore, we see that these sections define a surjective
map on every fiber. Independence of the $y_i$ follows from the fact that
the $x_i$ are independent, implying
each fiber is isomorphic to $\bp^4$.
Now, since the map is surjective at closed points, it is surjective.

Hence, we have a surjective map of locally free sheaves of rank $5$. The
kernel is a locally free sheaf of rank 0, hence $0$, implying that
this map is an isomorphism, as desired.

\ssec{Construction of the variety}

So, we have constructed a map to the Grassmannian $\iota: \bp^1 \rightarrow G(2,5)$.
It satisfies the property that a point $p \in \bp^1$ is sent to
the line joining a point on $M$ and a point on $L$. It only remains
to realize the union of these lines as a subscheme of $\bp^4$.

For this, we invoke a technique ubiquitous in algebraic geometry.
We consider an incidence correspondence

\begin{align}
	\nonumber
	\Phi = \left\{ (p,\ell)\in \bp^4 \times G(2,5): p \in \ell, \ell \in \iota(\bp^1) \right\}.
\end{align}
\noindent
If $\pi: \Phi \rightarrow \bp^4$ is the natural projection, then the desired
rational normal scroll is simply $\pi(\Phi)$. Indeed, the closed points
of $\pi(\Phi)$ are those $p$ so that $p \in \ell$ with $\ell \in
\iota(\bp^1)$, as desired. There is one issue, which
is to show that this is a smooth subscheme. This issue will be
taken up in the next section.

However, before verifying smoothness, we 
stoop down to a level of detail usually not seen in an algebraic geometry text.
	We explain, in detail what the incidence correspondence $\Phi$ really means,
and why it is closed.

\begin{remark}
	\label{remark:}
I include this description of $\Phi$ because for a long time, I was confused about
the details of why incidence correspondences were closed, and what they mean.
\end{remark}

We now explain what $\Phi$ means.
First, consider $G(2,5) \rightarrow \bp^9$ via the Pl\"ucker embedding,
with coordinates $x_{ij}$ for $0 \leq i < j \leq 4$, and $\bp^4$
with coordinates $x_0, \ldots, x_4$. First, we know that the Pl\"ucker
embedding is a closed embedding, cut out by the five $4 \times 4$ Pfaffians
of a $5 \times 5$ skew symmetric matrix of linear forms, whose upper triangular part
consists of the 10 coordinates functions $x_{ij}$. So we have described
$\bp^4 \times G(2,5)$ as a closed subscheme of $\bp^4 \times \bp^9$.
Next, from the previous section, we have constructed a map
$\bp^1 \rightarrow G(2,5)$.

\begin{lemma}
	\label{lemma:closed-embedding}
	The map $\iota:\bp^1 \rightarrow G(2,5)$ constructed
	above is a closed embedding.
\end{lemma}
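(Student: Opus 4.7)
The plan is to verify the two standard criteria that together promote $\iota$ to a closed immersion: set-theoretic injectivity on closed points, and injectivity of the differential $d\iota_t$ at every closed point $t$. Since $\bp^1$ is proper, the image $\iota(\bp^1) \subset G(2,5)$ is automatically closed; and a proper unramified monomorphism into a separated scheme is a closed embedding. So it suffices to check these two infinitesimal and set-theoretic conditions, after which no further work on scheme structure is needed.

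For injectivity on closed points, suppose $\iota(s) = \iota(t)$ for distinct $s,t \in \bp^1$. Then the lines $\ell_s := \overline{f_1(s) f_2(s)}$ and $\ell_t := \overline{f_1(t) f_2(t)}$ coincide as lines in $\bp^4$. Their common value then contains the two distinct points $f_1(s), f_1(t) \in L$, forcing the line to equal $L$ itself. On the other hand, the line also contains $f_2(s) \in M \subset H$, so we would obtain $f_2(s) \in L \cap H$. This contradicts the assumption that $L$ and $H$ span $\bp^4$, since two disjoint linear spaces of dimensions $1$ and $2$ spanning $\bp^4$ must satisfy $L \cap H = \emptyset$.

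For injectivity on tangent spaces, I identify $T_{\iota(t)} G(2,5)$ with $\shom(W, \bk^5/W)$, where $W \subset \bk^5$ is the two-dimensional subspace cutting out $\ell_t$. Under this identification, $d\iota_t(v)$ for $0 \ne v \in T_t \bp^1$ records the first-order motion of the line $\ell_t$ inside $\bp^4$. The endpoint $f_1(t)$ moves with nonzero velocity along $L$, since $f_1$ is an isomorphism onto $L$. The class of this velocity vector in $\bk^5/W$ vanishes precisely when $T_{f_1(t)} L \subset W$, i.e., when $L = \ell_t$; but this was excluded above. Hence $d\iota_t(v) \ne 0$, and since $T_t \bp^1$ is one-dimensional this forces $d\iota_t$ to be injective. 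The principal obstacle is making this tangent computation precise: one must choose a splitting $\bk^5 \cong W \oplus W'$ near $t$ and verify that the infinitesimal displacement of $f_1(t)$ projects to a nonzero element of $W'$; once this is set up, the injectivity statement is immediate from the fact that $L$ is transverse to $\ell_t$ at $f_1(t)$.
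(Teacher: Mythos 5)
Your proposal is correct, and it in fact supplies more than the paper does. The paper argues that $\iota$ is proper (via the cancellation property), then asserts that it suffices to check degree $1$ ``by Riemann--Hurwitz,'' which it deduces from injectivity on closed points. That line of reasoning only gives that $\bp^1 \to \iota(\bp^1)$ is the normalization of the image curve; to conclude it is actually a closed embedding one still needs to know the image is smooth, which is precisely the tangent-space condition the paper never spells out. You instead invoke the cleaner criterion that a proper morphism of varieties over an algebraically closed field that is injective on closed points and injective on tangent spaces at each closed point is a closed immersion, and you verify both conditions explicitly. Your set-theoretic check (if $\ell_s = \ell_t$ the line is forced to be $L$, hence meets $H$, contradiction) is exactly the right argument. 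Your tangent computation, identifying $T_{[W]}G(2,5) \cong \shom(W, \bk^5/W)$ and noting that the velocity of the lift of $f_1(t)$ lies in $U$ (the span of $L$) but not in $W$ because $U \cap W$ is one-dimensional, is also correct, and this is the step the paper's sketch omits. A small wording caveat: ``proper unramified monomorphism'' is redundant (a monomorphism is automatically unramified), and $T_{f_1(t)}L$ is not literally a subspace of $\bk^5$, so the condition ``$T_{f_1(t)}L \subset W$'' should be phrased in terms of the lifted velocity vector lying in $W$ --- but this is a matter of precision, not of substance, and you flag this yourself in the final sentence.
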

\begin{proof}
	Because $\bp^1, G(2,5)$ are both projective, they are, in particular,
	proper over $\spec \bk$. Thus, by the cancellation theorem for
	proper maps the map $\iota$ is also proper, and so $\iota(\bp^1)$
	is closed. To show it is a closed embedding,
	we only need show the map has degree 1, by Riemann Hurwitz.
	 This is true because
	in the construction, we saw that two different points of $\bp^1$ were
	mapped to two different lines.
\end{proof}

So, we now obtain that $\iota(\bp^1)$ is a closed subscheme of $G(2,5)$,
in particular, we can write down local equations defining it.
We have so far described the locus of points

\begin{align*}
	\left\{ (p,\ell) : p \in \bp^4, \ell \in \iota(\bp^1) \right\}
\end{align*}

To complete the construction of $\Phi$, it suffices to realize the condition
$p \in \ell$ as a closed condition via equations. 
Indeed, 
suppose we have coordinates $x_{ij}$ for the
Grassmannian and $x_i$ for projective space.
We can associate to a line in $\bp^4$ the hyperplane $A = \sum_i a_{ij} x_{ij}$ in
$\bp^9$ corresponding to a hyperplane in the Pl\"ucker
embedding of the Grassmannian $G(2,5)$.
We can also associate the 
to a point in $\bp^4$ the hyperplane $B = \sum_i a_i x_i$ 
The condition that the point is contained in the line
is the same as $A \wedge B = 0$, when thinking of the
$a_{ij}$ as $a_i \wedge a_j$. Then, grouping these
into four equations depending on the four basis
vectors gives four equations in the $x_k, x_{ij}$,
for each of the $a_i \wedge a_j \wedge a_k$ terms.

\ssec{The image of $\pi: \Phi\rightarrow \bp^4$ is a smooth, closed embedding of degree 3}

In the remainder of this section, our goal is to show $\pi$ is a smooth closed embedding of degree $3$.
This will complete the construction of degree $3$ surface scrolls.

For the remainder, let us denote $S := \pi(\Phi)$. We want to show that 
$S$ is a smooth surfaces of degree $3$. The first step will be to show it is
of degree $3$. Note that by \cite[Exercise 8.3.A]{vakil:foundations-of-algebraic-geometry},
to show $S$ is reduced, it suffices
to show $\Phi$ is reduced.

\begin{lemma}
	\label{lemma:phi-reduced}
	The incidence correspondence $\Phi$ is reduced. In particular, 
	$S$ is reduced.
\end{lemma}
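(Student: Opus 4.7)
The plan is to realize $\Phi$ as a $\bp^1$-bundle over $\bp^1$, which is automatically smooth and hence reduced; then the statement about $S$ follows from the Vakil exercise already flagged in the paragraph preceding the lemma.

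First, I would identify $\Phi$ with the fiber product $\scu \times_{G(2,5)} \iota(\bp^1)$, where $\scu \subset \bp^4 \times G(2,5)$ is the universal line, i.e.\ the incidence correspondence $\{(p,\ell) : p \in \ell\}$. This identification is immediate from the definition of $\Phi$ given above, viewed inside $\bp^4 \times G(2,5)$: the defining conditions ``$p \in \ell$'' and ``$\ell \in \iota(\bp^1)$'' are precisely the two factors being cut out. The projection $\scu \to G(2,5)$ is a $\bp^1$-bundle, being (up to conventions) the projectivization of the tautological rank $2$ bundle on $G(2,5)$; its closed fiber over $[\ell]$ is the line $\ell \subset \bp^4$ itself.

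Next, by \autoref{lemma:closed-embedding}, $\iota : \bp^1 \hookrightarrow G(2,5)$ is a closed embedding, so pulling back the $\bp^1$-bundle $\scu \to G(2,5)$ along $\iota$ realizes $\Phi \to \iota(\bp^1) \cong \bp^1$ as a $\bp^1$-bundle over $\bp^1$. Equivalently, $\Phi \cong \bp(\iota^* \scs)$ where $\scs$ is the tautological rank $2$ bundle; by \autoref{theorem:locally-free-sheaves-on-p1}, $\iota^*\scs \cong \sco_{\bp^1}(a) \oplus \sco_{\bp^1}(b)$ for some integers $a,b$, so $\Phi$ is a Hirzebruch surface. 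In particular $\Phi$ is smooth and irreducible, hence reduced.

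For the second assertion, the scheme-theoretic image of a reduced scheme under any morphism is reduced (Vakil, Exercise 8.3.A), as already cited in the paragraph preceding the lemma. Applying this to $\pi : \Phi \to \bp^4$ immediately yields that $S = \pi(\Phi)$ is reduced. There is no serious obstacle here; the only point that requires a moment's thought is checking that the set-theoretic description of $\Phi$ really does coincide with the fiber-product scheme structure on $\scu \times_{G(2,5)} \iota(\bp^1)$, which is formal once one unwinds that the condition ``$\ell \in \iota(\bp^1)$'' is cut out by pulling back the closed subscheme $\iota(\bp^1) \subset G(2,5)$.
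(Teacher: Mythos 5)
Your proof is correct, and it takes a genuinely different route from the paper's. The paper argues locally on rings: it notes that the projection $\Phi \to \iota(\bp^1)$ has reduced base and reduced fibers, then works with a ring map $g : B \to A$ with $B$ reduced, showing nilpotents of $A$ must die by passing to fibers over primes, and invokes the sublemma that a surjective map of spectra with reduced source ring has injective ring map. This argument is somewhat fiddly (and the paper's version in fact conflates $g$ with another symbol $\phi$ and is not fully watertight as written). Your approach instead identifies the scheme structure on $\Phi$ with the fiber product $\scu \times_{G(2,5)} \iota(\bp^1)$, uses \autoref{lemma:closed-embedding} to see this is the pullback of the universal $\bp^1$-bundle $\scu \to G(2,5)$ along a closed embedding of $\bp^1$, hence by \autoref{theorem:locally-free-sheaves-on-p1} a Hirzebruch surface, which is smooth and therefore reduced. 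This is cleaner and more structural: it avoids the local ring argument entirely, trades ``reduced base and reduced fibers implies reduced source'' for the much more transparent ``smooth implies reduced,'' and in passing establishes the stronger fact that $\Phi$ is smooth and irreducible, which the paper only gets later. The one point you flag — that the defining equations in the paper endow $\Phi$ with the fiber-product scheme structure — deserves the acknowledgment you give it, since the paper's own description of $\Phi$ is partly set-theoretic; but this is indeed formal, as the conditions ``$p \in \ell$'' and ``$\ell \in \iota(\bp^1)$'' are cut out respectively by $\scu \subset \bp^4 \times G(2,5)$ and the pullback of $\iota(\bp^1) \subset G(2,5)$, and their scheme-theoretic intersection is exactly that fiber product.
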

\begin{proof}
	If we knew $\Phi$ were reduced, we would immediately obtain $S$ is reduced
	from \cite[Exercise 8.3.A]{vakil:foundations-of-algebraic-geometry},
	because $S$ is the image of a reduced scheme.
	First, note that $\iota(\bp^1)$ is reduced, by \cite[Exercise 8.3.A]{vakil:foundations-of-algebraic-geometry},
	because
	$\bp^1$ is reduced. Now, we have a natural projection $\Phi \ra \iota(\bp^1)$,
	so that all the fibers are reduced lines. Let us now examine this question
	locally on the level of rings. The map $\Phi \ra \iota(\bp^1)$ corresponds
	locally to a map of rings $g:B \ra A$ where $B$ is reduced, and the fibers over
	closed points of $B$ are reduced. We want to show $A$ is reduced. Suppose not. So, there is some $a \in A$ with $a^m = 0$. Then, choose a maximal ideal $\fm \in \spec A$.  We have $a \in \fm$. Let $\fp = g^{-1}(\fm)$. We obtain $A/g(\fp)$ is reduced.  So, the image of $a$ in $A/\phi(\fp)$ must be $0$ implying $a \in g(\fp)$.  To conclude, it suffices to show the map $g$ is injective, as then we would
	obtain $g^{-1}(a) = 0$, so $a = 0$.
	This follows from the following sublemma, \autoref{lemma:surjective-on-schemes-implies-injective-on-rings}.
\end{proof}
\begin{lemma}
	\label{lemma:surjective-on-schemes-implies-injective-on-rings} Suppose $g:B \ra A$ is a map of rings with $B$ reduced, so
	that the associated map of
	schemes $g^* \spec A \ra \spec B$ is surjective.
	Then, $g$ is injective.
\end{lemma}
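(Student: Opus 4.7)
The plan is to show $\ker g = 0$ by leveraging the fact that in a reduced ring, the nilradical is zero, i.e., the intersection of all prime ideals is $(0)$. So to prove $b = 0$ for any $b \in \ker g$, it suffices to prove $b \in \fp$ for every prime $\fp \subset B$.

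Given any $\fp \in \spec B$, the hypothesis that $g^* : \spec A \to \spec B$ is surjective provides some $\fq \in \spec A$ with $g^{-1}(\fq) = \fp$. If $b \in \ker g$, then $g(b) = 0 \in \fq$, hence $b \in g^{-1}(\fq) = \fp$. Since this holds for every prime $\fp$ of $B$, the element $b$ lies in the nilradical of $B$, which is zero by the reducedness assumption. Therefore $b = 0$, so $g$ is injective.

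The argument is essentially a one-liner, so there is no serious obstacle; the only thing to be careful about is invoking the correct characterization of reducedness (nilradical equals zero, which is equivalent to the intersection of all primes being zero by the standard result that the nilradical equals the intersection of all prime ideals). I would write the proof in two short sentences without any case analysis or additional machinery.
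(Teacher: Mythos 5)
Your proof is correct and takes essentially the same route as the paper: both show any $b \in \ker g$ lies in every prime of $B$ (via surjectivity of $g^*$), hence in the nilradical, which vanishes by reducedness. The paper's version has some minor typographical slips ($\phi$ vs.\ $g$, $a$ vs.\ $b$) that your cleaner write-up avoids, but the argument is identical.
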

\begin{remark}
	\label{remark:}
	The reducedness hypothesis on $B$ is necessary. For example, we can have
	the map $A \otimes \bk[t]/t^2 \ra A$ which is surjective on schemes but
	not injective on rings.
	
	The converse will hold if the extension $g$ is integral, which is precisely
	the going up theorem.
\end{remark}

\begin{proof}[Proof of \autoref{lemma:surjective-on-schemes-implies-injective-on-rings}]
	Suppose $g^*$ is surjective. Let $b \in B$ with $g(b) = 0$. Then, $g(b)$
	in $\fp$ for all $\fp \in \spec A$, and hence in $\phi^{-1}(\fp)$ for all
	$\fp \in \spec A$. In particular, since $g^*$ is surjective, 
	$a \in \fq$ for all $\fq \in \spec B$. But, the intersection of all primes
	is the nilradical, which is $0$ in a reduced ring. Hence, $b = 0$.
\end{proof}

\begin{lemma}
	\label{lemma:degree-3}
	The degree of $S$ is $3$. Further, let $r$ be a point on $L$. Then, the intersection of $S$ with the hyperplane
	spanned by $H$ and $r$, 
	$T = \overline{H, r}\subset \bp^4,$ is the union of $M$ and a line,
	$L_r$, with reduced scheme structure. Here, $L_r$ is a line
	joining $r$ and a point on $M$.
\end{lemma}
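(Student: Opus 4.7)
The plan is to exploit the ruling of $S$ by lines. Each closed point of $\bp^1$ parametrizes a line on $S$ joining a point of $L$ to a point of $M$, and $S$ is set-theoretically the union of these ruling lines. I will compute $S\cap T$ by intersecting $T$ with each ruling line, and then read off $\deg S$ from the degree of this hyperplane section.

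First I would verify that $T$ does not contain $S$ and in particular $T\cap L=\{r\}$ as a set. Since $L$ and $H$ span $\bp^4$ and are disjoint, any hyperplane containing both would be $\bp^4$; therefore $T\neq \bp^4$ implies $L\not\subset T$, so the intersection of the line $L$ with the hyperplane $T$ is a single point, which must be $r$ because $r\in L\cap T$. Similarly $S\not\subset T$, since $S$ spans $\bp^4$: indeed $S$ contains both $L$ and $M$, and $L\cup M$ spans $\bp^4$.

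Next I would analyze the ruling. Let $\ell_p$ denote the ruling line corresponding to $p\in\bp^1$, and let $t\in\bp^1$ be the parameter with $f_1(t)=r$; write $L_r:=\ell_t$. The line $L_r$ joins $r\in L$ to $f_2(t)\in M\subset H\subset T$, so $L_r\subset T$, and of course $M\subset H\subset T$. Hence $M\cup L_r\subseteq S\cap T$. For any other parameter $p\neq t$, the ruling line $\ell_p$ meets $L$ at $f_1(p)\neq r$, which lies outside $T$ (since $T\cap L=\{r\}$); therefore $\ell_p\not\subset T$ and $\ell_p\cap T$ consists of the single point $f_2(p)\in M$, already accounted for by $M$. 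This shows the set-theoretic equality $S\cap T=M\cup L_r$.

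Finally I would extract the degree. Because $T$ is a hyperplane not containing $S$, the scheme $S\cap T$ is a one-dimensional subscheme of $T$ of degree $\deg S$. Its support is $M\cup L_r$, a conic together with a line meeting it at one point, which has degree $2+1=3$; so $\deg S\leq 3$, and equality holds once we know the intersection is generically reduced along each component. For reducedness along $M$, I note that at a general point $q\in M$ with $q\neq f_2(t)$, the tangent space to $S$ at $q$ is spanned by the ruling line $\ell_{f_2^{-1}(q)}$ (which is not contained in $T$ there) together with the tangent to $M$ (which is); hence $T$ meets $S$ transversely at $q$, so the component $M$ appears with multiplicity one. The analogous transversality at a general point of $L_r$ shows $L_r$ also appears with multiplicity one. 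The main subtlety I expect is precisely this transversality step, which requires identifying the tangent plane to $S$ at a smooth point as the span of the ruling line and the tangent to the ``directrix'' curve; once this is granted, the equality $S\cap T=M\cup L_r$ holds with reduced scheme structure and $\deg S=3$ follows.
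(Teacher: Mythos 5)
Your overall strategy — describe $S\cap T$ set-theoretically via the ruling, then prove generic transversality to read off the degree — is the same as the paper's. However, there are two problems in the execution.

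First, a small but real logical slip: you write that the scheme $S\cap T$ has support of degree $3$ and therefore $\deg S\leq 3$. This inequality goes the wrong way. The degree of a one-dimensional scheme is at least the degree of its reduction; non-reducedness along a component only increases the degree. So the correct deduction is $\deg S\geq 3$, with equality if and only if the intersection is generically reduced along each component. Your conclusion is right, but your stated reason for it is backwards.

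Second, and more seriously, the claimed ``analogous transversality'' at a general point of $L_r$ is not actually analogous to the argument at a general point of $M$. At a general $q\in M$ you are correct: $T_q S$ is spanned by the tangent to $M$ (inside $T$) and the ruling direction $\ell_{f_2^{-1}(q)}$ (outside $T$), so $T_q S\not\subset T$. But at a general $q\in L_r$ the ruling direction through $q$ \emph{is} $L_r$ itself, which \emph{is} contained in $T$. So the ruling direction gives no information; you must instead examine the complementary direction in $T_q S$ (what you call the ``directrix'' direction) and show it escapes $T$ for general $q$ on $L_r$. Nothing you have written addresses this, and your parenthetical disclaimer acknowledging the gap does not close it. The fact is true — for instance, the complementary tangent direction degenerates to the tangent of $L$ at $r$, which is transverse to $T$, at one end of $L_r$, so it can lie inside $T$ for only finitely many points of $L_r$ — but that argument needs to be spelled out. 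The paper sidesteps the issue entirely by applying Bertini's theorem to the pencil of hyperplanes through $H$ parameterized by $L$: the base locus of the pencil restricted to $S$ is $M$, so a general member of the pencil meets $S$ smoothly away from $M$; in particular $L_r$ is generically reduced for a general choice of $r$. If you want to retain your tangent-space analysis, you should make the boundary-case argument just sketched explicit; otherwise Bertini is the cleaner route.
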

\begin{proof}
	From the construction of the surface, we know that $L, M \subset S$,
	and recall $M \subset H \cong \bp^2 \subset \bp^4$ is a conic.
	Consider the hyperplane $T$ spanned by
	$H$ and a point $p \in L$. We know there is some line $L_p$
	joining $L$ and a point in the conic, and so this line is necessarily contained
	in $T$. However, if there were any point $q \in S$ with $q \in T$, then there is some point
	$r \in L$ and a line $L_r$ joining $r$ and a point $s$ on the conic, which contains $q$.
	If $q \in T$, since $s \in T$, it follows that all of $L_r \in T$. In particular, $r \in T$.
	This implies that all of $L \subset T$, by Bezout's theorem, because two points of
	$L$ are in $T$, and both are degree $1$. This is, of course, a contradiction to the assumption
	that $L, H$ span $T$. Therefore, we obtain that $T \cap S = L_r \cup M$, at least
	set theoretically.
	To calculate the degree, we only need verify this intersection is generically reduced.
	First, at a point $x$ of $M$ other than $L_r \cap M$,
	we may note that the tangent space to $S$ at $x$ is not contained in
	$T$, since the line joining $x$ and the corresponding point on $L$ is not
	contained in $T$. Therefore, the intersection is transverse at $x$,
	and so the scheme structure on $M$ is generically reduced.
	To conclude, we only need know that the scheme structure on $L_r$ is generically
	reduced. This holds from Bertini's theorem, since the
	we can consider the pencil of hyperplanes parameterized by points $r$ on $L$,
	which contain both $r$ and $M$. A general member of this linear system
	will intersect $S$ smoothly away from its base locus by Bertini's theorem.
	But then, it follows that if $r$ was chosen generally, $L_r$ will be smooth away
	from $L_r \cap M$, and hence the intersection $T \cap S$ will be generically reduced.
	Therefore, since $M$ has degree $2$ and
	$L_r$ has degree $1$, $S \cap T$ has degree $3$. Ergo, $S$ has degree $3$.
\end{proof}

\begin{lemma}
	\label{lemma:flat-map-to-p1}
	Let $L_r$ be as in the statement of \autoref{lemma:degree-3}. The sheaf
	$\scl := \sco_S(L_r)$ defines a flat map $f_\scl: S \ra \bp^1$ so that each fiber over a point in
	$\bp^1$ is isomorphic to $\bp^1$. \end{lemma}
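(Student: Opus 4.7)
The plan is to realize $f_\scl$ via the incidence correspondence $\Phi$ from the construction of $S$. Recall $\Phi$ carries two projections: $q: \Phi \to \iota(\bp^1) \cong \bp^1$, whose fiber over $t$ is the line $L_{f_1^{-1}(t)}$ joining $f_1^{-1}(t) \in L$ to $f_2^{-1}(t) \in M$; and the map $\pi: \Phi \to \bp^4$ with image $S$. The first projection tautologically makes $\Phi$ a $\bp^1$-bundle over $\bp^1$, hence smooth and irreducible, and $q$ is flat with $\bp^1$ fibers. My plan is to prove that $\pi: \Phi \to S$ is an isomorphism, after which I define $f_\scl := q \circ \pi^{-1}$ and verify $f_\scl^* \sco_{\bp^1}(1) \cong \scl$ directly.

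The geometric input is that the lines $\{L_r : r \in L\}$ are pairwise disjoint, giving injectivity of $\pi$ on closed points. Indeed, if $r \neq r'$ and $L_r \cap L_{r'}$ were nonempty, then the $2$-plane $P := \spn(L_r, L_{r'})$ would contain all of $L$ (since $r, r' \in L \cap P$) and would meet $H$ in the line through the two distinct points $f_2(f_1(r))$ and $f_2(f_1(r'))$ of $M$. Using the projective dimension formula,
\[
\dim(L + H) \leq \dim(P + H) = \dim P + \dim H - \dim(P \cap H) = 2 + 2 - 1 = 3,
\]
which contradicts $\dim(L + H) = 4$, itself forced by $L \cap H = \emptyset$. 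Hence $\pi$ is proper and bijective on closed points.

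The main obstacle is upgrading this set-theoretic bijection to a scheme-theoretic isomorphism; I would do this by identifying $\Phi$ explicitly as the projective bundle $\bp(\sce)$, where
\[
\sce := (\pi_1)_*\bigl(\sco_{L \cup M} \otimes \pi_2^* \sco_{\bp^4}(1)\bigr) \cong \sco_{\bp^1}(1) \oplus \sco_{\bp^1}(2)
\]
is the rank-$2$ quotient cut out in the construction. (The splitting uses that $L$ and $M$ are disjoint sections of $\pi_1$ embedded in $\bp^4$ by $\sco(1)$ and $\sco(2)$ respectively.) Then $\pi$ is precisely the morphism $\bp(\sce) \to \bp^4$ induced by the surjection $\sco_{\bp^1}^{\oplus 5} \twoheadrightarrow \sce$ of the construction; this is the complete linear system of the relative $\sco_{\bp(\sce)}(1)$ (since $h^0(\bp^1, \sce) = 2 + 3 = 5$), which is very ample because every summand of $\sce$ has positive degree. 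Hence $\pi$ is a closed immersion identifying $\Phi$ with $S$.

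Transporting structures through the isomorphism $\pi$, the map $f_\scl$ has fibers $\bp^1$ (the lines $L_r$) and is flat, either directly as the pushdown of the $\bp^1$-bundle $q$, or because any dominant morphism from the integral surface $S$ to the regular curve $\bp^1$ is automatically flat. Finally, $\scl = \sco_S(L_r)$ is the invertible sheaf attached to the fiber class of $q$, which under the identification is $q^*\sco_{\bp^1}(1) = f_\scl^*\sco_{\bp^1}(1)$, so $f_\scl$ is indeed the morphism associated to $|\scl|$.
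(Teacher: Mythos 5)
Your proof is correct but takes a genuinely different route from the paper. The paper works entirely inside $S$: it exhibits two explicit global sections of $\scl$ (the constant $1$ and the ratio $F_q/F_r|_S$ of two linear forms), checks they span a basepoint-free pencil, reads off the fibers directly, and invokes the constant-Hilbert-polynomial criterion over a reduced base to get flatness. Nothing about smoothness or the ambient bundle structure of $S$ is assumed; indeed the paper's proof is deliberately written so that smoothness of $S$ can be \emph{deduced} from this lemma in the subsequent corollary and theorem. You instead go back to the incidence correspondence, identify $\Phi$ with $\bp(\sce)$ for $\sce \cong \sco_{\bp^1}(1)\oplus\sco_{\bp^1}(2)$, show $\pi\colon \Phi\to\bp^4$ is a closed immersion via very ampleness of $\sco_{\bp\sce}(1)$, and then transport the projective-bundle structure. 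Your approach is more structural and proves strictly more (it yields smoothness, integrality, and two-dimensionality of $S$ in one stroke, rather than leaving these for the next two statements), at the cost of importing the external fact that $\sco_{\bp\sce}(1)$ is very ample whenever every summand of $\sce$ has positive degree. That fact is classical and true, but you state it without proof or citation and it is carrying the entire weight of the argument; the paper evidently prefers not to lean on it, since its proof of smoothness is built up from flatness rather than the other way around. The disjoint-lines dimension count you give is correct and a nice direct verification of injectivity, though once you have very ampleness it is logically redundant. Everything else — the computation $h^0(\bp^1,\sce)=5$ identifying $\pi$ with the complete linear series, the identification of $\sco_S(L_r)$ with $q^*\sco_{\bp^1}(1)$, and the two routes to flatness — is sound.
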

\begin{proof}
	First, consider the invertible sheaf $\scl := \sco_S(L_r)$.
	I claim we know at least two elements of $H^0(S,\scl)$. First, since $S \cap T$ is an effective
	divisor, there is the element $1$. Second, let
	$r, q \in L$ be a point, and let $T_q, T_r$ be the hyperplane spanned by $q, H$, with
	corresponding linear forms $F_q, F_r$.
	Then, $F_q/F_r$ defines a rational section of $\sco_{\bp^4}(1)$, whose restriction to
	$S$, call it $s := F_q/F_r|_S$, satisfies $s \in H^0(S, \scl)$, with divisor $\di s = L_q - L_r$.
	Now, we claim the rational functions $1,s$ span a subspace of $H^0(S, \scl)$ which
	defines a map $S \ra \bp^1$, whose fibers
	are the lines joining a point in $L$ to a point in $M$.
	First, we claim the linear system spanned by $1, s$ is basepoint free.
	To see this, note that $1$ only vanishes along $L_r$, while $s$ only vanishes along
	$L_q$. Therefore, the system is basepoint free, and we obtain a map
	$S \ra \bp^1$.
	Call this map $f_\scl$.
	Indeed, the fiber over a point $[\alpha, \beta] \in \bp^1$ is the vanishing locus
	of the linear form $\alpha \cdot 1 + \beta \cdot s = \frac{\alpha \cdot F_r + \beta \cdot F_q}{F_r}$.
	Letting $x_0, x_1$ be the linear coordinates on $L$ dual to the points $r,q$, if we let
	$t := \alpha x_0 + \beta x_1$, we have $L_t = f_\scl^{-1}([\alpha, \beta])$.
	Then, since $\bp^1$ is reduced, and all the fibers of $f_\scl$ are reduced lines,
	we obtain $f_\scl$ is a flat map because maps to a reduced scheme so that
	the fiber has constant Hilbert polynomial are flat
	by \cite[Theorem III.9.9]{Hartshorne:AG}.
\end{proof}
\begin{remark}
	\label{remark:}
	In fact, the fiber of $f_\scl$ over a point $p$ is a line in $\bp^3$ corresponding
	to the point of $G(2,5)$,
	determined by the fiber over $p$ in the exact sequence ~\eqref{equation:pushforward-left-sequence}.
\end{remark}

\begin{corollary}
	\label{corollary:}
	The scheme $S$ is integral and two dimensional.
\end{corollary}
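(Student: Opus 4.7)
The plan is to deduce both integrality and the dimension count from the flat map $f_\scl : S \to \bp^1$ constructed in \autoref{lemma:flat-map-to-p1}, whose fibers are all isomorphic to $\bp^1$. Reducedness of $S$ has already been established in \autoref{lemma:phi-reduced}, so the only remaining content is irreducibility and the dimension statement.

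First I would observe that $\bp^1$ is irreducible and every (closed) fiber of $f_\scl$ is irreducible of dimension $1$, being a line in $\bp^4$. Since $f_\scl$ is flat, this is exactly the setup of the second half of \autoref{lemma:irreducible-base-and-fibers}: a flat map to an irreducible base whose general (in fact every) fiber is irreducible of fixed dimension $\delta$ has an irreducible source of dimension $\dim(\text{base}) + \delta$. Applying this with $\delta = 1$ gives that $S$ is irreducible and $\dim S = 1 + 1 = 2$.

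Combined with reducedness from \autoref{lemma:phi-reduced}, irreducibility upgrades $S$ to integral, and the dimension computation yields $\dim S = 2$, completing the corollary. The main (and essentially only) obstacle is verifying that the flatness and fiber-irreducibility hypotheses of \autoref{lemma:irreducible-base-and-fibers} are met, but both are already recorded in \autoref{lemma:flat-map-to-p1}, so the proof amounts to citing that lemma together with \autoref{lemma:irreducible-base-and-fibers} and \autoref{lemma:phi-reduced}.
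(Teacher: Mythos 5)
Your proof is correct and follows essentially the same route as the paper's: the paper also deduces integrality and the dimension count from the flat map $f_\scl : S \to \bp^1$ with irreducible one-dimensional fibers (citing Vakil's Exercise 11.4.C directly, which is the same ingredient underlying \autoref{lemma:irreducible-base-and-fibers}). The only difference is that you are slightly more explicit in separating out reducedness via \autoref{lemma:phi-reduced} before upgrading irreducibility to integrality, which the paper leaves implicit.
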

\begin{proof}
	We have exhibited a flat map $f_\scl: S \ra \bp^1$ whose fibers are 1 dimensional lines.
	Then, by \cite[Exercise 11.4.C]{vakil:foundations-of-algebraic-geometry}, $S$ is integral
	and $2$ dimensional.
\end{proof}

Now, we can use our map $S \ra \bp^1$ to show $S$ is smooth.

\begin{theorem}
	\label{theorem:}
	$S$ is a smooth surface of degree $3$.
\end{theorem}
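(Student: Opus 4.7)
By \autoref{lemma:degree-3} the degree is already $3$, and by the corollary after \autoref{lemma:flat-map-to-p1} we know $S$ is integral of dimension $2$, so the only remaining claim is that $S$ is smooth. The plan is to exploit the flat map $f_\scl: S \to \bp^1$ constructed in \autoref{lemma:flat-map-to-p1}, whose every (scheme-theoretic) fiber is a line $L_t \cong \bp^1$.

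Fix a closed point $x \in S$ and set $t = f_\scl(x) \in \bp^1$. I would invoke the local criterion for regularity in a flat family: if $(A,\fm_A) \to (B,\fm_B)$ is a flat local homomorphism of Noetherian local rings, then $B$ is regular if and only if $A$ is regular and $B/\fm_A B$ is regular. Applying this to $A = \sco_{\bp^1, t} \to B = \sco_{S, x}$, flatness is given by \autoref{lemma:flat-map-to-p1}, regularity of $A$ holds because $\bp^1$ is smooth, and regularity of $B/\fm_A B = \sco_{L_t, x}$ holds because $L_t \cong \bp^1$ is smooth. Hence $\sco_{S,x}$ is regular for every closed point $x \in S$. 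Since $\bk$ is algebraically closed and $S$ is of finite type over $\bk$, regularity at every closed point is equivalent to smoothness of $S$ over $\bk$, completing the proof.

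The step that deserves a careful check, and which I view as the only real (minor) obstacle, is confirming that every scheme-theoretic fiber of $f_\scl$ is in fact a reduced $\bp^1$ rather than a thickened line; but this was already arranged in \autoref{lemma:flat-map-to-p1}, where $L_t$ is cut out on $S$ by a single linear form $\alpha F_r + \beta F_q$ and is visibly a reduced line for every $[\alpha{:}\beta] \in \bp^1$. Everything else is a direct application of standard commutative algebra.

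As a coda, the same argument shows that $f_\scl: S \to \bp^1$ is actually a smooth proper morphism whose geometric fibers are all $\cong \bp^1$, so $S$ is a Zariski-locally trivial $\bp^1$-bundle over $\bp^1$. Combined with \autoref{theorem:locally-free-sheaves-on-p1}, this identifies $S$ with $\bp(\sco_{\bp^1}(a) \oplus \sco_{\bp^1}(b))$ for some integers $a \geq b \geq 0$, and comparing degrees forces $a + b = 3$, consistent with the scroll $\scroll{2,1}$ picture from the beginning of \autoref{subsection:segre-cubic-surface-construction}.
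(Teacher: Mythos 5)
Your proof is correct and takes essentially the same approach as the paper: both arguments rest on the flat morphism $f_\scl\colon S \to \bp^1$ together with the fact that its fibers are smooth (reduced lines). The only difference is packaging: the paper first proves that $f_\scl$ itself is a smooth morphism (via the "flat + locally of finite presentation + smooth fibers" criterion) and then composes with the smooth structure map $\bp^1 \to \spec \bk$, whereas you apply the local criterion for regularity along a flat local homomorphism at each closed point and then pass from "regular at all closed points, finite type over $\bk$ algebraically closed" to "smooth over $\bk$." These are two standard ways of saying the same thing, and your route is no less rigorous.
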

\begin{proof}
	We have already shown $S$ is an integral surface of degree $3$. We want to show $S$
	is smooth. Note that $\pi: S \ra \spec \bk$ factors through $f_\scl$. That is,
	\begin{equation}
		\nonumber
		\begin{tikzcd}
			S \ar {rr}{f_\scl} \ar {rd}{\pi} && \bp^1 \ar {ld} \\
			 & \spec \bk & 
		 \end{tikzcd}\end{equation}
	commutes. 
	Now, it is clear $\bp^1 \ra \spec \bk$ is smooth, so to complete the proof,
	we only need show $f_\scl$ is smooth, as then the composition will be smooth.
	For this, we use \cite[Theorem 25.2.2]{vakil:foundations-of-algebraic-geometry}, which tells us that to show $f_\scl$
	is smooth, it suffices to show it is locally finitely presented, flat,
	and the fibers are smooth varieties of dimension $n$.
	We have already seen \autoref{lemma:flat-map-to-p1} that
	$f_\scl$ is flat. Further, it is locally finitely presented
	by the cancellation property for locally finitely presented maps,
	since we know $S$ is a variety, hence locally finitely presented over $\spec \bk$.
	To complete the proof, we only need show the fibers are smooth of dimension 1.
	However, the fibers are reduced copies of $\bp^1$, as shown in 
	\autoref{lemma:flat-map-to-p1}, so we are done.
\end{proof}

\section[Constructing a scroll]{Constructing a scroll by joining rational normal curves}

In this section, we will generalize the construction given in \autoref{subsection:segre-cubic-surface-construction},
albeit in less detail.

\begin{proposition}
	\label{proposition:construction-of-scrolls-by-joining-curves}
	Fix $k \in \bz$ and a non-increasing sequence $a_1, \ldots, a_k$ with $a_k \geq 1$. Let $n := k-1 + \sum_{i=1}^k a_i$
	Let $H_1, \ldots, H_k$ be a collection of linear subspaces in $\bp^{n}$ 
	with $\dim H_i = a_i$ so that $H_1, \ldots, H_k$ together span $\bp^{n}$.
	Let $C_1, \ldots, C_k$ be rational normal curves with embeddings $\iota_i: C_i \ra \bp^n$ so that $C_i$ spans $H_i \subset \bp^n$
	and choose isomorphisms $f_i: C_i \ra \bp^1$.
	Now, let $D_i$ be the image of $C_i$ under the map
	\begin{align*}
		\iota_i \times f_i : C_i \ra \bp^{1} \times \bp^n.
	\end{align*}
	and define the projections
	\begin{equation}
		\nonumber
		\begin{tikzcd}
			\qquad & \bp^{n} \times \bp^1 \ar {ld}{\pi_1} \ar {rd}{\pi_2} & \\
			\bp^{1} && \bp^n 
		\end{tikzcd}\end{equation}
	Then, we have an exact sequence of locally free sheaves on $\bp^1$
	\begin{equation}
		\label{equation:scroll-as-map-from-line-to-Grassmannian}
		\begin{tikzcd}
			0 \ar {r} & (\pi_1)_* \left(\sci_{D_1 \cup \cdots \cup D_k} \otimes (\pi_2)^*\sco_{\bp^n}(1)\right) \\ 
			\ar {r} & (\pi_1)_*\left( \sco_{\bp^n \times \bp^1} \otimes (\pi_2)^* \sco_{\bp^n}(1)\right)  \\
			\ar {r} & (\pi_1)_* \left(\sco_{D_1 \cup \cdots \cup D_k} \otimes (\pi_2)^*\sco_{\bp^n}(1)\right) \ar {r} & 0.
		\end{tikzcd}\end{equation}
	where the first sheaf is of rank $k$ and the second is of rank $n+1$. The dual of ~\eqref{equation:scroll-as-map-from-line-to-Grassmannian} determines a map
	\begin{align*}
		\iota: \bp^1 \ra G(k,n+1)
	\end{align*}
	Then, define the incidence correspondence
	\begin{align*}
		\Phi := \left\{ (p, H) \in \bp^n \times G(k, n+1) : p \in H, H \in \iota(\bp^1) \right\} 
	\end{align*}
	with projections	
	\begin{equation}
		\nonumber
		\begin{tikzcd}
			\qquad & \Phi \ar {ld}{\pi} \ar {rd}{\eta} & \\
			\bp^n && G(k,n+1).
		\end{tikzcd}\end{equation}
	We have that $\pi(\Phi) \subset \bp^n$ is a smooth scroll of type $a_1, \ldots, a_k$.
\end{proposition}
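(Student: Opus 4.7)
The plan is to mimic the detailed construction of the degree-$3$ surface scroll in \autoref{subsection:segre-cubic-surface-construction}. First, I would note that the hypotheses force the linear subspaces $H_1,\ldots,H_k$ to be pairwise disjoint: their underlying vector subspaces in $\bk^{n+1}$ have total dimension $\sum_i(a_i+1) = n+1$ and span $\bk^{n+1}$, so they form a direct-sum decomposition. Consequently the $D_i \subset \bp^1 \times \bp^n$ are pairwise disjoint, each mapping isomorphically to $\bp^1$ via $\pi_1$. With this in hand, I would verify exactness and local freeness of \eqref{equation:scroll-as-map-from-line-to-Grassmannian} exactly as in the degree-$3$ case: applying Grauert's theorem, each term has constant Hilbert polynomial on the fibers of $\pi_1$ (namely that of $\sco_{\bp^n}(1)$ restricted to a hyperplane, to $\bp^n$, and to $k$ reduced points, respectively), and flatness together with reducedness of $\bp^1$ ensures compatibility of pushforward with base change. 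The middle term is trivialized by pulling back the coordinates on $\bp^n$, and the fiber computation shows the quotient is locally free of rank $k$ (so the kernel has rank $n+1-k$).

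The surjection from the middle onto the rank-$k$ quotient, by the universal property of the Grassmannian, yields $\iota : \bp^1 \to G(k, n+1)$. That $\iota$ is a closed embedding follows, as in \autoref{lemma:closed-embedding}, from the cancellation theorem for proper morphisms (so $\iota(\bp^1)$ is closed) together with injectivity on closed points and tangent vectors: two distinct values of $t \in \bp^1$ give two distinct $k$-tuples $\{\iota_i(f_i^{-1}(t))\}_{i=1}^{k}$, which span distinct $(k-1)$-planes because the $H_i$ are disjoint and each $f_i$ is an isomorphism.

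Next, writing $\sce$ for the rank-$k$ quotient sheaf on $\bp^1$, disjointness of the $D_i$ yields a canonical direct-sum decomposition
\[
\sce \;\cong\; \bigoplus_{i=1}^{k} (\pi_1)_*\bigl(\sco_{D_i} \otimes (\pi_2)^* \sco_{\bp^n}(1)\bigr).
\]
Under the isomorphism $\pi_1|_{D_i} : D_i \to \bp^1$, each summand becomes the line bundle $\sco_{\bp^n}(1)|_{C_i}$ transported to $\bp^1$; since $\iota_i$ embeds $C_i \cong \bp^1$ as a rational normal curve of degree $a_i$, this line bundle is $\sco_{\bp^1}(a_i)$. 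Hence $\sce \cong \bigoplus_i \sco_{\bp^1}(a_i)$, and $\Phi$, being the pullback along $\iota$ of the universal family over $G(k,n+1)$, is canonically identified with $\bp\sce$ in the Grothendieck convention, under which $\sco_{\bp\sce}(1)$ is the tautological relative hyperplane class.

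Finally, I would show that $\pi : \bp\sce \to \bp^n$ is a closed embedding whose image is a scroll of type $(a_1,\ldots,a_k)$. By construction $\pi$ is the morphism attached to the surjection $H^0(\bp^1,\sce) \otimes \sco_{\bp^1} \twoheadrightarrow \sce$, i.e., the morphism defined by the complete linear system of the relative $\sco_{\bp\sce}(1)$; since $h^0(\bp^1,\sce) = \sum(a_i+1) = n+1$, this lands in $\bp^n$. The main obstacle, as in the degree-$3$ case, is verifying that $\pi$ is genuinely an embedding rather than merely finite: one must separate points on, and tangent vectors along, both distinct fibers and the same fiber of $\bp\sce \to \bp^1$. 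Separation within a fiber is automatic from very ampleness of the relative $\sco_{\bp\sce}(1)$ on each $\bp^{k-1}$-fiber, while separation between distinct fibers reduces, via the direct-sum decomposition $\bk^{n+1} = \bigoplus V_i$, to showing $\iota(t) \cap \iota(t') = \emptyset$ for $t \ne t'$, which follows because each $f_i$ is an isomorphism and the $V_i$ are linearly independent. With $\pi$ a closed embedding of the smooth scheme $\bp\sce$, the image $\pi(\Phi)$ is smooth and is precisely the scroll of type $(a_1,\ldots,a_k)$ as defined in \autoref{definition:scroll}.
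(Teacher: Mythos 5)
Your proposal is correct but takes a genuinely different route from the paper's. Both proofs share the sheaf-theoretic setup establishing that the three pushforwards in \eqref{equation:scroll-as-map-from-line-to-Grassmannian} are locally free and the middle term is trivial of rank $n+1$; thereafter you diverge. The paper never computes the rank-$k$ quotient sheaf explicitly: instead it shows $\pi(\Phi)$ has degree $\sum_i a_i$ by intersecting with a codimension $k-1$ plane spanned by $H_1$ and general points of the other $D_i$ and invoking Bertini, it reduces reducedness of the image to reducedness of $\Phi$ via \autoref{lemma:surjective-on-schemes-implies-injective-on-rings}, and it proves smoothness of $\pi(\Phi)$ by exhibiting a flat morphism $\overline\eta : \pi(\Phi) \to \bp^1$ whose fibers are smooth $(k-1)$-planes. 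You instead observe that the spanning hypothesis forces the $H_i$ to give a direct-sum decomposition of $\bk^{n+1}$, use this to split the quotient $\sce$ as $\bigoplus_i \sco_{\bp^1}(a_i)$, identify $\Phi$ with $\bp\sce$ as the pullback of the universal family over the Grassmannian, and then recognize $\pi$ as the morphism attached to the complete linear series of the relative $\sco_{\bp\sce}(1)$ --- so that the image is a scroll of type $(a_1,\ldots,a_k)$ by direct appeal to \autoref{definition:scroll}. Your approach is more intrinsic and cleanly pins down the scroll type, a point the paper's argument leaves somewhat implicit; what it requires in exchange is the verification that $\pi$ is a closed embedding. You argue separation of points across distinct fibers by showing $\iota(t) \cap \iota(t') = \emptyset$ from the direct-sum decomposition, but the separation of tangent vectors in the $\bp^1$-direction (where the hypothesis $a_k \geq 1$ --- so that each $\iota_i$ is itself an embedding --- is essential) is only asserted, not carried out; that is the one step you would need to fill in to complete the argument.
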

\sssec*{Idea of Proof}
	The proof follows precisely the same argument as given throughout \autoref{subsection:segre-cubic-surface-construction}, but
	with the constants slightly modified.
	The idea is to use the $C_i$ to construct a map from $\bp^1$ to the Grassmannian,
	where a point on $\bp^1$ is sent to the plane spanned by the corresponding points on each $C_i$.
	Then, we can realize this curve in the Grassmannian as a subscheme of $\bp^n$ using an incidence
	correspondence. Finally, we can check the resulting scheme is smooth by observing that it maps
	to our original $\bp^1$ with smooth linear fibers.
\begin{proof}
First, in order to apply the universal property
of the Grassmannian to produce the map $\iota: \bp^1 \rightarrow G(k, n+1)$, we verify that the sequence of sheaves on $\bp^1$
given in 
~\eqref{equation:scroll-as-map-from-line-to-Grassmannian}
is a sequence of locally free sheaves, with the middle one
being trivial of rank $n+1$ and the first trivial of rank $k$.

We show the sheaves in 
~\eqref{equation:scroll-as-map-from-line-to-Grassmannian}
are locally free.
By Grauert's theorem, since $\bp^1$ is reduced, and the sheaves
before being pushed forward to $\bp^1$ are flat,
it suffices
to show that the three sheaves have Hilbert polynomials
which are the same on all fibers over closed points of $\bp^1$.
Further, because the kernel of a map of locally free sheaves is
locally free, we will only check the latter two have locally
constant Hilbert polynomials on fibers.
Then, the fiber over a closed point of
\begin{align*}
	(\pi_1)_*\left( \sco_{\bp^n \times \bp^1} \otimes (\pi_2)^* \sco_{\bp^n}(1)\right) 
\end{align*}
is $\sco_{\bp^n}(1)$, and is clearly independent of the point.
Also, the fiber over a closed point $t \rightarrow \bp^1$
of
\begin{align*}
(\pi_1)_* \left(\sci_{D_1 \cup \cdots \cup D_k} \otimes (\pi_2)^*\sco_{\bp^n}(1)\right)
\end{align*}
is a union of skyscraper sheaves over $f_1^{-1}(t), \ldots, f_k^{-1}(t)$,
and is therefore of rank $k$ everywhere, as desired.
This implies all sheaves in the sequence are locally free.
Note that the map between the last two nonzero sheaves in
~\eqref{equation:scroll-as-map-from-line-to-Grassmannian}
is surjective because it is surjective on fibers, since we can
find hyperplanes on $\bp^n$ containing $f_i^{-1}(t)$ but
not $f_j^{-1}(t)$ for $i \neq j$.

To obtain a map to the Grassmannian, we still need to show
the middle term of 
~\eqref{equation:scroll-as-map-from-line-to-Grassmannian}
is trivial of rank $n+1$. Note that the third sheaf has
rank $k$, as we saw above. Therefore, the dual of 
~\eqref{equation:scroll-as-map-from-line-to-Grassmannian}
will determine a map to $G(k,n+1)$ once we show the middle
term is trivial of rank $n+1$.
We know it has rank $n+1$, because that is the rank of its
fibers, so it suffices to check the middle term is trivial.
However, we get a set of trivializing sections given by
pulling back the coordinate functions of $\bp^n$. These are
independent because they are independent at every fiber.

We have now constructed the map to $G(k,n+1)$.
To complete the proof, it suffices to show
that $\pi(\Phi)$ is a smooth closed embedding of degree
$d := \sum_{i=1}^k a_i$.

First, we show that $\pi(\Phi)$ has degree $n$.
To see this, take a codimension $k-1$ plane $P$ spanned
by $H_1$ and $a_i$ general points on $D_i$ for $2 \leq i \leq k$.
The resulting intersection $\pi(\Phi) \cap P$ is the union
of the rational normal curve $D_1$, together with $a_i$
joining $D_1$ to $D_i$ for $2 \leq i \leq k$. So, if we showed
this intersection is reduced, we would know the degree of $\pi(\Phi)$
is $a_1 + (a_2 + \cdots + a_k)$, as claimed.

Suppose we knew $\pi(\Phi)$ is reduced. We will now show this implies
$\pi(\Phi) \cap P$ is generically reduced.
Then, if we consider the linear system of hyperplanes containing $H_1$,
Bertini's theorem implies that a general member of this linear system will intersect
$\pi(\Phi)$ smoothly, away from $H_1$, the base locus of the linear system.
Further, the intersection with $D_1 = H_1 \cap \pi(\Phi)$ is also generically reduced,
since the intersection is transverse
at any point $x$ on $D_1$ so that $P \cap \pi(\Phi)$ does not contain a line in $\pi(\Phi),$ (other than possibly
$D_1,)$ meeting $x$.

So, to show this intersection $\pi(\Phi) \cap P$ is reduced, it suffices to show
that $\pi(\Phi)$ is reduced. In turn, by \cite[Exercise 8.2.A]{vakil:foundations-of-algebraic-geometry},
it suffices to show $\Phi$ is reduced. By the same reasoning,
we know $\iota(\bp^1) \subset G(k,n+1)$ is reduced, and the fibers
of the map $\Phi \rightarrow \iota(\bp^1)$ are all reduced
$(k-1)$-planes. It follows that $\Phi$ is reduced, essentially
using \autoref{lemma:surjective-on-schemes-implies-injective-on-rings}.

To conclude, we only need show that $\pi(\Phi)$ is smooth.
For this, 
note that the map $\eta$ determines a map
$\Phi \rightarrow \iota(\bp^1) \cong \bp^1$ in which
all fibers are $(k-1)$-planes.
Let $Q = \pi^{-1}(t)$ for $t$ a closed point of $\bp^1$.
Then, the invertible sheaf $\sco_{\pi(\Phi)}(\pi(Q))$
determines a map to $\bp^1$ because it has a two dimensional
space of global sections, corresponding to the distinct fibers
of the map $\eta$. This determines a map $\overline \eta: \pi(\Phi) \rightarrow \bp^1$,
where each fiber is the fiber is a plane.

Since all fibers are planes, they have the same Hilbert
polynomial, and so the map is flat.
So, to show $\pi(\Phi)$ is smooth, we only need to show
that the map $\overline \eta$ is smooth, because then
the composition of $\overline \eta$ with $\bp^1 \rightarrow \spec \bk$
will also be smooth.
Now, by \cite[Theorem 25.2.2]{vakil:foundations-of-algebraic-geometry},
it suffices to show $\pi$ is locally finitely presented
and has smooth fibers of dimension $k-1$.
Note that $\pi$ is locally finitely presented by the cancellation
property for locally finitely presented maps, and we already
know the fibers are smooth $(k-1)$-planes.
\end{proof}

\section[Equivalence of descriptions of scrolls]{Equivalence of various descriptions of scrolls}

In this section, we will discuss various equivalent definitions of scrolls, with a particular focus on scrolls of minimal degree.
In particular, we will describe implications between the following descriptions. 
\begin{enumerate}
	\item[\customlabel{custom:swept-planes}{scrolls-1}]  the scheme swept out by joining rational normal curves with planes, via compatible isomorphisms, as constructed in \autoref{proposition:construction-of-scrolls-by-joining-curves},
	\item[\customlabel{custom:vector-bundle}{scrolls-2}] a projective bundle over $\bp^1$, together with a particular embedding in $\bp^n$, as defined
		in \autoref{definition:scroll},
	\item[\customlabel{custom:minors}{scrolls-3}] a variety cut out the the two by two minors of a two row matrix,
	\item[\customlabel{custom:Grassmannian}{scrolls-4}] a rational curve in a Grassmannian.

\end{enumerate}

The precise statement of these implications are given in \autoref{proposition:equivalence-vector-bundle-swept-planes},
 \autoref{proposition:equivalence-minors-swept-planes}, and \autoref{proposition:equivalence-Grassmannian-swept-planes}.

Note that some precision is required to
verify that the above are actually varieties of minimal degree.
Although the construction of \autoref{proposition:construction-of-scrolls-by-joining-curves}
was fairly arduous, it will prove easiest to verify that all other descriptions of scrolls are equivalent
to this one.
We start with a description of the equivalence of \ref{custom:swept-planes} and \ref{custom:vector-bundle}.

\begin{proposition}
	\label{proposition:equivalence-vector-bundle-swept-planes}
	A scroll $X \subset \bp^{k-1 + \sum_{i=1}^k a_i}$ of type $a_1, \ldots, a_k$ as defined in \autoref{definition:scroll}
	is isomorphic 
	to the scroll of type $a_1, \ldots, a_k$
	as defined by joining rational normal curves via $k$-planes
	in \autoref{proposition:construction-of-scrolls-by-joining-curves}.
	Further, every scroll as constructed in 
	\autoref{proposition:construction-of-scrolls-by-joining-curves},
	can be realized as such an embedding of a projective bundle on $\bp^1$.
\end{proposition}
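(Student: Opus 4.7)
The plan is to prove both directions of the equivalence by exploiting the canonical direct-sum decomposition $\sce = \bigoplus_{i=1}^k \sco_{\bp^1}(a_i)$ and matching it against the $k$ rational normal curves in the construction of \autoref{proposition:construction-of-scrolls-by-joining-curves}. The first sanity check is dimensional: by the projection formula, $h^0(\bp\sce, \sco_{\bp\sce}(1)) = h^0(\bp^1, \sce) = \sum_{i=1}^k (a_i + 1) = k + \sum a_i$, so the complete linear series of $\sco_{\bp\sce}(1)$ embeds $\bp\sce$ into $\bp^{k-1 + \sum a_i}$, which agrees with the ambient projective space in \autoref{proposition:construction-of-scrolls-by-joining-curves}.

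For the direction from \autoref{definition:scroll} to \autoref{proposition:construction-of-scrolls-by-joining-curves}, I would produce $k$ canonical sections $\sigma_i : \bp^1 \to \bp\sce$ of the structure map, one for each surjection $\sce \twoheadrightarrow \sco_{\bp^1}(a_i)$ induced by the direct-sum decomposition (under the Grothendieck convention adopted in the preliminaries). Since $\sigma_i^* \sco_{\bp\sce}(1) = \sco_{\bp^1}(a_i)$, the composition of $\sigma_i$ with the embedding $\bp\sce \hookrightarrow \bp^n$ is given by the complete linear series of $\sco_{\bp^1}(a_i)$, hence embeds $\bp^1$ as a rational normal curve $C_i$ of degree $a_i$. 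The decomposition $H^0(\bp^1, \sce) = \bigoplus_i H^0(\bp^1, \sco(a_i))$ exhibits the linear spans $H_i = \spn C_i$ as linearly independent, summing to all of $\bp^n$. Restricting $\sco_{\bp\sce}(1)$ to any fiber $F_t$ of $\bp\sce \to \bp^1$ yields $\sco_{\bp^{k-1}}(1)$, so each fiber is embedded as the $(k-1)$-plane spanned by the $k$ points $\sigma_1(t), \ldots, \sigma_k(t)$. Taking the isomorphisms $f_i : C_i \to \bp^1$ to be $\sigma_i^{-1}$ exhibits $\bp\sce$ as the variety swept out by joining the $C_i$ exactly as in \autoref{proposition:construction-of-scrolls-by-joining-curves}.

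For the converse direction, I would observe that the incidence correspondence $\Phi$ built in \autoref{proposition:construction-of-scrolls-by-joining-curves} is a $\bp^{k-1}$-bundle over $\iota(\bp^1) \cong \bp^1$, namely the pullback of the universal bundle on $G(k,n+1)$. By \autoref{theorem:locally-free-sheaves-on-p1}, $\Phi \cong \bp\sce'$ for some $\sce' = \bigoplus_{i=1}^k \sco_{\bp^1}(b_i)$, and the embedding $\pi : \Phi \to \bp^n$ is induced by the quotient $\sco_{\bp^1}^{\oplus n+1} \twoheadrightarrow \sce'$ appearing in \eqref{equation:scroll-as-map-from-line-to-Grassmannian}, so it is given by $\sco_{\bp\sce'}(1)$; surjectivity of the corresponding global sections (already verified in the construction, where the middle term of \eqref{equation:scroll-as-map-from-line-to-Grassmannian} is trivial of rank $n+1$) shows the linear series is complete. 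To identify the $b_i$ with the $a_i$, note that each rational normal curve $D_i \subset \Phi$ is a section of $\Phi \to \bp^1$, hence corresponds to a surjection $\sce' \twoheadrightarrow \sco_{\bp^1}(c_i)$; pulling back $\sco_{\bp\sce'}(1)$ along this section yields precisely the invertible sheaf embedding $D_i$ as a rational normal curve of degree $a_i$, forcing $c_i = a_i$ and then $b_i = a_i$ after reordering.

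The main obstacle is bookkeeping rather than geometric depth: one must be careful that the Grothendieck convention $\bp\sce = \proj \sym^\bullet \sce$ is applied consistently so that quotients of $\sce$ correspond to sections of $\bp\sce$, and one must verify that the map $\pi : \Phi \to \bp^n$ of \autoref{proposition:construction-of-scrolls-by-joining-curves} is realized by the \emph{complete} linear series of the relative $\sco(1)$, not a proper subsystem. Both of these follow from tracking the exact sequence \eqref{equation:scroll-as-map-from-line-to-Grassmannian} carefully, since its middle term is literally the trivial rank $n+1$ bundle whose global sections cut out the embedding.
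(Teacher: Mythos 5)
Your forward direction matches the paper's argument precisely: the sections $\sigma_i$ you construct are the same as the paper's inclusions $\phi_i \colon \bp\sco_{\bp^1}(a_i) \hookrightarrow \bp\sce$ induced by the direct-sum projections, and the verification that each fiber is sent to the $(k-1)$-plane spanned by the $\sigma_i(t)$ is carried out the same way. Your converse direction is where you genuinely diverge: the paper does not invoke Grothendieck's theorem at all. Given the curves $D_i$, it simply declares $\sce := \bigoplus_i \sco_{\bp^1}(a_i)$ (identifying each span $H_i$ with $\bp H^0(\sco_{\bp^1}(a_i))$) and checks by the forward direction that the image of $\bp\sce$ under $\sco_{\bp\sce}(1)$ coincides with the swept-out scroll. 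You instead classify the abstract bundle underlying $\Phi$ and try to pin down its splitting type after the fact; this is a clean idea, but it trades the direct construction for a new identification problem.

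That identification problem is where the gap sits. The claim ``forcing $c_i = a_i$ and then $b_i = a_i$ after reordering'' does not follow from what precedes it. Knowing that $\sce' = \bigoplus_i \sco(b_i)$ admits line bundle quotients $\sce' \twoheadrightarrow \sco(a_i)$ for $i=1,\ldots,k$ does not determine the $b_i$: the multiset of degrees of line bundle quotients of a split bundle on $\bp^1$ is strictly larger than $\{b_1,\ldots,b_k\}$ in general, so the $a_i$'s could a priori realize a different splitting. What rescues the argument — and what you must say explicitly — is that the $k$ sections are \emph{fiberwise linearly independent}: over each $t \in \bp^1$ the points $f_1^{-1}(t),\ldots,f_k^{-1}(t)$ are distinct (the $D_i$ lie in complementary linear spaces) and span the entire $(k-1)$-plane fiber. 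Hence the combined map $\sce' \to \bigoplus_i \sco(a_i)$ formed from the $k$ quotients is surjective on every fiber, hence a surjection of bundles of equal rank, hence an isomorphism, and only now do you get $\sce' \cong \bigoplus_i \sco(a_i)$. This also closes a second small circularity: you appeal to completeness of the linear series before knowing the $b_i$'s, but $h^0(\bp\sce',\sco_{\bp\sce'}(1)) = k + \sum b_i$ equals $n+1$ only once the splitting type has been identified.
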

\sssec*{Idea of Proof}
Starting with a scroll, which is abstractly $\bp \sce$ as in \autoref{definition:scroll},
we first construct the rational normal curves $D_i$ as corresponding to
the quotient map from $\sce$ to one of the direct sum components of $\sce$.
Then, we verify that the planes spanned by corresponding points on the $D_i$
are indeed the images of the fibers of $\pi:\bp\sce \ra \bp^1$.

\begin{proof}[Proof expanding that given in \protect{\cite{eisenbudH:on-varieties-of-minimal-degree}}]

Both in the description as a projective bundle over $\bp^1$ and as in the construction from	
\autoref{proposition:construction-of-scrolls-by-joining-curves},
the resulting scheme will be reduced. Therefore, to show these constructions
are equivalent, it suffices to show that the set of closed points
contained in the image agree. That is, it suffices to show
the projective bundle can be described
as $(k-1)$-planes joining rational normal curves, and conversely,
that given any set of rational normal curves with isomorphisms
to $\bp^1$, the planes joining the fibers determine the image of such a projective bundle.

We will start by showing the image of a projective bundle can be described
set theoretically as follows:
We will construct $k$ rational normal curves $D_1, \ldots, D_k$ with maps
$f_i :D_i \ra \bp^1$, so that the union of the $(k-1)$-planes joining
$f_1^{-1}(p), \ldots, f_k^{-1}(p)$ form the image of the projective bundle.
Observe that for each $i, 1 \leq i \leq k$, we have a surjection
\begin{align*}
	\pi_i : \sce \cong \oplus_{j=1}^k \sco_{\bp^1}(a_j) \rightarrow \sco_{\bp^1}(a_i).
\end{align*}
This induces an inclusion in the other direction of schemes
\begin{align*}
	\phi_i: \bp \sco_{\bp^1} (a_i) \ra \bp \sce.
\end{align*}
Observe 
\begin{align*}
	\phi_i^* \sco_{\bp \sce}(1) = \sco_{\bp \sco_{\bp^1}(a_i)}(1) \cong \sco_{\bp^1}(a_i).
\end{align*}
Define $D_i$ to be the image of $\bp^1$ embedded into 
$\bp^{k-1 + \sum_{i=1}^k a_i}$ via the linear system $\phi_i^* \sco_{\bp \sce}(1)$.
These copies of $\bp^1$ span all of $\bp^{k-1 + \sum_{i=1}^k a_i}$, because
the coordinates of this projective space correspond to the
$\sum_{i=1}^k (a_i+1)$ sections of $\sce$.
In particular, the $i$th direct summand of $\sce$ has $a_i+1$ sections generating
$\sco_{\bp \sco(a_i)}(1)$, corresponding
to the $a_i+1$ sections spanned by the image $D_i$.
This shows that $D_1, \ldots, D_k$ span $\bp^{k-1 + \sum_{i=1}^k a_i}$.
Further, we have natural maps $f_i: D_i \ra \bp^1$
given by the composition
$D_i \ra \bp \sce \ra \bp^1$.

To complete this direction of the proof, we only need show that the linear space
spanned by the preimages $f_i^{-1}(p)$ is precisely the image of a fiber of the
projective bundle.
However, we may note that taking $p \ra \bp^1$ an inclusion, we have a fiber product
\begin{equation}
	\nonumber
	\begin{tikzcd} 
		\bp^{k-1} \ar {r} \ar {d} & \bp \sce \ar {d} \\
		p \ar {r} & \bp^1.
	\end{tikzcd}\end{equation}
The top map corresponds to the restriction map of locally free sheaves
\begin{align*}
	\oplus_{i=1}^k \sco_{\bp^1}(a_i) \ra \oplus_{i=1}^k \sco_p (a_i)
\end{align*}
where the latter map is evaluation at the point $p$. 
This map sends $p$ to $f_i^{-1}(p)$, because
the restriction
\begin{align*}
	H^0(\bp^1, \sco_{\bp^1}(a_i)) \ra H^0(\bp^1, \sco_p(a_i))
\end{align*}
has kernel given by the $a_i$ dimensional space of global sections vanishing at $p$.
The intersection of the dual hyperplanes in $\sco_{\bp^1}(a_i)$ is
precisely $f_i^{-1}(p)$.
Therefore, we see that the fiber of $\bp \sce$ over $p \in \bp^1$
is a copy of $\bp^{k-1}$. We already see it contains
$f_1^{-1}(p), \ldots, f_k^{-1}(p)$, so if we show it is linearly embedded,
it will necessarily be the plane spanned by these $k$ points.
To see this, we have a restriction of global sections
\begin{align*}
	H^0(\bp \sce, \sco_{\bp \sce}(1)) \ra H^0(\bp \sce, \sco_{\bp \oplus_{i=1}^k \sco_p (a_i)}(1)).
\end{align*}
Note that the latter map has image which is $k$ dimensional, implying that the image of the fiber
is contained in a $(k-1)$-plane. Further, the global sections of 
$\sco_{\bp \sce}(1)$ vanishing on these $k$ points are precisely those containing the span
of $f_1^{-1}(p), \ldots, f_k^{-1}(p)$, and so their
intersection is the plane spanned by
$f_1^{-1}(p), \ldots, f_k^{-1}(p)$, as claimed.

This completes the description of the set theoretic locus of the image of $\bp \sce$
and shows it is a variety swept out by $k-1$ planes joining $k$ rational normal curves
which span the ambient projective space. 

Conversely, suppose we have a set of $k$ rational
normal curves $D_1, \ldots, D_k$, where $\deg D_i = a_i$,
inside $\bp V$, where $\dim V = k + \sum_{i=1}^k a_i$. Suppose further
we have maps from each of these curves to a fixed copy of $\bp^1$
and that these curves span $\bp V$.
Then, we may view the span of $D_i$ as
$\bp H^0(\sco_{\bp^1}(a_i))$.
As above, the image of
$\sce := \oplus_{i=1}^k \sco_{\bp^1}(a_i)$ under $\sco_{\bp \sce}(1)$
contains the rational curves $D_i$ as hyperplane sections inside the planes
corresponding to the inclusions
$H^0(\sco_{\bp^1}(a_i)) \ra H^0(\oplus_{i=1}^k \sco_{\bp^1}(a_i))$.
As shown above, this is indeed the image of a locally free sheaf
swept out by linear spaces joining the points in a given fiber of the map $f_i: D_i \ra \bp^1$.
\end{proof}

Next, we explain the sense in which \ref{custom:swept-planes} and \ref{custom:minors} are equivalent.
This will take some work, but we now give the proof assuming
\autoref{lemma:reduced-minors}, which says that the scheme cut out by
the two by two minors of a matrix of the coordinate linear forms is reduced.

\begin{proposition}
	\label{proposition:equivalence-minors-swept-planes}
	Let $k \in \bz, a_1 \geq a_2 \geq \cdots \geq a_k$ be integers with $a_k \geq 1$.
	Let 
	\begin{align*}
		\scroll {a_1, \ldots, a_k} \subset \bp^{\sum_{i=1}^k a_i + k - 1} \cong \proj k[x_{0,0}, \ldots, x_{0,a_1}, x_{1,0}, \ldots, x_{1,a_2}, \ldots, x_{k,0}, \ldots, x_{k,a_k}]
	\end{align*}
	be a rational normal scroll of type $a_1, \ldots, a_k$ as constructed in 
	\autoref{proposition:construction-of-scrolls-by-joining-curves}.
	Then, we can write $X$ as the scheme defined by the vanishing of the $2 \times 2$ minors of the matrix
	\begin{align*} M:=
	\begin{pmatrix}
		x_{1,0} & x_{1,1} & \cdots & x_{1,a_1 - 1} & x_{2,0} & \cdots & x_{2, a_2-1} & \cdots & x_{k, 0} & \cdots & x_{k, a_k-1}  \\
		x_{1,1} & x_{1,2} & \cdots & x_{1, a_1} & x_{2, 1} & \cdots & x_{2, a_2} & \cdots & x_{k,1} & \cdots & x_{k,a_k}
	\end{pmatrix}.
	\end{align*}
	Conversely, any scheme cut out by the two by two minors of the above matrix is isomorphic to $\scroll {a_1, \ldots, a_k}$
	as constructed in 
	\autoref{proposition:construction-of-scrolls-by-joining-curves}.
\end{proposition}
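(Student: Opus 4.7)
The plan is to establish scheme-theoretic equality between $\scroll{a_1,\ldots,a_k}$ and $V(I_2(M))$, the vanishing locus of the $2\times 2$ minors of $M$. This equality yields both directions of the proposition at once: the scroll is cut out by $I_2(M)$, and the converse is immediate because $V(I_2(M))$ is one particular scheme, which we have just identified with the scroll.

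First I would verify $\scroll{a_1,\ldots,a_k} \subseteq V(I_2(M))$ via explicit parameterization. By \autoref{proposition:equivalence-vector-bundle-swept-planes}, each rational normal curve $D_i$ is parameterized over $[s:t] \in \bp^1$ by $x_{i,j} = s^{a_i - j} t^j$, and the general point on the ruling $(k-1)$-plane over $[s:t]$ has coordinates $x_{i,j} = \lambda_i s^{a_i - j} t^j$ for scalars $\lambda_1,\ldots,\lambda_k$. Under this parameterization, the column of $M$ indexed by the pair $(i,j)$ becomes $\lambda_i s^{a_i - j - 1} t^{j}(s,t)^{T}$, a scalar multiple of the fixed vector $(s,t)^{T}$. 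Hence every column of $M$ lies on a common line, $M$ has rank at most one, and all $2\times 2$ minors vanish on the scroll. For the reverse set-theoretic inclusion, take a closed point $x \in V(I_2(M))$. Since $x$ is a point of $\bp^n$ not all entries of $M(x)$ vanish, so the rank is exactly one, determining a unique $[s:t] \in \bp^1$ with every column a scalar multiple of $(s,t)^T$. Writing the $(i,j)$ column as $\mu_{i,j}(s,t)^T$ gives $x_{i,j} = \mu_{i,j}s$ and $x_{i,j+1} = \mu_{i,j}t$; comparing the two expressions for $x_{i,j+1}$ arising from columns $(i,j)$ and $(i,j+1)$ forces the uniform ratio $x_{i,j+1}/x_{i,j} = t/s$, and a straightforward induction in $j$ produces common scalars $\lambda_i$ with $x_{i,j} = \lambda_i s^{a_i - j} t^j$. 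This is precisely a point on a ruling of the scroll, so $V(I_2(M)) \subseteq \scroll{a_1,\ldots,a_k}$ set-theoretically.

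Finally, to upgrade set-theoretic equality to scheme-theoretic equality, I would invoke \autoref{lemma:reduced-minors}, which asserts that $V(I_2(M))$ is reduced. The scroll itself is integral (it is the image under a projective morphism of the irreducible variety $\bp\sce$), hence reduced, so two reduced closed subschemes of $\bp^n$ with the same underlying topological space must coincide. The main obstacle has been abstracted into \autoref{lemma:reduced-minors}, which is the technical heart of the argument: its proof typically relies on the fact that the ideal of $2\times 2$ minors of a $2 \times N$ matrix of linear forms cuts out a Cohen--Macaulay scheme of the expected codimension $N-1$ (via Eagon--Northcott), together with generic smoothness on the dense locus where $M$ has rank exactly one, which forces reducedness.
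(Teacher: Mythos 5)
Your proof is correct and follows essentially the same strategy as the paper's: establish set-theoretic agreement between the scroll and $V(I_2(M))$, then upgrade to scheme-theoretic equality by invoking reducedness from \autoref{lemma:reduced-minors} together with the fact that a reduced projective scheme of finite type over $\bk$ is determined by its closed points. (The paper's proof of \autoref{lemma:reduced-minors} itself proceeds by a Hilbert-function comparison, via \autoref{lemma:determinantal-hilbert-function} and \autoref{lemma:lower-bound-on-hilbert-function-of-scrolls}, rather than the Eagon--Northcott/Cohen--Macaulay route you sketch, but since you cite that lemma as a black box this does not affect your argument.)
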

\begin{proof}[Proof assuming \autoref{lemma:reduced-minors}]
	To prove this proposition, we will verify two things: 
	\begin{enumerate}
\item First, we show that the underlying set of closed points of the variety defined by the minors of $M$
	are those swept out by the $(k-1)$-planes joining $k$ distinct rational normal curves of degree $a_1, \ldots, a_k$.
	Conversely, if we are given $k$ such rational normal curves of degree $a_1, \ldots, a_k$ together spanning
	all of $\bp^{k-1 + \sum_{i=1}^k a_i}$, then the set of points swept out by $(k-1)$-planes joining them
	form the support of closed points which are the vanishing of the minors of $M$.

\item Second, we show that the scheme cut out by the minors of $M$ is reduced.
			\end{enumerate}
	Since a projective reduced finite type scheme over $\spec \bk$ is determined by its closed points, verifying the above
	two assertions suffices.

	Note that the second item above is precisely the content of the following \autoref{lemma:reduced-minors}.
	So, to complete the proof, we only need	
	verify the first assertion.

	Let $V$ be the scheme cut out by the $2 \times 2$ minors of $M$. Note that we have a natural map
	$V \ra \bp^1$ given by taking the ratio of the first row to the second row. More precisely, on the patch
	of $\bp^n$ where one of $x_{i,j}, x_{i,j+1} \neq 0$, we define the map $\phi: V \cap D(x_{i,j}x_{i,j+1}) \ra \bp^1$ via the pair of divisors
	$[x_{i,j}:x_{i,j+1}]$. These maps agree on the intersection of their domain of definition,
	and hence patch to a map on $V$.
	Further, define the $a_k-1$ planes $H_i = \spn(x_{i,0}, \ldots, x_{i,a_i})$ for $1 \leq i \leq k$.
	Next, define $C_i := V \cap H_i$. Observe that $C_i$ is a rational normal curve of degree $a_i$. At least set theoretically,
	the map $\phi|_{C_i}:C_i \ra \bp^1$ is a bijection on closed points, as the closed points of this curve
	are precisely those of the form
	$x_{i,j} = a^{a_i-j} b^j$, which are mapped to $[a,b]$ under $\phi$.
	That $C_i$ is reduced follows from 
	\autoref{lemma:reduced-minors}.
	
	Now, consider the closed points in $\phi^{-1}([a,b])$. This is all points of the form
	\begin{align*}
		\left[ a^{a_1}p_1, a^{a_1-1}b p_1, \ldots, b^{a_1}p_1, a^{a_2} p_2, \ldots, b^{a_2} p_2, \ldots, a^{a_k}p_k, \ldots, b^{a_k} p_k \right]
	\end{align*}
	which precisely forms the plane spanned by $p_i = \phi|_{C_i}^{-1}([a,b])$. In other words, set theoretically,
	such a scroll is swept out by the $k-1$ planes joining the preimage in $C_i$ under $\phi|_{C_i}$ of $[a,b] \in \bp^1$.
	This shows that the set theoretic vanishing locus of the minors is precisely that constructed as in
	\autoref{proposition:construction-of-scrolls-by-joining-curves}.
	
	Conversely, if we start with rational normal curves $C_1, \ldots, C_k$ so that $C_i$ spans a hyperplane $H_i$
	of dimension $a_i - 1$, we can realize $C_i$ as the vanishing of the minors of the matrix
	\begin{align*}M_i :=
		\begin{pmatrix}
			x_{i,0} & x_{i,1} & \cdots & x_{i,a_i-1} \\
			x_{i,1} & x_{i,2} & \cdots & x_{i,a_i}
		\end{pmatrix}.
	\end{align*}
	This is easily seen because every rational normal curve can be written in the above form since
	all rational normal curves are related by an automorphism of the ambient projective space, since there is a unique
	invertible sheaf of degree $a_i$ on $\bp^1$.
	Again, the curve defined by the vanishing of the minors of $M_i$ is isomorphic to $\bp^1$. The isomorphism is given by taking the ratio of the two rows of the matrix.
	This is a bijection on closed points, and since the domain is reduced, the map is an isomorphism.
	Then, as above, the minors of the resulting matrix
	\begin{align*} M:=
	\begin{pmatrix}
		x_{1,0} & x_{1,1} & \cdots & x_{1,a_1 - 1} & x_{2,0} & \cdots & x_{2, a_2-1} & \cdots & x_{k, 0} & \cdots & x_{k, a_k-1}  \\
		x_{1,1} & x_{1,2} & \cdots & x_{1, a_1} & x_{2, 1} & \cdots & x_{2, a_2} & \cdots & x_{k,1} & \cdots & x_{k,a_k}
	\end{pmatrix}.
	\end{align*}
	precisely span the $(k-1)$-planes obtained by joining the corresponding points $\phi|_{C_i}^{-1}([a,b])$.
	To be precise, the map $\phi: V \ra \bp^1$ is 
	given by taking the ratio of the first and second rows of $M$.

\end{proof}

To complete the proof of \autoref{proposition:equivalence-minors-swept-planes}, we only
need prove \autoref{lemma:reduced-minors}.
We do this now, assuming
\autoref{lemma:determinantal-hilbert-function} and 
\autoref{lemma:determinantal-hilbert-function}.
	\begin{lemma}
		\label{lemma:reduced-minors}
		Let $Y$ denote the scheme cut out by the minors of $M$.
		Then $Y$ is reduced.
		Further,
		if we let $d = \sum_{i=1}^k a_i$ and has Hilbert function equal to
		\begin{align*}
			p_Y(m) = d \binom{m + k - 1}{k} + \binom{m+k-1}{k-1}.
		\end{align*}
	\end{lemma}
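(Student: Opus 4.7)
My approach is to exhibit $Y$ as a smooth, irreducible scheme by covering it with $2k$ affine open charts, each isomorphic to $\ba^k$. Combined with the obvious inclusion $X \hookrightarrow Y$ (the $2 \times 2$ minors of $M$ vanish along the scroll $X$ by the explicit parametrization used earlier in the proof of \autoref{proposition:equivalence-minors-swept-planes}), smoothness and irreducibility of $Y$ will force $Y = X$ as schemes. In particular, $Y$ is reduced, and its Hilbert polynomial equals that of $X \cong \bp\sce$, which can be computed from the projective bundle structure.

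For $1 \le i \le k$ set $U_{i,0} := D(x_{i,0}) \cap Y$ and $U_{i,a_i} := D(x_{i,a_i}) \cap Y$. On $U_{1,0}$ I normalize $x_{1,0} = 1$; then the minors $x_{1,0}\,x_{1,j+1} - x_{1,1}\,x_{1,j} = 0$ inductively give $x_{1,j} = x_{1,1}^{\,j}$, while the minors $x_{1,0}\,x_{i,j+1} - x_{1,1}\,x_{i,j} = 0$ give $x_{i,j} = x_{1,1}^{\,j}\,x_{i,0}$ for $i \ge 2$. Hence $U_{1,0} \cong \spec \bk[x_{1,1}, x_{2,0}, \ldots, x_{k,0}] \cong \ba^k$, and entirely analogous arguments handle the other $2k-1$ charts. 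To see these charts cover $Y$, suppose $p \in Y$ has $x_{i,0}(p) = x_{i,a_i}(p) = 0$ for every $i$; the three-term relations $x_{i,j}\,x_{i,j+2} = x_{i,j+1}^{\,2}$ within block $i$ (which are among the $2\times 2$ minors) then inductively force $x_{i,j}(p) = 0$ for $0 \le j \le a_i$, contradicting $p$ being a point of $\bp^n$. Since each chart is smooth and irreducible, and the irreducible scroll $X \subseteq Y$ meets every chart, $Y$ is connected and smooth, hence integral of dimension $k$. Because $X \subseteq Y$ are both integral of the same dimension, we conclude $Y = X$ scheme-theoretically, so $Y$ is reduced.

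Finally, identifying $Y \cong \bp\sce$ with $\sce = \bigoplus_i \sco_{\bp^1}(a_i)$ embedded by the relative $\sco_{\bp\sce}(1)$, for $m \ge 0$ we have $H^0(Y, \sco_Y(m)) \cong H^0(\bp^1, \sym^m \sce)$. Expanding $\sym^m \sce = \bigoplus_{m_1 + \cdots + m_k = m} \sco_{\bp^1}\!\bigl(\textstyle\sum_i m_i a_i\bigr)$ and using $h^0(\bp^1, \sco_{\bp^1}(n)) = n+1$ yields
\[
p_Y(m) = \sum_{m_1 + \cdots + m_k = m} \Bigl(\textstyle\sum_{i} m_i a_i + 1\Bigr).
\]
The constant summand contributes $\binom{m+k-1}{k-1}$, the number of compositions of $m$ into $k$ non-negative parts, and by symmetry in $m_1, \ldots, m_k$ we have $\sum_{\mathrm{comp}} m_i = \frac{m}{k}\binom{m+k-1}{k-1} = \binom{m+k-1}{k}$, giving the claimed formula $d\binom{m+k-1}{k} + \binom{m+k-1}{k-1}$.

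The main obstacle is the bookkeeping in the covering argument: carefully propagating the vanishing of the corner coordinates through the quadratic relations within each block, so as to guarantee that smoothness on the $2k$ affine charts passes to the entirety of $Y$. The remaining steps amount to linear algebra on a single affine chart and a standard symmetry-of-compositions computation for the Hilbert polynomial.
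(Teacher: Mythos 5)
Your approach is genuinely different from the paper's and, for the reducedness claim, arguably cleaner. The paper proves $Y$ is reduced indirectly: it computes the algebraic Hilbert function $h_Y(m)=\dim(S/I_Y)_m$ via a Gr\"obner basis argument (\autoref{lemma:determinantal-hilbert-function}), separately computes the Hilbert function of the reduced scroll via a cohomological induction on dimension (\autoref{lemma:lower-bound-on-hilbert-function-of-scrolls} and \autoref{lemma:hyperplane-section-of-scroll}), and then invokes the criterion that a closed immersion of projective schemes with equal Hilbert functions is an isomorphism. You instead verify directly that $Y$ is \emph{smooth} by exhibiting an explicit open cover by copies of $\ba^k$. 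This is sound: your computation on $U_{1,0}$ is correct (and the verification that the remaining minors lie in the ideal generated by those involving column $(1,0)$, which you implicitly use, does go through), and the covering argument works as stated --- one does genuinely need both $D(x_{i,0})$ and $D(x_{i,a_i})$ for each $i$, since for a block with $a_i\geq 2$ the chain of three-term relations starting from $x_{i,0}=0$ only kills $x_{i,0},\dots,x_{i,a_i-1}$ and leaves $x_{i,a_i}$ unconstrained, and for a block with $a_i=1$ there is no three-term relation at all. Smooth over an algebraically closed field gives regular, hence reduced, hence integral once you note the charts pairwise overlap; this avoids Gr\"obner bases and cohomology entirely. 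The inclusion $X\subseteq Y$ you invoke is the set-theoretic direction of \autoref{proposition:equivalence-minors-swept-planes}, which is established there independently of \autoref{lemma:reduced-minors}, so there is no circularity.

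The one point to be careful about is the second claim. The quantity you compute via $\sym^m\sce$ is $h^0(Y,\sco_Y(m))$, whereas the Hilbert function appearing in the lemma (and used in the paper via Vakil's Exercise 18.6.F to compare $X\hookrightarrow Y$) is the algebraic one, $\dim(S/I_Y)_m$. These agree precisely when $Y$ is projectively normal. Scrolls are projectively normal, so your answer is right, but that fact needs to be cited or proved --- your affine cover does not immediately yield it. By contrast, the paper's Gr\"obner argument computes $\dim(S/I_Y)_m$ on the nose, with no appeal to projective normality. (The paper's \autoref{lemma:lower-bound-on-hilbert-function-of-scrolls} in fact conflates $h^0(X,\sco_X(m))$ with $h_X(m)$ in the same way, so this is an implicit assumption there as well.) If you only need the Hilbert \emph{polynomial} --- which is what most downstream applications such as \autoref{proposition:hilbert-scroll-degeneration} actually use --- your argument is complete as written, since $\chi(\sco_Y(m))=h^0(\bp^1,\sym^m\sce)$ for $m\geq 0$ because each summand $\sco_{\bp^1}(\sum m_ia_i)$ has vanishing $H^1$.
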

	\begin{remark}
		\label{remark:}
		One proof of this \autoref{lemma:reduced-minors} is given in
		~\cite[Lemma 2.1]{eisenbudH:on-varieties-of-minimal-degree}. However, this uses
		the somewhat involved tools of perfect and unmixed rings.
		Another proof is given in ~\cite[p.\ 71, Section II.3, Proposition]{ACGH:I}.
		We now give an alternate proof by comparing Hilbert functions.
	\end{remark}
	\sssec*{Idea of Proof of \autoref{lemma:reduced-minors}}
	The idea is to show that the Hilbert polynomial of a reduced scroll is the same as the
	Hilbert polynomial of one cut out by the minors of a two by two matrix of linear forms.
	In \autoref{lemma:determinantal-hilbert-function}, we explicitly compute the Hilbert polynomial
	of the ideal generated by the two by two minors of a matrix, using
	a somewhat straightforward combinatorial bijection.
	On the other hand, in \autoref{lemma:lower-bound-on-hilbert-function-of-scrolls},
	we relate the Hilbert polynomial of a scroll of dimension $k$ to a scroll
	of dimension $k-1$.
	We do this using the fact that the hyperplane section of a scroll is a scroll,
	as shown in \autoref{lemma:hyperplane-section-of-scroll}.
	\begin{proof}[Proof assuming \autoref{lemma:determinantal-hilbert-function} and \autoref{lemma:lower-bound-on-hilbert-function-of-scrolls}]
		Let $Y$ denote the scheme cut out by the minors of $M$ and let $X := Y_{red}$
		be the scheme $Y$ with the reduced scheme structure. We have a natural inclusion $X \ra Y$
		and to show $Y$ is reduced, it suffices to show this inclusion is an isomorphism of schemes.
		First, by ~\cite[Exercise 18.6.F(b)]{vakil:foundations-of-algebraic-geometry}, in order to show $X \ra Y$ is an isomorphism, it suffices
		to show the have the same Hilbert function.
	
		For convenience, define
		\begin{align*}
			f_{d,k}(m) := d \binom{m + k - 1}{k} + \binom{m+k-1}{k-1}.
		\end{align*}
		To complete the proof, it suffices to show
		\begin{align*}
			h_Y(m) = h_X(m) = f_{d,k}(m)
		\end{align*}
		for $m > 0$ 
		where $h_Y, h_X$ are the Hilbert functions of $X$ and $Y$ respectively.
		Further, because $X \ra Y$ is a closed embedding, we know $h_Y(m) \geq h_X(m)$ for $m > 0$ by
		~\cite[Exercise 18.6.F(a)]{vakil:foundations-of-algebraic-geometry}.
		So, it suffices to show $h_X(m) = f_{d,k}(m)$ for $m > 0$ and $h_Y(m) = f_{d,k}(m)$, because we will
		then obtain $f(m)= h_X(m) = h_Y(m) = f_{d,k}(m)$ for $m \gg 0$, and so all three functions
		must be equal.

		We will show this by induction on the dimension.
		Now, we can complete the proof of the computation of the Hilbert function of a scroll, while simultaneously
		verifying that $M$ is reduced. We will induct on the dimension of the scroll.
		The base case is when $k = 1$. In this case, we know the Hilbert polynomial is $h_X(m) = md + 1 = f_{d,1}(m)$,
		as this is the Hilbert polynomial of a rational normal curve in $\bp^{d-1}$.
		By \autoref{lemma:determinantal-hilbert-function}, this is also the Hilbert function of the minors of the matrix
		$M$, in the case $k = 1$.
		Now, inductively, assume that this lemma holds for scrolls of dimension $k - 1$.
		By \autoref{lemma:lower-bound-on-hilbert-function-of-scrolls}, we know
		that $f_{d,k}(m) = h_X(m)$. However, we also know from \autoref{lemma:determinantal-hilbert-function}
		that $h_Y(m) = f_{d,k}(m)$.
		So, $h_Y(m) = f_{d,k}(m) = h_X(m)$
		and so we obtain that $X \ra Y$ is an isomorphism and the Hilbert polynomial
		of a scroll of degree $d$ and dimension $k$ is $f_{d,k}(m)$.
	\end{proof}

	In order to prove \autoref{proposition:equivalence-minors-swept-planes}, we still have to prove
	both \autoref{lemma:determinantal-hilbert-function} and \autoref{lemma:lower-bound-on-hilbert-function-of-scrolls}.
	We start by showing \autoref{lemma:determinantal-hilbert-function}.
		\begin{lemma}
			\label{lemma:determinantal-hilbert-function}
			Letting $Y$ denote the scheme cut out by the minors of $M$,
			we have $h_Y(m) = f_{d,k}(m)$.
		\end{lemma}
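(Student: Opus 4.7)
The plan is to compute $h_Y(m) = \dim_\bk (R/I)_m$ directly, where $R$ is the polynomial ring in the $x_{i,j}$ and $I$ is the ideal of $2 \times 2$ minors of $M$. I will exhibit a monomial basis of $R/I$ indexed by pairs $(\vec\alpha, W)$ and count it. To each monomial $\prod x_{i,j}^{\alpha_{ij}}$ of degree $m$, associate the \emph{block degrees} $\alpha_i := \sum_j \alpha_{ij}$ (so $\sum_i \alpha_i = m$) and the \emph{total weight} $W := \sum_{i,j} j\, \alpha_{ij}$. A direct inspection of the generators $x_{i,j} x_{\ell, r+1} - x_{i,j+1} x_{\ell, r}$ shows that both sides have identical block degrees and identical total weight, so these invariants descend to $R/I$.

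For the upper bound, I would argue that any two monomials sharing $(\vec\alpha, W)$ are equivalent modulo $I$. Within a single block, the ``internal'' relations $x_{i,a} x_{i,b+1} = x_{i,a+1} x_{i,b}$ reduce any monomial in that block to the canonical form $x_{i,q}^{\alpha_i - r} x_{i,q+1}^{r}$ uniquely determined by $\alpha_i$ and the block weight $w_i = q\alpha_i + r$ (exactly as in the $k=1$ rational normal curve case). The cross-block relations $x_{i,j} x_{\ell, r+1} = x_{i,j+1} x_{\ell, r}$ let one transfer a unit of weight from block $\ell$ to block $i$ whenever $w_\ell \geq 1$ and $w_i < a_i \alpha_i$. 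Given any two admissible weight distributions $(w_1, \ldots, w_k)$ and $(w_1', \ldots, w_k')$ with $\sum w_i = \sum w_i' = W$ and $0 \leq w_i, w_i' \leq a_i \alpha_i$, I would pick indices with $w_i > w_i'$ and $w_\ell < w_\ell'$ and apply the corresponding transfer: both constraints are satisfied, so the move is legal, and it strictly decreases $\sum_i |w_i - w_i'|$. Iterating yields the upper bound
\[
h_Y(m) \;\leq\; \sum_{\vec\alpha : \sum \alpha_i = m}\Bigl(1 + \sum_i a_i \alpha_i\Bigr).
\]

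For the matching lower bound, I would invoke the parameterization $\phi : R \to \bk[s, t, y_1, \ldots, y_k]$ defined by $x_{i,j} \mapsto s^{a_i - j} t^j y_i$. Each minor generator maps to $0$ by direct substitution, so $\phi$ factors through $R/I$. A monomial with invariants $(\vec\alpha, W)$ is sent to $s^{\sum_i a_i \alpha_i - W}\, t^W\, \prod_i y_i^{\alpha_i}$, and these images are manifestly independent in $\bk[s, t, y_1, \ldots, y_k]$ for distinct $(\vec\alpha, W)$, so the canonical representatives remain independent in $R/I$. The counting is then routine: by symmetry over compositions of $m$ into $k$ nonnegative parts, $\sum_{\vec\alpha} \alpha_i = \frac{m}{k}\binom{m+k-1}{k-1} = \binom{m+k-1}{k}$ for each $i$, so the total equals $\binom{m+k-1}{k-1} + d\binom{m+k-1}{k} = f_{d,k}(m)$. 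The main obstacle in this plan is the connectivity claim underlying the upper bound---that two monomials sharing the invariants $(\vec\alpha, W)$ are \emph{actually} related modulo $I$ rather than merely sharing those invariants. The greedy swap argument suffices, but care is needed to verify that the intermediate weight distributions stay in the box $0 \leq w_i \leq a_i \alpha_i$ at every step.
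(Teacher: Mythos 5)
Your proof is correct, and it takes a genuinely different route from the one in the paper. The paper's argument runs through Gr\"{o}bner bases: it fixes the graded reverse lexicographic order, cites an external reference for the S-pair verification that the $2\times 2$ minors form a Gr\"{o}bner basis, and then counts standard monomials of the resulting initial ideal via an explicit bijection with pairs $(S,\alpha)$ of a $k$-subset of $\{1,\ldots,m+k-1\}$ and a position label. Your proof avoids Gr\"{o}bner machinery entirely. Instead you observe that block degrees $\vec\alpha$ and total weight $W$ are preserved by every binomial generator and hence descend to $R/I$; you bound the number of classes above by the greedy weight-shuffling argument (which you have essentially verified, since the box constraints $w_i - 1 \geq w_i' \geq 0$ and $w_\ell + 1 \leq w_\ell' \leq a_\ell\alpha_\ell$ hold automatically from the choice of indices); and you bound it below by the explicit toric parameterization $x_{i,j} \mapsto s^{a_i - j}t^j y_i$, under which the binomials visibly vanish and distinct invariant pairs map to distinct monomials in $\bk[s,t,y_1,\ldots,y_k]$. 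The trade-off: the paper's argument piggy-backs on a well-established Gr\"{o}bner basis computation and slots into a systematic theory, while yours is self-contained and more elementary, and it makes the geometric content transparent --- $(\vec\alpha, W)$ is exactly the multidegree under the scroll's natural $\mathbb{G}_m^{k+1}$-action, and the lower-bound map is the coordinate ring of the affine cone over the scroll realized as a torus quotient. Both approaches ultimately produce the same count $\binom{m+k-1}{k-1} + d\binom{m+k-1}{k}$, arriving at it via different organizing combinatorics.
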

		\begin{remark}
			\label{remark:}
				This proof relies on techniques from the theory of Gr\"{o}bner bases.
			For further reference on Gr\"{o}bner bases, see
			Eisenbud \cite[Chapter 15]{Eisenbud:commutativeAlgebra}
			and Cox--Little--O'Shea \cite[Chapter 2]{coxLS:ideals-varieties-and-algorithms}.
		\end{remark}
		
		\begin{proof}
			Let $I \subset S := \bk[x_{1,0}, \ldots, x_{k,a_k}]$ denote the
			ideal generated by the $2 \times 2$ minors of $M$. We are then looking
			for the Hilbert function of $I \subset k[x_{1,0}, \ldots, x_{k,a_k}]$.
			Choose the graded reverse lexicographic ordering on the monomials
			of $S$ with $x_{1,0} < \cdots < x_{k,a_k}$.
			With respect to this ordering. We may note that the generators of $I$ form a Gr\"{o}bner basis for $I$,
			as can be seen from the S-pair criterion for Gr\"{o}bner bases.
			This is a direct algebraic computation, as is carried out in
			\cite[Lemma 2.2]{andrewPU:divisors-on-rational-normal-scrolls}.

			Since the second graded piece of $I$ actually forms a Grobner basis
			for $I$, we obtain that the initial ideal of $I$ is generated by terms of the form
			$x_{i,j}x_{k,l}$ for all $x_{i,j},x_{k,l}$ so that either $i \neq k$ or
			($i = k$ and $|j-l| > 1$).
			Since the Hilbert function of the initial ideal of $I$ is equal to the Hilbert
			function of $I$, we can compute the Hilbert function of the initial ideal of $I$.
			The generators of the $m$th graded piece of
			$S/I$ are the images of the monomials $m$ of the form
			$m = x_{i_1,j_1}\cdots x_{i_m, j_m}$
			subject to the conditions that
			\begin{enumerate}
				\item $i_1 \leq i_2 \leq \cdots \leq i_m$ and $j_t < j_{t+1}$ if $i_t = i_{t+1}$
				\item there is a unique $1 \leq i \leq k$ so that 
					\begin{enumerate}
						\item whenever $i_t > i$ then $j_t = a_{i_t}$
						\item whenever $i_t < i$ then $j_t = 0$
						\item if $i = i_r = i_{r+1} = \cdots = i_s$
							then $j_s - j_r \leq 1$.
					\end{enumerate}
						\end{enumerate}
			These form a basis for the quotient $S/I$ because no such term has as a factor
			an element of the initial ideal of $I$, and every other degree $m$ monomial does.
			Finally, we can see there are $f_{d,k}(m)$ counting the number
			of monomials with the conditions prescribed above. 
			First, we will show that those monomials $m$ so that all variables
			in the expansion of $m$ are of the form $x_{i_t,0}$ (that is, $j_t = 0$ for all $t$)
			account for precisely $\binom{m+k-1}{k-1}$ such terms.
			Indeed, if all $j_t$ are $0$, then such monomials $m$ are in bijection
			with monomials of the form $x_{1,0}^{c_1} \cdots x_{k,0}^{c_k}$
			where the sum $\sum_{i=1}^k c_i = m$. There are $\binom{m+k-1}{k-1}$ such
			monomials.
			(One slick way of seeing this is to note that these polynomials form a basis
			for the degree $m$ polynomials on $\bp^{k-1} \cong \proj \bk[x_{1,0}, \ldots, x_{k,0}]$.)

			It remains to show that there are $d \cdot \binom{m+k-1}{k}$ remaining such
			monomials with some $j_t \neq 0$. To do this, we will find a bijection between
			such monomials and
			a choice $(S,\alpha)$ where $S$ is a size $k$ subset of $\{1, \ldots, m+k-1\}$ and $\alpha$
			is an element of the set of tuples 
			\begin{align*}
			\left\{(1,1), \ldots, (1,a_1), (2,1), \ldots, (2,a_2), \ldots, (k, 1), \ldots, (k,a_k)\right\}.
			\end{align*}
			Note that this set has size equal to $d$.
			Given a subset and 
			\begin{align*}
				S = \{n_1, \ldots, n_k\} \subset \{ 1, \ldots, m+k-1\}
			\end{align*}
			and a tuple $\alpha$,
			we will construct a monomial $m$ which is a product of variables which has variables of the form
			$x_{i,j}, x_{i,j+1}$ with $(i,j+1) = \alpha$.
			That is, if $\alpha = (i,j)$ then
			associate to $(S, \alpha)$ the monomial
			\begin{align*}
			x_{1,0}^{n_1-1} x_{2,0}^{n_2-n_1 - 1} \cdots x_{i-1, 0}^{n_{i-1}-n_{i-2}-1}, x_{i, j-1}^{n_i-n_{i-1}-1} x_{i,j}^{n_{i+1} -n_i} x_{i+1, a_{i+1}}^{n_{i+2} - n_{i+1}} \cdots x_{k, a_k}^{m+k-1 - n_k}.
			\end{align*}
			\begin{exercise}
				\label{exercise:}
				Complete the above monomial count by showing this construction given
				above is indeed a bijection.
			\end{exercise}
		\end{proof}
		We next prove the second lemma assumed in the proof of \autoref{proposition:equivalence-minors-swept-planes},
		although to prove this, we will
		assume \autoref{lemma:hyperplane-section-of-scroll}.

		\begin{lemma}
			\label{lemma:lower-bound-on-hilbert-function-of-scrolls}
			Assuming that $h_Z(m) = f_{d,k-1}(m)$ for all scrolls $Z$ of dimension $k-1$
			and degree $d$, We have $h_X(m) = f(m)$ for $m > 0$, where $X \in \minhilb d k$.
		\end{lemma}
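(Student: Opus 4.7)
The plan is to sandwich $h_X(m)$ between $f_{d,k}(m)$ on both sides. For the upper bound, since $X = Y_{\text{red}}$ is a closed subscheme of $Y$, the graded ideals satisfy $I_Y \subset I_X$, so $S/I_X$ is a quotient of $S/I_Y$, and hence $h_X(m) \leq h_Y(m) = f_{d,k}(m)$ by \autoref{lemma:determinantal-hilbert-function}. It remains to prove the matching lower bound $h_X(m) \geq f_{d,k}(m)$.

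For the lower bound, I would induct on $k$ via a general hyperplane section. Choose a general linear form $H \in S := \bk[x_{1,0},\ldots,x_{k,a_k}]$ and set $Z := X \cap H$. By the hypothesis \autoref{lemma:hyperplane-section-of-scroll}, $Z$ is a scroll of dimension $k-1$ and degree $d$, so the inductive hypothesis gives $h_Z(m) = f_{d,k-1}(m)$. Since $X$ is integral (it is the reduction of $Y$, whose underlying set is the image of the irreducible scroll from \autoref{proposition:construction-of-scrolls-by-joining-curves}), a general $H$ is a non-zerodivisor on $S/I_X$, yielding a short exact sequence of graded modules
\begin{equation*}
0 \to (S/I_X)(-1) \xrightarrow{\cdot H} S/I_X \to S/(I_X + (H)) \to 0,
\end{equation*}
and hence $h_X(m) - h_X(m-1) = h_{S/(I_X+(H))}(m)$. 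The tautological inclusion $(I_X + (H))/(H) \subset I_Z$ in the coordinate ring $S/(H)$ of the hyperplane $H \cong \bp^{n-1}$ (any form vanishing on $X$, when restricted to $H$, vanishes on $X \cap H$) then gives $h_{S/(I_X+(H))}(m) \geq h_Z(m) = f_{d,k-1}(m)$.

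Summing the inequality $h_X(m) - h_X(m-1) \geq f_{d,k-1}(m)$ from $j=1$ to $m$, and using the base case $h_X(0) = 1$ (since $X$ is integral and projective), I obtain
\begin{equation*}
h_X(m) \geq 1 + \sum_{j=1}^{m} f_{d,k-1}(j) = 1 + \sum_{j=1}^m \left[ d\binom{j+k-2}{k-1} + \binom{j+k-2}{k-2}\right].
\end{equation*}
Applying the hockey-stick identity $\sum_{j=1}^m \binom{j+t-1}{t} = \binom{m+t}{t+1}$ to each summand collapses this to $d\binom{m+k-1}{k} + \binom{m+k-1}{k-1} = f_{d,k}(m)$, matching the upper bound. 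The principal substantive input is the hyperplane-section statement \autoref{lemma:hyperplane-section-of-scroll}; the rest is bookkeeping with exact sequences and a combinatorial identity, and the only subtle point to check is the integrality of $X$ needed for the injectivity of multiplication by $H$.
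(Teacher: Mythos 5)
Your proof is correct, but it takes a genuinely different route from the paper. The paper's argument works with the sheaf-theoretic exact sequence $0 \to \sco_X(m-1) \to \sco_X(m) \to \sco_{X \cap H}(m) \to 0$ and invokes the vanishing $H^1(X, \sco_X(m-1)) = 0$ (deferred to \autoref{lemma:cohomology-of-O(1)-as-dual-bundle}) to obtain the exact recurrence $h_{X\cap H}(m) = h_X(m) - h_X(m-1)$, then telescopes. You replace this with a two-sided sandwich: the graded-module exact sequence $0 \to (S/I_X)(-1) \xrightarrow{\cdot H} S/I_X \to S/(I_X + (H)) \to 0$ requires no cohomology and only yields the inequality $h_X(m) - h_X(m-1) \geq h_Z(m)$ (the image of $I_X$ in $S/(H)$ may not be saturated), but you compensate with the upper bound $h_X(m) \leq h_Y(m) = f_{d,k}(m)$ coming from the closed embedding $X \hookrightarrow Y$ together with \autoref{lemma:determinantal-hilbert-function}. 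This is a defensible, arguably cleaner, variant: it stays entirely at the level of coordinate-ring Hilbert functions (so there is no implicit identification of $h_X(m)$ with $h^0(X, \sco_X(m))$, which is a projective-normality statement), and it avoids the deferred cohomology vanishing, at the price of invoking the Gr\"{o}bner-basis input once more. Both approaches share \autoref{lemma:hyperplane-section-of-scroll} and the induction on $k$. Note that for the upper bound all you really need is $I_Y \subset I_X$, i.e.\ that the minors of $M$ vanish on the scroll, which is a direct check from the parametrization and is weaker than the full assertion $X = Y_{\mathrm{red}}$ you cite; this keeps the argument clear of any circularity with \autoref{lemma:reduced-minors}.

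One small numerical point: the hockey-stick identity as you state it, $\sum_{j=1}^m \binom{j+t-1}{t} = \binom{m+t}{t+1}$, applies directly to the first summand (take $t = k-1$), but the second summand $\binom{j+k-2}{k-2}$ has the shifted form $\binom{j+t}{t}$ with $t = k-2$, and $\sum_{j=1}^m \binom{j+k-2}{k-2} = \binom{m+k-1}{k-1} - 1$. The extra $-1$ is exactly what cancels the $+1$ from the base case $h_X(0) = 1$; your final expression $f_{d,k}(m)$ is correct, but the intermediate prose glosses over this cancellation.
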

		\begin{proof}[Proof assuming \autoref{lemma:hyperplane-section-of-scroll}]
			To prove this, we will induct on the dimension of the scroll.
			Using ~\autoref{lemma:hyperplane-section-of-scroll},
			we have that there exists a hyperplane $H$ with $H \cap X$ a scroll of degree $d$ and dimension $k-1$.
			By our inductive hypothesis, we know the Hilbert function of $H \cap X$.

			Now, we have an exact sequence
			\begin{equation}
				\nonumber
				\begin{tikzcd}
					0 \ar {r} &  \sco_X(m-1) \ar {r} & \sco_X(m) \ar {r} & \sco_{X \cap H}(m) \ar {r} & 0.
				\end{tikzcd}\end{equation}
			Note that this is also exact on global sections because 
			$H^1(X, \sco_X) = 0$ holds. 
			One way to see this is that
			$X$ is birational to $\bp^{\dim X}$ and cohomology of the structure
			sheaf is a birational invariant by
			\cite[Theorem 1]{chatzistamatiouR:higher-direct-images-of-the-structure-sheaf-in-positive-characteristic}.
			One can also show $H^1(X, \sco_X) = 0$
			via more elementary means (essentially by
			the Leray spectral sequence)
			using
			\autoref{lemma:cohomology-of-O(1)-as-dual-bundle},
			which we could prove now, but we choose
			to defer until later.

			So, we obtain
			\begin{align*}
				h^0(X, \sco_{X \cap H}(m)) = h^0(X, \sco_X(m)) - h^0(\sco_X(m-1)),
			\end{align*}
			or, in other words, 
			\begin{align*}
				f_{d,k-1}(m) = h_{X \cap H}(m) = h_X(m) - h_X (m-1).
			\end{align*}
			Now, observe that
			\begin{align*}
				f_{d,k-1}(m) = f_{d,k}(m) - f_{d,k}(m-1)
			\end{align*}
			as follows from elementary algebra, which boils down to
			Pascal's identity for binomial coefficients.
			Hence, we obtain
			\begin{align*}
				h_X(m) - h_X(m-1)  = f_{d,k}(m) - f_{d,k}(m-1)
			\end{align*}
			and $h_X(m) = f_{d,k}(m)$ for $m \leq 1$. So, we have that
			$h_X(m) = f_{d,k}(m)$ for all $m \in \bz$, as claimed.
		\end{proof}

		We finally complete our proof of \autoref{lemma:lower-bound-on-hilbert-function-of-scrolls},
		by showing \autoref{lemma:hyperplane-section-of-scroll}.

				\begin{lemma}
				\label{lemma:hyperplane-section-of-scroll}
				A general hyperplane section of a smooth scroll $X \in \minhilb d k$
				is a smooth scroll $Y \in \minhilb d {k-1}$.
			\end{lemma}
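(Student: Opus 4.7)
The plan is to use the projective bundle description from \autoref{proposition:equivalence-vector-bundle-swept-planes} and exhibit $Y$ as a projective subbundle of $X$. Geometrically, the ruling of $X$ by $(k-1)$-planes (fibers of $\pi: X \cong \bp\sce \to \bp^1$) restricts under a general hyperplane $H$ to a ruling of $Y = X \cap H$ by $(k-2)$-planes, which should again endow $Y$ with a scroll structure over $\bp^1$.

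Concretely, write $X \cong \bp\sce \xrightarrow{\pi} \bp^1$ with $\sce = \bigoplus_{i=1}^k \sco_{\bp^1}(a_i)$, $a_1 \geq \cdots \geq a_k \geq 1$, and with embedding given by the complete linear system of $\sco_{\bp\sce}(1)$. Since $\pi_* \sco_{\bp\sce}(1) = \sce$, a hyperplane $H \subset \bp^n$ corresponds up to scalar to a nonzero section $s \in H^0(\bp^1, \sce)$, and $X \cap H$ is the zero scheme of the induced $\tilde s \in H^0(X, \sco_X(1))$. Assuming $k \geq 2$, so that $\rank \sce > \dim \bp^1$, a general $s$ is nowhere vanishing, yielding a short exact sequence
\[ 0 \to \sco_{\bp^1} \xrightarrow{s} \sce \to \sce' \to 0 \]
with $\sce'$ locally free of rank $k-1$ and degree $d$. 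A fiber-by-fiber check (using that closed points of $\bp\sce$ parameterize $1$-dimensional quotients of the fibers of $\sce$) then identifies the zero scheme of $\tilde s$ inside $\bp\sce$ with the projective subbundle $\bp\sce'$ induced by the quotient $\sce \twoheadrightarrow \sce'$, and shows that $\sco_{\bp\sce}(1)|_{\bp\sce'} = \sco_{\bp\sce'}(1)$. Hence $Y \cong \bp\sce'$ embedded via $\sco_{\bp\sce'}(1)$ into $H \cong \bp^{n-1}$.

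By \autoref{theorem:locally-free-sheaves-on-p1}, $\sce' \cong \bigoplus_{i=1}^{k-1} \sco_{\bp^1}(a_i')$. To verify that $Y$ is a scroll in the sense of \autoref{definition:scroll}, one must check that each $a_i' \geq 1$. Otherwise, projecting $\sce \twoheadrightarrow \sce' \twoheadrightarrow \sco(a_j')$ would produce a nonzero map $\sce \to \sco(a_j')$ with $a_j' \leq 0$; but every component $\sco(a_i) \to \sco(a_j')$ lies in $H^0(\bp^1, \sco(a_j' - a_i)) = 0$ since $a_j' - a_i \leq -1$, so the composite must vanish, a contradiction. Thus $a_i' \geq 1$ for all $i$ and $\sum a_i' = d$, so $Y$ is a scroll of dimension $k-1$ and degree $d$. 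Smoothness is then immediate from $Y \cong \bp\sce'$ being a projective bundle over $\bp^1$ (and is also a consequence of Bertini applied to the smooth $X$).

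The main obstacle is the identification in the second paragraph: carefully pinning down the zero scheme of $\tilde s$ inside $\bp\sce$ and checking that the restricted polarization is the relative $\sco(1)$ of the quotient bundle. This requires a little care with the tautological quotient on $\bp\sce$ in Grothendieck's convention, but once it is in place, both the scroll type and the smoothness follow routinely from the positivity argument and from Grothendieck's splitting theorem, respectively.
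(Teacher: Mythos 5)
Your proof is correct, and it takes a genuinely different route from the paper's. The paper's proof proceeds ``in the forward direction'': it directly writes down one explicit surjection $\sce \cong \bigoplus_i \sco_{\bp^1}(a_i) \twoheadrightarrow \sco_{\bp^1}(a_1 + a_2) \oplus \bigoplus_{i \geq 3}\sco_{\bp^1}(a_i)$ of bundles of the same degree, and notes that the corresponding hyperplane section is $\bp\sce'$ embedded as a scroll. Your argument instead starts from an arbitrary (generic) hyperplane, identifies it with a section $s \in H^0(\bp^1, \sce) = H^0(X, \sco_X(1))$, uses global generation of $\sce$ (all $a_i \geq 1$) together with $\rk\sce = k > \dim\bp^1$ to deduce that a general $s$ is nowhere vanishing, and then argues that the resulting quotient $\sce' = \sce/\langle s\rangle$ still has all summands of positive degree via the $\operatorname{Hom}(\sco(a_i), \sco(a_j')) = 0$ vanishing. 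The positivity check is a necessary step that the paper sidesteps only because its quotient is written out explicitly, but your check is short and robust. One thing worth noting: what the lemma literally asserts is a statement about the \emph{general} hyperplane, whereas the paper's proof only exhibits a single convenient hyperplane (which is all that is used downstream, in \autoref{lemma:lower-bound-on-hilbert-function-of-scrolls}); your argument actually establishes the general-hyperplane statement as written, at the modest cost of the tautological-bundle bookkeeping you flag at the end.
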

			\begin{proof}
				Say $X \cong \bp \sce$.
				Choose a hyperplane section corresponding to a quotient sheaf $\sce \ra \sce'$ of the same
				degree given by
				\begin{equation}
					\nonumber
					\begin{tikzcd}
						0 \ar {r} &  \sco_{\bp^1} \ar {r} & \sce \ar {r} & \sce' \ar {r} & 0.
					\end{tikzcd}\end{equation}
				Once one constructs a surjection $\sce \ra \sce'$ of invertible sheaves on $\bp^1$
				of the same degree, the kernel will necessarily be $\sco_{\bp^1}$.
				Then, $\sco_X(1)$ will restrict to $\bp \sce'$ and embed it as
				a scroll of dimension $k-1$ and degree $d$.
				
				To complete the proof, it suffices to show we can construct such a map $\sce \ra \sce'$.
				But, this is just a matter of writing down some simple polynomials. As just one example,
				we can start by just take the map
				$\sce \cong \oplus_{i=1}^k \sco_{\bp^1}(a_i) \ra \sco_{\bp^1}(a_1 + a_2) \bigoplus \left(\oplus_{i=3}^k \sco_{\bp^1}(a_i) \right)$
				which is the direct sum of the multiplication map 
				$\sco_{\bp^1}(a_1) \oplus \sco_{\bp^1}(a_2) \ra \sco_{\bp^1}(a_1 + a_2)$
				and the identity map on the later factors.
			\end{proof}
			This concludes the proof of \autoref{proposition:equivalence-minors-swept-planes}.
			We next state and prove an implication between the two descriptions of scrolls \ref{custom:swept-planes}
			and \ref{custom:Grassmannian}
	
\begin{proposition}
	\label{proposition:equivalence-Grassmannian-swept-planes}
	Any smooth scroll as in \autoref{proposition:construction-of-scrolls-by-joining-curves}
	is realized as the image $\pi(\Phi)$ where $\Phi$ is the incidence correspondence defined by some embedding
	$\iota: \bp^1 \ra G(k,d + k)$.
\end{proposition}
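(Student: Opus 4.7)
The plan. The construction given in Proposition~\ref{proposition:construction-of-scrolls-by-joining-curves} already produces a map $\iota: \bp^1 \to G(k, d+k)$ together with an incidence correspondence $\Phi$ and shows that $\pi(\Phi)$ equals the smooth scroll of type $(a_1,\ldots,a_k)$ with which we started. The only statement in the current proposition that is not immediate from that construction is that $\iota$ is a closed embedding. My plan is therefore to verify precisely this.

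To show $\iota$ is a closed embedding, first note that $\iota$ is proper (since $\bp^1$ is proper and $G(k,d+k)$ is separated), and hence $\iota(\bp^1)$ is a closed subscheme of $G(k,d+k)$. Next, $\iota$ is injective on closed points: by Proposition~\ref{proposition:equivalence-vector-bundle-swept-planes} the $(k-1)$-plane $\iota(t)$ is spanned by the $k$ points $f_1^{-1}(t), \ldots, f_k^{-1}(t)$ lying on the rational normal curves $D_1,\ldots,D_k$, and for $t_1 \neq t_2$ the planes $\iota(t_1)$ and $\iota(t_2)$ must be distinct because the $D_j$ together span all of $\bp^{d+k-1}$, which is strictly larger than any single $(k-1)$-plane; in particular no $(k-1)$-plane can contain $f_j^{-1}(t_1)$ and $f_j^{-1}(t_2)$ for all $j$ simultaneously. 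Combining properness with injectivity on closed points shows that $\iota: \bp^1 \to \iota(\bp^1)$ is finite and birational, so $\bp^1$ is the normalization of $\iota(\bp^1)$.

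To upgrade this to a closed embedding, it suffices to show that $\iota(\bp^1)$ is smooth, which I would deduce from the smoothness of $\pi(\Phi)$ established in Proposition~\ref{proposition:construction-of-scrolls-by-joining-curves}: the smooth morphism $\overline{\eta}: \pi(\Phi) \to \bp^1$ constructed there factors through $\iota(\bp^1)$, and combining smoothness of the source with smoothness of the $(k-1)$-plane fibers forces $\iota(\bp^1)$ to be smooth, hence normal. A finite birational morphism onto a normal curve is an isomorphism, so $\iota$ is a closed embedding onto $\iota(\bp^1)$.

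The main point requiring care is this last step, where one must take some care to avoid circularity with the smoothness argument of Proposition~\ref{proposition:construction-of-scrolls-by-joining-curves}. A cleaner, self-contained alternative is to compute the differential $d\iota_t$ directly: under the identification $T_{\iota(t)}G(k,d+k) \cong \Hom(\iota(t), \bk^{d+k}/\iota(t))$, the derivative is encoded by how the spanning points $f_j^{-1}(t) \in D_j$ move as $t$ varies. Since each $D_j$ is a smooth rational normal curve and the $D_j$ collectively span a space strictly larger than $\iota(t)$, the images in the quotient $\bk^{d+k}/\iota(t)$ of the tangent directions at the $f_j^{-1}(t)$ are nonzero, forcing $d\iota_t$ to be injective at every $t \in \bp^1$. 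This immersion property, together with properness and injectivity on closed points, suffices to conclude $\iota$ is a closed embedding.
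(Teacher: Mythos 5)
Your proof starts from the same observation the paper makes: the map $\iota:\bp^1 \to G(k,d+k)$ and the incidence correspondence $\Phi$ were already constructed in \autoref{proposition:construction-of-scrolls-by-joining-curves}, and $\pi(\Phi)$ was there shown to equal the scroll. The paper's own proof of \autoref{proposition:equivalence-Grassmannian-swept-planes} is exactly that one sentence and stops. You go further: you notice that the word ``embedding'' in the statement is a real hypothesis, and you verify that $\iota$ is indeed a closed embedding, something the paper does not do in the general case (it proves this only for the cubic example in \autoref{lemma:closed-embedding}, and in the proof of \autoref{proposition:construction-of-scrolls-by-joining-curves} it simply writes $\iota(\bp^1) \cong \bp^1$ and treats the fibers of $\Phi \to \iota(\bp^1)$ as individual $(k-1)$-planes without comment). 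So your approach is not different in kind, but it fills a genuine, if small, gap in the paper's treatment.

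Two places in your argument could be tightened. First, the injectivity of $\iota$ on closed points: the clean reason is that the ruling plane $\iota(t)$ meets each span $H_j$ of $D_j$ in exactly the single reduced point $f_j^{-1}(t)$ (this follows because $\iota(t)$ is spanned by one point from each $H_j$ and the $H_j$ are in direct sum), so $\iota(t_1)=\iota(t_2)$ forces $f_j^{-1}(t_1)=f_j^{-1}(t_2)$ and hence $t_1=t_2$. Your phrasing in terms of ``no $(k-1)$-plane can contain both points'' is the right conclusion but the reason needs this transversality observation. Second, in your differential computation, the statement that the tangent direction to $D_j$ at $f_j^{-1}(t)$ has nonzero image in $T_{f_j^{-1}(t)}\bp^{d+k-1}/T_{f_j^{-1}(t)}\iota(t)$ is correct, but again the reason is the transversality $T H_j \cap T\iota(t) = T(H_j \cap \iota(t)) = 0$ (which holds because both are linear subspaces and the scheme-theoretic intersection is a reduced point), not merely the spanning assertion you cite. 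With those two points spelled out, your self-contained argument is sound and, together with properness, does establish that $\iota$ is a closed embedding. You were also right to flag the potential circularity in your first approach to smoothness of $\iota(\bp^1)$; the differential computation avoids it cleanly.
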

\begin{proof}
	The construction given in 
	\autoref{proposition:construction-of-scrolls-by-joining-curves}
	actually defines the scroll via the map $\bp^1 \ra G(k, d+k)$. 
	This completes the proof.
\end{proof}
\begin{remark}
	\label{remark:}
	Unlike \autoref{proposition:equivalence-vector-bundle-swept-planes} and \autoref{proposition:equivalence-minors-swept-planes},
	there is not an immediate converse to \autoref{proposition:equivalence-Grassmannian-swept-planes}.
	
	However, it is worth noting that whenever we have a nondegenerate embedding $\bp^1 \ra G(k,d+k)$,
	we can construct a variety in $\bp^{d+k-1}$ as follows:
	For any nondegenerate embedding $\iota: \bp^1 \ra G(k,d + k)$
	(meaning that the image does not lie in some $G(k, d+k-1) \subset G(k,d)$)
	we define the incidence correspondence
	\begin{align*}
		\Phi := \left\{ (p, H) \in \bp^n \times G(k, n+1) : p \in H, h \in \iota(\bp^1) \right\}
	\end{align*}
with projections	
\begin{equation}
	\nonumber
		\begin{tikzcd}
			\qquad & \Phi \ar {ld}{\pi} \ar {rd} & \\
			\bp^n && G(k,n+1).
		\end{tikzcd}\end{equation}
	Then, consider the image $\pi(\Phi) \subset \bp^{d+k-1}$.
	
	However, even if we impose that the degree of $\pi(\Phi)$ is $d$,
	not all such images will be smooth scrolls. For example, it is possible that the image can be a cone
	over a scroll of one lower dimension.
	Therefore, in order to obtain a converse to \autoref{proposition:equivalence-Grassmannian-swept-planes},
	one will essentially have to impose that the map $\bp^1 \ra G(k,d+k)$ be constructed as in \autoref{proposition:construction-of-scrolls-by-joining-curves}.
\end{remark}

\begin{remark}
	\label{remark:}
	It is interesting to note that in \autoref{proposition:equivalence-Grassmannian-swept-planes},
	we do not distinguish what type of scroll it is. That is, we do not specify the sequence
	$a_1, \ldots, a_k$ corresponding to the smooth scroll.
	This stands in stark contrast to the descriptions given in
	\autoref{proposition:equivalence-vector-bundle-swept-planes} and \autoref{proposition:equivalence-minors-swept-planes}.
	
	This leads to the following vague question:

	\begin{question}
		\label{question:}
		Can one distinguish the isomorphism class of a scroll in terms of the geometric properties of the
		embedding $\bp^1 \ra G(k, d +k)$ corresponding to that scroll from 
		\autoref{proposition:equivalence-Grassmannian-swept-planes}?
	\end{question}
	
\end{remark}

\chapter[Minimal degree preliminaries]{Preliminaries on varieties of minimal degree}
\label{section:preliminaries-on-varieties-of-minimal-degree}

\section{The definition of varieties of minimal degree}
In this chapter, we start by recalling the definition of a variety of minimal degree.
Following this, in \autoref{subsection:fano-schemes-of-scrolls}
we describe the Fano scheme of $k$-planes in a fixed scroll.
Finally, in \autoref{subsection:smoothness-of-the-hilbert-scheme-of-varieties-of-minimal-degree},
we show that all smooth varieties of minimal degree correspond to smooth points of the
Hilbert scheme.

\begin{definition}
	\label{definition:variety-of-minimal-degree}
	A variety $X \subset \bp^n$ of dimension $k$ and degree $d$ is of {\bf minimal degree} if
	$d + k = n + 1$ and $X$ is nondegenerate (i.e., not contained in a hyperplane).
\end{definition}
\begin{remark}[Reason for the name ``minimal degree'']
	\label{remark:}
	As the name suggests, there are no nondegenerate varieties with degree
	less than a variety of minimal degree.
	Indeed, one can prove this by induction on the dimension of such a variety.

	\begin{exercise}
		\label{exercise:reason-for-minimal-degree}
		Show that any nondegenerate variety $X \subset \bp^n$ of dimension $k$ and
		degree $d$ satisfies $d \geq n - 1 - k$.
		{\it Hint:} Use induction, on $k$. For the inductive step, intersect
		$X$ with a hyperplane not containing any of the generic points of $X$.
	\end{exercise}
\end{remark}

The main structure theorem for irreducible varieties of minimal degree is the following.

\begin{theorem}[~\cite{eisenbudH:on-varieties-of-minimal-degree}, Theorem 1]
	\label{theorem:classification-of-varieties-of-minimal-degree}	
	If $X$ is an irreducible nondegenerate variety over an algebraically closed field of minimal degree in $\bp^n$
	then $X$ is either smooth or a cone over a smooth irreducible nondegenerate variety of minimal degree in $\bp^n$.
	If $X$ is a smooth irreducible nondegenerate variety of minimal degree in $\bp^n$,
	then $X$ is either either a quadric hypersurface,
	a rational normal scroll, the image of
	$\bp^2$ under the 2-Veronese embedding
	$\nu_2(\bp^2) \rightarrow \bp^5$,
	or $\bp^n$ itself.
\end{theorem}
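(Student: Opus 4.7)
The plan is to prove this classification by induction on $\dim X = k$, with hyperplane sections providing the inductive mechanism. The base case $k=1$ is the classical statement that an irreducible nondegenerate curve $C$ of degree $n$ in $\bp^n$ is a rational normal curve. I would prove this by showing that $n$ general points of $C$ must be in linearly general position (otherwise $C$ lies in a hyperplane), so that a general hyperplane section imposes independent conditions on $\sco_{\bp^n}(1)$; combining this with $\deg C = n$ forces $h^1(C, \sco_C) = 0$, hence $C$ is rational, and the embedding is by the complete linear system $|\sco_{\bp^1}(n)|$.

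For the inductive step I would first reduce the singular case. If $X$ is singular at $p$, let $\pi_p: X \dashrightarrow \bp^{n-1}$ be projection from $p$; because $p$ is singular, a general line through $p$ meets $X$ with multiplicity at least $2$ at $p$, so the image $X'$ has degree at most $d - 2 = (n - k) - 1$. The image is nondegenerate (since $X$ is) and of dimension $k$ or $k-1$. Using \autoref{exercise:reason-for-minimal-degree}, which says every nondegenerate variety of dimension $m$ in $\bp^N$ has degree at least $N - m + 1$ with the minimum achieved only on minimal-degree varieties, one forces $\dim X' = k - 1$ and $\deg X' = n - k$; thus $X'$ is nondegenerate of minimal degree in $\bp^{n-1}$, and the fibers of $\pi_p$ must be lines through $p$. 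This exhibits $X$ as a cone over $X'$, so by iterating we reduce to the smooth case.

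For smooth $X$ of dimension $k \geq 2$, take a general hyperplane $H$ and put $Y = X \cap H$. By Bertini, $Y$ is smooth if $k \geq 3$, and smooth by direct check when $k = 2$; in all cases $Y$ is irreducible, nondegenerate in $\bp^{n-1}$, and of minimal degree (same $d$, dimension $k-1$, ambient dimension $n-1$). By the inductive hypothesis, $Y$ is a quadric, a rational normal scroll, a 2-Veronese surface, or $\bp^{n-1}$. Each of these structures must now be lifted to $X$: if $Y = \bp^{n-1}$ then $X = \bp^n$ by a degree argument; if $Y$ is a quadric then $X$ is a quadric (use that any variety of minimal degree whose hyperplane section is a quadric is itself a quadric, proved by looking at the ideal); if $Y$ is a scroll, use the $(k-1)$-plane ruling of $Y$ and show it extends to a $k$-plane ruling of $X$, by taking for each ruling $(k-2)$-plane $\Lambda \subset Y$ the span of $\Lambda$ with a point of $X \setminus H$ and checking it lies in $X$, realizing $X$ as a scroll via \autoref{proposition:equivalence-vector-bundle-swept-planes}.

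The main obstacle is twofold: producing the Veronese surface in the two-dimensional classification, and ruling out the Veronese case in the inductive step. For the first, when $k = 2$ the hyperplane sections are rational normal curves and the minimal model analysis of the smooth rational surface (via its $\bp^1$-fibration coming from a pencil of hyperplane sections, or the lack thereof) distinguishes scrolls from the exceptional case $\nu_2(\bp^2) \subset \bp^5$, which has no $\bp^1$-ruling. For the second, I would show that the Veronese surface contains no lines: every line in $\bp^5$ meeting $\nu_2(\bp^2)$ in a subscheme of length $\geq 3$ is already contained in it, but a direct computation shows that such a line cannot be contained because $\nu_2^*\sco_{\bp^5}(1) = \sco_{\bp^2}(2)$ restricts to $\sco_{\bp^1}(2)$ on any line in $\bp^2$. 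Since a variety of minimal degree of dimension $\geq 3$ produces a $\bp^1$-family of lines on each of its hyperplane sections (via the ruling of its ambient scroll/quadric structure), this contradicts the Veronese being a hyperplane section for $k \geq 3$, completing the induction.
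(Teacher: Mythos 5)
The paper does not prove this theorem; it cites \cite{eisenbudH:on-varieties-of-minimal-degree} directly, so there is no internal proof to compare against. Your proposal reconstructs the classical del Pezzo--Bertini strategy of inducting on dimension via hyperplane sections, which is one of the standard routes (Eisenbud and Harris themselves give a proof via the structure of the quadrics through $X$). That is a perfectly reasonable plan, but as written it has two real gaps, both concentrated in the smooth inductive step.

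The first is in lifting the ruling. You say: take a ruling $(k-2)$-plane $\Lambda \subset Y = X \cap H$, span it with a point of $X \setminus H$, and ``check it lies in $X$.'' This check is the entire difficulty, and it will fail for a generic choice of point. There is no reason the span of a ruling plane of $Y$ with an arbitrary point of $X$ should again be contained in $X$; the actual argument must produce a \emph{specific} $(k-1)$-plane in $X$ extending $\Lambda$, and the standard way to do that uses that the ideal of a variety of minimal degree is generated by quadrics (so any line meeting $X$ in a length-$3$ scheme lies in $X$), together with either a second hyperplane section or a projection from a point. That quadric-generation is itself a nontrivial input -- it can be established directly from Castelnuovo-type bounds before the classification, but you would need to prove or cite it, since without it the span argument is unsupported.

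The second gap is the circularity in ruling out a Veronese hyperplane section for $k \geq 3$. You correctly observe that $\nu_2(\bp^2)$ contains no lines, but then conclude by saying that ``a variety of minimal degree of dimension $\geq 3$ produces a $\bp^1$-family of lines on each of its hyperplane sections (via the ruling of its ambient scroll/quadric structure).'' This invokes exactly the structure the induction is supposed to establish: you do not yet know $X$ is a scroll or quadric, so you cannot appeal to its ruling. You need an independent argument that a smooth $X$ of minimal degree with $\dim X \geq 3$ contains a line through a general point -- for instance by projecting from a smooth point $p \in X$ and using that the image is nondegenerate of dimension $\dim X$ and degree strictly less than $\deg X$, which forces the image to drop dimension and hence $X$ to contain lines through $p$, again resting on the quadric-generation fact.

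There is also a small arithmetic slip in the singular-case reduction: if $X$ is a cone over $X' \subset \bp^{n-1}$ with vertex $p$, then $\deg X' = \deg X = n-k+1$, not $n-k$; the minimal degree for a $(k-1)$-fold in $\bp^{n-1}$ is $(n-1)-(k-1)+1 = n-k+1$, so the arithmetic does work out, but as written the degree is off by one.
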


\section{Fano schemes of scrolls}
\label{subsection:fano-schemes-of-scrolls}

Before coming to the main result of the section, \autoref{lemma:linear-spaces-in-varieties-of-minimal-degree},
which describes the Fano scheme of a scroll,
we prove a quick but useful lemma about varieties of minimal degree, whose degree equals their dimension.

\begin{lemma}
	\label{lemma:segre-isomorphic-to-scroll}
	The scroll $\scroll {1^k}$ of dimension $k$ in $\bp^{2k-1}$ is, up to automorphism of $\bp^{2k-1}$,
	isomorphic to the Segre embedding $\bp^1 \times \bp^{k-1} \ra \bp^{2k-1}$.
\end{lemma}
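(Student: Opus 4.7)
The plan is to unwind the definition of $\scroll{1^k}$ and identify its defining linear system with the one giving the Segre embedding. By Definition \ref{definition:scroll}, $\scroll{1^k} = \bp\sce$ with $\sce = \sco_{\bp^1}(1)^{\oplus k}$, embedded into $\bp^{2k-1}$ by the complete linear series of the relative $\sco_{\bp\sce}(1)$. My strategy is to show that the abstract variety $\bp\sce$ is isomorphic to $\bp^1 \times \bp^{k-1}$ in such a way that this $\sco(1)$ gets identified with $\sco(1,1)$, and then observe that both embeddings are complete linear systems of the same line bundle, so they differ by an automorphism of $\bp^{2k-1}$.

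First I would apply the standard identity $\bp(\sce \otimes \scl) \cong \bp\sce$ for any line bundle $\scl$ on the base, together with the transformation $\sco_{\bp(\sce \otimes \scl)}(1) \cong \sco_{\bp\sce}(1) \otimes \pi^*\scl^{-1}$ (or $\otimes \pi^*\scl$, depending on convention; the important point is that one can recover the projectivization of $\sco^{\oplus k}$ after a twist). Writing $\sce = \sco_{\bp^1}^{\oplus k} \otimes \sco_{\bp^1}(1)$, this gives an isomorphism $\bp\sce \cong \bp(\sco_{\bp^1}^{\oplus k}) = \bp^1 \times \bp^{k-1}$. Under this isomorphism, if $p : \bp^1 \times \bp^{k-1} \to \bp^1$ and $q : \bp^1 \times \bp^{k-1} \to \bp^{k-1}$ are the projections, the relative $\sco_{\bp\sce}(1)$ corresponds to $\sco(1,1) := p^*\sco_{\bp^1}(1) \otimes q^*\sco_{\bp^{k-1}}(1)$. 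One can cross-check this via pushforward: $\pi_*\sco_{\bp\sce}(1) = \sce = \sco_{\bp^1}(1)^{\oplus k}$ has $h^0 = 2k$, matching $h^0(\sco(1,1)) = 2 \cdot k$, consistent with a map to $\bp^{2k-1}$.

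Second, I would recall that the Segre embedding $\bp^1 \times \bp^{k-1} \hookrightarrow \bp^{2k-1}$ is, by definition, the morphism associated to the complete linear system $|\sco(1,1)|$ (its sections $x_i \otimes y_j$ provide the standard Segre coordinates). Hence both maps $\scroll{1^k} \hookrightarrow \bp^{2k-1}$ and the Segre embedding are given by complete linear systems of the same invertible sheaf $\sco(1,1)$ on $\bp^1 \times \bp^{k-1}$. Any two such complete embeddings differ only in a choice of basis of $H^0(\bp^1 \times \bp^{k-1}, \sco(1,1))$, which amounts to an element of $\pgl_{2k}$ acting on $\bp^{2k-1}$; this gives the desired identification ``up to automorphism of $\bp^{2k-1}$.''

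The main obstacle is purely bookkeeping: keeping the Grothendieck convention $\bp\sce = \proj \sym^\bullet \sce$ straight when shifting $\sce$ by $\sco_{\bp^1}(1)$ and transporting the tautological $\sco(1)$. Once the identification $\sco_{\bp\sce}(1) \leftrightarrow \sco(1,1)$ is in hand, everything else is automatic from the general principle that embeddings by a complete linear system of a fixed line bundle are unique up to $\pgl$.
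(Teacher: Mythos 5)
Your proof is correct, but takes a genuinely different route from the one in the paper. The paper deduces the lemma from the Eisenbud--Harris classification theorem (\autoref{theorem:classification-of-varieties-of-minimal-degree}): up to automorphism of $\bp^{2k-1}$ there is a unique smooth nondegenerate variety of dimension $k$ and degree $k$, so one only needs to check that the Segre image has degree $k$, for which the paper cites a degree formula for products of projective spaces. Your argument instead identifies the two embeddings directly: you use the canonical isomorphism $\bp(\sce \otimes \scl) \cong \bp\sce$ with $\sce = \sco_{\bp^1}^{\oplus k}$ and $\scl = \sco_{\bp^1}(1)$, carry the tautological sheaf across to obtain $\sco_{\bp\sce}(1) \cong p^*\sco_{\bp^1}(1)\otimes q^*\sco_{\bp^{k-1}}(1)$ on $\bp^1\times\bp^{k-1}$, and then invoke uniqueness of an embedding by the complete linear system of a fixed invertible sheaf up to $\pgl_{2k}$. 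Your route is more self-contained --- it does not require the heavy classification theorem, and it produces the isomorphism explicitly rather than abstractly --- at the cost of the convention-sensitive bookkeeping around the twist $\bp(\sce\otimes\scl)\cong\bp\sce$, which you correctly flag as the main thing to get right under the Grothendieck convention adopted in this thesis. The paper's proof is shorter on the page but leans on a large external result and also silently uses that the only scroll type of dimension $k$ and degree $k$ is $\scroll{1^k}$ and that scrolls of a fixed type form a single $\pgl$-orbit; your proof makes that last fact manifest.
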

\begin{proof}
	By the classification of minimal \autoref{theorem:classification-of-varieties-of-minimal-degree}, up to automorphism, there is a
	unique smooth variety of degree $k$ and dimension $k$ in $\bp^{2k-1}$. Since the Segre map is an embedding, the image is smooth.
	So, to complete the proof, we only need know that the Segre embedding of $\bp^1 \times \bp^{k-1} \ra \bp^{2k-1}$ has degree $k$,
	which follows from \cite[Proposition 2.11]{Eisenbud:3264-&-all-that}.
\end{proof}

We now come to the description of the Fano scheme of a scroll.
We devote the remainder of this section to proving this characterization

\begin{proposition}
	\label{lemma:linear-spaces-in-varieties-of-minimal-degree}
	Suppose $X \subset \bp^n$ is a smooth scroll of minimal degree $d$ and dimension $k$, where $\pi: X \cong \bp\sce \ra \bp^1$ is the projection, for $\sce$ a
	locally free sheaf
	on $\bp^1$. Further, let $X \cong \scroll {a_1, \ldots, a_s, 1^j}$ with $a_1 \geq \cdots \geq a_s > 1$.
	Suppose that $X \subset \bp^n$ is embedded so that it is the span of the planes joining rational normal curves
	$C_1, \ldots, C_s, L_1, \ldots, L_j$ where $L_1, \ldots, L_j$ are lines and $C_1, \ldots, C_s$ are rational normal curves
	of degree at least $2$. Let $t$ be an integer with $1 \leq t \leq k - 1$.
Then, the $t$-planes contained in $X$ are of one of the following two forms:
	\begin{enumerate}
		\item[\customlabel{custom:line-small}{1}] If $j > 1$, let $P \subset \bp V$ be $P = \bp W$ for a two dimensional
			subspace $W \subset V$, and let $L_1, \ldots, L_j$ be the projectivizations of planes $P_1, \ldots, P_j \subset V$.
			Then, $W \in \spn(P_1, \ldots, P_j)$.
			
		\item[\customlabel{custom:line-large}{2}] $P$ is contained in some $(k-1)$-plane which is the fiber of the projection map $\pi:X \ra \bp^1$.
	\end{enumerate}
	The Fano scheme of $t$-planes in $X$ is smooth. If $j \geq 1$ and $t= 1$, it has two irreducible components, corresponding to planes of type
	\autoref{custom:line-small} and \autoref{custom:line-large}. It has one irreducible component 
	corresponding to planes of type \autoref{custom:line-large} otherwise.
	The component of planes of type \autoref{custom:line-large} is isomorphic to
the Grassmannian bundle $\gbundle {t+1} {\sce}$ over $\bp^1$. If it exists, meaning $t = 1$ and $j \geq 1$,
the component of planes of type \autoref{custom:line-small} is isomorphic to $\bp^{j-1}$.
\end{proposition}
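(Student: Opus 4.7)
The plan is to carry this out in two stages: first classify the $t$-planes contained in $X$ by analyzing the restriction of the ruling projection $\pi : X \to \bp^1$, and then identify the resulting parameter spaces with the claimed components of the Fano scheme.

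First, let $\Lambda \cong \bp^t \subset X$ be a $t$-plane and consider $\pi|_\Lambda : \Lambda \to \bp^1$. Any nonconstant morphism $\bp^t \to \bp^1$ is given by a ratio $[f_0 : f_1]$ of two forms of the same positive degree on $\bp^t$ with empty common zero locus. For $t \geq 2$, any two nonzero forms on $\bp^t$ share a common zero by dimension, so every morphism $\bp^t \to \bp^1$ with $t \geq 2$ is constant. Hence for $t \geq 2$, the plane $\Lambda$ lies in a fiber $\pi^{-1}(p)$, which is a ruling $(k-1)$-plane of $X$, giving type \autoref{custom:line-large}. For $t = 1$, a line $\ell \subset X$ either lies in a ruling (again type \autoref{custom:line-large}) or maps isomorphically onto $\bp^1$, i.e., is a section of $\pi$.

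Next, I would classify the sectional lines. In Grothendieck's convention, sections of $\bp\sce \to \bp^1$ correspond to surjections $\sce \twoheadrightarrow \scl$ onto invertible sheaves, and the embedded degree of the section is $\deg \scl$. A sectional \emph{line} therefore corresponds to a surjection $\sce \twoheadrightarrow \sco_{\bp^1}(1)$. Since $\sce \cong \bigoplus_{i=1}^s \sco(a_i) \oplus \sco(1)^j$ with $a_i \geq 2$, and
\[
\shom(\sco(a_i), \sco(1)) = H^0(\sco(1 - a_i)) = 0 \quad \text{for } a_i \geq 2,
\]
every such surjection factors through the $\sco(1)^j$ summand. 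The space of surjections $\sco(1)^j \twoheadrightarrow \sco(1)$ modulo scalars is then $\bp(\bk^j) \cong \bp^{j-1}$, which requires $j \geq 1$; these are exactly the type \autoref{custom:line-small} lines, and the image of each lies inside $\spn(L_1, \ldots, L_j)$, matching the description $W \in \spn(P_1, \ldots, P_j)$.

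For the Fano scheme, the type \autoref{custom:line-large} locus is identified with the relative Grassmannian bundle $\gbundle{t+1}{\sce}$: its universal rank-$(t+1)$ subbundle defines a family of $t$-planes in the fibers of $\pi$, producing a closed immersion into the Fano scheme whose image is exactly this locus. This is smooth as a Grassmannian bundle over $\bp^1$. The type \autoref{custom:line-small} locus (only for $t = 1$, $j \geq 1$) is identified with $\bp^{j-1}$ by the universal surjection $\sco(1)^j \twoheadrightarrow \sco(1)$ on $\bp^{j-1} \times \bp^1$, again yielding a closed immersion. The two loci are disjoint because type \autoref{custom:line-large} lines contract under $\pi$ while type \autoref{custom:line-small} lines map isomorphically to $\bp^1$; together with irreducibility and smoothness of each, this gives the component count and smoothness of the full Fano scheme. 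The main obstacle is the case analysis for sectional $t$-planes: once one rules out nonconstant morphisms $\bp^t \to \bp^1$ for $t \geq 2$ and pins down $\scl = \sco_{\bp^1}(1)$ as necessarily receiving its map from the $\sco(1)^j$ summand, the remainder is essentially bookkeeping.
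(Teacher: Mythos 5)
Your route to the set-theoretic classification is genuinely different from the paper's and noticeably cleaner: the paper writes $X$ as the vanishing of $2\times 2$ minors and studies the image of a $t$-plane under the coordinate projections $\pi_i$, whereas you observe directly that every morphism $\bp^t \to \bp^1$ with $t\geq 2$ is constant (two forms of positive degree on $\bp^t$, $t\geq 2$, always share a zero) and that degree-one sections of $\bp\sce\to\bp^1$ correspond to surjections $\sce\twoheadrightarrow\sco_{\bp^1}(1)$, which are forced to factor through the $\sco(1)^{\oplus j}$ summand. This is a nicer argument for the first part. One small point you should spell out: for $t=1$, you assert that a nonconstant $\pi|_\ell\colon\ell\to\bp^1$ is an isomorphism, but a priori it could have higher degree; the reason it can't is that two distinct points of $\ell$ in the same fiber $\pi^{-1}(p)$ would force $\ell\subset\pi^{-1}(p)$ since the fiber is a linear space, contradicting nonconstancy. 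Also, with the paper's Grothendieck convention $\bp\sce=\proj\sym^\bullet\sce$, a $t$-plane in a fiber corresponds to a rank-$(t+1)$ \emph{quotient} of $\sce|_p$, not a subbundle as you wrote, so the roles of sub and quotient in your description of $\gbundle{t+1}{\sce}$ are reversed.

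The genuine gap is in the smoothness claim for the Fano scheme. You produce maps $\gbundle{t+1}{\sce}\to F$ and (when $t=1$, $j\geq 1$) $\bp^{j-1}\to F$ into the Fano scheme $F$, and since these are proper monomorphisms it is reasonable to call them closed immersions. But a closed immersion from a smooth scheme that is bijective on closed points onto $F$ does \emph{not} establish that $F$ is smooth, or even reduced: consider $\spec\bk\hookrightarrow\spec\bk[\epsilon]/\epsilon^2$. Your sources exhaust the closed points of $F$, but $F$ could still carry embedded nilpotents (excess tangent directions). What is actually needed is a bound on the tangent space $T_{[P]}F\cong H^0(P, N_{P/X})$ at each $t$-plane $P$, showing $h^0(P, N_{P/X})$ does not exceed the dimension of the constructed component. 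This is precisely the content of the paper's Lemmas~\ref{lemma:smooth-line-small-fano-scheme} and~\ref{lemma:smooth-line-large-fano-scheme}, each of which is an exact-sequence chase (normal sequence, relative tangent sequence, relative Euler sequence) computing $h^0(P,N_{P/X})$ exactly. Your proposal skips this step entirely, so the assertion ``this gives \dots smoothness of the full Fano scheme'' does not yet follow.
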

\begin{proof}
We prove this by first giving a set theoretic description of the points of the
Fano scheme of planes in $X$, then constructing it as a scheme, and finally showing that the scheme
structure is reduced.

\ssec{Set theoretic description}
\label{sssec:set-theoretic-description-of-lines}
We will start by set theoretically describing the Fano scheme of $t$-planes in $X$.

As in \autoref{proposition:equivalence-minors-swept-planes}, we may write any given scroll
$X$ as the vanishing locus of the $2 \times 2$ minors of
	\begin{align*} M:=
	\begin{pmatrix}
		x_{1,0} & x_{1,1} & \cdots & x_{1,a_1 - 1} & x_{2,0} & \cdots & x_{2, a_2-1} & \cdots & x_{k, 0} & \cdots & x_{k, a_k-1}  \\
		x_{1,1} & x_{1,2} & \cdots & x_{1, a_1} & x_{2, 1} & \cdots & x_{2, a_2} & \cdots & x_{k,1} & \cdots & x_{k,a_k}
	\end{pmatrix}.
	\end{align*}
	Let $D_i$ the the rational normal curve defined by the vanishing of the $2 \times 2$ minors of
	\begin{align*} M_i :=
		\begin{pmatrix}
			x_{i,0} & x_{i, 1} & \cdots & x_{i, a_i-1} \\
			x_{i,1} & x_{i,2} & \cdots & x_{i,a_i}
		\end{pmatrix}
	\end{align*}
	considered as a curve $D_i \subset \bp^{a_i}$.	
	Next, define $W_i \subset \bp^{d+k-1}$ to be the linear span of $C_i$.	
	Define the linear subspace of $\bp^{d+k-1}$
	\begin{align*}
		V_i :=V(x_{1,0}, \ldots, x_{i-1, a_{i-1}}, x_{i+1,0}, \ldots, x_{k, a_k})
	\end{align*}
	for $1 \leq i \leq k$.
	Consider the $k+1$ projections defined for $1 \leq i \leq k$ by
\begin{align*}
	\pi_i : \bp^{d+k-1} \setminus V_i  & \rightarrow \bp^{a_i}\\
	\left( x_{1,0}, \ldots, x_{k,a_k} \right) & \mapsto \left( x_{i, 0}, \ldots, x_{i, a_i} \right).
\end{align*}
Observe that the image of $X \setminus V_i$ is precisely the rational normal curve $D_i$. Now suppose we had a linear space
$P \subset X$. The image $\pi_i(P \setminus V_i) \subset D_i$ is necessarily a linear subspace of $\bp^{a_i}$ if it is nonempty.
Further, because the intersection of all $V_i$ is empty, for each closed point $p \in P$, there must be some projection map
$\pi_i$ defined at $p$. So, up to reordering, we may assume that $\pi_1$ is defined at some point $p \in P$.
If $\pi_1(P\setminus V_1)$ contains two points, it necessarily contains the line joining them, which implies $D_1$ contains
the line joining them. But, if $D_1$ is a rational normal curve containing a line, it must be a line.

Hence, there are two options. Either $\pi_1(P \setminus V_1)$ is a point contained in $C_1$ or else it is a line and $C_1$ is a 1-dimensional
linear subspace of $\bp^{d+k-1}$. In particular, if $C_i$ has degree at least $2$, the image must then be a point.

There are now two cases to analyze:
\begin{enumerate}
	\item There exists some $i$ so that $\pi_i(P \setminus V_i)$ is a line.
	\item For all $i$, we have $\pi_i(P \setminus V_i)$ is either empty or a point.
\end{enumerate}

By \autoref{lemma:set-theoretic-line-small}, in the first case $P$ is a line and is of type \ref{custom:line-small}.
By \autoref{lemma:set-theoretic-line-large}, in the second case $P$ is of type \ref{custom:line-large}.

\begin{lemma}
	\label{lemma:set-theoretic-line-small}
	If there is some $i$ so that $\pi_i(P \setminus V_i)$ is a line,
	then $P$ is a line. Further, 
	$P$ is a line of type ~\ref{custom:line-small}.
\end{lemma}
\begin{proof}
	 First, after reordering, assume that $\pi_i(P \setminus V_i)$
are lines for $1 \leq i \leq l$, but not for $i > l$. In particular, $L_1, \ldots, L_l$ are a subset of the lines $L_1, \ldots, L_j$ defining the scroll.
We first claim that $P \subset V_i$ for $i > l$. Suppose this were not the case. Then, $\pi_i(P \setminus V_i) \neq \emptyset$.
In particular, $\pi_i$ is defined on some open subset of the plane $P$. Choose two points $p_1, p_2 \in P \setminus V_1 \cup V_i$ mapping to distinct points
under $\pi_1$. Then, by the condition that the minors of $M$ all vanish, we see that $p_1, p_2$ must also map to distinct points under $\pi_i$,
contradicting the assumption that the image $\pi_i(P \setminus V_i)$ is not a line for $i > l$.
So, we conclude that the image of $\pi_i(P \setminus V_i) = \emptyset$ implying $P \subset V_i$ for $i > l$.

Next, the condition that all minors of $M$ vanish imply that if $p \notin V_i$,
for $1 \leq i \leq l$,
then $p \notin V_w$ for all $1 \leq w \leq l$ and the the value of $\pi_w(p)$ is defined
and $\pi|_{L_1} \pi_1(p) = \cdots = \pi|_{L_l} \pi_l (p)$.
Here, we are abusing notation by viewing $\pi|_{L_i}$ as the
uniquely determined isomorphism
from $\pi_i(L_i) \rightarrow \bp^1$ compatible with $\pi$.
Because of this, for all $p \in P$, we have that $p \notin V_i$ for $1 \leq i \leq l$, as the projection is defined at $p$.
Additionally, since the projection $\pi_1(p)$ uniquely determines the projection
$\pi_i(p)$ for $1 \leq i \leq l$, we obtain that $\pi_1$ is an isomorphism on $P$.
In particular, $P$ is a line.

To complete the proof, we check that $P$ is a linear combination of the lines $L_1, \ldots, L_l$.
To see this, suppose $p \in P$ has $\pi_1(p) = [1,0]$. Then, observe we have
$\pi_i(p) = [r_i, 0]$ for $1 \leq i \leq l$.
It follows that $P$ is the projectivization of 
a linear combination of the $2$-planes 
whose projectivizations are $L_1, \ldots, L_l$ in the ratio
$[1, r_2, r_3, \ldots, r_l]$ because all $2 \times 2$ minors of $M$ vanish at all
points $p \in P$. In this case, $P$ is of
type ~\ref{custom:line-small}.
\end{proof}

\begin{lemma}
	\label{lemma:set-theoretic-line-large}
	If for all $i$, we have that $\pi_i(P \setminus V_i)$ is either empty or a point,
	then $P$ is of type \ref{custom:line-large}.
\end{lemma}
\begin{proof}
	Suppose the nonempty projections, after possibly reordering the $C_i$, are the points $p_0, \ldots, p_l$ where $t \leq l \leq k$.
We wish to show that in this case, $P$ is contained in a fiber of the projection map.
Choose a point $q$ so that $q \notin V_1 \cup \cdots \cup V_l$. This holds true for a general $q \in P$.
We will show that all points of $P$ lie in the same fiber of the map to $\bp^1$ that
$q$ lies in. Because $M_1, \ldots, M_l$ are all nonzero at $q$, the condition that the minors of
$M$ all vanish at $q$ imply that the ratio of the first row to second row of $M_i$ at
$q$ are all the same.
Then, because the image $\pi_i(P \setminus V_i)$ consists of a single point, it must be
that all of $P$ is contained in the same fiber, because the open subset $P \setminus \cup_i V_i$ is contained in that fiber.
Therefore, $P$ is contained in the fiber of $\pi$, so it is of type ~\ref{custom:line-large}.
\end{proof}

\ssec{Scheme theoretic description}
\label{sssec:scheme-theoretic-description-of-lines}

To complete the proof, it suffices to give the scheme theoretic description.
We start by describing the reduced scheme structure of the Hilbert scheme, using the
set theoretic description above.
The reduced scheme structure for lines of type ~\ref{custom:line-small}
is $\bp^{j-1}$ by \autoref{lemma:reduced-scheme-small-lines}
while the reduced scheme structure for planes of
type ~\ref{custom:line-large}
is a Grassmannian bundle $\gbundle {t+1}{\sce}$ from
\autoref{lemma:reduced-scheme-large-lines}.

\begin{lemma}
	\label{lemma:reduced-scheme-small-lines}
	When $j \geq 1$ and $t = 1$, the reduced scheme structure
	of the component of the Fano scheme whose closed points correspond to lines
	of type ~\ref{custom:line-small} is $\bp^{j-1}$.
\end{lemma}
\begin{proof}
	Then, note that we
have a $(2j-1)$-plane $\Lambda$ spanned by the lines $L_1, \ldots, L_j$.
Then, we have a scroll $\scroll {1^j} \cong X \cap \Lambda$.
Further, by \autoref{lemma:segre-isomorphic-to-scroll},
we have $\scroll {1^j} \cong \bp^1 \times \bp^{j-1}$.
We now construct a closed subscheme of $\bp^{j-1} \times \Lambda$
flat over $\bp^{j-1}$, which will correspond to a component of the
Hilbert scheme, isomorphic to $\bp^{j-1}$.
Choose a basis of $\bp^{j-1}$ given by $x_1, \ldots, x_j$, and then
define the map
\begin{align*}
	X \cong \bp^{j-1} \times \bp^1 &\ra \bp^{j-1} \times \bp^{d+k-1} \\
	\left( [x_1, \ldots, x_j ], [a,b] \right) &\mapsto \left( \left[ x_1, \ldots, x_j \right], \left[ x_1 a, x_1b, x_2a, x_2b, \ldots, x_ja, x_j b, 0, \ldots, 0 \right] \right)
 \end{align*}
where we have reordered coordinates so that $L_1, \ldots, L_j$ appear as the first $j$ rational curves in the matrix representation
of $X$.
In particular, the image of this
map is a subscheme flat over the projection $X \cong \bp^1 \times \bp^{j-1} \ra \bp^{j-1}$.
It is flat by ~\cite[Theorem III.9.9]{Hartshorne:AG} because all fibers have the
same Hilbert polynomial and the base is reduced. This exhibits a subscheme of the Hilbert
scheme isomorphic to $\bp^{j-1}$ with closed points corresponding to lines of type ~\ref{custom:line-small}.
\end{proof}

\begin{lemma}
	\label{lemma:reduced-scheme-large-lines}
	The reduced scheme structure of the component of the Fano scheme
	whose closed points correspond to planes of type ~\ref{custom:line-large}
	is a Grassmannian bundle $\gbundle {t+1}{\sce}$.
\end{lemma}
\begin{proof}
	Let $\scs$ be the projectivization of the universal subbundle
of $\gbundle {t+1}{\sce}$, whose fiber over a point is the corresponding line. See ~\cite[Subsection 3.2.3]{Eisenbud:3264-&-all-that}
for a more detailed description of this bundle over the Grassmannian.
To do this, we construct a subscheme
\begin{align*}
	\phi: \scs \ra \gbundle{t+1}{\sce}\times \bp^{d+k-1}
\end{align*}
sending a line $L$ which lies in a fiber of the map $\pi$
to the pair of that line and the the line inside $\bp^{d+k-1}$.

This map can be constructed analogously to the map $\bp^{j-1} \times \bp^1 \ra \bp^{j-1} \times \bp^{d+k-1}$
above.

\begin{exercise}
	\label{exercise:}
	Write down the map $\phi$ explicitly.
	{\it Possible approach:} One method is to describe this map locally over an trivialization of the
	Grassmannian bundle, and check the transition functions. 
\end{exercise}

The resulting map $\scs \ra \gbundle{t+1}{\sce}$ gotten by composing
$\phi$ with the projection $\gbundle{t+1}{\sce} \times \bp^{d+k-1} \ra \bp^{d+k-1}$
is flat by ~\cite[Theorem III.9.9]{Hartshorne:AG}
because the fibers all have the same Hilbert polynomial and the base $\gbundle{t+1}{\sce}$
is reduced.
\end{proof}

\ssec{Reduced scheme structure}
\label{sssec:reduced-scheme-structure-of-lines}

In ~\autoref{sssec:scheme-theoretic-description-of-lines},
we constructed two subschemes of the Fano
scheme of planes in $X$. The closed points of the first correspond bijectively
to those points of the type ~\ref{custom:line-small}.
The closed points of the second component correspond bijectively to those of the type ~\ref{custom:line-large}.

Then, by ~\autoref{sssec:set-theoretic-description-of-lines},
these planes correspond to all planes in the Fano
scheme of planes in $X$. To complete the argument, it suffices to show that the Fano scheme
of planes is smooth, and hence automatically reduced. To do this, we will show
\begin{align*}
	h^0(P, N_{P/X}) \leq
	\begin{cases}
		j-1 = \dim \bp^{j-1}  & \text{ if } P \text{ is of type ~\ref{custom:line-small}} \\
		1 + (t+1)(k-t-1) = \dim \gbundle{t+1}{\sce} & \text{ if } P \text{ is of type ~\ref{custom:line-large}} \\
	\end{cases}
\end{align*}
It then follows that the Fano scheme is smooth because
$H^0(P, N_{P/X})$ is canonically identified with the tangent space to the
Fano scheme (i.e., the Hilbert scheme) using \cite[Theorem 1.1(b)]{Hartshorne:deformation}.
Note that the reverse inequality holds automatically
because we have shown that the reduced components of the Fano scheme have
the above dimensions.
Therefore, this
computation will show equality actually holds, and so
this Fano scheme is smooth.
In particular, this also implies if $t = 1$ and $j\neq 0$,
the two irreducible components
are connected components, because a smooth point lies on a unique connected
component.

Note also that because the two sets of planes from ~\ref{custom:line-small} and ~\ref{custom:line-large}
are always disjoint, this will imply that the Fano scheme of planes
in $X$ has two irreducible components when $j \geq 1$ and $t = 1$, and one
irreducible component otherwise.

So, to complete the proof of \autoref{lemma:linear-spaces-in-varieties-of-minimal-degree}, we now have two cases, depending on whether the plane is of type
~\ref{custom:line-small} or ~\ref{custom:line-large}.
These are covered in \autoref{lemma:smooth-line-small-fano-scheme}
and \autoref{lemma:smooth-line-large-fano-scheme} respectively.
\end{proof}
\begin{lemma}
	\label{lemma:smooth-line-small-fano-scheme}
	Let $X$ be a scroll of dimension $k$,
	with $X \cong \scroll {a_1, \ldots, a_s, 1^j}$ as in \autoref{lemma:linear-spaces-in-varieties-of-minimal-degree}.
	Let $P \subset X$ be a $t$-plane contained in $X$,
	with inclusion $\eta: P \rightarrow X$
Suppose that $P$ is of type ~\ref{custom:line-small}.
	Then, 
	\begin{align*}
		h^0(P, N_{P/X}) \leq j-1.
	\end{align*}
\end{lemma}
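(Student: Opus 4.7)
The plan is to sandwich $P$ inside the sub-scroll $Y := \scroll{1^j}$ spanned by $L_1, \ldots, L_j$ and use the induced filtration of normal bundles. Writing $X = \bp\sce$ with $\sce = \scf \oplus \scq$, where $\scf := \sco_{\bp^1}(1)^{\oplus j}$ and $\scq := \bigoplus_{i=1}^s \sco_{\bp^1}(a_i)$, we have $Y = \bp\scf \subset \bp\sce = X$, and by \autoref{lemma:set-theoretic-line-small} the line $P$ is contained in $Y$ as a section of $\pi_Y : Y \to \bp^1$. The normal bundle sequence
\[
0 \to N_{P/Y} \to N_{P/X} \to N_{Y/X}|_P \to 0
\]
yields on global sections $h^0(P, N_{P/X}) \leq h^0(P, N_{P/Y}) + h^0(P, N_{Y/X}|_P)$, so it suffices to show these two dimensions are $j-1$ and $0$ respectively.

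For $N_{P/Y}$, I will identify $Y \cong \bp^1 \times \bp^{j-1}$ via \autoref{lemma:segre-isomorphic-to-scroll}. Since $P$ is a section of $\pi_Y$ corresponding (by the analysis in \autoref{lemma:set-theoretic-line-small}) to a constant linear combination of the $L_i$, it corresponds to the quotient $\scf \twoheadrightarrow \sco_{\bp^1}(1)$ given by a nonzero constant vector in $\bk^j$; choosing coordinates so that the quotient is projection to a single factor exhibits the kernel as $\sco_{\bp^1}(1)^{\oplus(j-1)}$, and under the isomorphism $Y \cong \bp^1 \times \bp^{j-1}$ the line $P$ becomes a fiber of the second projection. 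Either viewpoint yields $N_{P/Y} \cong \sco_P^{\oplus(j-1)}$ and hence $h^0(P, N_{P/Y}) = j-1$.

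For $N_{Y/X}|_P$, I will use the standard normal bundle formula for a sub-projective-bundle: the inclusion $\bp\scf \hookrightarrow \bp\sce$ is the zero locus of the section of $\pi_X^*\scq^\vee \otimes \sco_X(1)$ obtained from the composition $\pi_X^*\scq \hookrightarrow \pi_X^*\sce \twoheadrightarrow \sco_X(1)$, which is regular because its zero locus has the expected codimension $s = \rk \scq$. Therefore
\[
N_{Y/X} \;\cong\; \pi_Y^*\scq^\vee \otimes \sco_Y(1).
\]
Restricting to $P$ (using that $\pi_Y|_P$ is an isomorphism onto $\bp^1$ and that $\sco_Y(1)|_P = \sco_P(1)$ since $\deg P = 1$) gives $N_{Y/X}|_P \cong \bigoplus_{i=1}^s \sco_P(1-a_i)$. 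Since each $a_i \geq 2$ by hypothesis, every summand has negative degree, so $h^0(P, N_{Y/X}|_P) = 0$. Combining the two computations yields $h^0(P, N_{P/X}) \leq j-1$. The one input that needs to be checked carefully is the normal bundle formula $N_{Y/X} \cong \pi_Y^*\scq^\vee \otimes \sco_Y(1)$; it follows cleanly from the splitting $\sce = \scf \oplus \scq$ on $\bp^1$, which is available because we are working over $\bp^1$ where every locally free sheaf splits as a direct sum of line bundles.
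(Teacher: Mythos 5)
Your argument is correct, and it takes a genuinely different route from the paper's proof. The paper's proof chases three exact sequences in series (the normal sequence for $P\subset X$, the relative tangent sequence for $\pi:X\to\bp^1$, and the relative Euler sequence), ultimately reducing to computing $h^0(P,\sce^\vee\otimes\sco_P(1))=j$ and then subtracting off contributions from $T_P$ and $\pi^*T_{\bp^1}|_P$. Your proof instead filters $N_{P/X}$ geometrically through the smooth sub-scroll $Y=\bp\scf=\scroll{1^j}$, splitting it via $0\to N_{P/Y}\to N_{P/X}\to N_{Y/X}|_P\to 0$; the first term contributes the $j-1$ as normal directions of a fiber in $\bp^1\times\bp^{j-1}$, and the second term vanishes because $N_{Y/X}|_P\cong\bigoplus_i\sco_P(1-a_i)$ has all summands of negative degree (using $a_i>1$ from the hypotheses of \autoref{lemma:linear-spaces-in-varieties-of-minimal-degree}). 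The key input you flag — that $Y$ is the zero locus of a regular section of $\pi_X^*\scq^\vee\otimes\sco_X(1)$ coming from the composite $\pi_X^*\scq\hookrightarrow\pi_X^*\sce\twoheadrightarrow\sco_X(1)$ — is correct: the codimension matches $\rk\scq=s$, and $X$ is smooth, so the section is regular and $N_{Y/X}\cong\pi_Y^*\scq^\vee\otimes\sco_Y(1)$. Your approach is more geometric and modular, directly exhibiting which normal directions at $P$ are unobstructed (those into $Y$) and which are killed (those out of $Y$ into the higher-degree part of the scroll); the paper's approach is more uniform, since it re-uses the same sequence-chasing template for both \autoref{lemma:smooth-line-small-fano-scheme} and \autoref{lemma:smooth-line-large-fano-scheme}. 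Both ultimately touch the bundle $\scq^\vee\otimes\sco_P(1)$, just packaged differently.
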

\sssec{Idea of proof of \autoref{lemma:smooth-line-small-fano-scheme}}
This proof is simply a matter of chasing exact sequences.
We first use the normal exact sequence to relate $N_{P/X}$
to $T_X|_P$. Then, we use the tangent exact sequence to relate
$T_X|_P$ to $T_\pi|_P$. Then, we use the Euler exact sequence
to relate $T_\pi|_P$ to $\pi^* \sce (1)|_P$, which we can finally
compute.
\begin{proof}
	Note that in this case, $P$ is necessarily a line
	by \autoref{lemma:set-theoretic-line-small}.
	Since $P$ and $X$ are both smooth, the sequence
	\begin{equation}
		\nonumber
		\begin{tikzcd}
			0 \ar {r} &  T_P \ar {r} & T_X|_P \ar {r} & N_{P/X} \ar {r} & 0 
		\end{tikzcd}\end{equation}
	is exact.
	Note that $P \cong \bp^1$ and so $H^1(P, T_P) \cong H^1(\bp^1, \sco_{\bp^1}(2)) = 0$. Therefore, we have an exact sequence on global
	sections
	\begin{equation}
		\nonumber
		\begin{tikzcd}
			0 \ar {r} &  H^0(P, T_P) \ar {r} & H^0(P, T_X|_P) \ar {r} & H^0(P, N_{P/X}) \ar {r} & 0.
		\end{tikzcd}\end{equation}
	The first term is three dimensional. Therefore, to conclude,
	we only have to show
	\begin{align*}
		h^0(P, T_X|_P) \leq (j - 1) + 3
	\end{align*}

Next, since $\pi$ is a smooth map, the relative tangent sequence
is exact, and, restricting this to $P$, we have the exact sequence
\begin{equation}
	\nonumber
	\begin{tikzcd}
		0 \ar {r} &  T_\pi|_P \ar {r} & T_X|_P \ar {r} & \pi^* T_{\bp^1}|_P \ar {r} & 0.
	\end{tikzcd}\end{equation}
Observe that $\pi^* T_{\bp^1}|_P = \eta^* \pi^* T_{\bp^1}$. However, by construction of the map $\pi$
using the description of a scroll as the variety swept out by planes joining a collection of lines
given in \autoref{proposition:equivalence-vector-bundle-swept-planes}, the composition
$\pi \circ \eta$ is an isomorphism. Therefore,
$h^0(\bp^1, \pi^* T_{\bp^1}|_P) = h^0(\bp^1, T_{\bp^1}) = 3$.
Therefore, we get a left exact sequence on cohomology implying
\begin{align*}
	h^0(P, T_X|_P) &\leq h^0(P, T_\pi|_P) + h^0(P, \pi^* T_{\bp^1}|_P) \\
	&= h^0(P, T_\pi|_P) + 3
\end{align*}
So, to complete the proof, it suffices to show
\begin{align*}
	h^0(P, T_\pi|_P) = j-1.
\end{align*}

For this, we will use the Euler exact sequence \cite[Theorem 4.5.13]{brandenburg:tensor-categorical-foundations-of-algebraic-geometry} (see also
\cite[Subsection 21.4.9]{vakil:foundations-of-algebraic-geometry})
\begin{equation}
	\nonumber
	\begin{tikzcd}
		0 \ar {r} &  \sco_{\bp \sce} \ar {r} & \pi^*(\sce^\vee)(1)  \ar {r} & T_\pi \ar {r} & 0.
	\end{tikzcd}\end{equation}
Restricting this to $P$, the resulting sequence will be exact
on global sections because $H^1(P, \sco_{\bp \sce}) = H^1(P, \sco_P) = 0$, and taking global sections, we have
\begin{align*}
	h^0(P, T_\pi|_P) &= h^0(P, \pi^*(\sce^\vee)(1)) - h^0(P, \sco_P) \\
	&= h^0(P, \pi^*(\sce^\vee)(1)|_P) - 1.
\end{align*}

Therefore, to complete the proof, we only need show
\begin{align*}
h^0(P, \pi^*(\sce^\vee)(1)|_P) = j.
\end{align*}
But now, observe that
\begin{align*}
	\pi^*(\sce^\vee)(1)|_P &\cong \eta^* (\pi^*(\sce^\vee)(1))\\
	&\cong \eta^* \pi^*(\sce^\vee) \otimes \eta^* \sco_{\bp \sce}(1) \\
	&\cong \sce^\vee \otimes \sco_P(1),
\end{align*}
where here we are crucially using that $\pi \circ \eta: P \rightarrow \bp^1$ is an isomorphism.
Now, writing
\begin{align*}
	\sce \cong \sco_P(a_1) \oplus \cdots \oplus \sco_P(a_s) \oplus \bigoplus_{i = 1}^j \sco_P(1),
\end{align*}
with $a_1 \geq \cdots \geq a_s > 1$, we obtain that
\begin{align*}
	\sce^\vee \otimes \sco_P(1) & \cong
\sco_P(1-a_1) \oplus \cdots \oplus \sco_P(1-a_s) \oplus \bigoplus_{i = 1}^j \sco_P.
\end{align*}
Because $a_1 \geq \cdots \geq a_s > 1$,
we obtain that
\begin{align*}
	h^0(P, \sce^\vee \otimes \sco_P(1)) = j,
\end{align*}
as desired, completing the proof.
\end{proof}

\begin{lemma}
	\label{lemma:smooth-line-large-fano-scheme}
	Let $P \subset X$ be a $t$-plane contained in $X$, a scroll
	of dimension $k$,
	with $X \cong \scroll {a_1, \ldots, a_s, 1^j}$ as in \autoref{lemma:linear-spaces-in-varieties-of-minimal-degree}.
	Suppose that $P$ is of type ~\ref{custom:line-small}.
	Then, 
	\begin{align*}
		h^0(P, N_{P/X}) \leq 1+(t+1)(k-t-1).
	\end{align*}
\end{lemma}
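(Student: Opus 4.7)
The plan is to mimic the proof of \autoref{lemma:smooth-line-small-fano-scheme} by chaining the normal, relative tangent, and Euler exact sequences restricted to $P$, but with the important modification that now the composition $\pi \circ \eta : P \to \bp^1$ is a \emph{constant} map rather than an isomorphism, since $P$ is contained in a fiber of $\pi$.

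First, from the normal exact sequence
\[ 0 \to T_P \to T_X|_P \to N_{P/X} \to 0 \]
and the fact that $P \cong \bp^t$ has $H^1(P, T_P) = 0$ (via Euler) and $h^0(P, T_P) = (t+1)^2 - 1$, I would obtain
\[ h^0(P, N_{P/X}) \leq h^0(P, T_X|_P) - \left((t+1)^2 - 1\right). \]
Next, the relative tangent sequence $0 \to T_\pi|_P \to T_X|_P \to \pi^* T_{\bp^1}|_P \to 0$ gives
\[ h^0(P, T_X|_P) \leq h^0(P, T_\pi|_P) + h^0(P, \pi^* T_{\bp^1}|_P). \]
Because $\pi \circ \eta$ is constant, $\pi^* T_{\bp^1}|_P \cong \sco_P$, contributing just $1$ to the right-hand side. (This is the one place where the computation diverges from the line-small case, where the analogous term was $h^0(T_{\bp^1}) = 3$.)

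For the remaining term $h^0(P, T_\pi|_P)$, I would restrict the Euler sequence
\[ 0 \to \sco_{\bp\sce} \to \pi^*(\sce^\vee)(1) \to T_\pi \to 0 \]
to $P$. Again using $\pi \circ \eta$ constant, $\eta^* \pi^* \sce^\vee$ is a rank-$k$ trivial bundle on $P$, so
\[ \pi^*(\sce^\vee)(1)|_P \cong \sco_P(1)^{\oplus k}, \]
which has $h^0 = k(t+1)$. Since $H^1(P, \sco_P) = 0$, the sequence is exact on global sections and I get $h^0(P, T_\pi|_P) = k(t+1) - 1$.

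Assembling the inequalities yields $h^0(P, N_{P/X}) \leq k(t+1) - 1 + 1 - ((t+1)^2 - 1) = 1 + (t+1)(k-t-1)$, as desired. There is no real obstacle here; the only subtlety is remembering that in this regime the map $\pi \circ \eta$ collapses to a point, which simplifies several of the bundles to trivial ones and changes the numerics relative to \autoref{lemma:smooth-line-small-fano-scheme}.
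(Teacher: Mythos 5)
Your proof is correct and follows essentially the same route as the paper's: the normal exact sequence to reduce to $T_X|_P$, the relative tangent sequence to split off $T_\pi|_P$ (with $\pi^*T_{\bp^1}|_P \cong \sco_P$ since $P$ sits in a fiber), and the relative Euler sequence to compute $h^0(P, T_\pi|_P) = k(t+1)-1$. You also implicitly and correctly corrected the paper's typo in the hypothesis — the lemma should read ``of type~\ref{custom:line-large}'', as the bound and the argument make clear.
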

\sssec{Idea of proof of \autoref{lemma:smooth-line-large-fano-scheme}}
We prove this via a method analogous to \autoref{lemma:smooth-line-small-fano-scheme}. That is
we first use the normal exact sequence to relate $N_{P/X}$
to $T_X|_P$. Then, we use the tangent exact sequence to relate
$T_X|_P$ to $T_\pi|_P$. Then, we use the Euler exact sequence
to relate $T_\pi|_P$ to $\pi^* \sce (1)|_P$, which we can finally
compute.
\begin{proof}
	Since $P$ and $X$ are both smooth, the sequence
	\begin{equation}
		\nonumber
		\begin{tikzcd}
			0 \ar {r} &  T_P \ar {r} & T_X|_P \ar {r} & N_{P/X} \ar {r} & 0 
		\end{tikzcd}\end{equation}
	is exact.
	Note that by the Euler exact sequence for
	$P \cong \bp^t$,
	we have	that
	\begin{align*}
		h^0(P, T_P) &= (t+1)^2 - 1 \\
		h^1(P, T_P) &= 0.
	\end{align*}
	Therefore, we have an exact sequence on global
	sections
	\begin{equation}
		\nonumber
		\begin{tikzcd}
			0 \ar {r} &  H^0(P, T_P) \ar {r} & H^0(P, T_X|_P) \ar {r} & H^0(P, N_{P/X}) \ar {r} & 0.
		\end{tikzcd}\end{equation}
	To conclude our proof, we only need show
	\begin{align*}
		h^0(P, T_X|_P) &= h^0(P, N_{P/X}) + h^0(P, T_P) \\
		&= h^0(P, N_{P/X}) + \left( (t+1)^2 - 1 \right) \\
		&\leq (1 + (t+1)(k-t-1)) + \left( (t+1)^2 - 1 \right) \\
		&= tk + k.
	\end{align*}
Next, since $\pi$ is a smooth map, the relative tangent sequence
is exact, and, restricting this to $P$, we have the exact sequence
\begin{equation}
	\nonumber
	\begin{tikzcd}
		0 \ar {r} &  T_\pi|_P \ar {r} & T_X|_P \ar {r} & \pi^* T_{\bp^1}|_P \ar {r} & 0.
	\end{tikzcd}\end{equation}
Observe that $\pi^* T_{\bp^1}|_P \cong \sco_P$ because
$P$ lies in a fiber of $\pi$.
Therefore, we get a left exact sequence on cohomology implying
\begin{align*}
	h^0(P, T_X|_P) &\leq h^0(P, T_\pi|_P) + h^0(P, \pi^* T_{\bp^1}|_P) \\
	&= h^0(P, T_\pi|_P) + 1
\end{align*}
So, to complete the proof, it suffices to show
\begin{align*}
	h^0(P, T_\pi|_P) = tk + k - 1.
\end{align*}

For this, we will use the Euler exact sequence, \cite[Theorem 4.5.13]{brandenburg:tensor-categorical-foundations-of-algebraic-geometry} (see also
\cite[Subsection 21.4.9]{vakil:foundations-of-algebraic-geometry})
\begin{equation}
	\nonumber
	\begin{tikzcd}
		0 \ar {r} &  \sco_{\bp \sce} \ar {r} & \pi^*(\sce^\vee)(1)  \ar {r} & T_\pi \ar {r} & 0.
	\end{tikzcd}\end{equation}
Restricting this to $P$, the resulting sequence will be exact
on global sections because $H^1(P, \sco_{\bp \sce}) = H^1(P, \sco_P) = 0$. Taking global sections, we have
\begin{align*}
	h^0(P, T_\pi|_P) &= h^0(P, \pi^*(\sce^\vee)(1)) - h^0(P, \sco_P) \\
	&= h^0(P, \pi^*(\sce^\vee)(1)|_P) - 1.
\end{align*}

Therefore, to complete the proof, we only need show
\begin{align*}
h^0(P, \pi^*(\sce^\vee)(1)|_P) = tk + k.
\end{align*}
But now, observe that
\begin{align*}
	\pi^*(\sce^\vee)(1)|_P &\cong \eta^* (\pi^*(\sce^\vee)(1))\\
	&\cong \eta^* \pi^*(\sce^\vee) \otimes \eta^* \sco_{\bp \sce}(1) \\
	&\cong \sco_P^k \otimes \sco_P(1) \\
	&\cong \sco_P(1)^k
\end{align*}
where here we are crucially using that $\eta^* \pi^* \scf \cong \sco_P^{\rk \scf}$ because $P$ is contained in a fiber of $\pi$.
But then, we have
\begin{align*}
	h^0(P, \pi^*(\sce^\vee)(1)|_P) &= h^0(P, \sco_P(1)^k) \\
	&= k(t+1)
\end{align*}
as desired, completing the proof.
\end{proof}

\section[Smoothness of minimal degree Hilbert schemes]{Smoothness of the Hilbert scheme of varieties of minimal degree}
\label{subsection:smoothness-of-the-hilbert-scheme-of-varieties-of-minimal-degree}

In this section, we will check that for $X$ a smooth variety of minimal degree, $X$ is a smooth
point of the Hilbert scheme. The primary tactic for showing this is to show that $H^1(X, N_X) = 0$.
Hence, by \cite[Corollary 6.3]{Hartshorne:deformation}, $X$ is a smooth point of the Hilbert scheme.
Note that in the case $X$ is a local complete intersection (e.g., that $X$ is smooth) it has no local
obstructions by \cite[Corollary 9.3]{Hartshorne:deformation}, and so we may apply \cite[Corollary 6.3]{Hartshorne:deformation}
in such a case.

By \autoref{theorem:classification-of-varieties-of-minimal-degree}, we only have to deal with quadric surfaces,
the Veronese surface, and scrolls. We start with the easy case of quadric surfaces, which follows from the
more general \autoref{lemma:hypersurface-smooth-hilbert-scheme}. Then, we move on to showing smoothness
of the $d$th Veronese embedding $\nu_d:\bp^n \ra \bp^{\binom{n+d}{d}-1}$, when viewed as a point of the
Hilbert scheme in \autoref{proposition:veronese-smooth-hilbert-scheme}. In particular, this implies that any Veronese surface in $\bp^5$ under the second
Veronese embedding is a smooth point of the Hilbert scheme.
Finally, in \autoref{proposition:regular-minimal-degree-hilbert-schemes}, we show
all smooth scrolls correspond to smooth points of the Hilbert scheme.

We begin with showing hypersurfaces are smooth points of the Hilbert scheme.
\begin{lemma}
	\label{lemma:hypersurface-smooth-hilbert-scheme}
	If $X \subset \bp^n$ is any hypersurface, then it is a smooth point of the Hilbert scheme
	and satisfies $H^1(X, N_{X/\bp^n}) = 0$.
\end{lemma}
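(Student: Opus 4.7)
The plan is to identify the normal bundle explicitly and then read off the vanishing from the standard ideal sheaf sequence on $\bp^n$. Since $X$ is cut out by a single homogeneous form of some degree $d$, the ideal sheaf is $\sci_{X/\bp^n} = \sco_{\bp^n}(-d)$, and so the conormal bundle is $\sci_{X/\bp^n}/\sci_{X/\bp^n}^2 \cong \sco_X(-d)$. Dualizing gives
\[
N_{X/\bp^n} \cong \sco_X(d),
\]
so the problem reduces to showing $H^1(X, \sco_X(d)) = 0$.

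For this, I would twist the ideal sheaf sequence
\[
0 \to \sco_{\bp^n}(-d) \to \sco_{\bp^n} \to \sco_X \to 0
\]
by $\sco_{\bp^n}(d)$ to obtain
\[
0 \to \sco_{\bp^n} \to \sco_{\bp^n}(d) \to \sco_X(d) \to 0,
\]
and then take the associated long exact sequence in cohomology. The relevant portion reads
\[
H^1(\bp^n, \sco_{\bp^n}(d)) \to H^1(X, \sco_X(d)) \to H^2(\bp^n, \sco_{\bp^n}).
\]
For $n \geq 2$, both flanking groups vanish by the standard computation of cohomology of line bundles on $\bp^n$, giving $H^1(X, N_{X/\bp^n}) = 0$. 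For $n = 1$ the scheme $X$ is zero-dimensional, so $H^1(X, \sco_X(d)) = 0$ trivially.

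To conclude that $[X]$ is a smooth point of the Hilbert scheme, I would invoke deformation theory: since $X$ is a hypersurface it is automatically a local complete intersection, hence has no local obstructions by \cite[Corollary 9.3]{Hartshorne:deformation}, and so by \cite[Corollary 6.3]{Hartshorne:deformation} the vanishing $H^1(X, N_{X/\bp^n}) = 0$ implies smoothness of the Hilbert scheme at $[X]$. There is no real obstacle here; the only subtlety is remembering to treat (or dismiss) the edge cases where $X$ is zero-dimensional and to check that $H^2(\bp^n, \sco_{\bp^n})$ genuinely vanishes in the boundary case $n = 2$, which it does since $\sco_{\bp^2}$ has vanishing top cohomology.
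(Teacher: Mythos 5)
Your proof is correct and matches the paper's first proof essentially step for step: identify $N_{X/\bp^n} \cong \sco_X(d)$, twist the ideal sheaf sequence, read off the vanishing of $H^1(X, N_{X/\bp^n})$ from the cohomology of line bundles on $\bp^n$, and invoke \cite[Corollaries 6.3 and 9.3]{Hartshorne:deformation}. (The paper also gives a second, independent proof using the fact that the Hilbert scheme of degree $d$ hypersurfaces in $\bp^n$ is itself a projective space, hence smooth everywhere.)
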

We give two proofs.
\begin{proof}[Proof 1]
	If $X$ is a hypersurface, it is a divisor, and so the normal bundle is $\sco_X(X)$. Observe that
	$H^1(X, \sco_X(X)) = 0$ as follows from the long exact sequence on cohomology associated to
	the short exact sequence of sheaves
	\begin{equation}
		\nonumber
		\begin{tikzcd}
			0 \ar {r} &  \sco_{\bp^n} \ar {r} & \sco_{\bp^n}(X) \ar {r} & \sco_X(X) \ar {r} & 0 
		\end{tikzcd}\end{equation}
	as $H^2(\bp^n, \sco_{\bp^n}) = H^1(\bp^n, \sco_{\bp^n}(\deg X)) = 0$. 
	Therefore, $H^1(X, \sco_X(X)) = 0$,
and so \cite[Corollary 6.3]{Hartshorne:deformation} and \cite[Corollary 9.3]{Hartshorne:deformation},
$X$ is a smooth point of the Hilbert scheme.
\end{proof}
\begin{proof}[Proof 2]
	By \cite[Proposition 28.3.6]{vakil:foundations-of-algebraic-geometry}, the Hilbert scheme
	of degree $d$ in $\bp^n$ is $\bp^{\binom{n+d}{d}-1}$. In particular,
	all points of this Hilbert scheme are smooth points,
	as $\bp^r$ is smooth for any $r$.
\end{proof}

Next, we show Veronese varieties are smooth points of the Hilbert scheme.

\begin{proposition}
	\label{proposition:veronese-smooth-hilbert-scheme}
	Let $N := \binom{n+d}{d}-1$. Let $X := \nu_d(\bp^n) \subset \bp^N$ be
	the image of $\bp^n$ under the $d$th Veronese embedding.
	Then, $X$ is a smooth point of the Hilbert scheme and has $H^1(X, N_{X/\bp^N}) = 0$.
	Further, $H^0(X, N_{X/\bp^N}) = (N+1)^2 - (d+1)^2$, and so $\dim \hilb X = (N+1)^2 - (n+1)^2$.
\end{proposition}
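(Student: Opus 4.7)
The plan is to reduce the whole proposition to a standard computation with the Euler and normal bundle sequences. Since $X \cong \bp^n$ is smooth, it is a local complete intersection, so by the cited results of Hartshorne there are no local obstructions, and it suffices to show $H^1(X, N_{X/\bp^N}) = 0$ in order to conclude that $[X]$ is a smooth point of the Hilbert scheme; the dimension formula will then fall out of the long exact sequence associated to the normal bundle sequence.

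The key observation is that under the Veronese embedding $\iota : \bp^n \hookrightarrow \bp^N$, we have $\iota^*\sco_{\bp^N}(1) \cong \sco_{\bp^n}(d)$, so $\sco_X(1)$ on $X$ corresponds to $\sco_{\bp^n}(d)$ on $\bp^n$. I would then work with the two short exact sequences
\begin{equation*}
0 \to \sco_{\bp^n} \to \sco_{\bp^n}(1)^{\oplus (n+1)} \to T_{\bp^n} \to 0
\end{equation*}
(the Euler sequence on $\bp^n$) and
\begin{equation*}
0 \to \sco_X \to \sco_X(1)^{\oplus (N+1)} \cong \sco_{\bp^n}(d)^{\oplus (N+1)} \to T_{\bp^N}|_X \to 0
\end{equation*}
(the Euler sequence on $\bp^N$ pulled back to $X$). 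Using the standard vanishing $H^i(\bp^n, \sco_{\bp^n}(k)) = 0$ for $0 < i < n$ and $H^j(\bp^n, \sco_{\bp^n}) = 0$ for $j > 0$, the induced long exact sequences yield $h^0(T_{\bp^n}) = (n+1)^2 - 1$ with $H^i(T_{\bp^n}) = 0$ for $i \geq 1$, and $h^0(T_{\bp^N}|_X) = (N+1)^2 - 1$ with $H^1(T_{\bp^N}|_X) = 0$.

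Finally, I would feed these into the normal bundle sequence
\begin{equation*}
0 \to T_X \to T_{\bp^N}|_X \to N_{X/\bp^N} \to 0.
\end{equation*}
The long exact sequence on cohomology and $H^1(T_X) = 0$ give
\begin{equation*}
h^0(N_{X/\bp^N}) = h^0(T_{\bp^N}|_X) - h^0(T_X) = \bigl( (N+1)^2 - 1 \bigr) - \bigl( (n+1)^2 - 1 \bigr) = (N+1)^2 - (n+1)^2,
\end{equation*}
while the segment $H^1(T_{\bp^N}|_X) \to H^1(N_{X/\bp^N}) \to H^2(T_X)$ together with the vanishing of the two outer terms forces $H^1(N_{X/\bp^N}) = 0$. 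Applying Hartshorne's criterion then finishes the proof. There is no real obstacle here; the only subtlety worth mentioning is tracking that $\sco_X(1)$ pulls back to $\sco_{\bp^n}(d)$ (not $\sco_{\bp^n}(1)$), and checking that all the ambient vanishing of line-bundle cohomology on $\bp^n$ that one invokes is characteristic-free so the statement holds over any algebraically closed $\bk$.
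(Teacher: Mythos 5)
Your proof is correct and takes essentially the same route as the paper: reduce to $H^1(X, N_{X/\bp^N}) = 0$ via Hartshorne's smoothness criterion, then chase the normal bundle sequence with the Euler sequences on $\bp^n$ and on $\bp^N$ restricted to $X$, keeping track that $\sco_X(1) \cong \sco_{\bp^n}(d)$. (Incidentally, the paper's statement has a typo writing $(N+1)^2 - (d+1)^2$ for $h^0(N_{X/\bp^N})$; your value $(N+1)^2 - (n+1)^2$ is the correct one and matches the dimension claim.)
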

\begin{proof}
	As usual, by \cite[Corollary 6.3]{Hartshorne:deformation} and \cite[Corollary 9.3]{Hartshorne:deformation},
	it suffices to show $H^1(X, N_{X/\bp^N}) = 0$. For simplicity of notation, define $N := \binom{n+d}{d}-1$.

For this, by the normal exact sequence,
\begin{equation}
	\nonumber
	\begin{tikzcd}
		0 \ar {r} & T_X  \ar {r} & T_{\bp^{N}}|_X \ar {r} & N_{X/\bp^N} \ar {r} & 0 
	\end{tikzcd}\end{equation}
in order to show $H^1(X, N_{X/\bp^N}) = 0$, it suffices to show
\begin{align*}
	H^2(T_X) &= 0 \\
	H^1(T_{\bp^N}|_X) &= 0
\end{align*}
These both follow from the Euler exact sequence.
Since $X$ is abstractly isomorphic to $\bp^n$, we have the Euler exact sequence
\begin{equation}
	\nonumber
	\begin{tikzcd}
		0 \ar {r} &  \sco_{\bp^n}\ar {r} & \sco_{\bp^n}(1)^{n+1} \ar {r} & T_X \ar {r} & 0.
	\end{tikzcd}\end{equation}
In particular, $H^2(X, T_X) = 0$ because $H^1(\bp^n, \sco_{\bp^n}(1)) = 0$ and $H^2(\bp^n, \sco_{\bp^n}) = 0$.

To complete the proof, we only need show
$H^1(T_{\bp^N}|_X) = 0$. This too follows from the Euler exact sequence, this time applied to $\bp^N$.
We have
\begin{equation}
	\nonumber
	\begin{tikzcd}
		0 \ar {r} & \sco_{\bp^N}|_X \ar {r} & \sco_{\bp^N}(1)^{N+1}|_X \ar {r} & T_{\bp^N}|_X \ar {r} & 0.
	\end{tikzcd}\end{equation}
So, to show $H^1(T_{\bp^N}|_X) = 0$, it suffices to show $H^2(X, \sco_{\bp^N}|_X) = 0$ and
$H^1(X, \sco_{\bp^N}(1)^{N+1}|_X) = 0$.
First, since $X \cong \bp^n$, we have
\begin{align*}
H^2(X, \sco_{\bp^N}|_X) = H^2(\bp^n, \sco_{\bp^n}) = 0.
\end{align*}
Second, we see
\begin{align*}
H^1(X, \sco_{\bp^N}(1)^{N+1}|_X) = H^1(X, \sco_{\bp^n}(d)^{N+1}) = 0.
\end{align*}
The statement that $H^0(X, N_{X/\bp^N}) = (N+1)^2 - (n+1)^2$
follows by chasing these same sequences. Next, $X$ is a smooth point of the Hilbert scheme,
and $H^0(X, N_{X/\bp^N})$ is the tangent space to the Hilbert scheme at $X$, by
\cite[Theorem 1.1(b)]{Hartshorne:deformation}.
Thus, it follows that $\dim \hilb X = h^0(X, N_{X/\bp^N})$, completing the proof.
\end{proof}

\begin{corollary}
	\label{corollary:veronese-interpolation-numerics}
	If $X := \nu_d(\bp^n) \subset \bp^{\binom{n+d}{d}-1}$ is the image of $\bp^n$ under some $d$th Veronese embedding,
	then $\hilb X$ satisfies interpolation 
	if there exists some smooth Veronese variety through $\binom{n+d}{d} + n + 1$ general points.
\end{corollary}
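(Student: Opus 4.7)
The strategy is to apply \autoref{theorem:equivalent-conditions-of-interpolation} essentially directly, reducing interpolation for $\hilb X$ to the naive formulation \ref{interpolation-naive} and observing that, numerically, the extra ``meets $\Lambda$'' condition becomes automatic.

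First I would record from \autoref{proposition:veronese-smooth-hilbert-scheme} two key facts about $X := \nu_d(\bp^n) \subset \bp^N$ with $N = \binom{n+d}{d}-1$: that $H^1(X, N_{X/\bp^N}) = 0$, and that
\[
\dim \hilb X = (N+1)^2 - (n+1)^2 = (N-n)(N+n+2) = \codim X \cdot \left( \binom{n+d}{d} + n + 1 \right).
\]
Since $X$ is smooth (hence a local complete intersection) and integral, and since $H^1(X, N_X) = 0$, the hypotheses of \autoref{theorem:equivalent-conditions-of-interpolation} hold for $\hilb X$. Writing $\dim \hilb X = q \cdot \codim X + r$ with $0 \leq r < \codim X$, the factorization above reads off $q = \binom{n+d}{d} + n + 1$ and $r = 0$.

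By \autoref{theorem:equivalent-conditions-of-interpolation}, interpolation for $\hilb X$ is equivalent to condition \ref{interpolation-naive}, which in the case $r = 0$ asks that for any $q$ general points and any plane $\Lambda \subset \bp^N$ of dimension $\codim X - r = N-n$, there exists $[Y] \in \hilb X$ containing those points and meeting $\Lambda$. The key observation is that when $r = 0$ the meeting condition is automatic: any $[Y] \in \hilb X$ has $\dim Y = n$, so $\dim Y + \dim \Lambda = n + (N-n) = N$, and the projective dimension theorem in $\bp^N$ forces $Y \cap \Lambda \neq \emptyset$. Hence interpolation for $\hilb X$ reduces to the existence of a smooth Veronese through $q = \binom{n+d}{d} + n + 1$ general points, which is precisely the hypothesis of the corollary.

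There is no substantive obstacle in the proof of the corollary itself; the content is entirely the arithmetic identity $r = 0$ (which in turn comes from the pleasant factorization $(N+1)^2 - (n+1)^2 = (N-n)(N+n+2)$) together with the automatic nature of the meeting condition when the dimensions of $Y$ and $\Lambda$ are complementary. The real work, namely the construction of a smooth Veronese variety through the prescribed number of general points, is deferred to the later sections of the thesis treating the Veronese case in detail.
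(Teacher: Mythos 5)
Your proof is correct and follows the same approach as the paper: use \autoref{proposition:veronese-smooth-hilbert-scheme} to get $\dim \hilb X = \binom{n+d}{d}^2 - (n+1)^2$, observe that this factors as $\codim X \cdot \bigl(\binom{n+d}{d}+n+1\bigr)$ so that $r=0$, and conclude that interpolation reduces to passing through $q = \binom{n+d}{d}+n+1$ general points. (Your write-up is actually cleaner than the paper's, which has a sign typo in the codimension — it writes $\binom{n+d}{d}-(n-1)$ where it should be $\binom{n+d}{d}-(n+1)$ — and does not spell out why the residual plane condition is vacuous when $r=0$.)
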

\begin{proof}
	This follows from \autoref{proposition:veronese-smooth-hilbert-scheme}
	and the definition of interpolation because
	\begin{align*}
		\frac{\binom{n+d}{d}^2 - (n+1)^2}{\binom{n+d}{d} - (n-1)} = \binom{n+d}{d} +n +1.
	\end{align*}
\end{proof}

To show that all varieties of minimal degree are smooth points of the Hilbert scheme, it only remains to show
that smooth scrolls are smooth points of the Hilbert scheme.
This is the content of \autoref{proposition:regular-minimal-degree-hilbert-schemes},
whose proof will occupy much of the remainder of this section.
\begin{remark}
	\label{remark:}
	The proof of \autoref{proposition:regular-minimal-degree-hilbert-schemes},
	is structured in reverse order from what one usually finds in a math book.
	Usually, one might first prove several lemmas, and then combine them to prove
	our main result \autoref{proposition:regular-minimal-degree-hilbert-schemes}.
	However, here, we instead reduce the proof of \autoref{proposition:regular-minimal-degree-hilbert-schemes}
	to showing several lemmas, which we in turn reduce to showing other lemmas, and so on.

	The reason for this structure is that the proof will be fairly involved.
	By first reducing the proof to smaller chunks, we will motivate the proofs
	of the smaller chunks. We present the proof in this reverse order,
	as this is the logical order in which one would would 
	try to solve the problem.
\end{remark}

\begin{proposition}
	\label{proposition:regular-minimal-degree-hilbert-schemes}
	Let $X \subset \bp^n$ be a smooth variety of minimal degree $d$ and dimension $k$. 
	Then, $H^1(X, N_{X/\bp^n}) = 0$. Thus, the corresponding point $[X] \in \minhilb d k$ is smooth.	
\end{proposition}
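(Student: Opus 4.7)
The plan is to split into cases using the classification \autoref{theorem:classification-of-varieties-of-minimal-degree}: a smooth irreducible nondegenerate variety of minimal degree is a quadric hypersurface, a rational normal scroll, the $2$-Veronese surface in $\bp^5$, or $\bp^n$ itself. The hypersurface case is already covered by \autoref{lemma:hypersurface-smooth-hilbert-scheme}, the Veronese case by \autoref{proposition:veronese-smooth-hilbert-scheme}, and the case $X = \bp^n$ is trivial because $N_{X/\bp^n} = 0$. So the entire content of \autoref{proposition:regular-minimal-degree-hilbert-schemes} is the scroll case, and that is what I will focus on.

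For $X \cong \bp\sce \xrightarrow{\pi} \bp^1$ a smooth scroll of type $a_1 \geq \cdots \geq a_k \geq 1$, I will chase the normal bundle exact sequence
\[
0 \ra T_X \ra T_{\bp^n}|_X \ra N_{X/\bp^n} \ra 0.
\]
The associated long exact sequence reduces $H^1(N_{X/\bp^n}) = 0$ to the two vanishings $H^1(T_{\bp^n}|_X) = 0$ and $H^2(T_X) = 0$. For the first, I restrict the Euler sequence on $\bp^n$ to $X$, reducing further to the vanishings $H^1(X, \sco_X(1)) = 0$ and $H^2(X, \sco_X) = 0$. Both are computed via the Leray spectral sequence / projection formula for $\pi$: since $\pi_* \sco_X(1) = \sce$ and $R^i \pi_* \sco_X = 0$ for $i > 0$ while $\pi_*\sco_X = \sco_{\bp^1}$, these cohomologies reduce to cohomology on $\bp^1$, namely $H^1(\bp^1, \sce) = 0$ (since $a_i \geq 1$) and $H^{\geq 1}(\bp^1, \sco_{\bp^1}) = 0$.

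For the second vanishing $H^2(T_X) = 0$, I will use the relative tangent sequence
\[
0 \ra T_\pi \ra T_X \ra \pi^* T_{\bp^1} \ra 0.
\]
Here $H^2(\pi^* T_{\bp^1}) = H^2(\bp^1, \sco_{\bp^1}(2)\otimes \pi_*\sco_X) = 0$ trivially. To get $H^2(T_\pi) = 0$, I apply the relative Euler sequence
\[
0 \ra \sco_{\bp\sce} \ra \pi^*(\sce^\vee)(1) \ra T_\pi \ra 0,
\]
which reduces the problem to $H^2(\pi^*(\sce^\vee)(1)) = 0$ and $H^3(\sco_X) = 0$. By the projection formula, $H^2(\pi^*(\sce^\vee)(1)) = H^2(\bp^1, \sce^\vee \otimes \sce) = 0$ since $\bp^1$ is one-dimensional, and $H^3(\sco_X) = 0$ again by Leray.

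Assembling all these vanishings yields $H^1(X, N_{X/\bp^n}) = 0$. Since $X$ is smooth, hence a local complete intersection, the cited corollaries of Hartshorne's deformation text give that $[X]$ is a smooth point of $\minhilb d k$. The main obstacle, really a bookkeeping one, is organizing the three consecutive short exact sequences (normal, Euler on $\bp^n$ restricted to $X$, and Euler for $T_\pi$) so that all the needed cohomology vanishings on $\bp^1$ line up — but once one commits to pushing everything down to $\bp^1$ via $\pi$ and invoking the projection formula, every term becomes a cohomology of a sum of line bundles on $\bp^1$, which is either zero on degree grounds or zero because $\bp^1$ is one-dimensional.
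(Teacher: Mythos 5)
Your proposal is correct and follows essentially the same route as the paper: reduce via the normal sequence to $H^1(T_{\bp^n}|_X)=0$ and $H^2(T_X)=0$, handle the former via the restricted Euler sequence for $\bp^n$, handle the latter via the relative tangent and relative Euler sequences for $\pi$, and collapse everything to cohomology on $\bp^1$ using Leray together with the projection formula and $R^{>0}\pi_*\sco_{\bp\sce}(m)=0$. The only difference is economy: the paper carries out the computation more fully (expressing $h^i(T_{\bp\sce})$ in terms of $h^i(\bp^1,\End\sce)$, since that extra information is reused later to compute $\dim\minhilb d k$), whereas you target only the needed $H^{\geq 2}$ vanishings, which is perfectly sufficient here.
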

\begin{proof}[Proof assuming \autoref{lemma:cohomology-restricted-tangent} and \autoref{proposition:cohomology-as-vector-bundle}]
	First, suppose we knew $H^1(X, N_{X/\bp^n}) = 0$. By \cite[Corollary 6.3]{Hartshorne:deformation}, which is applicable because $X$ is smooth, using
	\cite[Corollary 9.3]{Hartshorne:deformation}, we obtain that
	$\hilb X$ is smooth at $[X]$. Therefore, it suffices to show $H^1(X, N_{X/\bp^n}) = 0$.
	
	Next, we have the normal exact sequence
	\begin{equation}
		\nonumber
		\begin{tikzcd}
			0 \ar {r} & T_X \ar {r} & T_{\bp^n}|_X \ar {r} & N_{X/\bp^n} \ar {r} & 0.
		\end{tikzcd}\end{equation}
	From the associated long exact sequence on cohomology, in order to show
	$H^1(X, N_{X/\bp^n}) = 0$, it 
	suffices to show $H^1(T_{\bp^n}|_X) = 0$ and $H^2(T_X) = 0$. 
	First, by \autoref{proposition:cohomology-as-vector-bundle} we have $H^2(T_X) = 0$. 
	To complete the problem, it suffices to show
	$H^1(T_{\bp^n}|_X) = 0$.
	But, this is the content of \autoref{lemma:cohomology-restricted-tangent}.
\end{proof}

So, our next goals are to prove \autoref{lemma:cohomology-restricted-tangent} and \autoref{proposition:cohomology-as-vector-bundle}.
We start with a proof of \autoref{lemma:cohomology-restricted-tangent}, which in turn assumes
\autoref{lemma:cohomology-of-O(1)-as-dual-bundle}.

\begin{lemma}
	\label{lemma:cohomology-restricted-tangent}
	For $X$ a variety of minimal degree in $\bp^n$, we have $H^1(T_{\bp^n}|_X) = 0$ and $H^0(T_{\bp^n}|_X) = (n+1)^2-1$.
\end{lemma}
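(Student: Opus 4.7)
The plan is to restrict the Euler exact sequence on $\bp^n$ to $X$ and take the associated long exact sequence in cohomology. Specifically, pulling back
\begin{equation*}
\begin{tikzcd}
0 \ar{r} & \sco_{\bp^n} \ar{r} & \sco_{\bp^n}(1)^{n+1} \ar{r} & T_{\bp^n} \ar{r} & 0
\end{tikzcd}
\end{equation*}
to $X$ gives an exact sequence
\begin{equation*}
\begin{tikzcd}
0 \ar{r} & \sco_X \ar{r} & \sco_X(1)^{n+1} \ar{r} & T_{\bp^n}|_X \ar{r} & 0,
\end{tikzcd}
\end{equation*}
and the associated long exact sequence reduces both desired statements to controlling $h^i(X,\sco_X)$ and $h^i(X,\sco_X(1))$ for $i = 0,1,2$.

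The key inputs I would invoke are: (a) $h^0(X,\sco_X)=1$ and $h^1(X,\sco_X)=h^2(X,\sco_X)=0$, and (b) $h^0(X,\sco_X(1))=n+1$ and $h^1(X,\sco_X(1))=0$. The first bundle of vanishings reflects that $X$ is rational with trivial higher cohomology of the structure sheaf (trivial for the $2$-Veronese and for quadrics by a direct Euler-sequence/adjunction computation, and for smooth scrolls by the forthcoming \autoref{lemma:cohomology-of-O(1)-as-dual-bundle}, which identifies pushforwards of twists of $\sco_{\bp\sce}$ along $\pi\colon\bp\sce\to\bp^1$ with locally free sheaves on $\bp^1$ whose higher cohomology vanishes). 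The second is the statement that $X\subset\bp^n$ is linearly normal with its expected space of sections: this is a classical fact for varieties of minimal degree, and can be deduced case-by-case from \autoref{theorem:classification-of-varieties-of-minimal-degree} (hypersurface quadric, Veronese surface, or scroll), again using \autoref{lemma:cohomology-of-O(1)-as-dual-bundle} in the scroll case.

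Granting (a) and (b), the long exact sequence gives
\begin{align*}
h^0(X,T_{\bp^n}|_X) &= (n+1)\cdot h^0(X,\sco_X(1)) - h^0(X,\sco_X) + h^1(X,\sco_X) \\
&= (n+1)^2 - 1,
\end{align*}
and
\begin{equation*}
h^1(X,T_{\bp^n}|_X) \le (n+1)\cdot h^1(X,\sco_X(1)) + h^2(X,\sco_X) = 0,
\end{equation*}
which is exactly the conclusion.

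The main obstacle is input (b), namely $h^1(X,\sco_X(1))=0$ together with linear normality, since this is where the geometry of minimal degree actually enters (it fails badly for general varieties). In the three cases of \autoref{theorem:classification-of-varieties-of-minimal-degree}, the quadric hypersurface case is immediate from the short exact sequence $0\to\sco_{\bp^n}(-2)\to\sco_{\bp^n}\to\sco_X\to 0$ twisted by $\sco_{\bp^n}(1)$, the Veronese surface case reduces to $H^1(\bp^2,\sco_{\bp^2}(2))=0$, and for a scroll $X=\bp\sce$ the Leray spectral sequence for $\pi\colon\bp\sce\to\bp^1$ together with $R^i\pi_*\sco_{\bp\sce}(1) = 0$ for $i>0$ and $\pi_*\sco_{\bp\sce}(1)\cong\sce$ reduces the computation to $H^i(\bp^1,\sce)$, which is controlled by \autoref{theorem:locally-free-sheaves-on-p1} and the positivity assumption $a_k\ge 1$ on the splitting type, and is precisely the content of \autoref{lemma:cohomology-of-O(1)-as-dual-bundle}.
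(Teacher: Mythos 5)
Your approach is exactly the paper's: restrict the Euler sequence to $X$, take cohomology, and reduce to vanishing of $h^1(X,\sco_X(1))$, $h^1(X,\sco_X)$, and $h^2(X,\sco_X)$ together with $h^0(X,\sco_X)=1$ and linear normality $h^0(X,\sco_X(1))=n+1$, with \autoref{lemma:cohomology-of-O(1)-as-dual-bundle} supplying the scroll case. The only difference is cosmetic: you spell out the quadric and Veronese cases explicitly, whereas the paper, which uses this lemma only in the scroll setting of \autoref{proposition:regular-minimal-degree-hilbert-schemes}, leaves them implicit. (One nit: in your computation of $h^0(X,T_{\bp^n}|_X)$, the additive term $+\,h^1(X,\sco_X)$ is not the correct general correction term from the long exact sequence — the correct version would subtract the rank of the connecting map — but since $h^1(X,\sco_X)=0$ the number comes out right regardless.)
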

\begin{proof}[Proof assuming \autoref{lemma:cohomology-of-O(1)-as-dual-bundle}]
	We have an exact sequence on $\bp^n$ given by
	\begin{equation}
		\nonumber
		\begin{tikzcd}
			0 \ar {r} &  \sco_{\bp^n} \ar {r} & \sco_{\bp^n}(1)^{n+1} \ar {r} & T_{\bp^n} \ar {r} & 0.
		\end{tikzcd}\end{equation}
	Restricting this to $X$, we obtain
	\begin{equation}
		\nonumber
		\begin{tikzcd}
			0 \ar {r} &  \sco_X \ar {r} & \sco_X(1)^{n+1} \ar {r} & T_{\bp^n}|_X \ar {r} & 0.
		\end{tikzcd}\end{equation}
	So, to show $H^1(X, T_{\bp^n}|_X) = 0$, it suffices to show $H^1(X, \sco_X(1)) = 0$ and
	$H^2(X, \sco_X) = 0$.
	These both follow from \autoref{lemma:cohomology-of-O(1)-as-dual-bundle}.
	
	Finally, we compute $H^0(T_{\bp^n}|_X)$. Since $X$ is rational, we again have
	$H^1(X, \sco_X) = 0$. Additionally, since $X$ is connected, we know $H^0(X, \sco_X) = 1$. Therefore, to conclude the proof,
	it suffices to show $H^0(\sco_X(1)) = n+1$.
	Finally, this follows from \autoref{lemma:cohomology-of-O(1)-as-dual-bundle}.
\end{proof}
\begin{remark}
		\label{remark:}
			Note one can also deduce $H^2(X, \sco_X) = 0$
			by using the fact that the 	
			cohomology of the structure sheaf is a birational
			invariant.
			This is much easier to show in characteristic $0$ than in characteristic $p > 0$,
			but follows in positive characteristic from
			\cite[Theorem 1]{chatzistamatiouR:higher-direct-images-of-the-structure-sheaf-in-positive-characteristic}.
	Here is a proof in the characteristic $0$ case: Define $h^{i,j}(X) := h^j(X, \Omega^i_{X/\bk})$,
	then from Hodge theory (see the aside following (21.5.11.1) in \cite{vakil:foundations-of-algebraic-geometry})
	we have $h^{0,i}(X) = h^{i,0}(X) = h^0(X, \Omega^i_{X/\bk})$ and the latter is a birational invariant
	by ~\cite[Exercise 21.5.A]{vakil:foundations-of-algebraic-geometry}.
	\end{remark}

In order to prove \autoref{lemma:cohomology-restricted-tangent} we prove
\autoref{lemma:cohomology-of-O(1)-as-dual-bundle}, assuming
\autoref{lemma:pushforward-of-O-1-on-projective-bundle}.

\begin{lemma}
	\label{lemma:cohomology-of-O(1)-as-dual-bundle}
	Let $\sce \cong \oplus_i \sco_{\bp^1}(a_i)$ be a locally free sheaf on $\bp^1$ with $a_i > 0$ and
	projectivization $\pi: \bp \sce \ra \bp^1$.
	Then, $H^i(\bp \sce, \sco_{\bp \sce}(m)) \cong H^i(\bp^1, \sym^m \sce)$.
	In particular, $H^i(\bp \sce, \sco_{\bp \sce}(m)) = 0$ for $i \geq 2$.
\end{lemma}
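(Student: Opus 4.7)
The plan is to reduce the computation of cohomology on the total space $\bp\sce$ to cohomology on the base $\bp^1$ via the Leray spectral sequence for the structure morphism $\pi : \bp\sce \to \bp^1$, and then to invoke the assumed \autoref{lemma:pushforward-of-O-1-on-projective-bundle} to identify the relevant pushforwards. The expectation is that this assumed lemma computes the direct images of $\sco_{\bp\sce}(m)$ along the fibers $\bp^{r-1}$ (where $r = \rk \sce$): it should state that $\pi_* \sco_{\bp\sce}(m) \cong \sym^m \sce$ for $m \geq 0$ and $R^q \pi_* \sco_{\bp\sce}(m) = 0$ for all $q \geq 1$. This is the standard ``relative projective space cohomology'' computation, proved by checking over an affine trivialization and gluing.

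Once this is in hand, I would apply the Leray spectral sequence
\begin{equation*}
E_2^{p,q} = H^p(\bp^1, R^q \pi_* \sco_{\bp\sce}(m)) \Longrightarrow H^{p+q}(\bp\sce, \sco_{\bp\sce}(m)).
\end{equation*}
The vanishing of the higher direct images collapses the spectral sequence to the row $q = 0$, yielding the isomorphism
\begin{equation*}
H^i(\bp\sce, \sco_{\bp\sce}(m)) \cong H^i(\bp^1, \pi_* \sco_{\bp\sce}(m)) \cong H^i(\bp^1, \sym^m \sce),
\end{equation*}
which is the first assertion of the lemma.

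For the ``in particular'' clause, the vanishing $H^i(\bp\sce, \sco_{\bp\sce}(m)) = 0$ for $i \geq 2$ is then immediate from Grothendieck's vanishing theorem: any coherent sheaf on $\bp^1$ (in particular $\sym^m \sce$) has no cohomology above degree $\dim \bp^1 = 1$. The only real work is the input \autoref{lemma:pushforward-of-O-1-on-projective-bundle}; everything else is formal. The main obstacle (or rather subtlety) will be making sure the hypothesis $a_i > 0$ is used appropriately --- it is not strictly needed for the isomorphism with $H^i(\bp^1, \sym^m \sce)$, which holds for all $m \geq 0$, but it ensures positivity of $\sym^m \sce$ and gives the stronger vanishing $H^1(\bp^1, \sym^m \sce) = 0$ that is implicitly needed in the applications of this lemma in the surrounding text.
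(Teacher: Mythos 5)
Your proposal matches the paper's proof: both invoke \autoref{lemma:pushforward-of-O-1-on-projective-bundle} for the identifications $\pi_*\sco_{\bp\sce}(m) \cong \sym^m\sce$ and $R^q\pi_*\sco_{\bp\sce}(m) = 0$ ($q \geq 1$), then collapse the Leray spectral sequence to obtain $H^i(\bp\sce,\sco_{\bp\sce}(m)) \cong H^i(\bp^1,\sym^m\sce)$, with vanishing for $i\geq 2$ coming from $\dim\bp^1 = 1$. Your closing observation that $a_i > 0$ is not actually needed for the isomorphism itself is correct and a useful clarification the paper does not make explicit.
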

\begin{proof}[Proof of \autoref{lemma:cohomology-of-O(1)-as-dual-bundle},
	assuming \autoref{lemma:pushforward-of-O-1-on-projective-bundle}]
Observe that the last statement $H^i(\bp \sce, \sco_{\bp \sce}(m)) = 0$ for $i \geq 2$ follows
immediately from the isomorphism $H^i(\bp \sce, \sco_{\bp \sce}(m)) \cong H^i(\bp^1, \sym^m \sce)$
because $\bp^1$ is only $1$-dimensional.

	By, \autoref{lemma:pushforward-of-O-1-on-projective-bundle},
we have $R^i \pi_* \sco_{\bp \sce}(m) = 0$ for $i > 0$ while $\pi_* \sco_{\bp \sce}(m) \cong \sym^m \sce$.
So, the Leray spectral sequence, ~\cite[Theorem 23.4.5]{vakil:foundations-of-algebraic-geometry}
tells us there is a spectral sequence with $E_2$ term given by $H^q(\bp^1, R^p \pi_* \sco_{\bp \sce}(m))$
abutting to $H^{p+q}(\bp \sce, \sco_{\bp \sce}(m))$.
However, we know $H^q(\bp^1, R^p \pi_* \sco_{\bp \sce}(m))= 0$ whenever $p > 0$, by
\autoref{lemma:pushforward-of-O-1-on-projective-bundle},
and therefore, the spectral sequence has already converged at the $E_2$ page, with
\begin{align*}
	H^{p+q}(\bp \sce, \sco_{\bp \sce}(m)) & \cong H^{p+q}(\bp^1, \pi_* \sco_{\bp \sce}(m)) 
	\\
	&\cong H^{p+q}(\bp^1, \sym^m \sce)
\end{align*}
as claimed.
	\end{proof}

We conclude the proof of \autoref{lemma:cohomology-restricted-tangent} by proving \autoref{lemma:pushforward-of-O-1-on-projective-bundle}.

\begin{lemma}
	\label{lemma:pushforward-of-O-1-on-projective-bundle}
	Let $\bp \sce \subset \bp^n$ be a rational normal scroll with structure map
	$\pi: \bp \sce \ra \bp^1$. Then, the higher derived pushforward of $\sco_{\bp \sce}(m)$ for $m \geq 0$ satisfy
	\begin{align*}
		R^i \pi_* \sco_{\bp \sce}(m) \cong
		\begin{cases}
			\sym^m \sce & \text{ if } i = 0 \\
			0 & \text{ if } i > 0 \\
		\end{cases}
	\end{align*}
\end{lemma}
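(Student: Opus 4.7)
The plan is to prove the vanishing of $R^i\pi_* \sco_{\bp\sce}(m)$ for $i > 0$ by a fiberwise cohomology computation together with Grauert's theorem, and then identify the $i = 0$ pushforward via the construction of $\bp\sce = \proj \sym^\bullet \sce$. Since $\sce$ has positive degree summands, the ranks $r := \rk \sce \geq 1$ and the fibers of $\pi$ are isomorphic to $\bp^{r-1}$, with $\sco_{\bp\sce}(m)$ restricting to $\sco_{\bp^{r-1}}(m)$ on each fiber.

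First I would compute the fiberwise cohomology: for any closed point $t \in \bp^1$, we have $H^i(\pi^{-1}(t), \sco_{\bp\sce}(m)|_{\pi^{-1}(t)}) = H^i(\bp^{r-1}, \sco_{\bp^{r-1}}(m))$. For $m \geq 0$ this equals $\binom{m+r-1}{r-1}$ when $i = 0$ and vanishes for all $i > 0$. In particular the dimensions on fibers are locally constant on $\bp^1$.

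Next I would invoke Grauert's theorem (in the form \cite[Theorem III.12.9]{Hartshorne:AG} or the version cited earlier as \cite[Theorem 28.1.5]{vakil:foundations-of-algebraic-geometry}): since $\bp^1$ is reduced (indeed integral), $\pi$ is proper, $\sco_{\bp\sce}(m)$ is flat over $\bp^1$ (it is a line bundle on $\bp\sce$, and $\bp\sce$ is flat over $\bp^1$), and the dimensions $h^i$ on fibers are locally constant, we conclude that each $R^i\pi_* \sco_{\bp\sce}(m)$ is locally free of the corresponding rank and that the base-change maps $R^i\pi_* \sco_{\bp\sce}(m) \otimes \kappa(t) \to H^i(\pi^{-1}(t), \sco_{\bp\sce}(m)|_{\pi^{-1}(t)})$ are isomorphisms. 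For $i > 0$ the fibers vanish, so $R^i\pi_* \sco_{\bp\sce}(m)$ is locally free of rank $0$, hence zero; this handles the case $i > 0$.

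For the $i = 0$ case, I would use the universal property of $\bp\sce = \proj \sym^\bullet \sce$: by construction of the relative $\proj$, the invertible sheaf $\sco_{\bp\sce}(1)$ satisfies $\pi_* \sco_{\bp\sce}(m) \cong \sym^m \sce$ for all $m \geq 0$. Concretely, this is a standard fact about the relative $\proj$ of a symmetric algebra on a locally free sheaf, and can be verified locally on $\bp^1$ by trivializing $\sce$ over an affine open $U \subset \bp^1$, reducing to the computation $H^0(\bp^{r-1}_U, \sco_{\bp^{r-1}_U}(m)) \cong \sym^m(\sco_U^{\oplus r})$, which is the usual computation of global sections of $\sco(m)$ on projective space. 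Gluing over a cover of $\bp^1$ then gives the identification $\pi_* \sco_{\bp\sce}(m) \cong \sym^m \sce$ globally.

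No step is a serious obstacle here; the main thing to be careful about is simply invoking Grauert's theorem with the correct flatness and locally-constant-rank hypotheses, and using the Grothendieck convention $\bp\sce = \proj \sym^\bullet \sce$ consistently so that the identification $\pi_*\sco_{\bp\sce}(m) = \sym^m \sce$ (rather than $\sym^m \sce^\vee$) comes out correctly.
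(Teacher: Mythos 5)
Your proposal is correct and follows essentially the same route as the paper: for $i>0$, flatness plus fiberwise vanishing of $H^i(\bp^{r-1},\sco(m))$ plus Grauert's theorem gives that $R^i\pi_*$ is locally free of rank zero, hence zero; for $i=0$, the identification $\pi_*\sco_{\bp\sce}(m)\cong\sym^m\sce$ comes from the relative $\proj$ construction under the Grothendieck convention. The only cosmetic difference is that the paper cites \cite[Proposition 9.3]{Eisenbud:3264-&-all-that} for the $i=0$ identification where you sketch the local trivialization argument directly, which is a perfectly good substitute.
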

\begin{proof}
First, the $i = 0$ case follows from
~\cite[Proposition 9.3]{Eisenbud:3264-&-all-that}.
Note that we are using the convention that $\bp \sce := \proj \sym^\bullet \sce$
instead of the convention $\bp \sce := \proj \sym^\bullet \sce^\vee$, adopted in
~\cite{Eisenbud:3264-&-all-that}, which explains the lack of a dual.

To complete the proof, we only need show that $R^i \pi_* \sco_{\bp \sce}(1) = 0$ for $i > 0$. 
Technically, this is also stated in 
~\cite[Proposition 9.3]{Eisenbud:3264-&-all-that}, but the proof is somewhat lacking in details,
so we provide them here.

To see this,
note that $\sco_{\bp \sce}(m)$ is flat over $\bp^1$ because the fibers are all
have the same Hilbert polynomial and $\bp^1$ is reduced, by \cite[Theorem III.9.9]{Hartshorne:AG}.
Hence, we have $h^i(\bp \sce_q, \sco_{\bp \sce}(1)|_{X_q}) \cong h^i(\bp^{k-1}, \sco_{\bp^{k-1}}(1))= 0$ when $i > 0$,
because all fibers of $\pi$ are isomorphic to a projective space $\bp^{k-1}$.
Hence, by Grauert's theorem, ~\cite[Grauert's Theorem 28.1.5]{vakil:foundations-of-algebraic-geometry},
we have that $R^i \pi_* \sco_{\bp \sce}$ is locally free. It is then $0$ because it has rank $0$ at all
closed points.
\end{proof}

To conclude the proof of \autoref{proposition:regular-minimal-degree-hilbert-schemes}, we only need
prove
\autoref{proposition:cohomology-as-vector-bundle}.
We do this assuming
\autoref{lemma:endomorphisms-and-pulled-back-endomorphisms},
\autoref{lemma:relative-tangent-and-pulled-back-endomorphisms},
and \autoref{lemma:tangent-and-relative-tangent}.

\begin{proposition}
	\label{proposition:cohomology-as-vector-bundle}
	Let $\sce \cong \oplus_i \sco_{\bp^1}(a_i)$ be a locally free sheaf on $\bp^1$ with all $a_i > 0$
	and projectivization $\pi: \bp \sce \ra \bp^1$.
	Then, 
	\begin{align*}
		h^i(\bp \sce, T_{\bp \sce}) 
		=
		\begin{cases}
			h^i(\bp^1, \End \sce) & \text{ if } i > 0 \\
			h^i(\bp^1, \End \sce) + 2 & \text{ if } i = 0.
		\end{cases}
	\end{align*}
	In particular $H^i(\bp \sce, T_{\bp \sce}) = 0$ for $i \geq 2$.
\end{proposition}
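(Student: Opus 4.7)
The plan is to extract $H^i(\bp\sce, T_{\bp\sce})$ from the two short exact sequences on $\bp\sce$: the relative Euler sequence
\[0 \to \sco_{\bp\sce} \to \pi^*(\sce^\vee)(1) \to T_\pi \to 0\]
and the relative tangent sequence
\[0 \to T_\pi \to T_{\bp\sce} \to \pi^*T_{\bp^1} \to 0.\]
The three forthcoming lemmas should handle the three pieces of this computation in turn, and my proof will simply combine them via long exact sequences in cohomology.

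First, I would identify the cohomology of $\pi^*(\sce^\vee)(1)$ with that of $\End\sce$. By the projection formula together with \autoref{lemma:pushforward-of-O-1-on-projective-bundle}, one has $\pi_*(\pi^*(\sce^\vee) \otimes \sco_{\bp\sce}(1)) \cong \sce^\vee \otimes \pi_*\sco_{\bp\sce}(1) \cong \sce^\vee \otimes \sce = \End\sce$, and the higher direct images vanish; the Leray spectral sequence then gives $H^i(\bp\sce, \pi^*(\sce^\vee)(1)) \cong H^i(\bp^1, \End\sce)$ for all $i$, which should be the content of \autoref{lemma:endomorphisms-and-pulled-back-endomorphisms}. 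Using $H^0(\bp\sce, \sco_{\bp\sce}) = \bk$ and $H^i(\bp\sce, \sco_{\bp\sce}) = 0$ for $i \geq 1$ (again via Leray), the long exact sequence attached to the Euler sequence then yields $h^0(T_\pi) = h^0(\End\sce) - 1$, $h^1(T_\pi) = h^1(\End\sce)$, and $H^i(T_\pi) = 0$ for $i \geq 2$ (the last because $H^i(\bp^1, \End\sce) = 0$ when $i \geq 2$); this should be \autoref{lemma:relative-tangent-and-pulled-back-endomorphisms}.

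The final step is to feed this into the long exact sequence attached to the relative tangent sequence. By the projection formula and $\pi_*\sco_{\bp\sce} = \sco_{\bp^1}$, one has $H^i(\bp\sce, \pi^*T_{\bp^1}) = H^i(\bp^1, T_{\bp^1})$, which equals $3$ for $i = 0$ and $0$ otherwise. The main obstacle, and the content of \autoref{lemma:tangent-and-relative-tangent}, will be to show that the connecting homomorphism
\[\delta \colon H^0(\bp\sce, \pi^*T_{\bp^1}) \to H^1(\bp\sce, T_\pi)\]
vanishes, i.e.\ that every global vector field on $\bp^1$ lifts to $\bp\sce$. My plan for this is to produce such lifts geometrically: since $\sce$ is a sum of line bundles on $\bp^1$ and $\phi^*\sco_{\bp^1}(a) \cong \sco_{\bp^1}(a)$ for every $\phi \in \Aut(\bp^1)$, the group $\mathrm{PGL}_2$ acts on $\bp\sce$ compatibly with its standard action on $\bp^1$, and differentiating this action realizes any global vector field on $\bp^1$ as the image of a global vector field on $\bp\sce$, forcing $\delta = 0$. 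Granting this, the long exact sequence collapses to $h^0(T_{\bp\sce}) = h^0(T_\pi) + 3 = h^0(\End\sce) + 2$ and $h^i(T_{\bp\sce}) = h^i(T_\pi)$ for $i \geq 1$, yielding the claimed formulas and in particular the vanishing $H^i(T_{\bp\sce}) = 0$ for $i \geq 2$.
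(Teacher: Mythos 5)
Your proposal is correct and follows essentially the same route as the paper: the same two short exact sequences, the same Leray/projection-formula identifications, and the same geometric mechanism (a three-parameter family of automorphisms of $\bp\sce$ covering $\Aut(\bp^1)$) to force exactness on global sections of the relative tangent sequence. One small imprecision: it is $GL_2$, not $\pgl_2$, that acts naturally on $\bp\sce$ (a scalar $\lambda \in GL_2$ acts on the $\sco_{\bp^1}(a_i)$ summand by $\lambda^{a_i}$, which is a nontrivial projective automorphism of $\bp\sce$ unless all $a_i$ agree), but since the composite $\mathfrak{gl}_2 \to H^0(T_{\bp\sce}) \to H^0(\pi^*T_{\bp^1}) = \mathfrak{sl}_2$ is still surjective, this does not affect the vanishing of the connecting map.
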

\begin{proof}[Proof assuming \autoref{lemma:endomorphisms-and-pulled-back-endomorphisms},
\autoref{lemma:relative-tangent-and-pulled-back-endomorphisms},
and \autoref{lemma:tangent-and-relative-tangent}]

The main content of this proof is the use of the Leray spectral sequence which we use to relate $\End \sce$ to $\pi^* \sce^\vee(1)$,
the relative Euler exact sequence which we use to relate $\pi^*\sce(1)$ to $T_\pi$,
and the relative tangent sequence which we use to relate $T_\pi$ to $T_{\bp \sce}$.
Combining these, we have,
\begin{align*}
	h^0(\bp^1, \End \sce) &= h^0(\bp \sce, \pi^* \sce^\vee(1)) & \text{ by } \autoref{lemma:endomorphisms-and-pulled-back-endomorphisms}\\
	&= h^0(\bp \sce, T_\pi) + \delta_{i0} & \text{ by } \autoref{lemma:relative-tangent-and-pulled-back-endomorphisms}\\
	&= h^0(\bp \sce, T_{\bp \sce}) - 2 \delta_{i0} & \text{ by } \autoref{lemma:tangent-and-relative-tangent}
\end{align*}
where $\delta_{ij}$ is the Kronecker $\delta$ function.
We have that $H^i(\bp \sce, T_{\bp \sce}) = 0$ for $i \geq 2$ because it is isomorphic to
$H^i(\bp^1, \End \sce)$ which is $0$ when $i \geq 2$ since $\bp^1$ is only $1$-dimensional.
\end{proof}

It now only remains to prove \autoref{lemma:endomorphisms-and-pulled-back-endomorphisms},
\autoref{lemma:relative-tangent-and-pulled-back-endomorphisms},
and \autoref{lemma:tangent-and-relative-tangent}.
We start with \autoref{lemma:endomorphisms-and-pulled-back-endomorphisms}.

\begin{lemma}
	\label{lemma:endomorphisms-and-pulled-back-endomorphisms}
	Let $\sce \cong \oplus_i \sco_{\bp^1}(a_i)$ be a locally free sheaf on $\bp^1$ with all $a_i > 0$
	and projectivization $\pi: \bp \sce \ra \bp^1$.
	Then,
	\begin{align*}
	H^i\left(\bp \sce, \pi^* (\sce^\vee)(1)\right) \cong H^i(\bp^1, \End \sce)
	\end{align*}
\end{lemma}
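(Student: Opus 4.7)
The plan is to compute the cohomology on $\bp\sce$ by pushing down to $\bp^1$ via the Leray spectral sequence, exploiting the projection formula to identify the relevant pushforwards with $\End \sce$.

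First I would apply the projection formula to the sheaf $\pi^*(\sce^\vee) \otimes \sco_{\bp\sce}(1)$. Since $\pi^*(\sce^\vee)$ is the pullback of a locally free sheaf, the projection formula gives
\begin{align*}
  R^i\pi_*\bigl(\pi^*(\sce^\vee)(1)\bigr) \cong \sce^\vee \otimes R^i\pi_* \sco_{\bp\sce}(1).
\end{align*}
Now I invoke \autoref{lemma:pushforward-of-O-1-on-projective-bundle} with $m = 1$: the zeroth pushforward $\pi_*\sco_{\bp\sce}(1)$ is $\sym^1 \sce = \sce$, and all higher direct images vanish. Combining these, $\pi_*(\pi^*(\sce^\vee)(1)) \cong \sce^\vee \otimes \sce \cong \End \sce$, while $R^i\pi_*(\pi^*(\sce^\vee)(1)) = 0$ for $i > 0$.

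Second, I would feed this into the Leray spectral sequence for $\pi$, which has $E_2^{p,q} = H^p(\bp^1, R^q\pi_*(\pi^*(\sce^\vee)(1)))$ converging to $H^{p+q}(\bp\sce, \pi^*(\sce^\vee)(1))$. Since $R^q\pi_*(\pi^*(\sce^\vee)(1)) = 0$ for $q > 0$, the spectral sequence degenerates at $E_2$ along the bottom row, yielding the isomorphism
\begin{align*}
  H^i(\bp\sce, \pi^*(\sce^\vee)(1)) \cong H^i(\bp^1, \End \sce),
\end{align*}
exactly as desired. This is the same spectral sequence argument used in \autoref{lemma:cohomology-of-O(1)-as-dual-bundle}, applied verbatim with the coefficient sheaf replaced by the twist $\pi^*(\sce^\vee) \otimes \sco_{\bp\sce}(1)$.

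There is no real obstacle here: the work has all been done in \autoref{lemma:pushforward-of-O-1-on-projective-bundle}, and the only thing to verify carefully is the applicability of the projection formula, which is immediate since $\pi^*(\sce^\vee)$ is locally free. One could alternatively decompose $\sce = \oplus_i \sco_{\bp^1}(a_i)$ and reduce the claim to a direct sum of instances of \autoref{lemma:cohomology-of-O(1)-as-dual-bundle} applied to $\sce(-a_j)$, but the projection-formula approach is cleaner and avoids bookkeeping over the summands.
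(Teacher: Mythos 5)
Your proposal is correct and follows essentially the same route as the paper: both apply the projection formula to identify $R^i\pi_*(\pi^*(\sce^\vee)(1))$ with $\sce^\vee \otimes R^i\pi_*\sco_{\bp\sce}(1)$, invoke \autoref{lemma:pushforward-of-O-1-on-projective-bundle} with $m = 1$ to get $\End \sce$ in degree $0$ and vanishing above, and then run the Leray spectral sequence to conclude. The alternative direct-sum reduction you mention at the end is a genuinely different (if more bookkeeping-heavy) variant, but your main argument matches the paper's.
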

\begin{proof}
	First, because $R^0 \pi_*$ is the same functor as $\pi_*$, by
\autoref{lemma:pushforward-of-O-1-on-projective-bundle}, with $m = 1$, we have $\pi_* \sco_{\bp \sce}(1) \cong \sce$.
Hence, applying the projection formula ~\cite[Exercise 18.8.E]{vakil:foundations-of-algebraic-geometry},
we obtain an isomorphism
\begin{align*}
	\left( R^i \pi_* \sco_{\bp \sce}(1) \right) \otimes \sce^\vee \cong R^i \pi_* \left( \sco_{\bp \sce}(1) \otimes \pi^* \sce^\vee \right).
\end{align*}
In particular, the left hand side is $0$ for $i > 0$ by \autoref{lemma:pushforward-of-O-1-on-projective-bundle}
with $m = 1$,
and is $\sce \otimes \sce^\vee \cong \End \sce$ when $i = 0$. Therefore, we have
\begin{align*}
R^i \pi_* \left( \sco_{\bp \sce}(1) \otimes \pi^* \sce^\vee \right) \cong
	\begin{cases}
		\End \sce & \text{ if } i = 0 \\
		0 & \text{ if } i > 0.
	\end{cases}
\end{align*}

Now, applying the Leray spectral sequence ~\cite[Theorem 23.4.5]{vakil:foundations-of-algebraic-geometry},
we see that there is a spectral sequence with $E_2$ term given by
\begin{align*}
H^q \left(\bp^1, R^p \pi_*\left( \sco_{\bp \sce}(1) \otimes \pi^* \sce^\vee \right) \right)
\end{align*}
abutting to
\begin{align*}
	H^{p+q}\left(\bp \sce, \sco_{\bp \sce}(1) \otimes \pi^* \sce^\vee \right).
\end{align*}
However, because 
$H^q \left(\bp^1, R^p \pi_*\left( \sco_{\bp \sce}(1) \otimes \pi^* \sce^\vee \right) \right) = 0$ for
$p > 0$, we obtain that the spectral sequence has already converged at the $E_2$ page with
\begin{align*}
	H^{p+q}\left(\bp \sce, \pi^* \sce^\vee(1) \right) &\cong H^{p+q}\left(\bp^1, R_0 \pi_* \left(\pi^* \sce^\vee (1)\right) \right) \\
	& \cong H^{p+q}(\bp^1, \End \sce).
\end{align*}
\end{proof}

We have now reduced our task to proving \autoref{lemma:relative-tangent-and-pulled-back-endomorphisms}
and \autoref{lemma:tangent-and-relative-tangent}.
We next tackle \autoref{lemma:relative-tangent-and-pulled-back-endomorphisms}.

	\begin{lemma}
		\label{lemma:relative-tangent-and-pulled-back-endomorphisms}
			Let $\sce \cong \oplus_i \sco_{\bp^1}(a_i)$ be a locally free sheaf on $\bp^1$ with all $a_i > 0$
	and projectivization $\pi: \bp \sce \ra \bp^1$.
	Then, 
	\begin{align*}
		h^i(\bp \sce, T_{\pi}) 
		=
		\begin{cases}
			h^i(\bp \sce, \pi^* (\sce^\vee)(1)) & \text{ if } i > 0 \\
			h^i(\bp^1, \pi^* (\sce^\vee)(1)) -1 & \text{ if } i = 0. \\
		\end{cases}
	\end{align*}
	\end{lemma}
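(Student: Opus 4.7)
The plan is to apply the relative Euler exact sequence
\[
0 \ra \sco_{\bp \sce} \ra \pi^*(\sce^\vee)(1) \ra T_\pi \ra 0
\]
(as used already in the proofs of \autoref{lemma:smooth-line-small-fano-scheme} and \autoref{lemma:smooth-line-large-fano-scheme}, via \cite[Subsection 21.4.9]{vakil:foundations-of-algebraic-geometry}) and pass to the associated long exact sequence in cohomology on $\bp \sce$. The claimed comparison between $h^i(\bp \sce, T_\pi)$ and $h^i(\bp \sce, \pi^*(\sce^\vee)(1))$ will fall out immediately once I know the cohomology of the structure sheaf $\sco_{\bp \sce}$.

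The key step is therefore to compute $H^i(\bp \sce, \sco_{\bp \sce})$. For this I would invoke the previous lemma \autoref{lemma:cohomology-of-O(1)-as-dual-bundle} with $m = 0$: it gives
\[
H^i(\bp \sce, \sco_{\bp \sce}) \cong H^i(\bp^1, \sym^0 \sce) = H^i(\bp^1, \sco_{\bp^1}),
\]
which equals $\bk$ for $i = 0$ and vanishes for all $i \geq 1$. (Alternatively one can appeal directly to \autoref{lemma:pushforward-of-O-1-on-projective-bundle} with $m = 0$ together with the Leray spectral sequence, exactly as in the proof of \autoref{lemma:cohomology-of-O(1)-as-dual-bundle}.)

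Plugging this into the long exact sequence, for $i = 0$ I obtain
\[
0 \ra \bk \ra H^0(\bp \sce, \pi^*(\sce^\vee)(1)) \ra H^0(\bp \sce, T_\pi) \ra 0,
\]
which yields $h^0(\bp \sce, T_\pi) = h^0(\bp \sce, \pi^*(\sce^\vee)(1)) - 1$, while for $i \geq 1$ the flanking terms $H^i(\sco_{\bp \sce}) = H^{i+1}(\sco_{\bp \sce}) = 0$ force the connecting maps to be isomorphisms $H^i(\bp \sce, T_\pi) \cong H^i(\bp \sce, \pi^*(\sce^\vee)(1))$, matching the statement exactly. There is no real obstacle here: the entire content is the Euler sequence together with the vanishing of $H^{\geq 1}(\bp \sce, \sco_{\bp \sce})$, which has already been established in the supporting lemmas.
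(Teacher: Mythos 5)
Your proof is correct and matches the paper's argument essentially line for line: both rely on the relative Euler exact sequence together with the vanishing $H^i(\bp\sce, \sco_{\bp\sce}) = \delta_{i0}$, obtained from \autoref{lemma:cohomology-of-O(1)-as-dual-bundle} (with $m=0$), and then read off the claimed equalities from the long exact sequence in cohomology. No gaps.
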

	\begin{proof}
		First, by \cite[Theorem 4.5.13]{brandenburg:tensor-categorical-foundations-of-algebraic-geometry} (see also
\cite[Subsection 21.4.9]{vakil:foundations-of-algebraic-geometry})
we have the Euler exact sequence
\begin{equation}
	\nonumber
	\begin{tikzcd}
		0 \ar {r} &  \sco_{\bp \sce} \ar {r} & \pi^*(\sce^\vee)(1)  \ar {r} & T_\pi \ar {r} & 0 
	\end{tikzcd}\end{equation}
where $T_\pi$ is the relative tangent sheaf of $\pi: \bp \sce \ra \bp^1$.

We now use these sequences to determine the cohomology of $T_{\bp \sce}$.
First, since $\bp \sce$ is rational, we have $H^i(\bp \sce, \sco_{\bp \sce}) = \delta_{i0}$.
Here, we are using \autoref{lemma:cohomology-of-O(1)-as-dual-bundle}.
This implies that for $i > 0$, we have an isomorphism 
\begin{align*}
	H^i(\bp \sce, \pi^* (\sce^\vee)(1)) \cong H^i(\bp \sce, T_\pi).
\end{align*}
Further, when $i = 0$,
\begin{align*}
	h^0(\bp \sce, \pi^* (\sce^\vee)(1)) -1 = h^0(\bp \sce, T_\pi).
\end{align*}
\end{proof}
	
Finally, to conclude our proof of
\autoref{proposition:regular-minimal-degree-hilbert-schemes}, it suffices to prove
\autoref{lemma:tangent-and-relative-tangent}.
Following our style so far, we prove this
assuming two more lemmas,
\autoref{lemma:cohomology-of-pullback-of-tangent-sheaf} and
\autoref{lemma:relative-tangent-exact-on-global-sections}.

\begin{lemma}
	\label{lemma:tangent-and-relative-tangent}
	Let $\sce \cong \oplus_i \sco_{\bp^1}(a_i)$ be a locally free sheaf on $\bp^1$ with all $a_i > 0$
	and projectivization $\pi: \bp \sce \ra \bp^1$.
	Then, 
	\begin{align*}
		h^i(\bp \sce, T_{\bp \sce}) 
		=
		\begin{cases}
			h^i(\bp \sce, T_{\pi}) & \text{ if } i > 0 \\
			h^i(\bp \sce, T_{\pi}) +3 & \text{ if } i = 0 \\
		\end{cases}
	\end{align*}
	\end{lemma}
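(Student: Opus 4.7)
The plan is to use the relative tangent sequence for the smooth map $\pi:\bp\sce\to\bp^1$,
\begin{equation*}
\begin{tikzcd} 0 \ar{r} & T_\pi \ar{r} & T_{\bp\sce} \ar{r} & \pi^*T_{\bp^1} \ar{r} & 0, \end{tikzcd}
\end{equation*}
and then read off what we need from its long exact sequence in cohomology. So everything reduces to (a) computing $H^\bullet(\bp\sce,\pi^*T_{\bp^1})$, and (b) showing that the connecting map $H^0(\pi^*T_{\bp^1})\to H^1(T_\pi)$ vanishes, equivalently, that $H^0(T_{\bp\sce})\to H^0(\pi^*T_{\bp^1})$ is surjective.

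For step (a), I would invoke the projection formula together with \autoref{lemma:pushforward-of-O-1-on-projective-bundle} (which, in particular, gives $\pi_*\sco_{\bp\sce}=\sco_{\bp^1}$ and $R^i\pi_*\sco_{\bp\sce}=0$ for $i>0$, since $\sym^0\sce=\sco_{\bp^1}$). This yields $R^i\pi_*(\pi^*T_{\bp^1})=T_{\bp^1}\otimes R^i\pi_*\sco_{\bp\sce}$, which is $T_{\bp^1}$ when $i=0$ and $0$ otherwise. The Leray spectral sequence then collapses to give $H^i(\bp\sce,\pi^*T_{\bp^1})\cong H^i(\bp^1,T_{\bp^1})$. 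Since $T_{\bp^1}\cong\sco_{\bp^1}(2)$, this group is $3$-dimensional when $i=0$ and vanishes for $i>0$.

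Plugging these into the long exact sequence, the vanishing of $H^i(\pi^*T_{\bp^1})$ for $i>0$ immediately gives $H^i(T_{\bp\sce})\cong H^i(T_\pi)$ for $i\ge 2$ and an exact sequence
\begin{equation*}
\begin{tikzcd}[column sep=small] 0 \ar{r} & H^0(T_\pi) \ar{r} & H^0(T_{\bp\sce}) \ar{r} & H^0(\pi^*T_{\bp^1}) \ar{r}{\delta} & H^1(T_\pi) \ar{r} & H^1(T_{\bp\sce}) \ar{r} & 0. \end{tikzcd}
\end{equation*}
The only remaining point, and the genuine obstacle, is the surjectivity of the middle map, i.e.\ $\delta=0$; once we have this, the formula $h^0(T_{\bp\sce})=h^0(T_\pi)+3$ and $h^1(T_{\bp\sce})=h^1(T_\pi)$ fall out, and the final claim is that these match the content of \autoref{lemma:relative-tangent-exact-on-global-sections}.

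The hard part is thus producing three global vector fields on $\bp\sce$ that project to a basis of $H^0(\bp^1,T_{\bp^1})$. The natural way to do this is to lift the $\mathfrak{sl}_2$-action on $\bp^1$: since $\sce=\oplus_i\sco_{\bp^1}(a_i)$ is $\mathrm{SL}_2$-equivariant (each $\sco_{\bp^1}(a_i)$ carries its standard $\mathrm{SL}_2$-linearization as the symmetric power of the tautological representation), the $\mathrm{SL}_2$-action on $\bp^1$ lifts canonically to an action on $\bp\sce$, and differentiating yields a Lie algebra map $\mathfrak{sl}_2\to H^0(\bp\sce,T_{\bp\sce})$ whose composition with the projection to $H^0(\pi^*T_{\bp^1})=H^0(T_{\bp^1})$ recovers the standard identification $\mathfrak{sl}_2\cong H^0(\bp^1,T_{\bp^1})$. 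That suffices for surjectivity. (Alternatively, one can argue locally: over a trivializing cover of $\bp^1$ the scroll is a product, so lifts exist locally, and then cohomological vanishing, namely $H^1(\bp\sce,T_\pi)$ being computed by the relative Euler sequence and \autoref{lemma:relative-tangent-and-pulled-back-endomorphisms}, shows there is no obstruction to gluing; but the $\mathrm{SL}_2$-lift is cleaner.)
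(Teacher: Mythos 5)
Your argument is correct and follows essentially the same route as the paper: reduce via the relative tangent sequence, compute $H^\bullet(\bp\sce,\pi^*T_{\bp^1})$ by pushing forward to $\bp^1$, and then show the restriction $H^0(T_{\bp\sce})\to H^0(\pi^*T_{\bp^1})$ is surjective, which the paper isolates as Lemma~\ref{lemma:relative-tangent-exact-on-global-sections}. Where you differ is in how that surjectivity is produced: the paper exhibits a three-parameter family of automorphisms concretely by letting $GL_2(\bk)$ act on the left of the $2\times 2$-minor matrix defining the scroll, whereas you invoke the $\mathrm{SL}_2$-linearization of $\sce=\bigoplus_i\sco_{\bp^1}(a_i)$ to lift the $\mathrm{SL}_2$-action on $\bp^1$ to $\bp\sce$ and differentiate. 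These are the same mechanism expressed in different languages; your version is more intrinsic and arguably cleaner, while the paper's is more explicit but relies on the auxiliary matrix description. Your handling of the Leray step via the projection formula is also slightly more careful than the paper's citation of Lemma~\ref{lemma:pushforward-of-O-1-on-projective-bundle} (which literally concerns $\sco_{\bp\sce}(m)$ rather than $\pi^*\sco_{\bp^1}(m)$; the projection formula with $m=0$ is the right way to say it).

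One caution on your parenthetical alternative: the claim that local lifts glue because the obstruction in $H^1(\bp\sce,T_\pi)$ vanishes is not available in general. By Lemmas~\ref{lemma:relative-tangent-and-pulled-back-endomorphisms} and~\ref{lemma:endomorphisms-and-pulled-back-endomorphisms}, $H^1(T_\pi)\cong H^1(\bp^1,\End\sce)$, and $\End\sce\cong\bigoplus_{i,j}\sco_{\bp^1}(a_i-a_j)$ can have $H^1\neq 0$ whenever some $a_i-a_j\le -2$ (e.g.\ $\sce=\sco(1)\oplus\sco(3)$). So you would still have to show the particular obstruction class vanishes; a bare vanishing argument does not suffice. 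Since you present this as a secondary route and rely on the $\mathrm{SL}_2$-lift, the proof as a whole stands.
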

\begin{proof}[Proof assuming \autoref{lemma:cohomology-of-pullback-of-tangent-sheaf} and
\autoref{lemma:relative-tangent-exact-on-global-sections}]

Because $\pi$, is smooth and $\bp^1$ is smooth over $\bk$,
we have that the relative tangent sequence
\begin{equation}
	\nonumber
	\begin{tikzcd}
		0 \ar {r} &  T_\pi \ar {r} & T_{\bp \sce} \ar {r} & \pi^* T_{\bp^1} \ar {r} & 0.
	\end{tikzcd}\end{equation}
is exact.

	By \autoref{lemma:cohomology-of-pullback-of-tangent-sheaf},
	\begin{align*}
	H^i(\bp \sce, \pi^* T_{\bp^1}) =0
\end{align*}
for all $i > 0$. Since we also know the relative tangent sequence 
is exact on global sections,
by \autoref{lemma:relative-tangent-exact-on-global-sections},
we have an isomorphism
\begin{align*}
	H^i(\bp \sce, T_\pi) \cong H^i(\bp \sce, T_{\bp \sce})
\end{align*}
for all $i > 0$. So, for $i > 0$, we have the claimed isomorphisms
\begin{align*}
	H^i(\bp \sce, T_{\pi}) \cong H^i(\bp \sce, T_{\bp \sce})
\end{align*}
coming from the relative tangent sequence.

Finally, in the case $i = 0$, we obtain that
\begin{align*}
	h^i(\bp \sce, T_\pi) = h^i(\bp \sce, T_{\bp \sce}) + 3	
\end{align*}
because the relative tangent sequence 
is exact on global sections, by \autoref{lemma:relative-tangent-exact-on-global-sections},
and because $h^1(\bp \sce, \pi^* T_{\bp^1}) = 0$ by
\autoref{lemma:cohomology-of-pullback-of-tangent-sheaf}.
\end{proof}

To conclude our proof, we only need show
\autoref{lemma:cohomology-of-pullback-of-tangent-sheaf} and
\autoref{lemma:relative-tangent-exact-on-global-sections}.
First, we dispense with 
\autoref{lemma:cohomology-of-pullback-of-tangent-sheaf}.

\begin{lemma}
	\label{lemma:cohomology-of-pullback-of-tangent-sheaf}
	Let $\pi: \bp \sce \ra \bp^1$ be a projective bundle. Then, we have
	\begin{align*}
		H^i(\bp \sce, \pi^* T_{\bp^1}) = H^i(\bp^1, T_{\bp^1}) =
		\begin{cases}
			3 & \text{ if } i = 0 \\
			0 & \text{ if } i \neq 0.
		\end{cases}
	\end{align*}
\end{lemma}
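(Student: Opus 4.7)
The plan is to reduce the cohomology upstairs on $\bp\sce$ to cohomology on $\bp^1$ by combining the projection formula with the Leray spectral sequence. First I would invoke the projection formula in the form of \cite[Exercise 18.8.E]{vakil:foundations-of-algebraic-geometry}: since $T_{\bp^1}$ is locally free on $\bp^1$, we have
\begin{align*}
R^i\pi_*(\pi^*T_{\bp^1}) \;\cong\; T_{\bp^1}\otimes R^i\pi_*\sco_{\bp\sce}.
\end{align*}
Next I would compute the higher direct images of the structure sheaf. Because the fibers of $\pi$ are projective spaces $\bp^{k-1}$, we have $h^i(\bp^{k-1},\sco_{\bp^{k-1}}) = \delta_{i0}$, so Grauert's theorem gives $R^i\pi_*\sco_{\bp\sce} = 0$ for $i > 0$ and $\pi_*\sco_{\bp\sce} \cong \sco_{\bp^1}$. (Alternatively, this is the $m = 0$ case of \autoref{lemma:pushforward-of-O-1-on-projective-bundle}.) Hence $R^i\pi_*(\pi^*T_{\bp^1}) = T_{\bp^1}$ if $i = 0$ and vanishes otherwise.

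Then I would feed this into the Leray spectral sequence \cite[Theorem 23.4.5]{vakil:foundations-of-algebraic-geometry}, whose $E_2^{p,q} = H^p(\bp^1, R^q\pi_*\pi^*T_{\bp^1})$ abuts to $H^{p+q}(\bp\sce, \pi^*T_{\bp^1})$. Because $R^q\pi_*\pi^*T_{\bp^1} = 0$ for $q > 0$, the spectral sequence collapses on the $E_2$-page, yielding
\begin{align*}
H^i(\bp\sce,\pi^*T_{\bp^1}) \;\cong\; H^i(\bp^1,T_{\bp^1}).
\end{align*}
Finally, since $T_{\bp^1} \cong \sco_{\bp^1}(2)$, standard computation of cohomology of line bundles on $\bp^1$ gives $h^0(\bp^1,T_{\bp^1}) = 3$ and $h^i(\bp^1,T_{\bp^1}) = 0$ for $i > 0$, which produces the claimed values. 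There is no real obstacle here; the only thing requiring any care is making sure the projection formula is applied correctly to identify the higher direct images of $\pi^*T_{\bp^1}$.
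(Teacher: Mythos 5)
Your proof is correct and follows the same overall strategy as the paper: compute the higher direct images $R^q\pi_*\pi^*T_{\bp^1}$, observe they vanish for $q>0$, then let the Leray spectral sequence collapse to identify $H^i(\bp\sce,\pi^*T_{\bp^1})$ with $H^i(\bp^1,T_{\bp^1})$, and finish with $T_{\bp^1}\cong\sco_{\bp^1}(2)$. The one place where your write-up is actually more careful than the paper's is the computation of $R^i\pi_*\pi^*T_{\bp^1}$: the paper cites \autoref{lemma:pushforward-of-O-1-on-projective-bundle} with $m=2$, but that lemma computes $R^i\pi_*\sco_{\bp\sce}(m)$, and $\sco_{\bp\sce}(2)$ is a genuinely different sheaf from $\pi^*\sco_{\bp^1}(2)=\pi^*T_{\bp^1}$ (they agree only when $m=0$). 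Your route — project via $R^i\pi_*(\pi^*T_{\bp^1})\cong T_{\bp^1}\otimes R^i\pi_*\sco_{\bp\sce}$ using the projection formula for the locally free sheaf $T_{\bp^1}$, then feed in $R^i\pi_*\sco_{\bp\sce}=0$ for $i>0$ and $\pi_*\sco_{\bp\sce}\cong\sco_{\bp^1}$ (the $m=0$ case of the cited lemma, or Grauert) — is the clean and correct way to obtain the vanishing the paper wants, and lands you in exactly the same place.
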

\begin{proof}
	Of course, since $T_{\bp^1} \cong \sco_{\bp^1}(2)$, in order to complete the proof,
	we only need show
	\begin{align*}
		H^i(\bp \sce, \pi^* T_{\bp^1}) = H^i(\bp^1, T_{\bp^1}).
	\end{align*}
By \autoref{lemma:pushforward-of-O-1-on-projective-bundle}, applied with $m = 2$, 
we have that 
\begin{align*}
	R^i \pi_* \pi^* T_{\bp^1} \cong R^i \pi_* \pi^* \sco_{\bp^1}(2) = 0
\end{align*}
when $i > 0$ and
\begin{align*}
	\pi_* \pi^* T_{\bp^1} \cong \sco_{\bp^1}(2) \cong T_{\bp^1}  
\end{align*}
when $i = 0$. 
Finally, by applying
the Leray spectral sequence ~\cite[Theorem 23.4.5]{vakil:foundations-of-algebraic-geometry},
we obtain that there is a spectral sequence with $E_2$ term given by $H^q(\bp^1, R^p \pi_* \pi^* T_{\bp^1})$
abutting to $H^{p+q}(\bp \sce, \pi^* T_{\bp^1})$. Because $H^q(\bp^1, R^p \pi_* \pi^* T_{\bp^1}) = 0$ for $p > 0$,
the spectral sequence converges at the $E_2$ page and we have
\begin{align*}
	H^{p+q}(\bp \sce, \pi^* T_{\bp^1}) \cong H^{p+q}(\bp^1, T_{\bp^1}).
\end{align*}
\end{proof}

We have finally reduced proving \autoref{proposition:regular-minimal-degree-hilbert-schemes} to showing
\autoref{lemma:relative-tangent-exact-on-global-sections}.
We do this now.

\begin{lemma}
	\label{lemma:relative-tangent-exact-on-global-sections}
	Let $\pi: \bp \sce \ra \bp^1$ be a projective bundle where $\sce \cong \oplus_{i=1}^k \sco_{\bp^1}(a_i)$ with $a_i > 0$ for all $i$. The sequence
	\begin{equation}
	\nonumber
	\begin{tikzcd}
		0 \ar {r} &  H^0(\bp \sce, T_\pi) \ar {r} & H^0(\bp \sce, T_{\bp \sce}) \ar {r} & H^0(\bp \sce, \pi^* T_{\bp^1}) \ar {r} & 0 
	\end{tikzcd}\end{equation}
gotten by taking global sections of the relative tangent sequence
is exact.
\end{lemma}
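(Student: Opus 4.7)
The sequence in question is the beginning of the long exact cohomology sequence attached to the relative tangent sequence
\[0 \to T_\pi \to T_{\bp \sce} \to \pi^* T_{\bp^1} \to 0,\]
so exactness at the first two terms is automatic from left-exactness of $H^0$. Thus the actual content is the surjectivity of the map $H^0(\bp\sce, T_{\bp\sce}) \to H^0(\bp\sce, \pi^* T_{\bp^1})$; by \autoref{lemma:cohomology-of-pullback-of-tangent-sheaf} the target is canonically identified with $H^0(\bp^1, T_{\bp^1})$, which is $3$-dimensional.

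My plan is to prove surjectivity by lifting the standard $GL_2$-action on $\bp^1$ to an algebraic action on $\bp\sce$ that commutes with $\pi$. Write $\bp^1 = \bp(V)$ for $V$ a two-dimensional vector space. The tautological sheaves $\sco_{\bp(V)}(1)$ and $\sco_{\bp(V)}(-1)$ are both $GL(V)$-equivariant (they are, tautologically, the universal quotient and sub-invertible-sheaves for $V \otimes \sco_{\bp(V)}$), hence so is every $\sco_{\bp^1}(a)$, and therefore so is $\sce \cong \oplus_i \sco_{\bp^1}(a_i)$. This equivariant structure on $\sce$ induces an algebraic $GL(V)$-action on $\bp\sce = \proj \sym^\bullet \sce$ lifting the action on $\bp(V) = \bp^1$.

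Differentiating this action at the identity produces a $\bk$-linear map $\mathfrak{gl}(V) \to H^0(\bp\sce, T_{\bp \sce})$, where I use that $GL(V)$ is smooth so that differentiation makes sense in arbitrary characteristic. Because the $GL(V)$-action is compatible with $\pi$, its composition with the projection $H^0(\bp\sce, T_{\bp\sce}) \to H^0(\bp\sce, \pi^* T_{\bp^1}) = H^0(\bp^1, T_{\bp^1})$ coincides with the infinitesimal-action map $\mathfrak{gl}(V) \to H^0(\bp^1, T_{\bp^1}) \cong \mathfrak{pgl}(V)$ coming from the standard action on $\bp(V)$. That latter map is the standard surjection with kernel the scalar matrices. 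Since the composition is surjective, the second factor must be surjective, which is exactly what we need.

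I do not expect any serious obstacle: the $GL(V)$-equivariant structure on $\sce$ is automatic from its description as a direct sum of tautological line bundles on $\bp(V)$, and the differentiation of an algebraic group action at the identity is characteristic-free given smoothness of $GL_2$. The only point to be a little careful about is that the construction of the equivariant structure should use the $GL(V)$-structure on $\bp^1 = \bp(V)$ rather than the $\mathrm{PGL}(V)$-structure, since in positive characteristic one cannot freely pass between group schemes and their quotients by a central subgroup.
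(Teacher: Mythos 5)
Your proof is correct, and it is at heart the same idea as the paper's: lift the $\pgl_2$-action on $\bp^1$ to $\bp\sce$, so that the three-dimensional space of infinitesimal automorphisms of $\bp^1$ comes from infinitesimal automorphisms of $\bp\sce$. Where you differ from the paper is in the packaging of the lift and the way surjectivity is concluded. The paper works concretely with the determinantal presentation of the scroll (the rank-one locus of a $2 \times (d)$ matrix of coordinates) and acts on that matrix by $GL_2$ on the left, then concludes by a dimension count, appealing to triple transitivity of the linear fractional transformations to see the lifted family is three-dimensional. You instead construct the lift intrinsically from the $GL(V)$-equivariant structure on $\sce = \oplus_i\sco_{\bp(V)}(a_i)$ (which is automatic since each $\sco_{\bp(V)}(a)$ is built from the tautological sub- and quotient bundles of $V \otimes \sco_{\bp(V)}$), and you conclude surjectivity by observing that $d\pi$ intertwines the infinitesimal-action maps, so the composite $\mathfrak{gl}(V) \to H^0(\bp\sce, T_{\bp\sce}) \to H^0(\bp^1, T_{\bp^1})$ is the standard surjection with kernel the scalars. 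Your version is a bit more conceptual and slightly tighter logically (it makes the surjectivity manifest by factoring through a known surjection rather than arguing by a dimension bound), and it avoids any reliance on the particular determinantal embedding; the paper's version has the advantage of being entirely hands-on and self-contained within its matrix formalism. Your remark about working with $GL(V)$ rather than $\pgl(V)$ to stay characteristic-free is a sound precaution, though immaterial here since the paper separately establishes the needed vanishings by Euler-sequence computations.
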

\begin{proof}
	Because $\pi$ is smooth and $\bp^1$ is smooth over $\bk$, the relative tangent sequence is exact,
	and so the above sequence on cohomology is automatically left exact.
	Because $H^0(\bp \sce, \pi^* T_{\bp^1}) = H^0(\bp^1, T_{\bp^1}) = 3$,
	in order to show this is exact, it suffices to show
	\begin{align*}
		h^0(\bp \sce, T_\pi) + 3 \leq h^0(\bp \sce, T_{\bp \sce}).
	\end{align*}
	Further, recall that $H^0(\bp \sce, T_{\bp \sce})$ parameterizes first order automorphisms 
	while $H^0(\bp \sce, T_\pi)$ parameterizes first order automorphisms fixing each fiber of $\pi: \bp \sce \ra \bp^1$.
	The inclusion $H^0(\bp \sce, T_\pi) \ra H^0(\bp \sce, T_{\bp\sce})$ the expresses the natural inclusion
	of these two parameterizations.
	Hence, it suffices to exhibit a three dimensional family of automorphisms not fixing $\bp \sce$ fiber by fiber.
	We will now do this.
	
	If $X \cong \bp \sce \in \scroll {a_1, \ldots, a_k}$, and $X$ is embedded into $\bp^{d+k-1}$ so that it has degree $d$ and dimension $k$,
	we have an explicit description of $X$ as the vanishing
	of the $2 \times 2$ minors of
	\begin{align*} M:=
	\begin{pmatrix}
		x_{1,0} & x_{1,1} & \cdots & x_{1,a_1 - 1} & x_{2,0} & \cdots & x_{2, a_2-1} & \cdots & x_{k, 0} & \cdots & x_{k, a_k-1}  \\
		x_{1,1} & x_{1,2} & \cdots & x_{1, a_1} & x_{2, 1} & \cdots & x_{2, a_2} & \cdots & x_{k,1} & \cdots & x_{k,a_k}
	\end{pmatrix}.
	\end{align*}
	Now, acting on $M$ on the left by an element of $GL_2(\bk)$
	will produce a new matrix whose rank one locus agrees with that of $M$
	and this automorphism sends fibers of the structure map $\pi :\bp \sce \ra \bp^1$
	to fibers of $\pi$. 
	The elements of $GL_2(\bk)$ act on $\bp^1 = \im \pi$ by linear fractional transformations.
	In particular, they act triply transitively and hence define a three dimensional
	family of automorphisms preserving the fibers of $\pi$.

\end{proof}

To conclude this section,
we calculate the dimension of the Hilbert scheme by finding
the dimension of $H^0(X, N_{X/\bp^n})$ for $X$ a scroll.

\begin{proposition}
	\label{proposition:scroll-hilbert-scheme-dimension}
	If $X$ is a smooth scroll of minimal degree $d$ and dimension $k$ in $\bp^n$ then $h^0(X, N_{X/\bp^n}) = (d+k)^2 - k^2 - 3$. In particular, 
	$\dim \minhilb d k =  (d+k)^2 - k^2 - 3$.
\end{proposition}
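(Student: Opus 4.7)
The plan is to compute $h^0(X, N_{X/\bp^n})$ by first showing all higher cohomology $H^i(X, N_{X/\bp^n}) = 0$ for $i \geq 1$ and then reducing to an Euler-characteristic calculation. From \autoref{proposition:regular-minimal-degree-hilbert-schemes} we already have $H^1(X, N_{X/\bp^n}) = 0$. For $i \geq 2$, I will use the normal exact sequence
\begin{equation*}
0 \to T_X \to T_{\bp^n}|_X \to N_{X/\bp^n} \to 0,
\end{equation*}
together with $H^i(X, T_X) = 0$ for $i \geq 2$ (from \autoref{proposition:cohomology-as-vector-bundle}) and $H^i(X, T_{\bp^n}|_X) = 0$ for $i \geq 2$ (by restricting the Euler sequence on $\bp^n$ to $X$ and invoking \autoref{lemma:cohomology-of-O(1)-as-dual-bundle} with $m = 0, 1$).

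With all higher cohomology gone, $h^0(X, N_{X/\bp^n}) = \chi(N_{X/\bp^n}) = \chi(T_{\bp^n}|_X) - \chi(T_X)$. For the first term, \autoref{lemma:cohomology-restricted-tangent} gives $h^0(T_{\bp^n}|_X) = (n+1)^2 - 1 = (d+k)^2 - 1$, and combined with the above vanishing we get $\chi(T_{\bp^n}|_X) = (d+k)^2 - 1$. For the second term, I chain together the three Lemmas underlying \autoref{proposition:cohomology-as-vector-bundle} (Lemmas \ref{lemma:endomorphisms-and-pulled-back-endomorphisms}, \ref{lemma:relative-tangent-and-pulled-back-endomorphisms}, \ref{lemma:tangent-and-relative-tangent}) to obtain $\chi(T_X) = \chi(\End \sce) + 2$, where $X \cong \bp\sce$ with $\sce = \bigoplus_{i=1}^k \sco_{\bp^1}(a_i)$.

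Finally, the key calculation is $\chi(\End \sce)$ on $\bp^1$. Since $\End \sce \cong \bigoplus_{i,j} \sco_{\bp^1}(a_i - a_j)$ and $\chi(\sco_{\bp^1}(a)) = a+1$, we get
\begin{equation*}
\chi(\End \sce) = \sum_{i,j=1}^k (a_i - a_j + 1) = k^2 + \sum_{i,j}(a_i - a_j) = k^2,
\end{equation*}
the cross term vanishing by symmetry. Assembling everything, $h^0(X, N_{X/\bp^n}) = (d+k)^2 - 1 - (k^2 + 2) = (d+k)^2 - k^2 - 3$, as claimed. The final statement that $\dim \minhilb d k = (d+k)^2 - k^2 - 3$ is then immediate from \cite[Theorem 1.1(b)]{Hartshorne:deformation}, since we have already shown $[X]$ is a smooth point of the Hilbert scheme.

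There is no real obstacle here once the machinery of the preceding section is in place; the only thing to be careful about is verifying the higher cohomology vanishing so that the Euler characteristic actually equals $h^0$, and ensuring the type-independence of the answer, which is precisely what makes the $\sum (a_i - a_j) = 0$ cancellation satisfying.
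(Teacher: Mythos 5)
Your proposal is essentially the same as the paper's own proof: both reduce via the normal exact sequence and \autoref{lemma:cohomology-restricted-tangent} to the computation $\chi(T_X) = k^2+2$, then invoke \autoref{proposition:cohomology-as-vector-bundle} to reduce this to $\chi(\bp^1, \End \sce) = k^2$, which is calculated identically via the direct sum decomposition and the symmetric cancellation $\sum_{i,j}(a_i - a_j) = 0$. The only cosmetic difference is that you make the higher cohomology vanishing ($H^i = 0$ for $i \geq 2$) fully explicit to frame the computation as an Euler characteristic, whereas the paper reads this off the four-term exact sequence of $H^0$'s and $H^1$'s directly.
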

\begin{proof}
	First, by \autoref{proposition:regular-minimal-degree-hilbert-schemes}, we know $\hilb X$ is smooth at $[X]$.
	So we know $\dim \hilb X = \dim T_{[X]} \hilb X = h^0(X, N_{X/\bp^n})$, where the latter
	equality holds by \cite[Theorem 1.1(b)]{Hartshorne:deformation}.
	So, to conclude the proof, we only need show $h^0(X, N_{X/\bp^n}) = (d+k)^2 - k^2 - 3$.
	We will compute $H^0(X, N_{X/\bp^n})$ by the exact sequence
	\begin{equation}
	\nonumber
	\begin{tikzcd}
		0 \ar {r} &  T_X \ar {r} & T_{\bp^n}|_X \ar {r} & N_{X/\bp^n} \ar {r} & 0.
	\end{tikzcd}\end{equation}
As we saw in \autoref{lemma:cohomology-restricted-tangent}, we have $H^1(X, T_{\bp^n}|_X) = 0$, so we can consider the
exact sequence on cohomology
\begin{equation}
	\nonumber
	\begin{tikzcd}
		0 \ar {r} &  H^0(X, T_X) \ar {r} & H^0(X, T_{\bp^n}|_X)  \ar {r} & H^0(N_{X/\bp^n}) \\
		\ar {r} & H^1(X, T_X) \ar {r} & 0.
	\end{tikzcd}\end{equation}
We know from \autoref{lemma:cohomology-restricted-tangent}, that $H^0(X, T_{\bp^n}|_X) = (n+1)^2 - 1$.
We want to show $h^0(N_{X, \bp^n}) = (n+1)^2 - k^2 - 2$. So, to complete the proof, it suffices to show that
\begin{align*}
\chi(X, T_X) = h^0(X, T_X) - h^1(X, T_X) = k^2 +2. 
\end{align*}
This suffices because dimension is additive in exact sequences, so we will obtain
\begin{align*}
h^0(X, N_{X/\bp^n}) = (n+1)^2 - 1 - k^2 -2 = (n+1)^2 -k^2 - 3.
\end{align*}

In order to show $\chi(X, T_X) = k^2 + 2$, by \autoref{proposition:cohomology-as-vector-bundle}, it suffices to show that
\begin{align*}
\chi(\bp^1, \End \sce) = h^0(\bp^1, \End \sce) - h^1(\bp^1, \End \sce) = k^2.
\end{align*}
Now, writing $\sce = \oplus_{j=1}^k \sco_{\bp^1}(a_j)$,
we have
\begin{align*}
	\End \sce \cong \sce \otimes \sce^\vee \cong \oplus_{i,j} \sco_{\bp^1}(a_i - a_j).
\end{align*}
Therefore,
\begin{align*}
	\chi(\bp^1, \End \sce) &\cong \chi(\bp^1, \oplus_{i,j} \sco_{\bp^1}(a_i - a_j)) \\
	&= \sum_{i,j} \chi(\sco_{\bp^1}(a_i - a_j)) \\
	&= \sum_{i,j} (a_i - a_j + 1) \\
	&= \sum_{i,j} 1 \\
	&= k^2
\end{align*}
as desired.
\end{proof}

\begin{remark}
	\label{remark:}
	One can also compute $\dim \minhilb d k$ in a different way. 
	We now provide a sketch of this alternate approach, omitting
	many of the details:

	Using the fact that the general member of $\minhilb d k$ is a balanced
	scroll (which holds because one can degenerate a balanced scroll to any unbalanced scroll, via a corresponding exact sequence of locally free sheaves),
	the dimension of the Hilbert scheme will be equal to the dimension of the space of balanced scrolls.
	One can then check that this dimension is the difference of the dimensions of the automorphisms of projective
	space and the automorphisms of a balanced scroll. Since the automorphisms of projective space are $(d+k)^2 - 1$ dimensional, it suffices
	to show that automorphisms of a balanced scroll are $k^2 + 2$ dimensional. 

	Let $X = \bp \sce$ for $\sce$ an invertible sheaf on $\bp^1$.
	Note that $\dim Aut(X) = \dim H^0(\bp^1, \End \sce) + 2 = \dim \bp H^0(\bp^1, \End \sce) +3 $ by \autoref{proposition:cohomology-as-vector-bundle}.
	But, since $X$ is balanced,
	we can write $\sce \cong \sco_{\bp^1}(a_1) \oplus \cdots \oplus \sco_{\bp^1}(a_k)$ where $a_k - a_1 \leq 1$. Suppose $a_1 = a_2 = \cdots a_t$ and either $t = k$ or $a_{t+1} = a_t + 1$.
	Then,
	\begin{align*}
		\End \sce \cong \sce \otimes \sce^\vee \cong \sco_{\bp^1}^{\oplus (t^2 + (k-t)^2)} \oplus \sco_{\bp^1}(1)^{\oplus t(k-t)} \oplus \sco_{\bp^1}(-1)^{\oplus t(k-t)}.
	\end{align*}
Therefore,
\begin{align*}
	h^0(\bp^1, \End \sce) &\cong h^0 ( \bp^1, \sco_{\bp^1}^{\oplus (t^2 + (k-t)^2)} \oplus \sco_{\bp^1}(1)^{\oplus t(k-t)} \oplus \sco_{\bp^1}(-1)^{\oplus t(k-t)}) \\
		&= t^2 + (k-t)^2 + 2t(k-t) + 0 \\
		&= (t + (k-t))^2 \\
		&= k^2
\end{align*}
So, by \autoref{proposition:cohomology-as-vector-bundle}, the
vector space of
first order automorphisms of such a balanced scroll is $k^2 + 2$ dimensional.
Hence,
\begin{align*}
\dim \minhilb d k = (n+1)^2 -1 - k^2 - 2 = (n+1)^2 - k^2 -3,
\end{align*}
as claimed.
\end{remark}

\chapter[Minimal degree degenerations]{Degenerations of varieties of minimal degree}
\label{section:degenerations-of-varieties-of-minimal-degree}

We have now arrived at what is arguably the most technical chapter of this thesis.
In this chapter we examine the degeneration of a scroll in $\minhilb d k$ to the union
of a $k$-plane and a scroll in $\minhilb {d-1} k$, meeting along a $(k-1)$-plane of the ruling
of the degree $d-1$ scroll.
In \autoref{subsection:constructing-a-degeneration-using-equations},
we construct such a degeneration (see \autoref{proposition:hilbert-scroll-degeneration}), showing that these degenerate scrolls lie
in the same component of the Hilbert scheme as a smooth scroll.
In \autoref{subsection:irreducibility-in-the-locus-of-singular-scrolls},
we show (see \autoref{proposition:minutely-broken-closure})
that if a family of scrolls has special fiber isomorphic to the degenerate scroll 
described above, then a general fiber will either be isomorphic to the degenerate scroll
or it will be a smooth scroll.
We will use this to prove interpolation for scrolls by using
it to show that certain such degenerate scrolls are isolated points in their fiber
of the map $\pi_2$, from \autoref{definition:interpolation}.
Along the way, we will prove 
\autoref{proposition:distinct-hilbert-polynomials-implies-reducible-source},
a fact which is interesting in its own right
about the source of a map to a curve, where all fibers have two irreducible components.
Although very believable, we could not find this technical fact in the literature.

\section{Constructing a degeneration using equations}
\label{subsection:constructing-a-degeneration-using-equations}

In this section, our main aim is to prove \autoref{proposition:hilbert-scroll-degeneration},
describing a degeneration of a smooth scroll, which will be crucial for proving that scrolls
satisfy interpolation. We start by defining this degeneration.

\begin{definition}
	\label{definition:least-broken}
	Fix a projective space $\bp^n$ and let $H \subset \bp^n$ be a hyperplane.
	Let $X \subset H$ be a smooth variety of minimal degree with $[X] \in \minhilb {d-1} k$, and let $Y \subset \bp^n$ be a $k$-plane
	so that $X \cap Y \cong \bp^{k-1} \subset \bp^n$ is of type \ref{custom:line-large}
	(that is, $X \cap Y$ is a fiber of the projection $X \ra \bp^1$)
	and $W$ is nondegenerate.
	Let $W: = X \cap Y$.
	Then, we define $\brokengeneral d k$ to be the locally closed
	locus in the Hilbert scheme
	of all such $W$ and define $\broken d k$ to be the closure in the Hilbert
	scheme of $\brokengeneral d k$.
\end{definition}
\begin{remark}
	\label{remark:}
	Note that in the notation $\broken d k$, we use the superscript ``broken'' to
	refer to varieties which are the union of a plane and a scroll of one lower degree.
	In the literature, broken scrolls are sometimes used to mean
	any union of two scrolls meeting along a plane of the ruling,
	where one is not required to be a plane.
	We will not consider these more general types of broken
	scrolls in this thesis.
\end{remark}

\begin{proposition}
	\label{proposition:hilbert-scroll-degeneration}
	Fix a projective space $\bp^n$ and let $W \subset \bp^n$ satisfy $[W] \in \broken d k$. Then, $[W] \in \minhilb d k$.
\end{proposition}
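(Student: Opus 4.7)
The plan is to show every $[W] \in \brokengeneral{d}{k}$ lies in $\minhilb{d}{k}$; the statement for the full closure $\broken{d}{k}$ then follows since $\minhilb{d}{k}$ is closed in the Hilbert scheme. To exhibit $[W] \in \minhilb{d}{k}$, I will construct an explicit flat family $\scz \to \ba^1$ of closed subschemes of $\bp^n$ whose fiber at every $t \neq 0$ is a smooth scroll of degree $d$ and dimension $k$, and whose fiber at $t = 0$ is $W$. Since $\minhilb{d}{k}$ is closed and contains all generic fibers, the induced morphism $\ba^1 \to \sch$ lands inside $\minhilb{d}{k}$, so in particular $[W] = [\scz_0] \in \minhilb{d}{k}$.

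For the construction, I will use the matrix description of scrolls from \autoref{proposition:equivalence-minors-swept-planes}. After applying an element of $\pgl_{n+1}$, which preserves each connected component of the Hilbert scheme, I may assume $W = X_0 \cup Y$, where $X_0 = \scroll{b_1, \ldots, b_k}$ is cut out by the standard $2 \times 2$ minors matrix $M_{X_0}$ in the coordinate hyperplane $H = \{x_{1, b_1+1} = 0\}$, and $Y$ is the $k$-plane spanned by the $[0\!:\!1]$-ruling fiber of $X_0$ together with the extra coordinate $x_{1, b_1+1}$. Consider the one-parameter family of matrices
\begin{equation*}
M_t = \begin{pmatrix} x_{1,0} & \cdots & x_{1, b_1-1} & t \cdot x_{1, b_1} & x_{2, 0} & \cdots & x_{k, b_k - 1} \\ x_{1,1} & \cdots & x_{1, b_1} & x_{1, b_1+1} & x_{2, 1} & \cdots & x_{k, b_k} \end{pmatrix},
\end{equation*}
and let $\scz_t$ denote the zero scheme of its $2 \times 2$ minors. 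For $t \neq 0$, the substitution $x_{1, b_1} \mapsto t^{-1} x_{1, b_1}$ identifies $M_t$ with the standard matrix of the smooth scroll $\scroll{b_1+1, b_2, \ldots, b_k}$, so $\scz_t$ is a smooth scroll of degree $d$ and dimension $k$. A direct substitution verifies the set-theoretic inclusion $X_0 \cup Y \subseteq \scz_0$: on $X_0$ the minors of $M_{X_0}$ vanish and $x_{1, b_1+1}$ is zero, while on $Y$ every column of $M_0$ has top entry zero, so every $2 \times 2$ minor is automatically zero.

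To upgrade to the scheme-theoretic equality $\scz_0 = W$ and simultaneously prove flatness of $\scz \to \ba^1$, I will compare Hilbert polynomials. For $t \neq 0$, $\scz_t$ has Hilbert polynomial $f_{d,k}(m) = d\binom{m+k-1}{k} + \binom{m+k-1}{k-1}$ by \autoref{lemma:reduced-minors}. For $W$, \autoref{lemma:hilbert-polynomial-intersection} gives
\begin{equation*}
p_W = p_{X_0} + p_Y - p_{X_0 \cap Y} = f_{d-1,k}(m) + \binom{m+k}{k} - \binom{m+k-1}{k-1},
\end{equation*}
which simplifies via Pascal's identity to $f_{d,k}(m)$. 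Since $W \subseteq \scz_0$ and both have the same Hilbert polynomial, they must coincide as closed subschemes of $\bp^n$; in particular, the fibers of $\scz \to \ba^1$ all share Hilbert polynomial $f_{d,k}(m)$, so \cite[Theorem III.9.9]{Hartshorne:AG} applied over the reduced base $\ba^1$ yields flatness.

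The main obstacle will be this scheme-theoretic identification of the special fiber $\scz_0$ with $W$: it is easy to produce the containment $W \subseteq \scz_0$, but without the Hilbert polynomial comparison one could a priori have a larger or non-reduced scheme on the left. This comparison, rather than the construction of the family itself, is therefore the crux of the argument, and it rests on the explicit Hilbert polynomial formula for scrolls (\autoref{lemma:reduced-minors}) together with inclusion-exclusion (\autoref{lemma:hilbert-polynomial-intersection}).
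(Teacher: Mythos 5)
Your strategy — degenerate via the explicit one-parameter family of matrices, compute Hilbert polynomials, conclude flatness — is the same as the paper's, and the matrix family you write down is equivalent (after a change of coordinates) to the one in the paper's \autoref{lemma:degeneration-of-scrolls}. However, there is a gap at exactly the step you flag as ``the crux of the argument.''

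You establish a scheme-theoretic inclusion $W \subseteq \scz_0$ (which is fine: the minors of $M_0$ vanish on $W$, and since $W$ is reduced the set-theoretic containment upgrades to an ideal containment $I_{\scz_0} \subseteq I_W$). You also compute $p_W = f_{d,k}$ by inclusion-exclusion. But then you write ``Since $W \subseteq \scz_0$ and both have the same Hilbert polynomial, they must coincide.'' You have shown $p_W = f_{d,k}$ and $p_{\scz_t} = f_{d,k}$ for $t \neq 0$; you have \emph{not} shown $p_{\scz_0} = f_{d,k}$. Upper semicontinuity of the Hilbert function in a family over $\ba^1$ only gives $h_{\scz_0}(m) \geq h_{\scz_t}(m)$, and the containment $W \subseteq \scz_0$ only gives $h_W(m) \leq h_{\scz_0}(m)$. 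Both inequalities point the same way, so you cannot conclude equality; a priori $\scz_0$ could strictly contain $W$ (e.g., with an embedded component), and the family would fail to be flat at $t=0$. In other words, your argument implicitly uses $p_{\scz_0} = p_{\scz_t}$, which is precisely what flatness would give you — the very thing you are trying to prove.

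What is missing is the reverse inclusion $\scz_0 \subseteq W$, i.e., $I_W \subseteq I_{\scz_0}$. The paper fills this by a direct ideal computation: the $2\times 2$ minors of $M_0$ split as the products $x_{1,a_1}\, x_{i,j}$ (call this $I_1$) together with the minors of the truncated matrix $N$ (call this $I_2$), and one checks both $I_1 + I_2 \subseteq I_X \cap I_Y$ and $I_X \cap I_Y \subseteq I_1 + I_2$ by hand. Once $\scz_0 = W$ is established scheme-theoretically, the Hilbert polynomial computation via \autoref{lemma:hilbert-polynomial-intersection} shows $p_{\scz_0} = f_{d,k} = p_{\scz_t}$, and then \cite[Theorem III.9.9]{Hartshorne:AG} applied over the reduced base $\ba^1$ gives flatness. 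Your Hilbert polynomial comparison is exactly the right tool \emph{after} this ideal identification, but it cannot substitute for it. (An alternative fix to the explicit ideal computation would be to show $\scz_0$ is reduced — e.g.\ via a Gr\"obner basis argument as in \autoref{lemma:determinantal-hilbert-function}, adapted to the degenerate matrix $M_0$ — since a reduced scheme supported on $X_0 \cup Y$ containing $W$ must equal $W$.)
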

\begin{proof}[Proof assuming \autoref{lemma:degeneration-of-scrolls}]
By definition of the functor of points, which is represented by the Hilbert scheme of minimal rational normal scrolls $X$ of
degree $d$ and dimension $k$ in $\bp^{d+k-1}$, maps
\begin{align*}
	\ba^1 \ra \hilb X
\end{align*}
are in natural bijection with maps closed subschemes $Y \subset \ba^1 \times \bp^n$,
flat over $\ba^1$.
Such a closed subscheme is constructed in \autoref{lemma:degeneration-of-scrolls}.
In particular, because $\ba^1$ is irreducible, and the image of an irreducible scheme
is irreducible, all fibers of the map $\pi: \scx \ra \ba^1$, as defined in \autoref{lemma:degeneration-of-scrolls}
lie in a single component of the Hilbert scheme.
Since the Hilbert scheme of rational normal scrolls is smooth at a point corresponding to
a smooth rational normal scroll by \autoref{proposition:regular-minimal-degree-hilbert-schemes}, it is, in particular, integral at such a point. Therefore,
the fiber of $\scx$ over the origin must lie in the same component that a smooth rational normal scroll lies in.
That is, $X \cup Y$ corresponds to a point in $\minhilb d k$.
\end{proof}

To complete the proof of \autoref{proposition:hilbert-scroll-degeneration},
we only need prove \autoref{lemma:degeneration-of-scrolls}, which we now do.

\begin{lemma}
	\label{lemma:degeneration-of-scrolls}
	Let $\scroll {a_1, \ldots, a_k} \subset \bp^n$ be a rational normal scroll with $a_1 \geq a_2 \geq \cdots \geq a_k$ and $a_i > a_{i+1}$
	(where we assume this condition holds vacuously for $i = k$).
	Let $X \cong  \scroll {a_1, \ldots, a_i-1, a_{i+1},  a_k}$ be a variety spanning a hyperplane $H \subset \bp^n$
	and let $Y \cong \bp^{k-1} \subset \bp^n$.
	Then,
consider the projective scheme $\scx$, with map $\pi: \scx \ra \ba^1 := \spec \bk[t],$ so that 
\begin{align*}
	\scx &\subset \bp^n \times_{\spec \bk} \ba^1 \\
	\bp^n &:= \proj \bk[x_{1,0}, \ldots, x_{1,a_1}, x_{2,0}, \ldots, x_{2,a_2}, \ldots, x_{k,0}, \ldots, x_{k,a_k}] 
\end{align*}
where $\scx$ has ideal sheaf generated by the minors of
\begin{align*}
	\begin{pmatrix}
		x_{1,0} & x_{1,1} & \cdots & x_{1,a_1 - 1} & \cdots & x_{i,0} & \cdots & tx_{i, a_i-1} & \cdots & x_{k, 0} & \cdots & x_{k, a_k-1}  \\
		x_{1,1} & x_{1,2} & \cdots & x_{1, a_1} & \cdots & x_{i, 1} & \cdots & x_{i, a_i} & \cdots & x_{k,1} & \cdots & x_{k,a_k}
	\end{pmatrix}.
	\end{align*}
Then, $\pi: \scx \ra \ba^1$
is a flat family whose fiber over the origin is isomorphic to the $W := X \cup Y$ with $[W] \in \broken d k$ and whose fiber over any other closed point of $\ba^1$ is isomorphic to
$\scroll {a_1, \ldots, a_k}$.
\end{lemma}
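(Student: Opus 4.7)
There are three things to verify: (i) for $t\neq 0$, $\scx_t\cong \scroll{a_1,\ldots,a_k}$; (ii) $\scx_0=W=X\cup Y$ scheme-theoretically; and (iii) $\pi$ is flat. I will dispatch (i) by an explicit change of coordinates, establish the set-theoretic content of (ii) by separating the defining minors according to whether they involve the degenerating column, and conclude both the scheme structure in (ii) and the flatness (iii) via Cohen--Macaulayness of the determinantal ideals together with a Hilbert polynomial comparison.

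For (i), the $\mathbb G_m$-automorphism of $\bp^n$ rescaling $x_{i,a_i-1}$ by $t^{-1}$ and fixing every other coordinate pulls the defining matrix of $\scx_t$ back to the standard scroll matrix of type $(a_1,\ldots,a_k)$; hence $\scx_t\cong\scroll{a_1,\ldots,a_k}$. For the set-theoretic portion of (ii), setting $t=0$ replaces the entry $tx_{i,a_i-1}$ by $0$ and the $2\times 2$ minors split into two groups. The minors not involving the degenerate column are the minors of the truncated $2\times (d-1)$ matrix, and since none of them involve $x_{i,a_i}$ they cut out the cone from the point $[0{:}\cdots{:}1{:}\cdots{:}0]$ (with the $1$ in the $x_{i,a_i}$ slot) over $X\subset H:=V(x_{i,a_i})$. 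The minors involving the degenerate column are exactly the forms $x_{i,a_i}\cdot\ell$ as $\ell$ ranges over the remaining first-row entries, forcing either $x_{i,a_i}=0$ (giving $X$) or the simultaneous vanishing of all these $\ell$ (giving the $k$-plane $Y$). Thus $\scx_0=X\cup Y$ set-theoretically, and $X\cap Y$ is the ruling of $X$ on which all first-row entries vanish---exactly the configuration defining $\broken d k$.

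For the scheme structure in (ii) and for flatness (iii), the defining ideals of both $\scx\subset \bp^n\times\ba^1$ and $\scx_0\subset \bp^n$ are ideals of $2\times 2$ minors of a $2\times d$ matrix of linear forms, and a dimension count using the set-theoretic description above shows the codimensions equal the expected value $d-1$. By the Eagon--Northcott resolution both ideals are therefore perfect, so $\scx$ and $\scx_0$ are Cohen--Macaulay and in particular unmixed. A local computation at the generic points of $X$ and $Y$ (at the generic point of $X$ the minors involving the zero column produce $x_{i,a_i}$ as a local generator of the ideal; at the generic point of $Y$ they produce the defining linear forms of $Y$) shows $\scx_0$ is generically reduced along each component, so unmixedness forces $\scx_0=(X\cup Y)_{\text{red}}$. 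Applying \autoref{lemma:hilbert-polynomial-intersection} and the Hilbert polynomial formula from \autoref{lemma:reduced-minors} to $X\cup Y$ gives Hilbert polynomial $f_{d,k}(m)$, which agrees with that of $\scx_t$ for $t\neq 0$; since $\ba^1$ is reduced and all fibers then share the same Hilbert polynomial, $\pi$ is flat by \cite[Theorem III.9.9]{Hartshorne:AG}.

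The main technical obstacle is the scheme-theoretic identification of $\scx_0$ with $W$: a priori the degenerate determinantal ideal at $t=0$ could acquire embedded components, which would both break this identification and obstruct flatness of $\pi$. The Cohen--Macaulayness supplied by the correct codimension of the determinantal ideal is exactly what rules this out.
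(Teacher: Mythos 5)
Your proof has the same high-level skeleton as the paper's (flatness by constant Hilbert polynomial over the reduced base $\ba^1$, plus separate identifications of the special and general fibers), but both fiber identifications are handled differently from the paper. One of your modifications is an improvement; the other has a concrete error.

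\textbf{The coordinate change for $t\neq 0$ is wrong.} You propose rescaling $x_{i,a_i-1}$ by $t^{-1}$. For the lemma to be meaningful we must have $a_i\geq 2$ (otherwise $a_i-1 = 0$ and $X\cong\scroll{a_1,\ldots,a_i-1,\ldots,a_k}$ would have a summand $\sco_{\bp^1}(0)$, which \autoref{definition:scroll} disallows). But for $a_i\geq 2$ the variable $x_{i,a_i-1}$ appears \emph{twice} in the scroll matrix inside block $i$: as the top entry of the last column, and as the bottom entry of the penultimate column. Your rescaling removes the $t$ from the first occurrence but introduces an unwanted factor $t^{-1}$ at the second, so the pullback is not the standard scroll matrix. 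The variable that occurs uniquely is $x_{i,a_i}$ (bottom of the last column of block $i$); after $x_{i,a_i}\mapsto t\,x_{i,a_i}$ the last column of block $i$ becomes $t\cdot(x_{i,a_i-1},x_{i,a_i})^{T}$, and since multiplying one column of a two-row matrix by a unit leaves the ideal of $2\times 2$ minors unchanged, the pulled-back ideal is the standard scroll ideal. With that fix your explicit argument is fine; the paper instead obtains this step by citing \cite[Proposition 9.12]{harris:a-first-course}.

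\textbf{Your identification of $\scx_0$ is a genuinely different and cleaner route.} The paper verifies by hand that the degenerate ideal of minors decomposes as $I_1+I_2$ and then that $I_1+I_2 = I_X\cap I_Y$, generator by generator. You instead note that the degenerate minor ideal has the expected codimension $d-1$, hence is perfect by Eagon--Northcott, hence Cohen--Macaulay and in particular unmixed; combined with generic reducedness along each of $X$ and $Y$ (the short local check you give), unmixedness forces $\scx_0=(X\cup Y)_{\text{red}}$ with no generator-level computation. This is more robust than the paper's hands-on bookkeeping, which is precisely the kind of computation where small slips creep in (the variable $x_{i,a_i-1}$ is a free coordinate of $Y$ rather than a generator of $I_Y$, since its only appearance as a top entry of the matrix is overwritten by $0$, and this subtlety is easy to miss in a direct generator count). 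Both proofs then conclude the same way, by matching the Hilbert polynomial of $X\cup Y$ against $f_{d,k}$ via \autoref{lemma:hilbert-polynomial-intersection} and \autoref{lemma:reduced-minors} and invoking \cite[Theorem III.9.9]{Hartshorne:AG}.
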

\begin{proof}
	To prove this, we will produce an explicit degeneration. Without loss of generality, we may assume $i = 1$. If not, one may
	simply reorder the variables in the following proof.
So, we are assuming that $\scx$ is
defined by the vanishing of the minors of
\begin{align*}
	\begin{pmatrix}
		x_{1,0} & x_{1,1} & \cdots & tx_{1,a_1 - 1} & x_{2,0} & \cdots & x_{2, a_2-1} & \cdots & x_{k, 0} & \cdots & x_{k, a_k-1}  \\
		x_{1,1} & x_{1,2} & \cdots & x_{1, a_1} & x_{2, 1} & \cdots & x_{2, a_2} & \cdots & x_{k,1} & \cdots & x_{k,a_k}
	\end{pmatrix}.
	\end{align*}
We claim that this produces the desired flat family, with map $\pi: \scx \ra \ba^1$ given by the $t$ coordinate.
Since the base is reduced, by ~\cite[Theorem III.9.9]{Hartshorne:AG},
it suffices to show all fibers have the same Hilbert polynomial.

When $t \neq 0$, we have that $\pi^{-1}(p) \cong \scroll {a_1, \ldots, a_k}$ when $p$ a closed point other than the origin.
This follows
for general $t \neq 0$ by the alternate proof of ~\cite[Proposition 9.12]{harris:a-first-course}
which holds in all dimensions, as given in ~\cite[p.\ 108-109]{harris:a-first-course}.
To prove the family is flat, by \cite[Theorem III.9.9]{Hartshorne:AG},
it suffices to show that the fiber over $t = 0$ had Hilbert polynomial equal to that of a smooth scroll.

So, to complete the proof, it only remains to check that the preimage of the origin is precisely the degeneration into a degree $d-1$ rational normal
scroll and a linear subspace, and has the same Hilbert polynomial as a smooth scroll.
Note that the degeneration is the scheme cut out by the minors of $M$, with $M$ defined by
\begin{align*} M:=
	\begin{pmatrix}
		x_{1,0} & x_{1,1} & \cdots & 0 & x_{2,0} & \cdots & x_{2, a_2-1} & \cdots & x_{k, 0} & \cdots & x_{k, a_k-1}  \\
		x_{1,1} & x_{1,2} & \cdots & x_{1, a_1} & x_{2, 1} & \cdots & x_{2, a_2} & \cdots & x_{k,1} & \cdots & x_{k,a_k}
	\end{pmatrix}.
	\end{align*}
	Now, we see that the minors of $M$ are precisely
	\begin{align*}
	I_1 := \left\{  x_{1, a_1} x_{i,j}: (i,j) \neq (t, a_t), 1 \leq t \leq k \right\}  
	\end{align*}
	together with the minors of the matrix
	\begin{align*}
		N :=
	\begin{pmatrix}
		x_{1,0} & x_{1,1} & \cdots & x_{1, a_1-2} & x_{2,0} & \cdots & x_{2, a_2-1} & \cdots & x_{k, 0} & \cdots & x_{k, a_k-1}  \\
		x_{1,1} & x_{1,2} & \cdots & x_{1, a_1-1} & x_{2, 1} & \cdots & x_{2, a_2} & \cdots & x_{k,1} & \cdots & x_{k,a_k}
	\end{pmatrix}.
	\end{align*}
	Define $I_2$ to be the ideal generated by the minors of $N$.
	Now, we claim $I_1 + I_2 = I_{X \cup Y}$, where $X \cup Y \in \broken d k$ with $X$ a scroll of degree $d-1$ and $Y \cong \bp^{k}$ a plane.

	Explicitly, we will take $X$ defined by $I_X = I_1 + (x_{1, a_1})$, and $Y$ defined by $I_Y = \left\{ x_{i,j}: (i,j) \neq (t,a_t), 1 \leq t \leq k \right\}$.
	Now, since $I_{X \cup Y} = I_X \cap I_Y$, it suffices to check $I_1 + I_2 = I_X \cap I_Y$.

	We first check $I_1 + I_2 \subset I_X \cap I_Y$. Since $I_1 \subset  I_X$ and $I_1 \subset I_Y$ we have $I_1 \subset I_X \cap I_Y$. Similarly, since $x_{1, a_1} \in I_X$, we have
	$I_2 \subset I_X$. Analogously, we also have $I_2 \subset I_Y$. Therefore, $I_2 \subset I_X \cap I_Y$.
	Hence, $I_2 + I_1 \subset I_X \cap I_Y$.
	The reverse inclusion is similarly easy to check, as the generators of the intersection $I_X \cap I_Y$ are precisely the generators of $I_1$, which
	already lie in $I_X$, together with the generators of $I_2$, which are the generators of the intersection of $(x_{1,a_1}) \cap I_Y$.

	So, we conclude $I_{X \cup Y} = I_1 + I_2$. Note that $Y \cong \bp^{k}, X \in \minhilb {d-1} k$. Further, scheme theoretically,
	$X \cap Y \cong \bp^{k-1}$ because $I_{X \cap Y} = I_X + I_Y = I_Y + (x_{1, a_1})$.
	This gives the desired description of the the fiber over $t = 0$. To conclude, we only need verify that the Hilbert
	polynomial of this fiber over $t=0$ agrees with the Hilbert polynomial of the generic fiber.

	That the Hilbert polynomials agree follows from \autoref{lemma:hilbert-polynomial-intersection}, 
	In more detail, we have
\begin{align*}
	p_{X \cup Y}(m) &= p_X(m) + p_Y(m) - p_{X \cap Y}(m) \\
	&= \left( (d-1) \binom{m + k - 1}{k} + \binom{m+k-1}{k-1} \right) + \binom{m+k}{k} - \binom{m+k - 1}{k-1} \\
	&= \left( (d-1) \binom{m + k - 1}{k} + \binom{m+k-1}{k-1} \right) + \binom{m+k-1}{k} \\
	&= d \binom{m + k - 1}{k} + \binom{m+k-1}{k-1},
\end{align*}
which is indeed the Hilbert function of a smooth scroll of degree $d$ and dimension $k$ by \autoref{lemma:reduced-minors}.
\end{proof}

\section[$\broken d k$ is a component of $\minhilbsing d k$]{Showing $\broken d k$ is an irreducible component of $\minhilbsing d k$}
\label{subsection:irreducibility-in-the-locus-of-singular-scrolls}

In this section our main aim is to prove 
\autoref{proposition:minutely-broken-closure},
	which tells us that $\broken d k$ sits inside
$\minhilbsing d k$ as an irreducible component.

\begin{remark}
	\label{remark:coskun-degenerations-of-scrolls}
	We present a proof of \autoref{corollary:isolated-points-in-minutely-broken-scrolls}
using \autoref{proposition:distinct-hilbert-polynomials-implies-reducible-source},
a general fact about reducibility of a family of varieties with a reducible special fiber.
However, one can also prove it using a suitable generalization of
\cite[Proposition 4.1]{coskun:degenerations-of-surface-scrolls}.
Explicitly, here is the generalization:
Suppose $\scx \ra B$ is a flat family with $B$ a curve
so that the general fiber is a smooth scroll, and that the special fiber is nondegenerate.
Then, the special fiber is a connected variety whose irreducible components are
scrolls. 
In particular, since the number of components in a family is upper semicontinuous in reduced families,
and nondegeneracy is an open condition,
\autoref{corollary:isolated-points-in-minutely-broken-scrolls}
follows without much work.
\end{remark}

From \autoref{corollary:isolated-points-in-minutely-broken-scrolls},
this gives a way of checking whether we have produced
an isolated point of a fiber of $\pi_2$ from the definition of interpolation, \autoref{definition:interpolation},
and will enable us to prove scrolls satisfy interpolation.

Our first goal is to prove \autoref{proposition:distinct-hilbert-polynomials-implies-reducible-source}.
This will be crucial in showing that if we have a degeneration of some family whose general member
is reducible to a general element of $\broken d k$, then that family actually has general member in $\broken d k$.

\begin{proposition}
	\label{proposition:distinct-hilbert-polynomials-implies-reducible-source}
	Suppose $f:X \ra Y$ is a flat map of projective varieties over an algebraically
	closed field $\bk$ where $Y$ is a smooth connected projective
	curve and the geometric fibers over the closed points of $Y$ all have two components
	and are reduced.
	Further, suppose that there is one closed geometric point $p \in Y$ 
	so that the fiber over $p$ has two
	irreducible components $Z_1$ and $Z_2$ with distinct Hilbert polynomials.
	Then $X$ is reducible with two components $X = X_1 \cup X_2$.
	Further, up to permutation
	of these components,
	we have $X_i|_p = Z_i$ for $i = 1, 2$.
\end{proposition}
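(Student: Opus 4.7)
The plan is to use flatness of $f$ over the smooth curve $Y$ to show that every irreducible component of $X$ dominates $Y$, and then exploit the distinct Hilbert polynomials at $p$ to force the generic fiber $X_\eta$ to split into two components already defined over the function field $k(Y)$. Since $f$ is flat and $Y$ is regular of dimension one, every associated point of $X$ maps to the generic point $\eta \in Y$, so every irreducible component of $X$ dominates $Y$; consequently the irreducible components of $X$ correspond bijectively to those of the generic fiber $X_\eta$ via closure.

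Next, by \autoref{proposition:upper-semicontinuous-number-of-components}, the geometric generic fiber $X_{\bar\eta}$ has at most $2$ geometric irreducible components. It cannot have only one, since then an open dense $U \subset Y$ would consist of points whose geometric fibers have one component, contradicting the assumption that every closed fiber has two. Hence $X_{\bar\eta}$ has exactly two geometric components, say $A_1$ and $A_2$. Two possibilities arise: either $A_1, A_2$ are already defined over $k(\eta)$ (in which case $X_\eta$ is reducible over $k(\eta)$, and taking closures in $X$ gives two irreducible components of $X$), or $A_1, A_2$ are swapped by a nontrivial element of $\mathrm{Gal}(\overline{k(\eta)}/k(\eta))$, in which case $X_\eta$ is geometrically reducible but algebraically irreducible, and $X$ itself is irreducible.

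The crux of the proof is to rule out the Galois-conjugate case by means of the hypothesis $p_{Z_1} \neq p_{Z_2}$. In that case, choose a finite étale cover $Y' \to Y$ trivializing the Galois action on $\{A_1, A_2\}$; then the base change $X' := X \times_Y Y'$ splits as $X'_1 \cup X'_2$, where $X'_1$ and $X'_2$ are swapped by the deck involution of $Y'/Y$. For any closed point $p' \in Y'$ lying above $p$, the fiber $X'_{p'}$ is canonically identified with $X_p = Z_1 \cup Z_2$, and the deck involution restricts to an automorphism of $X_p$ exchanging $X'_1|_{p'}$ with $X'_2|_{p'}$. In particular these two subschemes have equal Hilbert polynomials. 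A dimension count, combined with the equality $X'_1|_{p'} \cup X'_2|_{p'} = X_p$ and reducedness of $X_p$, forces $\{X'_1|_{p'}, X'_2|_{p'}\} = \{Z_1, Z_2\}$ after relabeling, which yields $p_{Z_1} = p_{Z_2}$, a contradiction. Hence $X = X_1 \cup X_2$ with $X_1, X_2$ irreducible, and the identification $X_i|_p = Z_i$ (up to permutation) follows by comparing the generic Hilbert polynomials of fibers of $X_i \to Y$ (which are constant on the open set of flatness given by generic flatness) against the specialization at $p$, using once again $p_{Z_1} \neq p_{Z_2}$.

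The main obstacle is the last step of the preceding paragraph: controlling $X'_i|_{p'}$ as a closed subscheme of $X_p$ and showing that each of the two must in fact coincide with a single irreducible component $Z_j$, rather than properly contain both $Z_1$ and $Z_2$ or carry embedded structure. Pinning this down requires essential use of flatness of $X' \to Y'$, reducedness of the fibers, and an equidimensionality argument for the two irreducible components $X'_1, X'_2$ of $X'$.
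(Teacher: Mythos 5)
Your proposal follows essentially the same route as the paper: base change to a finite extension of $k(Y)$ over which the generic fiber splits, then use the Galois/deck symmetry together with the distinct Hilbert polynomials at $p$ to derive the splitting over $k(Y)$ itself. Two minor imprecisions are worth flagging.

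First, the cover $Y'\to Y$ you want is not étale in general: you should take $Y'$ to be the normalization of $Y$ in the relevant Galois extension $L/K(Y)$, which is a finite cover that may be ramified (this is what the paper does). The use of étaleness isn't needed anywhere, so nothing is lost, but the word should be removed. Second, and more substantively, you claim the deck involution $\sigma$ ``restricts to an automorphism of $X_p$ exchanging $X'_1|_{p'}$ with $X'_2|_{p'}$.'' This requires $\sigma(p')=p'$, which only holds if the cover is ramified at $p'$. If the cover is unramified over $p$ there are two preimages $p', p''$ and $\sigma(p')=p''$, so $\sigma$ instead gives an isomorphism $X'_1|_{p'}\cong X'_2|_{p''}$. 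This still yields the conclusion you want --- $p_{X'_1|_{p'}}=p_{X'_2|_{p'}}$ --- but only after you invoke flatness of $X'_2\to Y'$ to transfer the Hilbert polynomial from the fiber over $p''$ to the fiber over $p'$. Without that extra step, the claim as written is false in the unramified case.

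The piece you label ``the main obstacle'' --- showing that each $X'_i|_{p'}$ is an irreducible component of $X_p$ (and not, say, all of $X_p$ or a nonreduced thickening of part of it), so that $\{X'_1|_{p'},X'_2|_{p'}\}=\{Z_1,Z_2\}$ --- is exactly the content of the paper's Lemma on component-by-component specialization, which argues that if $X'_i|_{p'}$ were reducible then the reduced special fiber of $X'\to Y'$ would have Hilbert polynomial strictly smaller than the flat one, contradicting reducedness of $X_p$. Your sketch correctly identifies this as needing flatness of each $X'_i\to Y'$ (automatic since $X'_i$ dominates the smooth curve $Y'$) plus reducedness of the fibers, and the paper supplies precisely that argument; so you have located the right lemma even if you did not write it out.
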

\sssec*{Idea of Proof}
To show $X$ has two irreducible components, we show the fiber of $f$ over the generic point has two components.
To show this, we first show the generic fiber of $f$ has two irreducible components with different 
Hilbert polynomials after a finite base change is made.
Then, we note that the Galois group of this finite base change cannot
permute the two components because they have different Hilbert polynomials, and
so the generic fiber must have had two components before the finite base change.

\begin{remark}
	\label{remark:}
	Before proceeding with the proof of \autoref{proposition:distinct-hilbert-polynomials-implies-reducible-source}, 
	we briefly outline an alternate approach,
	suggested by Anand Patel,
	with the caveat that this approach is destined to failure in characteristic $p > 0$.

	The idea for this alternate approach is to split apart the components
	of each fiber of $f$ by taking the normalization of $X$, and then argue they must
	have the same Hilbert polynomial by using Stein factorization.
	More specifically, let $\tilde X \ra X$ be the normalization of $X$.
	Stein factorization of $\tilde X \ra Y$ gives a factorization
	$\tilde X \xrightarrow h Y' \xrightarrow g Y$ where $h$ has connected fibers and
	$g$ is finite. If we knew that the fibers of $\tilde X \ra Y$ were the normalizations
	of the corresponding fibers of $X \ra Y$, the fibers of $\tilde X \ra Y$
	would consist of two components, one for each irreducible component of
	the fibers of $X \ra Y$. If $X$ were irreducible, $\tilde X$ would also be irreducible,
	and so $Y'$ would also be irreducible.
	Then, since $f$ is flat, every fiber of $\tilde X \ra Y'$ would have the same
	Hilbert polynomial, contradicting the assumption that the two components
	over the special fiber have distinct Hilbert polynomials.

	It is worth noting that the proof sketch above uses
	characteristic $0$ in an essential way for the statement that the normalization of the fiber
	is the fiber of the normalization. More specifically, this is the following lemma.
	\begin{lemma}
		\label{lemma:normalization-of-fiber}
Suppose $X \ra Y$ is a map of reduced connected projective schemes of finite type over an algebraically closed field of characteristic 0, where $Y$ is a smooth connected curve. Let $\tilde X \ra X$ be the normalization of X. Then, for all but finitely many closed points $y \in Y$
we have that $\tilde X_y$ is the normalization of $X_y$.
	\end{lemma}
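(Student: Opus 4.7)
The plan is to combine the fact that normalization is insensitive to smooth base change with generic smoothness in characteristic $0$, applied to the composition $\tilde f : \tilde X \to X \to Y$. Since $Y$ is a smooth curve of dimension $1$, any irreducible component of $X$ not contained in a fiber dominates $Y$; so after throwing away the finitely many points of $Y$ that are images of the components which fail to dominate, I may assume every component of $X$ (and hence every component of $\tilde X$) dominates $Y$. Write $\tilde X = \tilde X_1 \sqcup \cdots \sqcup \tilde X_r$, where $\tilde X_i$ is the normalization of the component $X_i$ of $X$.

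First I would apply generic smoothness to each dominant morphism $\tilde X_i \to Y$. Because $\tilde X_i$ is an integral normal variety over an algebraically closed field of characteristic $0$ and $Y$ is a smooth curve, there is a dense open $U_i \subset Y$ over which $\tilde X_i \to Y$ is smooth (cf.\ \cite[Theorem III.10.7]{Hartshorne:AG}). Setting $U := \bigcap_i U_i$, for every closed point $y \in U$ the fiber $\tilde X_y = \coprod_i (\tilde X_i)_y$ is smooth, and in particular normal.

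Next I would verify that the induced finite map $\tilde X_y \to X_y$ is birational for general $y \in U$. The non-normal locus $Z \subset X$ is closed and contains no generic point of any $X_i$, so $\dim (Z \cap X_i) < \dim X_i$; combining this with the fact that $X_i \to Y$ is flat of pure relative dimension $\dim X_i - 1$, for $y$ outside a further finite set the intersection $Z \cap X_{i,y}$ has dimension strictly less than $\dim X_{i,y}$, hence misses the generic points of $X_{i,y}$. Over $X_y \setminus Z_y$ the normalization morphism $\tilde X \to X$ is an isomorphism, so $\tilde X_y \to X_y$ is finite and birational onto $X_y$. Since a finite birational map whose source is normal is the normalization of the target, this shows $\tilde X_y$ is the normalization of $X_y$ for all but finitely many closed points $y \in Y$, as desired.

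The main obstacle, and the place where the characteristic $0$ hypothesis is essential, is the generic smoothness step: without it one cannot conclude that $\tilde X_y$ is normal for generic $y$, and the conclusion can genuinely fail in characteristic $p$ (where for example the fibers of the Frobenius of a normal variety need not be normal). The remaining bookkeeping—showing birationality on each component and ruling out bad fibers—amounts to removing finitely many points of $Y$, which is harmless.
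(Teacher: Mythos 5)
Your overall strategy is sound, and the birationality part of the argument (the non-normal locus $Z$ has fibers of too small dimension for general $y$, so $\tilde X_y \to X_y$ is a finite map that is an isomorphism over a dense open, hence an isomorphism on total rings of fractions, whence $\tilde X_y$ is the normalization of $X_y$ once we know $\tilde X_y$ is normal) is correct. The paper itself does not supply a proof, but only cites a MathOverflow comment, so I compare only against correctness.

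The gap is in the generic smoothness step. Hartshorne III.10.7 requires the source to be \emph{nonsingular}, not merely normal, and $\tilde X_i$ is only guaranteed to be normal: a normal variety of dimension $\geq 2$ can have singularities in codimension $\geq 2$. If the singular locus of $\tilde X_i$ dominates $Y$, there is \emph{no} open $U_i \subset Y$ over which $\tilde X_i \to Y$ is smooth. A concrete example: $\tilde X = Y \times C$ with $C = \spec \bk[x,y,z]/(xy - z^2)$ the affine quadric cone; $\tilde X$ is normal, yet every fiber of the projection to $Y$ has a cone point, so no fiber is smooth. Thus the claim ``$\tilde X_y$ is smooth for general $y$'' is false in general, and the conclusion of the lemma cannot be reached via smoothness. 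What you actually need, and what is true, is that $\tilde X_y$ is \emph{normal} for general $y$; but this requires a different route. Applying generic smoothness only to the nonsingular open $\tilde X_i^{\mathrm{sm}} \subset \tilde X_i$, together with a dimension count on $\mathrm{Sing}(\tilde X_i)$, shows the general fiber is $R_1$, but it does not give $S_2$: the depth formula $\operatorname{depth}_x \tilde X_{i,y} = \operatorname{depth}_x \tilde X_i - 1$ shows that $S_2$ of the total space only yields $S_1$ of the fiber where the fiber has dimension $\geq 2$. The standard fix is to observe that $\tilde X_\eta$ (the generic fiber) is normal because normality localizes, and since $K(Y)$ has characteristic $0$ (hence is perfect), $\tilde X_\eta$ is in fact geometrically normal; then the locus of $y$ with $\tilde X_y$ geometrically normal is open in $Y$ for a flat proper finitely presented morphism by \cite[Th\'eor\`eme 12.2.4(vi)]{EGAIV.3}, so it contains a dense open. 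With this replacement for the first step, your argument goes through.
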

	\begin{proof}
		A sketch is given in the comments of 
		\cite{MO:why-is-the-normalization-of-a-general-fiber-the-general-fiber-of-the-normalization}
		by nfdc23.
	\end{proof}
	
	Note that \autoref{lemma:normalization-of-fiber} is emphatically false in characteristic $p$:
	quasi-elliptic fibrations provide a counterexample in characteristics $2$ and $3$, as pointed out by
	Jason Starr in the comments to 
	\cite{MO:why-is-the-normalization-of-a-general-fiber-the-general-fiber-of-the-normalization}.
	
	As an additional fun observation, we can give \autoref{lemma:normalization-of-fiber}
	the following funny slogan:
\begin{displayquote}
	Normal families have normal members, provided the families do not have a positive characteristic.
\end{displayquote}
\end{remark}

We now return to the proof of \autoref{proposition:distinct-hilbert-polynomials-implies-reducible-source}.

\begin{proof}[Proof of \autoref{proposition:distinct-hilbert-polynomials-implies-reducible-source} assuming \autoref{lemma:two-components-over-separable-closure}, \autoref{lemma:component-by-component-specialization},
	and \autoref{lemma:galois-preserves-hilbert-polynomial}]

Let $\eta$ denote the generic point of $Y$,
		let $X|_\eta$ denote the fiber of $f$ over $\eta$,
		and let $K(Y)$ denote the fraction field of $Y$.
		
		By flatness of $f$, in order to show
		$X$ has two irreducible components, it suffices to show
		$X|_\eta$ has two irreducible components.

Let $L$ be the finite Galois field extension of $K(Y)$ constructed in
\autoref{lemma:two-components-over-separable-closure},
so that $(X|_\eta)_L$ has two irreducible components.
Define $\tilde Y$ to be the normalization of $Y$ in $L$,
as defined in \cite[Exercise 9.7.I]{vakil:foundations-of-algebraic-geometry}.
Let $\tilde \eta$ be the generic point of $\tilde Y$.
Define $\tilde X$ as the fiber product
\begin{equation}
	\nonumber
	\begin{tikzcd} 
		\tilde X \ar {r} \ar {d}{\tilde f} & X \ar {d}{f} \\
		\tilde Y \ar {r} & Y.
	\end{tikzcd}\end{equation}
Now, directly from the definitions, we have
\begin{align*}
	\tilde X |_{\tilde \eta} = \left( X|_\eta \right)_L.
\end{align*}
Therefore, $\left( X|_\eta \right)_L$ is the generic fiber
of $\tilde f$.
Further, because $f$ is flat, so is $\tilde f$.
Since $\tilde X|_{\tilde \eta}$ 
has two components, $\tilde X$ must also have two components,
by flatness of $\tilde f$.
Call these two components $\tilde X_1$ and $\tilde X_2$.
Let $\tilde p \in \tilde Y$ be a closed point
with $f(\tilde p) = p$.
Note that $(\tilde X|_{\tilde p}) \cong X|_p$
and so $(\tilde X|_{\tilde p})$ has two components which we can identify with
$Z_1$ and $Z_2$.
Next, $Z_i = \tilde X_i|_{\tilde p}$ by \autoref{lemma:component-by-component-specialization}.
So, the Hilbert polynomial of $\tilde X_i$ agrees with that
of $Z_i$. In particular, the Hilbert polynomials of
$\tilde X_1|_{\tilde \eta}$ and $\tilde X_2|_{\tilde \eta}$ are distinct.

Hence, by \cite[Lemma 11.2.15]{vakil:foundations-of-algebraic-geometry},
applied to the field extension $L/K(Y)$,
the Galois group $Gal(L/K(Y))$ acts transitively on the
irreducible components of $\tilde X|_{\tilde \eta}$ a given component of $X|_\eta$.
So, by \autoref{lemma:galois-preserves-hilbert-polynomial},
since the two components $\tilde X_i|_{\tilde \eta}$ 
have different Hilbert polynomials,
they must lie over distinct points of $X|_\eta$.
In other words, $X|_\eta$ must have to components,
call them $X_1$ and $X_2$.
Finally, by another application of \autoref{lemma:component-by-component-specialization},
we obtain that $X_i|_p = Z_i$.
\end{proof}

We now proof \autoref{lemma:two-components-over-separable-closure},
\autoref{lemma:component-by-component-specialization},
and \autoref{lemma:galois-preserves-hilbert-polynomial}
in succession, which will complete the proof of
\autoref{proposition:distinct-hilbert-polynomials-implies-reducible-source}.
	\begin{lemma}
		\label{lemma:two-components-over-separable-closure}
		With the same notation as \autoref{proposition:distinct-hilbert-polynomials-implies-reducible-source},
		Let $\eta$ denote the generic point of $Y$,
		let $X|_\eta$ denote the fiber of $f$ over $\eta$,
		and let $K(Y)$ denote the fraction field of $Y$.
		Then, there is some finite Galois extension $L$ of $K(Y)$
		so that the base change of the generic fiber $(X|_\eta)_L$
	has two irreducible components.	
	\end{lemma}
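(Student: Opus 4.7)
The plan is to produce $L$ by descending the geometric generic fiber's decomposition. Let $\bar\eta = \spec \overline{K(Y)}$ and consider the geometric generic fiber $X|_{\bar\eta}$. The main input will be the upper semicontinuity of the number of geometrically irreducible components (\autoref{proposition:upper-semicontinuous-number-of-components}), which applies because $f$ is flat, proper, and every fiber over a closed point is geometrically reduced.

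First I would show that $X|_{\bar\eta}$ has exactly two irreducible components. Applying \autoref{proposition:upper-semicontinuous-number-of-components} to $f$, the set $U_n := \{y \in Y : X|_y \text{ has at least } n \text{ geometric components}\}$ is closed in $Y$ for every $n$. Since every closed point of $Y$ has fiber with two components, $U_2 = Y$, so $\bar\eta \in U_2$ and the geometric generic fiber has at least two components. Conversely, $U_3$ does not contain the distinguished point $p$ (whose fiber has exactly two components $Z_1, Z_2$), so $U_3$ is a proper closed subset and its complement is a nonempty open subset of $Y$ containing $\bar\eta$; hence $X|_{\bar\eta}$ has at most two components. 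So $X|_{\bar\eta} = C_1 \cup C_2$ with $C_1, C_2$ its geometric irreducible components.

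Next I would descend this decomposition to a finite Galois extension. The components $C_1, C_2$ correspond to the two minimal primes of the generic fiber after base change. These minimal primes are permuted by $\mathrm{Gal}(\overline{K(Y)}/K(Y))$, and the stabilizer of each is an open subgroup (since each $C_i$ involves only finitely many coefficients in the equations defining it, or equivalently, since the scheme-theoretic image of each $C_i$ in $X|_\eta$ is an irreducible component of $X|_\eta$ which splits upon base change to $\overline{K(Y)}$, with a finite orbit). Choose a finite subextension $L_0 \subset \overline{K(Y)}$ over which both components $C_1, C_2$ descend as closed subschemes of $(X|_\eta)_{L_0}$, and let $L$ be the Galois closure of $L_0$ over $K(Y)$. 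Then $(X|_\eta)_L$ has exactly two irreducible components, namely the descents of $C_1$ and $C_2$.

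The main obstacle I expect is the descent step: verifying cleanly that each geometric component $C_i$ of $X|_{\bar\eta}$ is defined over a finite subextension of $\overline{K(Y)}/K(Y)$. The cleanest way is to note that the irreducible components of $X|_\eta$ (over $K(Y)$) are finite in number, and each base changes under $\overline{K(Y)}/K(Y)$ to a finite union of conjugate geometric components; this conjugation action factors through a finite quotient of the absolute Galois group, which corresponds to a finite Galois extension $L$ that splits the components. Everything else is immediate book-keeping.
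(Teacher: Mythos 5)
Your approach is the same as the paper's, and the first half (using upper semicontinuity of the number of geometric components to deduce that $X|_{\bar\eta}$ has exactly two irreducible components) is correct and matches the paper. The descent step, however, glosses over a separability issue that the paper addresses explicitly and that genuinely matters in positive characteristic. You descend $C_1, C_2$ to a finite subextension $L_0 \subset \overline{K(Y)}$ and then take ``the Galois closure of $L_0$.'' But a finite subextension of the algebraic closure need not be separable over $K(Y)$, and for inseparable $L_0$ the normal closure is not Galois, so the sentence ``let $L$ be the Galois closure of $L_0$'' does not produce what the lemma demands. Relatedly, your use of $\operatorname{Gal}(\overline{K(Y)}/K(Y))$ is only meaningful as $\operatorname{Gal}(K(Y)^s/K(Y))$, so the argument is implicitly already working over the separable closure; but then you need to know the geometric components of $X|_{\bar\eta}$ are visible already over $K(Y)^s$, and that has to be justified.

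The paper's fix is a short intermediate step: since $\overline{K(Y)}/K(Y)^s$ is purely inseparable, the base change $(X|_\eta)_{\overline{K(Y)}} \to (X|_\eta)_{K(Y)^s}$ is a homeomorphism (\cite[Proposition 2.7]{liu:algebraic-geometry-and-arithmetic-curves}), so $(X|_\eta)_{K(Y)^s}$ has exactly two components. One then descends those components to a finite subextension $L'$ of $K(Y)^s$ (\cite[Lemma 3.2.6]{liu:algebraic-geometry-and-arithmetic-curves}), and since $L'/K(Y)$ is automatically separable, its normal closure in $K(Y)^s$ is finite Galois. Your final paragraph (``the cleanest way'') is morally the same argument, and if you replace the loose references to $\overline{K(Y)}$ with $K(Y)^s$ and insert the homeomorphism fact, the proof closes up.
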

	\sssec*{Idea of Proof}
	We do this in three steps, first by showing that the base change of the
	generic fiber to the algebraic closure has two components,
	then by showing the base change to the separable closure has two components,
	and finally by finding 
	the desired finite extension. The step regarding the separable closure
	is necessary to ensure separability of $L/K(Y)$.
	\begin{remark}
		\label{remark:finite-base-change-subscheme}
	Here is a summary of why,
	if we start with a scheme $X$ over a field $K$, and consider
	an algebraic base change by $\spec L \ra \spec K$ to $X_L$,
	for any closed subscheme $Z \subset X_L$, we can in fact find a finite extension
	$\spec M \ra \spec K$ and a closed subscheme $W \subset X_M$ so that $Z = W_L$.

	If the extension $L/K$ has two components, one of the components will
	have ideal defined by a finite number of algebraic elements. These elements each generate
	a finite extension, and therefore the extension generated by all of them will be finite.
	After just passing to this finite extension, we will already see this component.

	The above is written out in more detail in \cite[Lemma 3.2.6]{liu:algebraic-geometry-and-arithmetic-curves}.
	We include this argument here as it is a very standard
	technique, ubiquitous in Galois theory, which allows one to reduce from the case
	of algebraic extensions to finite extensions.
\end{remark}

	\begin{proof}
First, we show $(X|_\eta)_{\overline {K(Y)}}$ has two irreducible components.
Under the assumption that the fibers of $f$ are reduced, we obtain
that the number of geometrically irreducible components is upper semicontinuous
by \autoref{proposition:upper-semicontinuous-number-of-components}.
Since all geometric fibers over closed points of $Y$ have two irreducible components,
the generic geometric fiber must also have two irreducible components.
In other words, $X_{\overline {K(Y)}}$ has two irreducible components.

Next, we show that $(X|_\eta)_{K(Y)^s}$ has two irreducible components, where
$K(Y)^s$ denotes the separable closure of $K(Y)$.
By \cite[Proposition 2.7]{liu:algebraic-geometry-and-arithmetic-curves},
any base extension by a purely inseparable field extension is a homeomorphism.
Therefore, since $K(Y)^s \rightarrow \overline {K(Y)}$
is purely inseparable, we obtain that $(X|_\eta)_{\overline {K(Y)}} \cong (X|_\eta)_{K(Y)^s}$ are
homeomorphic. In particular, $(X|_\eta)_{K(Y)^s}$ has two irreducible components if $(X|_\eta)_{K(Y)^s}$ does.

Finally, by \cite[Lemma 3.2.6]{liu:algebraic-geometry-and-arithmetic-curves}
	(see \autoref{remark:finite-base-change-subscheme} for a discussion of how to prove this),
we have a finite separable extension $L'/K(Y)$ so that the base change $(X|_\eta)_{L'}$
has two components. Taking the normalization of $L'$ inside $K(Y)^s$
produces the desired Galois extension $L/K$.
	\end{proof}

\begin{lemma}
	\label{lemma:component-by-component-specialization}
	Suppose we have a flat map of projective schemes
	$g: X \ra Y$ where $Y$ is a smooth integral curve.
	Further, assume that all the fibers of $g$ are reduced
	and composed of
	precisely two irreducible components.
	Let $\eta$ denote the generic point of $Y$.
	Then for any closed point $p \in Y$,
	if we write $X|_p = Z_1 \cup Z_2$ and $X = X_1 \cup X_2$,
	we have that $Z_i = X_i|_p$.
\end{lemma}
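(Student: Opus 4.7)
The plan is to exploit the constancy of Hilbert polynomials in flat families, applied both to $X \to Y$ and separately to each component $X_i \to Y$, to force the specialization $X_i|_p = Z_i$ up to relabeling. I would begin by establishing basic structural facts. Since $X \to Y$ is flat with reduced fibers and $Y$ is reduced, $X$ itself is reduced. No irreducible component of $X$ can lie over a single closed point of $Y$, because such a component would have dimension $\dim X$ yet be contained in a fiber of dimension $\dim X - 1$; hence both $X_1$ and $X_2$ dominate $Y$. Because $Y$ is a smooth integral curve (a Dedekind scheme) and each $X_i$ is reduced and dominant, the restriction $X_i \to Y$ is itself flat by the torsion-free criterion. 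Finally, each $X_i|_\eta$ is irreducible: an irreducible component of $X_1|_\eta$ is an irreducible component of the reduced $X|_\eta$, which has only two by hypothesis; so if $X_1|_\eta$ had two, it would exhaust $X|_\eta$, forcing $X_2|_\eta \subseteq X_1|_\eta$ and thus $X_2 \subseteq X_1$ by taking closures—contradicting distinctness.

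The crux of the argument is a short Hilbert polynomial computation. By flatness I have $p_{X|_p} = p_{X|_\eta}$ and $p_{X_i|_p} = p_{X_i|_\eta}$. Applying inclusion--exclusion (\autoref{lemma:hilbert-polynomial-intersection}) to the decompositions $X|_p = X_1|_p \cup X_2|_p$ and $X|_\eta = X_1|_\eta \cup X_2|_\eta$ and subtracting yields
\begin{align*}
p_{X_1|_p \cap X_2|_p} \;=\; p_{X_1|_\eta \cap X_2|_\eta}.
\end{align*}
Since $X_1|_\eta$ and $X_2|_\eta$ are two distinct irreducible components of the reduced scheme $X|_\eta$, their intersection has dimension strictly less than $\dim X - 1$; the Hilbert polynomial identity above then forces $X_1|_p \cap X_2|_p$ to have dimension strictly less than $\dim X - 1$ as well. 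In other words, $X_1|_p$ and $X_2|_p$ share no top-dimensional component.

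To conclude, flatness of $X_i \to Y$ ensures that $X_i|_p$ has pure dimension $\dim X - 1 = \dim Z_j$, and since $X_i|_p \subseteq Z_1 \cup Z_2$ its reduced support is a nonempty union of some subset of $\{Z_1, Z_2\}$. Combined with the no-sharing conclusion, each $X_i|_p$ must have support equal to exactly one $Z_{j(i)}$; since $X_1|_p \cup X_2|_p = Z_1 \cup Z_2$, the assignment $i \mapsto j(i)$ is a bijection, giving $X_i|_p = Z_i$ after relabeling. The main obstacle is justifying the Hilbert polynomial calculation rigorously: the reducedness of $X$ and the flatness of each $X_i \to Y$ are what make $p_{X_i|_\eta} = p_{X_i|_p}$ hold individually, and these require a careful appeal to standard flatness criteria over a Dedekind base. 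Once those are in place, the degree identity reduces the problem to the straightforward case analysis sketched above.
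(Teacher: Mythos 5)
Your proposal takes the same overall approach as the paper: deduce flatness of each component map $X_i \to Y$ from the torsion-free criterion over the Dedekind base, then exploit constancy of Hilbert polynomials in flat families. The organization of the key deduction differs, and yours is arguably cleaner: you derive $p_{X_1|_p \cap X_2|_p} = p_{X_1|_\eta \cap X_2|_\eta}$ via inclusion-exclusion and conclude the intersection has dimension $< \dim X - 1$, whence the $X_i|_p$ have disjoint top-dimensional supports; the paper instead rules out reducibility of $X_i|_p$ directly by an argument about minimality of the Hilbert polynomial of a reduced scheme, and implicitly invokes that $p_{Z_1} \neq p_{Z_2}$ (a hypothesis present only in the application, not in the lemma's statement). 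Your proof avoids that hypothesis, which is a small improvement. One remark worth making explicit before you use the identity $X|_p = X_1|_p \cup X_2|_p$: this equality of closed subschemes needs the reducedness of $X|_p$, since in general only the inclusion $X_1|_p \cup X_2|_p \subseteq X|_p$ is automatic, but they share the same underlying set and $X|_p$ is reduced, forcing equality.

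There is one step that you (and, it should be said, the paper as well) leave implicit and that deserves a sentence of justification: the passage from \emph{equality of supports} ($(X_i|_p)_{\mathrm{red}} = Z_{j(i)}$) to \emph{equality of schemes} ($X_i|_p = Z_{j(i)}$). Since $X_i|_p$ is only a closed subscheme of the reduced $X|_p$, it is automatically generically reduced along $Z_{j(i)}$, but a priori could carry embedded structure along $Z_1 \cap Z_2$ (locally, ideals such as $I_1 = (x^2)$, $I_2 = (y)$ in $\bk[x,y]/(xy)$ satisfy $I_1 \cap I_2 = 0$ with $I_1$ non-radical, so reducedness of the union alone does not force each piece to be reduced). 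Your Hilbert-polynomial bookkeeping is, on its own, consistent with such excess: both $p_{X_i|_p} - p_{Z_{j(i)}}$ and $p_{X_1|_p \cap X_2|_p} - p_{Z_1 \cap Z_2}$ could be strictly positive polynomials of degree $< \dim X - 1$. To close this, one needs a further observation — for instance, comparing $p_{X_i|_p} = p_{X_i|_\eta}$ (the Hilbert polynomial of the integral generic-fiber component) against $p_{Z_{j(i)}}$ and showing the difference must vanish, or invoking a reduced-fiber result over a DVR. I'd encourage you to spell this out.
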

\begin{proof}
First, let $g_i$ be the restriction of $g$ to $X_i$.
Then, since $g$ is flat, it is dominant when restricted to
any component. Then, the restrictions $g_i$
are flat, 
because they are dominant maps to a curve. 

Therefore, the Hilbert polynomial of the fibers of $g_i$
must be locally constant over $Y$.
If $X_i|_p$ were irreducible, 
then the intersection of $X_i$ with $X|_p$ must be either $Z_i$
or $Z_{1-i}$, and because the two components have different Hilbert polynomials,
it must be $Z_i$.

So, we only need rule out the possibility that the restriction of $X_i$ to $Z_i$
is reducible. This is where we use the assumption that the fibers of $g$ are reduced.
If the restriction of $X_i$ to $Z_i$ is reducible,
then we may assume that one of the components of $X_i|_p$, call it $W$
is strictly contained in $Z_{1-i}$.
But then, $X|_p$ will have the same reduction
as $(X_i|_p \setminus W) \cup X_{1-i}|_p$. In particular,
$X|_p$ will be nonreduced, since the Hilbert polynomial
of $(X_i|_p \setminus W) \cup X_{1-i}|_p$
is less than that of $X|_p$, but a reduced scheme
has Hilbert polynomial which is minimal among all
schemes with the same reduction.
Therefore, we have a contradiction, and the $Z_i$ were in fact irreducible.
\end{proof}

\begin{lemma}
	\label{lemma:galois-preserves-hilbert-polynomial}
	Suppose $L/K$ is a finite Galois extension
	and $Gal(L/K)$ acts on a variety $X$.
	Then, for every irreducible component $X_i \subset X$
	and $\sigma \in Gal(L/K)$, we have that $\sigma(X_i)$
	is an irreducible component of $X$ with the same Hilbert polynomial
	as $X_i$.
\end{lemma}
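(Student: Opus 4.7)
The plan is to unpack what the action of $Gal(L/K)$ on $X$ amounts to, and then to observe that Hilbert polynomials are preserved by scheme automorphisms that respect the polarization. In the context where this lemma is applied, $X = (X|_\eta)_L$ is obtained by base change from the $K$-scheme $X|_\eta$, and the Galois action is the one on the second factor of the fiber product. For each $\sigma \in Gal(L/K)$, pullback along $\spec L \xrightarrow{\sigma^{-1}} \spec L$ induces a $K$-scheme automorphism $\tilde\sigma : X \to X$ (which is not $L$-linear but only $\sigma$-semilinear).

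First I would record that $\tilde\sigma$, being an automorphism of topological spaces, permutes the irreducible components of $X$; so $\sigma(X_i)$ is indeed an irreducible component. Next I would observe that the very ample invertible sheaf $\sco_X(1)$ used to define Hilbert polynomials is, by construction, the pullback to $X = (X|_\eta)_L$ of a very ample sheaf on $X|_\eta$ defined over $K$, so there is a canonical isomorphism $\tilde\sigma^* \sco_X(1) \cong \sco_X(1)$ compatible with all tensor powers.

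The computation of the Hilbert polynomial then proceeds as follows. For each $m \geq 0$, the automorphism $\tilde\sigma$ sends the closed subscheme $X_i$ isomorphically to $\sigma(X_i)$, and under this isomorphism $\sco_{\sigma(X_i)}(m)$ pulls back to $\sco_{X_i}(m)$. Therefore $\tilde\sigma$ induces a $\sigma$-semilinear isomorphism of abelian groups $H^0(\sigma(X_i), \sco(m)) \to H^0(X_i, \sco(m))$. Because $\sigma$ is a field automorphism of $L$, this semilinear isomorphism preserves $L$-dimensions, so $h^0(\sigma(X_i), \sco(m)) = h^0(X_i, \sco(m))$ for every $m$, and hence the two subschemes have the same Hilbert polynomial.

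The only mild subtlety — and the one worth being careful about — is the distinction between $K$-scheme automorphisms and $L$-scheme automorphisms: $\tilde\sigma$ is the former and not the latter, so one has to check that $L$-dimensions of sections are still preserved. This is not really an obstacle, since $\sigma$-semilinearity visibly preserves $L$-dimensions, but it is the step at which the statement could superficially appear suspect, so it deserves explicit mention rather than being glossed over.
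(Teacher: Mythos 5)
Your proposal is correct and follows essentially the same strategy as the paper: both observe that the Galois action is an automorphism of the underlying scheme (hence permutes irreducible components), that it is compatible with the polarization, and that it therefore identifies the relevant cohomology groups, forcing equal Hilbert polynomials. The one small place where the two proofs diverge is the last step. You note that the induced map on $H^0$ is $\sigma$-semilinear and conclude directly that $\sigma$-semilinear bijections of $L$-vector spaces preserve $L$-dimension. The paper instead treats the cohomology map as a $K$-linear isomorphism, concludes that the two spaces have the same $K$-dimension, and then divides by $[L:K]$ to recover equality of $L$-dimensions. Both are valid; your route is marginally more direct and has the virtue of making the semilinearity (which is the one subtle point) explicit rather than implicit.
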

\begin{proof}
	First, because $\sigma \in Gal(L/K)$ is invertible, it must send irreducible
	components to irreducible components. To complete the proof, it suffices
	to show $\sigma$ preserves the Hilbert polynomial. Showing this is just a matter
	of unwinding the definitions.

	Let $\scl$ be the
invertible sheaf giving the embedding $X \ra \bp^n$ so that,
as a function of $m \in \bz$, 
the Hilbert series is $h_X(m) = H^0(X, \scl^{\otimes m})$.
Then, we obtain that the map $\sigma: X_i \ra \sigma(X_i)$ determines an isomorphism
on cohomology
\begin{align*}
	H^0(X_i, \scl^{\otimes m}|_{X_i}) \ra H^0(X_i, \scl^{\otimes m}|_{\sigma(X_i)}).
\end{align*}
So, these two vector spaces have the same dimension over $K$.
But then, since $L$ is finite dimensional over $K$, these vector spaces have
the same dimension over $L$.
\end{proof}

The above \autoref{proposition:distinct-hilbert-polynomials-implies-reducible-source}
will help us show that reducible varieties cannot degenerate to a general element
of $\broken d k$. However, in order to show that only smooth scrolls or other scrolls
in $\broken d k$ cannot degenerate to scrolls in $\broken d k$, we will
need to rule out the possibility that cones can degenerate to scrolls in $\broken d k$.
This is done in \autoref{lemma:cone-degeneration}.
But, to prove this, we need a quick lemma showing that any two smooth curves are isomorphic \'etale locally.

\begin{lemma}
    \label{lemma:curve-etale-isomorphism}
    If $C$ and $D$ are two projective
    curves, then any two smooth points $p \in C, q \in D$
    have some \'etale neighborhood on which they are isomorphic.
    In other words, there exists Zariski open sets $U \subset C, V \subset D$
    and a curve $W$ so with \'etale maps $W \rightarrow U$ and $W \rightarrow V$.
  \end{lemma}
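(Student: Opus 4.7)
The plan is to exhibit $C$ and $D$ as étale over $\ba^1$ near $p$ and $q$ respectively via a choice of uniformizer, and then take a fiber product to produce the common étale neighborhood. More specifically, since $p$ is a smooth point of the curve $C$, the maximal ideal $\fm_p \subset \sco_{C,p}$ is principal, generated by some element $f \in \sco_{C,p}$. After possibly shrinking, $f$ extends to a regular function on some Zariski open neighborhood $U \subset C$ of $p$, giving a morphism $\phi: U \to \ba^1$ with $\phi(p) = 0$ whose induced map on completed local rings $\widehat{\sco}_{\ba^1, 0} \cong \bk[[t]] \to \bk[[f]] \cong \widehat{\sco}_{C,p}$ is an isomorphism. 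Hence $\phi$ is étale at $p$, and after further shrinking $U$ we may assume $\phi$ is étale on all of $U$. Analogously, choose a uniformizer $g$ at $q \in D$, shrink to an open $V \subset D$ with $q \in V$, and obtain an étale morphism $\psi: V \to \ba^1$ with $\psi(q) = 0$.

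Now form the fiber product $W' := U \times_{\ba^1} V$, with the two projections $\pi_U: W' \to U$ and $\pi_V: W' \to V$. Since étale morphisms are stable under base change, both $\pi_U$ and $\pi_V$ are étale. Because $\phi(p) = \psi(q) = 0$, the pair $(p, q)$ determines a closed point $w \in W'$ mapping to $p$ under $\pi_U$ and to $q$ under $\pi_V$. Let $W$ be the connected component of $W'$ containing $w$; then the restrictions $W \to U$ and $W \to V$ are the required étale morphisms.

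I expect no serious obstacle in this argument. The only minor subtlety is verifying that the uniformizer actually gives an étale map (rather than merely unramified), which follows since $\phi$ is a dominant morphism of smooth curves whose differential is nonzero at $p$; equivalently, the induced map on completed local rings is a surjection of complete DVRs with the same residue field, hence an isomorphism. Everything else is a standard application of base change and the local structure of smooth curves.
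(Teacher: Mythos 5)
Your proof is correct but takes a genuinely different route from the paper. The paper reduces, via transitivity of the relation ``\'etale locally isomorphic,'' to showing that any curve is \'etale locally isomorphic to $\bp^1$ at a smooth point; it achieves this globally, choosing an invertible sheaf of degree at least $2g+1$ (hence very ample and nonspecial by Riemann--Roch), projecting from a general $(g-1)$-plane to get a map to $\bp^1$ unramified at $p$, and then deleting the branch locus. Your argument instead works purely locally: a uniformizer at the smooth closed point gives a regular function on an affine neighborhood, the isomorphism $k[[t]] \to k[[f]] \cong \widehat{\sco}_{C,p}$ (valid since the residue field is $k$) shows this map to $\ba^1$ is \'etale at $p$, and openness of the \'etale locus lets you shrink. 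The fiber product $U \times_{\ba^1} V$ then hands you the common \'etale neighborhood directly, rather than via transitivity. Your approach buys you something concrete: you never invoke Riemann--Roch or the genus, so you avoid the slight awkwardness in the paper's proof where ``degree $2g+1$'' implicitly assumes $C$ is a smooth projective curve (whereas the lemma only assumes $p$ is a smooth point of $C$); working locally at $p$ sidesteps any global hypotheses on $C$. The paper's route, on the other hand, fits more naturally into the stated ``\'etale locally isomorphic to $\bp^1$'' framing and avoids taking a fiber product. One small point worth tightening in your write-up: before shrinking $U$ to the \'etale locus, you should first shrink to the smooth locus of $C$, so that when you invoke ``a dominant morphism of smooth curves'' you are in fact working over a smooth source.
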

  \begin{proof}
    Since the notion of being \'etale locally isomorphic is transitive,
    it suffices to show that any curve $C$ is \'etale locally isomorphic to
    $\bp^1$, at a smooth point. For this, we only need produce
    an map $C \ra \bp^1$ which is \'etale at a given smooth point $p$ of $C$.
    However, this can be accomplished by taking an invertible sheaf
    $\scl$ on $C$ of degree at least $2g+1$, which will determine
    a basepoint free map $C \ra \bp^{g+1}$.
    Then, a general two dimensional subspace of $H^0(C, \scl)$ will determine
    a map $C \ra \bp^1$ which is unramified at $p$.
    Removing the branch points and singular locus of $\pi$
    yields the desired \'etale map.
  \end{proof}

\begin{lemma}
  \label{lemma:cone-degeneration}
The locus $\broken d k$ cannot be in the closure of the locus of cones.	  
\end{lemma}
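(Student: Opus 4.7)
The plan is to show that the locus of cones in $\minhilb{d}{k}$ is closed and that a general element of $\broken{d}{k}$ is not a cone; together these yield that $\broken{d}{k}$ is not contained in the closure of the locus of cones. We may assume $d \geq 3$, since for $d = 2$ a broken scroll is a union of two $k$-planes meeting in a $(k-1)$-plane, which is itself a cone, and this case does not arise in the relevant applications.

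First I would prove closedness of the locus of cones. By \autoref{theorem:classification-of-varieties-of-minimal-degree}, every cone of minimal degree is a join $\Lambda * Z$ of a linear vertex $\Lambda$ of some dimension $r$ with a smooth minimal-degree variety $Z$ in a complementary subspace. For each $0 \le r \le k - 1$, such pairs are parametrized by a projective scheme (the product of the Grassmannian of $r$-planes in $\bp^n$ with the relevant components of the Hilbert scheme of smooth minimal-degree varieties of dimension $k - r - 1$), and the join construction extends to a morphism into $\minhilb{d}{k}$. Since the parameter space is projective, its image is closed, and the locus of all cones is the finite union of these images over $r$. Hence $\broken{d}{k}$ lies in the closure of the cone locus if and only if a general point of $\broken{d}{k}$ is itself a cone.

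To rule this out, I would show that a general $W = X \cup Y \in \broken{d}{k}$ is not a cone. Suppose for contradiction $W$ is a cone and let $v$ be any point of its vertex space; then for every $p \in W$ the line $\overline{vp}$ lies in $W$, hence in $X$ or in $Y$ by irreducibility of a line. If $v \notin X$, then for a general $p \in X \setminus Y$ the line $\overline{vp}$ lies in neither $X$ (since $v \notin X$) nor $Y$ (since $p \notin Y$), a contradiction; symmetrically $v \notin Y$ is impossible. Hence $v \in X \cap Y$, and for a general $p \in X \setminus Y$ the line $\overline{vp}$ must lie in $X$. Since this condition is closed in $p$, it follows that $X$ is itself a cone with vertex $v$. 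But $X$ is smooth at $v$, so every line through $v$ in $X$ lies in $T_v X$, forcing $X \subset T_v X$; comparing dimensions and using irreducibility, $X = T_v X$ is a $k$-plane, contradicting $\deg X = d - 1 \ge 2$.

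The main obstacle is the first step, namely rigorously identifying the parameter space for cones and verifying that the join construction extends to a morphism into $\minhilb{d}{k}$ in families; this ultimately reduces to a standard application of properness of Hilbert schemes and Grassmannians. The second step is a direct geometric case analysis that uses only the smoothness and non-linearity of $X$.
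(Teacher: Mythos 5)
Your argument takes a genuinely different route from the paper's. The paper works directly with a one-parameter degeneration: it tracks the locus of cone points through the family, chooses a smooth point $p$ in the special fiber $W \in \broken d k$ not joined to the limiting cone point $p_0$ by any line in $W$, and then shows that the family of lines joining the cone points to a section through $p$ specializes to such a line, a contradiction. You instead factor the problem into two global statements — the cone locus in $\minhilb d k$ is closed, and a general element of $\broken d k$ is not a cone — which together give the result. Your second step is correct and tidy: forcing the vertex $v$ into $X \cap Y$, then forcing $X$ to be a cone with vertex $v$, then using smoothness of $X$ at $v$ to get $X \subseteq T_v X$ and hence $X$ linear, contradicts $\deg X = d-1 \geq 2$. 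You are also right to flag $d = 2$: there the broken scroll is a union of two $k$-planes meeting in a $(k-1)$-plane, which genuinely is a cone, so the statement needs $d \geq 3$; the paper's proof also tacitly requires $X$ to be nonlinear.

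The gap is in your first step, and it is not a routine verification. The join construction on $G(r+1,n+1) \times \minhilb d {k-r-1}$ is only a morphism to the Hilbert scheme over the open locus where the candidate vertex $\Lambda$ and the base $Z$ are disjoint and together span $\bp^n$; where that general position fails, the join drops dimension or acquires the wrong Hilbert polynomial, so the map does not extend to a morphism from the full projective parameter space into $\minhilb d k$, and properness alone does not give a closed image. (Note also that the locus of \emph{smooth} minimal-degree varieties is open, not projective, so the parameter space you describe is not proper to begin with.) Closedness of the cone locus is still true, but it needs a different argument — for instance, show that the incidence locus $\left\{ (v,[W]) \in \bp^n \times \minhilb d k : W \text{ is a cone with vertex containing } v \right\}$ is closed, via a valuative criterion checking that a flat limit over a DVR of cones with a given section of vertex points is again a cone with that vertex, and then take the image under the proper projection to $\minhilb d k$. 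With that step repaired, your argument is a valid and appealing alternative to the paper's degeneration proof.
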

\begin{remark}
  \label{remark:}
  In fact, the same exact proof shows that every generically
  smooth degeneration of a cone
  is a cone.
\end{remark}
\sssec*{Idea of the proof of \autoref{lemma:cone-degeneration}}
If some cones degenerated to a generically smooth variety which
is not a cone, we can find a point $p_0$ in the special fiber
which is the limit of the cone points.
Taking a point $p$ in the special fiber so that the line
$\overline {p_0, p}$ is not contained in the special fiber,
we can slightly perturb $p_0$ and $p$ so that the lines through
these perturbations are contained in the nearby fibers.
But, this is a contradiction because the limit of lines contained
in the nearby fibers will be a line contained in the special fiber.

\begin{proof}
  Suppose $\broken d k$ did lie in the closure of the locus of cones.
  Then, a general $[W] \in \broken d k$ could be constructed as
  a fiber of a flat map $\pi:\scx \rightarrow Y$ where $Y$ is a smooth curve and a general
  fiber of the map is a cone.
  Say $X = \pi^{-1}(y_0)$.
  Now, by \cite[Theorem 25.2.2]{vakil:foundations-of-algebraic-geometry},
   if $A \rightarrow B$ is a map of schemes, $x \in f^{-1}(b)$,
   and $B$ is smooth, then $x$ is a smooth point of $f^{-1}(b)$
   if and only if $x$ is a smooth point of $A$.
  So, in our case at hand, the singular locus of $\pi: \scx \rightarrow Y$
  contains all points corresponding to the cone points of fibers of $\pi$.
  Now, let $C$ be the 
  be the closure of the locus of cone points,
  More formally, we can take $C_1$ to be the singular locus of $\pi$
  over the open set points in why whose fibers are cones,
  and then take $C$ to be the closure of $C_1$ in $\scx$.

  We obtain that $C \subset \scx$ is a curve mapping dominantly, hence flatly to $Y$.
  If necessary, replace $C$ by its reduction so that the fiber over a general point
  will be reduced.
  Then, by flatness, the fiber over every point will be of degree 1. 
  Then, $Y \cap C$ is some reduced point $p_0 \in Y$.
  Choose a smooth point $p \in X$ so that there is no line in $X$ joining $p_0$ and $p$.
  This is possible by \autoref{lemma:linear-spaces-in-varieties-of-minimal-degree},
  as the lines through any point $p_0 \in Y$ do not span all of $Y$.
  
  There is then an irreducible curve $D \subset \scx$ whose fiber over $y_0$ contains $p$,
  but so that $D$ is not contained in $X$ because $p$ is a smooth point of $X$.
  After taking an \'etale base change of $Y$ if necessary, we may assume
  that $D \rightarrow Y$ is an isomorphism in a Zariski neighborhood of
  $p$. In other words, we can find a curve $D$ which is isomorphic to $Y$
  in an \'etale local neighborhood, as proven in \autoref{lemma:curve-etale-isomorphism}.

  We have already constructed a curve
  $C \subset \scx$ whose fiber over $y_0$ is $p_0$.
  Then, for a general $y \in Y$, we know $\pi^{-1}(y)$ is 
  a cone whose cone point is $C \cap \pi^{-1}(y)$.
  Therefore, the line 
  \begin{align*}
  L_y := \overline{C \cap \pi^{-1}(y), D \cap \pi^{-1}(y)}
  \end{align*}
  is contained $\pi^{-1}(y)$.
  Next, construct the family of lines $Z \subset \scx$ whose fiber
  over a point $y \in Y$ is the line $L_y$, assuming $\pi^{-1}(y)$ is a cone.

  \begin{exercise}
    \label{exercise:}
    Verify the family $Z \subset \bp_Y^{d+k-1}$ described above can be constructed scheme
    theoretically.
    {\it Hint:  Scheme theoretically, we can construct this family of lines as in the construction
	    of varieties of minimal degree \autoref{proposition:construction-of-scrolls-by-joining-curves} 
	    by taking those hyperplanes in the ambient projective
    space containing both $C$ and $D$, and then constructing a map to the Grassmannian
    corresponding to an appropriate exact sequence of locally free sheaves.}
  \end{exercise}
 
  The family $Z$ is in fact contained in $\scx$ because it is reduced and
  it is set theoretically contained in $\scx$, as the general fiber
  of $Z$ over $Y$, $L_y$,
  is contained in $\scx$.
  It follows that the fiber of $Z$ over $y_0$ is contained in $\scx$,
  as $\scx$ is projective. It is also a line because the family $Z$
  is flat over $Y$, and the general fiber is a line.
  Hence, $\pi^{-1}(y_0)$ is a line in $Y$ containing both $p_0$
  and $p$, contradicting the assumption that there was no line
  in $Y$ containing both points.
    \end{proof}

    In further preparation for our proof of \autoref{proposition:distinct-hilbert-polynomials-implies-reducible-source},
    we will need to know that a degenerate variety cannot degenerate to a nondegenerate variety.
 
\begin{lemma}
	\label{lemma:general-fiber-nondegenerate}
	Suppose $X \ra Y$ is a flat proper map of $\bk$-varieties, together with an embedding $X \ra \bp^n_Y$ compatible with the projection to $Y$.
	Then, if one fiber over a closed point of $Y$ is nondegenerate under this embedding,
	the general fiber is nondegenerate.
\end{lemma}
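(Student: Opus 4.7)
The plan is to deduce the lemma from upper semicontinuity of cohomology applied to the ideal sheaf of $X$ in $\bp^n_Y$. The key observation is that ``$X_y$ is nondegenerate'' is equivalent to the vanishing $h^0(\bp^n_y, \sci_{X_y/\bp^n_y}(1)) = 0$, and the locus where a cohomological dimension vanishes is open provided the relevant sheaf is flat over the base.

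First, I would let $\sci \subset \sco_{\bp^n_Y}$ denote the ideal sheaf of $X$ inside $\bp^n_Y$ and check that $\sci$ is flat over $Y$. This follows from the short exact sequence
\begin{equation}
\nonumber
\begin{tikzcd}
 0 \ar{r} & \sci \ar{r} & \sco_{\bp^n_Y} \ar{r} & \sco_X \ar{r} & 0,
\end{tikzcd}
\end{equation}
since $\sco_{\bp^n_Y}$ is flat over $Y$ and $\sco_X$ is flat over $Y$ by hypothesis on $X \to Y$, so the kernel $\sci$ is also flat. Twisting by the pullback of $\sco_{\bp^n}(1)$ to $\bp^n_Y$ (which is invertible) preserves flatness, so $\sci(1)$ is flat over $Y$ as well. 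Flatness of $\sco_X$ over $Y$ also guarantees that restricting the above sequence to a fiber $\bp^n_y$ remains exact, so $\sci|_y$ is canonically identified with the ideal sheaf $\sci_{X_y/\bp^n_y}$ of the fiber.

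Next, I would apply the standard upper semicontinuity theorem for cohomology in flat families (see, e.g., \cite[Theorem 28.1.1]{vakil:foundations-of-algebraic-geometry}) to the flat family $\sci(1)$ on the proper $Y$-scheme $\bp^n_Y$. This yields that the function
\begin{align*}
 y \mapsto h^0\bigl(\bp^n_y, \sci|_y(1)\bigr) = h^0\bigl(\bp^n_y, \sci_{X_y/\bp^n_y}(1)\bigr)
\end{align*}
is upper semicontinuous on $Y$. Equivalently, the locus $U \subset Y$ of points $y$ where this $h^0$ vanishes is open. But $h^0(\bp^n_y, \sci_{X_y/\bp^n_y}(1)) = 0$ is exactly the condition that no hyperplane in $\bp^n_y$ contains $X_y$, i.e., that $X_y$ is nondegenerate.

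By hypothesis there is a closed point $y_0 \in Y$ with $X_{y_0}$ nondegenerate, so $y_0 \in U$ and $U$ is nonempty. Since $Y$ is a variety, hence irreducible, the nonempty open set $U$ is dense in $Y$, proving that the general fiber of $X \to Y$ is nondegenerate. There is essentially no obstacle here beyond verifying the bookkeeping that the ideal sheaf's restriction to fibers really is the ideal sheaf of the fiber, which is immediate once one has flatness of $\sco_X$ over $Y$.
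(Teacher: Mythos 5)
Your proof is correct and follows essentially the same route as the paper: both reduce nondegeneracy of the fiber $X_y$ to the vanishing of $h^0(\bp^n_y,\sci_{X_y/\bp^n_y}(1))$ and then invoke upper semicontinuity of cohomology in a flat proper family. You spell out the flatness of the twisted ideal sheaf and the compatibility of $\sci$ with base change more carefully than the paper does, but the core idea is the same.
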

\begin{proof}
	Consider the exact sequence
	\begin{equation}
		\nonumber
		\begin{tikzcd}
			0 \ar {r} & \sci_{X/\bp^n_Y} \ar {r} & \sco_{\bp^n_Y}(1) \ar {r} & \sco_X(1) \ar {r} & 0 
		\end{tikzcd}\end{equation}
	This induces a left exact sequence on cohomology
	\begin{equation}
		\nonumber
		\begin{tikzcd}
			0 \ar {r} & H^0(\bp^n_Y, \sci_{X/\bp^n_Y}) \ar {r} & H^0(\bp^n_Y, \sco_{\bp^n_Y}(1)) \ar {r} & H^0(\bp^n_Y, \sco_X(1))
		\end{tikzcd}\end{equation}
	Now, suppose that one fiber over a closed point $y \in Y$ is nondegenerate. Equivalently, we have that for that fiber
	the restriction, $H^0(\bp^n_{\kappa(y)}, \sci_{X/\bp^n_Y}|_y) = 0$. Therefore, by upper semicontinuity of
	cohomology in flat proper families, we obtain that this cohomology group vanishes for a general closed fiber of $Y$.
	Therefore, the general fiber of $Y$ is nondegenerate.
\end{proof}

With all the above preparation, we are finally able to prove
\autoref{proposition:distinct-hilbert-polynomials-implies-reducible-source}.

\begin{proposition}
	\label{proposition:minutely-broken-closure}
	Suppose $(k,d) \neq (2,4)$.
	Let $\minhilbsing d k \subset \minhilb d k$ denote the closed locus of singular scrolls
	in $\minhilb d k$.
	Then, $\broken d k$ is an irreducible component of
	$\minhilbsing d k$.	
\end{proposition}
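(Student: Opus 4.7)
The plan has two parts: (i) show that $\broken d k$ is an irreducible closed subscheme of $\minhilbsing d k$; and (ii) show that $\broken d k$ is maximal among irreducible closed subsets of $\minhilbsing d k$.

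For part (i), I would realize $\brokengeneral d k$ as the image of an irreducible parameter space of triples $([X], \Lambda, [P])$: here $[X]$ ranges over the dense open subscheme $\minhilbsmooth{d-1}{k}$ of the irreducible Hilbert scheme $\minhilb{d-1}{k}$ (smoothness and hence irreducibility at smooth scrolls comes from \autoref{proposition:regular-minimal-degree-hilbert-schemes}); $\Lambda$ ranges over the type \ref{custom:line-large} ruling $(k-1)$-planes in $X$, which form an irreducible Grassmannian bundle over $\bp^1$ by \autoref{lemma:linear-spaces-in-varieties-of-minimal-degree}; and $[P]$ ranges over an open subvariety of the Grassmannian of $k$-planes in $\bp^n$ containing $\Lambda$ and meeting $X$ scheme-theoretically in exactly $\Lambda$. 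Each factor is irreducible, so $\brokengeneral d k$ is irreducible and thus so is its closure $\broken d k$. Every point of $\broken d k$ represents a reducible (and therefore singular) variety, so $\broken d k \subset \minhilbsing d k$.

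For part (ii), I would argue by contradiction: suppose $\broken d k$ were strictly contained in a larger irreducible closed subset $\scz \subset \minhilbsing d k$. Then I can find a smooth projective curve $Y$, a point $p_0 \in Y$, and a flat family $\pi : \scx \to Y$ with $\pi^{-1}(p_0) = W = X \cup P$ a general member of $\brokengeneral d k$, but with generic fiber in $\scz \setminus \broken d k$. By \autoref{theorem:classification-of-varieties-of-minimal-degree}, each fiber of $\pi$ is either a cone or reducible; \autoref{lemma:cone-degeneration} forbids the general fiber from being a cone, so the general fiber is reducible. Reducedness of $W$ propagates openly in a flat family, and after shrinking $Y$ every fiber is reduced; \autoref{proposition:upper-semicontinuous-number-of-components} then yields upper semicontinuity of the number of geometric components, so each fiber has at most two components, hence (with reducibility) exactly two. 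Provided $d \geq 3$, the Hilbert polynomials of $X$ and $P$ are distinct by the formula in \autoref{lemma:reduced-minors}, so \autoref{proposition:distinct-hilbert-polynomials-implies-reducible-source} applies to give a decomposition $\scx = \scx_1 \cup \scx_2$ with $\scx_i|_{p_0}$ equal to $X$ and $P$ respectively. The family $\scx_2 \to Y$ consists of $k$-planes by flatness and irreducibility of the Grassmannian; because $X$ is a smooth point of the Hilbert scheme (\autoref{proposition:regular-minimal-degree-hilbert-schemes}), the family $\scx_1 \to Y$ has image in the irreducible component $\minhilb{d-1}{k}$, and after another shrinkage of $Y$ each $\scx_1|_y$ is a smooth scroll spanning a hyperplane (\autoref{lemma:general-fiber-nondegenerate}). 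The scheme-theoretic intersection $\scx_1|_y \cap \scx_2|_y$ is a $(k-1)$-plane by flatness, and by \autoref{lemma:linear-spaces-in-varieties-of-minimal-degree} the Fano scheme of $(k-1)$-planes in $\scx_1|_y$ is smooth with disjoint components labelled by type; connectedness of $Y$ then forces the family of intersections to remain in the same component as $X \cap P$, which is type \ref{custom:line-large}. Hence $\scx|_y \in \broken d k$ for general $y$, contradicting the choice of $\scz$.

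The main obstacles I anticipate are: first, verifying the hypotheses of \autoref{proposition:distinct-hilbert-polynomials-implies-reducible-source} uniformly along $Y$, which requires openness of reducedness in flat families together with \autoref{lemma:cone-degeneration} and \autoref{proposition:upper-semicontinuous-number-of-components} to rule out the irreducible cone fibers and nonreduced degenerations; and second, handling the type of the limiting intersection plane, which is where the exclusion $(k,d) \neq (2,4)$ enters, since for $k = 2$ the scroll $X \cong \scroll{2,1}$ carries a nonempty type \ref{custom:line-small} component in its Fano scheme of lines whose interaction with the degeneration must be ruled out to keep the intersection plane in the type \ref{custom:line-large} component.
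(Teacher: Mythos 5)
Your proof follows essentially the same route as the paper's: you establish irreducibility of $\broken d k$ directly (the paper leaves this as an exercise), then argue by contradiction via a one-parameter family over a smooth curve degenerating to a general member of $\brokengeneral d k$, ruling out an irreducible (cone) general fiber via \autoref{lemma:cone-degeneration}, then applying \autoref{proposition:distinct-hilbert-polynomials-implies-reducible-source} to split $\scx$ into two components $\scx_1$ and $\scx_2$, and finally showing the general fiber lies in $\broken d k$. Two small points are worth flagging. First, the step asserting that $\scx_1|_y \cap \scx_2|_y$ is a $(k-1)$-plane ``by flatness'' is glossed: flatness of $\scx_1 \to Y$ and $\scx_2 \to Y$ does not by itself yield flatness (or constancy of Hilbert polynomial) of the intersection. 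The paper's \autoref{lemma:intersection-of-components-of-fibers} fills this in by the inclusion-exclusion identity $p_{\scx_1|_y \cap \scx_2|_y} = p_{\scx_1|_y} + p_{\scx_2|_y} - p_{\scx|_y}$ from \autoref{lemma:hilbert-polynomial-intersection}: each term on the right is constant in $y$ by flatness of $f, f_1, f_2$, so the left-hand side is constant, equal to the Hilbert polynomial of a $(k-1)$-plane, and that polynomial characterizes $(k-1)$-planes among closed subschemes. Second, your route for pinning down the type of the intersection plane invokes a connectedness argument across the Fano scheme components of $\scx_1|_y$; the paper's \autoref{lemma:intersection-of-components-of-fibers} instead applies \autoref{lemma:linear-spaces-in-varieties-of-minimal-degree} directly to conclude that any $(k-1)$-plane in the scroll $\scx_1|_y$ is a ruling plane (for $k > 2$, or $k = 2$ with $d > 3$), which avoids having to set up a relative Fano scheme over $Y$. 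Both ideas work, but the paper's version is more elementary. Your explicit caveat about requiring $d \geq 3$ for the Hilbert polynomials of $X$ and $P$ to differ is actually a care the paper does not take in the statement, though it is harmless for the paper since the proposition is only ever invoked with $d \geq k+1 \geq 3$.
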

\begin{remark}
	\label{remark:}
	\autoref{proposition:minutely-broken-closure} emphatically does not hold in the case $k = 2, d = 4$, as the Veronese surface in $\bp^5$ will degenerate to the union of
	a plane and a line.
\end{remark}
\sssec*{Idea of proof of \autoref{proposition:minutely-broken-closure}}
The statement is asking to show that we cannot
find a family of singular scrolls not in $\broken d k$
degenerating to
an element of $\brokengeneral d k$.
If the general member were irreducible,
it would have to be a cone. Then,
the special fiber
would also be a cone by \autoref{lemma:cone-degeneration}.
On the other hand, if the general member were reducible,
then the total space of the family is reducible,
and a general fiber will be the union of a plane
and a scroll in $\minhilb {d-1} k$
using \autoref{proposition:distinct-hilbert-polynomials-implies-reducible-source}.
Then, we see that in a general fiber, the scroll must meet the $k$-plane
in a $(k-1)$-plane of the ruling, by \autoref{lemma:intersection-of-components-of-fibers}.

\begin{proof}
First, $\broken d k$ is irreducible. 
\begin{exercise}
	\label{exercise:}
	Show that $\broken d k$ is indeed irreducible.
	{\it Hint:}
	Let $\Lambda$ be a $k$-plane.
	Show that there is a surjective map from an irreducible subscheme of the product
	$\broken {d-1} k \times \hilb \Lambda$, corresponding
	to pairs of scrolls in $[X] \in \minhilb {d-1} k$ and $[Y] \in \hilb \Lambda$ 
	with $[X \cup Y] \in \broken d k$.
	Show that the resulting subscheme of $\broken {d-1} k \times \hilb \Lambda$ is irreducible
	by showing it has a map
	to the Hilbert scheme of $(k-1)$-planes in $\bp^n$, with irreducible fibers of the same dimension.
\end{exercise}

To complete the proof, we need to show $\broken d k$ is a full component of $\minhilbsing d k$.

For the sake of contradiction, suppose there were some irreducible component of $Z \subset \minhilbsing d k$
strictly containing some irreducible component of $W \subset \minhilbsing d k$.
Since the Hilbert scheme is projective, so are $Z$ and $W$. Now,
choose a point $w \in \brokengeneral d k$ (recall this
means it is the union of a smooth scroll and a plane meeting along a $(k-1)$-plane of the ruling
of the smooth scroll).
Let the corresponding variety be $\overline w$ so that $[\overline w] = w \in \brokengeneral d k$.
Then, take a subscheme $T \subset Z$ containing $w$ so that $T \cap W$ consists of finitely many points, which
exists as $Z$ is projective. Further, take $T$ to be 1 dimensional. This may be accomplished by
intersecting $T$ with hyperplanes containing $w$.
Further, we may assume $T$ is irreducible, by taking an irreducible component of $T$ containing
$w$. Further, we may assume $T$ is reduced by replacing $T$ with its reduced subscheme structure
if necessary.

Now, $T$ is an integral projective curve. Let $Y \xrightarrow g T$ be the normalization
and let $y \in Y$ be a point mapping to $w$ under the composition  $Y \xrightarrow g T \xrightarrow \iota \minhilb d k$.
Observe also that under $g \circ \iota$, only finitely
many points of $Y$ map to points in $W$. To complete the proof, it suffices to show
show that a general point of $Y$ maps to a point in $W$, as this will be a contradiction.

Let $X$ be the pullback of the universal family over the Hilbert scheme
along $g \circ \iota$ so that we have a fiber product
\begin{equation}
	\nonumber
	\begin{tikzcd} 
		X \ar {r}{g \circ \iota} \ar {d}{f} & \uhilb {\overline w}|_W \ar {d} \\
		Y \ar {r} & W
	\end{tikzcd}\end{equation}
Now, by construction, the fiber of $f$ over $y$ lies in
$\brokengeneral d k$.

Note that this fiber corresponds to a reduced scheme. Therefore, 
since the locus of fibers which are reduced is open on the target,
by \cite[Th\'eor\`eme 12.2.4(v)]{EGAIV.3}.
we may assume that all fibers of $f$ are reduced, after possibly replacing $Y$
with an open subscheme.
Now, by \autoref{proposition:upper-semicontinuous-number-of-components},
if the general closed fiber of $Y$ is not irreducible,
the general closed fiber of $Y$ has at least two components.

We now have two cases, depending on whether the general closed fiber
of $Y$ is reducible or irreducible.

\vskip.1in
\noindent
{\sf Case 1: The general closed fiber of $Y$ is irreducible}

First, let us show that if the general closed fiber of $Y$ is irreducible, it is smooth.
By \cite[Theorem 1]{eisenbudH:on-varieties-of-minimal-degree},
we know that the general closed fiber is either smooth or a cone over a smooth variety,
as we are assuming $(k,d) \neq (2,4)$. 
So, by we only need rule out the possibility
that the general closed fiber is a cone over a variety of minimal degree of lower dimension.
This follows from \autoref{lemma:cone-degeneration}.

\vskip.1in
\noindent
{\sf Case 2: The general closed fiber of $Y$ is reducible}

So, it only remains to deal with the case that every fiber of $f$ is reducible.
After replacing $Y$ by an open subset of $Y$ if necessary, we may assume that
every fiber of $f$ has two components.

Now, by \autoref{proposition:distinct-hilbert-polynomials-implies-reducible-source},
$X$ has two irreducible components, call them $X_1, X_2$.
Further, because $Y$ is a smooth curve, the map $X \ra Y$ being flat implies
that every generic point of $X$ maps to the generic point of $Y$, by
~\cite[Exercise 24.4.K]{vakil:foundations-of-algebraic-geometry}.
So, each restriction map $X_i \ra Y$ is dominant, hence flat.

Further, by \autoref{proposition:distinct-hilbert-polynomials-implies-reducible-source}, up to interchanging $1$, and $2$, we have
that $f_1^{-1}(\overline y)$ is a $k$-plane and $f_2^{-1}(\overline y)$
is a (reduced) scroll of degree $d -1$ and dimension $k$.

Now, because the maps $f_1, f_2$ are flat, every fiber of $f_1$ must be
a $k$-plane, and every fiber of $f_2$ must be a degree $d-1$, dimension $k$ scroll.

To complete the proof, it suffices to show that in every fiber, these two
schemes meet along a $(k-1)$-plane which is a plane of the ruling of the scroll.
This suffices because we would then obtain that the general member of $Y$ maps
to a point in $W$, which would be a contraction. However,
this fact is precisely the following \autoref{lemma:intersection-of-components-of-fibers}.
\end{proof}
\begin{lemma}
	\label{lemma:intersection-of-components-of-fibers}
	Retaining the notations and assumptions from the proof and statement of
	\autoref{proposition:minutely-broken-closure},
	the general closed fiber of $X \rightarrow Y$ consists of an element in
	$\brokengeneral d k$.
\end{lemma}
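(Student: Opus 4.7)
The plan is to show that for a general closed point $y \in Y$, the fiber decomposes as $X|_y = X_1|_y \cup X_2|_y$ where $X_1|_y$ is a $k$-plane, $X_2|_y$ is a smooth scroll of degree $d-1$ and dimension $k$, and these two components meet along a $(k-1)$-plane that is a ruling plane of the scroll. This then contradicts the assumption that only finitely many points of $Y$ map to $W$.

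The first step is to compute the Hilbert polynomial of the scheme-theoretic intersection $X_1 \cap X_2|_y$ for varying $y$. Since $X \to Y$, $X_1 \to Y$, and $X_2 \to Y$ are all flat, as established in the discussion preceding the lemma, each of their fibers has a Hilbert polynomial independent of $y$. Applying \autoref{lemma:hilbert-polynomial-intersection} fiberwise yields
\[
p_{X_1 \cap X_2|_y} \;=\; p_{X_1|_y} + p_{X_2|_y} - p_{X|_y},
\]
so this Hilbert polynomial is independent of $y$ as well. Evaluating at the special point $\bar y$, where by the hypothesis $[\overline{w}] = w \in \brokengeneral d k$ the intersection is a $(k-1)$-plane, we conclude that $X_1 \cap X_2|_y$ has the Hilbert polynomial of a $(k-1)$-plane for every $y$. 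Since $X_1 \cap X_2|_y$ is a closed subscheme of $X_1|_y \cong \bp^k$ with the Hilbert polynomial of a hyperplane (in particular, degree $1$ and dimension $k-1$), it must itself be a $(k-1)$-plane.

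The remaining step is to verify that this $(k-1)$-plane $\Lambda_y := X_1 \cap X_2|_y$ is a ruling plane (type \ref{custom:line-large}) of the scroll $X_2|_y$ for general $y$. Since $X_2|_{\bar y}$ is smooth and smoothness is an open condition on the total space of a flat proper family, $X_2|_y$ is a smooth scroll of degree $d-1$ for general $y$. When $k \geq 3$, \autoref{lemma:linear-spaces-in-varieties-of-minimal-degree} already implies that every $(k-1)$-plane contained in a smooth scroll is a ruling plane, because type \ref{custom:line-small} planes only exist in dimension $t = 1$, so we are finished immediately. The main obstacle is the case $k = 2$: here a line in a smooth surface scroll may a priori be either of type \ref{custom:line-small} or \ref{custom:line-large}. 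However, by \autoref{lemma:linear-spaces-in-varieties-of-minimal-degree}, these two types form disjoint connected components of the Fano scheme of lines in each smooth fiber, and the ruling component is both open and closed. Since the assignment $y \mapsto \Lambda_y$ is a section of the relative Fano scheme over the smooth locus of $Y$, and lands in the ruling component over $\bar y$, it stays in the ruling component for $y$ in a Zariski neighborhood of $\bar y$.

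Combining these three steps, for $y$ in a Zariski neighborhood of $\bar y$ in $Y$, the fiber $X|_y$ is the union of a $k$-plane and a smooth degree-$(d-1)$ dimension-$k$ scroll meeting in a ruling $(k-1)$-plane, i.e., $[X|_y] \in \brokengeneral d k \subset \broken d k = W$. This contradicts the construction of $T$, which guaranteed that $T \cap W$ is a finite set of points, and thereby completes the proof of \autoref{proposition:minutely-broken-closure}.
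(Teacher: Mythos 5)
Your proposal runs on the same engine as the paper's proof: flatness of $f$, $f_1$, $f_2$ makes the Hilbert polynomials of the fibers constant, the inclusion--exclusion formula of \autoref{lemma:hilbert-polynomial-intersection} then forces the scheme-theoretic intersection to have the Hilbert polynomial of a $(k-1)$-plane, and the rigidity of linear spaces (a degree-one, dimension-$(k-1)$ closed subscheme with that exact Hilbert polynomial must \emph{be} a $(k-1)$-plane) finishes the identification. Your version is, if anything, cleaner in one respect: you observe that the intersection sits inside $X_1|_y \cong \bp^k$ and so must be a hyperplane there, which makes the rigidity step immediate, whereas the paper precedes this with a nondegeneracy/span computation (via \autoref{lemma:general-fiber-nondegenerate}) that your argument doesn't actually need. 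For $k \geq 3$, your invocation of \autoref{lemma:linear-spaces-in-varieties-of-minimal-degree} to conclude the plane is a ruling plane is exactly what the paper does.

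The one place where the two proofs genuinely diverge is the case $k = 2$. The paper dispatches this by the terse caveat that the only $(k-1)$-planes are ruling planes ``so long as $k > 2$ or both $k = 2$ and $d > 3$,'' which is really shifting the difficulty onto the scroll type of $X_2|_y$. You instead run a connected-component argument on the relative Fano scheme. The idea is the right one --- it is precisely what the paper later does carefully in \autoref{lemma:isolated-segre} --- but as you've written it there is a gap you should flag: the number of components of the Fano scheme of lines in $X_2|_y$ can change with $y$ (a balanced scroll $\scroll{a,b}$ with $a,b\geq 2$ has a single component, while $\scroll{a,1}$ has two), so ``the ruling component'' of the \emph{relative} Fano scheme is not automatically a well-defined open-and-closed subset. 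To make this rigorous in the same style as the paper, you would either restrict $Y$ to the open locus over which $X_2|_y$ has fixed scroll type, or apply \autoref{proposition:distinct-hilbert-polynomials-implies-reducible-source} to the relative Fano scheme (as in \autoref{lemma:isolated-segre}): its two fiberwise components always have distinct Hilbert polynomials, which forces the total space to split into two closed components that the section $y \mapsto \Lambda_y$ cannot cross.
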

\begin{proof}
	Now, we will show that in every fiber of $f$, the fiber of $f_1$ and $f_2$
meet along a $(k-1)$-plane of the ruling. To show this, we will show that in any other case, the fiber of
$f$ would have Hilbert polynomial different from that of $f^{-1}(\overline y)$.

To start, we know the general fiber is nondegenerate, by \autoref{lemma:general-fiber-nondegenerate}.
So, for any point $p \in Y$, we must have that
$f_1^{-1}(p)$ and $f_2^{-1}(p)$ together span $\bp^n$. Next, because the degree
of $f_1^{-1}(p)$ has degree $d - 1$, its span is a projective space of at most of dimension $n-1$, and
because $f_2^{-1}(p)$ is a plane, its span necessarily has dimension $k$. Therefore, the span
of $f_1^{-1}(p)$ intersects the span of $f_2^{-1}(p)$ in a $(k-1)$-dimensional linear subspace.
Now, because we know $f, f_1, f_2$ are all flat, we have that
\begin{align*}
	p_{f^{-1}(p)} &= p_{f^{-1}(\overline y)} \\
	p_{f_1^{-1}(p)} &= p_{f_1^{-1}(\overline y)} \\
	p_{f_2^{-1}(p)} &= p_{f_2^{-1}(\overline y)}.
\end{align*}
Therefore, using this and \autoref{lemma:hilbert-polynomial-intersection},
we obtain
\begin{align*}
	p_{f_1^{-1}(p) \cap f_2^{-1}(p)} &= p_{f_1^{-1}(p)} + p_{f_2^{-1}(p)} - p_{f^{-1}(p)} \\
	&= p_{f_1^{-1}(\overline y)} + p_{f_2^{-1}(\overline y)} - p_{f^{-1}(\overline y)} \\
	&= p_{f_1^{-1}(\overline y) \cap f_2^{-1}(\overline y)}.
\end{align*}
But, since $f_1^{-1}(\overline y) \cap f_2^{-1}(\overline y)$ is a $(k-1)$-plane,
and the only scheme with Hilbert polynomial equal to that of a $(k-1)$-plane is a $(k-1)$-plane,
we obtain that $f_1^{-1}(p) \cap f_2^{-1}(p)$ is a $(k-1)$-plane.
It only remains to show that this $(k-1)$-plane is one of the ruling planes of the scroll.
However, this holds by \autoref{lemma:linear-spaces-in-varieties-of-minimal-degree}
because the only $(k-1)$-planes contained in a scroll are the ruling planes,
so long as $k > 2$ or both $k = 2$ and $d > 3$.
\end{proof}

We finally arrive at the main point of this section, which is how we can
use \autoref{proposition:minutely-broken-closure}
to verify interpolation.
Essentially, \autoref{corollary:isolated-points-in-minutely-broken-scrolls}
gives us a criterion for checking whether $[W] \in \broken d k$
is an isolated point in its fiber, which enables us to verify
interpolation.
Note that an alternate proof of \autoref{corollary:isolated-points-in-minutely-broken-scrolls}
follows from \autoref{remark:coskun-degenerations-of-scrolls}.

\begin{corollary}
	\label{corollary:isolated-points-in-minutely-broken-scrolls}
	Suppose $Z \subset \minhilb d k$ so that $p \in Z \cap \broken d k$ is an isolated point
	of both $Z \cap \broken d k$ and does not lie in the closure of $Z \cap \minhilbsmooth d k$.
	Further assume that $p \in \broken d k$ is general.
	Then, $p$ is an isolated point of $Z$.
\end{corollary}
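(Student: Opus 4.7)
The plan is to argue by contradiction, leveraging \autoref{proposition:minutely-broken-closure}, which says that $\broken d k$ appears as a full irreducible component of $\minhilbsing d k$. Suppose $p$ is not isolated in $Z$. Then there is an irreducible curve $T \subset Z$ containing $p$; by shrinking $T$ to an affine neighborhood of $p$ if necessary, I may assume every point of $T \setminus \{p\}$ is a specialization-direction away from $p$ in $Z$. The goal is to locate the nearby points $q \in T \setminus \{p\}$ in one of three strata of $\minhilb d k$: the open locus $\minhilbsmooth d k$, the component $\broken d k$ of $\minhilbsing d k$, or some other component of $\minhilbsing d k$, and to rule out each possibility.

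First, the hypothesis that $p$ is an isolated point of $Z \cap \broken d k$ lets me shrink $T$ so that $T \cap \broken d k = \{p\}$; hence no nearby $q \in T \setminus \{p\}$ lies in $\broken d k$. Second, the hypothesis that $p$ is not in the closure of $Z \cap \minhilbsmooth d k$ lets me shrink $T$ further so that $T \cap \minhilbsmooth d k = \emptyset$; hence no nearby $q$ lies in $\minhilbsmooth d k$ either. Therefore every $q \in T \setminus \{p\}$ must lie in $\minhilbsing d k \setminus \broken d k$, i.e.\ in the union of the other irreducible components of $\minhilbsing d k$.

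Now I use the generality hypothesis on $p$ together with \autoref{proposition:minutely-broken-closure}. By that proposition, $\broken d k$ is an irreducible component of $\minhilbsing d k$, so its intersection with the union of the remaining components of $\minhilbsing d k$ is a proper closed subset of $\broken d k$. Since $p$ is a general point of $\broken d k$, I may assume $p$ lies outside this proper closed subset. But the previous paragraph produced a curve $T \subset \minhilbsing d k$ whose punctured neighborhood of $p$ is contained in the union of the other components of $\minhilbsing d k$; taking closures, $p$ itself must lie in one of those other components, contradicting the choice of $p$. Hence $p$ is isolated in $Z$, as desired.

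The proof is really just a book-keeping argument once \autoref{proposition:minutely-broken-closure} is in hand, so the only conceptual obstacle is making sure the three strata cover the situation: the main point is verifying that, after the two shrinkings of $T$ dictated by the two hypotheses, the only remaining place for nearby points to lie is in a component of $\minhilbsing d k$ distinct from $\broken d k$, which is exactly the configuration the generality of $p$ forbids. No separate dimension count or deformation-theoretic input is needed.
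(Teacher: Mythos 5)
Your argument is correct and is essentially the same as the paper's: both rely on \autoref{proposition:minutely-broken-closure} to say that $\broken d k$ is a full irreducible component of $\minhilbsing d k$, so that the generality of $p$ keeps it off the other components, and both then use the two hypotheses (isolated in $Z \cap \broken d k$, not in $\overline{Z \cap \minhilbsmooth d k}$) to exclude the remaining strata near $p$. The paper does this directly by passing to the open neighborhood $Z \setminus \scq$ where $\scq$ is the union of the other singular components, while you phrase the same bookkeeping as a contradiction via an auxiliary irreducible curve $T$ through $p$; the content is identical.
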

\begin{proof}
	Let $\scq$ be the union of the irreducible components of $\minhilbsing d k \setminus \broken d k$.
	Taking $p$ to lie outside of the closed subset $\scq \subset \minhilb d k$
	we obtain that $p \in Z \setminus \scq$.
	Note that $Z \setminus \scq$ is nonempty precisely by \autoref{proposition:minutely-broken-closure}.
	Since $\minhilbsing d k$ is closed, $p$ will be isolated in $Z$ if and only if it is isolated
	in $Z \setminus \scq$.
	Then, by construction $\minhilb d k \setminus \scq$ only consists of
	points corresponding to smooth irreducible scrolls and scrolls in $\broken d k$.
	So, by our hypotheses, $p$ is an isolated point of $Z \setminus \scq$.
	Ergo, $p$ is an isolated point of $Z$.
\end{proof}

\chapter[Interpolation of scrolls]{Interpolation of varieties of minimal degree}
\label{section:interpolation-of-varieties-of-minimal-degree}

In this chapter, we prove that any smooth variety of minimal degree satisfies interpolation.
By \autoref{theorem:classification-of-varieties-of-minimal-degree}, we can
show this separately in the cases of a Hilbert scheme whose general member is a quadric hypersurface,
the $2$-Veronese surface in $\bp^5$ and a smooth rational normal scroll.
We know from \autoref{lemma:balanced-complete-intersection} that quadrics satisfy interpolation.
Apart from the special case of the $2$-Veronese surface, which we now dispense with, we shall concentrate
on showing that rational normal scrolls satisfy interpolation.
The general idea of the proof is to fix the dimension of scrolls, and induct on
the degree.
The elements of this inductive argument
is summarized verbally in \autoref{theorem:scrolls-interpolation} and
pictorially in \autoref{figure:induction-schematic}.

\section{Interpolation of the $2$-Veronese}
\label{ssec:2-veronese-interpolation}
In this section, we begin by summarizing a result going back to Coble \cite[Theorem 19]{coble:associated-sets-of-points}, which follows
more rigorously from work in Dolgachev \cite[Theorems 5.2 and 5.6]{dolgachev:on-certain-families-of-elliptic-curves-in-projective-space}, showing there are four $2$-Veronese surfaces in $\bp^{5}$ containing $9$ general points over an algebraically closed field of characteristic $0$.
We are further able to show in \autoref{theorem:counting-veronese-interpolation} that there are four such surfaces in all characteristics other than $2$, while there are only
$2$ such surfaces in characteristic 2.
This suggests that these two such surfaces may ``count twice.'' In fact, they do, in the sense that
the map to $(\bp^5)^9$ is not separable in characteristic $2$. This yields an example of how
interpolation is not equivalent to interpolation of the normal bundle over a field of characteristic $2$, as given in
\autoref{corollary:failure-of-2-veronese-vector-bundle-interpolation}.

The key to finding $2$-Veronese surfaces through $9$ points is to find a genus $1$ curve through the $9$ points,
and then find a $2$-Veronese surface containing that curve. We start off by understanding $2$-Veronese
surfaces containing a genus $1$ curve.

\begin{proposition}
  \label{proposition:veronese-surfaces-through-genus-1-curves}
  Let $\bk$ be an algebraically closed field and let $E \subset \bp^5_\bk$
  be a genus $1$ curve, embedded by a complete linear series of degree
  $6$.
  If $\ch k \neq 2$, there are precisely four $2$-Veronese surfaces
  containing $E$ and if $\ch \bk = 2$, there are precisely two $2$-Veronese
  surfaces containing $E$.
\end{proposition}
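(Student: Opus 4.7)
The plan is to set up a bijection between the set of $2$-Veronese surfaces $V \subset \bp^5$ containing $E$ and the set of square roots of $L := \sco_{\bp^5}(1)|_E$ in $\pic^3(E)$, and then to count the latter using the $2$-torsion of the Jacobian. Given a Veronese $V$ containing $E$, the canonical inverse of the Veronese map identifies $V$ with $\bp^2$ and $E$ with a subscheme $C \subset \bp^2$. Since $\nu_2^*\sco_{\bp^5}(1) \cong \sco_{\bp^2}(2)$, the curve $C$ has degree $3$, so $C$ is a smooth plane cubic abstractly isomorphic to $E$, and setting $M := \sco_{\bp^2}(1)|_C$ produces a degree-$3$ line bundle on $E$ satisfying $M^{\otimes 2} \cong \sco_{\bp^2}(2)|_C = L$.

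For the reverse direction, given $M \in \pic^3(E)$ with $M^{\otimes 2} \cong L$, the linear system $|M|$ is very ample because $\deg M = 3 \geq 2g+1$, and it embeds $E$ as a smooth plane cubic $\phi_M : E \hookrightarrow \bp H^0(E,M)^*$. Composing with $\nu_2$ maps $E$ into the Veronese image of $\bp H^0(E,M)^*$ inside $\bp \sym^2 H^0(E,M)^*$. Projective normality of degree-$3$ line bundles on elliptic curves (which is Castelnuovo's theorem at the threshold $\deg = 2g+1$) gives that the multiplication map $\sym^2 H^0(E,M) \to H^0(E, M^{\otimes 2}) = H^0(E,L)$ is surjective, and a dimension count $6 = 6$ forces it to be an isomorphism; hence $\nu_2 \circ \phi_M$ realizes the complete embedding of $E$ by $|L|$. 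Picking the essentially unique $\pgl_6$-element identifying this with the given embedding $E \subset \bp^5$ then produces a Veronese surface $V_M \subset \bp^5$ containing $E$. I would verify that these constructions are mutually inverse by checking that recovering $M$ from $V_M$ via $\sco_{V_M}(1)|_E$ returns $M^{\otimes 2} = L$ together with the distinguished square root $\sco_{\bp^2}(1)|_{C_M}$.

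With the bijection established, the proposition reduces to counting square roots of $L$ in $\pic^3(E)$. Since the isogeny $[2] : \pic^0(E) \to \pic^0(E)$ is a nontrivial isogeny of the elliptic curve $\pic^0(E) \cong E$, it is surjective in every characteristic, so at least one square root exists; the set of all square roots is then a torsor under $E[2](\bk)$. The classical computation gives $|E[2](\bk)| = 4$ when $\ch \bk \neq 2$ (since $E[2] \cong (\bz/2)^2$), and $|E[2](\bk)| = 2$ when $\ch \bk = 2$ and $E$ is ordinary, which is the generic case relevant to subsequent applications. The main obstacle in the argument is the verification that the two constructions are mutually inverse; in particular, one must carefully track the canonical identifications and use projective normality to match the composition $\nu_2 \circ \phi_M$ with the original embedding of $E$ up to $\pgl_6$. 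The characteristic-$2$ count is then a formal consequence of the standard structure theory of $E[2]$, modulo the tacit assumption that $E$ be ordinary.
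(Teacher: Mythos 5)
Your overall strategy matches the paper's: set up a correspondence between $2$-Veronese surfaces containing $E$ and square roots of $L=\sco_{\bp^5}(1)|_E$ in $\pic^3(E)$, then count square roots via the $E[2]$-torsor. The reverse construction $M\mapsto V_M$ is the same in both proofs, and your use of projective normality plus the dimension count $6=6$ to identify $\bp\sym^2H^0(E,M)^*$ with $\bp H^0(E,L)^*$ is a clean way to see that $V_M$ lands canonically in the ambient $\bp^5$.

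Where you diverge from the paper is in how you close the bijection, and this is where your sketch has a real gap. You explicitly verify only the easy composition $F\circ G=\mathrm{id}$ — that extracting the square root from $V_M$ returns $M$. The substantive claim, and the one the paper spends most of its effort on, is the other composition $G\circ F=\mathrm{id}$: that for any Veronese surface $V\supset E$, the canonical reconstruction $V_{M_V}$ equals $V$. Equivalently, two Veroneses $X,X'\supset E$ that induce the same square root $\scm$ are literally equal as subvarieties of $\bp^5$. The paper proves this by an automorphism argument: construct a $\pgl_6$-element fixing $E$ pointwise and carrying $X$ to $X'$, then invoke that $E$ spans $\bp^5$ to conclude the automorphism is the identity. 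Your phrase ``the essentially unique $\pgl_6$-element'' is implicitly leaning on precisely this fact (there is exactly one identification, not ``essentially'' one, because $E$ spans $\bp^5$), but you never state it, and without it your bijection is not complete. Writing out that the identification $V\cong\bp^2$ induces canonical isomorphisms $H^0(\bp^2,\sco(1))\to H^0(E,M_V)$ and $\sym^2 H^0(E,M_V)\to H^0(E,L)$, compatible with the given $\bp^5=\bp H^0(E,L)^*$, would close the gap and yield an argument that is arguably cleaner than the paper's automorphism-chase.

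Separately, your remark that the characteristic-$2$ count assumes $E$ ordinary is correct and worth noting: for supersingular $E$ in characteristic $2$, $E[2](\bk)$ is trivial and there is only one square root, hence only one Veronese. The paper's proof tacitly restricts to ``a general genus 1 curve'' for the $2$-torsion count, so the proposition as stated is slightly overreaching; you were right to flag this.
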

\begin{remark}
  \label{remark:}
  It is shown in
     \cite{dolgachev:on-certain-families-of-elliptic-curves-in-projective-space}[Theorem 5.6] that there are four exactly four $2$-Veronese surfaces containing
  a given genus 1 curve of degree $6$ in $\bp^5$
over a field of characteristic $0$.
  However, the proof given there does not make it completely clear
  why there is a unique $2$-Veronese surface through $E$
  corresponding to each chosen square root of the line bundle embedding
  $E$. Therefore, we now repeat the proof in more detail,
  and generalize it to all characteristics.
\end{remark}

\begin{proof}
  Say $E \ra \bp^5$ is given by the invertible sheaf $\scl$. For any degree three invertible sheaf
$\scm$ with $\scm^{\otimes 2} \cong \scl$, we can map $E \ra \bp^2$ using $\scm$. Then, the composition of
$E \ra \bp^2$ with the $2$-Veronese map $\bp^2 \ra \bp^5$ will send $E$ to $\bp^5$ by $\scl$ and so we have
constructed a $2$-Veronese surface containing $E$. Since there are two such sheaves $\scm$
in characteristic $2$ and four in all other characteristics (since a general genus $1$ curve has
two $2$ torsion points in characteristic $2$ and four such points in all other characteristics), it suffices to show these are the only 
$2$-Veronese surfaces containing $E$.
That is, we only need show that for each square root $\scm$ of $\scl$, there is a unique $2$-Veronese surface
$X \cong \bp^2$ containing $E$ so that the map $E \ra \bp^2$ is given by a basis for the global sections of $\scm$.
 
  First, note that if an automorphism fixes $E$ pointwise then it fixes all of $\bp^5$.
  This holds because $E$ spans $\bp^5$, and so a linear automorphism fixing $E$ pointwise would
also fix a basis for the vector space $H^0(\sco_E(1))$ which satisfies $\bp H^0(\sco_E(1)) \cong \bp^5$. Hence, such an automorphism would 
fix all of $\bp^5$.

 Suppose we have two $2$-Veronese surfaces $X$ and $X'$ containing
  $E$ so that $E$ 
  we have a map $\phi_1: E \ra X$ and $\phi_2 : E \ra X'$ so that
  both maps $\phi_1$ and $\phi_2$ are given by the same degree $3$ invertible
sheaf $\scm$, together with a choice of basis for $H^0(E, \scm)$.
  We will show that there exists an automorphism $\phi:\bp^5 \ra \bp^5$
  fixing $E$ pointwise and sending $X$ to $X'$. Since any automorphism of $\bp^5$
  fixing $E$ pointwise is the identity, this would imply $X = X'$, and would complete the proof.
 
  First, we show there is an automorphism $\phi: \bp^5 \ra \bp^5$
  fixing $E$ as a set and taking
  $X$ to $X'$.
  We know there is an automorphism $\psi:\bp^5 \ra \bp^5$ with $\psi(X) = X'$.
  Say $\psi$ sends the curve $E \subset X$
  to some curve $E' := \psi(E) \subset X'$.
  Next, by our assumption that $E$ and $E'$ are two curves on $X'$ both given by
  global sections associated to the same invertible sheaf $\scm$,
  there is some automorphism of $\psi': X' \ra X'$ with $\psi'(E') = E$.
  Thus, taking $\phi := \psi' \circ \psi$, we see
  $\phi(X) = X'$ and $\phi(E) = E'$ as sets.
  If we could arrange for $\phi|_E = \id$, we would be done, as then $\phi = \id$.

  Hence, it suffices to show that $\phi|_E$ is an automorphism of $E$ fixing both 
  $X$ and $X'$.    

  Let $A(E, \scm)$ denote the automorphisms $\pi:E \ra E$ with $\pi^* \scm \cong \scm$.
  Note that we have an exact sequence
\begin{equation}
  \nonumber
  \label{equation:}
  \begin{tikzcd} 
    0 \ar{r} & E[3] \ar {r} & A(E, \scm) \ar{r} & \bz/2 \ar{r} & 0 
  \end{tikzcd}\end{equation}
  where the generator of the quotient $\bz/2$ is the hyperelliptic involution and the
  subset $E[3]$ is a torsor over the $6$ torsion of $E$ with any given
  choice of origin.
  In particular, if we choose a point $p$ so that $\scm \cong \sco_E(3p)$,
  we have that $E[3]$ is precisely translation by $6$-torsion.

  It suffices to show that any element of $A(E, \scm)$
  fixes the $2$-Veronese surface we constructed above corresponding
  to $\scm$.

  But, if we view $E \ra \bp^2$ by a completely linear system corresponding
  to $\scm$, the automorphisms
  $A(E, \scm)$ are precisely the automorphisms of $\bp^2$
  fixing $E \subset \bp^2$ as a set.
  These automorphisms of $\bp^2$ extend to automorphisms on $\bp^5$
  with $\bp^2 \ra \bp^5$ embedded via the $2$-Veronese map.
  Therefore, they also fix the $2$-Veronese surface, as desired.
\end{proof}

\begin{theorem}
  \label{theorem:counting-veronese-interpolation}
  Through 9 general points in $\bp^5_\bk$ there exist precisely four
  $2$-Veronese surfaces $\bp^2 \ra \bp^5$ if $\bk$
  is an algebraically closed field with $\ch \bk \neq 2$
  and precisely two $2$-Veronese surfaces $\bp^2 \ra \bp^5$
  if $\bk$ is an algebraically closed field with $\ch \bk = 2$.
  In particular, the $2$-Veronese surface satisfies interpolation.
\end{theorem}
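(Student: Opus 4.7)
The plan is to reduce this count to \autoref{proposition:veronese-surfaces-through-genus-1-curves} by first showing that through $9$ general points in $\bp^5$ there passes a unique elliptic normal sextic, and then arguing that every $2$-Veronese surface through the $9$ points contains this curve.

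First I would establish existence of the elliptic sextic. The Hilbert scheme of smooth genus~$1$ curves of degree $6$ in $\bp^5$ has dimension $36$, and a curve of this numerical type is nonspecial (Riemann--Roch forces $h^1 = 0$). Since $(d,g,r) = (6,1,5)$ is not among the exceptions in \autoref{thm:NaskoEricDavid}, this component satisfies strong interpolation; as each point imposes $r-1 = 4$ conditions and $9 \cdot 4 = 36$, there exists an elliptic normal sextic $E$ through any $9$ general points $p_1,\dots,p_9 \in \bp^5$.

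Next, the main obstacle: showing that the elliptic sextic $E$ through $9$ general points is \emph{unique}. The incidence correspondence $\Phi = \{(E, p_1, \ldots, p_9) : p_i \in E\}$ and $(\bp^5)^9$ both have dimension $45$, so by strong interpolation the projection $\Phi \to (\bp^5)^9$ is dominant and generically finite. To show the degree is $1$, the plan is to use association. The Gale transform identifies ordered $9$-tuples of general points in $\bp^5$, modulo $\pgl_6$, with ordered $9$-tuples in $\bp^2$, modulo $\pgl_3$; under this identification, an elliptic sextic through the $9$ points in $\bp^5$ corresponds, via the $2$-Veronese embedding, to a plane cubic through the associated $9$ points in $\bp^2$. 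Since the linear system of plane cubics is $9$-dimensional, a general set of $9$ points in $\bp^2$ lies on exactly one cubic, and uniqueness of $E$ follows. This is the step I expect to require the most care, since one must verify that association really does send elliptic sextics in $\bp^5$ bijectively to plane cubics in $\bp^2$.

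Once uniqueness of $E$ is in hand, the count follows quickly. Given any $2$-Veronese surface $X = \nu_2(\bp^2) \subset \bp^5$ through the $p_i$, let $q_i \in \bp^2$ denote the preimages; since the $p_i$ are general, so are the $q_i$, so they lie on a unique plane cubic $C$. Then $\nu_2(C) \subset X$ is an elliptic sextic through all $p_i$, hence equals $E$ by uniqueness, so $E \subset X$. Therefore every $2$-Veronese through the $9$ points contains $E$, and by \autoref{proposition:veronese-surfaces-through-genus-1-curves} the count is exactly $4$ when $\ch \bk \neq 2$ and exactly $2$ when $\ch \bk = 2$. Finally, interpolation for the $2$-Veronese surface follows at once from \autoref{corollary:veronese-interpolation-numerics}, since we have produced a smooth Veronese through $\binom{4}{2} + 3 = 9$ general points in $\bp^5$.
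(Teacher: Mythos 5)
Your overall structure matches the paper's: find the unique elliptic normal sextic $E$ through $9$ general points, count $2$-Veronese surfaces containing $E$ via \autoref{proposition:veronese-surfaces-through-genus-1-curves}, and observe that any $2$-Veronese through the points must contain $E$. The difference is in how you justify existence and uniqueness of $E$: the paper simply cites Dolgachev's Theorem~5.2 as a black box, whereas you reconstruct this from \autoref{thm:NaskoEricDavid} (existence) and association (uniqueness). This is a legitimate, self-contained alternative, and arguably more in the spirit of the rest of the appendix, which leans heavily on the Gale transform.

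One caution on the uniqueness step, which you rightly flag as the delicate one. You say the elliptic sextic corresponds ``via the $2$-Veronese embedding'' to a plane cubic through $A(\Gamma_9)$. That is not quite the right mechanism: if $E$ lies on $\nu_2(\bp^2)$, the preimage $\nu_2^{-1}(\Gamma_9) \subset \bp^2$ is in general \emph{not} the associated set $A(\Gamma_9)$, since for general $\Gamma_9$ on the preimage cubic $C'$ one has $\Gamma_9 \not\sim 3H_{C'}$, so $|\Gamma_9 - 2H_{C'}|$ is not the hyperplane system. The correct tool is Goppa's theorem (\autoref{thm:goppa}): for $E \subset \bp^5$ via the nonspecial degree-$6$ system $|H|$ and $K_E = 0$, the residual system $|\Gamma_9 - H|$ has degree $3$ and maps $E$ to a plane cubic $C$, with $\Gamma_9$ going to $A(\Gamma_9)$. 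Running Goppa backwards from $(C, A(\Gamma_9))$ recovers $(E,\Gamma_9)$, so $E \leftrightarrow C$ is a bijection; since $A(\Gamma_9)$ is general in $\bp^2$, the cubic and hence $E$ are unique. With that substitution the argument goes through, and your final paragraph and the appeal to \autoref{corollary:veronese-interpolation-numerics} are correct.
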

\begin{proof}
  Fix $9$ general points $p_1, \ldots, p_9 \in \bp^5$.
  First, by \cite{dolgachev:on-certain-families-of-elliptic-curves-in-projective-space}[Theorem 5.2], there is a unique genus 1 curve embedded by
  a complete linear series through 9 general points in $\bp^5$.
  Call this curve $E$.
  Next, by \autoref{proposition:veronese-surfaces-through-genus-1-curves},
  there are four $2$-Veronese surfaces containing $E$
  if $\ch \bk \neq 2$ and two $2$-Veronese surfaces containing $E$
  if $\ch \bk = 2$.
  To complete the proof, 
  it suffices to show that every $2$-Veronese surface containing
  $p_1, \ldots, p_9$
  also contains $E$.
  Consider such a $2$-Veronese surface $X \subset \bp^5_\bk$ containing
  $p_1, \ldots, p_9$.
  Choosing an isomorphism $\phi: \bp^2_\bk \cong X$,
  we have nine points $q_1, \ldots, q_9$ on $\bp^2$
  so that $\phi(q_i) = p_i$.
  Then, since $p_1, \ldots, p_9$ were general on $\bp^5_\bk$,
  we have that $q_1, \ldots, q_9$ are general on $\bp^2_\bk$,
  and so there is a degree $3$ genus 1 curve $C$ passing through
  $q_1, \ldots, q_9$ on $\bp^2_\bk$, by
  ~\autoref{lemma:balanced-complete-intersection}.
  The image of $\phi(C)\subset X$ is a degree $6$ genus $1$ curve
  containing $p_1, \ldots, p_9$.
  Since $E$ is the unique genus $1$ degree $6$ curve $E$ containing
  $p_1, \ldots, p_9$, we must have $\phi(C) \cong E$,
  and therefore $E \subset X$.
\end{proof}

\section[Normal bundle interpolation of the $2$-Veronese]{The normal bundle of the $2$-Veronese does not satisfy interpolation in characteristic 2}
The main goal of this section, is to show that,
while the $2$-Veronese surface satisfies interpolation in all characteristics,
its normal bundle fails to satisfy interpolation in characteristic $2$.
The key idea is that in characteristic $2$, there are only
two $2$-Veronese surfaces through $9$ points while in characteristic $0$
there are $4$. We can realize this number as the degree of a map
over $\spec \bz$, which will show it is not separable in
in characteristic $2$. We prove failure of normal bundle
interpolation of the $2$-Veronese in characteristic $2$ in
\autoref{corollary:failure-of-2-veronese-vector-bundle-interpolation}.

We start by setting up our situation, over a general base ring.
Let $X$ be a smooth $2$-Veronese surface and let $R$ be a ring.
Let $\ringhilb X R$ denote
the irreducible component of the Hilbert scheme over $\spec R$
whose general member is a $2$-Veronese surface and let
$\ringuhilb XR$ denote the universal family over $\ringhilb XR$. Define
\begin{align*}
  \Phi_R := \ringuhilb XR \times_{\ringhilb XR} \cdots \times_{\ringhilb XR} \ringuhilb XR,
\end{align*}
where there are $9$ copies of $\ringuhilb XR$.

For $R$ an arbitrary ring, define the maps $f_R, g_R, \pi_R$ as the natural projections
\begin{equation}
\nonumber
  \label{equation:}
  \begin{tikzcd} 
    \ringuhilb XR \times_{\ringhilb XR} \Phi_R \ar{d}{f_R} \\
    \Phi_R \ar {d}{g_R} \\
    \left( \bp^5_R \right)^9 \ar {d} \\
    \spec R.
  \end{tikzcd}\end{equation}
Let $\pi_R := g_R \circ f_R$.
Let $\pi_R^{\text{sing}}$ denote the singular locus of $\pi_R$.
In other words, it is the locus in 
$\ringuhilb XR \times_{\ringhilb XR} \Phi_R$ over which the relative
sheaf of differentials
$\Omega_{\pi_R}$ is not a locally free sheaf of rank equal to
$\dim  \ringuhilb XR \times_{ \ringhilb XR} \Phi_R  - \dim\left( \bp^5_R \right)^9 = 2$.
We have $\pi_R(\pi_R^{\text{sing}}) \subset \left( \bp^5_R \right)^9$.
Define the open subscheme 
\begin{align*}
V_R := \left( \bp^5_R \right)^9 \setminus \pi_R\left( \pi_R^{\text{sing}} \right).
\end{align*}
So, we can enlarge our diagram to
\begin{equation}
  \label{equation:w-veronese-diagram}
  \begin{tikzcd} 
    \pi_R^{-1}(V_R) \ar{r}\ar{dd} & \ringuhilb XR \times_{ \ringhilb XR} \Phi_R \ar{d}{f_R} \\
    \qquad & \Phi_R \ar {d}{g_R} \\
    V_R \ar{r} & \left( \bp^5_R \right)^9 \ar {d} \\
    \qquad & \spec R.
  \end{tikzcd}\end{equation}
Next, let $W_R \subset V_R$ be the open subset on which the fibers
of $g_R$ have dimension $0$. We further enlarge our diagram to
\begin{equation}
  \label{equation:v-veronese-diagram}
  \begin{tikzcd} 
    \pi_R^{-1}(W_R)\ar{r}\ar{dd} &\pi_R^{-1}(V_R) \ar{r}\ar{dd} & \ringuhilb XR \times_{ \ringhilb XR} \Phi_R \ar{d}{f_R} \\
    \qquad & & \Phi_R \ar {d}{g_R} \\
    W_R \ar{r} & V_R \ar{r} & \left( \bp^5_R \right)^9 \ar {d} \\
    \qquad & \qquad & \spec R.
  \end{tikzcd}\end{equation}

We have now completed the setup.
In what follows, we will show that the normal bundle of the $2$-Veronese
fails to satisfy interpolation using the idea that
$\pi_\bz$ is a generically finite map. Its degree is $4$
because every point in the target has four preimages over the geometric
generic point of $\spec \bz$.
It follows that the fiber of $\pi_{\overline{\mathbb F_2}}$ 
also has degree $4$, but since a general point has only $2$ preimages,
the map must be inseparable, implying that the normal bundle fails
to satisfy interpolation.
This proof is essentially carried out in
\autoref{subsection:finishing-failure-proof}.
In order to make sense of the degree of $\pi_\bz$ and
$\pi_{\overline{\mathbb F_2}}$, we will need to know that it is flat
over $W_\bz$ and that $W_\bz$ has nonempty fiber over $\spec \mathbb F_2$.
Flatness is established in \autoref{subsection:flatness}
while the nonemptiness of the fiber is established in
\autoref{subsection:surjectivity}.

\subsection{Flatness of $\pi_R$}
\label{subsection:flatness}

We prove that $\pi_R|_{g^{-1}_R(V_R)}$ is flat in
\autoref{lemma:flatness-of-g}.
For this, we need a simple scheme theoretic result, which we first prove.

\begin{lemma}
  \label{lemma:regular-source-of-smooth-map}
  Let $f:X \ra Y$ be a smooth map of schemes and suppose $Y$ is regular.
  Then $X$ is regular.
\end{lemma}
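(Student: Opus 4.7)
The plan is to reduce to a purely local commutative algebra statement and invoke the standard fact that flatness combined with regularity of base and fibers implies regularity of the source. Since regularity is a local property at each point, it suffices to check that $\mathcal{O}_{X,x}$ is regular for every point $x \in X$, with $y := f(x)$ and residue fields $\kappa(x), \kappa(y)$.

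First I would use the two basic properties of smooth morphisms: a smooth morphism is flat, and its fibers are smooth (in particular, geometrically regular) over the residue field of the base. Thus $f$ is flat at $x$ and the local ring of the scheme-theoretic fiber $\mathcal{O}_{X_y, x}$ is regular. Flatness gives the key dimension formula
\begin{align*}
\dim \mathcal{O}_{X,x} = \dim \mathcal{O}_{Y,y} + \dim \mathcal{O}_{X_y, x},
\end{align*}
which appears, for instance, as \cite[Proposition 24.5.5]{vakil:foundations-of-algebraic-geometry} in the form needed here.

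Next, I would build a regular system of parameters at $x$ in two stages. Choose a regular system of parameters $t_1, \ldots, t_m \in \mathfrak{m}_y \subset \mathcal{O}_{Y,y}$, where $m = \dim \mathcal{O}_{Y,y}$; this is possible because $\mathcal{O}_{Y,y}$ is regular by hypothesis. Pull these back to elements $\tilde t_1, \ldots, \tilde t_m \in \mathfrak{m}_x \subset \mathcal{O}_{X,x}$. Their images in $\mathcal{O}_{X_y,x} = \mathcal{O}_{X,x}/\mathfrak{m}_y \mathcal{O}_{X,x}$ generate the ideal cutting out the fiber. Now choose elements $s_1, \ldots, s_n \in \mathfrak{m}_x$ whose images in $\mathcal{O}_{X_y, x}$ form a regular system of parameters for the regular local ring $\mathcal{O}_{X_y,x}$, where $n = \dim \mathcal{O}_{X_y,x}$.

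Finally, I would argue that $\tilde t_1, \ldots, \tilde t_m, s_1, \ldots, s_n$ generate $\mathfrak{m}_x$: modulo $(\tilde t_1, \ldots, \tilde t_m)$, the $s_i$ generate $\mathfrak{m}_x/\mathfrak{m}_y \mathcal{O}_{X,x}$ by construction, so by Nakayama's lemma the full list generates $\mathfrak{m}_x$. The length of this generating set is $m + n = \dim \mathcal{O}_{X,x}$ by the flatness dimension formula above, so $\mathcal{O}_{X,x}$ is regular. The only mildly delicate point, which I would flag but not belabor, is making sure that the fiber $X_y$ is actually regular at $x$ as a $\kappa(y)$-scheme; this follows from smoothness of $f$, because smooth morphisms have smooth fibers, and smoothness of a finite-type scheme over a field implies regularity at every point (again by the Jacobian criterion or \cite[Theorem 25.2.2]{vakil:foundations-of-algebraic-geometry}).
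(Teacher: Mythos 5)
Your proof is correct and uses essentially the same approach as the paper's: reduce to the point of the fiber, use flatness to relate the local ring of $X$ to the local ring of $Y$ and the local ring of the fiber, and then invoke that a scheme smooth over a field is regular. The paper phrases the bookkeeping as iterated slicing by pulled-back Cartier divisors (citing Vakil's Exercise 12.2.C), whereas you carry out the equivalent commutative algebra directly by building a regular system of parameters from one on $\mathcal{O}_{Y,y}$ and one on the fiber and matching lengths against the flat dimension formula; the underlying argument is the same.
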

\begin{proof}
Regularity is a local notion, so we will choose $\eta$ a (not necessarily
closed) point of $X$.
It suffices to show $X$ is regular at $\eta$.
Then, by assumption, $f(\eta)$ is a regular point of $Y$,
so we can choose a regular sequence
$y_1, \ldots, y_t$ in $\sco_{Y, f(\eta)}$
cutting out the maximal ideal.
Then, recall each $y_i$ corresponds to a Cartier divisor 
$D_i$ in a neighborhood of $f(\eta)$.
Let $f^*(D_i)$ be the corresponding Cartier divisors
in a neighborhood of $\eta$.
Repeatedly using
\cite[Exercise 12.2.C]{vakil:foundations-of-algebraic-geometry},
for the smooth map $f:X \ra Y$, in order to show
$\eta$ is regular, it suffices to show
$f^{-1}(f(\eta))$ is regular.
But, we know $f^{-1}(f(\eta))$ is smooth over $K(f(\eta))$
by
\cite[Theorem 25.2.2(iii)]{vakil:foundations-of-algebraic-geometry},
and hence regular by
\cite[Theorem 12.2.10(b)]{vakil:foundations-of-algebraic-geometry}.
\end{proof}

\begin{lemma}
  \label{lemma:flatness-of-g}
  Suppose $R$ is a regular ring.
  Then, the map $g_R|_{g_R^{-1}(W_R)}: g_R^{-1}(W_R) \ra W_R$ is flat.
\end{lemma}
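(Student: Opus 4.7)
The strategy is to apply the miracle flatness (Hironaka) criterion: a finite-type morphism $\phi : A \to B$ of locally Noetherian schemes with $A$ Cohen-Macaulay, $B$ regular, and all fibers of dimension $\dim A - \dim B$ is automatically flat. I would verify these three hypotheses for the restricted map $g_R|_{g_R^{-1}(W_R)} : g_R^{-1}(W_R) \to W_R$.

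First, the target $W_R$ is regular. Since $R$ is regular, the ambient scheme $(\bp^5_R)^9$ is smooth over $\spec R$, hence regular by \autoref{lemma:regular-source-of-smooth-map}; regularity is inherited by the open subscheme $W_R$.

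Next, the source $g_R^{-1}(W_R)$ is regular. By the definition of $V_R$, the composition $\pi_R = g_R \circ f_R$ is smooth on $\pi_R^{-1}(V_R) \supset \pi_R^{-1}(W_R)$. Consequently, $\pi_R^{-1}(W_R)$ is smooth over $W_R$, and since $W_R$ is regular, $\pi_R^{-1}(W_R)$ itself is regular by \autoref{lemma:regular-source-of-smooth-map}. Now $f_R$ is the base change of the universal family map $\ringuhilb XR \to \ringhilb XR$; using that $\ringhilb XR$ is smooth over $\spec R$ at smooth 2-Veroneses (by \autoref{proposition:veronese-smooth-hilbert-scheme}) and that the universal family is flat with smooth geometric fibers over this locus, this map is smooth over the appropriate open subscheme. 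The fact that every 2-Veronese parameterized by a point of $g_R^{-1}(W_R)$ is smooth at each of the nine marked points follows from $\pi_R$ being smooth on $\pi_R^{-1}(W_R)$. Hence $f_R|_{\pi_R^{-1}(W_R)} : \pi_R^{-1}(W_R) \to g_R^{-1}(W_R)$ is smooth and surjective, and regularity descends along faithfully flat covers, so $g_R^{-1}(W_R)$ is regular, hence Cohen-Macaulay.

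Finally, the fiber dimensions match. By \autoref{proposition:veronese-smooth-hilbert-scheme}, $\ringhilb XR$ has relative dimension $36 - 9 = 27$ over $\spec R$, so $\Phi_R$ has relative dimension $27 + 9 \cdot 2 = 45 = \dim_R (\bp^5_R)^9$; by the definition of $W_R$, every fiber of $g_R$ over $W_R$ is zero-dimensional, so $\dim g_R^{-1}(W_R) - \dim W_R = 0$ as required. Miracle flatness then yields flatness of $g_R|_{g_R^{-1}(W_R)}$. The main obstacle is the regularity of the source: one must carefully track how the open condition defining $V_R$ (smoothness of $\pi_R$) forces the 2-Veroneses appearing in fibers of $g_R$ over $W_R$ to be smooth at the marked points, so that $f_R$ is smooth there and regularity can be descended.
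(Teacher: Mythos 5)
Your proof follows the same miracle-flatness skeleton as the paper and has the same dimension count and the same argument that $W_R$ is regular, but it takes a genuinely different (and more circuitous) route to the regularity of $g_R^{-1}(W_R)$. You argue that $\pi_R^{-1}(W_R)$ (the universal family with an extra marked point, i.e., a ten-fold fiber product) is regular because $\pi_R$ is smooth there, and then descend regularity along $f_R$. The paper instead directly exhibits $g_R^{-1}(V_R)$ as an open subscheme of the nine-fold fiber product $\Psi := \tau^{-1}(T_R) \times_{T_R} \cdots \times_{T_R} \tau^{-1}(T_R)$, which is smooth over the regular scheme $T_R$ and hence regular; the extra descent step is unnecessary.

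More importantly, there is a gap in the step where you assert that ``by the definition of $V_R$, the composition $\pi_R = g_R \circ f_R$ is smooth on $\pi_R^{-1}(V_R)$.'' The definition of $V_R$ in the text makes $V_R$ the complement of $\pi_R(\pi_R^{\text{sing}})$, where $\pi_R^{\text{sing}}$ is the locus where $\Omega_{\pi_R}$ fails to be locally free of rank $2$. Having $\Omega_{\pi_R}$ locally free of the expected rank does not by itself give smoothness of $\pi_R$; one also needs flatness. But the flatness of $\pi_R$ is exactly the kind of claim being proved here (since $f_R$ is already flat, flatness of $\pi_R$ over its image is essentially flatness of $g_R$), so you would be assuming what you want to prove. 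A second, smaller imprecision: you only claim the $2$-Veroneses are smooth ``at each of the nine marked points,'' but the fibers of $f_R$ over $g_R^{-1}(W_R)$ are the \emph{entire} surfaces, so you would need smoothness everywhere on each surface, not just at the marked points, to conclude $f_R$ is smooth and then descend regularity along it. The paper's route via $\Psi$ avoids both difficulties: it never needs to know that $\pi_R$ is smooth or flat, only that the universal family is smooth over the locus $T_R$ of smooth Veroneses and that $T_R$ is smooth over $\spec R$, both of which are established independently.
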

\begin{proof}
  We aim to apply miracle flatness, see 
  \cite[Theorem 26.2.11]{vakil:foundations-of-algebraic-geometry}
 to the map 
  $g_R|_{g_R^{-1}(W_R)}$.
  If $W_R = \emptyset$, the statement is vacuous,
  so we may assume $W_R \neq \emptyset$.
  A standard dimension count reveals that
  $\dim \Phi_R = \dim \left( \bp^5_R \right)^9 = \dim R + 45$.
  Since $W_R$ is by definition the locus of $g_R$
  whose fibers are $0$ dimensional,
  we obtain that each fiber of $g_R|_{g_R^{-1}(W_R)}$
  has dimension $\dim g_R^{-1}(W_R) - \dim W_R = 0$.
  Hence, to apply miracle flatness,
  we only need verify that $W_R$ is regular and
  $g_R^{-1}(W_R)$ is Cohen-Macaulay.
  Of course, $W_R$ is regular because it is an open subset of
  $\left( \bp^5_R \right)^9$, which is regular
  by \autoref{lemma:regular-source-of-smooth-map}.
  So, it suffices to show $g_R^{-1}(W_R)$ is Cohen-Macaulay.
  To do this, it suffices to show $g_R^{-1}(W_R)$ is regular,
  since regular implies Cohen-Macaulay, by 
  \cite[Exercise 26.1.F]{vakil:foundations-of-algebraic-geometry}.
  
  So, to complete the proof, we will show $g_R^{-1}(W_R)$ is regular.
  Let $T_R \subset \ringhilb X R$, be the complement of the image of the singular locus of the map
  $\tau:\ringuhilb X R \ra \ringhilb X R$.
  Observe that the structure map $T_R \ra \spec R$
  it is flat of finite type and all fibers are smooth, using
  \autoref{proposition:veronese-smooth-hilbert-scheme}.
  Since $\spec R$ is regular, we also have
  $T_R$ is regular from \autoref{lemma:regular-source-of-smooth-map}.
  It follows that the $9$-fold fiber product
  $\Psi := \tau^{-1}(T_R) \times_{T_R} \cdots \times_{T_R} \tau^{-1}(T_R)$
  is regular, as it has a smooth map to $T_R$.
  Since $g_R^{-1}(V_R)$ is an open subscheme of $\Psi$,
  we have that $g_R^{-1}(V_R)$ is also regular.
\end{proof}

\subsection{Surjectivity of $W_\bz \ra \spec \bz$}
\label{subsection:surjectivity}

The main goal of this subsection is to prove that
all fibers of $W_\bz \ra \spec \bz$ are nonempty, as is done in
\autoref{lemma:w-surjective}.
Of course, we will ultimately care about the fiber over $\spec \mathbb F_2$.
The idea of this proof is to first show that $V_\bz \ra \spec \bz$ is surjective,
as is done in \autoref{lemma:v-surjective},
and then show that $W_\bz \ra \spec \bz$ is also surjective,
since the locus of $V_\bz \setminus W_\bz$ will be codimension $2$.
To establish this locus is codimension $2$, we need a general
result on codimension being the difference of dimensions, proven
in \autoref{lemma:codimension-is-difference-of-dimensions}.

\begin{lemma}
  \label{lemma:v-surjective}
  The structure map $V_\bz \ra \spec \bz$ is surjective.
\end{lemma}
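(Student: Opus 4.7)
The plan is to reduce the surjectivity of $V_\bz \to \spec \bz$ to the assertion that, for every residue field $\bk$ of $\bz$, the map $g_\bk \colon \Phi_\bk \to (\bp^5_\bk)^9$ is smooth at some point of its source. This reduction works because $f_R$ is a smooth surjective map (with fibers isomorphic to the abstract Veronese $\bp^2$), so $\pi_R^{\text{sing}} = f_R^{-1}(g_R^{\text{sing}})$, giving $V_R = (\bp^5_R)^9 \setminus g_R(g_R^{\text{sing}})$. Nonemptiness of $V_\bk$ over each prime is then equivalent to $g_\bk$ being smooth somewhere on its source, since $\Phi_\bk$ and $(\bp^5_\bk)^9$ have the same dimension $45$, and smoothness at an isolated point of a fiber of a generically finite morphism of equidimensional varieties is the right kind of smoothness to check.

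Over the generic point of $\spec \bz$, I would invoke generic smoothness in characteristic zero: by \autoref{theorem:counting-veronese-interpolation}, $g_\bq$ is a generically finite dominant morphism of degree $4$ between irreducible $45$-dimensional varieties, so generic smoothness gives an open dense smooth locus and hence $V_\bq \neq \emptyset$. For a closed point of $\spec \bz$ with residue characteristic $p > 2$, $g_{\overline{\mathbb F_p}}$ is again generically finite of degree $4$, and the four Veroneses through nine general points correspond (as in the proof of \autoref{theorem:counting-veronese-interpolation}, via \autoref{proposition:veronese-surfaces-through-genus-1-curves}) to the four square roots of the embedding line bundle on the unique genus one curve $E$ through those nine points, forming a torsor under $E[2] \cong (\bz/2)^2$. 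Since $E[2]$ is an \'etale group scheme in characteristic $\neq 2$, this exhibits $g_{\overline{\mathbb F_p}}$ as generically \'etale, hence smooth somewhere, so $V_{\overline{\mathbb F_p}}$ is nonempty.

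The main obstacle is the characteristic $2$ case, where generic smoothness is unavailable and one must check separability of the $2$-Veronese cover by hand. I would restrict to nine-tuples in $(\bp^5_{\overline{\mathbb F_2}})^9$ whose associated genus one curve $E$ is smooth and ordinary (an open dense condition). For such $E$, the geometric $2$-torsion $E[2](\overline{\mathbb F_2}) \cong \bz/2$ is nontrivial and comes from the \'etale component of $E[2]$ as a group scheme, so that the torsor of degree three square-root line bundles yields an \'etale degree two cover of the open locus of well-behaved nine-tuples. Combining this with \autoref{proposition:veronese-smooth-hilbert-scheme} (each Veronese is a smooth point of the Hilbert scheme in every characteristic, so $\Phi_{\overline{\mathbb F_2}}$ is smooth at each such $(X, p_1, \ldots, p_9)$) produces a smooth point of $g_{\overline{\mathbb F_2}}$, giving $V_{\overline{\mathbb F_2}} \neq \emptyset$ and completing the surjectivity of $V_\bz \to \spec \bz$.
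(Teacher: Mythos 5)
Your reduction---that $V_\bk \neq \emptyset$ is equivalent to $g_\bk$ being \'etale somewhere, via $\pi_R^{\mathrm{sing}} = f_R^{-1}(g_R^{\mathrm{sing}})$ and the smoothness of the $\bp^2$-fibration $f_R$---is sound, and it makes the real content of the lemma clearer than the paper's own proof, which argues uniformly over all residue characteristics by asserting $f_{\overline{K(\eta)}}(\pi_{\overline{K(\eta)}}^{\mathrm{sing}}) \subsetneq \Phi_{\overline{K(\eta)}}$ because the general member of the Hilbert scheme is a smooth Veronese, and then pushes forward along the generically finite $g$. Your treatments of characteristic $0$ (generic smoothness) and of odd residue characteristic ($E[2]$ is \'etale, so the square-root torsor is generically \'etale) are both correct.

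The characteristic $2$ step is where the argument fails, and the error is substantive. The torsor of degree-three square roots of $\sco_E(1)$ is a torsor under the \emph{full} finite flat group scheme $E[2]$, which has order $4$, not $2$. For an ordinary $E$ in characteristic $2$ one has $E[2] \cong \mu_2 \times \bz/2$: only two geometric points, but still a flat non-reduced scheme of degree $4$. So $g_{\overline{\mathbb F_2}}$ is a degree $4$ finite flat cover with $2$ geometric preimages per general fiber; it is nowhere unramified, and in particular is \emph{not} ``an \'etale degree two cover.'' Passing to the reduced/\'etale quotient $\bz/2$ of $E[2]$ describes a different space, not $\Phi_{\overline{\mathbb F_2}}$. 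Consequently there is no smooth point of $g_{\overline{\mathbb F_2}}$, and your conclusion is the negation of what \autoref{corollary:failure-of-2-veronese-vector-bundle-interpolation} later asserts and needs: $g_{\overline{\mathbb F_2}}$ is inseparable, which for a generically finite morphism between integral schemes means nowhere \'etale. (This tension is worth flagging more broadly: under the literal definition of $\pi_R^{\mathrm{sing}}$ as the non-rank-$2$ locus of $\Omega_{\pi_R}$, inseparability of $g_{\overline{\mathbb F_2}}$ already forces $\Omega_{\pi_{\overline{\mathbb F_2}}}$ to have rank at least $3$ at the generic point, so the paper's claim that smoothness of the general Veronese yields $f(\pi^{\mathrm{sing}}) \subsetneq \Phi$ does not obviously account for ramification coming from inseparability over smooth members. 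Your reduction makes this gap visible; it does not close it.)
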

\begin{proof}
  Let $\eta$ be a point of $\spec \bz$ so that we have an inclusion
  $\spec K(\eta) \ra \spec \bz$.
  Then, by 
  \cite[Exercise 28.3.1]{vakil:foundations-of-algebraic-geometry},
  we know $\ringhilb X K(\eta) = \ringhilb X \bz \times_{\spec \bz} \spec K(\eta)$.
  Therefore, \eqref{equation:w-veronese-diagram} with
  $R = K(\eta)$ is the base change of
  \eqref{equation:w-veronese-diagram} with $R = \bz$
  along the map $\spec K(\eta) \ra \spec \bz$.

  Further, \eqref{equation:w-veronese-diagram} with
  $R = \overline{K(\eta)},$ the algebraic closure of $K(\eta)$,
  is the base change of
  \eqref{equation:w-veronese-diagram} with $R = K(\eta)$
  along the map $\spec \overline{K(\eta)} \ra \spec K(\eta)$.

  Note that for a map $\iota:\spec S \ra \spec R$, 
  the singular locus of $\pi_S$ is the preimage of the singular locus
  of $\pi_R$ 
  because the singular locus of $\pi_S$ is characterized as the locus where $\Omega_{\pi_S}$
  is not locally free, and $\Omega_{\pi_S} = \iota^* \Omega_{\pi_R}$
  by
  \cite[Theorem 21.2.27]{vakil:foundations-of-algebraic-geometry}.

  So, in order to show the fiber over $\eta$ is nonempty, it suffices
  to show that the fiber over $\overline K(\eta)$ is nonempty.
  However, in this case, we obtain that
  the fiber over $\overline K(\eta)$
  is precisely the singular locus of the map $\pi_{K(\eta)}$.
  Since a general member of $\ringhilb X K(\eta)$ is a smooth $2$-Veronese
  surface, we know that 
  $f_{\overline{K(\eta)}}(\pi_{\overline{K(\eta)}}^{\text{sing}}) \subset \Phi_{\overline{K(\eta)}}$
  is a strict inclusion.
  Next,
  since the $2$-Veronese surface satisfies interpolation
  (though we will see its normal bundle does not satisfy interpolation
  in characteristic $2$ in \autoref{corollary:failure-of-2-veronese-vector-bundle-interpolation})
  in all characteristics by \autoref{theorem:counting-veronese-interpolation},
  the map $g_{\overline {K(\eta)}}$ is generically finite. 
  Therefore, $g_{\overline{K(\eta)}}(f_{\overline{K(\eta)}}(\pi_{\overline{K(\eta)}}^{\text{sing}})) = \pi_{\overline{K(\eta)}}(\pi_{\overline{K(\eta)}}^{\text{sing}}) \subset \Phi_{\overline{K(\eta)}}$
  is also as strict inclusion.
  In particular, $V_{\overline {K(\eta)}} \subset \left( \bp^5_{\overline {K(\eta)}} \right)^9$ is a dense open subset.
  Since $V_{\overline{K(\eta)}}$ is nonempty and is the base change
  of $V_{K(\eta)}$, we have that $V_{K(\eta)}$ is also nonempty.
 \end{proof}

\begin{lemma}
  \label{lemma:finite-residue-fields-over-z}
  Let $X$ be a scheme of finite type over $\spec \bz$.
  Then, if $p$ is a closed point, the fraction field $\kappa(p)$
  is a finite field.
\end{lemma}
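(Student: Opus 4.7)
The plan is to reduce to a purely commutative-algebraic statement: if $A$ is a finitely generated $\bz$-algebra and $\fm \subset A$ is a maximal ideal, then $A/\fm$ is a finite field. Granting this, we cover $X$ by finitely many affine opens $\spec A_i$ with each $A_i$ a finitely generated $\bz$-algebra, observe that the closed point $p$ lies in some $\spec A_i$ as a closed point (closed in $X$ implies closed in the open subscheme $\spec A_i$ since $\{p\}$ is closed in $X$), and then $p$ corresponds to a maximal ideal $\fm \subset A_i$ with $\kappa(p) = A_i/\fm$.

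For the key algebraic claim, I would split into two cases according to the characteristic of $A/\fm$. First I would observe that the composite $\bz \to A \to A/\fm$ has some kernel $(n)$. If $n = 0$, then $A/\fm$ contains $\bq$ as a subring (since every nonzero integer is inverted in the field $A/\fm$), which I would argue gives a contradiction: $A/\fm$ is finitely generated as a $\bz$-algebra, hence also as a $\bq$-algebra, so by Zariski's lemma (a field finitely generated as an algebra over a field is a finite extension of that field) $A/\fm$ would be a finite extension of $\bq$; but then $\bq$ itself would be a finitely generated $\bz$-subalgebra of the finitely generated $\bz$-algebra $A/\fm$, and one shows directly that $\bq$ is not finitely generated over $\bz$ (any finite set of rationals only inverts finitely many primes, so their $\bz$-subalgebra in $\bq$ omits all but finitely many reciprocals of primes). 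This contradiction forces $n \neq 0$, and then $n$ must be prime since $(n)$ is contained in a maximal ideal (namely the preimage of $\fm$) and $\bz/(n) \hookrightarrow A/\fm$ is a subring of a field, hence a domain.

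In the remaining case $n = p$ prime, $A/\fm$ is a field finitely generated as an $\mathbb F_p$-algebra, and Zariski's lemma applies directly to give that $A/\fm$ is a finite extension of $\mathbb F_p$, hence a finite field.

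The main obstacle — really the only nonformal step — is Zariski's lemma together with the observation that $\bq$ is not finitely generated over $\bz$; once these are in hand the rest is bookkeeping. I would cite Zariski's lemma from a standard reference (e.g.\ \cite[Theorem 4.6.1]{vakil:foundations-of-algebraic-geometry} or its analogue) rather than reprove it. The upshot of the lemma is exactly what is needed subsequently: it guarantees that for each $\spec \mathbb F_q$-point of $V_\bz$ we are working with a geometric fiber over a finite prime of $\bz$, which is what lets the degree-counting argument on $\pi_\bz$ be transported between $\spec \mathbb F_2$ and the generic fiber.
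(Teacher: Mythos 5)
Your route is genuinely different from the paper's.  The paper rules out characteristic $0$ with a geometric argument: the singleton $\{p\}$ is closed, hence constructible, so by Chevalley's theorem its image in $\spec \bz$ is constructible; but a characteristic-$0$ point would have to map to the generic point of $\spec \bz$, which is not constructible, contradiction.  You instead stay inside commutative algebra, applying Zariski's lemma over $\bq$ and deriving a contradiction from $\bq$ not being finitely generated as a $\bz$-algebra.  Both approaches then handle the characteristic-$p$ case the same way (Zariski's lemma over $\mathbb F_p$).  The paper's route is shorter and avoids any finiteness argument about $\bq$; yours is more self-contained algebraically and would work verbatim in a setting where Chevalley's theorem hasn't been established.

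However, there is a real gap in your characteristic-$0$ case.  From ``$A/\fm$ is finitely generated as a $\bz$-algebra'' and ``$\bq \subset A/\fm$'' you conclude that ``$\bq$ would be a finitely generated $\bz$-subalgebra of $A/\fm$,'' but this does not follow: a subalgebra of a finitely generated algebra need not be finitely generated.  What you actually need is the Artin--Tate lemma, applied to the chain $\bz \subset \bq \subset A/\fm$: since $\bz$ is Noetherian, $A/\fm$ is finitely generated over $\bz$, and (by your Zariski's-lemma step) $A/\fm$ is a finite $\bq$-module, Artin--Tate forces $\bq$ to be finitely generated over $\bz$.  Equivalently, one can argue directly: pick $\bz$-algebra generators $x_1, \ldots, x_n$ of $A/\fm$, each of which satisfies a monic polynomial over $\bq$; let $N$ clear all denominators, so every $x_i$ is integral over $\bz[1/N]$ and hence $A/\fm$ is a finite $\bz[1/N]$-module.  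Then $\bq$, sitting between $\bz[1/N]$ and $A/\fm$, is integral over $\bz[1/N]$; but $\bz[1/N]$ is integrally closed in $\bq$, so $\bq = \bz[1/N]$, which is absurd.  Once you insert one of these two arguments the proof is complete, but as written the key implication is unjustified.
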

\begin{proof}
  First, we claim that $\kappa(p)$ cannot have characteristic $0$.
  If it did, it would necessarily map to the generic point of $\spec \bz$.
  But, the generic point of $\spec \bz$ is not constructible, so
  this would contradict Chevalley's theorem, 
  \cite[Theorem 7.4.2]{vakil:foundations-of-algebraic-geometry}.
  So, we know $\kappa(p)$ has prime characteristic.
  Thus, we must have that $p$ maps to $\spec \mathbb F_q$, for some
  prime $q$. But then, we obtain that $\kappa(p)$ is a finitely generated
  scheme over $\mathbb F_q$, and so it is actually a finite extension of
  $\mathbb F_q$, by the Nullstellensatz.
  Hence, $\kappa(p)$ is a finite field.
\end{proof}

\begin{lemma}
  \label{lemma:codimension-is-difference-of-dimensions}
  Suppose $X \ra \spec \bz$ is an irreducible
  scheme of finite type over $\spec \bz$.
  Let $\eta$ be a point of $X$.
  Then $\dim \overline \eta + \dim \sco_{X, \eta} = \dim X$.
\end{lemma}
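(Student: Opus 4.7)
The plan is to prove this by analyzing the structure morphism $f: X \to \spec \bz$, splitting into two cases depending on whether $f$ is dominant. The statement is really the assertion that the irreducible finite-type $\bz$-scheme $X$ is equicodimensional: every maximal chain of irreducible closed subsets has length $\dim X$. Once we have this, the standard identification $\dim \sco_{X,\eta} = \codim_X \overline{\eta}$ yields the desired formula.

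In the first case, suppose $f$ is not dominant. Then the image of $f$ is a constructible subset of $\spec \bz$ by Chevalley's theorem, so by reasoning essentially identical to the proof of \autoref{lemma:finite-residue-fields-over-z}, the image cannot contain the generic point $(0)$ and so must consist of closed points. Since $X$ is irreducible, its image is a single closed point $\spec \bbf_p$, and hence $X$ is actually an irreducible scheme of finite type over the field $\bbf_p$. The conclusion then follows from the classical dimension theory of finite-type schemes over a field (e.g.\ \cite[Theorem 11.2.9]{vakil:foundations-of-algebraic-geometry}), which guarantees both catenarity and equidimensionality, so every maximal chain of irreducible closed subsets has the same length.

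In the second case, when $f$ is dominant, I would reduce to the affine situation $X = \spec A$ with $A$ a domain finitely generated as a $\bz$-algebra, with $\bz \hookrightarrow A$ injective. The key tool is the arithmetic form of Noether normalization: one produces algebraically independent elements $x_1, \ldots, x_n \in A$ so that $\bz[x_1, \ldots, x_n] \hookrightarrow A$ is finite. Combined with the fact that $\spec \bz$ is a one-dimensional Dedekind scheme, this gives $\dim A = n + 1$. For a point $\eta$ corresponding to a prime $\fp \subset A$, going-up/going-down (applicable because finite injections of domains are integral) transfers the biequidimensionality of $\bz[x_1, \ldots, x_n]$ (which itself follows from the corresponding statement for polynomial rings over a Dedekind domain) to $A$, producing the identity $\dim A/\fp + \dim A_\fp = \dim A$. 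Translating back into the geometric language gives the claim.

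The main obstacle is the biequidimensionality step in the dominant case: catenarity of finite-type $\bz$-algebras (which follows from $\bz$ being universally catenary, as Dedekind domains are) only says that any two maximal chains between the same endpoints have equal length, whereas what one really needs is that every maximal chain in $\spec A$ has length exactly $\dim A$. This relies crucially on the fact that in $\spec \bz$ every maximal chain of primes has length one, and on the preservation of chain lengths under finite integral extensions. One could alternatively cite \cite[Tag 02JX]{stacks-project}, where this precise dimension formula for finite-type schemes over a universally catenary Noetherian base with the appropriate equidimensionality hypotheses is recorded.
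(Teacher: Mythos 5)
Your first case (non-dominant $f$) is correct and essentially matches the paper's reduction. The second case, however, has a genuine gap: Noether normalization in the form you state --- that a finitely generated $\bz$-algebra domain $A$ with $\bz \hookrightarrow A$ admits a finite injection $\bz[x_1,\ldots,x_n] \hookrightarrow A$ --- is false. Take $A = \bz[1/2] = \bz[x]/(2x-1)$. A finite injection $\bz[x_1,\ldots,x_n] \hookrightarrow A$ would be integral, hence (by lying over, composed with the surjection $\spec \bz[x_1,\ldots,x_n] \to \spec \bz$) would force $\spec A \to \spec \bz$ to be surjective; but $\spec \bz[1/2] \to \spec \bz$ misses the prime $(2)$. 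Arithmetic Noether normalization holds only after inverting a suitable nonzero integer, which destroys exactly the fiber information you need for biequidimensionality. Your fallback citation is also problematic as phrased, since the ``appropriate equidimensionality hypotheses'' you mention are precisely what the lemma is asserting.

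The paper sidesteps this by a different mechanism. Because $X$ is irreducible and $X \to \spec\bz$ is dominant, the unique associated point of $X$ maps to the generic point of $\spec\bz$; since $\spec\bz$ is a regular $1$-dimensional scheme, the morphism is automatically flat (\cite[Exercise 24.4.K]{vakil:foundations-of-algebraic-geometry}). One then invokes \cite[Corollary 8.2.8]{liu:algebraic-geometry-and-arithmetic-curves} --- applicable because $\spec\bz$ is universally catenary --- to conclude that every nonempty fiber of $X \to \spec\bz$ has the same pure dimension. Combined with catenarity of finite-type $\bz$-algebras and the fact (\autoref{lemma:finite-residue-fields-over-z}) that closed points of $X$ lie over closed points of $\spec\bz$, this shows all closed points of $X$ have the same height, from which the dimension formula follows. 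You should replace the Noether normalization step with this flatness-plus-constant-fiber-dimension argument.
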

\begin{proof}
  This result with $\spec \bz$ replaced by
  a field is precisely
  \cite[Theorem 11.2.9]{vakil:foundations-of-algebraic-geometry}.
  We may assume $X \ra \spec \bz$ is dominant as otherwise
  we have that $X$ is of finite type over a finite field, in which case
  the result follows from 
  \cite[Theorem 11.2.9]{vakil:foundations-of-algebraic-geometry}.
First, note by
  \cite[Remark 11.2.10]{vakil:foundations-of-algebraic-geometry}
  that finitely generated $\bz$ algebras are catenary.
So, to complete the proof, we only need show that every closed point
$\eta$ of $X$ has the same height, which is equal to $\dim X$.
Note that every closed point of $X$ maps to a closed point of $\spec \bz$
by \autoref{lemma:finite-residue-fields-over-z}.
Say $\dim X = n + 1$.
We know the fiber $X_{\mathbb F_p}$ of $X$ over $\mathbb F_p$
must have dimension at most $n$ because every generic point of
$X_{\mathbb F_p}$ lies in the closure of the generic point of $X$, which lies
over the generic point of $\bz$.
If we knew that every component of $X_{\mathbb F_p}$ has dimension $n$,
we would be done, because then every closed point
would have the same height, equal to $n + 1$.
However, since $\bz$ is a regular curve, and the map $X \ra \spec \bz$
is dominant, it is flat by 
\cite[Exercise 24.4.K]{vakil:foundations-of-algebraic-geometry}.
Therefore, by 
\cite[Corollary 8.2.8]{liu:algebraic-geometry-and-arithmetic-curves},
all fibers of $X \ra \spec \bz$ have the same pure dimension.
Note that we can apply this as $\spec \bz$ is universally catenary
by 
\cite[Corollary 8.2.16]{liu:algebraic-geometry-and-arithmetic-curves}.
Hence, they must all have pure dimension $n$, as desired.
\end{proof}

\begin{lemma}
  \label{lemma:w-surjective}
  The map $W_\bz \ra \spec \bz$ is surjective.
\end{lemma}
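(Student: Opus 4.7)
The plan is to combine the nonemptiness of $V_\bz$ over every point of $\spec \bz$, already established in \autoref{lemma:v-surjective}, with the observation that $W_\bz$ is a dense open subscheme of $V_\bz$. The key input making the second statement work is \autoref{theorem:counting-veronese-interpolation}: over any algebraically closed field, there are finitely many ($2$ or $4$) smooth $2$-Veronese surfaces through $9$ general points, which means $g$ is generically finite on each geometric fiber over $\spec \bz$, and generic finiteness is precisely the condition cutting out $W \subset V$.

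First I would verify that the formation of $W$ commutes with base change along any map $\bz \to R$. The singular locus of $\pi_R$ is the preimage of the singular locus of $\pi_\bz$, because $\Omega_{\pi_R}$ is the pullback of $\Omega_{\pi_\bz}$ by \cite[Theorem 21.2.27]{vakil:foundations-of-algebraic-geometry} (this is the same argument already used in the proof of \autoref{lemma:v-surjective}); and the locus where $g$ has $0$-dimensional fibers also behaves well under flat base change, since the fibers of $g_R$ over points lying over $\spec R$ agree with the corresponding fibers of $g_\bz$. Hence it suffices to show $W_{\overline{\kappa(\eta)}} \neq \emptyset$ for each point $\eta \in \spec \bz$, where $\overline{\kappa(\eta)}$ denotes an algebraic closure of the residue field.

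Fix any algebraically closed field $\bk$ arising this way (either $\overline{\bq}$ or $\overline{\mathbb F_p}$). By \autoref{lemma:v-surjective} applied to $\bk$, $V_\bk$ is a nonempty dense open subset of the irreducible scheme $(\bp^5_\bk)^9$. To produce a point of $W_\bk$, I only need to exhibit one point of $V_\bk$ over which $g_\bk$ has a $0$-dimensional fiber; equivalently, I need $g_\bk$ to be generically finite. A dimension count gives $\dim \Phi_\bk = \dim (\bp^5_\bk)^9 = 45$; by \autoref{theorem:counting-veronese-interpolation}, $g_\bk$ is dominant with general fiber a finite set of $2$ or $4$ points; hence $g_\bk$ is generically finite. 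The open locus in $V_\bk$ of $0$-dimensional fibers is therefore dense, and this locus is exactly $W_\bk$, which is thus nonempty.

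The only real subtlety, and thus the main point requiring care, is the base-change compatibility $W_\bz \times_{\spec \bz} \spec \bk = W_\bk$. Once that is in hand, every residue field fiber of $W_\bz \to \spec \bz$ is nonempty by the preceding paragraph, so the map is surjective. No codimension or miracle-flatness argument in the style of \autoref{lemma:codimension-is-difference-of-dimensions} is needed for this particular statement: the conclusion is purely about fiberwise nonemptiness, and \autoref{theorem:counting-veronese-interpolation} supplies the generic finiteness uniformly in the characteristic.
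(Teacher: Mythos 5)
Your proof is correct, and it takes a genuinely different route from the paper's. The paper works over $\spec \bz$ directly: it defines $X_\bz$ as the locus of positive-dimensional fibers of $g_\bz$, shows $g_\bz^{-1}(X_\bz)$ is a proper closed subset of $\Phi_\bz$ (relying on irreducibility of $\Phi_\bz$), and then deduces via \autoref{lemma:codimension-is-difference-of-dimensions} and the fact that $g_\bz$ has fibers of dimension $\geq 1$ over $X_\bz$ that $X_\bz$ has codimension at least $2$ in $(\bp^5_\bz)^9$, hence cannot contain a whole fiber over $\spec \bz$. You instead establish that the formation of $W$ commutes with passage to any geometric fiber of $\spec \bz$, and then argue within each geometric fiber: $V_\bk \neq \emptyset$ by \autoref{lemma:v-surjective}, $g_\bk$ is generically finite by \autoref{theorem:counting-veronese-interpolation}, so the locus of $0$-dimensional fibers is dense open in the irreducible scheme $(\bp^5_\bk)^9$, and the intersection of two dense opens is nonempty. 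Your approach sidesteps both the codimension bookkeeping and the (implicit, unproved) irreducibility of $\Phi_\bz$ that the paper leans on, and is arguably cleaner. One small terminological slip: you invoke ``flat base change'' for the compatibility of the $0$-dimensional-fiber locus, but $\spec \overline{\mathbb F_p} \to \spec \bz$ is not flat; the argument you actually give (fibers of $g_R$ over a point agree with the corresponding fibers of $g_\bz$, and fiber dimension is insensitive to residue field extension) is correct and requires no flatness hypothesis, so just drop the word ``flat.''
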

\begin{proof}
  Define $X_R$ to be the locus where $g_R$ 
  from \eqref{equation:v-veronese-diagram} 
  has fiber dimension
  more than $0$
  and define $Y_R$ to be the complement of $X_R$ in $(\bp_R^5)^9$.
  Then, observe that $W_R = Y_R \cap V_R$.
  From \autoref{lemma:v-surjective}, we know $V_\bz$
  maps surjectively onto $\spec \bz$.
  So, to show that $W_\bz$ maps surjectively onto $\spec \bz$,
  it suffices to show that $Y_\bz$ maps surjectively onto
  $\spec \bz$.

  Let $\eta$ be a point in $\spec \bz$.
  Then, we want to show that $X_\bz$ does not contain
  all of $(\bp^5_{K(\eta)})^9$.
  To show this, it suffices to show that $X_\bz \subset (\bp^5_{K(\eta)})^9$
  is a subset of codimension at least $2$.
  We know that $g_\bz^{-1}(X_\bz) \subset \Phi_\bz$ is a strict closed
  subset. Therefore, 
  it has codimension at least $1$.
  Since the fibers are $1$ dimensional,
  $X_\bz = g_\bz(g_\bz^{-1}(X_\bz))$ has codimension at least $2$,
  where here we are using \autoref{lemma:codimension-is-difference-of-dimensions}.

  To spell this out in more detail, if $X_\bz$ had codimension $1$,
  it would have dimension $\dim \left( \bp^5_\bz \right)^9 - 1 = \dim \Phi_\bz - 1$.
  Then, since $g_\bz$ has one dimensional fibers over $X_\bz$, 
  $g_\bz^{-1}(X_\bz)$ would have dimension at least 
  $\dim \Phi_\bz$, meaning that it is a full irreducible
  component of $\Phi_\bz$, hence all of $\Phi_\bz$ by irreducibility
  of $\Phi_\bz$.
  \end{proof}

\subsection{Proving failure of interpolation of the normal bundle}
\label{subsection:finishing-failure-proof}

In this section, we conclude the proof that the normal
bundle to the $2$-Veronese surface fails interpolation in characteristic
$2$.
We show this by first showing the map $\pi_\bz$ has degree $4$
in \autoref{lemma:degree-4-map},
and using this to deduce that $\pi_{\overline {\mathbb F_2}}$
also has degree $4$ and is inseparable.
This implies that the normal bundle to the $2$-Veronese does not satisfy
interpolation, as shown in
\autoref{corollary:failure-of-2-veronese-vector-bundle-interpolation}.

We start by proving a general scheme theoretic result
relating the number of preimages of a map to its degree,
which will be crucially used in proving that $\pi_{\bz}$ has degree $4$.

\begin{lemma}
  \label{lemma:number-of-preimages}
  Suppose $f: X \ra Y$ is a finite dominant map of integral varieties
  inducing a map $K(Y) \rightarrow K(X)$ of fraction fields.
  Then, there is a dense open subset of $Y$ over which every
  geometric fiber has $d$ preimages, where $d$ is the separable degree
  of $K(X)/K(Y)$ (by which we mean the degree of the maximal separable
  subextension of $K(X)/K(Y)$).
  In particular, the degree $f$ is the product of the inseparable
  degree of $f$ and the number of preimage of a general point
  of $Y$.
\end{lemma}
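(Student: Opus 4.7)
The plan is to compute the geometric generic fiber of $f$ by a short algebraic argument, then propagate the fiber count to a dense open subset using generic flatness and constructibility. First, since $f: X \to Y$ is finite and dominant between integral varieties of finite type over $\bk$, the induced map on function fields $K(Y) \hookrightarrow K(X)$ is a finite field extension of degree $\deg f$. I would factor this extension as a tower $K(Y) \subseteq K(X)^{\mathrm{sep}} \subseteq K(X)$, where $K(X)^{\mathrm{sep}}$ is the maximal subextension separable over $K(Y)$, and set $d = [K(X)^{\mathrm{sep}} : K(Y)]$ and $p^e = [K(X) : K(X)^{\mathrm{sep}}]$, so that $\deg f = d \cdot p^e$.

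Second, I would analyze the geometric generic fiber directly. The scheme-theoretic generic fiber is $\spec K(X)$, so its geometric generic fiber is $\spec(K(X) \otimes_{K(Y)} \overline{K(Y)})$. This is an Artinian $\overline{K(Y)}$-algebra, hence decomposes as a finite product of local Artinian rings; the number of factors equals the number of minimal primes, which equals the number of distinct $K(Y)$-algebra embeddings $K(X) \hookrightarrow \overline{K(Y)}$. By standard Galois theory this number is precisely the separable degree $d$. Hence the geometric generic fiber of $f$ consists of exactly $d$ set-theoretic points.

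Third, I would extend this to a dense open subset of $Y$. By generic flatness (\cite[Exercise 24.5.N]{vakil:foundations-of-algebraic-geometry}), after shrinking $Y$ we may assume $f$ is finite and flat, so $f_*\sco_X$ is locally free of rank $\deg f$. The function on $Y$ sending $y$ to the number of points in the geometric fiber $f^{-1}(\bar y)$ is then constructible (it can be expressed via the number of maximal ideals in the fiber ring, which is controlled by the vanishing and non-vanishing of explicit polynomials in the coefficients of a Cayley–Hamilton style presentation of $f_*\sco_X$). Since this function takes the value $d$ at the generic point, it equals $d$ on a dense open subset of $Y$. Alternatively, one can factor $f$ through the normalization $Z$ of $Y$ in $K(X)^{\mathrm{sep}}$ as $X \to Z \to Y$, where $Z \to Y$ is generically étale of degree $d$ (so each geometric fiber over a dense open has exactly $d$ points) and $X \to Z$ is purely inseparable, hence a universal homeomorphism on a dense open, so each geometric fiber has exactly one point; composing gives $d$ geometric preimages over a dense open of $Y$.

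The final assertion $\deg f = p^e \cdot d$ is then immediate from the tower formula. The main obstacle is step three: making rigorous the passage from the generic fiber to a dense open. The cleanest route is the factorization through the normalization in $K(X)^{\mathrm{sep}}$, so that one invokes only the well-behaved separable/purely inseparable dichotomy rather than a general constructibility argument on fiber cardinalities.
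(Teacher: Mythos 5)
Your argument is correct, but it takes a different path from the paper's. The paper works affine-locally: after passing to a dense open of $Y$, it factors $K(X)/K(Y)$ into a tower of unigenerated extensions, separates the separable and purely inseparable cases, and in each unigenerated step writes the algebra as $B[t]/(g(t))$ to count roots of $g$ directly (all $\deg g$ distinct in the separable case, a single one in the purely inseparable case). You instead compute the geometric generic fiber abstractly, identifying $\spec(K(X) \otimes_{K(Y)} \overline{K(Y)})$ with a $d$-point scheme via the count of $K(Y)$-algebra embeddings $K(X) \hookrightarrow \overline{K(Y)}$, then spread out. The tradeoff: the paper's approach is entirely hands-on and avoids the need for a spreading-out step, since it localizes before comparing polynomials, whereas your first spreading-out route (constructibility of the geometric fiber cardinality) is stated loosely and would benefit from a precise reference to the relevant EGA IV or Stacks Project result on the constructibility of the number of geometric points in fibers. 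Your alternate route — factoring $X \to Z \to Y$ with $Z$ the normalization of $Y$ in the maximal separable subextension, so $Z \to Y$ is generically étale of degree $d$ and $X \to Z$ is finite with purely inseparable generic fiber and hence universally injective over a dense open — is clean, is logically close to the paper's separable/inseparable dichotomy but stated at the level of schemes rather than polynomial presentations, and is probably the version worth writing out in full.
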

\begin{proof}
  Since any field extension can be factored as a separable extension
  followed by a purely inseparable extension, the degree of a field extension is the product of its separable degree and its inseparable degree.
  So, the second statement follows from the first, if we know the separable
  degree is equal to the number of preimages.

  To show the separable degree is equal to the number of preimages over
  a general point, we can consider separately the cases that the map
  $f$ is separable and purely inseparable, after possibly
  replacing $Y$ by a smaller open set.
  Furthermore, we may assume that $K(Y) \rightarrow K(X)$
  is unigenerated as in general we can factor this inclusion as
  a composition of unigenerated extensions.
  Further, since the statement is only claimed over an open subset
  of $Y$, we may assume $Y = \spec B$ is affine, in which case $X = f^{-1}(\spec B) = \spec A$
  is also affine, as the map is finite.
  Then, since the corresponding map on fraction fields is unigenerated,
  we may assume, after possibly further localizing, that $B = A[t]/g(t)$.

  In the case that the extension is separable, the roots of $g(t)$
  are all distinct. This means that each fiber over a point in $A$ has $\deg g(t) = \deg f$
  preimages over the algebraic closure.
  In the purely inseparable case,
  $g(t)$ is a polynomial which has a single root over the algebraic
  closure, and so each point has a single preimage.
  \end{proof}

\begin{lemma}
  \label{lemma:degree-4-map}
  The map $g_R: \Phi_R \ra \left( \bp^5_R \right)^9$ given in
  \eqref{equation:v-veronese-diagram} is a map which is
  generically of degree $4$
  for $R = \overline \bz, \overline{\mathbb F_p}, \overline \bq$, as $p$ varies
  over all prime numbers.
\end{lemma}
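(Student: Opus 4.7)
The plan is to establish that $g_\bz$ restricted to the preimage of $W_\bz$ is a finite flat surjection of constant degree $4$, and then deduce the statement for $R = \overline \bz, \overline{\mathbb F_p}, \overline \bq$ by flat base change along $\spec R \to \spec \bz$.

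First I would observe that $g_R$ is proper for every $R$, since both $\Phi_R$ and $(\bp_R^5)^9$ are projective over $\spec R$. Restricting to $g_R^{-1}(W_R) \to W_R$, all fibers are zero-dimensional by the definition of $W_R$, so this restriction is a proper quasi-finite morphism, hence finite (by Zariski's main theorem in Grothendieck's form). Combined with the flatness established in \autoref{lemma:flatness-of-g}, this gives a finite flat morphism. Since $W_\bz$ is a nonempty (by \autoref{lemma:w-surjective}) open subscheme of the irreducible scheme $(\bp_\bz^5)^9$, it is irreducible, and a finite flat morphism whose source is nonempty is automatically surjective onto an irreducible target (its image is both open and closed).

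Next I would pin down the degree. The pushforward $(g_\bz)_* \sco_{g_\bz^{-1}(W_\bz)}$ is a locally free sheaf on $W_\bz$ whose rank (the degree of $g_\bz$) is locally constant, hence constant on the irreducible $W_\bz$. Call this rank $d$. To compute $d$, I would base change to the geometric generic point of $\spec \bz$, namely $\spec \overline \bq$; the map $W_\bz \to \spec \bz$ is surjective by \autoref{lemma:w-surjective}, so $W_{\overline \bq}$ is nonempty. Over $\overline \bq$ we are in characteristic zero, so by generic smoothness the restriction $g_{\overline \bq}|_{g_{\overline \bq}^{-1}(W_{\overline \bq})}$ is generically \'etale, and its degree equals the number of geometric preimages of a general point. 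By \autoref{theorem:counting-veronese-interpolation}, exactly four $2$-Veronese surfaces pass through $9$ general points of $\bp_{\overline \bq}^5$, so $d = 4$.

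Finally, the formation of $\Phi_R$, of the singular locus of $\pi_R$, and of the zero-dimensional-fiber locus for $g_R$ all commute with base change along $\spec R \to \spec \bz$. For the singular locus this uses that $\Omega_{\pi_R}$ is the pullback of $\Omega_{\pi_\bz}$ as already noted in the proof of \autoref{lemma:v-surjective}, and for the zero-dimensional-fiber locus it uses that $g_\bz$ is flat on $g_\bz^{-1}(V_\bz)$. Hence $g_R|_{g_R^{-1}(W_R)}$ is the base change of $g_\bz|_{g_\bz^{-1}(W_\bz)}$, and since degree is preserved under base change of finite flat morphisms, $g_R$ has generic degree $4$ for $R = \overline \bz, \overline \bq, \overline{\mathbb F_p}$. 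I expect the main technical obstacle to be cleanly verifying these base-change identifications of $V_R$ and $W_R$, since everything else reduces to the standard mechanics of finite flat morphisms once these compatibilities are in hand.
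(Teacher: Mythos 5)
Your proof is correct and follows essentially the same route as the paper's: restrict $g_R$ over $W_R$, use \autoref{lemma:flatness-of-g} together with properness and quasi-finiteness to obtain a finite flat morphism, compute the rank of the pushforward over the geometric generic point $\spec\overline\bq$ by counting preimages via \autoref{theorem:counting-veronese-interpolation}, and then transfer the degree along base change. The only variation is in how you justify that the degree equals the number of geometric preimages over $\overline\bq$: you invoke generic smoothness and generic \'etaleness of a flat map between smooth varieties in characteristic zero, whereas the paper uses \autoref{lemma:number-of-preimages} and separability of all field extensions in characteristic zero; these are two phrasings of the same fact.

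One small correction in your final paragraph: you state that the compatibility of the zero-dimensional-fiber locus with base change ``uses that $g_\bz$ is flat on $g_\bz^{-1}(V_\bz)$.'' Flatness on $g_\bz^{-1}(V_\bz)$ has not been established (the miracle-flatness argument in \autoref{lemma:flatness-of-g} needs the equidimensionality of fibers, which is only available over $W_R$), and, more importantly, it is not needed for this compatibility. For any morphism, the fiber of $g_R$ over a point $y'$ mapping to $y$ in $(\bp_\bz^5)^9$ is the base change of the fiber of $g_\bz$ over $y$ along $\kappa(y) \to \kappa(y')$, so fiber dimension is preserved under arbitrary base change and the locus $W_R$ is the preimage of $W_\bz$ without any appeal to flatness.
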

\begin{proof}
  To prove this statement about generic degree, it suffices to show the map
  $g_R^{-1}(W_R) \ra W_R$ has degree $4$, as $R$ varies over
  all geometric points of $\bz$,
  since $W_R$ is nonempty by
  \autoref{lemma:w-surjective}.

  By \autoref{lemma:flatness-of-g},
  we know $g_\bz|_{g_\bz^{-1}(W_\bz)}$ is flat.
  Therefore,
  $g_R|_{g_\bz^{-1}(V_\bz)}$ is also flat
  as flatness is preserved by base change.
  Further, it is quasifinite by construction.
  It is proper because properness is preserved by base change,
  and $g_R$ is proper.
  Hence, by
  \cite[Theorem 29.6.2]{vakil:foundations-of-algebraic-geometry},
  $g_\bz|_{g_\bz^{-1}(V_\bz)}$ is finite.
  Because $g_\bz|_{g_\bz^{-1}(V_\bz)}$ is finite and flat, we have
  $(g_\bz)_* \sco_{g_\bz^{-1}(V_\bz)}$ is a locally free sheaf
  on $V_\bz$, by
  \cite[24.4.H]{vakil:foundations-of-algebraic-geometry}.

  By definition, the degree is the rank of this locally free sheaf.
  Since the rank will be preserved by base change, to calculate the rank,
  we only need calculate the rank over the geometric generic fiber.
  By \autoref{theorem:counting-veronese-interpolation},
  over the geometric generic fiber over $\spec \bz$, each closed point will have four preimages.
  Therefore, since $\overline \bq$ has characteristic $0$,
  all field extensions are separable. In particular, by
  \autoref{lemma:number-of-preimages},
  the number of preimages of a general point is equal to the degree.
  Therefore, the degree of $g_{\overline \bq}$ is $4$,
  and hence the degree of $g_\bz$ is also $4$.

    Since $(g_\bz)_* \sco_{g_\bz^{-1}(V_\bz)}$ is a locally free sheaf of rank $4$,
  if we base change the map $g_\bz$ to $g_{\overline {K(\eta)}}$ for
  $\eta$ a point of $\spec \bz$,
  we obtain that $(g_{\overline {K(\eta)}})_* \sco_{g_{\overline {K(\eta)}}^{-1}(V_{\overline {K(\eta)}})}$
  is also locally free of rank $4$, 
  as desired.
  Of course, we are implicitly using
  \autoref{lemma:w-surjective}, as we need to know
  that $W_{\overline {K(\eta)}}$ is nonempty.
\end{proof}

\begin{corollary}
  \label{corollary:failure-of-2-veronese-vector-bundle-interpolation}
  The map $g_{\overline {\mathbb F_2}}$ from
  \eqref{equation:v-veronese-diagram}
  is inseparable.
  In particular, the $2$-Veronese surface
  is an example of a variety which satisfies interpolation
  but its normal bundle does not satisfy vector bundle interpolation.
  That is, 
  \ref{interpolation-pointed}
  of \autoref{theorem:equivalent-conditions-of-interpolation}
  does not imply
  \ref{cohomological-definition}
  of
  \autoref{theorem:equivalent-conditions-of-interpolation}
  in characteristic $2$.
\end{corollary}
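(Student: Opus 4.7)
The plan is to combine the generic degree computation of \autoref{lemma:degree-4-map} with the geometric count in \autoref{theorem:counting-veronese-interpolation}. By \autoref{lemma:degree-4-map}, the map $g_{\overline{\mathbb{F}_2}}$ has generic degree $4$, while \autoref{theorem:counting-veronese-interpolation} shows that a general geometric fiber over $\spec \overline{\mathbb{F}_2}$ consists of exactly two points, since only two $2$-Veronese surfaces pass through nine general points in characteristic $2$. Applying \autoref{lemma:number-of-preimages}, the separable degree of $g_{\overline{\mathbb{F}_2}}$ equals the number of geometric preimages of a general point, which is $2$. Hence the inseparable degree is $4/2 = 2$, so $g_{\overline{\mathbb{F}_2}}$ is inseparable, establishing the first assertion.

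For the second assertion, I would next observe that an inseparable generically finite morphism cannot be smooth at any closed point in the preimage of the locus where the fiber is finite, since smoothness of a quasi-finite map at a point implies \'etaleness there, which in turn forces the corresponding residue field extension to be separable. Consequently, at every closed point $\overline{p}$ lying over a general $9$-tuple, the differential $dg_{\overline{\mathbb{F}_2}}|_{\overline{p}}$ fails to be surjective. Identifying $g_{\overline{\mathbb{F}_2}}$ with the map $\eta_2$ of \autoref{definition:interpolation} associated to $\lambda = (3^9)$, where the planes $\Lambda_i$ are required to be points in $\bp^5$, \autoref{proposition:tangent-space-to-psi} translates non-surjectivity of $d\eta_2|_{\overline{p}}$ into non-surjectivity of the evaluation map
\[
H^0(X, N_{X/\bp^5}) \to \bigoplus_{i=1}^{9} N_{X/\bp^5}|_{p_i}
\]
for a general Veronese surface $X$ and general points $p_1, \ldots, p_9 \in X$. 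By the equivalence of the vector bundle interpolation condition with surjectivity on global sections, as encoded in \autoref{lemma:equivalent-vector-bundle-interpolation}, this precisely says that $N_{X/\bp^5}$ fails interpolation.

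The main subtlety will be verifying the hypotheses of \autoref{proposition:tangent-space-to-psi} at the chosen $\overline{p}$: one must confirm that the nine chosen points are distinct smooth points of $X$ meeting the zero-dimensional $\Lambda_i$ quasi-transversely, and that $\overline{p}$ is a smooth point of $\Psi$. Both conditions follow from the generality of $\overline{p}$ together with the vanishing $H^1(X, N_{X/\bp^5}) = 0$ established in \autoref{proposition:veronese-smooth-hilbert-scheme}, which also ensures that $X$ is a smooth point of its Hilbert scheme component. Once these are in place, the logical chain from inseparability of $g_{\overline{\mathbb{F}_2}}$ to failure of cohomological interpolation of $N_{X/\bp^5}$ is immediate, and combining with \autoref{theorem:counting-veronese-interpolation}, which guarantees that the $2$-Veronese itself satisfies interpolation in every characteristic, establishes the final claim that the implication \ref{interpolation-pointed} $\Rightarrow$ \ref{cohomological-definition} in \autoref{theorem:equivalent-conditions-of-interpolation} can fail in characteristic $2$.
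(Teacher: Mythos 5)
Your argument follows the paper's approach exactly: degree $4$ from \autoref{lemma:degree-4-map}, two geometric preimages from \autoref{theorem:counting-veronese-interpolation}, then \autoref{lemma:number-of-preimages} to conclude inseparable degree $2$, and finally the translation through \autoref{proposition:tangent-space-to-psi} to failure of interpolation for $N_{X/\bp^5}$. You actually fill in a step the paper glosses over (and which the paper's text appears to cite with the wrong proposition label), namely \emph{why} inseparability forces $d\eta_2$ to fail surjectivity at every relevant point.

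One small caution on the way you justify that step. You write that smoothness of a quasi-finite map at a point would force ``the corresponding residue field extension to be separable.'' At a closed point of a variety over $\overline{\mathbb{F}_2}$, the residue field extension is $\overline{\mathbb{F}_2}/\overline{\mathbb{F}_2}$, hence trivially separable, so that particular clause gives no contradiction. The precise argument you want is: if $g$ were \'etale at some point $x \in g^{-1}(W)$, then $\Omega^1_g$ would vanish near $x$; but $\Omega^1_g$ is a coherent sheaf whose stalk at the generic point of $\Psi$ is $\Omega^1_{K(\Psi)/K(W)} \neq 0$ (inseparability of the function field extension), so $\supp \Omega^1_g$ is a closed set containing the generic point and is therefore all of $\Psi$. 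Equivalently: the open locus of $W$ over which the fiber is reduced is empty because the geometric generic fiber already has nilpotents. Either phrasing closes the gap cleanly; as written, the residue-field appeal is not quite doing the work you want it to, though the intent and conclusion are correct.
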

\begin{proof}
  First, as we showed in \autoref{lemma:degree-4-map},
  the degree of $g_\bz$ is $4$.
  Therefore, since $g_{\overline {\mathbb F_2}}$, is a base change
  of $g_\bz$, it also has degree $4$.

  From 
  \autoref{theorem:counting-veronese-interpolation},
  we know 
  each closed point of $W_{\overline {\mathbb F_2}}$
  will have two preimages under $g_{\overline{\mathbb F_2}}$.
  So, by  
  \autoref{lemma:number-of-preimages},
  the number of preimages of a general point is equal to the separable
  degree,
  which implies the separable degree of
  $g_{\overline{\mathbb F_2}}$ must be $2$,
  and hence the inseparable degree must be $4/2 = 2$.

  Note that $g_{\overline{\mathbb F_2}}$ is the map
  $\eta_2$ from \autoref{proposition:irreducible-incidence}.
  If it is not separable, then $d \eta_2$ is not surjective.
  So by \autoref{proposition:irreducible-incidence},
  the corresponding map $\tau$ from \autoref{proposition:irreducible-incidence}
is not surjective,
  which precisely means that
  the $2$-Veronese surface fails to satisfy vector bundle
  interpolation in characteristic $2$, by ~\eqref{equation:sequence-cohomological-interpolation-definition}
  in the definition of interpolation for vector bundles.
\end{proof}

\section{A teaser: rational normal curves}

Before moving on to the general setting, to get a feel for how the argument will proceed,
let's carry it out the special case that the scrolls are one dimensional.
That is, let's examine the case when the scrolls are rational normal curves.
It is a classical result that a unique rational normal curve passes
through a general set of $n+3$ points in $\bp^n$. In fact, you may
have even seen this result in your first course on algebraic geometry 
\cite[Theorem 1.18]{Harris1998}.

\begin{remark}
	\label{remark:}
	In addition to knowing that rational normal curves satisfying interpolation,
	it is also known that all rational normal surface scrolls satisfy interpolation, as follows from
	Coskun's thesis \cite{coskun:degenerations-of-surface-scrolls}.
	So, the main object of this section is to prove interpolation for scrolls of dimension at least 3.
	Nevertheless, the proof still applies to dimensions $1$ and $2$.
\end{remark}

Let's now trace through the example of rational normal curves.

\begin{figure}
	\centering
	\includegraphics[scale=.2]{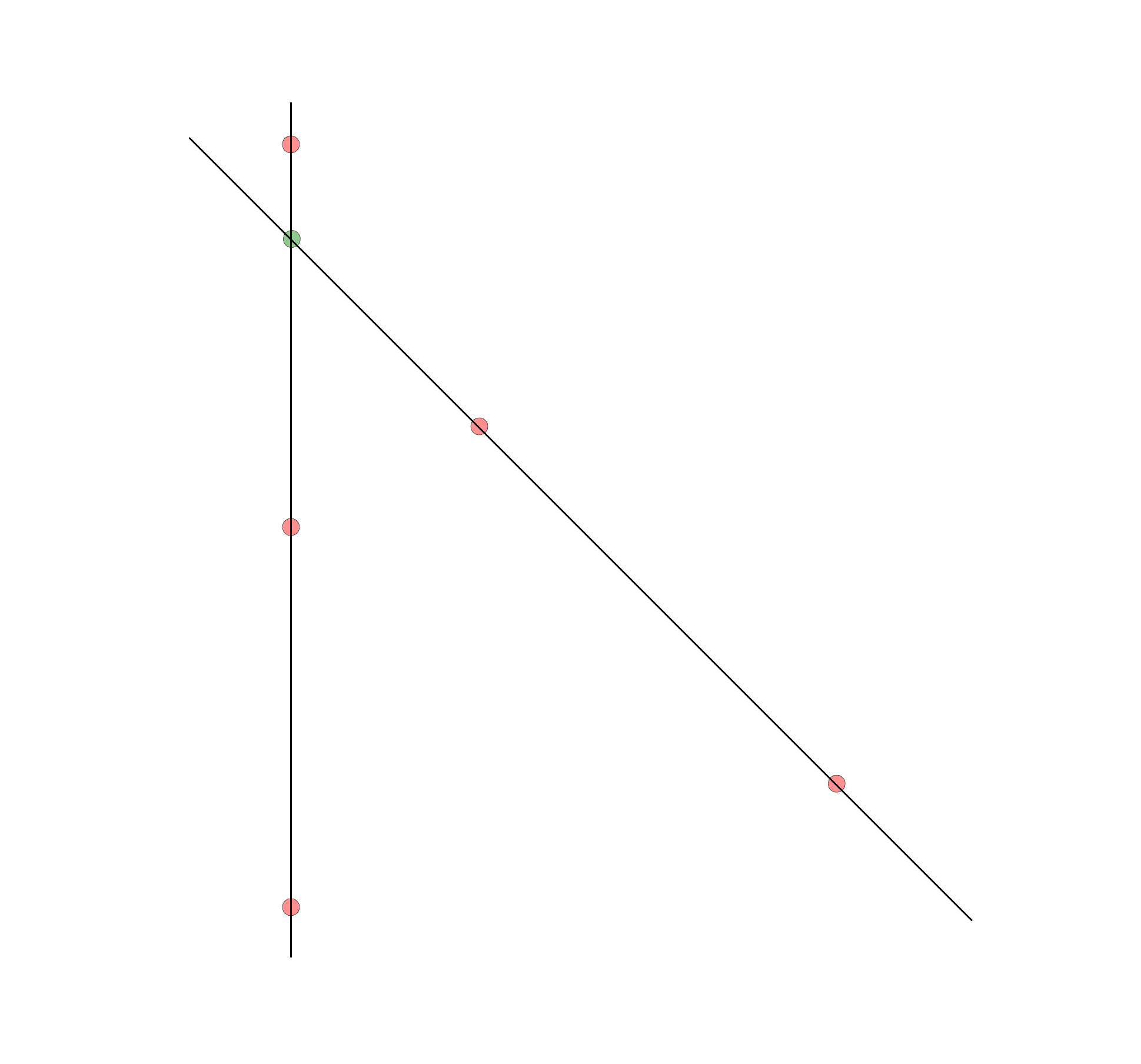}
	\includegraphics[scale=.2]{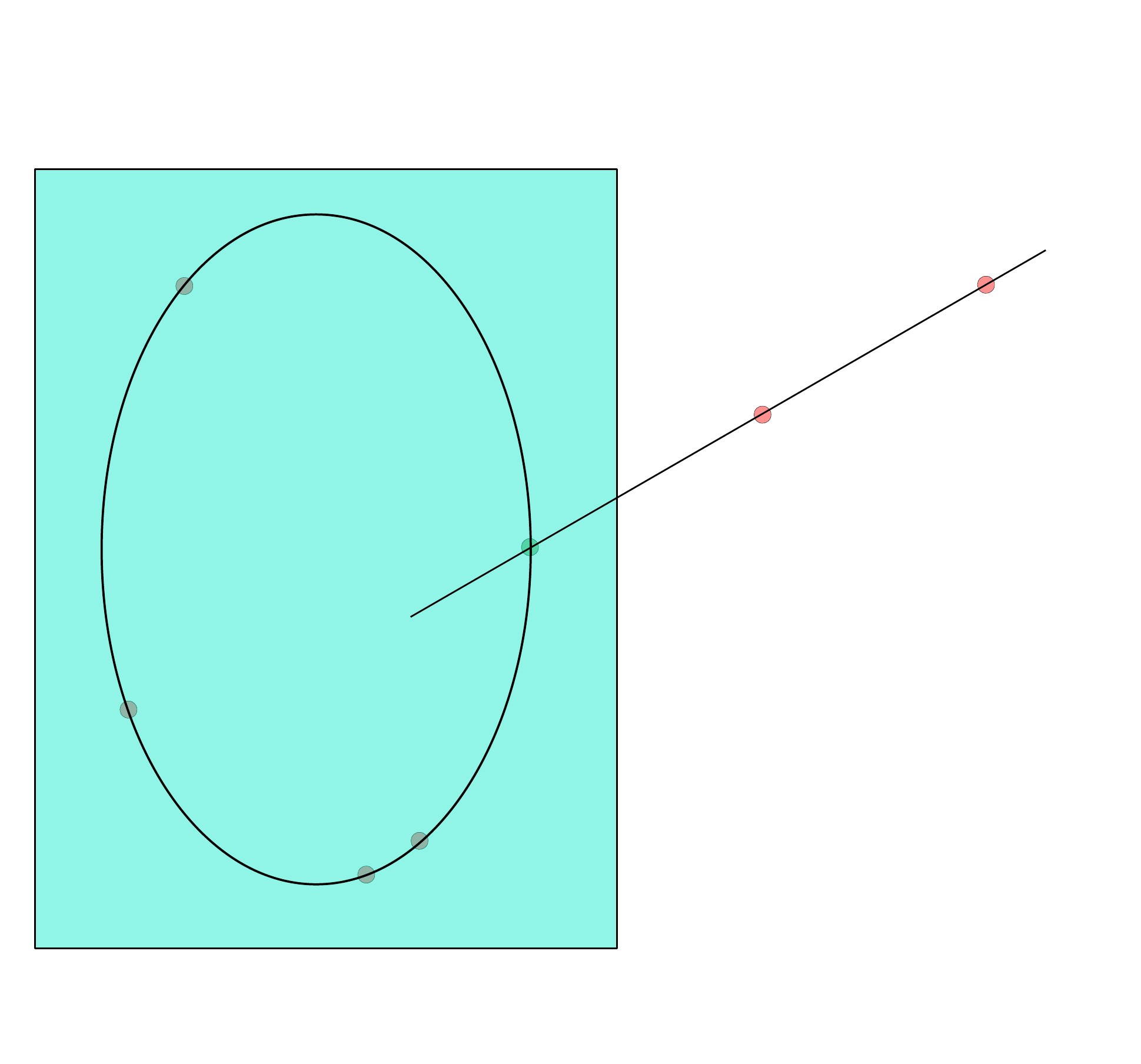}
	\includegraphics[scale=.2]{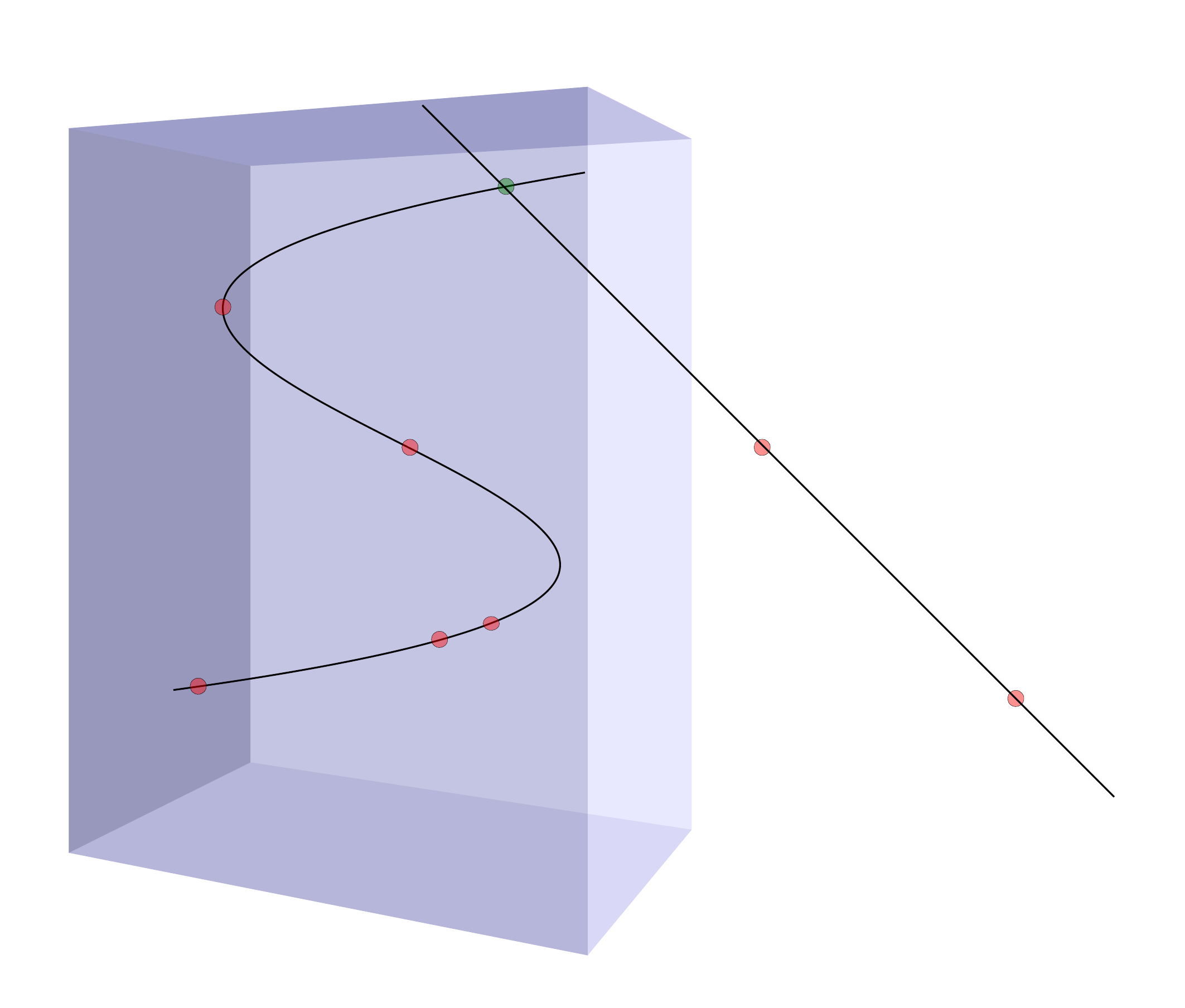}
	\caption{A pictorial description of the inductive degeneration for showing rational normal curves
	satisfy interpolation. Degenerations are drawn for rational normal curves of degrees 2, 3, and 4.}
\end{figure}

\begin{example}
	\label{example:rational-normal-curve-interpolation}
	We'd like to show that there is a rational normal curve $C$ passing through $n+3$ points
	in $\bp^n$. The result is clear when $n = 2$ by \autoref{lemma:balanced-complete-intersection}.
	This is just saying that through any $5$ points in $\bp^2$ we can find a conic curve.

	Now, inductively assume there exists a rational normal curve passing through $n+2$ points
	in $\bp^{n-1}$. We will show there is a rational normal curve in $\bp^n$ passing through
	$n+3$ points, $p_1, \ldots, p_{n+3}$. 
	Start by specializing $p_3, \ldots, p_{n+3}$ to general points in a hyperplane $H \cong \bp^{n-1} \subset \bp^n$.
	Note that because a smooth rational normal curve spans $\bp^n$, it cannot be contained in any hypersurface,
	and so, by Bezout's theorem, its intersection with any hypersurface must be a scheme of degree $n$.
	Therefore, there are no smooth rational normal curves passing through such a configuration of points.

	However, there {\emph is} a degenerate rational normal curve passing through this collection of points.
	Namely, let $\ell$ be the line joining $p_1$ and $p_2$ and let $q := \ell \cap H$.
	Then, by the inductive hypothesis, there is a unique rational normal curve $D \subset H$ of degree $n+2$ containing
	$q, p_3, \ldots, p_{n+3}$. Let $C := \ell \cup D$.
	This defines a unique ``degenerate'' rational normal curve passing through $n+2$ general points
	in $H$ and two additional general points in $\bp^n$. Note that this curve indeed lies in the irreducible
	component of the Hilbert scheme whose general member is a smooth rational normal curve, as follows
	from \autoref{proposition:hilbert-scroll-degeneration}.

	Then, by \autoref{corollary:isolated-points-in-minutely-broken-scrolls}, we obtain
	that this degenerate rational normal
	curve, together with the $n+3$ points, corresponds to an isolated point in the fiber of the map
	\begin{equation}
		\nonumber
		\begin{tikzcd} 
			\left\{ \left( p_1, \ldots, p_{n+3}, C  \right) \subset (\bp^n)^{n+3} \times \minhilb n 1: p_1 \in C, \ldots, p_{n+3} \in C \right\}  \ar {d}\\
			(\bp^n)^{n+3}
		\end{tikzcd}\end{equation}
	Hence, by \autoref{theorem:equivalent-conditions-of-interpolation}, 
	rational normal curves satisfy interpolation.
\end{example}
\begin{remark}
	\label{remark:}
	In \autoref{example:rational-normal-curve-interpolation}, we were able to show rational normal curves satisfy
	interpolation. One may still want to know how many rational normal curves there are through a general set
	of $n+3$ points. Using the above method, one can see there is a unique one,
	as one can specialize points into a hyperplane 1 by 1, and note that there will only be a degenerate curve
	once we specialize the $n+1$st point to the hyperplane. Then, one can inductively show there is precisely 1 curve,
	by showing (with some work) that there is precisely 1 degenerate curve.

	\begin{exercise}
		\label{exercise:}
		Make the above discussion precise, showing that there is exactly one rational normal curve
		through $n+3$ points.
	\end{exercise}
	Similarly, in his thesis, Coskun was able to count the number of rational normal scrolls meeting a given
	set of linear spaces. However, finding the number in higher dimensions seems to be a more difficult task,
	as the degenerations get much thornier in dimension 3 and above. In the remainder of this thesis,
	we shall primarily be concerned just with finding whether there exists a scroll passing through
	a set of points, and not the question of how many there are.

	With that said, we do answer some enumerative questions for scrolls of dimension
	$k$ and degree $k$ using \autoref{subsection:a-combinatorial-interlude}.
\end{remark}

\section{The set-up for scrolls}
In the remainder of this chapter, we prove that scrolls satisfy interpolation.
We'll start by stating the precise number of points and linear spaces a scroll must
pass through to satisfy interpolation.

\begin{lemma}
	\label{lemma:interpolation-numerics}
	The condition of interpolation means that we can find a degree $d$ dimension $k$ scroll 
	\begin{itemize}
		\item containing $d + 2k + 1$ general points and meeting a general $d-2k + 1$-plane if $d \geq 2k - 1$,
		\item containing $d + 2k + 2$ general points and meeting a general $2(d-k)$-plane if $k \leq d \leq 2k - 2$.
	\end{itemize}
\end{lemma}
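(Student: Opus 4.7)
The plan is a direct division-algorithm calculation. By \autoref{proposition:scroll-hilbert-scheme-dimension}, a smooth scroll of degree $d$ and dimension $k$ in $\bp^{d+k-1}$ satisfies
\begin{align*}
\dim \minhilb{d}{k} = (d+k)^2 - k^2 - 3 = d^2 + 2dk - 3,
\end{align*}
and its codimension in $\bp^{d+k-1}$ is $n - k = d - 1$. According to \autoref{definition:interpolation}, interpolation for this component is the statement that, writing $\dim \minhilb{d}{k} = q(d-1) + r$ with $0 \leq r < d - 1$, a general configuration of $q$ points together with one general linear space of dimension $(d-1) - r$ is met by some scroll in $\minhilb{d}{k}$.

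I will extract $q$ and $r$ from polynomial long division. Starting from
\begin{align*}
d^2 + 2dk - 3 = (d-1)(d + 2k + 1) + (2k - 2),
\end{align*}
the remainder $2k - 2$ lies in the legal range $[0, d-1)$ precisely when $d \geq 2k$. There I read off $q = d + 2k + 1$, $r = 2k - 2$, and plane dimension $(d-1) - (2k-2) = d - 2k + 1 \geq 1$, which is the first bullet.

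When $d \leq 2k - 1$ the remainder $2k - 2$ is at least $d - 1$, so I absorb one more factor of $d - 1$ into the quotient:
\begin{align*}
d^2 + 2dk - 3 = (d-1)(d + 2k + 2) + (2k - d - 1).
\end{align*}
For $k + 1 \leq d \leq 2k - 2$ the new remainder satisfies $0 \leq 2k - d - 1 < d - 1$, yielding $q = d + 2k + 2$, $r = 2k - d - 1$, and plane dimension $(d-1) - (2k - d - 1) = 2(d - k) \geq 2$, matching the second bullet.

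The two boundary values $d = 2k - 1$ and $d = k$ each produce $r = 0$ after one further absorption, but the lemma's formulas remain correct because meeting a general $0$-dimensional plane is the same as containing one additional general point, and meeting a general $(d-1)$-dimensional plane is automatic for a $k$-dimensional scroll in $\bp^{d+k-1}$. Thus the first bullet at $d = 2k - 1$ (i.e.\ $4k$ points plus a $0$-plane) and the second bullet at $d = k$ (i.e.\ $3k + 2$ points plus a $0$-plane) recover exactly the point counts $q = 4k + 1$ and $q = 3k + 3$ coming from the division algorithm. Since the entire argument is elementary algebra, I do not anticipate any substantive obstacle.
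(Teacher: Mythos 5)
Your proof is correct and follows essentially the same route as the paper: invoke \autoref{proposition:scroll-hilbert-scheme-dimension} for $\dim \minhilb{d}{k} = (d+k)^2 - k^2 - 3$, note the codimension $n - k = d-1$, perform the division $\dim \minhilb{d}{k} = q(d-1) + r$ via the two identities $(d-1)(d+2k+1) + (2k-2)$ and $(d-1)(d+2k+2) + (2k-d-1)$, and observe that the boundary cases $d = 2k-1$ and $d = k$ still match the stated formulas because a $0$-plane is a point. The only minor difference is presentational — you foreground the division algorithm while the paper states the identities and checks the remainder constraints in prose — but the content and all the case distinctions coincide.
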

\begin{proof}
	By \autoref{proposition:scroll-hilbert-scheme-dimension}, we know
	\begin{align*}
		\dim \minhilb d k = (d + k)^2 - k^2 - 3.
	\end{align*}
	Additionally, for any $[Y] \in \minhilb d k$, since $n  = d + k - 1$, we have $\codim Y = n - k = d - 1$.
	Now, we may write
	\begin{align*}
		\dim \minhilb d k
		&= (d+k)^2 - k^2- 3 \\
		&= (d-1) \left( d+2k + 1 \right) + 2k - 2.
	\end{align*}
	Note that we always have $d \geq k$ and $2k - 2 \leq d-1$ when $d \geq 2k - 1$. Therefore, in the case $d > 2k - 1$.
	interpolation means the scheme passes through $d + 2k + 1$ points and meet a $(d - 2k + 1)$-plane.
	In the case $d = 2k - 1$, the scheme must pass through $d + 2k + 2$ points, which is the same as passing through
	$d + 2k +1$ points and a general $d - 2k +1$ plane, since a $d - 2k + 1$ plane is a point in this case.
	Next, when $k < d \leq 2k - 2$, we have $d-1 <2k - 2 < 2(d-1)$, and so for interpolation to hold, such
	schemes must pass through $d + 2k + 2$ points and meet a $2(d-k)$-plane.
	In the special case that $k = d$, the scheme must pass through $d + 2k + 3$ points, which is the same
	as passing through $d + 2k + 2$ points and meeting a $2(d-k)$-plane, as a $2(d-k)$-plane is a point.
	Therefore, by \autoref{definition:interpolation}, $\minhilb d k$ satisfies interpolation if
	it satisfies the conditions of this lemma.
\end{proof}

We prove that scrolls satisfy interpolation by induction.
We fix a dimension $k$ and induct on the degree of $k$-dimensional varieties.
In order to prove the theorem for a variety of dimension $k$ and degree $d$, we will
make the following inductive hypotheses.
\begin{enumerate}
	\item[\customlabel{custom:ind-high}{ind-1}] When $d > k$, we assume $\minhilb {d-1} k$ satisfies interpolation.
	\item[\customlabel{custom:ind-mid}{ind-2}] When $k + 1 \leq d \leq 2k - 1$, we assume there is a variety of degree $d - 1$ containing a general $(2k-d-1)$-plane and containing $2d$ points.
\end{enumerate}

\section[Degree at least $2k-1$]{Inductive degeneration for degree at least $2k-1$}

In this section we show that degree $d$ scrolls satisfy interpolation for $d \geq 2k-1$,
assuming our inductive hypothesis \autoref{custom:ind-high}.
The main result of this section is \autoref{lemma:high-induction}.

\begin{proposition}
	\label{lemma:high-induction}
	Assuming induction hypothesis ~\ref{custom:ind-high}, if $d > 2k-1$ then $\minhilb d k$ satisfies interpolation.
\end{proposition}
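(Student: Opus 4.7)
The plan is an inductive degeneration argument via broken scrolls. Assuming $\minhilb{d-1}{k}$ satisfies interpolation, I will produce an element $[W] \in \broken d k \subset \minhilb d k$ that is isolated in its fiber of the incidence map $\pi_2$, and then invoke \autoref{corollary:isolated-points-in-minutely-broken-scrolls} together with \autoref{theorem:equivalent-conditions-of-interpolation} (via, say, condition \ref{interpolation-isolated}) to conclude interpolation for $\minhilb d k$. By \autoref{lemma:interpolation-numerics}, it suffices to exhibit such a $W$ through a suitable specialization of $d+2k+1$ points and a $(d-2k+1)$-plane in $\bp^n$.

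Fix a hyperplane $H \subset \bp^n = \bp^{d+k-1}$ and specialize $p_1, \ldots, p_{d+2k-1}$ to generic points of $H$, while keeping $p_{d+2k}, p_{d+2k+1}$ generic outside $H$ and $\Lambda$ a generic $(d-2k+1)$-plane. Define the auxiliary point $q := \overline{p_{d+2k}, p_{d+2k+1}} \cap H$ and the generic $(d-2k)$-plane $\Lambda' := \Lambda \cap H$. Applying the inductive hypothesis inside $H \cong \bp^{d+k-2}$ with \autoref{lemma:interpolation-numerics}, I obtain a smooth scroll $X \subset H$ of degree $d-1$ and dimension $k$ containing the $d+2k$ points $\{p_1, \ldots, p_{d+2k-1}, q\}$ and meeting $\Lambda'$. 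Since $q \in X$, let $R$ be the unique ruling of $X$ through $q$, set $Y := \overline{R, p_{d+2k}}$, and put $W := X \cup Y$. Because $q \in R \subset Y$ and $q, p_{d+2k}, p_{d+2k+1}$ are collinear, we have $p_{d+2k+1} \in Y$; thus $W$ contains all $p_i$ and satisfies $W \cap \Lambda \supseteq X \cap \Lambda' \neq \emptyset$, and $[W] \in \brokengeneral d k \subset \broken d k$.

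To verify the isolation hypothesis of the corollary, let $Z$ denote the fiber of $\pi_2$ over the specialized configuration. I would show that any $W' = X' \cup Y' \in Z \cap \broken d k$ arises from the construction above: the hyperplane spanning $X'$ must equal $H$, since more than $k+1$ generic points of $H$ cannot lie on the $k$-plane $Y'$; then $R' := Y' \cap H$ must contain $q$ (because $Y' \supset \overline{p_{d+2k}, p_{d+2k+1}}$), forcing $q \in X'$; and finally $W'$ must meet $\Lambda$ either via $X' \cap \Lambda' \neq \emptyset$ or via the auxiliary ruling condition forcing $Y' \cap \Lambda \neq \emptyset$. Each alternative imposes a codimension on $X'$ inside $\minhilb{d-1}{k}$ matching the interpolation budget ($d+2k$ points plus a $(d-2k)$-plane), giving only finitely many $X'$ and hence finitely many $W'$; in particular $W$ is isolated in $Z \cap \broken d k$.

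The main obstacle is to verify the remaining hypothesis of \autoref{corollary:isolated-points-in-minutely-broken-scrolls}: that $W$ does not lie in the closure of $Z \cap \minhilbsmooth d k$. The difficulty is subtle because smooth scrolls through the specialized configuration might a priori accumulate at the broken scroll $W$. I expect this to be handled by combining \autoref{proposition:minutely-broken-closure} (which identifies $\broken d k$ as a full component of $\minhilbsing d k$) with a local dimension argument near $W$ ensuring the smooth locus in $Z$ does not limit to the broken locus. Once both isolation hypotheses are in place, the corollary delivers an isolated point in the fiber, and \autoref{theorem:equivalent-conditions-of-interpolation} upgrades this to interpolation for $\minhilb d k$.
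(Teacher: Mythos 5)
Your overall strategy — specialize $d+2k-1$ of the points to a hyperplane $H$, build a broken scroll $W = X \cup Y$ from the degree-$(d-1)$ inductive interpolant $X \subset H$ and a $k$-plane $Y$ through $R$ and the two outside points, and then invoke \autoref{corollary:isolated-points-in-minutely-broken-scrolls} — is exactly the paper's approach, and your finiteness argument for $Z \cap \broken d k$ (forcing $X'$ into $H$, forcing $q \in X'$, then counting degree-$(d-1)$ scrolls) matches the paper's as well.

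However, there is a genuine gap at the last step, which you flag yourself as the ``main obstacle.'' The hypothesis of \autoref{corollary:isolated-points-in-minutely-broken-scrolls} that you still owe is that $W$ does not lie in the closure of $Z \cap \minhilbsmooth d k$, and your proposed tools do not fill it. \autoref{proposition:minutely-broken-closure} says $\broken d k$ is an irreducible component of $\minhilbsing d k$ inside the \emph{whole} Hilbert scheme; it says nothing about accumulation \emph{within a fiber of $\pi_2$}, and in fact its content is already baked into the proof of \autoref{corollary:isolated-points-in-minutely-broken-scrolls}, so invoking it again here is circular. What the paper actually proves is that $Z \cap \minhilbsmooth d k$ is \emph{finite}: this is the separate dimension-count \autoref{lemma:ind-high-smooth-finite}, which shows that for the specialized configuration (all but two points on $H$, plus the plane $\Lambda$) the incidence correspondence over $\minhilbsmooth d k$ has the same dimension as the target, so the smooth fiber is finite. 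Finiteness of $Z \cap \minhilbsmooth d k$ immediately implies $W$ is not a limit of points of $Z \cap \minhilbsmooth d k$, which is the missing hypothesis. Your write-up should replace the appeal to \autoref{proposition:minutely-broken-closure} and the unspecified ``local dimension argument'' with this concrete count; note in particular that a point specialized to the hyperplane $H$ still imposes $n-k$ conditions on a scroll (since the hyperplane section is a scroll of dimension $k-1$ in $\bp^{n-1}$), which is why the count works out. Your two minor imprecisions — defining $Y$ as $\overline{R, p_{d+2k}}$ rather than $\overline{\ell, R}$ (equivalent, as you observe, by collinearity), and the ``either/or'' for where $\Lambda$ meets $W'$ (generically only $X' \cap (\Lambda \cap H) \neq \emptyset$ occurs, since $Y'$ is too low-dimensional to meet a general $\Lambda$) — are harmless, but the smooth-locus finiteness is a real lemma, not a formality.
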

The idea of this proof is to specialize all but two of the points to a hyperplane, and then find a reducible scroll of degree
$d$ which is the union of a scroll of degree $d-1$ in a hyperplane and $\bp^k$, meeting along $\bp^{k-1}$,
as pictured in \autoref{figure:high-induction}.

\begin{figure}
	\centering
	\includegraphics[scale=.35]{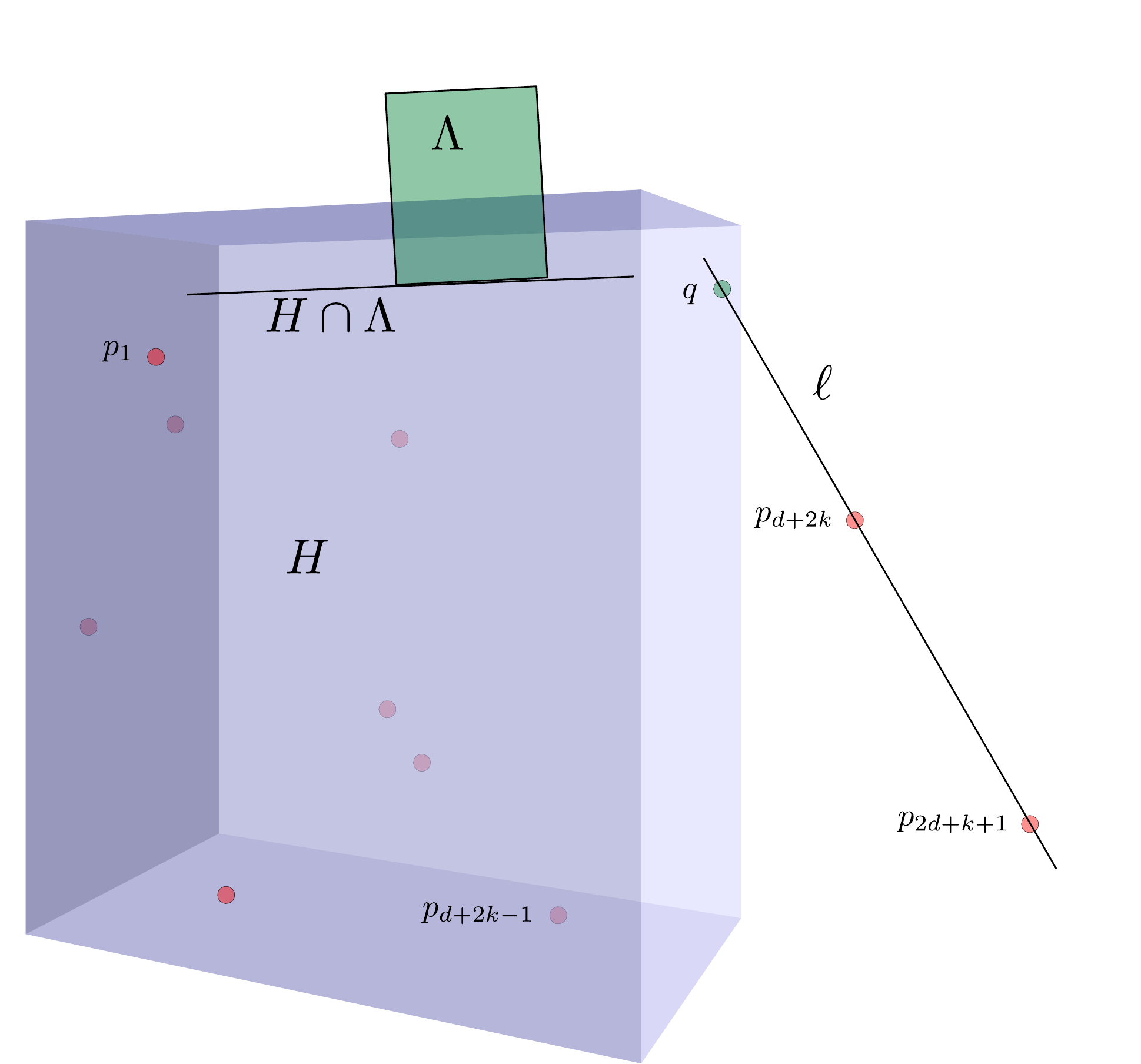}
	\caption{A visualization of the idea of the proof of \autoref{lemma:high-induction},
	where one inductively specializes all but two points to lie in a hyperplane.
}
	\label{figure:high-induction}
\end{figure}

\begin{proof}
	By \autoref{lemma:interpolation-numerics}, $\minhilb d k$ satisfying interpolation means that we can find 
	some variety corresponding to a point in this Hilbert scheme, passing through
	a general collection of $d + 2k +1$ points and a general 
	$(d -2k + 1)$-plane, $\Lambda$. 	Now specialize $d - 2k - 1$ of the points $p_1, \ldots, p_{d+2k-1}$ to a general hyperplane $H \subset \bp^{d+k-1}$. Note further that
	since $H$ was chosen generally, we have $H \cap \Lambda$ is a general $((d-1)-2k+1)$-plane. 
	Further, let $\ell$ be the line through $p_{d+2k}, p_{d+2k+1}$ and let $q := \ell \cap H$.
	By induction hypothesis ~\ref{custom:ind-high}, there is
	a degree $d-1$ dimension $k$ variety of minimal degree which contains the $(d-1) + 2k + 1$ points $p_1, \ldots, p_{d+2k-1}, q$ and meets
	the $((d-1)-2k+1)$-plane $H \cap \Lambda$. Call that variety $X$. Now, since $q \in X$, there is a unique $k-1$-plane
	contained in $X$ and containing $q$, as follows from \autoref{lemma:linear-spaces-in-varieties-of-minimal-degree}.
	Let that $k-1$-plane be $P$. Then, let $Y$ be the $k$ plane spanned by $\ell$ and $P$.
	Then, the variety $X \cup Y$ with reduced scheme structure lies in $\minhilb d k$ by \autoref{proposition:hilbert-scroll-degeneration}.
	
	We claim further that $X \cup Y$ is an isolated point in the set of all elements of $\minhilb d k$ containing $p_2, \ldots, p_{d + 2k - 1}$ and $\Lambda$.
	This would then show that $\minhilb d k$ satisfies equivalent criterion ~\ref{interpolation-isolated} of interpolation.
	Since the points and plane were chosen generally subject to the requirement that $k+d$ of the points were contained
	in a hyperplane, by \autoref{corollary:isolated-points-in-minutely-broken-scrolls}, it suffices to show there are only finitely many
	scrolls in $\broken d k \cup \minhilbsmooth d k$ containing $p_1, \ldots, p_{d - 2k+1},$ and meeting $\Lambda$.

	This now follows from a dimension count. First, we will show there are only finitely many
	scrolls in $\broken d k$ containing $p_1, \ldots, p_{d - 2k+1}$ and meeting $\Lambda$. 
	Because of the specialization of the points, any $[X \cup Y] \in \broken d k$ with $Y \cong \bp^{k-1}$ 
	containing this set of points must satisfy
	$X \subset H$.
	Therefore, if the points and $\Lambda$ were chosen generally, any such scroll must pass through
	$p_1, \ldots, p_{d-2k-1}, q$, and meet $\Lambda \cap H$.
	Now, there are only finitely many scrolls $X \subset H$ containing $p_1, \ldots, p_{k+d}, q$ and meeting
	$\Lambda \cap H$ by
	\autoref{lemma:interpolation-numerics}.
	Let $P$ be the unique $(k-1)$-plane contained in $X$ and containing $q$. This exists and is unique by
	\autoref{lemma:linear-spaces-in-varieties-of-minimal-degree}.
	Then, $Y$ is uniquely determined to be the plane containing $p_{k+d}, p_{k+d+1}$ and $P$.
	Hence, there are only finitely many scrolls in $\broken d k$.

	By \autoref{corollary:isolated-points-in-minutely-broken-scrolls},
	to complete the proof, it suffices to show there are only finitely many smooth scrolls containing
	$p_1, \ldots, p_{k+d+1}$ and meeting $\Lambda$.
	This follows from \autoref{lemma:ind-high-smooth-finite}, which we prove next.
\end{proof}

\begin{lemma}
	\label{lemma:ind-high-smooth-finite}
	Let $p_1, \ldots, p_{d+2k+1}$ be points in $\bp^n$ so that $p_1, \ldots, p_{d+2k-1}$ are contained in a hyperplane $H \subset \bp^n$,
	but the points are otherwise general, and let $\Lambda$ be a general $(d-2k+1)$ plane.
	Then, there are only finitely many smooth scrolls containing $p_1, \ldots, p_{d+2k+1}$ and meeting $\Lambda$.
\end{lemma}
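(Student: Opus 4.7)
I will prove finiteness by a dimension count that factors each smooth scroll through its hyperplane section and invokes induction on dimension. Since a smooth scroll $X$ is nondegenerate in $\bp^n$, $X \not\subset H$ and $Y := X \cap H$ is a Cartier divisor on $X$ of degree $d$ and pure dimension $k-1$ inside $H \cong \bp^{n-1}$; because $(n-1) - (k-1) + 1 = d$, the scheme $Y$ is a variety of minimal degree in $H$. By \autoref{lemma:hyperplane-section-of-scroll} and the generality of the points in $H$, we may reduce to the case where $Y$ is a smooth scroll of dimension $k-1$ and degree $d$ in $H$, so $[Y] \in \minhilb{d}{k-1}$.

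Assuming by induction on dimension that $\minhilb{d}{k-1}$ satisfies interpolation, combining \autoref{proposition:scroll-hilbert-scheme-dimension} with \autoref{lemma:interpolation-numerics} gives $\dim \minhilb{d}{k-1} = d^{2} + 2dk - 2d - 3$, and each of $p_{1}, \ldots, p_{d+2k-1}$ imposes codimension $d-1$. Hence the locus
\[
M \;:=\; \{\,[Y] \in \minhilb{d}{k-1}\text{ in }H : p_{1}, \ldots, p_{d+2k-1} \in Y\,\}
\]
is pure of dimension $(d^{2} + 2dk - 2d - 3) - (d+2k-1)(d-1) = 2k - 4$. Let $Z \subset \minhilbsmooth{d}{k}$ be the locus of smooth scrolls through $p_{1}, \ldots, p_{d+2k-1}$, and let $\sigma : Z \to M$ be the hyperplane-section morphism $X \mapsto X \cap H$. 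The general fiber of the rational map $\minhilbsmooth{d}{k} \dashrightarrow \minhilb{d}{k-1}$ sending $X \mapsto X\cap H$ has dimension $\dim \minhilb{d}{k} - \dim \minhilb{d}{k-1} = 2d$, so $\dim Z \leq 2d + 2k - 4$.

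Finally, I impose the remaining conditions on $Z$. The two extra general points $p_{d+2k}, p_{d+2k+1}$ lie off $H$, so by \autoref{lemma:linear-condition-counting} each imposes codimension $d-1$ on $Z$; meeting the general $(d-2k+1)$-plane $\Lambda$ imposes codimension $(n - (d-2k+1)) - k = 2k - 2$. Their cumulative codimension $2(d-1) + (2k-2) = 2d + 2k - 4$ exactly equals the upper bound on $\dim Z$, so the expected dimension of the locus of smooth scrolls satisfying all the conditions is $0$; this matches the global count $\dim \minhilb{d}{k} - (d+2k+1)(d-1) - (2k-2) = 0$. The main obstacle is verifying the actual independence of these codimension conditions, i.e.\ that $p_{d+2k}, p_{d+2k+1}$ and $\Lambda$ cut $Z$ in the expected codimension rather than lowering it. This will be ensured by the fact that these last three pieces of data are chosen generically \emph{after} the points in $H$ are fixed, so a successive application of \autoref{lemma:linear-condition-counting} to the universal family over $Z$ (using that the smooth scrolls in $Z$ sweep out $\bp^{n}$ away from $H$ and that $Z$ meets a Zariski-dense collection of $(d-2k+1)$-planes) delivers the required transversality and the finiteness claim.
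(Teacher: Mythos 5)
Your proof takes a genuinely different route from the paper's. The paper gives two proofs of this lemma, both of which are ``flat'' dimension counts on the full incidence correspondence inside $\minhilb{d}{k}$: Proof~1 directly tallies the conditions imposed by each point (observing only in passing that a point in $H$ still imposes $n-k$ conditions because $X \cap H$ is $(k-1)$-dimensional), and Proof~2 computes $\dim \Phi$ by decomposing it as a fiber product. You instead factor every smooth scroll through its hyperplane section $X \mapsto X \cap H$, push the problem down to $\minhilb{d}{k-1}$, invoke interpolation one dimension lower, and then count fiber dimensions. This buys a more conceptual picture but at the cost of restructuring the paper's overall induction: the paper inducts on degree for each fixed $k$, so interpolation for $\minhilb{d}{k-1}$ is not a priori available at the point where this lemma is used. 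Your plan is sound if one reorganizes into a double induction with dimension as the outer loop, but that is a genuinely different architecture for \autoref{theorem:scrolls-interpolation}, not a local change. You also implicitly assume $k \geq 2$; for $k = 1$ the ``dimension $k-1$'' Hilbert scheme does not exist, so the base case would need separate treatment, whereas the paper's direct count handles all $k$ uniformly.

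There is also a real gap in the step $\dim Z \leq 2d + 2k - 4$. You cite only the dimension of the \emph{general} fiber of $X \mapsto X\cap H$ over $\minhilb{d}{k-1}$, but $M$ is a proper (indeed rather low-dimensional) subvariety of $\minhilb{d}{k-1}$, and upper semicontinuity of fiber dimension permits the fibers over points of $M$ to be strictly larger than $2d$. To close this you need an argument that the fiber dimension over a general point of $M$ is still $2d$ --- for instance, that fiber dimension depends only on the scroll type of $Y$ because the hyperplane-section map is equivariant under the parabolic stabilizer of $H$ (which surjects onto $\pgl_n$) and a general point of $M$ has the generic balanced type. A minor point: you should not phrase this as ``reduce to the case where $Y$ is smooth''; with $H$ fixed, $Y$ may be singular, but \autoref{lemma:smooth-scroll-hyperplane-section} already guarantees $[Y] \in \minhilb{d}{k-1}$, which is all the dimension count needs.
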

We give two proofs, using the two techniques described in \autoref{remark:interpolation-strategy}.

\begin{proof}[Proof using \ref{technique-conditions}]
The condition that $X$ pass through a point imposes $n-k = d-1$ conditions. 
When we specialize a point to a hyperplane, the hyperplane will intersect $X$ in a scroll of dimension $k-1$
in $\bp^{n-1}$. Hence, a point will still impose $n-1 - (k-1) = n - k$ conditions. 
Finally, the condition to meet a $(d - 2k + 1)$-plane imposes $n - k - (d-2k+1)$ conditions.
Adding these up, we obtain a total of
\begin{align*}
	(d+2k+1)(n-k) + n-k-(d-2k+1) &= (k+d)^2 - k^2 - 3 \\
	&= \dim \minhilb d k
\end{align*}
conditions.
\end{proof}

For the second proof of \autoref{lemma:ind-high-smooth-finite} using \ref{technique-dimension},
we will first need a lemma, showing that a hyperplane section of a scroll is a scroll.

	\begin{lemma}
		\label{lemma:smooth-scroll-hyperplane-section}
		Let $[X] \in \minhilbsmooth d k$ be a smooth scroll. Then, for any hyperplane $H \in (\bp^n)^\vee$, we have
		$X \cap H \in \minhilb d {k-1} $.
	\end{lemma}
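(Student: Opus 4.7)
My plan is to produce a morphism $\phi \colon (\bp^n)^\vee \to \sch(\bp^n)$ whose value on a hyperplane $H$ is the point $[X\cap H]$, and then conclude that the image of $\phi$ lies in the irreducible component $\minhilb d {k-1}$ by combining the irreducibility of $(\bp^n)^\vee$ with \autoref{lemma:hyperplane-section-of-scroll}.

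First I would construct the universal hyperplane section
\begin{align*}
  \scy := \{(H,x) \in (\bp^n)^\vee \times X : x \in H\},
\end{align*}
cut out inside $(\bp^n)^\vee \times X$ by the vanishing of the tautological bilinear pairing, i.e.\ the natural section of $p_1^*\sco_{(\bp^n)^\vee}(1) \otimes p_2^*\sco_{\bp^n}(1)$ pulled back to $(\bp^n)^\vee \times X$. Let $\pi \colon \scy \to (\bp^n)^\vee$ be the first projection, whose fiber over $[H]$ is scheme-theoretically $X \cap H$.

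Next I would verify that $\pi$ is flat. Because $X$ is a nondegenerate subvariety of $\bp^n$ (being of minimal degree), no hyperplane contains $X$, so for every closed $[H] \in (\bp^n)^\vee$ the sequence
\begin{equation*}
  \begin{tikzcd}
    0 \ar{r} & \sco_X(-1) \ar{r} & \sco_X \ar{r} & \sco_{X \cap H} \ar{r} & 0
  \end{tikzcd}
\end{equation*}
is exact. Consequently $p_{X \cap H}(m) = p_X(m) - p_X(m-1)$ is the same polynomial $P$ for every $H$. Since the base $(\bp^n)^\vee$ is reduced and all fibers of $\pi$ share this Hilbert polynomial, flatness follows from \cite[Theorem III.9.9]{Hartshorne:AG}. (Alternatively, $\scy$ is an effective Cartier divisor in the smooth irreducible scheme $(\bp^n)^\vee \times X$, hence automatically flat over $(\bp^n)^\vee$.)

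With flatness in hand, the universal property of the Hilbert scheme yields a morphism $\phi \colon (\bp^n)^\vee \to \sch^P(\bp^n)$. The image of $\phi$ is irreducible, being the image of an irreducible scheme, and by \autoref{lemma:hyperplane-section-of-scroll} a general hyperplane section of $X$ is a smooth scroll of degree $d$ and dimension $k-1$, so $\phi$ meets the locus of smooth scrolls. By \autoref{proposition:regular-minimal-degree-hilbert-schemes} every such smooth scroll is a smooth point of the Hilbert scheme, hence lies on a unique irreducible component, namely $\minhilb d {k-1}$. Therefore the closure of $\im \phi$, being irreducible and containing a point whose only component is $\minhilb d {k-1}$, must be contained in $\minhilb d {k-1}$. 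Evaluating $\phi$ at an arbitrary $[H]$ then gives $[X \cap H] \in \minhilb d {k-1}$.

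The only mildly subtle point is flatness of the universal hyperplane section at non-transverse $H$; the short exact sequence argument above, using nondegeneracy of $X$, handles this uniformly. After that, all remaining steps are formal consequences of irreducibility of $(\bp^n)^\vee$ and the fact that smooth points of $\sch^P$ lie on a unique irreducible component.
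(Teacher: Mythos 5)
Your proof is correct and follows essentially the same route as the paper: construct the universal hyperplane section over $(\bp^n)^\vee$, check constancy of Hilbert polynomials via nondegeneracy of $X$ to get flatness, obtain a map to the Hilbert scheme, and use irreducibility of $(\bp^n)^\vee$ together with the fact that a general member (by \autoref{lemma:hyperplane-section-of-scroll}) is a smooth point of the Hilbert scheme to pin the entire image inside $\minhilb d {k-1}$. The only cosmetic difference is that the paper phrases the uniform Hilbert polynomial via ``no hyperplane contains an associated point of $X$'' and Bezout, whereas you use the exact sequence $0 \to \sco_X(-1) \to \sco_X \to \sco_{X\cap H} \to 0$; these are equivalent for the integral scroll $X$.
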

	\begin{remark}
		\label{remark:}
		Note that the statement of \autoref{lemma:smooth-scroll-hyperplane-section} asserts not only
		that $X \cap H$ has dimension $k-1$, but also
		that
		$X \cap H$ lies in the closure of the locus of smooth scrolls.
		This additional fact about the irreducible component in which a hyperplane section
		of a smooth scroll lies, 
		which takes up the bulk of the proof of \autoref{lemma:smooth-scroll-hyperplane-section}, is 
		not needed in the proof of \autoref{lemma:ind-high-smooth-finite}.
		Nevertheless, we prove
		it as a bonus.
	\end{remark}
	
	\begin{proof}
		First, note there
		are no hyperplanes $[H] \in (\bp^n)^\vee$ that contain an
		associated point of $X$, since $X$ has no degree $1$ components. 
		Therefore, by Bezout's theorem, for any hyperplane $[H] \in (\bp^n)^\vee$, we have $H \cap X$
		lies in the connected component of the Hilbert scheme of varieties of minimal degree $d$ and dimension $k-1$.
	
		We need to show that $H \cap X$ further lies in
		$\minhilb {d} {k-1}$. That is, we claim it lies in the irreducible component
		whose general member is a smooth scroll of degree $d$ and dimension $k-1$.
		To see this, observe that by Bertini's theorem, there is some hyperplane $[J] \in (\bp^n)^\vee$ so that $X \cap J$ is smooth.

		Now, consider the family
		\begin{align*}
			\left\{ X \cap H : [H] \in (\bp^n)^\vee \right\} \subset \bp^n \times (\bp^n)^\vee \ra (\bp^n)^\vee
		\end{align*}
		By \cite[Theorem III.9.9]{Hartshorne:AG}, the projection map to $(\bp^n)^\vee$ is flat, because $(\bp^n)^\vee$ is reduced, and all fibers have the same
		Hilbert polynomial, equal to that of a member of $\minhilb d {k-1}$, as shown above.

		Therefore, by the functorial definition of the Hilbert scheme $\sch$, we obtain a map
		$(\bp^n)^\vee \ra \sch,$ which sends the point $[J] \in (\bp^n)^\vee$ to a smooth scroll.
		By \autoref{proposition:regular-minimal-degree-hilbert-schemes}, we know that a smooth scroll is a smooth point of the Hilbert
		scheme, and is therefore contained in the irreducible component $\minhilb d {k-1}$, and not contained in any other irreducible components.
		Hence, since $(\bp^n)^\vee$ is irreducible, it must map completely to $\minhilb d {k-1}$, as claimed.
	\end{proof}

	\begin{proof}[Proof of \autoref{lemma:ind-high-smooth-finite} using \ref{technique-dimension}]

	For the remainder of the proof, fix a hyperplane $H \subset \bp^n$. Define the incidence correspondence
\begin{align*}
	\Phi := 
	\left( \uhilb X \times_{\hilb {p_1}} H \right) &\times_{\hilb X} \cdots \times_{\hilb X} \left( \uhilb X \times_{\hilb {p_{d+2k-1}}} H \right)
	\\
	&\times_{\hilb X} \uhilb X \times_{\hilb X} \uhilb X \times_{\hilb X} \left( \uhilb X \times_{\bp^n} \hilb \Lambda \right)
\end{align*}
To complete the proof, it suffices to show that 
\begin{align*}
	\dim \Phi &= \dim H^{d+2k-1} \times (\bp^n)^2 \times G(d -2k + 2, n+1) \\
	&= (n-1)(d+2k-1)+ 2n +(d-2k+2)(n+2k-d-1)
\end{align*}
because then the projection map
\begin{align*}
	\Phi \ra (\bp^n)^{d+2k-1} \times (\bp^n)^2 \times G(d -2k + 2, n+1)
\end{align*}
will either be generically finite or not dominant, and in either case, there
will only be finitely many smooth varieties through a general set of such
points, meeting a general plane $\Lambda$.
Now, consider the projection
\begin{align*}
	\Phi \ra \minhilbsmooth d k
\end{align*}
It suffices to show that the dimension of $\minhilb d k$ plus the dimension of every fiber of this map is
\begin{align*}
(n-1)(d+2k-1)+ 2n +(d-2k+2)(n+2k-d-1).
\end{align*}
Noting that, by definition, $n = d+k-1$, we can rewrite the above dimension as
as
\begin{align*}
(d+k-2)(d+2k-1)+ 2(d+k-1) +(d-2k+2)(n+2k-d-1).
\end{align*}

We know that $\dim \minhilbsmooth d k = (d+k)^2 - k^2 - 3$ because smooth scrolls are smooth points of $\minhilb d k$
by \autoref{proposition:regular-minimal-degree-hilbert-schemes} and because this is equal to the dimension
of $\minhilb d k$ by \autoref{proposition:scroll-hilbert-scheme-dimension}.
We claim
\begin{align*}
	\dim \uhilb X \times_{\bp^n} H - \dim \minhilbsmooth d k &= k-1 \\
	\dim \uhilb X - \dim \minhilbsmooth d k &= k \\
	\dim \uhilb X \times_{\bp^n} \hilb \Lambda - \dim \minhilbsmooth d k &= k+ (d-2k+1)(3k-2)
\end{align*}
Indeed, the last two statements are immediate consequences of \autoref{lemma:interpolation-dimension}.
Finally, the first statement follows because the intersection $X \cap H$ is always
$k-1$ dimensional, by \autoref{lemma:smooth-scroll-hyperplane-section}, and so the fibers of the map
\begin{align*}
	\uhilb X \times_{\bp^n} H \ra \minhilbsmooth d k
\end{align*}
are all $k-1$ dimensional, implying the difference of the dimensions is $k-1$.

We have now completed the proof because
\begin{align*}
	\dim \Phi &= ((d+k)^2 -k^2 - 3) \\
	 & \qquad + \left(  (k-1)(d+2k-1) + (2k) + (k +(d-2k+1)(3k-2))
\right) \\
&= (d+k-2)(d+2k-1)+ 2(d+k-1) \\
& \qquad +(d-2k+2)(n+2k-d-1) \\
&= \dim (\bp^n)^{d+2k-1} \times (\bp^n)^2 \times G(d -2k + 2, n+1).
\end{align*}
\end{proof}

\section[Degree between $k+1$ and $2k-1$]{Inductive degeneration for degree between $k+1$ and $2k-1$}

The main result of this section is \autoref{corollary:mid-induction}
which lets us verify scrolls of degree between $k+1$ and $2k-1$
satisfy interpolation, assuming \autoref{custom:ind-mid} inductively.
We prove this assuming two lemmas, 
\autoref{lemma:finite-containing-plane-and-points} and
\autoref{lemma:finite-smooth-scrolls-mid-to-ladder}.
These two lemmas are essentially just dimension counts.

\begin{figure}
	\centering
	\includegraphics[scale=.3]{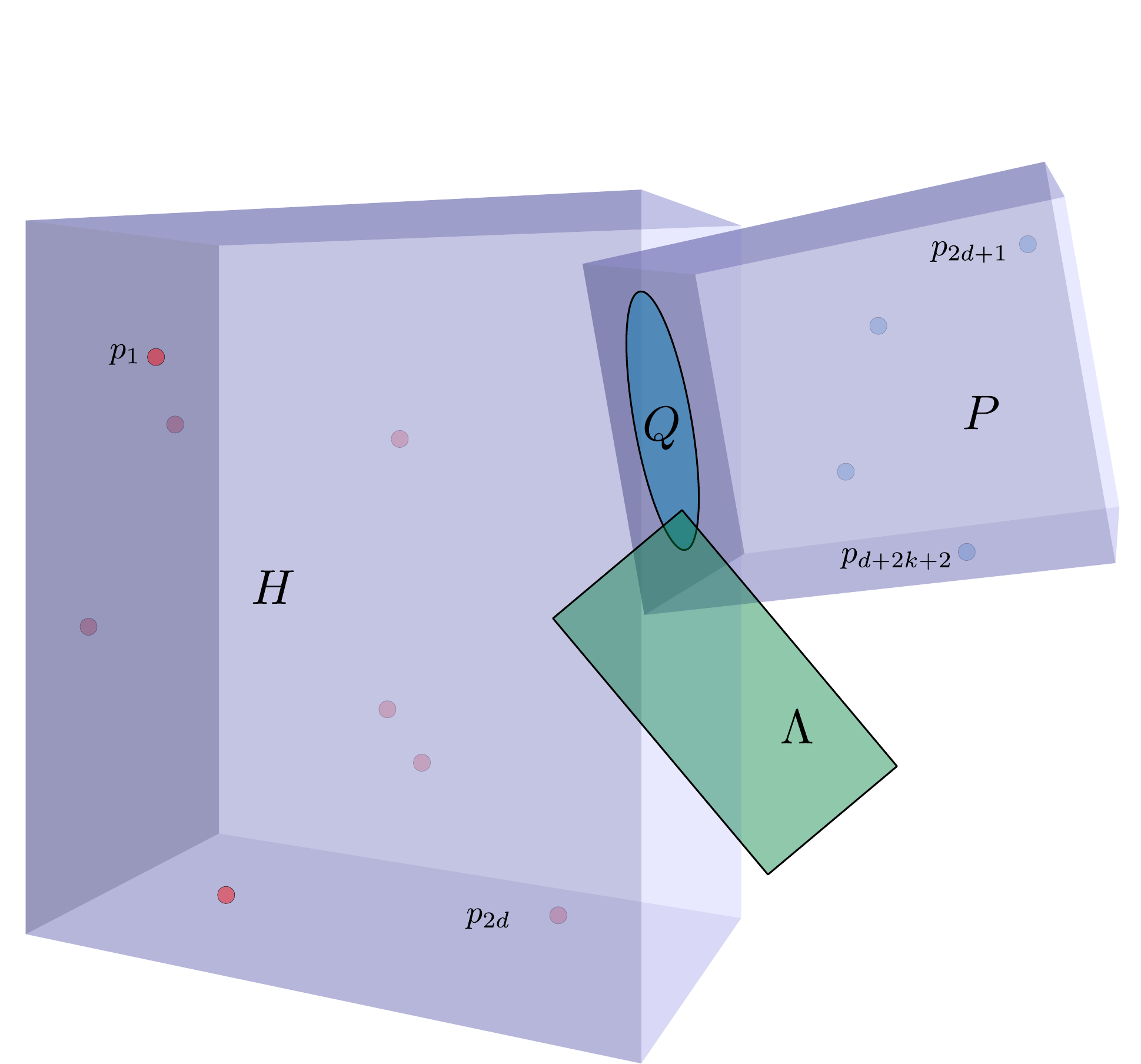}
	\caption{A visualization of the idea of the proof of \autoref{corollary:mid-induction},
	where one inductively specializes $2d$ points to lie in a hyperplane $H$, and the additional plane $\Lambda$
	to meet the intersection of $H$ and the span of the remaining $2k-d+2$ points.
}
	\label{figure:}
\end{figure}

\begin{proposition}
	\label{corollary:mid-induction}
	Let $k+1 \leq d \leq 2k-1$. Assuming induction hypothesis ~\ref{custom:ind-mid} holds for varieties of degree $d-1$ then $\minhilb {d} k$ satisfies interpolation.
\end{proposition}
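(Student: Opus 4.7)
The plan is to mimic the inductive degeneration of \autoref{lemma:high-induction}: specialize most of the point-conditions into a general hyperplane $H$, apply the inductive hypothesis ~\ref{custom:ind-mid} inside $H$ to obtain a degree $d-1$ scroll, and attach a $k$-plane to form an element of $\broken{d}{k}$, which lies in $\minhilb{d}{k}$ by \autoref{proposition:hilbert-scroll-degeneration}.

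Concretely, by \autoref{lemma:interpolation-numerics} I must produce some $[W]\in\minhilb{d}{k}$ through $d+2k+2$ general points $p_1,\dots,p_{d+2k+2}\in\bp^{d+k-1}$ and meeting a general $2(d-k)$-plane $\Lambda$. Specialize $p_1,\dots,p_{2d}$ into a general hyperplane $H$; the remaining $2k-d+2$ points span a generic $(2k-d+1)$-plane $\Pi$, whose intersection $\Pi\cap H$ has dimension $2k-d$. Further specialize $\Lambda$ within its Grassmannian so that $\Lambda\cap(\Pi\cap H)$ is nonempty; then every $k$-plane $Y\supset\Pi$ automatically meets $\Lambda$, and $P:=Y\cap H$ is a $(k-1)$-plane containing $\Pi\cap H$. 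Invoke ~\ref{custom:ind-mid} inside $H\cong\bp^{(d-1)+k-1}$ to produce a degree $d-1$, dimension $k$ scroll $X\subset H$ through $p_1,\dots,p_{2d}$ and containing a chosen $(2k-d-1)$-plane placed inside $\Pi\cap H$; then, using the classification of linear spaces in a minimal-degree scroll from \autoref{lemma:linear-spaces-in-varieties-of-minimal-degree}, pick $Y$ so that $P=Y\cap H$ is a ruling fiber of $X$. Setting $W:=X\cup Y$ gives a broken scroll in $\brokengeneral{d}{k}\subset\minhilb{d}{k}$ which contains all $p_i$ and meets $\Lambda$.

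To upgrade existence into interpolation, apply \autoref{corollary:isolated-points-in-minutely-broken-scrolls}: since the specialization was general subject to the constraints above, $[W]$ will be an isolated point in its fiber of $\pi_2$ once one checks that (i) only finitely many broken scrolls in $\broken{d}{k}$ satisfy the prescribed incidence conditions, and (ii) only finitely many smooth scrolls in $\minhilbsmooth{d}{k}$ do so. These assertions are precisely the content of \autoref{lemma:finite-containing-plane-and-points} and \autoref{lemma:finite-smooth-scrolls-mid-to-ladder} respectively; combined with the equivalence \ref{interpolation-isolated}$\Leftrightarrow$\ref{interpolation-definition} from \autoref{theorem:equivalent-conditions-of-interpolation}, this yields the theorem.

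The main obstacle is the step of matching inductive data to broken-scroll geometry: ~\ref{custom:ind-mid} only places a $(2k-d-1)$-plane inside $X$, yet the ruling fiber $P$ we require has dimension $k-1$, so one must exploit the remaining $(d-k)$ residual dimensions of moduli in $X$ to promote this smaller plane to a full ruling fiber containing the prescribed $(2k-d-1)$-plane. Showing that this promotion is available in a sufficiently generic configuration, and that the associated dimension counts underlying the two finiteness lemmas balance, is the technical heart of the argument; everything else is bookkeeping against the framework already set up in \autoref{section:degenerations-of-varieties-of-minimal-degree} and \autoref{section:interpolation-in-general}.
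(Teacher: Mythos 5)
Your overall route is the same as the paper's: specialize $p_1,\dots,p_{2d}$ into a hyperplane $H$, specialize $\Lambda$ to meet $Q:=\Pi\cap H$, build a broken scroll $X\cup Y$ with $X\subset H$ from the inductive hypothesis and $Y\supset\Pi$ glued along a ruling fiber of $X$, then conclude via \autoref{corollary:isolated-points-in-minutely-broken-scrolls}, \autoref{lemma:finite-containing-plane-and-points}, and \autoref{lemma:finite-smooth-scrolls-mid-to-ladder}. However, the mismatch you flag as the ``main obstacle'' is a genuine gap, and the ``promotion'' you gesture at does not close it. If all you know is that $X$ contains a $(2k-d-1)$-plane $\Pi'\subsetneq Q$, then the $(k-1)$-dimensional ruling fiber $R\subset X$ through $\Pi'$ (which exists by \autoref{lemma:linear-spaces-in-varieties-of-minimal-degree}) has no reason to contain all of $Q$; generically $\dim(R\cap\Pi)=2k-d-1$, so the span of $R$ and $\Pi$ is $(k+1)$-dimensional, and no $k$-plane $Y$ can contain both. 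The broken scroll cannot be assembled, and nothing in the available moduli forces $X\supset Q$ without essentially re-proving the stronger hypothesis.

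The fix is not to enlarge $R$ but to correct the inductive input: the balanced incidence data for $\minhilb{d-1}{k}$ is $2d$ points and a full $(2k-d)$-plane, as the condition count in the proof of \autoref{lemma:finite-containing-plane-and-points} verifies — the ``$(2k-d-1)$'' in the displayed statement of \ref{custom:ind-mid} is off by one, and the paper's own uses of the hypothesis, both here and in \autoref{lemma:middle-induction}, consistently work with the $(2k-d)$-plane. Once $X\supset Q$, \autoref{lemma:linear-spaces-in-varieties-of-minimal-degree} gives a ruling fiber $R\supset Q$ directly (when $\dim Q = 1$, i.e.\ $d=2k-1$, one lands generically in the $k$-dimensional component of the Fano scheme of lines); then $Y$, the span of $R$ and $\Pi$, has dimension exactly $k$, $Y\cap X = R$, and $X\cup Y\in\brokengeneral{d}{k}$. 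No promotion step is needed, and the rest of your argument is correct.
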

\begin{proof}[Proof assuming \autoref{lemma:finite-containing-plane-and-points} and \autoref{lemma:finite-smooth-scrolls-mid-to-ladder}]
	By \autoref{lemma:interpolation-numerics}, we would like to show there is a degree $d-1$, dimension $k$ variety passing through
	$d + 2k + 2$ points, $p_1, \ldots, p_{d+2k+2}$, and meeting a general $2(d-k)$ plane $\Lambda$. 
	Choose a general hyperplane $H$ and specialize
	$2d$ of these points, $p_{1}, \ldots, p_{2d}$, to $H$.
	Now, the first $2k-d+2$ points remain general, and span some $2k-d+1$ plane $P$.
	Specialize $\Lambda$ so that $\Lambda \cap H \cap P \neq \emptyset$, but so that $\Lambda$ is otherwise general.

	Then, by induction hypothesis
	~\ref{custom:ind-mid} for varieties of degree $d$, there is a variety of degree $d-1$ containing the $2d$ points
	$p_1, \ldots, p_{2d}$ and containing $Q$. Call this variety $X$. If the points are chosen generally, we can 
	assume this variety is smooth.
	
	Further, if the points are chosen generally, we will have some $(k-1)$ plane $R$ with $Q \subset R \subset X$,
	as we now explain.
	Note, by \autoref{lemma:linear-spaces-in-varieties-of-minimal-degree}, if $\dim Q \neq 1$, then we will
	have that $Q$ is contained in some $(k-1)$-plane. If, instead $\dim Q = 1$, then, 
	at least when $k > 2$, we know there are at most two components of the Fano scheme of lines
	contained in $X$, and one of these has dimension $k$, which is strictly larger than 
	the other component (if it exists), from \autoref{lemma:linear-spaces-in-varieties-of-minimal-degree}.
	Therefore, since	
	$p_1, \ldots, p_{d+2k+2}$ and $\Lambda$ are chosen generally,
	we may assume that this line $Q$ lies in the component of the Fano scheme of larger dimension. In this case,
	the line $Q$ lies in some $(k-1)$-plane, again by \autoref{lemma:linear-spaces-in-varieties-of-minimal-degree}.

	Define $Y$ to be the plane spanned by $R, p_{2d+1}, \ldots, p_{d+2k+2}$.
	Then, we obtain then obtain that $X \cup Y$ corresponds 
	to a point in $\minhilb d k$ by \autoref{proposition:hilbert-scroll-degeneration}.
	Note that because we specialized $\Lambda$ to meet $Q$, we automatically obtain that $\Lambda$ meets $X \cup Y$.

	It remains only to show that $X \cup Y$ is an isolated point.
	First, by \autoref{lemma:finite-containing-plane-and-points}, there are only finitely many
	schemes of degree $d-1$ containing $p_1, \ldots, p_{2d}$ and $Q$.
	For each such degree $d-1$ scroll, $X$, there will be at most one plane $Y$ containing $p_{2d+1}, \ldots, p_{d+2k+2}$ so that
	$Y \cap X \cong \bp^{k-1}$, as follows from \autoref{lemma:linear-spaces-in-varieties-of-minimal-degree}.
	By \autoref{lemma:finite-smooth-scrolls-mid-to-ladder},
	there are also only finitely many smooth scrolls containing $p_1, \ldots, p_{d+2k+2}$ and meeting $\Lambda$.
	Since there are only finitely many scrolls in $\broken d k$ and $\minhilbsmooth d k$ containing the points and meeting
	the plane, it follows from \autoref{corollary:isolated-points-in-minutely-broken-scrolls}
	that $X \cup Y$ is an isolated point of the set of schemes containing the points and meeting $\Lambda$,
	and so it satisfies interpolation by \autoref{theorem:equivalent-conditions-of-interpolation}.
\end{proof}

To complete the proof of \autoref{corollary:mid-induction}, we only need show
 \autoref{lemma:finite-containing-plane-and-points} and \autoref{lemma:finite-smooth-scrolls-mid-to-ladder}.
We first show \autoref{lemma:finite-containing-plane-and-points}.

\begin{lemma}
	\label{lemma:finite-containing-plane-and-points}
	Fix $k$ and suppose $k + 1 \leq d \leq 2k - 1$.
	Let $p_1, \ldots, p_{2d}$ be a general set of $2d$ points in $\bp^{n-1}$ and let $\Lambda$ be a general $2k-d$ plane in $\bp^{n-1}$.
	Then, there are only finitely many schemes in $\minhilb {d-1} k $
	containing $p_1, \ldots, p_{2d}, \Lambda$.
\end{lemma}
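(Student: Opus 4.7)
The plan is a dimension count, in the spirit of \ref{technique-conditions} from \autoref{remark:interpolation-strategy}. I would form the incidence correspondence
\[
\Phi := \left\{(X, p_1, \dots, p_{2d}, \Lambda) \in \minhilb{d-1}{k} \times (\bp^{n-1})^{2d} \times G(2k-d+1, d+k-1) : p_i \in X,\ \Lambda \subset X\right\}
\]
together with its natural projection $\pi_2 : \Phi \to (\bp^{n-1})^{2d} \times G(2k-d+1, d+k-1)$. It suffices to verify that every irreducible component of $\Phi$ has dimension at most that of the target: restricted to any such component, $\pi_2$ is either not dominant (so that a general point of the target has empty preimage) or generically finite, and in either case a general fiber is finite.

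To compute $\dim \Phi$, I project to $\minhilb{d-1}{k}$, whose dimension is $(d+k-1)^2 - k^2 - 3$ by \autoref{proposition:scroll-hilbert-scheme-dimension}. Since $k \le d-1 \le 2k-2$, a general element of $\minhilb{d-1}{k}$ is the balanced smooth scroll of type $(2^{d-k-1},\ 1^{2k-d+1})$. By \autoref{lemma:linear-spaces-in-varieties-of-minimal-degree}, the component of type \ref{custom:line-large} in its Fano scheme of $(2k-d)$-planes has dimension $1 + (2k-d+1)(d-k-1)$. Combined with $2dk$ dimensions for choosing $2d$ ordered points on $X$, the type-\ref{custom:line-large} component of $\Phi$ has dimension
\[
(d+k-1)^2 - k^2 - 3 + 2dk + 1 + (2k-d+1)(d-k-1),
\]
which a direct expansion shows equals $7dk - 2k^2 - 5k - 2$; the target dimension $2d(d+k-2) + (2k-d+1)(2d-k-2)$ expands to the same value, so the bound holds with equality on this component.

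The only subtlety is the boundary case $d = 2k-1$, where $2k - d = 1$ and \autoref{lemma:linear-spaces-in-varieties-of-minimal-degree} produces an additional component of the Fano scheme of lines of type \ref{custom:line-small}, isomorphic to $\bp^{j-1} = \bp^1$. The corresponding piece of $\Phi$ has dimension $12k^2 - 14k + 2$ while the target has dimension $12k^2 - 12k - 2$, so the required bound still holds (strictly for $k \ge 3$, with equality when $k = 2$), and this component contributes only finitely many schemes to a general fiber. The hard step is this bookkeeping of the two Fano-scheme components; once \autoref{lemma:linear-spaces-in-varieties-of-minimal-degree} is invoked as a black box, what remains is a routine numerical verification.
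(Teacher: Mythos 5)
Your proposal is correct and takes essentially the same route as the paper's second proof (the one ``using \ref{technique-dimension}''): form the incidence correspondence $\Phi$, project to $\minhilb{d-1}{k}$, and add up the Hilbert-scheme dimension, $2dk$ for the points, and the Fano-scheme dimension from \autoref{lemma:linear-spaces-in-varieties-of-minimal-degree}, then compare with the dimension of $(\bp^{n-1})^{2d} \times G(2k-d+1, d+k-1)$. Your arithmetic checks out on both sides (both expansions give $7dk - 2k^2 - 5k - 2$), and the boundary-case numbers $12k^2 - 14k + 2$ and $12k^2 - 12k - 2$ when $d = 2k-1$ are also correct. What you do differently, and what it buys you: the paper's second proof simply quotes the fiber dimension of the flag scheme $\flag{\Lambda}{X}$ over $\minhilb{d-1}{k}$ as $1 + (2k-d+1)(d-k-1)$, tacitly taking the component of the Fano scheme coming from the Grassmannian bundle; you explicitly observe that exactly in the boundary case $d = 2k-1$ (i.e.\ $t = 2k-d = 1$) the lemma produces a second component $\bp^{j-1}$, and you verify that the resulting piece of $\Phi$ is still of dimension at most that of the target, strictly so for $k \geq 3$. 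This closes a small gap in the paper's exposition. Your framing --- ``every irreducible component of $\Phi$ has dimension at most that of the target, so on each component $\pi_2$ is either non-dominant or generically finite'' --- is also slightly more robust than the paper's ``it suffices to show $\dim \Phi = \dim(\text{target})$,'' since the latter implicitly presumes that the components of maximal dimension are the dominating ones; both arguments still implicitly assume the Fano-scheme fibers do not jump enough over the locus of unbalanced or singular scrolls to produce a larger component of $\Phi$ supported over a proper subvariety of $\minhilb{d-1}{k}$, so on that point the two are on equal footing.
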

We give two proofs using the two techniques from \autoref{remark:interpolation-strategy}.

\begin{proof}[Proof using \ref{technique-conditions}]
	The condition that a variety of dimension $k$ meet a point in $\bp^n$
	is $n-k$ conditions. Next, containing a $(2k-d)$ plane imposes
	$\dim G(2k-d+1, n) - \dim G(2k-d+1, k) -1$ conditions because there
	is a $\dim G(2k-d+1, n)$ dimensional space of $2k-d$ planes, and a
	$\dim G(2k-d+1, k) -1$ dimensional space of planes in any given $k$-dimensional
	scroll.
	Adding these up, the total number of conditions is
	\begin{align*}
		&(2d)(n-k) + \dim G(2k-d+1, n) - \dim G(2k-d+1, k) -1\\
		&= (k + (d-1))^2 - k^2 - 3 \\
		&= \dim \minhilb {d-1} k
	\end{align*}
	as desired.
\end{proof}
\begin{proof}[Proof using \ref{technique-dimension}]
	Let $\flag \Lambda X$ denote the flag scheme of pairs of $(2k-d)$-planes $\Lambda$ and elements of $\minhilb d k$.
	That is, it is the pullback of the usual flag Hilbert scheme along the map from
	the irreducible component of the Hilbert scheme
	$\minhilb d k$ into the Hilbert scheme.
	Define
	\begin{align*}
		\Phi := \uhilb X \times_{\hilb X} \cdots \times_{\hilb X} \uhilb X \times_{\hilb X} \flag \Lambda X
	\end{align*}

	To complete the lemma it suffices to show that	
	\begin{align*}
		\dim \Phi &= \dim (\bp^{n-1})^{2d} \times G(2k-d+1, n) 
	\end{align*}
	
	To calculate the dimension of $\Phi$, we claim
	\begin{align*}
		\dim \uhilb X - \dim \minhilb {d-1} k &= k \\
		\dim \flag \Lambda X - \dim \minhilb {d-1} k  &= 1 + (2k-d+1)(-k+d-1).
	\end{align*}
	The first equality holds by \autoref{lemma:interpolation-dimension} while the second holds from
	\autoref{lemma:linear-spaces-in-varieties-of-minimal-degree}.
	
	Finally, we see
	\begin{align*}
		\dim \Phi &= \dim \minhilb {d-1} k + 2d(\dim \Phi_{(n-1)} - \dim \minhilb {d-1} k ) \\
		& \qquad + 
		(\dim \Phi_{(2d-2k)} - \dim \minhilb {d-1} k ) \\
		&= ((k+d-1)^2 -k^2 -3) + 2d(k) \\
		& \qquad + (1 + (2k-d+1)(-k+d-1)) \\
		&= (k+d-2)(2d) + (2k-d+1)(2d-k-2) \\
		&= \dim (\bp^{n-1})^{2d} \times G(2k-d+1, n) 
	\end{align*}
	completing the proof.
\end{proof}

We now complete the proof of \autoref{corollary:mid-induction} by proving
\autoref{lemma:finite-smooth-scrolls-mid-to-ladder}.

\begin{lemma}
		\label{lemma:finite-smooth-scrolls-mid-to-ladder}
		Let $k+1 \leq d \leq 2k$ and $n = d + k - 1$.
		Let $p_1, \ldots, p_{2d}$ be a general set of points in a hyperplane $H \subset \bp^n$ and let
		$q_1, \ldots, q_{2k-d+2}$ be general points in $\bp^n$. Further, let
		$P$ be the plane spanned by $q_1, \ldots, q_{2k-d+2}$ and let $Q := P \cap H$.
		Specialize $\Lambda \subset \bp^n$ to be a $2(d-k)$-plane 
		meeting $Q$. Then, there are only finitely many elements in $\minhilbsmooth d k$ meeting
		$\Lambda$ and containing $p_1, \ldots, p_{2d}, q_1, \ldots, q_{2k-d+2}$.
	\end{lemma}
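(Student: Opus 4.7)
The plan is to apply technique \ref{technique-conditions} from \autoref{remark:interpolation-strategy}, counting the conditions imposed on a smooth scroll by each incidence. I will check that the conditions sum to $\dim \minhilb d k$, and conclude that for general choices within the allowed parameter space there are only finitely many smooth scrolls satisfying all the incidences. This mirrors the first proof of \autoref{lemma:ind-high-smooth-finite}.

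First, I claim that each of the $2d$ points $p_i$ specialized to $H$ still imposes $n-k = d-1$ conditions on a smooth scroll $X$. This is the same argument used in the proof of \autoref{lemma:ind-high-smooth-finite}: by \autoref{lemma:smooth-scroll-hyperplane-section}, the hyperplane section $X \cap H$ is a $(k-1)$-dimensional scroll in $\bp^{n-1}$, so containing a specified point in $H$ imposes $(n-1)-(k-1) = d-1$ conditions on $X \cap H$, hence on $X$. Second, each of the $2k-d+2$ general points $q_j$ imposes the usual $n-k=d-1$ conditions on $X$. Third, the condition that $\Lambda$ meet $X$ imposes $n-k-2(d-k) = 2k-d-1$ conditions on $X$, and this codimension is unchanged by the specialization $\Lambda \cap Q \neq \emptyset$: for \emph{any} fixed $\Lambda$, the sublocus of $\minhilb d k$ consisting of scrolls meeting $\Lambda$ is cut out by $2k-d-1$ conditions, independent of whether $\Lambda$ sits in the codimension-$(k-1)$ Schubert-type subvariety of $G(2(d-k)+1,n+1)$ of planes meeting $Q$. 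Summing yields
\[
2d(d-1) + (2k-d+2)(d-1) + (2k-d-1) = (d+2k+2)(d-1) + (2k-d-1) = d^2+2dk-3,
\]
which equals $\dim \minhilb d k$ by \autoref{proposition:scroll-hilbert-scheme-dimension}.

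The anticipated main obstacle is verifying that these conditions are generically independent --- a priori, the simultaneous specialization of the $p_i$ into $H$ together with $\Lambda$ into the sublocus of planes meeting $Q$ could conspire to drop the total codimension, producing a positive-dimensional family of scrolls. To rule this out one would, in parallel with the \ref{technique-dimension}-style proof of \autoref{lemma:ind-high-smooth-finite}, assemble the incidence correspondence
\[
\Phi := \bigl(\uhilb X \times_{\bp^n} H\bigr)^{\times_{\hilb X} 2d} \times_{\hilb X} \uhilb X^{\times_{\hilb X}(2k-d+2)} \times_{\hilb X} \Xi,
\]
where $\Xi$ parameterizes pairs $(X,\Lambda)$ with $\Lambda$ meeting both $X$ and $Q$, and then check that $\dim \Phi$ equals the dimension of the natural target parameter space --- a locally trivial bundle over $H^{2d}\times (\bp^n)^{2k-d+2}$ whose fibers are the codimension-$(k-1)$ sublocus of $G(2(d-k)+1,n+1)$ of planes meeting $Q$. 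The relative dimensions over $\minhilb d k$ are, respectively, $k-1$ per point in $H$ (by \autoref{lemma:smooth-scroll-hyperplane-section}), $k$ per general point, and $\dim G(2(d-k)+1,n+1)-(2k-d-1)-(k-1)$ for the $\Lambda$-factor (the $(k-1)$ subtraction being the extra codimension from requiring $\Lambda\cap Q\neq\emptyset$, which is independent of meeting $X$ once $X$ and $Q$ are in general position with respect to each other). Combining with the ``conditions'' count above shows equality of dimensions, so the projection to the target is either not dominant or generically finite; in either case, the fiber over a general choice of points and plane is finite, which is what is required to invoke \autoref{corollary:isolated-points-in-minutely-broken-scrolls} in the proof of \autoref{corollary:mid-induction}.
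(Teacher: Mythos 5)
Your proposal is correct and takes essentially the same route as the paper, which gives two proofs of this lemma: your first paragraph is the paper's Proof~1 via \ref{technique-conditions}, with the arithmetic done correctly (the paper's printed sum has a slip, using $2k-d+1$ where the count of $q_j$ is $2k-d+2$) and with an explicit justification that specializing $\Lambda$ into the Schubert locus of planes meeting $Q$ does not change its condition count, a point the paper leaves implicit. Your second paragraph is a compressed version of the paper's Proof~2 via \ref{technique-dimension}, so nothing essentially new is added there, but it correctly identifies the incidence correspondence and the $(k-1)$-codimension adjustment for the $\Lambda$-factor.
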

	We give two proofs, one using \ref{technique-conditions}, and one using \ref{technique-dimension}.
	However, since the proof using \ref{technique-conditions} involves a different setup than those of previous
	proofs, we gloss over some of the details of this alternate setup, as they are more than sufficiently addressed in the second
	proof by \ref{technique-dimension}.
	\begin{proof}[Proof 1, by \ref{technique-conditions}]
	We prove this using \ref{technique-conditions}. The proof can also be done using \ref{technique-dimension},
		but is much trickier to set up.
		Following the explanation given in \autoref{remark:interpolation-strategy} and 
		\autoref{example:twisted-cubic-interpolation-dimension-count},
		we count the number of conditions imposed by the various objects.
		Recall from \autoref{proposition:scroll-hilbert-scheme-dimension} that
		$\dim \minhilb d k = (d+k)^2- k^3- 3$.
		First, each of the points $q_1, \ldots, q_{2d+2}$ impose $n - k$ conditions.
		That is, the subscheme of the Hilbert scheme $\minhilb d k$ passing through $q_i$ has codimension
		$n - k$. Therefore, in total, these impose $(2k-d+2)(n - k)$ conditions.
		Next, because each of the points $p_1, \ldots, p_{2d+2}$ lie in $H \cong \bp^{n-1}$,
		and any hyperplane section of a scroll is $k-1$ dimensional,
		(or, more precisely, one can consider the universal family restricted to $H$, which is
		the fiber product $\uhilb X \times_{\bp^n_{\hilb X}} (H \times_{\bp^n} \hilb X)$,)
		we obtain that passing through a point $p_i$ also imposes $(n-1) - (k-1) = n-k$ conditions.
		
		Finally, since $\Lambda$ is a $(2d - 2k)$ plane, it is
		\begin{align*}
			((d+k-1) - k) - (2d-2k) = 2k - d - 1
		\end{align*}
		conditions
		for a $k$ dimensional variety to meet it.
		Adding up all these conditions yields \begin{align*}
			(d-1)(2d) + (d-1)(2k-d+1) + (2k-d-1) = (d+k)^2 - k^2 - 3,
		\end{align*}
		as desired.
	\end{proof}
	\begin{proof}[Proof 2, by \ref{technique-dimension}]
	Construct the incidence correspondences 
	\begin{align*}
			\Psi &:= \left\{ \left(x, q, X, p_1, \ldots, p_{2d}, q_1, \ldots, q_{2k-d+2}, P, Q, \Lambda \right) \right. \\
			& \qquad \subset 
			\bp^n \times \bp^n \times
			\minhilbsmooth d k \times
			H^{2d} \times
			(\bp^n)^{2k-d+2} \\
			& \qquad \times G(2k-d + 2, d+k) \times
			G(2k-d+1, d+k-1) \\
			& \qquad \times 
			G(2d-2k+1, d+k) \\
			& \qquad : x \in X \cap \Lambda, q \in P \cap H \cap \Lambda, p_i \in X, q_i \in X, \\
			& \qquad \left. P = \overline{q_1, \ldots, q_{2k-d+2}}, Q = P \cap H \right\}, \\
		B &:= \left\{ \left (p_1, \ldots, p_{2d}, q_1, \ldots, q_{2k-d+2},t, \Lambda \right ) \right. \\
			& \qquad \subset (\bp^n)^{2k-d+2}  \times H \times G(2d-2k+1, d+k) \\
			& \qquad \left. : t \in \overline{q_1, \ldots, q_{2k-d+2}}, t \in \Lambda \right\}.
		\end{align*}
		The definition of $\Psi$ above is only intended as a set theoretic description, so as to introduce notation
		for the components of $\Psi$.

		We now give a scheme theoretic construction of $\Psi$.
		First, take
		\begin{align*}
			\Psi_1 &:=  \left( \left(\uhilb X \times_{\bp^n_{\hilb X}} H_{\hilb X} \right) \times_{\hilb X} \cdots \times_{\hilb X} \left(\uhilb X \times_{\bp^n_{\hilb X}} H_{\hilb X} \right)\right) \\
			& \qquad \times_{\hilb X} \left(\uhilb X \times_{\hilb X} \cdots \times_{\hilb X} \uhilb X\right)  \\
			\Psi_2 &:= \Psi_1 \times_{\hilb X} \left( \uhilb \Lambda \times_{\bp^n} \uhilb X \right)
		\end{align*}
		where there are $2d$ copies of $\uhilb X \times_{\bp^n_{\hilb X}} H_{\hilb X}$, corresponding to the $p_i$,
		and $2k-d+2$ copies of $\uhilb X$, corresponding to the $q_i$.
		Then, define $B'$ to be the irreducible component of the reduced scheme of triples $(P,Q,b)$ with $b = (p_1, \ldots, p_{2d}, q_1, \ldots, q_{2k-d+2},t,\Lambda) \in B, P \in G(2d-2k+2, d+k), Q \in G(2k-d+1,d+k-1)$
		whose general member consists of independent points $q_1, \ldots, q_{2k-d+2}$ so that
		$P=\overline{q_1, \ldots, q_{2k-d+2}}$ and so that $P$ intersects $H$ transversely with $Q = P \cap H$.
		Finally, define $\Psi := \Psi_2 \cap B'$ to be the scheme theoretic intersection.
		Now, consider the projection maps
		\begin{equation}
			\nonumber
			\begin{tikzcd}
				\qquad & \Psi \ar{ld}{\pi_1} \ar{rd}{\pi_2} & \\
				\minhilbsmooth d k && B. 
			\end{tikzcd}\end{equation}
		Following \ref{technique-dimension}, it suffices to show that $\dim \Phi = \dim B$.

		So, we will now show $\dim \Phi = \dim B$. First, we compute $\dim B$.
		Observe that we have a projection
		\begin{align*}
			B \xrightarrow \eta H^{2d} \times (\bp^n)^{2k - d + 2}.
		\end{align*}
		Note further that the fibers of this map $\eta$ are all isomorphic to a Schubert cell
		inside $G(2d-2k + 1, d +k)$ of $(2d-2k)$-planes meeting a given $2k-d$ plane.
		This Schubert cell has codimension $k-1$ in $G(2d-2k+1, d+k)$ by
		~\cite[Theorem 4.1]{Eisenbud:3264-&-all-that}. Therefore, the fibers of $\eta$
		are
		\begin{align*}
			(2d - 2k + 1)(2k-d-1) - (k-1).
		\end{align*}
		Now, because the target of $\eta$ is $2d(d+k-2) + (2k-d+2)(d+k-1)$-dimensional,
		and the fibers are irreducible of the same dimension,
		\autoref{lemma:irreducible-base-and-fibers}
		implies that $B$ is irreducible of dimension
		\begin{align*}
			&(2d - 2k + 1)(2k-d-1) - (k-1) +2d(d+k-2) \\
			& \qquad + (2k-d+2)(d+k-1).
		\end{align*}

		To complete the proof, it suffices to show that
		\begin{align*}
			\dim \Phi &= (2d - 2k + 1)(2k-d-1) - (k-1) +2d(d+k-2) \\
			& \qquad + (2k-d+2)(d+k-1),
		\end{align*}
		as the latter is equal to $\dim B$.	
		From the definition of $\Psi$,
		we have a natural projection map	
		\begin{align*}
			\Psi \xrightarrow {\pi_3} \Psi_1
		\end{align*}
		
		We claim it suffices to show that the fibers of $\pi_3$ are $k + (2k-d) +(2d-2k-1)(3k-d-1)$ dimensional. 
		To see this, we will show that the sum of dimension of a general fiber of $\Psi$, plus the dimension of $\Psi$ of $\pi_3$ is equal to the dimension of $B$.
		Now, we can compute the dimension of $\Psi_1$ using the projection $\Psi_1 \ra \hilb X$. Observe
		\begin{align*}
		\dim \uhilb X \times_{\bp^n_{\hilb X}} H_{\hilb X}- \dim \minhilbsmooth d k &= k-1 \\
		\dim \Phi_{q_i} - \dim \minhilbsmooth d k &= k.
	\end{align*}
	The first equality holds by \autoref{lemma:smooth-scroll-hyperplane-section}, as all fibers of the projection map
	\begin{align*}
		\uhilb X \times_{\bp^n_{\hilb X}} H_{\hilb X} \ra \minhilbsmooth d k
	\end{align*}
	are $k-1$ dimensional.
	Therefore, we obtain that 
		\begin{align*}
			\dim \Psi_1 &= \dim \minhilbsmooth d k + 2d (\dim \Phi_{p_i} - \dim \minhilbsmooth d k) \\
			& \qquad + (2k-d+2)(\dim \Phi_{q_i} - \dim \minhilbsmooth d k) \\
			&= (d+k)^2 - k^2 -3 + 2d + (2k-d+2)k
		\end{align*}
		Using this, if we knew the fibers of $\pi_3$ are 
		\begin{align*}
			k + (2k-d) +(2d-2k-1)(3k-d-1)
		\end{align*}
		 dimensional,
		we would obtain that
		\begin{align*}
			\dim \Psi &= \dim \Psi_1+ k + (2k-d) +(2d-2k-1)(3k-d-1) \\
			&=  (d+k)^2 - k^2 -3 + 2d + (2k-d+2)k + k + (2k-d) \\
			& \qquad +(2d-2k-1)(3k-d-1) \\
			&= (2d - 2k + 1)(2k-d-1) - (k-1) +2d(d+k-2) \\
			& \qquad  + (2k-d+2)(d+k-1) \\
				&= \dim B.
		\end{align*}

	So, to complete the proof, we will now show the fibers of $\pi_3$ are $k + (2k-d) +(2d-2k-1)(3k-d-1)$ dimensional.
	For this, observe that we can describe the fiber of $\pi_3$ over a particular general point 
	\begin{align*}
	(X, p_1, \ldots, p_{2d}, q_1, \ldots, q_{2k-d+2})
	\end{align*}
	as quintets
	\begin{align*}
		F :=& \left\{ \left( \Lambda, q, x, P, Q \right): \right. \\
			& \qquad \left. P = \overline {q_1, \ldots, q_{2k-d+2}}, Q = P \cap H, q \in Q, x \in X, x \in \Lambda, q \in \Lambda \right\}
	\end{align*}
	Further, define
	\begin{align*}
		\Psi_3 &= \left\{ \left( q, x, p_1, \ldots, p_{2d}, q_1, \ldots, q_{2k-d+2}, P, Q \right)\right. \\
		& \qquad \left. : p_1, \ldots, p_{2d} \in H, P = \overline {q_1, \ldots, q_{2k-d+2}}, Q = P \cap H, q \in Q, x \in X \right\}.
	\end{align*}
	Observe that we have commuting triangle
	\begin{equation}
		\nonumber
		\begin{tikzcd}
			\Psi \ar {rr}{\pi_4} \ar {rd}{\pi_3} && \Psi_3 \ar {ld}{\pi_5} \\
			& \Psi_1 & 
		 \end{tikzcd}\end{equation}
	
	Note that the fiber of $\pi_5$ over a point of $\Psi_1$ is
	\begin{align*}
		G &:= \left\{ \left( q, x, P, Q \right): p_1, \ldots, p_{2d} \in H, \right. \\
		& \qquad \left. P = \overline {q_1, \ldots, q_{2k-d+2}}, Q = P \cap H, q \in Q, x \in X \right\}.
	\end{align*}
Therefore, the dimension of the fibers of $\pi_3$ is the sum of the dimensions of the fibers of $\pi_4$ and $\pi_5$.
	So, to complete the proof, it suffices to show that the fibers of $\pi_5$ are $k+(2k-d)$ dimensional and the fibers of $\pi_5$
	are $(2d-2k-1)(3k-d-1)$ dimensional over a general point in the base $\Psi_3$. 
	
	We can first see that the fibers of $\pi_5$ are simply isomorphic to the product of $X$ and the $2k-d$ plane $Q$, and therefore $k+2k-d$ dimensional.
	Next, we see that the fibers of $\pi_4$ consist of $2d-2k$ planes $\Lambda$ containing $x,q$. In the case that
	$x,q$ are distinct points,
	the fiber will simply be $2k-d$ planes containing two points, which is isomorphic to the Grassmannian $G(2d-2k -1, d + k - 2)$.
	Finally,
	\begin{align*}
		\dim G(2d-2k-d, d + k -2) = (2d - 2k-1)(3k-d-1)
	\end{align*}
	as desired. This completes the proof.
	\end{proof}
	
\section{Inductively verifying \getrefnumber{custom:ind-mid}}

In this section, we show \autoref{custom:ind-mid} holds for a given degree
$d'$, assuming it holds for degree $d'-1$.
This is accomplished in \autoref{lemma:middle-induction}.

The proof of the following \autoref{lemma:middle-induction} is quite analogous to \autoref{lemma:high-induction}, in that we will specialize all but two of the points to
a hyperplane, and then find a scroll of degree d which is a union of a scroll of degree $d-1$ inside a hyperplane and a $\bp^k$, meeting along a $\bp^{k-1}$. We now prove \autoref{lemma:middle-induction} assuming \autoref{proposition:at-most-two-components} and \autoref{lemma:ind-mid-smooth-finite}.

\begin{figure}
	\centering
	\includegraphics[scale=.3]{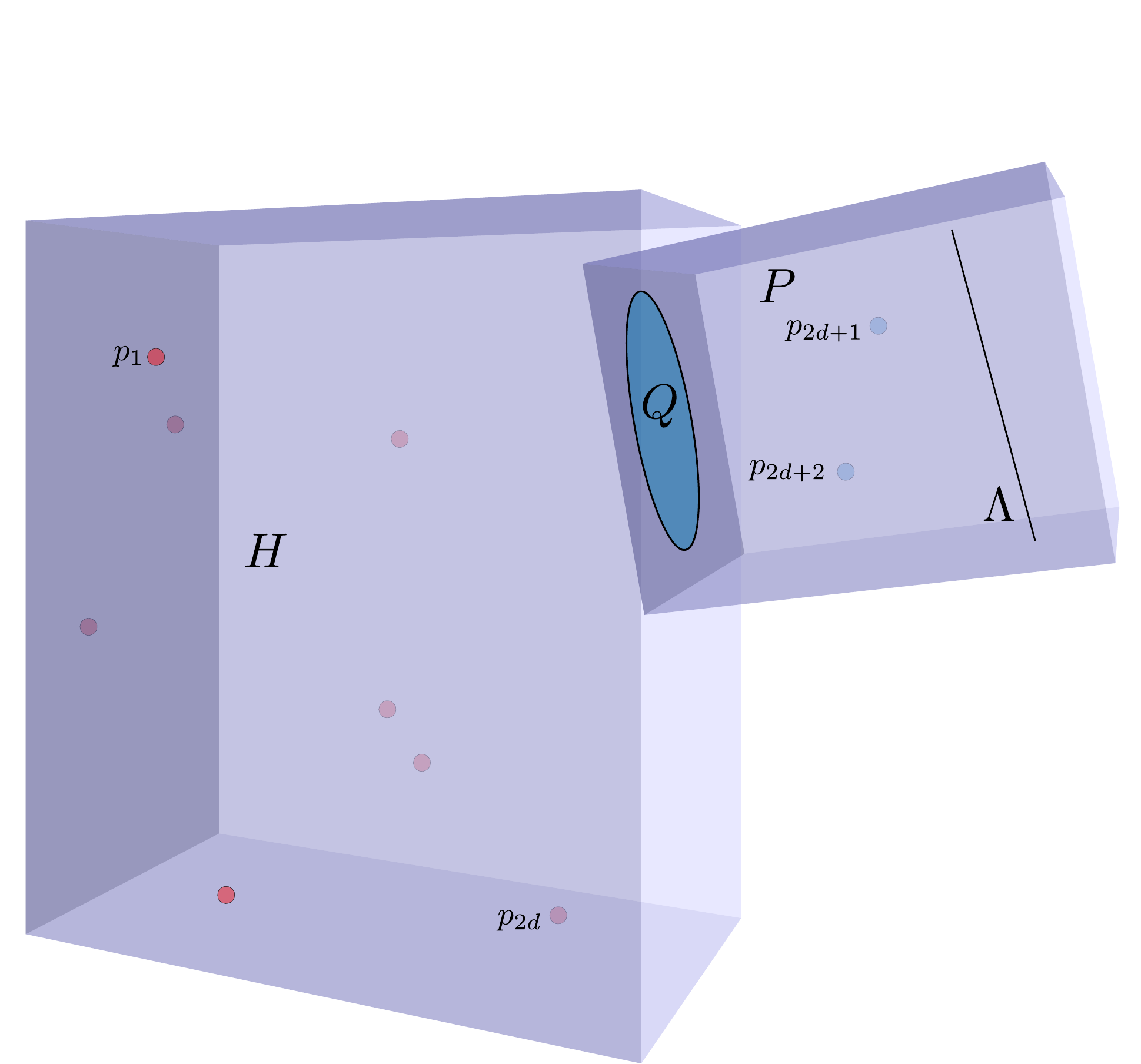}
	\caption{A visualization of the idea of the proof of \autoref{lemma:middle-induction},
	where one inductively specializes $2d$ points to lie in a hyperplane.
}
	\label{figure:}
\end{figure}

\begin{proposition}
	\label{lemma:middle-induction}
	Let $k \geq 3$. Assuming induction hypothesis \autoref{custom:ind-mid}
	holds for varieties of degree $d-1$ with $k+1 \leq d \leq 2k-2$, 
	it holds for varieties of degree $d$.
\end{proposition}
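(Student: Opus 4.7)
The plan is to mirror the inductive degeneration used in the proof of \autoref{lemma:high-induction}, replacing its appeal to the strong hypothesis \autoref{custom:ind-high} with the weaker hypothesis \autoref{custom:ind-mid} at degree $d-1$. Given a general configuration $(p_1, \ldots, p_{2d}, \Lambda)$ in $\bp^{d+k-2}$ with $\Lambda$ a $(2k-d-1)$-plane, I would specialize $p_1, \ldots, p_{2d-2}$ and $\Lambda$ into a general hyperplane $H \subset \bp^{d+k-2}$, while keeping $p_{2d-1}, p_{2d}$ general, and then build a broken scroll of the desired degree out of a degree $d-2$ scroll in $H$ supplied by the induction and a $k$-plane spanned by the two ``leftover'' points.

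In detail, let $\ell = \overline{p_{2d-1}, p_{2d}}$, let $q = \ell \cap H$, and let $\Pi \subset H$ be the span of $\Lambda$ and $q$. Since $\Lambda$ is a general $(2k-d-1)$-plane of $H$ and $q$ is a general point of $H$ independent of $\Lambda$, the plane $\Pi$ is a general $(2k-d)$-plane of $H \cong \bp^{d+k-3}$. The inductive hypothesis \autoref{custom:ind-mid} at degree $d-1$ then provides a scroll $X \subset H$ of degree $d-2$ and dimension $k$ containing $\Pi$ and $p_1, \ldots, p_{2d-2}$. Because $q \in \Pi \subset X$ and $k \geq 3$, \autoref{lemma:linear-spaces-in-varieties-of-minimal-degree} furnishes the unique $(k-1)$-plane $R$ of the ruling of $X$ through $q$ (the only component of the Fano scheme of $(k-1)$-planes is of type \ref{custom:line-large}). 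Set $Y := \overline{\ell, R}$, a $k$-plane in $\bp^{d+k-2}$. Since $Y \not\subset H$ and $R \subset Y \cap H$ with $\dim(Y \cap H) = k-1 = \dim R$, we get $Y \cap X = R$; thus $X \cup Y \in \broken{d-1}{k} \subset \minhilb{d-1}{k}$ by \autoref{proposition:hilbert-scroll-degeneration}. By construction, $X \cup Y$ contains $p_1, \ldots, p_{2d-2}$ (in $X$), $p_{2d-1}, p_{2d}$ (on $\ell \subset Y$), and $\Lambda$ (inside $\Pi \subset X$).

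To deduce existence for a \emph{general} configuration from this specialized one, I would form the incidence correspondence
\[
\Phi := \bigl\{ (Z, (p_i), \Lambda) \in \minhilb{d-1}{k} \times (\bp^{d+k-2})^{2d} \times G(2k-d, d+k-1) : p_i \in Z,\ \Lambda \subset Z \bigr\}
\]
with projection $\pi_2$ to $(\bp^{d+k-2})^{2d} \times G(2k-d, d+k-1)$, and show that $[X \cup Y]$ is isolated in its $\pi_2$-fiber. By \autoref{corollary:isolated-points-in-minutely-broken-scrolls}, it suffices to verify that only finitely many elements of $\broken{d-1}{k}$ and only finitely many elements of $\minhilbsmooth{d-1}{k}$ meet the specialized incidence conditions. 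For the broken locus, any competitor $X' \cup Y'$ is forced to have $X' \subset H$ (since $2d-2$ points and $\Lambda$ already lie in $H$), after which a dimension count inside $\minhilb{d-2}{k}$ analogous to \autoref{lemma:finite-containing-plane-and-points} pins down $X'$ to finitely many choices, and then $Y'$ is determined by $\ell$ together with the ruling plane of $X'$ through $q$ via \autoref{lemma:linear-spaces-in-varieties-of-minimal-degree}. For the smooth locus, one runs a dimension count of the same shape as \autoref{lemma:ind-high-smooth-finite} and \autoref{lemma:finite-smooth-scrolls-mid-to-ladder}, tracking that each point specialized to $H$ still imposes $n - k = d - 2$ conditions via \autoref{lemma:smooth-scroll-hyperplane-section}. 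Isolation then yields dominance of $\pi_2$ by \autoref{lemma:isolated-fiber-implies-dominant}, and the general-fiber statement of \autoref{custom:ind-mid} at degree $d$ follows.

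The main obstacle is the smooth-scroll finiteness count: the specialized configuration mixes generic points, $H$-constrained points, and an $H$-constrained $(2k-d-1)$-plane, and one must check that, in the smooth locus, the total number of conditions precisely matches $\dim \minhilb{d-1}{k}$. A secondary subtlety is the broken-locus finiteness, which must be handled by a direct parameter count in the Hilbert scheme of degree $d-2$ scrolls in $H$ rather than by invoking the existence-only hypothesis \autoref{custom:ind-mid} itself; executing both counts carefully, in analogy with the numerics of \autoref{lemma:ind-high-smooth-finite} and \autoref{lemma:finite-smooth-scrolls-mid-to-ladder}, is the crux of the argument.
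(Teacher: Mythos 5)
Your approach is essentially the same as the paper's: specialize $\Lambda$ and all but two of the points into a general hyperplane $H$, let $\ell$ be the line through the two ``leftover'' points and $q = \ell \cap H$, form $\Pi = \overline{\Lambda, q}$, apply the inductive hypothesis in $H$ to get a scroll containing $\Pi$ and the in-$H$ points, extract the ruling $(k-1)$-plane $R$ through $q$, and attach $Y = \overline{\ell, R}$. (The paper's $Q := P \cap H$ with $P = \overline{\Lambda, p_{2d+1}, p_{2d+2}}$ is exactly your $\Pi$.) The finiteness part is also run along the same two lines (broken vs.\ smooth), mirroring \autoref{lemma:finite-containing-plane-and-points} and \autoref{lemma:finite-smooth-scrolls-mid-to-ladder}.

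Two things to flag. First, a bookkeeping warning you inherit from the paper: the plane dimension $(2k-d-1)$ in the stated form of \autoref{custom:ind-mid} is off by one --- a direct count (or matching against the base case \autoref{proposition:segre-plane}, which supplies a $(k-1)$-plane at degree $k$, or against the way \autoref{corollary:mid-induction} uses the hypothesis with a $(2k-d)$-plane $Q$) shows the plane should be $(2k-d)$-dimensional. If you carry the literal $(2k-d-1)$ through, your $\Pi$ has dimension $2k-d$ where the induction at degree $d-1$ really requires $2k-d+1$, and the output is one dimension too weak to feed \autoref{corollary:mid-induction}. Shifting all plane dimensions up by one makes your construction coincide with the paper's verbatim.

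Second, a genuine gap in the dominance step. The incidence correspondence $\Phi$ you form imposes \emph{containment} of $\Lambda$, and when $\Lambda$ is a line (the top of the allowed range, under the literal indexing) the Fano scheme of lines in a scroll with $\sco_{\bp^1}(1)$ summands has two irreducible components (\autoref{lemma:linear-spaces-in-varieties-of-minimal-degree}), so $\Phi$ need not be irreducible. Since \autoref{lemma:isolated-fiber-implies-dominant} requires an irreducible source of the same pure dimension as the target, an isolated fiber point alone does not yet give dominance; you must check the isolated point $[X \cup Y]$ sits on a component of maximal dimension that dominates. The paper does exactly this via \autoref{proposition:at-most-two-components}, showing $\Phi$ has at most two components and that the smaller one has strictly lower dimension, so a general configuration avoids its image. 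Your write-up should include this step before invoking \autoref{lemma:isolated-fiber-implies-dominant}.
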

\begin{proof}[Proof assuming \autoref{proposition:at-most-two-components} and \autoref{lemma:ind-mid-smooth-finite}]
	Start with $2d+2$ general points $p_1, \ldots, p_{2d+2}$ and a general $(2k-d-1)$-plane $\Lambda$. 
	We would like to show there is an element of $\minhilb d k$ containing
	these points and $\Lambda$.

	Choose a general hyperplane $H \subset \bp^{d + k - 1}$ and specialize $\Lambda$ and $p_1, \ldots, p_{2d}$ to be contained in this hyperplane.
	Let $P$ be the $(2k-d+1)$-plane spanned by $\Lambda, p_{2d+1}$ and $p_{2d+2}$, and let $Q := P \cap H$. Then, by inductive hypothesis
	~\ref{custom:ind-mid}, there is a scroll of degree $d - 1$ and dimension $k$, call it $X$,
	containing the $2d$ points $p_1, \ldots, p_{2d}$ and the $2k-d$ plane $Q$.

	There is a unique $k-1$-plane contained in $X$ and containing $Q$,
	by \autoref{lemma:linear-spaces-in-varieties-of-minimal-degree}. Call this plane $P$. Note that here the special case
	~\ref{custom:line-small} of \autoref{lemma:linear-spaces-in-varieties-of-minimal-degree} only occurs when
	$X \cong \bp^1 \times \bp^1$ is a quadric surface, and so we need not consider this case, as we are assuming $k \geq 3$.
			
	Take $Y$ to be the span of $\Lambda, p_{2d+1}, p_{2d+2},$ and $P$.
	The variety $X \cup Y$ with reduced scheme structure lies in $\broken d k \subset \minhilb d k$
	and contains $p_1, \ldots, p_{2d+2}$, and $\Lambda$,
	as desired.
	Note that the domain and range of $\pi_2$ have the same dimension by \autoref{lemma:finite-containing-plane-and-points}.
	Furthermore, by \autoref{proposition:at-most-two-components},
	when $k > 2$, the incidence correspondence which is the domain of $\pi_2$ will have a unique component of maximal dimension,
	and possibly one other component of lower dimension. 
	If the source of $\pi_2$ from \autoref{proposition:at-most-two-components}
	has two components, since we are choosing $(p_1, \ldots, p_{2d+2}, \Lambda)$ generally,
	we may as well choose this tuple not to lie in the image of the lower dimensional component.

	Next, there are only finitely many smooth scrolls and finitely many scrolls in $\broken d k$
	containing $p_1, \ldots, p_{d+2}, \Lambda$, as follows from \autoref{lemma:ind-mid-smooth-finite}.
	So, applying \autoref{corollary:isolated-points-in-minutely-broken-scrolls}, to the map from
	the irreducible component of maximal dimension inside the domain of $\pi_2$ to the range of $\pi_2$,
	we see that the point
	the point $[X \cup Y]$ constructed above in the Hilbert scheme is isolated among all schemes containing
	$p_1, \ldots, p_{2d+2}, \Lambda$. 
	
	Then, by \autoref{lemma:isolated-fiber-implies-dominant},
	we obtain that $\pi_2$ is surjective, meaning that 
	\ref{custom:ind-mid} holds for scrolls of degree $d$ and dimension $k$.	
\end{proof}
\begin{remark}
	\label{remark:}
	The restriction that $k \geq 3$ made in the hypothesis of \autoref{lemma:middle-induction} was only
	introduced to make the proof slightly easier, so that we would not have to deal with the case of quadric surfaces in $\bp^2$.
	Of course, it is quite easy to deal with the dimension 2 case separately. This essentially amounts to showing there
	are only finitely many quadric surfaces containing 6 general points and a line. Nevertheless, interpolation in dimension
	$2$ also follows immediately from Coskun's thesis ~\cite[Example, p.\ 2]{coskun:degenerations-of-surface-scrolls}
\end{remark}

To complete the proof of \autoref{lemma:middle-induction}, we prove \autoref{proposition:at-most-two-components} and
\autoref{lemma:ind-mid-smooth-finite}. We start with \autoref{proposition:at-most-two-components}.

	\begin{lemma}
		\label{proposition:at-most-two-components}
		Let $X \in \minhilb d k$ and let
	$F$ denote the Fano scheme whose $\bk$ points are $2k-d-1$ planes contained in $X$.
	Define
	\begin{align*}
\Phi = \uhilb X \times_{\hilb X} \cdots \times_{\hilb X} \uhilb X \times_{\hilb X} F.
	\end{align*}
	where there are $2d + 2$ copies of $\uhilb X$.
	Define
	\begin{equation}
		\nonumber
		\begin{tikzcd}
			\qquad & \Phi \ar {ld}{\pi_1} \ar {rd}{\pi_2} & \\
			 \minhilb d k  && (\bp^n)^{2d+2} \times G(2k-d, n+1).
		 \end{tikzcd}\end{equation}

	Then, $\Phi$ is irreducible if $2k-d-1 \neq 1$. When $2k -d -1 = 1$, it has at most two components.
		If it has two components, one is of dimension
		\begin{align*}
			\dim (\bp^n)^{2d+2} \times G(2k-d, n+1) &= (k+d-1)(2d+2) + (2k-d)(2d-k)
		\end{align*}
		and the other is of dimension 
		\begin{align*}
			(k+d-1)(2d+2) + (2k-d)(2d-k) - 2(k-2).
		\end{align*}
	\end{lemma}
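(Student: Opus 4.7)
My plan is to analyze $\Phi$ via its natural projection to $\hilb X$ — where I interpret $F$ as the flag Hilbert scheme of pairs $(X', P)$ with $[X'] \in \hilb X$ and $P$ a $t$-plane in $X'$, setting $t := 2k - d - 1$, precisely as in the setup of \autoref{lemma:finite-containing-plane-and-points}. Over the dense open subset $\minhilbsmooth{d}{k} \subset \hilb X$, the fiber of $\Phi$ over $[X']$ is $(X')^{\times(2d+2)} \times F_{X'}$, where $F_{X'}$ denotes the Fano scheme of $t$-planes inside the smooth scroll $X'$.

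I would then apply \autoref{lemma:linear-spaces-in-varieties-of-minimal-degree}: when $t \neq 1$, $F_{X'}$ is irreducible — it is either $X'$ itself (when $t = 0$) or the Grassmannian bundle $\gbundle{t+1}{\sce}$ over $\bp^1$ — whereas when $t = 1$ (equivalently $d = 2k - 2$), a general $[X'] \in \hilb X$ is the balanced scroll $\scroll{2^{k-2}, 1^{2}}$, for which $j = 2$ and $F_{X'}$ has exactly two components: the ``large'' one $\gbundle{2}{\sce}$ of dimension $1 + 2(k-2) = 2k - 3$, and the ``small'' one $\bp^{j-1} = \bp^{1}$ of dimension $1$. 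Because $X'$ is irreducible, the factor $(X')^{\times(2d+2)}$ is irreducible, so the generic fiber of $\Phi \to \hilb X$ has the same number of components as $F_{X'}$. Applying \autoref{lemma:irreducible-base-and-fibers} to the closure in $\Phi$ of each candidate dominant component — the base $\hilb X$ is irreducible, the maps are flat, and the generic fibers are irreducible by the above — gives at most one dominant component when $t \neq 1$ and at most two when $t = 1$. Any component of $\Phi$ not lying over $\minhilbsmooth{d}{k}$ must map to a proper closed subset of $\hilb X$, hence has strictly smaller dimension.

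For the dimension count, I use the fiber-product formula: since $\uhilb X \to \hilb X$ is flat of relative dimension $k$, and each component of $F$ is flat over $\minhilbsmooth{d}{k}$ of relative dimension $\delta$, the corresponding component of $\Phi$ has dimension
\[
\dim \hilb X + (2d+2) k + \delta.
\]
Plugging in $\dim \hilb X = (d+k)^2 - k^2 - 3$ from \autoref{proposition:scroll-hilbert-scheme-dimension}, $d = 2k - 2$, and the two values $\delta = 2k - 3$ and $\delta = 1$, a routine algebraic rearrangement confirms that the large component has dimension $(k+d-1)(2d+2) + (2k-d)(2d-k)$, and that the small component has dimension $2(k-2)$ less, as claimed.

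The main obstacle I anticipate is identifying the balanced scroll as the general member of $\hilb X$ when $d = 2k-2$ (to guarantee $j = 2$, and hence the existence of the small component of the Fano scheme), together with checking the flatness hypotheses required to apply \autoref{lemma:irreducible-base-and-fibers} to both candidate components simultaneously; once these are in place, the irreducibility conclusion and the dimension count follow directly from \autoref{lemma:linear-spaces-in-varieties-of-minimal-degree}, \autoref{proposition:scroll-hilbert-scheme-dimension}, and a brief algebraic computation.
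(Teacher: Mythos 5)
Your proposal follows the same approach as the paper: the irreducibility of $\Phi$ (or its at-most-two-component structure when $2k-d-1 = 1$) is deduced from the irreducibility of $\hilb X$ and $\uhilb X$ together with the component structure of the Fano scheme of planes in a general (balanced) scroll from \autoref{lemma:linear-spaces-in-varieties-of-minimal-degree}, and the dimension count proceeds via the relative-dimension formula $\dim \hilb X + (2d+2)k + \delta$. Your write-up actually spells out more detail than the paper's terse two-sentence proof — for instance, identifying the balanced scroll $\scroll{2^{k-2},1^2}$ with $j = 2$ when $d = 2k-2$, and flagging the flatness issues needed to apply \autoref{lemma:irreducible-base-and-fibers} — but the strategy and the key inputs are identical.
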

	\begin{proof}
		First, the statements regarding irreducibility of $\Phi$ follow immediately from the statements on irreducibility
		of the Fano scheme $F$ as given in \autoref{lemma:linear-spaces-in-varieties-of-minimal-degree},
		coupled with the assumption that $\hilb X$ and $\uhilb X$ are irreducible.

		Since \autoref{lemma:linear-spaces-in-varieties-of-minimal-degree}, also gives the dimensions
		of the one or two irreducible components of $F$, call them $F_i$, we have
		that the dimension of the $i$th irreducible component of $\Phi$ is
		$(\dim F_i - \dim \hilb X) + (2d+2)(\dim \uhilb X - \dim \hilb X) + \dim \hilb X$, as claimed.
		\end{proof}

		Finally, we prove \autoref{lemma:ind-mid-smooth-finite}, which finishes the proof of 
		\autoref{lemma:middle-induction}.

	\begin{lemma}
		\label{lemma:ind-mid-smooth-finite}
		Let $p_1 \ldots, p_{2d +2}$ be $2d+2$ points in $\bp^n$ and let $\Lambda$ be a $(2k-d-1)$-plane so that
		there is a hyperplane $H \subset \bp^n$ containing $p_1, \ldots, p_{2d}$ but the points and plane are otherwise
		general. Then, there are only finitely many scrolls in $\minhilbsmooth d k$ containing
		$p_1, \ldots, p_{2d+2}, \Lambda$ and there are only finitely many scrolls in $\broken d k$ containing $p_1, \ldots, p_{2d+2}, \Lambda$.
	\end{lemma}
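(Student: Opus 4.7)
The plan is to prove both finiteness claims by the dimension-counting technique \ref{technique-dimension} used in the second proof of \autoref{lemma:ind-high-smooth-finite}. For each of the two loci $\minhilbsmooth d k$ and $\broken d k$ I will build an incidence correspondence $\Phi$ over the parameter space $B$ of admissible configurations (tuples $(p_1, \ldots, p_{2d+2}, \Lambda)$ with $p_1, \ldots, p_{2d}$ lying on a common hyperplane), project to $B$ via $\pi_2$, and verify $\dim \Phi = \dim B$ so that $\pi_2$ is generically finite on each component.

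For the smooth case I will take
\begin{align*}
	\Phi^{\mathrm{sm}} := \{(X, H, p_1, \ldots, p_{2d+2}, \Lambda, x) : &\ X \in \minhilbsmooth d k,\ p_i \in X, \\
	&\ x \in X \cap \Lambda,\ p_1, \ldots, p_{2d} \in H\}.
\end{align*}
The key input is \autoref{lemma:smooth-scroll-hyperplane-section}: for $X$ smooth and $H$ a general hyperplane, $X \cap H \in \minhilb d {k-1}$ is a smooth scroll of dimension $k-1$, so each constrained point $p_i$ with $i \leq 2d$ contributes only $k-1$ dimensions to the fibre over $(X,H)$ instead of $k$. Combining this with \autoref{proposition:scroll-hilbert-scheme-dimension} for $\dim \minhilb d k$, with \autoref{exercise:grassmannian-bundle-planes} to identify $\{(\Lambda, x) : x \in X \cap \Lambda\}$ as a Grassmannian bundle over $X$ of dimension $k + (2k-d-1)(2d-k)$, and with the observation that the hyperplane constraint on $p_1, \ldots, p_{2d}$ has codimension $d - k + 1$ in $(\bp^n)^{2d}$ (as $2d \geq n+1$), the dimensions of $\Phi^{\mathrm{sm}}$ and $B$ will match, yielding generic finiteness of $\pi_2$.

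For the broken case I will use that every $[W] \in \broken d k$ decomposes as $W = X' \cup Y'$ with $X' \in \minhilb {d-1} k$ and $Y' \cong \bp^k$ meeting $X'$ along a ruling $(k-1)$-plane $R$ of $X'$. For each of the finitely many combinatorial types (determined by which subset of $\{p_1, \ldots, p_{2d+2}\}$ lies on each component, and which of $X'$ or $Y'$ meets $\Lambda$), the finiteness count reduces to \autoref{lemma:finite-containing-plane-and-points} applied to $X'$ inside the ambient $H$ (with the role of the containing plane played by $R$, which is forced once enough points in $X' \cap H$ have been fixed) together with a routine incidence count for the $k$-plane $Y'$. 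Summing over the finitely many combinatorial types then gives the claim.

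The main obstacle will be the delicate bookkeeping in the smooth case: one must match the $(k-1)$-dimensional contribution from each constrained point against the codimension of the hyperplane specialization, and verify that no component of $\Phi^{\mathrm{sm}}$ collapses entirely into the special locus $B$, which is where the genericity hypothesis on $p_{2d+1}, p_{2d+2},$ and $\Lambda$ enters (it guarantees we stay away from the lower-dimensional components produced by the second case of \autoref{lemma:linear-spaces-in-varieties-of-minimal-degree}). The broken case is more routine, but the combinatorial enumeration must be cross-checked against \autoref{proposition:minutely-broken-closure} to ensure no configuration secretly lies in the closure of $\minhilbsmooth d k$ and so has been counted twice.
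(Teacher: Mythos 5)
The central error is your interpretation of the phrase ``containing $p_1, \ldots, p_{2d+2}, \Lambda$'': the plane $\Lambda$ must be \emph{contained in} the scroll, not merely \emph{meet} it. This is clear from the inductive hypothesis \ref{custom:ind-mid} (``a variety of degree $d-1$ containing a general $(2k-d-1)$-plane'') and from \autoref{proposition:at-most-two-components}, where the incidence correspondence uses the Fano scheme of planes \emph{in} $X$. Since $d \leq 2k-2$, $\Lambda$ is a $(2k-d-1)$-plane of dimension at least $1$, so the two conditions are genuinely different, and the mismatch kills your dimension count. Concretely, your fibre $\{(\Lambda, x) : x \in X \cap \Lambda\}$ is a Grassmannian bundle of dimension $k + (2k-d-1)(2d-k)$ over $X$, whereas the correct locus of $(2k-d-1)$-planes contained in $X$ has dimension $1 + (2k-d)(d-k)$ by \autoref{lemma:linear-spaces-in-varieties-of-minimal-degree}. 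The difference is $(d-k)(2k-d-1) + (k-1) > 0$, so $\dim \Phi^{\mathrm{sm}}$ would exceed $\dim B$ by a positive amount (for example, by $5$ when $k=3$, $d=4$), and generic finiteness of $\pi_2$ would not follow. With the correct containment condition, the count $\dim \Phi^{\mathrm{sm}} = \dim B = 7dk - 2d + 2k - 2 - 2k^2$ does close, so the strategy is salvageable, but you must work with the Fano scheme of planes in $X$ rather than the ``meets $\Lambda$'' Grassmannian bundle. (The paper's own proof of the smooth case avoids building the incidence correspondence explicitly and instead runs the faster condition count of \ref{technique-conditions}, observing that specializing $2d$ points into $H$ does not change the per-point condition count $n-k = (n-1)-(k-1)$; the paper then leaves \ref{technique-dimension} as an exercise.)

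The same misreading infects your broken case (``which of $X'$ or $Y'$ meets $\Lambda$''), and beyond that the combinatorial-type bookkeeping you propose is both too vague and unnecessary. The paper's argument pins down the combinatorics: $X'$ is \emph{forced} to lie in $H$ (since otherwise no $k$-plane $Y'$ can absorb enough of the $p_1, \ldots, p_{2d}$), whence $Y'$ must contain $\Lambda, p_{2d+1}, p_{2d+2}$; one then sets $Q := \operatorname{Span}(\Lambda, p_{2d+1}, p_{2d+2}) \cap H$, a $(2k-d)$-plane, applies \autoref{lemma:finite-containing-plane-and-points} to $X' \subset H$ with the data $(p_1, \ldots, p_{2d}, Q)$, and observes that $X'$ and the configuration uniquely determine $Y'$. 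Your proposal also never establishes $X' \subset H$, which is a necessary step before \autoref{lemma:finite-containing-plane-and-points} is even applicable. Finally, the appeal to \autoref{proposition:minutely-broken-closure} to ``ensure no configuration has been counted twice'' is not needed here: the lemma asks only for finiteness of each of the two loci separately, not for disjointness.
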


	\begin{proof}
	First, let us show there are only finitely many scrolls in $\broken d k$ containing $p_1, \ldots, p_{2d+2}, \Lambda$.
	Suppose we took such a scroll $X \cup Y$ with $Y \cong \bp^k, X \in \minhilb {d-1}k$.
	We would necessarily have $X \subset H$, as if $X$ spanned any other hyperplane, there would not be a $k$-plane
	$Y$ so that $Y \cup H$ contains $p_1, \ldots, p_{2d+2}, \Lambda$, and in particular there can be no such
	scroll $X \cup Y$ containing $p_1, \ldots, p_{2d+2}, \Lambda$.
	So, we conclude that $Y$ contains $\Lambda, p_{2d+1}, p_{2d+2}$. Let $P$ be the plane spanned by
	$\Lambda, p_{2d+1}, p_{2d+2}$. Then, $Q:= P \cap H$ is a $(2k-d)$-plane. It now suffices to show there are only finitely
	many scrolls $X \in \minhilb d k$ containing $Q$ and planes $Y$ meeting $X$ at $\bp^{k-1} \subset H$ and containing
	$\Lambda, p_{2d+1}, p_{2d+2}$.
	Now, by \autoref{lemma:finite-containing-plane-and-points}, there are finitely many $X \in \minhilb {d-1} k$
	containing $Q, p_1, \ldots, p_{2d}$. Then, for any such $X$, there is at most 1 $k$-plane containing $Q$ which meets
	$X$ in a $(k-1)$-plane and contains $\Lambda, p_{2d+1}, p_{2d+2}$.

	To complete the proof, we only need show there are only finitely many scrolls in $\minhilbsmooth d k$ containing
	$p_1, \ldots, p_{2d+2}, \Lambda$.
	This proof is quite similar to that of \autoref{lemma:finite-containing-plane-and-points}.
	For this proof, we will use \ref{technique-conditions}.
	Observe that it is $n-k$ conditions to contain a point, while it is also $(n-1) - (k-1) = n - k$ conditions
	to contain a point in a fixed $n-1$ dimensional hyperplane.
	Therefore, a point in a fixed hyperplane and a general point impose the same number of conditions.
	Hence, since the configuration from \autoref{lemma:finite-containing-plane-and-points},
	prior to specializing $p_1, \ldots, p_{2d}$ to a hyperplane already imposed
	$\dim \minhilb d k$ conditions, the current configuration after specializing them to a hyperplane
	still imposes $\dim \minhilb d k$ conditions, completing the proof.

	\begin{exercise}
		\label{exercise:}
		Prove that there are only finitely many scrolls in $\minhilbsmooth d k$
		using \ref{technique-dimension}, following the proof using \ref{technique-dimension}
		for \autoref{lemma:finite-containing-plane-and-points}.

	\end{exercise}
\end{proof}

\section{A combinatorial interlude}
\label{subsection:a-combinatorial-interlude}

In this section, we count the number of planes containing a point and meeting $k$ lines
in \autoref{lemma:number-of-planes}.
We will only need that this number is nonzero to show scrolls satisfy interpolation.
It is fairly easy to see this number is nonzero from Schubert calculus.
Nevertheless, it is nice to know that the actual number of scrolls containing these linear spaces.
This may be helpful in finding the number of scrolls through the expected number of
linear spaces, as discussed in \autoref{question:number-of-scrolls}.

	\begin{definition}
		\label{definition:}
		A {\bf Young diagram} of size $n$ is a collection of $n$ $1 \times 1$ boxes which are left and bottom justified.
		A {\bf right-justified Young diagram} of size $n$ is a collection of $n$ $1 \times 1$ boxes which are right and top justified.
		A {\bf Young tableaux} (respectively {\bf right-justified Young tableaux}) of size $n$ is a Young diagram 
		(respectively right-justified Young diagram), together with an ordered labeling of the boxes from $1$ to $n$.
		A Young tableau (respectively right-justified Young tableau) is {\bf standard} if the entries in each
		row are increasing from left to right (respectively left to right) and bottom to top.
		Define $T(n) := \#\left\{Y: Y \text{ is a standard Young tableaux } \right\}$.
	\end{definition}

	\begin{lemma}
		\label{lemma:number-of-planes}
		The number of $(k-1)$-planes which are subspaces of $\bp^{2k-1}$ containing a point and meeting $k$ lines is
		$T(k-1)$. In particular, it is nonzero.
	\end{lemma}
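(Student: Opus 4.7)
The plan is to translate this enumerative problem into a Schubert calculus computation on a Grassmannian, and then evaluate the resulting intersection number combinatorially using Pieri's rule and Young tableaux.

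First I would project from the marked point $p$. A $(k-1)$-plane through $p$ in $\bp^{2k-1}$ corresponds bijectively to a $(k-2)$-plane in $\bp^{2k-2}$ (quotienting by the line spanned by $p$), and the condition that it meet a line $\ell_i$ not passing through $p$ becomes the condition of meeting the projected line $\ell_i' \subset \bp^{2k-2}$. Thus the count equals the top intersection number $\sigma_{k-1}^k$ on the Grassmannian $G(k-1, 2k-1)$, of dimension $k(k-1)$; the total codimension $k(k-1)$ of the $k$ Schubert conditions $\sigma_{k-1}$ matches this dimension, so the count is finite.

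Next I would evaluate $\sigma_{k-1}^k$ in the Schubert basis by iterating Pieri's rule. Each multiplication by $\sigma_{k-1}$ appends a horizontal strip of size $k-1$, so the coefficient of the top Schubert class $\sigma_{(k^{k-1})}$ in $\sigma_{k-1}^k$ is the Kostka number $K_{(k^{k-1}),\,((k-1)^k)}$: the number of semistandard Young tableaux of the $(k-1)\times k$ rectangular shape with uniform content $(k-1)^k$. I would then set up a combinatorial bijection between such SSYT (equivalently, Pieri chains $\emptyset \subset \lambda^{(1)} \subset \cdots \subset \lambda^{(k)} = (k^{k-1})$ with each step a horizontal strip of size $k-1$) and the standard Young tableaux counted by $T(k-1)$, completing the identification in the statement.

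The main obstacle is the final combinatorial bijection with $T(k-1)$ in the precise convention given by the definitions in the paper; the reduction to Schubert calculus and the Pieri expansion are routine. For the ``in particular, nonzero'' conclusion, which is the only part actually needed in what follows, one has an immediate witness: the SSYT whose $j$-th column is $\{1,\dots,k\}\setminus\{k+1-j\}$ has columns strictly increasing by construction, and its rows are weakly increasing because the omitted values $k+1-j$ decrease in $j$; so the Kostka number, and hence $T(k-1)$, is at least $1$.
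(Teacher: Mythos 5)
Your projection from the marked point is a genuine and clean streamlining of the paper's argument: the paper works directly in $G(k,2k)$ and applies Pieri's rule to $\sigma_k\cdot\sigma_{k-1}^k$; since the first multiplication by $\sigma_k$ forces the Pieri chain to begin by filling the bottom row, this is combinatorially equivalent to your $\sigma_{k-1}^k$ computation in the smaller Grassmannian $G(k-1,2k-1)$. Your identification of the intersection number with the Kostka number $K_{(k^{k-1}),\,((k-1)^k)}$ is also correct.

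However, the bijection with $T(k-1)$ that you defer to at the end cannot exist, because the Kostka number you wrote down is in fact always equal to $1$, not $T(k-1)$. Your own analysis nearly shows this: an SSYT of the $(k-1)\times k$ rectangle with content $((k-1)^k)$ has each column equal to $\{1,\dots,k\}$ with one value $m_j$ omitted, so the columns are encoded by a permutation $(m_1,\dots,m_k)$ of $\{1,\dots,k\}$; the weak row-increase condition then forces $m_1\geq m_2\geq\cdots\geq m_k$, hence $m_j=k+1-j$, and the SSYT is unique. (Equivalently, in the Pieri chain starting from $\emptyset$ each step is forced; for instance with $k=3$ the only admissible chain is $\emptyset\to(3)\to(3,2)\to(3,3,1)\to(3,3,3)$.) One can also see this geometrically: after projecting from $p$, a $(k-2)$-plane in $\bp^{2k-2}$ meeting $k-1$ of the lines must lie in their span, which is a $\bp^{2k-3}$ meeting the $k$th line in a single point, and induction on $k$ yields a unique plane. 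Consequently, the explicit SSYT you exhibit is not just a witness to nonvanishing --- it is the \emph{only} SSYT, so the enumerative count is exactly $1$. This means the displayed value $T(k-1)$ in the statement (and hence the bijection asserted in \autoref{lemma:young-bijection}) is in error for $k\geq 3$, since $T(2)=2$, $T(3)=4$, etc. The error is harmless for the rest of the paper, which, as you note, only uses the nonvanishing, and your argument for that part is complete and correct. You could tighten your write-up by replacing the vague appeal to a bijection with the uniqueness observation above, which simultaneously proves nonvanishing and pins down the count.
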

	\begin{proof}
		Consider $G(k, 2k)$, the Grassmannian of $(k-1)$-planes	in $\bp^{2k-1}$. Let $A(G(k,2k))$ denote its chow ring. 
		Let $\sigma_{k-1} \in A(G(k, 2k))$ denote the class of
		the Schubert cell in the Grassmannian of $(k-1)$-planes meeting a line and let $\sigma_k$ denote the class
		of the Schubert cell in the Grassmannian of $(k-1)$-planes containing a point, as defined in
		~\cite{Eisenbud:3264-&-all-that}.	
		We are now asking to compute the product $\sigma_k \sigma_{k-1}^k \in A_0(G(k,2k)) \cong \bz$
		as the value of this class is the number of $(k-1)$-planes which meets $k$ lines and contains
		a point.
		Applying Pieri's formula \cite[Proposition 4.9]{Eisenbud:3264-&-all-that} interpreted in terms of Young diagrams via 
		\cite[Section 4.5, page 165]{Eisenbud:3264-&-all-that},
		the product $\sigma_k \cdot \sigma_{k-1}^k$ is precisely the number of ways to fill a $k \times k$ box
		by iteratively placing a set of $k$ boxes, followed by $k$ sets of $k-1$ boxes so that
		after adding a given set of boxes, the resulting shape is a Young diagram and we only add at most
		a single box to each column.

		Once we show this, we will finish the proof. This, however, follows from \autoref{lemma:young-bijection},
		which we now prove.
	\end{proof}	

		\begin{lemma}
			\label{lemma:young-bijection}
			Let $S(k)$ denote the number of ways to fill a $k \times k$ box
		by iteratively placing a set of $k$ boxes, followed by $k$ sets of $k-1$ boxes so that
		after adding a given set of boxes, the resulting shape is a Young diagram and we only add at most
		a single box to each column. Then, $S(k) = T(k-1)$, where $T(k-1)$ is the number of standard Young tableaux of size $k-1$.
		\end{lemma}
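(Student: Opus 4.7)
The plan is to recognize each filling as a Pieri chain of partitions and encode this chain by combinatorial data that specializes to a standard Young tableau of smaller size. First, since the initial set of $k$ boxes, subject to the ``at most one box per column'' rule, is forced to form the bottom row, the remaining data amounts to a chain $\lambda^{(1)} = (k) \subsetneq \lambda^{(2)} \subsetneq \cdots \subsetneq \lambda^{(k+1)} = (k^k)$ in which each skew shape $\lambda^{(i+1)} / \lambda^{(i)}$ is a horizontal strip of size $k-1$. Equivalently, each of the $k$ subsequent steps $i = 2, \ldots, k+1$ skips exactly one column (so that column receives no new box at step $i$), producing a bijection $s : \{2, \ldots, k+1\} \to \{1, \ldots, k\}$.

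Next, the plan is to translate the requirement that every intermediate $\lambda^{(i)}$ be a Young diagram into a concrete combinatorial constraint on $s$ and the associated labeling. If $t(i, j) \in \{2, \ldots, k+1\}$ records the step at which box $(i, j)$ enters the diagram, then the row-weak and column-strict monotonicity conditions forced by the Young diagram property package the data as a semistandard Young tableau of rectangular shape with $k-1$ rows and $k$ columns, in which each entry appears exactly $k-1$ times. From this tableau one would then extract a standard Young tableau of size $k-1$, for example by recording the positions of descents of $s$, or by applying a promotion / evacuation-type operation on the rectangular semistandard tableau that collapses it to a standard tableau of strictly smaller shape.

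The main obstacle will be identifying the right combinatorial bijection so that the row-and-column constraints on the filling correspond exactly to the constraints defining a standard Young tableau of size $k-1$. A natural approach is to exploit the right-justified Young diagrams introduced alongside the usual ones in the definitions: transposing converts horizontal strips into vertical strips, and then jeu-de-taquin-style slides can be used to reduce the rectangular tableau to a tableau of a strictly smaller shape $\lambda$. Verifying that this procedure is well-defined, injective, and surjective onto the disjoint union of standard Young tableaux over all shapes $\lambda$ with $|\lambda| = k-1$ would be the principal technical step.
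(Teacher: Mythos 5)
Your first two paragraphs are on solid ground and mirror the paper's opening move: the initial set of $k$ boxes is forced to be the bottom row, each of the subsequent $k$ horizontal-strip steps of size $k-1$ skips exactly one column, and (since each column must receive exactly $k-1$ further boxes over $k$ steps) the skipped columns give a bijection $s$. Recording the step at which each box enters packages the chain as a semistandard Young tableau of rectangular shape $(k^{k-1})$ with content $\bigl((k-1)^k\bigr)$; this is the standard dictionary between Pieri chains and SSYT, and it is a genuinely cleaner reformulation than the one the paper works with.

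The gap is in the third paragraph: you do not actually produce the bijection with standard Young tableaux of size $k-1$, only list candidate mechanisms (descents of $s$, promotion/evacuation, jeu-de-taquin) and flag the verification as the ``principal technical step.'' That step cannot be filled in, and your own SSYT reformulation is precisely what shows this. By the reduction you describe, $S(k)$ equals the Kostka number $K_{(k^{k-1}),\,((k-1)^k)}$. For $k=3$ one checks directly that there is a unique such chain, namely
\begin{equation}
\nonumber
(3)\;\subset\;(3,2)\;\subset\;(3,3,1)\;\subset\;(3,3,3),
\end{equation}
since the alternative $(3)\subset(3,2)\subset(3,2,2)$ cannot be completed to $(3,3,3)$ by a horizontal strip; so $S(3)=1$, while $T(2)=2$. (The same conclusion follows from the Schubert product in $G(3,6)$, which gives $\sigma_3\sigma_2^3=\sigma_{3,3,3}$ with coefficient $1$, and from elementary geometry: lines in $\bp^4$ through the point $\ell_3\cap\spn(\ell_1,\ell_2)$ meeting $\ell_1$ and $\ell_2$ form a single line.) Thus no bijection of the kind you — or the paper — are seeking can exist, and the identity $S(k)=T(k-1)$ fails already at $k=3$. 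The paper's explicit map, which places a box atop the column skipped at step $i$ for $i=1,\dots,k-1$ and declares the result a right-justified standard tableau, runs into the same obstruction: for the unique $k=3$ chain, columns $3$ and $2$ receive labels $1$ and $2$ respectively, which is not increasing left to right. What survives and is actually used downstream (e.g. in the proof of \autoref{proposition:segre-plane}) is only that $S(k)$ is a positive integer, and your reduction does establish that cleanly.
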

		\begin{proof}
			To compute $S(k)$, note that we must start by placing $k$ boxes on the bottom row.
			Therefore, $S(k)$ is equal to the number of ways to place $k$ sets of $k-1$ boxes so that 
			no two boxes from the $i$th set are in the same column and after adding
			the $i$th set, the resulting set of boxes is a Young diagram.
			We now give a bijection between the set of all such configurations and the set of standard Young tableau of size
			$k-1$, which will complete the proof.
		
			Note that the datum of adding $k-1$ boxes under the above constraints is the same as choosing a column
			that has more boxes than the column to its right.
			So, given a valid filling of the $(k-1) \times k$ box by $k$ sets of $k-1$ boxes, associate to it the
			Young diagram of size $k-1$ whose $i$th box is placed on the top of the column in which there is no box from the
			$i$th set of $k-1$ boxes. This is a right-justified young diagram because the $i$th box must be placed
			in a column so that the column to it's right has fewer boxes, meaning that before inserting
			the $i$th box in our standard Young tableaux, we have already placed
			some box in our standard Young tableau on the column to its right.

			To show this is a bijection, the inverse map can be given quite simply:
			Given a right justified standard Young tableaux, for all $i$ with $1 \leq i \leq k-1$, we place the $i$th set of $k-1$ boxes in all columns but
			the column in which the $i$th box of the standard Young tableaux lies in. 
			We then place the $k$th set of $k-1$ boxes in whatever $k-1$ boxes are remaining after placing the first $k$ sets of $k-1$ boxes.
			This then defines a valid filling of the $(k-1) \times k$ box by $k$ sets of $k-1$ boxes.

			It is immediate from the construction that these to processes are mutually inverse, and hence define a bijection.
		\end{proof}

\section{Base case: verifying \getrefnumber{custom:ind-mid} for degree $k$ varieties}
\label{subsection:base-case-plane}

We have essentially completed our inductive steps. Now comes the heart of the
argument, where we will examine interpolation of varieties of dimension $k$ and degree $k$.
Recall that degree $k$, dimension $k$ varieties are realized as
a Segre embedding $\bp^1 \times \bp^{k-1} \ra \bp^{2k-1}$, by \autoref{lemma:segre-isomorphic-to-scroll}.

We have two remaining parts of our argument for showing scrolls satisfy interpolation.
First, in this section, we complete the base case of the induction, which is carried out in
\autoref{proposition:segre-plane}.
We show that we can find a variety of degree $k$ and dimension $k$ containing
$2k+2$ general points and a general $(k-1)$-plane.

Then, in the next section \autoref{subsection:base-case-points},
we show that $\minhilb k k$ satisfies interpolation, meaning that we can find
a scroll in $\minhilb k k$ through $3k+3$ general points.

In order to prove 
\autoref{proposition:segre-plane},
we will first need a way of constructing a scroll through a point and $2k + 2$ points.
For this, we will need \autoref{lemma:unique-segre-containing-linear-spaces}, which we now prove.

\begin{lemma}
	\label{lemma:unique-segre-containing-linear-spaces}
	Fix $k$ lines $\ell_1, \ldots, \ell_k \subset \bp^{2k-1}$ and three $(k-1)$-planes $\Lambda_1, \Lambda_2,\Lambda_3 \subset \bp^{2k-1}$.
	Suppose that $\ell_i \cap \Lambda_j$ consists of a single point for all pairs $(i,j)$, and that no two of these intersection points
	are the same. Further, suppose the lines $\ell_1, \ldots, \ell_k$ span $\bp^{2k-1}$.
	Then, there is a unique scheme $X \subset \bp^{2k-1}$ so that $X \cong \bp^1 \times \bp^{k-1}$ is the Segre embedding and $X$
	contains $\ell_1, \ldots, \ell_k, \Lambda_1, \Lambda_2, \Lambda_3$.

	Further, the lines $\ell_1, \ldots, \ell_k$ appear as the images of lines of the form $\bp^1 \times \left\{ q_k \right\} \subset \bp^1 \times \bp^{k-1}$
	under the Segre embedding, with $q_i \in \bp^{k-1}$.
\end{lemma}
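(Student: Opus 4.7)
The plan is to use the construction of scrolls in Proposition \ref{proposition:construction-of-scrolls-by-joining-curves}, with the three planes $\Lambda_1, \Lambda_2, \Lambda_3$ used to fix compatible parametrizations of the $\ell_i$, and then to verify that the resulting scroll contains the $\Lambda_j$ as ruling planes.

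For existence, I would fix three distinct points $t_1, t_2, t_3 \in \bp^1$ and set $p_{ij} := \ell_i \cap \Lambda_j$, which by hypothesis is a single point. Since the three points $p_{i1}, p_{i2}, p_{i3}$ on $\ell_i \cong \bp^1$ are pairwise distinct, there is a unique isomorphism $f_i : \bp^1 \to \ell_i$ with $f_i(t_j) = p_{ij}$. Applying Proposition \ref{proposition:construction-of-scrolls-by-joining-curves} to the lines $\ell_i$ with these parametrizations, using that the $\ell_i$ span $\bp^{2k-1}$, I would produce a smooth scroll $X \in \scroll{1^k}$ containing each $\ell_i$ as a ruling line over $\bp^1$. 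By Lemma \ref{lemma:segre-isomorphic-to-scroll}, such $X$ is (up to automorphism of $\bp^{2k-1}$) the Segre embedding of $\bp^1 \times \bp^{k-1}$, and the $\ell_i$ appear as the images of $\bp^1 \times \{q_i\}$ for suitable $q_i \in \bp^{k-1}$. The ruling plane of $X$ over $t_j$ is the linear join of $f_1(t_j), \ldots, f_k(t_j) = p_{1j}, \ldots, p_{kj}$, which is a $(k-1)$-plane because the projection $X \to \bp^1$ is smooth with fibers of constant dimension $k-1$; since this $(k-1)$-plane is contained in $\Lambda_j$, it must equal $\Lambda_j$, and hence $\Lambda_j \subset X$.

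For uniqueness, suppose $X'$ is any Segre variety $\bp^1 \times \bp^{k-1}$ in $\bp^{2k-1}$ containing the $\ell_i$ and $\Lambda_j$. By Lemma \ref{lemma:linear-spaces-in-varieties-of-minimal-degree}, the $(k-1)$-planes contained in $X'$ form either a single ruling (when $k \geq 3$) or two rulings distinguished by cardinality of the given data (when $k = 2$, where we have two lines $\ell_i$ in one ruling and three lines $\Lambda_j$ in the other). In either case, the $\Lambda_j$ correspond to three distinct fibers of the projection $X' \to \bp^1$, and the requirement that each $\ell_i$ meet each $\Lambda_j$ in a single point forces $\ell_i$ to be a ruling line of the $\bp^1$-family on $X'$, so $\ell_i = \bp^1 \times \{q_i\}$ for some $q_i$. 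The isomorphisms $\ell_i \to \bp^1$ induced by the first projection of $X'$ agree, up to a single automorphism of $\bp^1$ (the one carrying the three $\Lambda_j$-parameters to $t_1, t_2, t_3$), with the $f_i^{-1}$ used to construct $X$; since $X$ is the union of the $(k-1)$-planes joining corresponding points, $X' = X$ as embedded subschemes.

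The main obstacle will be the uniqueness step, specifically ruling out the possibility that $\ell_i$ or $\Lambda_j$ could arise as non-ruling linear subspaces of a hypothetical Segre $X'$. This relies crucially on the classification of linear subspaces of scrolls from Lemma \ref{lemma:linear-spaces-in-varieties-of-minimal-degree}, together with the spanning hypothesis on the $\ell_i$ and the incidence conditions with three distinct planes $\Lambda_j$; the $k=2$ case requires a small separate argument because both rulings consist of lines.
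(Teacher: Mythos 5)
Your proposal is correct and follows essentially the same route as the paper: triple transitivity of $\pgl_2$ pins down the parametrizations $\ell_i \cong \bp^1$ from the three intersection points with the $\Lambda_j$, the scroll construction (Proposition \ref{proposition:construction-of-scrolls-by-joining-curves} for you, explicit minors and Proposition \ref{proposition:equivalence-minors-swept-planes} for the paper) then produces $X$, and uniqueness falls to the classification of linear subspaces in Lemma \ref{lemma:linear-spaces-in-varieties-of-minimal-degree}. Your treatment of uniqueness is in fact a touch more explicit than the paper's in first identifying the $\Lambda_j$ as ruling fibers (and in isolating the $k=2$ two-ruling case), but the underlying argument and lemma dependencies are identical.
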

\begin{proof}
	First, we show some such scheme $X$ exists. Since the lines span all of $\bp^{k-1}$, we may choose coordinates so that
	\begin{align*}
	\ell_i = V(x_0, \ldots, x_{2i-3}, x_{2i}, x_{2i+1}, \ldots, x_{2k-1}).
	\end{align*}
	Denote the standard coordinate point by $e_i := [0, \ldots, 0, 1, 0, \ldots, 0]$, where there is a $1$ in the $i$th coordinate.
	Since automorphisms of $\bp^1$ are
	triply transitive, we may assume that 
	\begin{align*}
		\Lambda_1 \cap \ell_i &= e_{2i-2} \\
		\Lambda_2 \cap \ell_i &= e_{2i-1} \\
		\Lambda_3 \cap \ell_i &= e_{2i-2} + e_{2i-1}
	\end{align*}
Now, observe that the variety defined by the minors of the matrix
\begin{align*}
	\begin{pmatrix}
		x_0 & x_2 & \cdots & x_{2k-2} \\
		x_1 & x_3 & \cdots & x_{2k-1}
	\end{pmatrix}
\end{align*}
defines a variety of minimal degree $k$ in $\bp^{2k-1}$ by \autoref{proposition:equivalence-minors-swept-planes}, and therefore
a Segre variety by \autoref{lemma:segre-isomorphic-to-scroll}. This variety contains the linear
spaces $\ell_1, \ldots, \ell_k, \Lambda_1, \Lambda_2, \Lambda_3$
by construction, because the minors of the matrix vanish on all of these linear spaces.

To complete the proof, it suffices to show this variety is unique. 
However, since the lines $\ell_i$ meet the $(k-1)$ planes
$\Lambda_j$, they must be
type \ref{custom:line-small}, by \autoref{lemma:linear-spaces-in-varieties-of-minimal-degree}.
Then, if we had a scroll through such a configuration, say $\pi:X \ra \bp^1$,
it would necessarily satisfy
$\pi(\ell_1 \cap \Lambda_j) = \cdots = \pi(\ell_k \cap \Lambda_j)$.
In particular, since the isomorphisms $\pi|_{\ell_i}: \ell_i \ra \bp^1$ 
are determined for the three points
$\ell_i \cap \Lambda_j$, they are uniquely determined, as
automorphisms of $\bp^1$ are triply transitive.
Hence, we have specified isomorphisms $\ell_i \cong \bp^1$,
and there is then a unique scroll defined by these isomorphisms,
as follows from \autoref{proposition:equivalence-vector-bundle-swept-planes}.
More precisely, this proposition shows that every smooth scroll can be constructed
from only the datum of
a collection of $k$ rational normal curves with specified isomorphisms
to $\bp^1$.
\end{proof}

\begin{figure}
	\centering
\includegraphics[scale=.3]{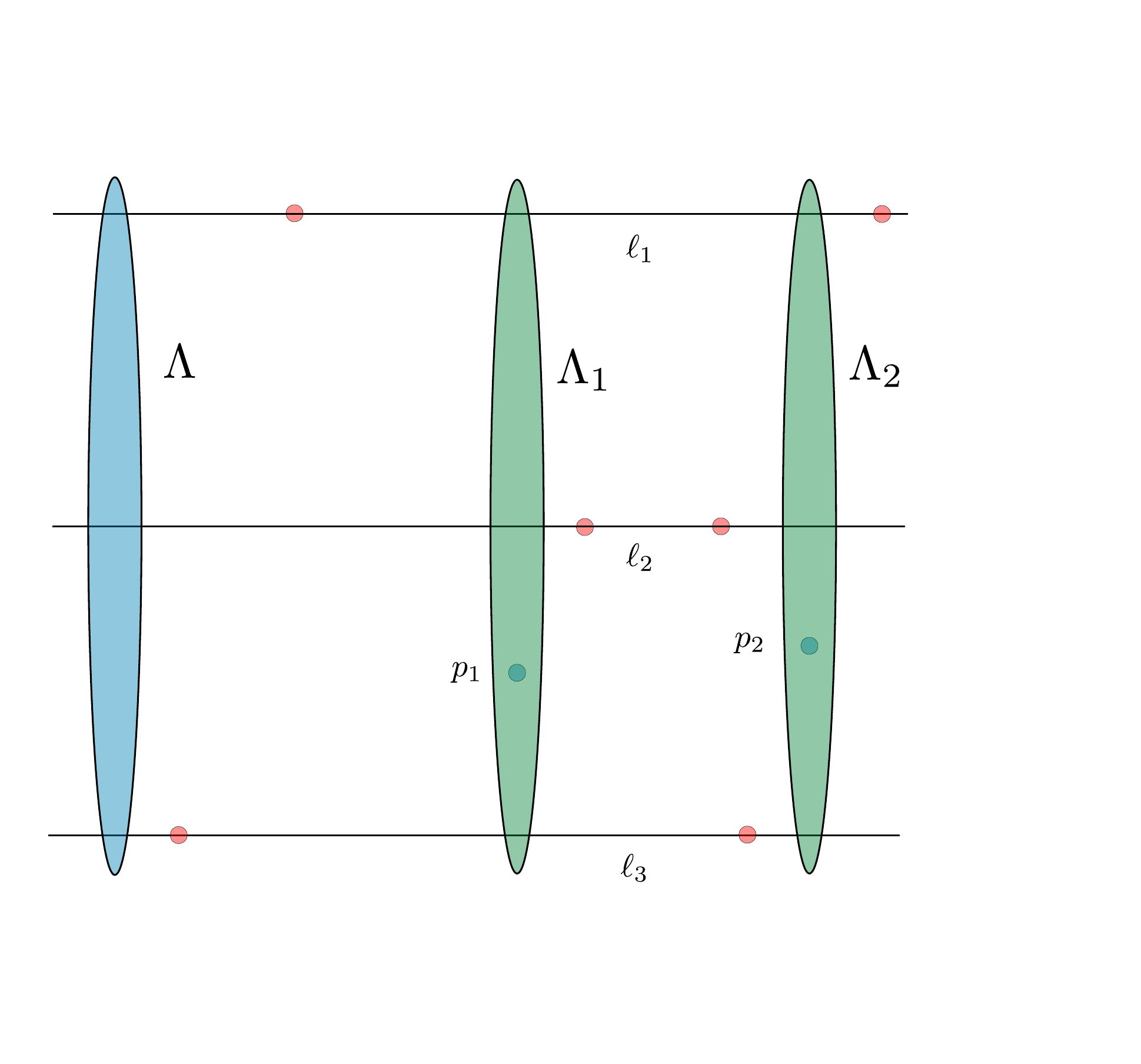}
\caption{A visualization of the idea of the proof of \autoref{proposition:segre-plane},
where one specializes pairs of points to lie on lines meeting the given $(k-1)$-plane.
}
\label{figure:}
\end{figure}

\begin{proposition}
	\label{proposition:segre-plane}
	There exist finitely many varieties of minimal degree $\scroll {1^k}$
	containing $2k + 2$ general points and a general $(k-1)$-plane.
\end{proposition}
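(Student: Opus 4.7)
The plan is to set up the incidence correspondence
\[
\Phi := \{(X, p_1, \ldots, p_{2k+2}, \Lambda) : p_i \in X,\ \Lambda \subset X\} \subset \minhilb{k}{k} \times (\bp^{2k-1})^{2k+2} \times G(k, 2k),
\]
with projection $\pi_2: \Phi \to T := (\bp^{2k-1})^{2k+2} \times G(k, 2k)$, and to show $\pi_2$ is generically finite, which is equivalent to the stated finiteness. First I would compute $\dim \Phi = \dim T = 5k^2+2k-2$: the target dimension is immediate, and for the source I combine \autoref{proposition:scroll-hilbert-scheme-dimension} with \autoref{lemma:linear-spaces-in-varieties-of-minimal-degree}, which (for $k \ge 3$) identifies the Fano scheme of $(k-1)$-planes in $\scroll{1^k}$ with the one-dimensional Grassmannian bundle $\gbundle{k}{\sce}$ over $\bp^1$, i.e.\ the $\bp^1$ of ruling fibers. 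Irreducibility of $\Phi$ follows from \autoref{lemma:irreducible-base-and-fibers} applied to the map $\Phi \to \minhilb{k}{k}$. Since $\pi_2$ is a proper morphism of irreducible varieties of the same pure dimension, by \autoref{lemma:isolated-fiber-implies-dominant} it suffices to exhibit a single point of $\Phi$ isolated in its fiber.

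To produce such a point, I would specialize. Pick three general pairwise disjoint $(k-1)$-planes $\Lambda_1 = \Lambda$, $\Lambda_2$, $\Lambda_3 \subset \bp^{2k-1}$, and choose $p_1, \ldots, p_{k+1}$ general on $\Lambda_2$ and $p_{k+2}, \ldots, p_{2k+2}$ general on $\Lambda_3$. The first key claim is that any scroll $Y \in \minhilb{k}{k}$ through this data must contain both $\Lambda_2$ and $\Lambda_3$. Indeed, $\deg Y = k$, so by B\'ezout either $\Lambda_2 \subset Y$ or $\Lambda_2 \cap Y$ has degree at most $k$; but $\Lambda_2 \cap Y$ contains the $k+1$ general points $p_1, \ldots, p_{k+1}$, forcing $\Lambda_2 \subset Y$, and similarly for $\Lambda_3$. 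By \autoref{lemma:linear-spaces-in-varieties-of-minimal-degree}, these three $(k-1)$-planes must each be fibers of the ruling of $Y$.

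The main obstacle is then the uniqueness assertion: \emph{there is at most one scroll in $\minhilb{k}{k}$ having three given pairwise disjoint $(k-1)$-planes as fibers of its ruling.} I would argue this by comparing two $(k-1)$-dimensional families of lines. For each $q \in \Lambda$, the span $\overline{q, \Lambda_2}$ is a $k$-plane meeting $\Lambda_3$ transversally in a single point $r(q)$, and the line $\overline{q, r(q)}$ also meets $\Lambda_2$ in a point; this gives a $\bp^{k-1}$-family (parametrized by $\Lambda$) of lines meeting all three planes. A Schubert-calculus dimension count (codimension $k-1$ per plane inside the $(4k-4)$-dimensional Grassmannian of lines) shows this accounts for the full family of lines meeting $\Lambda, \Lambda_2, \Lambda_3$. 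On the other hand, any scroll $Y$ having these as fibers has its type~\ref{custom:line-large} rulings contained in this family, and by \autoref{lemma:linear-spaces-in-varieties-of-minimal-degree} the ruling family is already $(k-1)$-dimensional, so the two families coincide. Hence any two such scrolls have the same rulings and therefore agree. Existence of the scroll then follows by selecting $k$ general lines from this $\bp^{k-1}$-family and invoking \autoref{lemma:unique-segre-containing-linear-spaces}.

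Putting it together: the specialized quadruple $(X, p_1, \ldots, p_{2k+2}, \Lambda)$ is the unique point of its $\pi_2$-fiber, hence isolated. By \autoref{lemma:isolated-fiber-implies-dominant}, $\pi_2$ is dominant, and combined with the dimension equality its general fiber is zero-dimensional; since $\pi_2$ is proper, the general fiber is finite, yielding finitely many scrolls $\scroll{1^k}$ through $2k+2$ general points and a general $(k-1)$-plane. The small case $k = 2$ follows from Coskun's thesis, cited earlier in the paper.
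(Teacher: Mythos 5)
Your approach takes a genuinely different specialization than the paper's (which puts pairs of points on lines meeting $\Lambda$ and then argues via quadrics), and the overall skeleton — dimension count, irreducibility, isolated point via \autoref{lemma:isolated-fiber-implies-dominant} — is sound. However, there is a real gap in the B\'ezout step. You write: ``by B\'ezout either $\Lambda_2 \subset Y$ or $\Lambda_2 \cap Y$ has degree at most $k$; but $\Lambda_2 \cap Y$ contains the $k+1$ general points $p_1, \ldots, p_{k+1}$, forcing $\Lambda_2 \subset Y$.'' The degree bound $\leq k$ holds as stated only if $\Lambda_2 \cap Y$ is zero-dimensional (a proper intersection). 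The refined B\'ezout theorem still bounds the \emph{sum of degrees of irreducible components} by $k$ when the intersection is improper, but a positive-dimensional component of small degree can easily contain $k+1$ general points of $\Lambda_2 \cong \bp^{k-1}$: for $k \ge 3$ a quadric hypersurface in $\bp^{k-1}$ passes through $k+1$ general points, and such a situation actually occurs. For example, take $k=3$, $Y = \bp^1 \times \bp^2 \subset \bp^5$ the rank-one locus of $\bigl(\begin{smallmatrix} x_0 & x_2 & x_4 \\ x_1 & x_3 & x_5\end{smallmatrix}\bigr)$ and $\Lambda_2 = \{x_0 = x_1 = x_2 = 0\}$; then $\Lambda_2 \cap Y = \{x_3 x_4 = 0\}$ is a conic in $\Lambda_2$, positive-dimensional of degree $2 \le 3$. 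Because $Y$ ranges over \emph{all} elements of $\minhilb{k}{k}$, not just generic ones, you cannot assume the intersection with your fixed $\Lambda_2$ is proper, so you have not ruled out scrolls $Y$ through your configuration with $\Lambda_2 \not\subset Y$. The paper sidesteps exactly this difficulty by putting \emph{three} collinear points of $Y$ on each auxiliary line $\ell_i$ and applying B\'ezout to a \emph{line against each quadric generator} of $I_Y$, where there is no improper-intersection subtlety.

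Two smaller points. First, you should also worry about singular elements of $\minhilb{k}{k}$ (cones, broken scrolls) in the $\pi_2$-fiber; as in the paper, once you establish finiteness of \emph{smooth} scrolls through the configuration, openness of the smooth locus handles this, but it should be said. Second, in your uniqueness step you speak of ``type~\ref{custom:line-large} rulings'' where you mean the $\bp^{k-1}$ of type~\ref{custom:line-small} lines; that family is $(k-1)$-dimensional and your construction $q \mapsto \overline{q,r(q)}$ shows the Schubert variety $\Sigma$ of lines meeting $\Lambda_1,\Lambda_2,\Lambda_3$ is irreducible (indeed isomorphic to $\Lambda_1$), which is needed to conclude the two $(k-1)$-dimensional families coincide. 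That part of the argument can be made to work. But as written, the B\'ezout step is the missing ingredient, and fixing it is not a technicality — you would need a structural argument about how a scroll can meet a $(k-1)$-plane improperly.
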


\begin{proof}
	Label the points by $p_1, \ldots, p_{2k+2}$ and the $(k-1)$-plane by $\Lambda$.
	Define the $k$ lines
	$\ell_1, \ldots, \ell_k$, by $\ell_i := \overline {p_{2i+1}, p_{2i+2}}$.
	Start by specializing the points $p_1, \ldots, p_{2k}$ to general points
	satisfying the condition that the $k$ lines
	$\ell_1, \ldots, \ell_k$ all meet
	$\Lambda$. We claim that there are only finitely many varieties $\scroll {1^k}$ which
	contain such a configuration of points and $\Lambda$.

	To see this, note that the ideal of a scroll is generated by quadrics,
	by \autoref{proposition:equivalence-minors-swept-planes}. Note that the variety $\scroll {1^k}$ contains
	$\Lambda$ and
	three points on each line $\ell_i$, namely $p_{2i+1}, p_{2i+2}$ and $\ell_i \cap \Lambda$.
	By Bezout's theorem, each quadric containing these three points must contain the line.
	Since the variety $\scroll {1^k}$ is the intersection of the quadrics containing it, 
	and each quadric containing it contains $\ell_i$, it follows that $\ell_i \subset X$.

	Hence, it suffices to show there are a finite, nonzero number of smooth
	varieties $\scroll {1^k}$ corresponding to a point
	in $\minhilb k k$ that
	\begin{enumerate}
		\item contain a $(k-1)$-plane $\Lambda$
		\item contain $k$ lines $\ell_1, \ldots, \ell_k$, and
		\item contain $2$ points $p_1, p_2$.
	\end{enumerate}
	This suffices because the smooth locus of $\minhilb k k$ is open, and so any
	such smooth scheme would then be an isolated point inside $\minhilb k k$ containing $p_1, \ldots, p_{2k+2}, \Lambda$.
	Now, define $\Phi$ to be
	\begin{align*}
		\left\{ \left( X, p_1, \ldots, p_{2k+2}, \Lambda \right) \subset \minhilb k k \times (\bp^{2k-1})^{2k+2}\times G(k, 2k): p_i \in X, \Lambda \subset X \right\}
	\end{align*}
	and define the projection map	
	\begin{equation}
		\nonumber
		\begin{tikzcd}
			\qquad & \Phi \ar {ld} \ar {rd}{\pi_2} & \\
			\minhilb {d-1}k && (\bp^{2k-1})^{2k} \times G(k, 2k).
		\end{tikzcd}\end{equation}
	The source and target of $\pi_2$ have the same dimension by
	\autoref{lemma:finite-containing-plane-and-points}.
	So, by \autoref{lemma:isolated-fiber-implies-dominant},
	$\pi_2$ would be dominant if it has a point isolated in its fiber.

	So, to complete the proof, it suffices to show there is a finite nonzero number of smooth 
	scrolls containing $\Lambda, \ell_1, \ldots, \ell_k, p_1, p_2$.
	Next, any smooth scheme in $\minhilb k k$ is abstractly isomorphic to $\bp^1 \times \bp^{k-1}$ by
	\autoref{theorem:classification-of-varieties-of-minimal-degree} and \autoref{lemma:segre-isomorphic-to-scroll}.
	Let $X$ be some smooth scheme containing $\Lambda, \ell_1, \ldots, \ell_k, p_1, p_2$.
	By \autoref{lemma:linear-spaces-in-varieties-of-minimal-degree}, when $k > 2$, (the $k = 2$ case can easily
	be handled separately,) the only $(k-1)$-planes contained in $X$
	are those which are the image of planes of the form $\left\{ p \right\} \times \bp^{k-1}$ under the Segre embedding.
	Additionally, by \autoref{lemma:linear-spaces-in-varieties-of-minimal-degree}, any line
	contained in $X$ is either image of 
	$\left\{ p \right\}\times \ell$ or $\bp^1 \times \left\{ q \right\}$
	under the Segre embedding. Since $\ell_1, \ldots, \ell_k$ all meet $\Lambda$ at precisely 1 point, they must pull back to lines
	of the form $\bp^1 \times \left\{ q \right\}$.
	Additionally, the two remaining points $p_1$ and $p_2$ must be contained in two $(k-1)$-planes $\Lambda_1, \Lambda_2$ with
	$\Lambda_i \cap \ell_j \neq \emptyset$.
	This is because they pull back to some pair of points $q_1,q_2$, on $\bp^1 \times \bp^{k-1}$ which are contained in some $\left\{ p \right\}\times \bp^{k-1}$,
	which intersects the line $\bp^1 \times \left\{ q \right\}$ at $\left\{ p \right\}\times \left\{ q \right\}$.
	According to \autoref{lemma:number-of-planes}, there are $T(k-1)$ choices of planes containing $q_1$ which meet $\ell_1, \ldots, \ell_k$,
	where $T(m)$ is the number of standard Young diagrams of size $m$. Therefore, there are $T(k-1)^2$ choices of pairs of planes $\Lambda_1, \Lambda_2$
	containing $p_1, p_2$ and meeting lines $\ell_1, \ldots, \ell_k$.

	Now, for any such choice of planes $\Lambda_1, \Lambda_2$, there is a unique smooth scroll containing
	$\ell_1, \ldots, \ell_k, \Lambda_1, \Lambda_2, \Lambda$ by \autoref{lemma:unique-segre-containing-linear-spaces}.
	In particular, this scroll contains $\Lambda,p_1, \ldots, p_{2k+2}$, as desired. Hence, in total, there are
	$T(k-1)^2$ smooth scrolls containing such a collection of points, which is a finite, positive number, as claimed.
\end{proof}

\section{Base case: interpolation for degree $k$ varieties}
\label{subsection:base-case-points}

Before piecing our inductive argument for interpolation of scrolls together,
we have to show that varieties of minimal degree $k$ and dimension $k$
satisfy interpolation.
Perhaps surprisingly, this will turn out to be the
most subtle case of all, and will be proven in \autoref{proposition:segre-lines-interpolation}.
To simplify the clutter a bit, we'll start by outlining the argument in the case that
$k = 3$. 

\begin{example}
	\label{example:three-segre-interpolation}
	Before we go through the argument that $\minhilb k k$ satisfies interpolation 
	in detail, let's start with an example in the case $k=3$,
	illustrating the idea of this argument.
	See \autoref{figure:isolated-segre-threefold} for a pictorial description of the degeneration
	in this example.
	We will omit many of the details, as they are covered in the proof of
	\autoref{proposition:segre-lines-interpolation}.
	In this case we want to show there is a scroll $\scroll {1,1,1}$ passing through
	$12$ general points in $\bp^5$.

	Start by specializing the points so that
	\begin{enumerate}
		\item $p_1, p_2$, and $p_3$ lie on a line $\ell_1$,
		\item $p_4, p_5$, and $p_6$ lie on a line $\ell_2$, and
		\item $p_7, p_8$, and $p_9$ lie on a line $\ell_3$.
	\end{enumerate}
	By Schubert calculus, there will be two $2$-planes which contain a general point $p_{10}$ and meet all three lines $\ell_1, \ell_2$, and $\ell_3$.
	Choose one of these two choices, and call it $P_1$. Similarly, choose a two plane $P_2$ (respectively $P_3$) containing $p_{11}$ (respectively $p_{12}$) and meeting
	all three lines $\ell_1, \ell_2$, and $\ell_3$.
	Let the unique intersection point of $P_i$ with $\ell_r$ be denoted $q_{i,r} := p_i \cap \ell_r$.
	Then, recall an automorphism of $\bp^1$ is uniquely determined by where three points are sent.
	So, we obtain unique isomorphisms $\ell_i \cong \bp^1$ sending
	\begin{align*}
		q_{1, r} &\mapsto 0 \in \bp^1 \\
		q_{2, r} &\mapsto 1 \in \bp^1 \\
		q_{3, r} &\mapsto \infty \in \bp^1,
	\end{align*}
	This uniquely determines the maps $\phi_r: \ell_r \ra \bp^1$ and hence
	determines the scroll $X$.
	
	One may observe that there will be other scrolls in $\minhilb 3 3$ containing $p_1, \ldots, p_{12}$. In fact, there will be a 3 dimensional family of such scrolls.
	However, we claim that the scroll $X$ is an isolated point, meaning that it does not lie in the closure of any of the
	higher dimensional loci of scrolls containing $p_1, \ldots, p_{12}$. Recall that
	$X$ is the image of $\bp^1 \times \bp^2$ under the Segre embedding.
	Then, the only lines contained in $X$ are those which are the image of
	$\bp^1 \times \left\{ p \right\}$ for $p \in \bp^2$ (i.e., of type \ref{custom:line-small})
	and those which
	are the image of $\left\{ q \right\} \times \ell$ for 
	$q \in \bp^1$ and $\ell \subset \bp^2$ a line (i.e., of type \ref{custom:line-large}).
	Then, suppose we had a family of threefolds $\scx$.
	Inside this family, we can follow our three lines.
	Since the lines lying in the family \ref{custom:line-large} cannot degenerate
	to those lying in \ref{custom:line-small}, the only possibility is that
	the lines appeared as lines in \ref{custom:line-small} for all members of our
	family $\scx$. But, since there are only finitely many
	$X$ containing the three lines as lines in \ref{custom:line-small},
	$[X]$ must be an isolated point in its fiber.
\end{example}

Keeping outline given in the case \autoref{example:three-segre-interpolation} in mind,
we now proceed with the general proof.

\begin{proposition}
	\label{proposition:segre-lines-interpolation}
	Let $k \geq 2$ and 
	$p_1, \ldots, p_{3k+3}$ be $3k + 3$ general points
	in $\bp^{2k-1}$.
	Then, there is some variety of minimal degree $\scroll {1^k}$ containing $p_1, \ldots, p_{3k+3}$.
	That is, $\minhilb k k$ satisfies interpolation.
\end{proposition}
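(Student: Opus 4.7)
The plan is to degenerate the $3k+3$ points into a configuration for which we can explicitly build a scroll using Lemma \ref{lemma:unique-segre-containing-linear-spaces}, then argue that this scroll is an isolated point in the corresponding fiber of $\pi_2$, so that interpolation follows from the equivalence of \ref{interpolation-isolated} and \ref{interpolation-naive} in Theorem \ref{theorem:equivalent-conditions-of-interpolation}.

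First, I would specialize $p_1, \ldots, p_{3k}$ into $k$ triples lying on $k$ lines $\ell_1, \ldots, \ell_k$, namely $p_{3i-2}, p_{3i-1}, p_{3i} \in \ell_i$, keeping the $\ell_i$ otherwise general (so in particular they span $\bp^{2k-1}$). The remaining points $p_{3k+1}, p_{3k+2}, p_{3k+3}$ remain general. By Lemma \ref{lemma:number-of-planes}, for each $j \in \{1,2,3\}$ there exist (finitely many, but at least one) $(k-1)$-planes $\Lambda_j$ containing $p_{3k+j}$ and meeting each of the $\ell_i$ in a single point. Choosing one such $\Lambda_j$ for each $j$ (and checking that the intersection points $\Lambda_j \cap \ell_i$ are all distinct, which is automatic since the $p_i$ and $\ell_i$ are general), Lemma \ref{lemma:unique-segre-containing-linear-spaces} produces a unique scroll $X \cong \scroll{1^k}$ containing the three $(k-1)$-planes $\Lambda_1, \Lambda_2, \Lambda_3$ and all of $\ell_1, \ldots, \ell_k$, hence containing all $3k+3$ specialized points.

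Next, as in the setup of \autoref{remark:interpolation-strategy}, consider the incidence correspondence
\[
\Phi := \uhilb X \times_{\hilb X} \cdots \times_{\hilb X} \uhilb X \qquad (3k+3 \text{ factors}),
\]
with projections $\pi_1 : \Phi \to \minhilb k k$ and $\pi_2 : \Phi \to (\bp^{2k-1})^{3k+3}$. A direct dimension count (using Proposition \ref{proposition:scroll-hilbert-scheme-dimension} and the fact that each point imposes $\codim X = k-1$ conditions) shows $\dim \Phi = \dim (\bp^{2k-1})^{3k+3}$. Thus by Lemma \ref{lemma:isolated-fiber-implies-dominant} it suffices to exhibit a single point of $\Phi$ which is isolated in its $\pi_2$-fiber. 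I will argue that the pair $(X, p_1, \ldots, p_{3k+3})$ constructed above is such a point.

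The hard part will be the isolatedness claim, and I expect it to be the main obstacle. Here is the plan. Suppose for contradiction that $(X, p_1, \ldots, p_{3k+3})$ lies in a positive-dimensional component of $\pi_2^{-1}(\pi_2(X,\underline p))$. Then there is a smooth irreducible curve $B \to \hilb X$ with a flat family $\scx \to B$ of scrolls in $\minhilb k k$, each containing the fixed points $p_1, \ldots, p_{3k+3}$, and with $\scx_0 = X$ for some $0 \in B$. For each triple $p_{3i-2},p_{3i-1},p_{3i}$, the unique line through them, $\ell_i$, lies in every fiber (indeed, any two points impose independent conditions so the line through any two of them meets the fiber in a degree-$\geq 3$ scheme inside a degree-$k$ scroll, and since scrolls are cut out by quadrics by Proposition \ref{proposition:equivalence-minors-swept-planes}, Bezout forces the whole line into each fiber). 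Hence we get $k$ sections $B \to F(\scx/B)$ of the relative Fano scheme of lines, where $F(\scx_b)$ by Lemma \ref{lemma:linear-spaces-in-varieties-of-minimal-degree} has exactly two components: lines of type \ref{custom:line-small} (a $\bp^{k-1}$) and lines of type \ref{custom:line-large} (a $\gbundle{2}{\sce_b}$). Since these two components are disjoint and $B$ is connected, each section lies entirely in one component. In the central fiber $X$, the lines $\ell_i$ are forced to be of type \ref{custom:line-small}: they meet the $(k-1)$-plane $\Lambda_j$ in single points, so they cannot be contained in any $(k-1)$-plane of type \ref{custom:line-large} (which would intersect $\Lambda_j$ either in $\emptyset$ or in a positive-dimensional subscheme). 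Therefore every fiber $\scx_b$ contains $k$ lines $\ell_i$ of type \ref{custom:line-small} passing through the prescribed triples of points, and also contains the three additional points $p_{3k+1}, p_{3k+2}, p_{3k+3}$. By the uniqueness in Lemma \ref{lemma:unique-segre-containing-linear-spaces} (applied to $\ell_1, \ldots, \ell_k$ together with any $(k-1)$-planes in $\scx_b$ of type \ref{custom:line-small} through $p_{3k+1}, p_{3k+2}, p_{3k+3}$), the fiber $\scx_b$ is determined by the finite combinatorial data of how many such $(k-1)$-planes exist through the three extra points and which ones are chosen. As $b$ varies in a connected curve, this discrete data is locally constant, so $\scx_b = X$ for all $b$, contradicting the positive-dimensionality of the family. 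Therefore $(X, \underline p)$ is isolated in its fiber, and \ref{interpolation-isolated} of Theorem \ref{theorem:equivalent-conditions-of-interpolation} gives interpolation for $\minhilb k k$.
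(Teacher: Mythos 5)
Your proposal follows the paper's own strategy quite closely: specialize the $3k$ points into $k$ collinear triples on lines $\ell_1,\ldots,\ell_k$, use \autoref{lemma:number-of-planes} and \autoref{lemma:unique-segre-containing-linear-spaces} to produce a scroll through the configuration, and then argue this scroll is isolated in its fiber of $\pi_2$. The paper's proof has exactly this shape, deferring the isolatedness claim to a separate lemma (\autoref{lemma:isolated-segre}).

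There is, however, a genuine gap in your isolatedness argument at the sentence ``Since these two components are disjoint and $B$ is connected, each section lies entirely in one component.'' You know that each \emph{fiber} $F(\scx_b)$ decomposes into two disjoint pieces (lines of type~\ref{custom:line-small} and type~\ref{custom:line-large}), but this does not by itself imply that the total space $F(\scx/B)$ decomposes as a disjoint union of two clopen pieces: a priori, monodromy as $b$ moves in $B$ could carry one fiberwise component to the other, and then a section of $F(\scx/B) \to B$ could jump from type~\ref{custom:line-small} at $b=0$ to type~\ref{custom:line-large} at another value of $b$. The paper closes this gap by invoking \autoref{proposition:distinct-hilbert-polynomials-implies-reducible-source}: the two fiberwise components $\bp^{k-1}$ and $\gbundle{2}{\sce_b}$ have \emph{different Hilbert polynomials} (indeed different dimensions, $k-1$ versus $2k-3$, which is why one needs $k \geq 3$), so the relative Fano scheme is itself reducible with two components, one of which contains each type; only then can one say the sections are type-preserving. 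You would either need to cite this proposition, or argue the clopen-ness directly (e.g.\ by upper semicontinuity of fiber dimension in $F(\scx/B)$). Relatedly, your argument implicitly requires $k \geq 3$ at this step; the paper handles $k=2$ separately by noting $\scroll{1,1}$ is a quadric hypersurface, so \autoref{lemma:balanced-complete-intersection} applies. Once that one step is repaired, the rest of your argument --- the $(k-1)$-planes through the last three points belong to a fixed finite set by \autoref{lemma:number-of-planes}, so they must be constant in a connected family, and then \autoref{lemma:unique-segre-containing-linear-spaces} forces $\scx_b$ to be constant --- matches the paper's reasoning.
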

\begin{figure}
	\centering
	\includegraphics[scale=.35]{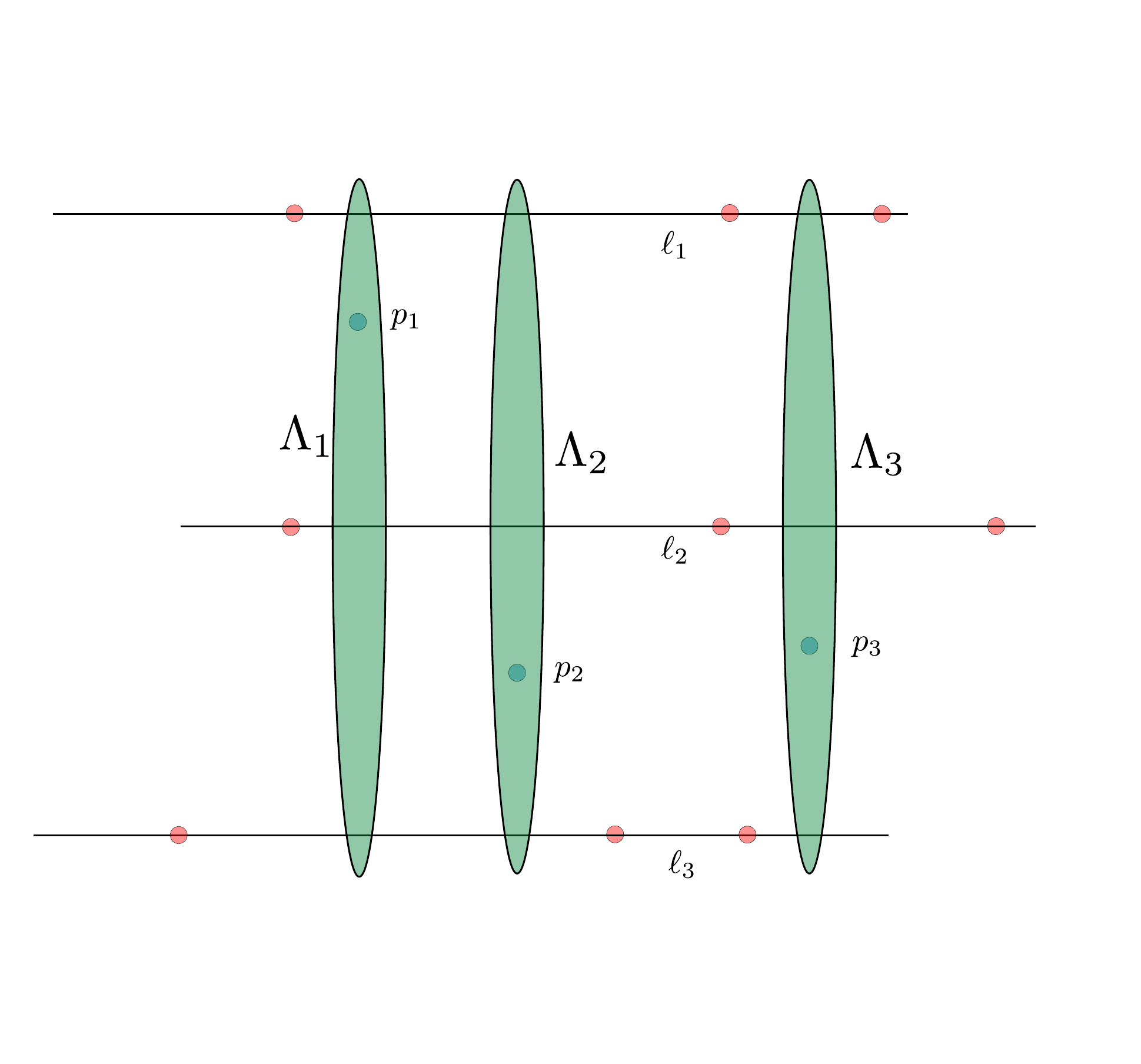}
	\caption{A visualization of the idea of the proof of \autoref{proposition:segre-lines-interpolation},
	where one specializes triples of points to lie on lines, and then finds a scroll containing those points.
}
	\label{figure:isolated-segre-threefold}
\end{figure}

\begin{proof}
	To begin with, specialize the points, as follows:
	Pick $k$ lines $\ell_{1}, \ldots, \ell_k$ so that $\ell_1, \ldots, \ell_k$
	span $\bp^{2k-1}$.
	Now, specialize the points so that
	$p_{3i+1}, p_{3i+2}, p_{3i+3}$ are distinct points on $\ell_{i}$ for $1 \leq i \leq k$.
	This leaves us with $3$ general points, which we have labeled as $p_1, p_2, p_3$.
	
	We will now examine the incidence correspondence
	\begin{align*}
		\Psi := \uhilb X \times_{\hilb X} \cdots \times_{\hilb X} \uhilb X
	\end{align*}
	where the product is taken of $3k+3$ copies of $\uhilb X$. Set theoretically (but without
	specifying a scheme structure)
	we can describe the closed points of $\Psi$ as
	\begin{align*}
	\label{equation:segre-incidence-correspondence-with-points}
		\left\{ \left(X, r_1, \ldots, r_{3k+3}\right) \subset \minhilb k k  \times (\bp^{2k-1})^{3k+3} : r_i \in X \neq \emptyset \right\}
	\end{align*}
	with projections
	\begin{equation}
		\nonumber
		\begin{tikzcd} 
		\qquad & \Psi \ar{dl}{\eta_1} \ar {dr}{\eta_2}& \\
		\minhilb k k && (\bp^{2k-1})^{3k+3}.
		\end{tikzcd}\end{equation}
	We show the fiber over the special configuration of points chosen above
	contains an isolated point. 	
	This will complete the proof because, 
	by \autoref{lemma:isolated-fiber-implies-dominant}, the map $\eta_2$ will be
	is dominant, and so this interpolation problem is satisfied.
	Here, we are using \autoref{proposition:irreducible-incidence} to say that the incidence correspondence is irreducible
	and \autoref{lemma:interpolation-dimension} to say the source and target of $\eta_2$ have the same dimension.

	Define $\scx = \eta_2^{-1} (p_1, \ldots, p_{3i+3})$.
	First, we shall exhibit a point of $\scx$, and then show it is isolated.
	By \autoref{proposition:equivalence-minors-swept-planes}, if 
	$[X] \in \minhilbsmooth k k$, the ideal of $X$ is defined by quadrics, since it is
	generated by the minors of a matrix with two rows. 
	Therefore,
	if a smooth variety of minimal degree contains three collinear points, it must contain the line through them,
	by Bezout's theorem.

	In particular, a smooth variety of minimal degree $X$ corresponds to a point in $\scx$ if and only if it contains
	the lines lines $\ell_1, \ldots, \ell_k$ and the points $p_1, p_2, p_3$.
	So, define
		\begin{align}
		\label{equation:correspondence-with-lines-and-points}
		\Phi := \left(  \uhilb {\ell_1} \times_{\bp^n} \uhilb X \right) \times_{\hilb X} \cdots \times_{\hilb X} \left(  \uhilb {\ell_k} \times_{\bp^n} \uhilb X \right)
			\times_{\hilb X} \uhilb X\times_{\hilb X} \uhilb X\times_{\hilb X} \uhilb X
	\end{align}
	with projections
	\begin{equation}
		\label{equation:projections-segre-incidence-correspondence-with-lines-and-points}
		\begin{tikzcd} 
			\qquad & \Phi \ar{dl}{\pi_1} \ar {dr}{\pi_2}& \\
			\minhilb k k && (G(2, 2k))^k \times (\bp^{2k-1})^3
		\end{tikzcd}\end{equation}

	Set theoretically, we can describe the closed points of $\Phi$ (without specifying a scheme structure)
	as
		\begin{align}
			\nonumber
		\left\{ \left( L_1, \ldots, L_k, r_1, r_2, r_3, X \right) \subset (G(2,2k))^j \times (\bp^{2k-1})^3 \times \minhilb k k : L_i \subset X, r_i \in X  \right\}.
	\end{align}
	Let $\scy := \pi_2^{-1}(\ell_1, \ldots, \ell_k, p_1, p_2, p_3)$.

	To complete the proof, it suffices to exhibit an isolated point of $\scy$, because locus of smooth schemes
	corresponding to points of $\scy$
	agrees with the locus of smooth schemes corresponding to points in $\scx$ as subschemes of $\minhilb k k$.
	\begin{remark}
		\label{remark:}
		It is worth observing that $\Phi$ is reducible, and even has multiple connected components.
		However, the set theoretic locus of smooth schemes which are fibers of $\scy$ under $\pi_1$ agrees with the
		set theoretic locus of smooth schemes which are fibers of $\scx$ under $\eta_1$.
		So, we obtain that a point of $\scx$ corresponding to a smooth scheme will be isolated in
		$\scx$ if that point is an isolated point of $\scy$.
	\end{remark}

	Now, by assumption, $\ell_1, \ldots, \ell_k, p_1, p_2, p_3$ are chosen generally.
	In particular, the lines span $\bp^{2k-1}$. Further, the points were specialized to general position so that,
	by \autoref{lemma:number-of-planes}, there are finitely many $(k-1)$-planes containing $p_i$ and meeting all lines
	$\ell_1, \ldots, \ell_k$.
	In particular, there are only finitely many three tuples of $(k-1)$-planes $(\Lambda_1, \Lambda_2, \Lambda_3)$ so that $\Lambda_i$ contains
	$p_i$ and meets all lines. Additionally, since the points were chosen generally, we may assume that the points of intersection
	$\Lambda_i \cap \ell_j$ are all pairwise distinct.
	Now, by \autoref{lemma:unique-segre-containing-linear-spaces}, there is a unique
	Segre variety $X$ containing $\Lambda_1, \Lambda_2,\Lambda_3$ and $\ell_1, \ldots, \ell_k$.

	This corresponds to a point in $\scy$, hence also one in $\scx$. 
	To complete the proof, it remains to show it corresponds to an isolated point in $\scy$.
	This follows from \autoref{lemma:isolated-segre}. 	
\end{proof}

\begin{lemma}
	\label{lemma:isolated-segre}
	Consider the incidence correspondence
	$\Phi$ defined in
	\eqref{equation:correspondence-with-lines-and-points}
	with projection maps $\pi_1$ to $\minhilb k k$ and $\pi_2$ to
	$(G(2, 2k))^k \times (\bp^{2k-1})^3$, as defined in
	\eqref{equation:projections-segre-incidence-correspondence-with-lines-and-points}.
	Let $\ell_1, \ldots, \ell_k$ be lines in $\bp^{2k-1}$ and let $p_1, p_2,p_3 \in \bp^{2k-1}$ be three points.
		Define $\scy := \pi_2^{-1}(\ell_1, \ldots, \ell_k, p_1, p_2, p_3)$.
		Write $[X] \in \scy$ as
		$X \cong \bp^1 \times \bp^{k-1}$ via
		\autoref{lemma:segre-isomorphic-to-scroll}. 
		If the lines $\ell_i$ are all 
		of type ~\ref{custom:line-small} 
		Then, $[X]$ is an isolated point of $\scy$.
		\end{lemma}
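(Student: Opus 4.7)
The plan is to exploit the fact that the Fano scheme of lines on the Segre variety $\scroll{1^k}$ decomposes into two disjoint connected components---one for lines of type \ref{custom:line-small} and one for type \ref{custom:line-large}---so that in any connected family of scrolls through $[X]$ the containment relation $\ell_i\subset X'$ forces each $\ell_i$ to remain of type \ref{custom:line-small}. This rigidity, combined with Lemma \ref{lemma:unique-segre-containing-linear-spaces} and Lemma \ref{lemma:number-of-planes}, will reduce the isolation claim to a finite count.

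First, I would globalize the two-component structure of Lemma \ref{lemma:linear-spaces-in-varieties-of-minimal-degree}. For each $i$, the factor $\uhilb{\ell_i}\times_{\bp^n}\uhilb X$ in $\Phi$ maps naturally to the relative Fano scheme $\mathscr{F}\to\minhilbsmooth kk$ of lines in smooth Segre scrolls. Over each smooth scroll the fiber of $\mathscr{F}$ has exactly two smooth components, of dimensions $k-1$ (type \ref{custom:line-small}) and $2k-3$ (type \ref{custom:line-large}). For $k\ge 3$ these dimensions differ, so the relative Fano scheme $\mathscr{F}$ splits globally into two disjoint connected components over $\minhilbsmooth kk$ (the case $k=2$ is handled directly, as in the earlier argument). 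Consequently, on any connected neighborhood of $[X]$ in $\scy$ the line $\ell_i$ must remain of type \ref{custom:line-small} in each nearby scroll $X'$, since by hypothesis this is its type in $X$.

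Next I would shrink to the locally closed subscheme $\scy^{\mathrm{sm}}\subset\scy$ consisting of points whose scroll is smooth and whose $k$ distinguished lines are all of type \ref{custom:line-small}. By the previous paragraph, $[X]$ has a neighborhood in $\scy$ contained in $\scy^{\mathrm{sm}}$. On $\scy^{\mathrm{sm}}$ the scroll $X'$ contains $p_1,p_2,p_3$ and $\ell_1,\ldots,\ell_k$, and each $p_j$ lies on a unique $(k-1)$-plane $\Lambda_j\subset X'$; since for $k\ge 3$ every $(k-1)$-plane in $\scroll{1^k}$ is of type \ref{custom:line-large} by Lemma \ref{lemma:linear-spaces-in-varieties-of-minimal-degree}, each $\Lambda_j$ is a fiber of the ruling $X'\to\bp^1$, and hence meets every $\ell_i$ automatically. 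By Lemma \ref{lemma:number-of-planes}, applied to the generality of the configuration, there are only finitely many $(k-1)$-planes through a given $p_j$ meeting the $k$ lines $\ell_1,\ldots,\ell_k$, giving finitely many choices for each $\Lambda_j$. By Lemma \ref{lemma:unique-segre-containing-linear-spaces}, the lines $\ell_1,\ldots,\ell_k$ (which span $\bp^{2k-1}$) together with any admissible triple $(\Lambda_1,\Lambda_2,\Lambda_3)$ determine the scroll $X'$ uniquely. Hence only finitely many scrolls appear as $\pi_1(\scy^{\mathrm{sm}})$ near $[X]$, so $[X]$ is isolated.

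The main obstacle is Step~1, namely verifying rigorously that the two components of the relative Fano scheme $\mathscr{F}\to\minhilbsmooth kk$ remain globally disconnected, so that the type of $\ell_i$ really is locally constant in families. For $k\ge 3$ this follows from the strict dimension inequality between the two fiberwise components (together with flatness of $\mathscr{F}\to\minhilbsmooth kk$, which is guaranteed because each of the two components is smooth over the base as a Grassmannian bundle, respectively a $\bp^{k-1}$-bundle, exhibited in the proof of Lemma \ref{lemma:linear-spaces-in-varieties-of-minimal-degree}). The one remaining dimensional coincidence is $k=2$, but this case is already handled separately as noted in the parenthetical remarks in Lemma \ref{lemma:linear-spaces-in-varieties-of-minimal-degree} and in the proof of Proposition \ref{proposition:segre-lines-interpolation}.
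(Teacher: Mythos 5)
Your strategy is essentially the same as the paper's: count the finitely many Segre scrolls containing all the $\ell_i$ as type \ref{custom:line-small} lines (via Lemma \ref{lemma:number-of-planes} and Lemma \ref{lemma:unique-segre-containing-linear-spaces}), then argue that the type of a line is rigid in families so that $[X]$ cannot limit to scrolls where some $\ell_i$ is of type \ref{custom:line-large}. You have correctly isolated the crux: proving this rigidity.

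However, your treatment of that crux has a circularity. You claim the relative Fano scheme $\mathscr{F}\to\minhilbsmooth kk$ splits globally into two disjoint connected components, justified by the assertion that each piece is globally a Grassmannian bundle, respectively a $\bp^{k-1}$-bundle, over the base and hence flat. But Lemma \ref{lemma:linear-spaces-in-varieties-of-minimal-degree} exhibits these bundle structures only for a \emph{fixed} scroll; extending the bundle structure to the family over $\minhilbsmooth kk$ is precisely the global decomposition you are trying to establish, so the appeal to flatness assumes what is to be proved. The paper sidesteps this by restricting to a one-parameter family: given a hypothetical curve $C\subset\scy$ through $[X]$ along which some $\ell_i$ changes type, one considers the relative Hilbert scheme of lines over $C$ and applies Proposition \ref{proposition:distinct-hilbert-polynomials-implies-reducible-source}, which is proved carefully for families over a smooth projective curve whose fibers are reduced with two components of distinct Hilbert polynomials. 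That proposition is the precise tool that converts the dimension inequality $k-1 \neq 2k-3$ (for $k\ge 3$) into the statement that the two components cannot be exchanged along $C$. Your intuition about the dimension inequality preventing the components from mixing is correct, but to make it rigorous you should either restrict to curves and cite Proposition \ref{proposition:distinct-hilbert-polynomials-implies-reducible-source} as the paper does, or else genuinely construct the relative Fano scheme of lines of each type as a separate proper family over $\minhilbsmooth kk$ (which requires, for instance, establishing that the ruling $\pi:\bp\sce\to\bp^1$ itself varies algebraically in the universal family).

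The remainder of your argument --- identifying the unique ruling $(k-1)$-plane through each $p_j$, bounding the number of admissible triples $(\Lambda_1,\Lambda_2,\Lambda_3)$ by $T(k-1)^3$ via Lemma \ref{lemma:number-of-planes}, and invoking Lemma \ref{lemma:unique-segre-containing-linear-spaces} to determine the scroll from its lines and three ruling planes --- is correct and matches the paper.
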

\begin{proof}
	We will assume $k \geq 3$, as the cases $k = 2$ holds since $S_{1,1}$ is a quadric hypersurface.

	First, by \autoref{lemma:number-of-planes} and \autoref{lemma:unique-segre-containing-linear-spaces},
	there are only finitely many points  $[Y] \in \scy$ so that all $\ell_i$ appear as lines of type
	~\ref{custom:line-small}.
	To conclude the proof, it suffices to show that there cannot be a family of scrolls in $\scy$
	containing $X$ so that in the general member of the family, there is some $\ell_i$ which appears as a line
	of type ~\ref{custom:line-large}.
	So, suppose for the sake of contradiction, that such a family $\scz \rightarrow C$, for $C\subset \scy$ and $[X] \in C$, does exist.

	Then, consider the relative Hilbert scheme $\sch := \sch^{t+1}(\scz/C)$ of lines inside $\scz$ over $C$.
	\begin{remark}
		\label{remark:}
		Note, throughout nearly all of the rest of this thesis, we were considering the Hilbert scheme over a ground field,
	but this is a case in which we will need to harness the construction of Hilbert schemes over arbitrary Noetherian bases.
	\end{remark}
	
	We claim that $\sch$ has two connected components one whose lines are of type
	~\ref{custom:line-small} and the other whose lines are of type ~\ref{custom:line-large}.
	In particular, this will suffice to show such a family over $C$ cannot exist,
	as inside this family, we would have a family of lines moving between the two components of $\sch$.

	To see that $\sch$ has the two connected components, we examine the map
	$\sch \ra C$. Since the fiber of $\scz$ over a closed point of $C$ is a scroll,
	we know that the fibers of $\sch$ over any closed point of $C$ has two smooth connected components
	of different dimensions, using
	\autoref{lemma:linear-spaces-in-varieties-of-minimal-degree},
	and our standing assumption that $k > 2$.
	In particular, there are two irreducible components of each of the closed
	fibers of $\sch \rightarrow Y$,
	each smooth with different Hilbert polynomials.
	By \autoref{proposition:distinct-hilbert-polynomials-implies-reducible-source},
	$\sch$ has two components, call them $\sch_1$ and $\sch_2$,
	where the restriction of $\sch_1$ to any closed fiber consists of lines of type
	~\ref{custom:line-small} and the restriction of
	$\sch_2$ to the closed fiber consists of lines of type ~\ref{custom:line-large}.
	So, since the two components of $\sch_1$ and $\sch_2$ do not intersect on any of the closed
	fibers, they are in fact two connected components, completing the proof.
	\end{proof}

\section{Spelling out the induction}
In this section, we combine the previous results from this section to prove that smooth scrolls satisfy interpolation.
We then conclude that all varieties of minimal degree satisfy interpolation, and hence also strong interpolation in characteristic
$0$.

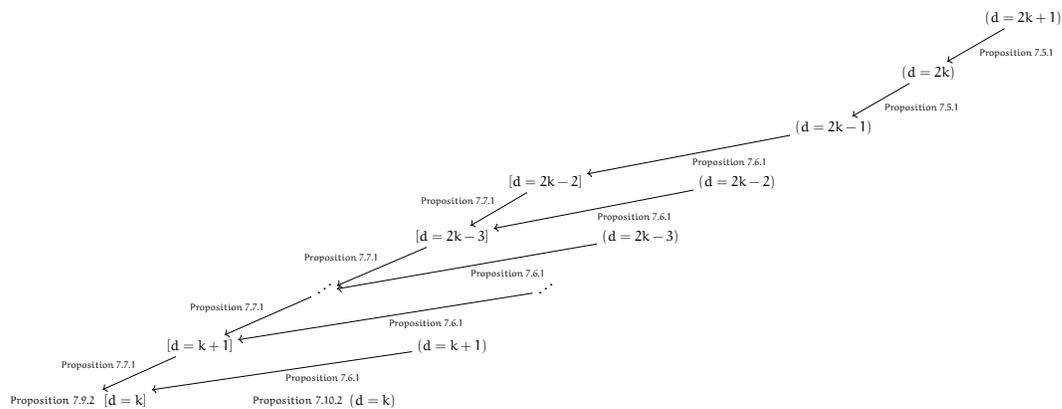
\begin{figure}
	\centering
\begin{equation}
  \nonumber
\begin{tikzpicture}[baseline= (a).base]
\node[scale=.45] (a) at (0,0){
  \begin{tikzcd}[column sep=tiny]
    \qquad && && && && && & (d = 2k+1) \ar{dll}{\begin{NoHyper}\autoref{lemma:high-induction}\end{NoHyper}} \\ 
    \qquad && && && && & (d = 2k) \ar{dll}{\begin{NoHyper}\autoref{lemma:high-induction}\end{NoHyper}} && \\
    \qquad && && && & (d = 2k-1) \ar{dlll}{\begin{NoHyper}\autoref{corollary:mid-induction}\end{NoHyper}} &&  && \\
    \qquad && && \left[d = 2k-2\right] \ar{dl}[swap]{\begin{NoHyper}\autoref{lemma:middle-induction}\end{NoHyper}} && (d=2k-2) \ar{dlll}{\begin{NoHyper}\autoref{corollary:mid-induction}\end{NoHyper}}&  &&  && \\
    \qquad && & \left[d = 2k - 3\right] \ar{dl}[swap]{\begin{NoHyper}\autoref{lemma:middle-induction}\end{NoHyper}} && (d=2k-3) \ar{dlll}{\begin{NoHyper}\autoref{corollary:mid-induction}\end{NoHyper}} &&  &&  && \\
    \qquad && \iddots \ar{dl}[swap]{\begin{NoHyper}\autoref{lemma:middle-induction}\end{NoHyper}}&& \iddots \ar{dlll}{\begin{NoHyper}\autoref{corollary:mid-induction}\end{NoHyper}} &  &&  &&  && \\
    \qquad & \left[d = k+1\right] \ar{dl}[swap]{\begin{NoHyper}\autoref{lemma:middle-induction}\end{NoHyper}} && (d = k + 1) \ar{dlll}{\begin{NoHyper}\autoref{corollary:mid-induction}\end{NoHyper}} && &&  &&  && \\
    \substack{\begin{NoHyper}\autoref{proposition:segre-plane}\end{NoHyper}}\hspace{.2cm}\left[d = k\right]  && \substack{\begin{NoHyper}\autoref{proposition:segre-lines-interpolation}\end{NoHyper}}\hspace{.2cm}(d = k)  & && &&  &&  &&
  \end{tikzcd}
};
\end{tikzpicture}
\end{equation}
\caption{
This is a schematic diagram for the proof that scrolls satisfy interpolation.
The parenthesized expressions $(d = a)$ indicate that
scrolls of degree $a$ satisfy interpolation,
while the bracketed expressions $[d = a]$ indicate that scrolls of degree $a$
satisfy the hypothesis \ref{custom:ind-mid}.
See \autoref{theorem:scrolls-interpolation} for a written proof.
The arrows point from higher degree to lower degree, and are labeled
by the proposition showing that interpolation for the lower degree
variety implies interpolation for the higher degree variety.
}
	\label{figure:induction-schematic}
\end{figure}

\begin{theorem}
	\label{theorem:scrolls-interpolation}
	If $X$ is a rational normal scroll of minimal degree
	then $\hilb X$ satisfies interpolation.
\end{theorem}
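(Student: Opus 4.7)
The plan is to fix the dimension $k$ and perform induction on the degree $d$, following the schematic in \autoref{figure:induction-schematic}. All the hard work has already been done in the preceding sections; what remains is to assemble the inductive pieces in the correct order. The argument splits naturally according to the sign of $d - (2k-1)$, because the numerics of interpolation (\autoref{lemma:interpolation-numerics}) and the statement of hypothesis \ref{custom:ind-mid} behave differently in the two regimes.

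First I will dispose of the base cases. When $d = k$, \autoref{proposition:segre-lines-interpolation} shows directly that $\minhilb{k}{k}$ satisfies interpolation. Simultaneously, \autoref{proposition:segre-plane} supplies a smooth scroll in $\minhilb{k}{k}$ through $2k+2$ general points and a general $(k-1)$-plane; since a general $(2k-d-1)$-plane for $d = k+1$ is contained in a general $(k-1)$-plane, this verifies hypothesis \ref{custom:ind-mid} for degree $d-1 = k$. Next, I would propagate \ref{custom:ind-mid} upward from degree $k$ to degree $2k-2$ by repeated application of \autoref{lemma:middle-induction}; this gives \ref{custom:ind-mid} for every $d$ with $k \leq d \leq 2k-2$.

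With \ref{custom:ind-mid} now available for every relevant $d-1$ in the middle range, I would invoke \autoref{corollary:mid-induction} separately for each integer $d$ with $k+1 \leq d \leq 2k-1$ to conclude that $\minhilb{d}{k}$ satisfies interpolation throughout that range. Combined with the base case $d = k$, this establishes interpolation for all scrolls of degree at most $2k-1$. Finally, for $d \geq 2k$, I would induct on $d$ using \autoref{lemma:high-induction}: the base case $d = 2k$ follows from interpolation of $\minhilb{2k-1}{k}$ (proved in the previous step), and each inductive step promotes interpolation from degree $d-1$ to degree $d$. This covers every scroll of dimension $k$ and minimal degree $d \geq k$, which by \autoref{theorem:classification-of-varieties-of-minimal-degree} exhausts all smooth rational normal scrolls.

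There is no genuinely hard step remaining here; the proof is essentially bookkeeping. The only subtlety is to get the quantifiers in the right order: one should first complete the propagation of \ref{custom:ind-mid} for all $d$ in $[k, 2k-2]$ before invoking \autoref{corollary:mid-induction}, since that corollary consumes \ref{custom:ind-mid} for degree $d-1$ while producing interpolation (not \ref{custom:ind-mid}) for degree $d$. With that sequencing, the two induction chains depicted in \autoref{figure:induction-schematic} fit together cleanly: one chain runs along the bracketed conditions $[d = k] \Rightarrow [d = k+1] \Rightarrow \cdots \Rightarrow [d = 2k-2]$, and a second chain, feeding off of it, runs along the parenthesized statements $(d=k) \Rightarrow (d=k+1) \Rightarrow \cdots$, eventually joining the high-degree chain driven by \autoref{lemma:high-induction}.
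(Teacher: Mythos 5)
Your proposal is correct and follows the same inductive strategy as the paper's own proof: establish interpolation for degree $k$ via \autoref{proposition:segre-lines-interpolation}, seed hypothesis \ref{custom:ind-mid} at the bottom of the middle range via \autoref{proposition:segre-plane}, propagate it upward with \autoref{lemma:middle-induction}, convert it to interpolation statements with \autoref{corollary:mid-induction}, and then climb to high degree with \autoref{lemma:high-induction}. The only difference is expository: the paper does the \ref{custom:ind-mid}-propagation on an as-needed basis for each $X = \scroll{2^t, 1^{k-t}}$, whereas you propagate all of $[k, 2k-2]$ up front and then harvest — logically equivalent, and your observation that a general $(2k-d-1)$-plane sits inside a general $(k-1)$-plane is the right patch for the off-by-one between the stated form of \ref{custom:ind-mid} and the conclusion of \autoref{proposition:segre-plane}.
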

\begin{proof}
	First, if $X = \scroll {1^k}$ then this holds by
	\autoref{proposition:segre-lines-interpolation} with
	$j = 0$. 

	Second, suppose $X = \scroll {2^t, 1^{k-t}}$ with
	$1 \leq t \leq k-1$. We will show $X$ satisfies interpolation.	By \autoref{proposition:segre-plane}, inductive hypothesis ~\ref{custom:ind-mid}
	holds for $d = k+1$. By induction, assume it holds for a given degree $d-1$.
	Then, By \autoref{lemma:middle-induction}, it holds for degree $d$, provided
	$d \leq 2k-2$. Finally, we obtain that varieties of degree $t + k$
	satisfy interpolation by \autoref{corollary:mid-induction}.

	To complete the proof, it suffices to show that $X$ satisfies interpolation
	when $d > 2k - 1$. We have shown this when $d = 2k -1$. Inductively assume that
	inductive hypothesis ~\ref{custom:ind-high} holds for degree $d-1$. Then,
	by \autoref{lemma:high-induction}, it also holds for degree
	$d$. Therefore, varieties of degree $d$ satisfy interpolation.
\end{proof}

\begin{remark}
	\label{remark:}
	The question of interpolation for scrolls can be rephrased in
	an interesting alternative fashion as a question of 
	whether curves in the Grassmannian meet Schubert cycles.

	Using \autoref{proposition:equivalence-Grassmannian-swept-planes},
	every scroll in $\minhilb d k$ can be viewed as a rational curve in the Grassmannian
	$G(k, d+k).$
	The condition that the scroll meet a point is the same as the condition that
	the rational curve intersect a Schubert cell.
	Therefore, the question of interpolation can be rephrased as
	whether there is a smooth rational curve in the Grassmannian
	meeting an appropriate collection of Schubert cells.
	\autoref{theorem:scrolls-interpolation}, combined with the equivalence
	of interpolation and strong interpolation, implies that whenever
	we ``expect'' to have such a rational curve, we will indeed have one.
	That is, if the sum of the codimensions of the Schubert cells is equal
	to the dimension of the Hilbert scheme of scrolls, then we will have
	a rational curve in the Grassmannian meeting those Schubert cells.

	The equivalence between scrolls and rational curves in the Grassmannian is
	described in more detail when
	$k = 2$ in \cite[Section 3]{coskun:degenerations-of-surface-scrolls}.
	Further, in the case $k = 2$, Coskun is able to use his algorithm
	for computing the number of surface scrolls with certain
	incident conditions to calculate the Gromov-Witten invariants of
	the Grassmannian \cite[Section 9]{coskun:degenerations-of-surface-scrolls}.
	We ask whether a similar algorithm exists in \autoref{question:number-of-scrolls}.
\end{remark}

We can now prove our main theorem.

\minimalInterpolation*
\begin{proof}
	First, by \autoref{theorem:classification-of-varieties-of-minimal-degree},
	we only need show that quadric surfaces, 
	scrolls, and the $2$-Veronese embedding $\bp^2 \ra \bp^5$ satisfy
	interpolation.

	First, quadric surfaces satisfy interpolation by
	\autoref{lemma:balanced-complete-intersection}.
	Second, the $2$-Veronese embedding satisfies interpolation by
	\autoref{theorem:counting-veronese-interpolation}.
	Finally, by \autoref{theorem:scrolls-interpolation}
	if $X$ is a scroll then $\hilb X$ satisfies interpolation.
\end{proof}

\begin{corollary}
	\label{corollary:strong-interpolation-for-varieties-of-minimal-degree}
	If $\bk$ has characteristic $0$, then smooth varieties of minimal degree
	over $\bk$ satisfy strong interpolation.
\end{corollary}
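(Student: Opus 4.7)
The plan is to deduce this corollary directly by combining the just-proved \autoref{theorem:interpolation-minimal-surfaces} with the final statement of \autoref{theorem:equivalent-conditions-of-interpolation}, which asserts that in characteristic $0$, for $X$ a local complete intersection with $H^1(X, N_{X/\bp^n}) = 0$, interpolation (condition \ref{interpolation-definition}) is equivalent to strong interpolation (condition \ref{strong-definition}). Since smooth varieties are automatically local complete intersections, I only need to verify that $H^1(X, N_{X/\bp^n}) = 0$ for every smooth variety of minimal degree $X$.

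By \autoref{theorem:classification-of-varieties-of-minimal-degree}, I would split into the three cases: quadric hypersurfaces, the $2$-Veronese surface in $\bp^5$, and smooth rational normal scrolls (the case $X = \bp^n$ being trivial). The hypothesis $H^1(X, N_{X/\bp^n}) = 0$ is verified in \autoref{lemma:hypersurface-smooth-hilbert-scheme} for hypersurfaces (in particular quadrics), in \autoref{proposition:veronese-smooth-hilbert-scheme} for the $2$-Veronese, and in \autoref{proposition:regular-minimal-degree-hilbert-schemes} for smooth scrolls. With the vanishing of $H^1$ established in each case, the hypotheses needed to apply the ``finally'' clause of \autoref{theorem:equivalent-conditions-of-interpolation} are satisfied.

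Putting it together, in each case $\hilb X$ satisfies \ref{interpolation-definition} by \autoref{theorem:interpolation-minimal-surfaces}, and thus, since $\bk$ has characteristic $0$, also satisfies \ref{strong-definition}, which is precisely strong interpolation. I do not anticipate any real obstacle: the work has all been done in the preceding chapters, and this corollary is essentially a formal consequence. The only mild subtlety is remembering that the equivalence of interpolation and strong interpolation requires the characteristic $0$ hypothesis (as emphasized by the remark following \autoref{theorem:equivalent-conditions-of-interpolation} and by \autoref{corollary:failure-of-2-veronese-vector-bundle-interpolation}, which shows this equivalence genuinely fails in characteristic $2$ for the $2$-Veronese). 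Hence restricting to characteristic $0$ is not a cosmetic hypothesis but an essential one coming from the use of generic smoothness inside the proof of \autoref{theorem:equivalent-conditions-of-interpolation}.
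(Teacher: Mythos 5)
Your proposal is correct and follows exactly the same route as the paper: combine \autoref{theorem:interpolation-minimal-surfaces} with the characteristic-$0$ equivalence of interpolation and strong interpolation in \autoref{theorem:equivalent-conditions-of-interpolation}. The paper's proof is terser and does not spell out the verification of the hypotheses $H^1(X, N_{X/\bp^n}) = 0$ and $X$ being a local complete intersection, but those checks are implicit (they were established earlier in \autoref{lemma:hypersurface-smooth-hilbert-scheme}, \autoref{proposition:veronese-smooth-hilbert-scheme}, and \autoref{proposition:regular-minimal-degree-hilbert-schemes}, exactly as you cite), so your extra care simply makes the argument more self-contained.
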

\begin{proof}
	This follows from \autoref{theorem:interpolation-minimal-surfaces}
	and the equivalence of interpolation and strong interpolation
	in characteristic $0$, as proven in
	\autoref{theorem:equivalent-conditions-of-interpolation}.
\end{proof}

\section{Interpolation of $\minhilb 3 3$ through lines}

In this section, we outline, in a series of exercises,
that
there is a 1 dimensional family of degree 3,
dimension 3 scroll passing through 4 general lines and one general point.
Toward the end of the section, we also pose some related open questions.

For the remainder of this section, let $L_1, L_2, L_3, L_4$ be four lines in $\bp^5$, and let $p, q$ be two points in $\bp^5$.

\begin{exercise}
	\label{exercise:condition-counting-for-3-scrolls}
	Show that given four general lines and two general points, there are at most finitely many degree 3, dimension 3 scrolls
	in $\minhilb 3 3$ containing all four lines and both points.
	{\it Hint:} Observe that a general such scroll contains a three dimensional family of lines
	(a two dimensional family in each plane of the ruling, and a 1 dimensional family of rulings) while
	there is a $8$ dimensional family of lines in $\bp^5$. Therefore, containing a line ``imposes $8 - 3 = 5$
	conditions.`` Similarly, show that containing a point ''imposes $5 - 3 = 2$ conditions.`` Since
	there is a $\dim \minhilb 3 3= 24 = (3+3)^2 - 3^2 - 3$, and we also have $24 = 4 \cdot 5 + 2 \cdot 2$, corresponding
	to containing 4 lines and 2 points, conclude, using ~\ref{technique-conditions} that there
	are at most finitely many scrolls in $\minhilb 3 3$ containing all four lines and both points.
	In particular, there is at most a two dimensional family of scrolls containing $p, L_1, \ldots, L_4$.
\end{exercise}

\begin{exercise}
	\label{exercise:reduction-to-plane-for-3-scrolls}
	Reduce the problem of finding a one dimensional family of scrolls in $\minhilb 3 3$ through $p, L_1, \ldots, L_4$ to the problem
	of finding a 1 dimensional family of scrolls containing a two plane $P$ and three general lines,
	where the lines appear as type ~\ref{custom:line-small} on each scroll in this one dimensional family.
	{\it Hint:} Assume that $L_4$ and $p$ lie in the same ruling plane of the scroll.
\end{exercise}

\begin{exercise}
	\label{exercise:reduction-to-line-isomorphisms-for-3-scrolls}
	Let $M_1, M_2, M_3$ be three lines, each with three labeled points $p_1^i, p_2^i, p_3^i$ on $M_i$.
	Reduce the problem of finding a 1 dimensional family of scrolls in $\minhilb 3 3$ through $p, L_1, \ldots, L_4$
	to the problem of finding a one dimensional family of triples of isomorphisms $(\phi_1, \phi_2, \phi_3)$, with $\phi_i : M_i \ra \bp^1$
	so that 
	\begin{align*}
		\phi_1(p_1^1) &= \phi_2(p_1^2) = \phi_3(p_1^3),\\
		\phi_1(p_2^1) &= \phi_2(p_2^2), \\
		\phi_1(p_3^1) &= \phi_3(p_2^3), \\
		\phi_2(p_3^2) &= \phi_3(p_3^3).
	\end{align*}
	{\it Hint:} First, use \autoref{exercise:reduction-to-plane-for-3-scrolls}. 
	Show that there are finitely many lines meeting a two plane and two other general lines. Take $M_i$ to be a line
	meeting the two plane $P$, containing $p, L_4$, and meeting the two lines $L_j$ with $j \in \left\{ 1,2,3 \right\}\setminus i$.
	Show that any set of three isomorphisms $\phi_i: M_i \ra \bp^1$ sending all three points $\phi_i(M_i \cap P)$ to the same
	point in $\bp^1$ and satisfying $\phi_i(L_j \cap M_i) = \phi_i(L_k \cap M_i)$ for $\left\{ i,j,k \right\} = \left\{ 1,2,3 \right\}$
	uniquely determines a scroll in $\minhilb 3 3$ containing $P, L_1, L_2, L_3$.
\end{exercise}

\begin{exercise}
	\label{exercise:}
	Let $M_1, M_2, M_3$ be three lines.
	Show there is a one dimensional family of triples of isomorphisms $(\phi_1, \phi_2, \phi_3)$, with $\phi_i : M_i \ra \bp^1$
	satisfying the conditions given in \autoref{exercise:reduction-to-line-isomorphisms-for-3-scrolls}.
	{\it Hint:} Write the maps out as linear fractional transformations in coordinates. It may help to assume, without loss of
	generality, that $\phi^{-1}(M_i \cap P)$ is the
	point at infinity.
\end{exercise}

\begin{exercise}
	\label{exercise:}
	Piece together the above exercises to conclude that there are at least four one dimensional families of scrolls through a point and four lines.
	{\it Hint:} To get all four, take the plane $P$ spanned by $L_i$ and $p$ in \autoref{exercise:reduction-to-plane-for-3-scrolls},
	as $1 \leq i \leq 4$.
\end{exercise}

We have just shown there are at least four one dimensional families of
scrolls in $\minhilb 3 3$ through one general point and four lines.
It is natural to ask whether there will be one through four lines and two points, as is ``expected.''
This is an open question.

\begin{question}
	\label{question:}
	Is a scroll in $\minhilb 3 3$ containing four general lines and two general points?
\end{question}

For dimensional reasons, if there is a scroll through $4$ general lines and two general points,
these lines must be of type ~\ref{custom:line-large}. It is reasonable to ask whether 
we can interpolate scrolls through four the lines of type
~\ref{custom:line-small}, but, through a smaller number of points. 
While I do not know whether it is possible to have three be of type
~\ref{custom:line-large} and one of type ~\ref{custom:line-small}, or all four of type ~\ref{custom:line-small}, \autoref{exercise:scrolls-through-vertical-lines},
addresses the other two cases.
\begin{exercise}
	\label{exercise:scrolls-through-vertical-lines}
	Show that there is no scroll in $\minhilb 3 3$ that contains $L_1, \ldots, L_4$ so that either two or three
	of $L_1, \ldots, L_4$ appear as a line of type ~\ref{custom:line-small} on the scroll.
	{\it Hint:} Assume that $L_1, L_2$ are of type ~\ref{custom:line-small} while $L_3$ is of type
	~\ref{custom:line-large}. Show that $L_1, L_2, L_3$ are necessarily contained in a hyperplane in $\bp^5$,
	and so cannot be general.
\end{exercise}

But, \autoref{exercise:scrolls-through-vertical-lines} raises the following question, which is still open.

\begin{question}
	\label{question:}
	Is there a scroll in $\minhilb 3 3$ containing four general lines $L_1, \ldots, L_4$ so that at least three of them
	are of type ~\ref{custom:line-small}?
\end{question}

\chapter{Castelnuovo curves}
\label{section:castelnuovo-curves}

Our main aim in this chapter is to show
that a Castelnuovo curve 
in $\bp^r$ of degree at least $2r + 1$ does not satisfy interpolation.

Combined with previous results of interpolation for canonical curves and nonspecial curves,
this yields a nearly complete picture of interpolation for Castelnuovo curves,
as described in \autoref{theorem:castelnuovo-interpolation}.
The question of whether canonical curves satisfy interpolation
is open, although they are known to satisfy weak interpolation
in genus not equal to $4$ or $6$. See 
\cite[p.\ 108]{stevens:deformations-of-singularities}.
For more on this open question, see
\autoref{question:interpolation-for-canonical-curves}.

\section{Background on Castelnuovo curves}
We start by recalling the definition of Castelnuovo curves,
their basic properties, and the dimension of the irreducible component
of the Hilbert scheme containing a given smooth Castelnuovo curve.

\begin{definition}
	\label{definition:}
	Suppose a curve $C \subset \bp^r$ has degree $d$, which can be written in the form
	$d= m(r-1) + \varepsilon + 1$, where $0 \leq \varepsilon \leq r - 2$.
	Define
	\begin{align*}
		\pi(d, r) := \binom{m}{2}(r-1) + m \varepsilon.
	\end{align*}
	Then, $C$ is a {\bf Castelnuovo curve} if its arithmetic genus is equal to $\pi(d,r)$.
\end{definition}

\begin{remark}
	\label{remark:}
	In fact, if $C \subset \bp^r$ has degree $d$, then
	its genus is at most $\pi(d,r)$. In this way, Castelnuovo curves
	are curves of the biggest possible genus, given their degree and ambient projective space,
	as is shown in \cite[Section III.2, Castelnuovo's Bound]{ACGH:I}.

	If $d = 2r$, then $C$ is necessarily a canonical curve, while if $d < 2r$, $C$ is projectively normal
	and nonspecial.
	In the case that $d > 2r$, every Castelnuovo curve lies on some surface of minimal degree in $\bp^r$,
	by \cite[Section III.2, Theorem 2.5]{ACGH:I}.
\end{remark}

Recall that every rational normal surface scroll $X$ is a Hirzebruch surface. 
The Picard group of such a surface is generated by the class of the fiber of the projection $\pi: X \ra \bp^1$,
which we call $f$, and the class of a hyperplane section, which we call $h$.
This description of the Picard group follows from \cite[Exercise 20.2.L]{vakil:foundations-of-algebraic-geometry},
since the classes $(h, f)$ are related to the generators of the Picard group detailed in 
\cite[Exercise 20.2.L]{vakil:foundations-of-algebraic-geometry} by a change of basis.
We can now state the main theorem describing the different components of the Hilbert scheme
whose general member is a smooth Castelnuovo curve of degree at least $2r + 1$ in $\bp^r$.

\begin{theorem}[\protect{\cite[Theorem, p.\ 351, and Theorem 1.4]{ciliberto:on-the-hilbert-scheme-of-curves-of-maximal-genus-in-a-projective-space}}]
	\label{theorem:castelnuovo-hilbert-scheme-dimension}
	Let $C \subset \bp^r$ be a smooth Castelnuovo curve of degree $d$ and genus $g$, with $d \geq 2r + 1$.
	Choose $\varepsilon, m$ so that $d = m(r-1) + \varepsilon + 1$ and $g = \binom{m}{2}(r-1) + m \varepsilon$.
	Then, $C$ is a smooth point of the Hilbert scheme.
	Further, $\dim \hilb C$ is
	\begin{align*}
		\begin{cases}
		m(m+3)+10 &\text{if } r = 3,\\
		27 + \frac{a(a+3)}{2} & \text{if $C$ lies on the Veronese surface} \\
	& \text{or a degeneration of a Veronese } \\
	& \text{surface in }\bp^5 \text{ and } r = 5, \\
	& a := d/2, \varepsilon \in \left\{ 1,3 \right\},   \\
	(r-1)\left( \binom{m+1}{2}+r+2 \right)+2m & \text{if $C$ is a curve of class } \\
	& m h - (r-2-\varepsilon)f \text{ on a } \\
	& \text{rational normal surface scroll } \\
	& \text{and }\varepsilon = 0, r \geq 4, \\
	(r-1)\left( \binom{m+1}{2}+r+2 \right)+2(m-1) & \text{if $C$ is a curve of class $m h + f$} \\
	& \text{on a rational normal surface } \\
	& \text{scroll and} \varepsilon = 0, r \geq 4, \\
		(r-1)\left( \binom{m+1}{2}+r+2 \right) + (\varepsilon+2)(m+2)-4 & \text{otherwise.}
	\end{cases}
	\end{align*}
\end{theorem}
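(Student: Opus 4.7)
The plan is to establish that $[C]$ is a smooth point of the Hilbert scheme and then compute $h^0(C, N_{C/\bp^r})$ by stratifying according to the surface of minimal degree containing $C$. First I would show $H^1(C, N_{C/\bp^r}) = 0$, which both proves smoothness of $\hilb C$ at $[C]$ and identifies the tangent space with $H^0(C, N_{C/\bp^r})$. Since $d \geq 2r+1$, Castelnuovo's theorem recalled in the preceding remark guarantees that every such $C$ lies on some surface $S$ of minimal degree. I would then apply the normal bundle sequence
\begin{equation*}
0 \to N_{C/S} \to N_{C/\bp^r} \to N_{S/\bp^r}|_C \to 0
\end{equation*}
and reduce the vanishing of $H^1(C, N_{C/\bp^r})$ to $H^1(C, N_{C/S}) = 0$ and $H^1(C, N_{S/\bp^r}|_C) = 0$. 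The first follows from $\deg N_{C/S} > 2g-2$, computed by adjunction using the explicit class of $C$ on the Hirzebruch surface $S$, while the second follows, via the sequence $0 \to N_{S/\bp^r} \otimes \sci_{C/S} \to N_{S/\bp^r} \to N_{S/\bp^r}|_C \to 0$, from $H^1(S, N_{S/\bp^r}) = 0$ established in \autoref{proposition:regular-minimal-degree-hilbert-schemes} together with Serre vanishing for the sufficiently twisted ideal sheaf term.

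With smoothness in hand, I would realize each irreducible component of $\hilb C$ as the image of a suitable flag Hilbert scheme parameterizing pairs $(C \subset S)$ with $S$ of minimal degree of the appropriate type. For each stratum the dimension decomposes as
\begin{equation*}
\dim \hilb C = \dim \hilb S + h^0(S, \sco_S(C)) - 1 - \delta,
\end{equation*}
where $\delta$ accounts for the subgroup of $\pgl_{r+1}$ that stabilizes $(S, [C])$ but has already been counted in $\dim \hilb S$. The dimensions $\dim \hilb S$ for surfaces of minimal degree come from \autoref{proposition:scroll-hilbert-scheme-dimension} and \autoref{proposition:veronese-smooth-hilbert-scheme}, and $h^0(S, \sco_S(ah + bf))$ on a Hirzebruch surface $S \cong \bp \sce$ can be computed directly from the identification $\pi_* \sco_S(ah+bf) \cong \sym^a \sce \otimes \sco_{\bp^1}(b)$ together with \autoref{lemma:cohomology-of-O(1)-as-dual-bundle}.

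The case analysis then mirrors the statement. For $r = 3$, $C$ lies on a smooth quadric $\bp^1 \times \bp^1$ with bidegree determined by $(d, g)$, and the formula $m(m+3)+10$ reduces to counting curves of that bidegree on a fixed quadric and adding the $9$ parameters for quadrics in $\bp^3$ together with $2$ moduli from stabilizer considerations. For $r = 5$ with $\varepsilon \in \{1,3\}$, curves of even degree can additionally lie on the Veronese surface or on a flat degeneration of it, giving a separate stratum of dimension $27 + \frac{a(a+3)}{2}$ with $a = d/2$. For $\varepsilon = 0, r \geq 4$, curves of class $mh - (r-2)f$ and curves of class $mh + f$ both lie on scrolls but with numerical classes that constrain the scroll's type, producing the two separate subcases; while the generic case $\varepsilon \geq 1$ is recovered as a general curve of class $mh + (\varepsilon+1)f$ on a general scroll, and a direct substitution of the dimensions of the parameter spaces yields the final formula.

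The principal obstacle will be verifying that each listed stratum is actually a distinct irreducible component of $\hilb C$, rather than a sublocus of a larger component. This requires checking that the generic curve in one stratum does not specialize into the closure of another, which reduces to a dimension comparison combined with a semicontinuity argument for the type of surface of minimal degree containing $C$. The exceptional cases ($r = 5$, $\varepsilon \in \{1,3\}$, and the two subcases of $\varepsilon = 0$) are subtle precisely because they involve degenerations of Veronese surfaces to scrolls or scrolls with exceptional sections of low self-intersection; these constitute the technical core of Ciliberto's original argument, and completing them would require following the detailed deformation analysis of \cite{ciliberto:on-the-hilbert-scheme-of-curves-of-maximal-genus-in-a-projective-space}.
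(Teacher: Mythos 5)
Your very first step is incompatible with the paper itself. Immediately after the theorem, the paper issues a Warning stating explicitly that, even though smooth Castelnuovo curves $C \subset \bp^r$ with $d \geq 2r+1$ are smooth points of the Hilbert scheme, one does \emph{not} in general have $H^1(C, N_{C/\bp^r}) = 0$; indeed $h^1(C, N_{C/\bp^r})$ is computed in \cite[Proposition 2.4]{ciliberto:on-the-hilbert-scheme-of-curves-of-maximal-genus-in-a-projective-space} and is frequently nonzero. So smoothness of $\hilb C$ at $[C]$ cannot be established by cohomology vanishing of the normal bundle, which is exactly what makes this result ``fairly tricky'' in the paper's words. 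The concrete flaw in your vanishing argument is the piece $H^1(C, N_{S/\bp^r}|_C)$: using the restriction sequence you wrote, vanishing would require $H^2(S, N_{S/\bp^r} \otimes \sci_{C/S}) = 0$, but $\sci_{C/S} \cong \sco_S(-C)$ is a \emph{negative} twist of high degree, so Serre vanishing gives no leverage. By Serre duality that $H^2$ is dual to $H^0(S, N_{S/\bp^r}^\vee \otimes \omega_S(C))$, and for $C$ of large class this is typically nonzero — consistent with $h^1(C, N_{C/\bp^r}) > 0$. (You can already see this for $r=3$, $\varepsilon=0$: Riemann--Roch gives $\chi(\sco_C(2)) = 2d+1-g = 5m + 3 - m^2$, negative once $m \geq 6$, forcing $h^1(\sco_C(2)) > 0$.)

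The paper in fact does not prove this theorem: it is quoted verbatim from \cite[Theorem, p.\ 351, and Theorem 1.4]{ciliberto:on-the-hilbert-scheme-of-curves-of-maximal-genus-in-a-projective-space}. What the paper offers, in the remark and exercise sequence following the statement, is only a sketch of how to compute the dimension \emph{formula} in the generic case via the filtration $0 \to N_{C/S} \to N_{C/\bp^r} \to N_{S/\bp^r}|_C \to 0$, adjunction, and the known dimensions of the Hilbert schemes of scrolls — that portion of your proposal is in the right spirit and matches the paper's exercises. But that computation identifies $h^0(C, N_{C/\bp^r})$ with $\dim \hilb C$ only \emph{after} smoothness is known, and it is the smoothness claim that requires Ciliberto's deformation-theoretic argument, not an $H^1$-vanishing argument. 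You have not supplied a substitute for that argument, so the crux of the theorem is unaddressed.
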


\begin{warn}
	\label{warning:}
	Even though \autoref{theorem:castelnuovo-hilbert-scheme-dimension}
	guarantees that Castelnuovo curves $C \subset \bp^r$ are smooth points of the Hilbert scheme,
	this does not mean that $H^1(C, N_{C/\bp^r}) = 0$.
	In fact, $h^1(C, N_{C/\bp^r})$ is given in 
	\cite[Proposition 2.4]{ciliberto:on-the-hilbert-scheme-of-curves-of-maximal-genus-in-a-projective-space}
	and often takes on nonzero values.
\end{warn}

\begin{remark}
	\label{remark:}
	While proving that smooth Castelnuovo curves are smooth points of the Hilbert scheme is fairly tricky,
	the idea for computing the dimensions of the components of the Hilbert scheme given in 
	\autoref{theorem:castelnuovo-hilbert-scheme-dimension} is not so bad.

	We briefly summarize the idea of how to compute the dimension of $\hilb C$,
	in the last case that $\varepsilon > 0$ and $r > 5$ in a series of exercises. The other cases can be computed similarly.

	\begin{exercise}
		\label{exercise:}
		Explain why $\dim \hilb C = h^0(C, N_{C/\bp^r})$
	{\it Hint:} You may assume that $C$ is a smooth point of the Hilbert scheme,
	by \autoref{theorem:castelnuovo-hilbert-scheme-dimension}.
	\end{exercise}

	\begin{exercise}
		\label{exercise:}
		Show that the dimension of $\hilb C$ is equal to the sum of the dimension
		of scrolls $X \subset \bp^r$ and the dimension of curves on a fixed scroll.
		{\it Hint:} Invoke the exact sequence
	\begin{equation}
		\nonumber
		\begin{tikzcd}
			0 \ar {r} &  N_{C/X} \ar {r} & N_{C/\bp^r} \ar {r} & N_{X/\bp^r}|_C \ar {r} & 0 
		\end{tikzcd}\end{equation}
	Show this is exact on global sections because $H^1(C, N_{C/X}) = 0$ (as is mentioned in
	\cite[Lemma 1.5(iii)]{ciliberto:on-the-hilbert-scheme-of-curves-of-maximal-genus-in-a-projective-space}).
	Use this to relate $h^0(C, N_{C/\bp^r})$ to $h^0(C, N_{X/\bp^r}|_C)$ and $h^0(C, N_{C/X})$.
	\end{exercise}
	
	\begin{exercise}
		\label{exercise:}
		Using that we know the dimension of scrolls in projective space, by
	\autoref{proposition:scroll-hilbert-scheme-dimension},
	reduce the problem to computing
	$h^0(C, N_{C/X})$.
	\end{exercise}

	\begin{exercise}
		\label{exercise:}
		Reduce to the case of computing $h^0(\sco_C(C))$.
		{\it Hint:} Use the exact sequence
	\begin{equation}
		\nonumber
		\begin{tikzcd}
			0 \ar {r} &  \sco_X \ar {r} & \sco_X(C) \ar {r} & \sco_C(C) \ar {r} & 0.
		\end{tikzcd}\end{equation}
	Show this sequence is exact on global sections and then use that $h^0(X, \sco_X) = 1$.
	\end{exercise}
	
	\begin{exercise}
		\label{exercise:}
		Compute $h^0(C, \sco_C(C))$ using that $h^0(\sco_C(C))$ is the self intersection of $C$ with itself
	on the surface $X$, as follows from standard intersection theory.
	{\it Hint:} To compute $h^0(C, \sco_C(C))$, use that $C$ has class
	$(m+1)h - (r-2-\varepsilon) f$. It may help to use the genus formula given in
	\cite[Subsection 2.4.1]{Eisenbud:3264-&-all-that}.
	For this approach, it may also help to use the result from 
	\cite[p.\ 3]{coskun:degenerations-of-surface-scrolls}, that
	\begin{align*}
		K_{\mathbb F_r} = -2h + (r-1)f.
	\end{align*}
	\end{exercise}

	\begin{exercise}
		\label{exercise:}
		Piece the above exercises together to compute the dimension of $\hilb C$.
	\end{exercise}
	
	\begin{exercise}
		\label{exercise:}
		Generalize the method in the above sequence of exercises to also deal with the cases when
		$\varepsilon \neq 0$ and the cases when $r \leq 5$.
	\end{exercise}
\end{remark}

\section{When Castelnuovo curves satisfy interpolation}

In this section, we show that Castelnuovo curves with degree more than $2r$ in
$\bp^r$ do not satisfy interpolation.
Combining this with previous results yields a nearly complete
description of which Castelnuovo curves satisfy interpolation,
as given in \autoref{theorem:castelnuovo-interpolation}.

\begin{proposition}
	\label{proposition:castelnuovo-curves-fail-interpolation}
	The Hilbert scheme of Castelnuovo curves $C \subset \bp^r$
	do not satisfy weak interpolation (and hence
	also do not satisfy interpolation or strong interpolation) when the degree of $C$ is at least
	$2r+1$.
\end{proposition}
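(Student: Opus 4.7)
The plan is to show failure of weak interpolation by exploiting the fact that every Castelnuovo curve of degree $d \geq 2r+1$ in $\bp^r$ lies on a surface of minimal degree (by \cite[Section III.2, Theorem 2.5]{ACGH:I}, which applies for $d > 2r$). Hence, if a Castelnuovo curve $C$ passes through points $p_1, \ldots, p_q$, some surface $S$ of minimal degree containing $C$ also passes through $p_1, \ldots, p_q$. The idea is then to show that the number of general points required for weak interpolation exceeds the maximum number of general points that any surface of minimal degree in $\bp^r$ can contain.

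First, I would compute the maximum number of general points through which a surface of minimal degree passes. By \autoref{proposition:scroll-hilbert-scheme-dimension}, the Hilbert scheme of smooth rational normal surface scrolls in $\bp^r$ has dimension $D_S := (r+1)^2 - 7$; for $r = 5$ one also has the $27$-dimensional component of $2$-Veronese surfaces from \autoref{proposition:veronese-smooth-hilbert-scheme}, and for $r = 3$ one has the component of quadric surfaces of dimension $9$ by \autoref{lemma:hypersurface-smooth-hilbert-scheme}. A general point imposes $r-2$ independent conditions on any of these components (using that the codimension of a surface in $\bp^r$ is $r-2$ and the usual incidence correspondence argument), so no surface of minimal degree contains more than $\lfloor D_S/(r-2)\rfloor$ general points. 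In particular, no Castelnuovo curve of degree $\geq 2r+1$ can pass through more than this many general points.

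Next, I would use \autoref{theorem:castelnuovo-hilbert-scheme-dimension} to read off $\dim \hilb C$, and note that weak interpolation requires $C$ to pass through $q := \lfloor \dim \hilb C/(r-1)\rfloor$ general points. The remainder of the proof is a direct numerical verification that $q > \lfloor D_S/(r-2)\rfloor$ in every case, which will establish failure of weak interpolation. For the main generic case, the formula $\dim \hilb C = (r-1)\bigl(\binom{m+1}{2} + r + 2\bigr) + (\varepsilon+2)(m+2) - 4$ yields $q \geq \binom{m+1}{2} + r + 2$, while $D_S/(r-2) = (r^2+2r-6)/(r-2) \approx r+4$; for $m \geq 2$ this gives $q \geq r+5 > \lfloor D_S/(r-2)\rfloor$ with room to spare. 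The special Veronese and $r=3$ cases, as well as the boundary case $d=2r+1$ and the scroll cases with $\varepsilon = 0$, can all be checked individually using the explicit formulas in \autoref{theorem:castelnuovo-hilbert-scheme-dimension}.

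The main obstacle, as it is usually with such proofs, is the bookkeeping: handling every case of \autoref{theorem:castelnuovo-hilbert-scheme-dimension} (the $r=3$ case, the two Veronese-related subcases at $r=5$, the two scroll subcases with $\varepsilon=0$, and the ``otherwise'' case) and verifying the inequality uniformly in each. A minor technical point to address is that we need the containing surface $S$ to exist for \emph{every} curve in $\hilb C$, not just the general one, and that the $p_i$ genuinely lie on $S$ when they lie on $C$; both follow directly from $C \subset S$ and the ACGH result cited above.
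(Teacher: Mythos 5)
Your approach is essentially the same as the paper's: both use the ACGH fact that a Castelnuovo curve of degree $\geq 2r+1$ lies on a surface of minimal degree, then compare the number of general points required for weak interpolation of the curve (read off from the dimension formulas in Theorem~\ref{theorem:castelnuovo-hilbert-scheme-dimension}) against the maximum number of general points that a minimal-degree surface can contain, handling the Veronese case in $\bp^5$ and the $r=3$ case separately. One small caution for the deferred bookkeeping: the intermediate bound $q \geq r+5 > \lfloor D_S/(r-2)\rfloor$ that you state as the main case fails to be strict at $r=4$, since there $\lfloor D_S/(r-2)\rfloor = \lfloor 18/2 \rfloor = 9 = r+5$; the actual $q$ is considerably larger (e.g.\ $q=13$ when $r=4$, $m=2$, $\varepsilon=2$), so the inequality holds once you use the full dimension formula rather than the crude lower bound $\binom{m+1}{2}+r+2$.
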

\begin{proof}
	Using \autoref{theorem:castelnuovo-hilbert-scheme-dimension},
	if $C$ does not lie on a Veronese surface or degeneration thereof and
	$r > 3$, then
	the condition that $C$ satisfy interpolation
	interpolation can be expressed as the conditions that $C$
	pass through
	$\binom{m+1}{2} + r + 2$ general points, and that $C$ meet some additional general plane
	of codimension at least 3.
	So, if we can show that there are no curves $C$ passing through general $\binom{m+1}{2}+r+2$
	points and meeting a general plane of codimension 3, such curves will not satisfy interpolation.
	Since we are assuming $C$ has degree at least $2r + 1$,
	it will lie on some variety of minimal degree, and so it suffices to show there is no variety
	of minimal degree containing $\binom{m+1}{2} +r +2$ points
	and meeting a codimension $3$ plane.
	But now, since we are assuming $r > 3$,
	(and also that $m > 2$, since $d > 2r$,)
	we obtain that $\binom{m+1}{2}+r+2 \geq r + 5$.

	However, there are no surfaces of degree $r - 1$ passing through
	$r + 5$ points and meeting a codimension $3$ plane,
	as follows from \autoref{lemma:interpolation-numerics}.

	To complete the proof, it suffices to show Castelnuovo curves in $\bp^3$ or 
	lying on the Veronese surface in $\bp^5$ do not satisfy interpolation.

	First, if the curve lies on the Veronese surface or degeneration thereof, its degree must be at least $12$.
	We conclude that it must pass through at least $\frac{27 + \frac{12}{4} (\frac{12}{2}+3)}{4} = 12$
	points. But, there are no Veronese surfaces through $12$ or more general points,
	as there are only four (that is, finitely many) Veronese surfaces through $9$ points,
	by \autoref{theorem:counting-veronese-interpolation}.

	To conclude, we deal with the case of Castelnuovo curves in $\bp^3$.
	In this case, since $m \geq 2$, we obtain that the dimension of the Hilbert
	scheme is at least $20$. Therefore, if a Castelnuovo curve satisfied interpolation,
	it would need to pass through at least $10$ general points. But, it lies on a quadric surfaces,
	and there will not be any quadric surface containing $10$ general points.
\end{proof}

\castelnuovo*
\begin{proof}
	In the case $d \geq 2r+1$, this follows from \autoref{proposition:castelnuovo-curves-fail-interpolation}.
	For the case $d < 2r$, this follows from
	\cite[Theorem 1.3]{atanasovLY:interpolation-for-normal-bundles-of-general-curves}.
	as such a curve is nonspecial.
	Finally, in the case $d = 2r$, a Castelnuovo curve is a canonical curve,
	and so the statement follows from \cite[p.\ 108]{stevens:deformations-of-singularities}.
\end{proof}

\chapter{Further questions}
\label{section:further-questions}

Interpolation is a burgeoning field with oodles of open problems.
Here, we state just a few of those not already mentioned.

\section{Questions on Veronese interpolation}

We begin with some questions related to interpolation of
Veronese varieties.
We have seen the very beginnings of interpolation
for Veronese embeddings. That is, as discussed in \ref{sssec:castelnuovos-lemma},
all rational normal curves, which are the Veronese embeddings
of $\bp^1$, satisfy interpolation. In general, interpolation
of the $r$-Veronese embedding of $\bp^n$ which is the image of
$\bp^n \rightarrow \bp^{\binom{n+r}{n}-1}$ is equivalent
to the question of whether the Veronese surface
passes through $\binom{n+r}{r} + n +1$ points.
Unlike the del Pezzo surfaces and rational normal scrolls,
Veronese embeddings are a class of varieties for
which interpolation only imposes point conditions, and not
an additional linear space condition. Perhaps
this coincidence may be helpful in finding the solution to
the following question.

\begin{question}
	\label{question:veronese-interpolation}
	Does the image of the $r$-Veronese embedding $\bp^n \rightarrow \bp^{\binom{n+r}{r}-1}$ satisfy interpolation?
	That is, is there a Veronese embedding containing
	$\binom{n+r}{r} + n + 1$ general points in $\bp^{\binom{n+r}{r}-1}$?
\end{question}

If the answer to ~\autoref{question:veronese-interpolation}
is affirmative, it would be very interesting to know how many
Veronese varieties pass through the correct number of points.
From \ref{sssec:castelnuovos-lemma}
we know there is precisely one $r$-Veronese $\bp^1$ through 
$\binom{r+1}{1} + 1 +1  = r+3$ points in $\bp^r$.
Additionally,  \autoref{theorem:counting-veronese-interpolation}
tells us there are $4$ $2$-Veronese surfaces in through $9$ general
points in $\bp^5$. In the appendix, we show that there are at least $630$ $3$-Veronese surfaces through $13$ general points in $\bp^{9}$. See \autoref{remark:630}.

\begin{question}
	\label{question:veronese-enumeration}
	How many $r$-Veronese varieties of dimension
	$n$ pass through
	$\binom{n+r}{r} + n + 1$ general points in $\bp^{\binom{n+r}{r}-1}$?
\end{question}

We have also seen in Coble's work 
\cite{coble:associated-sets-of-points}
that any two $2$-Veronese
surfaces through $9$ general points in $\bp^5$
intersect along a genus 1 curve through those $9$ points.
This leads to the question:
\begin{question}
	\label{question:intersect-curve}
	Suppose there are at least two $r$-Veronese varieties
	of dimension $n$ passing through
	$\binom{n+r}{r} + n + 1$ general points in $\bp^{\binom{n+r}{r}-1}$.
	Do they have positive dimensional intersection?
\end{question}

\section{Interpolation of canonical curves}

We now engage in a brief discussion of interpolation for canonical curves.
Note that
by \autoref{theorem:castelnuovo-interpolation}
once we understand whether canonical curves satisfy interpolation,
we will have completed the description of which Castelnuovo curves satisfy interpolation.
While ~\cite{stevens:on-the-number-of-points-determining-a-canonical-curve}
showed that canonical curves of genus not equal to $4$ or $6$ satisfy weak interpolation, whether they satisfy
interpolation is still open.
This has particular relevance for the slope conjecture, as described in
\autoref{subsection:relevance-of-interpolation}.

\begin{question}
	\label{question:interpolation-for-canonical-curves}
	Do those canonical curves which satisfy weak interpolation also satisfy interpolation?
\end{question}

\section{More on interpolation of scrolls}

Next, we move onto some open questions regarding
interpolation of higher dimensional varieties.
In \autoref{theorem:interpolation-minimal-surfaces}, we proved
that smooth varieties of minimal degree satisfy interpolation.
In the case of varieties of minimal degree of dimension 1, which
are all rational normal curves, we know that
there is a unique rational normal curve through $n+3$ points in $\bp^n$.
Similarly, the number of surface scrolls through the expected number of points
and one linear subspace of the appropriate dimension
is computed to be $(n-2)(n-3)$ in
~\cite[Example, p.\ 2]{coskun:degenerations-of-surface-scrolls}.
\begin{question}
	\label{question:number-of-scrolls-satisfying-point-interpolation}
	How many scrolls in $\minhilb d k$ meet the collections of linear spaces given in \autoref{lemma:interpolation-numerics}?
\end{question}

Coskun also gives a highly efficient algorithm for computing how many surface scrolls meet a collection
of linear subspaces of the appropriate codimensions ~\cite{coskun:degenerations-of-surface-scrolls}.

\begin{question}
	\label{question:number-of-scrolls}
	Let $\Lambda_1, \ldots, \Lambda_m$ be general planes in $\bp^n$.
	Suppose $X$ is a scroll of dimension $k$ in $\bp^n$ and
	$\sum_{i=1}^m (n - k + \dim \Lambda_i) = \dim \hilb X$.
	Assuming the base field $\bk$ has characteristic $0$, 
	we find there are a nonzero finite number of scrolls in $\hilb X$
	passing through them, by
	\autoref{corollary:strong-interpolation-for-varieties-of-minimal-degree}.
	Is there an efficient algorithm, perhaps similar to that in
	~\cite{coskun:degenerations-of-surface-scrolls}
	which can be used to compute this number?
\end{question}

It seems likely that any recursive formula to
answer \autoref{question:number-of-scrolls}
will be quite nasty, involving many terms.
Further, it appears that even the answer to \autoref{question:number-of-scrolls-satisfying-point-interpolation}
is likely to be much messier than $1$ in the case of curves and $(n-2)(n-3)$ in the case of surfaces.
Nevertheless, the answer in low degree may not be so bad.
Although we have almost solely been considering smooth scrolls, scrolls with degree less than their dimension
do exist as singular varieties which are the locus where a two row matrix of linear forms has rank $1$.
It seems very likely that these scrolls satisfy interpolation, as might be proved using
degenerations similar to those in ~\cite{coskun:degenerations-of-surface-scrolls}.
That said, such a proof would likely be rather tedious and involved to write out, and so we pose it as a question.

\begin{question}
	\label{question:low-degree-scroll-interpolation}
	Do scrolls of dimension $k$ and degree $d$, with $d < k$, satisfy interpolation?
\end{question}

It is fairly immediate that when $d = 2$, such scrolls do satisfy interpolation, and we can even count the number,
which we now do in \autoref{example:number-of-singular-quadrics}.
\begin{example}
	\label{example:number-of-singular-quadrics}
	The simplest case of \autoref{question:low-degree-scroll-interpolation}
	is when the degree $d = 2$. In this case, all scrolls of dimension $k$ are simply rank $4$ quadric hypersurfaces.
	Then, we are asking for the number of such rank $4$ quadrics meeting $4k + 1$ general points.
	Inside the projective space of quadrics $\bp H^0(\bp^{k+1}, \sco_{\bp^{k+1}}(2))^\vee$,
	the number of such quadrics is the degree of the intersection of the locus of such quadrics with a
	general codimension $4k + 1$ linear subspace.
	This is simply the degree of the locus of rank $4$ quadrics in the vector space of all quadrics.
	In particular, it is positive, implying quadric scrolls satisfy interpolation.
	Using \cite[Proposition 12(b)]{harrisT:on-symmetric-and-skew-symmetric-determinantal-varieties},
	the degree of this variety is
	\begin{align*}
		\prod_{\alpha = 0}^{k-3} \frac{\binom{k+1 + \alpha}{k - 3 - \alpha}}{\binom{2\alpha + 2}{\alpha}}
	\end{align*}
	Although it takes some algebraic manipulation, when $k > 1$, the above expression simplifies to the fairly clean expression
	\begin{align*}
		\frac{(2k-2)!(2k-1)!}{(k-1)!(k-1)!k!(k+1)!}.
	\end{align*}
\end{example}

So, \autoref{example:number-of-singular-quadrics} shows degree $2$ scrolls satisfy interpolation, and even gives
a simple closed form for the number. Of course, this number can also be computed by Gromov-Witten theory,
but this is not a calculation that can be done by hand. The next case to consider is the number of degree $3$ scrolls
meeting points. Note that these scrolls will be singular if their
dimension is more than $2$.
With careful specialization arguments as in ~\cite[Section 5, Example A]{coskun:degenerations-of-surface-scrolls},
it seems that one should be able to calculate the number of degree $3$ scrolls meeting
$3k + 3$ points (and there will be finitely many such scrolls by a dimension count, as passing through $3k+3$ 
points is equivalent to interpolation).

\begin{question}
	\label{question:}
	Is there a simple form for the number of (usually singular) degree $3$ scrolls of dimension $k$ meeting $3k + 3$ general points
	in $\bp^{k+2}$?
\end{question}

\section{Questions on determinantal varieties}

We may also note that all varieties of minimal degree are determinantal.
Determinantal varieties are in many ways a fundamental object of study,
and the fact that varieties of minimal degree satisfy interpolation suggests
the possibility that determinantal varieties may as well.

\begin{question}
	\label{question:}
	Do determinantal varieties satisfy interpolation?
\end{question}

A particularly interesting special case of determinantal varieties is
that of Grassmannians.

\begin{question}
	\label{question:}
	Do Grassmannians under the Pl\"ucker embedding satisfy interpolation?
\end{question}

\section{Questions related to del Pezzo surfaces}

We conclude our survey of open questions with some questions
related to the result shown in the appendix
\autoref{theorem:main}, that del Pezzo surfaces satisfy weak interpolation.

Since weak interpolation is equivalent to interpolation
for del Pezzo surfaces of degrees $3,4,5$, and $9$,
it is immediate from \autoref{theorem:main} that 
del Pezzo surfaces of degree $3, 4, 5$ and $9$ surfaces satisfy interpolation,
while the remaining del Pezzo surfaces satisfy weak interpolation.

\begin{question}
	\label{question:}
	Do all del Pezzo surfaces satisfy strong interpolation?
	If so, how many del Pezzo surfaces meet
	a collection of points and a linear space,
	as given in \autoref{table:del-Pezzo-conditions}?
\end{question}

It was mentioned in the introduction that plane conics
constitute all anticanonically embedded Fano varieties of dimension 1 and
del Pezzo surfaces constitute those of dimension 2.
As we have seen these both satisfy weak interpolation.
Further, there is a complete classification of Fano
varieties in dimension 3 \cite{iskovskikhP:fano-varieties}. 
Unfortunately, it is immediately clear that not all
Fano varieties in dimension more than 3 satisfy interpolation.
A counter example is provided by the complete intersection of
a quadric and cubic hypersurface in $\bp^5$. 
This leads to the following question:
\begin{question}
	\label{question:}
	Which Fano threefolds, embedded by their
	anticanonical sheaf, satisfy weak interpolation?
	Which Fano threefolds, embedded by their anticanonical sheaf,
	satisfy interpolation? Which Fano varieties in dimension more than $3$ satisfy
	interpolation?
\end{question}

In another direction, we may note that surfaces of minimal degree satisfy
interpolation, that is, surfaces of degree $d-1$ in $\bp^d$ satisfy
interpolation by \autoref{theorem:interpolation-minimal-surfaces}.
In the appendix, we show that all smooth surfaces of one more than minimal
degree, which are not projections of surfaces of degree $d$ from $\bp^{d+1}$ 
as described in \cite[Theorem 2.5]{coskun:the-enumerative-geometry-of-del-Pezzo-surfaces-via-degenerations}.
satisfy interpolation. That is, surfaces of degree $d$ in $\bp^d$ satisfy interpolation. 

\begin{question}
	\label{question:}
	Do all smooth surfaces of degree $d$ in $\bp^d$ satisfy
	interpolation? Equivalently, using
	\cite[Theorem 2.5]{coskun:the-enumerative-geometry-of-del-Pezzo-surfaces-via-degenerations}
	\autoref{theorem:main},
			do projections of surfaces of minimal
	degree from a point 
	satisfy interpolation?
\end{question}

While all smooth linearly normal nondegenerate surfaces of degree $d$ in $\bp^d$
satisfy weak interpolation, note that not all surfaces of degree $d+2$
in $\bp^d$ will satisfy interpolation.
This is because the complete
intersection of a quadric and cubic hypersurface in $\bp^4$ does not
satisfy interpolation.
So, in some way, surfaces of $d + 1$ in $\bp^d$
are the turning point between surfaces satisfying interpolation
and surfaces not satisfying interpolation.
This leads naturally to the following question.

\begin{question}
	\label{question:}
	Do surfaces of degree $d + 1$ in $\bp^d$ satisfy interpolation?
\end{question}

From \autoref{theorem:interpolation-minimal-surfaces}, we know that varieties of dimension
$k$ and degree $d$ in $\bp^{d+k-1}$ satisfy interpolation.
In the appendix we see that varieties of degree $2$ and
dimension $2$ (which are nondegenerate and not projections
of varieties of minimal degree)
in
$\bp^{d + 2 - 2} = \bp^d$ satisfy interpolation.
This too  offers an immediate generalization.

\begin{question}
	\label{question:}
	Do varieties of dimension $k$ and degree $d$ in
	$\bp^{d+k-2}$ satisfy interpolation?
\end{question}

\section{General interpolation questions}

In addition to the questions of interpolation for specific varieties,
there are several general questions regarding interpolation,
which seem quite plausible, yet very little is known about them.

It is clear that if a $k$-dimensional variety lies on
a $(k+1)$-dimensional variety, and one cannot pass the
$(k+1)$-dimensional variety through the required number of linear spaces,
then the $k$-dimensional variety cannot satisfy interpolation.
Indeed, in every instance of interpolation we are aware of, 
a $k$-dimensional variety will only fail to satisfy interpolation
if it lies on such a $(k+1)$-dimensional variety.
For example, this is why certain Castelnuovo curves and non-balanced
complete intersections fail to satisfy interpolation.
To this end, we pose the following vague question.

\begin{question}
	\label{question:}
	Are there any instances of a parameter space of $k$-dimensional 
	varieties failing to satisfy
	interpolation, 
	so that there is no family of $(k+1)$-dimensional varieties
	containing all the members of this family
	of $k$-dimensional varieties which causes the
	failure of interpolation?
\end{question}

Continuing in this vein, we ask several questions about the relation
of $k$-dimensional varieties satisfying interpolation to $(k+1)$-dimensional 
varieties satisfying interpolation.
One natural construction sending $k$-dimensional varieties to $(k+1)$-dimensional 
varieties is the construction taking $X$ to the cone over $X$.
\begin{warn}
	\label{warning:}
	When we ask the question about interpolation of cones, as well as several
	later questions in this section,	
	we ask whether a certain closed subscheme of the Hilbert scheme
	satisfies interpolation, which is not necessarily an irreducible component.
	Although, strictly speaking, we have only defined interpolation
	for an irreducible component of the Hilbert scheme, the definition
	of interpolation makes sense for any closed subscheme $\sch$ of the
	Hilbert scheme.
	We say $\sch$ satisfies interpolation if
	$\sch$ parameterizes varieties of dimension
	$k$ in $\bp^n$
	with $\dim \sch := q \cdot (n - k)+ r$, for $0 \leq r < n - k$,
	such that for 
	$q$ general points and a $(n - k - r)$-dimensional plane,
	there is some $[X] \in \sch$ meeting the $q$ points and meeting the plane.
\end{warn}

\begin{example}
	\label{example:}
	If one starts with $r$ points in $\bp^{n-1}$, the cone over the points
will be a union of $r$ lines in $\bp^n$, all meeting at a common vertex.
We now show the locus of the Hilbert scheme corresponding to cones over $r$
points satisfies interpolation.

To see this, note that satisfying interpolation is equivalent to
finding a one dimensional family of cones passing through $r + 1$ points.
It is quite easy to produce such a family. Simply choose two points among
the $r+1$ points, and fit a line $L$ through them. Then, we have a one dimensional
family of cones through the $r+1$ points containing $L$,
as we vary the cone point along $L$.
Once we choose the cone point on $L$, this will determine the cone, as
the cone must consist of $L$ together with
the $r - 1$ lines joining the cone point to the remaining $r-1$ points.
\end{example}

We now ask if we can generalize the above example, which shows that
cones over $0$ dimensional varieties satisfy interpolation, to higher dimensional varieties.
\begin{question}
	\label{question:}
	If $\scg$ is a closed subscheme of the Hilbert scheme satisfying interpolation
	and $\sch$ denotes the closed subscheme of the Hilbert scheme whose members are cones
	over $\scg$, does $\sch$ satisfy interpolation?
\end{question}

A sort of inverse construction to taking a cone is taking a hyperplane section.
Analogously to the case of cones,
we pose the following question.

\begin{question}
	\label{question:}
	Let $\sch$ be a closed subscheme of the Hilbert scheme satisfying interpolation
	and let $\scg$ is the locus in the Hilbert scheme of hyperplane sections of $\sch$.
	Suppose further that all members of $\scg$ have the same Hilbert polynomial, with
	each member of $\scg$ being at least $1$ dimensional.
	If $\sch$ satisfies interpolation, does $\scg$?
	If $\scg$ satisfies interpolation, does $\sch$?
	\end{question}

Finally, at least in the case of smooth curves, we know that when a curve is embedded by
a very high degree invertible sheaf, the corresponding irreducible component of the Hilbert scheme
will satisfy interpolation. We ask whether this holds for higher dimensional curves as well.

\begin{question}
	\label{question:}
	Suppose we have an irreducible component of the Hilbert scheme $\sch$
	corresponding to varieties $[X] \in \sch$ embedded by line bundles $\scl_X$.
	Does there exist a sufficiently high $n$ so that the locus in the Hilbert
	scheme parameterizing the same abstract varieties embedded by $\scl_X^{\otimes n}$
	satisfies interpolation?
\end{question}

\chapter[Appendix]{Appendix: Interpolation of del Pezzo surfaces, with Anand Patel}
\label{section:appendix}

\section{Introduction to the appendix}
The main result of this appendix is that del Pezzo surfaces satisfy weak interpolation, over a field of characteristic $0$.
For the remainder of this chapter, we assume our base field $\bk$
is of characteristic $0$.
In many ways, del Pezzo surfaces are a natural next
class of varieties to look at.

First, as mentioned earlier, varieties of
minimal degree were shown to satisfy interpolation
in \autoref{theorem:interpolation-minimal-surfaces}. Del Pezzo surfaces
are surfaces of degree $d$ in $\bp^d$, one higher
than minimal. Further, all irreducible varieties of
degree $d$ in $\bp^d$ are either del Pezzo surfaces,
projections of surfaces of minimal degree, or cones
over elliptic curves, by \cite[Theorem 2.5]{coskun:the-enumerative-geometry-of-del-Pezzo-surfaces-via-degenerations}. So, del Pezzo constitute all 
linearly normal smooth varieties of
degree $d$ in $\bp^d$.
By analogy, all curves of degree $d-1$ in $\bp^d$
(also one more than minimal degree) have been shown
to satisfy interpolation in \cite[Theorem 1.3]{atanasovLY:interpolation-for-normal-bundles-of-general-curves}.

Second, del Pezzo surfaces constitute all Fano surfaces.
Since the only Fano curve is $\bp^1$, which
satisfies interpolation, del Pezzo surfaces
suggest themselves as a higher dimensional analog from this point of view.

Recall that a del Pezzo surface, embedded in $\bp^n$,
is a surface with ample anticanonical bundle, embedded
by the complete linear system of its
anticanonical bundle. All del Pezzo surfaces have
degree $d$ in $\bp^d$, and
all linearly normal smooth surfaces of degree $d$ in $\bp^d$ are del Pezzo
surfaces by
\cite[Theorem 2.5]{coskun:the-enumerative-geometry-of-del-Pezzo-surfaces-via-degenerations}.
We also know the dimension of the component of the Hilbert
scheme containing a del Pezzo surface from \cite[Lemma 2.3]{coskun:the-enumerative-geometry-of-del-Pezzo-surfaces-via-degenerations}, as given
in \autoref{table:del-Pezzo-conditions}, and that all del Pezzo
surfaces have $H^1(X, N_{X/\bp^n}) = 0$, by \cite[Lemma 5.7]{coskun:the-enumerative-geometry-of-del-Pezzo-surfaces-via-degenerations}.

Assuming the remainder of the appendix, we now restate and prove
the main result of the appendix.

\main*
\begin{proof}
Recall that that there is a unique component of the Hilbert
scheme of del Pezzo surfaces in
degrees $3, 4, 5, 6, 7, 9$, and there are two in degree $8$.
One component in degree $8$, which we call type $0$,
has general member abstractly isomorphic to $\mathbb F_0 \cong \bp^1 \times \bp^1$.
The other component in degree $8$, which we call type $1$,
has general member abstractly isomorphic to $\mathbb F_1$.
The cases of degree $3$ and $4$ surfaces hold by
\autoref{lemma:balanced-complete-intersection}.
The case of degree $5$ del Pezzo surfaces is
\autoref{thm:quinticDPinterpolation}.
The case of degree $6$ del Pezzo surfaces is
\autoref{thm:sexticDPweakinterpolation}.
The case of degree $8$, type $0$
surfaces is ~\autoref{thm:weakinterpolationP1xP1}.
Finally, the three remaining cases of del Pezzo surfaces in
degrees $7,8,9$ are 
\autoref{corollary:degree-8-type-1-interpolation},
\autoref{corollary:degree-7-interpolation},
and
~\autoref{thm:3veroneseexistence},
respectively.
\end{proof}

\begin{table}
\begin{tabular}
	{| c || c | c | c |}
	\hline
	Degree & Dimension & Number of Points & Additional Linear Space Dimension \\
	\hline
	\hline

	$3$ & $19$ & $19$ & None \\	\hline

	$4$ & $26$ & $13$ & None \\	\hline
	
	$5$ & $35$ & $11$ & $1$ \\	\hline
	
	$6$ & $46$ & $11$ & $2$ \\	\hline
	
	$7$ & $59$ & $11$ & $1$ \\	\hline
	
	$8$, type 0 & $74$ & $12$ & $4$ \\	\hline
	
	$8$, type 1 & $74$ & $12$ & $4$ \\	\hline
	
	$9$ & $91$ & $13$ & None \\	\hline
\end{tabular}
\vspace{.5cm}
\caption{Conditions for del Pezzo surfaces to satisfy interpolation.
Type $0$ refers to the component of the Hilbert scheme whose
general member is a degree $8$ del Pezzo surface, isomorphic to $\mathbb F_0$,
(this also includes, in its closure, del Pezzo surfaces abstractly
isomorphic to $\mathbb F_2$,)
while type $1$ refers to those isomorphic to $\mathbb F_1$.
The dimension counts are proven in \cite[Lemma 2.3]{coskun:the-enumerative-geometry-of-del-Pezzo-surfaces-via-degenerations}.
}
\label{table:del-Pezzo-conditions}
\end{table}

The remainder of this appendix is structured as follows:
In \autoref{sec:degree-5}, \autoref{sec:degree-6}, and
\autoref{sec:degree-8-type-0}, we show del Pezzo surfaces of degree $5, 6$, and
degree $8$, type $0$,
respectively,
satisfy weak interpolation. Our approach for
surfaces of degree $5,6$ and the degree $8$, type $0$ del Pezzo
surfaces is to find surfaces
through a collection of points by first finding a curve or threefold
containing the points and then a surface containing the curve or contained
in the threefold.
In \autoref{section:association}, we recall the technique
of association, in preparation for \autoref{sec:degree-7-8-9},
where we use association to prove weak interpolation of
del Pezzo surfaces of degree $7$, degree $8$, type 1, and degree $9$.  The
degree $9$ del Pezzo surface case is by far the most technically challenging
case in this appendix. 
We were led to the approach of association after reading Coble's remarkable
paper ``Associated Sets of Points" \cite{coble:associated-sets-of-points}.

\section{Degree 5 del Pezzos}

\label{sec:degree-5}
\begin{theorem}
	\label{thm:quinticDPinterpolation}
	Quintic del Pezzo surfaces satisfy
	weak interpolation.
\end{theorem}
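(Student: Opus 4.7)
Plan. By Table~\ref{table:del-Pezzo-conditions}, the component $\hilb X$ parametrizing smooth quintic del Pezzo surfaces $X \subset \bp^5$ has dimension $35 = 11 \cdot 3 + 2$ and codimension $3$, with $H^1(X, N_{X/\bp^5}) = 0$, so weak interpolation amounts to producing one such $X$ through any $11$ general points of $\bp^5$. By Theorem~\ref{theorem:equivalent-conditions-of-interpolation}, it suffices to exhibit a single closed point of the incidence correspondence mapping to a general point of $(\bp^5)^{11}$. I would follow the ``find an auxiliary threefold first'' approach announced in the introduction to the appendix.

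For the auxiliary threefold, take $Y = \bp^2 \times \bp^1 \subset \bp^5$ via the Segre embedding, the unique smooth cubic scroll threefold $\scroll{1,1,1}$. As a variety of minimal degree, $Y$ satisfies interpolation by Theorem~\ref{theorem:interpolation-minimal-surfaces}; since $\dim \hilb Y = 24$ and the codimension of $Y$ in $\bp^5$ is $2$, the cubic scrolls through $11$ general points form a $2$-dimensional family inside $\hilb Y$. On any such $Y$, write $\pic(Y) = \bz H_1 \oplus \bz H_2$ for the two factor pullbacks, so the hyperplane class is $H = H_1 + H_2$. The base-point-free linear system of divisors of bidegree $(2,1)$ has projective dimension $\dim |\sco_Y(2,1)| = h^0(\bp^2, \sco(2)) \cdot h^0(\bp^1, \sco(1)) - 1 = 11$; a general member $D$ is smooth by Bertini, has degree $H^2 \cdot (2H_1 + H_2) = 5$ in $\bp^5$, and by adjunction satisfies $K_D = (K_Y + D)|_D = -(H_1 + H_2)|_D = -H|_D$ with $K_D^2 = 5$, so $D$ is an anticanonically embedded smooth quintic del Pezzo.

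Imposing the $11$ point conditions drops $|\sco_Y(2,1)|$ by $11$ on a generic $Y$, so the $D$'s through all $11$ points form a $0$-dimensional family on any fixed $Y$; combined with the $2$-dimensional family of cubic scrolls $Y$ through the points, the pairs $(Y, D)$ form a $2$-dimensional family, matching the expected value $35 - 11 \cdot 3 = 2$. Finally, a general quintic del Pezzo $D \cong \blow_4 \bp^2$ admits exactly $5$ conic fibrations—corresponding to the classes $H - E_i$ for $i = 1, \ldots, 4$ and $2H - E_1 - E_2 - E_3 - E_4$—and each extends (via $\bp(\pi_* \sco_D(H))$, which for generic $D$ has balanced splitting $\sco_{\bp^1}(1)^{\oplus 3}$) to an embedding into some $\bp^2 \times \bp^1 \subset \bp^5$. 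Thus the forgetful map $(Y, D) \mapsto [D]$ is generically $5$-to-$1$ onto its image, which is therefore a $2$-dimensional family of smooth quintic del Pezzos through the $11$ points, giving weak interpolation.

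The main obstacle is verifying the two dimension counts above are exact: first, that $11$ general points of $\bp^5$ lying on a generic $Y$ impose $11$ independent conditions on $|\sco_Y(2,1)|$ (so the linear subsystem through the points is $0$-dimensional rather than positive-dimensional); and second, that a general member of this subsystem remains smooth and irreducible rather than acquiring an excess base locus forcing a reducible or singular quintic. Both should follow from standard Bertini and deformation arguments after $Y$ is chosen generically in its $2$-dimensional family, provided we also avoid degenerate cubic scrolls (singular scrolls or cones), which occur in positive codimension in $\hilb Y$ and can be sidestepped by genericity.
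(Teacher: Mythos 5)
Your proposal is correct and follows essentially the same route as the paper's proof: both produce a cubic scroll threefold through the $11$ points (using Theorem~\ref{theorem:interpolation-minimal-surfaces}) and then find the quintic del Pezzo as a divisor on it — the paper presents this divisor as the residual of a ruling plane in a quadric section of the scroll, while you work directly with the linear system $|\sco_Y(2,1)|$ on $\bp^2\times\bp^1$, which is the same linear system in disguise (a quadric containing a ruling plane $P$ of class $(0,1)$ cuts out $P + D$ with $D$ of class $(2,1)$). Your extra checks — the adjunction computation showing $D$ is anticanonically embedded, the $5$-to-$1$ count via conic bundles, and the caveats about independence of point conditions and smoothness of the residual — are exactly the details the paper leaves implicit.
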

\begin{proof}
	By \autoref{table:del-Pezzo-conditions}	it suffices to show quintic del Pezzo surfaces pass through $11$ points.
	
	Start by choosing $11$ points.
	Since degree $3$, dimension $3$	scrolls satisfy interpolation,
	by \autoref{theorem:interpolation-minimal-surfaces},
	there is such a scroll through any $12$ general points.
	Equivalently, there is a two dimensional family of scrolls
	through $11$ points, which sweeps out all of $\bp^5$.
	In any scroll in this two dimensional family, we will show
	there is a quintic del Pezzo.

	First, start with a scroll $X$ containing the $11$ points.
	Since $X$ is projectively normal and it's ideal is defined by
	$3$ quadrics, $h^0(X, \sco_X(2)) = 21 - 3 = 18.$
	Therefore, if we let $P$ be a ruling two plane of $X$,
	since $h^0(P, \sco_P(2)) = 6$, there will be an $18 - 6 = 12$
	dimensional space of quadrics on $X$ vanishing on $P$.
	Therefore, there will be a $12 - 11 = 1$ dimensional
	space of quadrics vanishing on $P$ and containing the $11$ points.
	However, the intersection of any such quadric with $X$
	is the union of $P$ and a quintic del Pezzo surface. 
	Therefore, we have produced a two dimensional family of
	quintic del Pezzo surfaces containing the $11$ points.
	\end{proof}

\begin{remark}
	\label{remark:}
	Another way to prove weak interpolation of quintic del Pezzo
	surfaces uses curves instead of three folds.
	Specifically, by \cite[Corollary 6]{stevens:on-the-number-of-points-determining-a-canonical-curve}, every genus $6$ canonical curve passes through
	$11$ general points. Then, since there is a quintic del Pezzo
	surface containing any genus 6 canonical curve,
	as proved in, among other places,
	\cite[5.8]{arbarelloH:canonical-curves-and-quadrics-of-rank-4}.
	Then, because there is a canonical curve containing these points
	and a quintic del Pezzo containing the canonical curve,
	there is a quintic del Pezzo containing these points.
\end{remark}

\section{Degree $6$ del Pezzos}
\label{sec:degree-6}
By \autoref{table:del-Pezzo-conditions}, weak interpolation for sextic del Pezzos amounts to showing that through $11$ general points $\Gamma_{11} \subset \bp^{6}$ there passes a sextic del Pezzo.

\begin{lemma}\label{lem:genus3degree9}
Through $11$ general points $\Gamma_{11} \subset \bp^{6}$ there passes a smooth degree $9$, genus $3$ curve. 
\end{lemma}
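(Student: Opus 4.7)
The plan is to deduce this immediately from the Atanasov--Larson--Yang interpolation theorem for nonspecial curves, namely \autoref{thm:NaskoEricDavid}. First I would verify that a smooth degree $9$, genus $3$ curve $C \subset \bp^6$ is nonspecial and embedded by a complete linear series: any degree $9$ line bundle $\scl$ on a genus $3$ curve satisfies $\deg \scl = 9 > 2g-2 = 4$, so $h^1(C,\scl) = 0$ and by Riemann--Roch $h^0(C,\scl) = d - g + 1 = 7$, whence $C$ is linearly normally embedded in $\bp^6$ and lies on the main component of the Hilbert scheme parameterizing nonspecial curves of degree $9$ and genus $3$ in $\bp^6$.

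Next I would check the numerics against the statement of \autoref{thm:NaskoEricDavid}. The main component has dimension $(r+1)d - (r-3)(g-1) = 7\cdot 9 - 3\cdot 2 = 57$, and passing through a general point of $\bp^6$ imposes $r-1 = 5$ conditions on a curve. Thus $11$ general points impose $55$ conditions, leaving the expected $2$-dimensional family of such curves containing them, which is both consistent with the weak interpolation count and within range of the theorem's guarantee. Since the exceptional triples in \autoref{thm:NaskoEricDavid} are only $(5,2,3)$, $(6,2,4)$, and $(7,2,5)$, and $(d,g,r) = (9,3,6)$ is not among these, strong interpolation holds for this component; in particular, a smooth degree $9$, genus $3$ curve passes through $11$ general points of $\bp^6$.

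There is no substantive obstacle in this step of the argument: the entire content of \autoref{lem:genus3degree9} is absorbed by \autoref{thm:NaskoEricDavid}, and the only genuine verification is the avoidance of the three exceptional triples together with the Riemann--Roch computation confirming nonspeciality. The real work of this section will come in the next step, where one must show that a sextic del Pezzo surface actually contains a general such curve; that is where the geometric content of \autoref{thm:sexticDPweakinterpolation} lies.
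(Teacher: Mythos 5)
Your proof is correct and matches the paper's: the paper simply notes that this is a special case of \autoref{thm:NaskoEricDavid}. Your Riemann--Roch verification of nonspeciality and the check that $(9,3,6)$ avoids the exceptional list are exactly the details one needs to fill in.
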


\begin{proof}
This is a special case of \autoref{thm:NaskoEricDavid}.
\end{proof}

Starting from a curve $C \subset \bp^{6}$ as in \autoref{lem:genus3degree9}, we can ``build'' a sextic del Pezzo surface containing $C$.

\begin{lemma}\label{lem:p+q+r}
Let $D$ be a general degree $9$ divisor class on a genus $3$ curve $C$. Then there exists a unique degree three effective divisor $P+Q+R$ such that $D \sim 3K_{C} - (P+Q+R)$.
\end{lemma}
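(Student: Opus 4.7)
The plan is to reduce the problem to a standard application of Riemann--Roch on the genus $3$ curve $C$. Setting $E := 3K_C - D$, note that $\deg E = 3\cdot(2g-2) - 9 = 12 - 9 = 3$. The lemma then asks precisely that the linear system $|E|$ contain a unique effective representative, namely $P+Q+R$. Since $D$ is general in $\mathrm{Pic}^9(C)$ and the map $\mathrm{Pic}^9(C) \to \mathrm{Pic}^3(C)$, $D \mapsto 3K_C - D$, is an isomorphism of torsors, $E$ is a general class in $\mathrm{Pic}^3(C)$.

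Next I would apply Riemann--Roch to $E$ on $C$: $h^0(E) - h^0(K_C - E) = \deg E - g + 1 = 3 - 3 + 1 = 1$. The divisor class $K_C - E$ has degree $4 - 3 = 1$. The key observation is that for a general degree $1$ class $L$ on $C$, one has $h^0(L) = 0$: the Abel--Jacobi image of $C$ inside $\mathrm{Pic}^1(C)$ is a $1$-dimensional subvariety of the $3$-dimensional $\mathrm{Pic}^1(C)$, so a general class does not lie on it. Since $D$, and hence $E$, is general, $K_C - E$ is a general class of degree $1$, hence ineffective, so $h^0(K_C - E) = 0$.

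Combining, $h^0(E) = 1$, which means $|E|$ consists of a single effective divisor. Calling this divisor $P+Q+R$ (allowing multiplicity), we have $P+Q+R \sim 3K_C - D$, equivalently $D \sim 3K_C - (P+Q+R)$, and the uniqueness of $P+Q+R$ as an effective divisor is exactly the statement $h^0(E)=1$.

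There is no real obstacle: the only subtlety is verifying that the genericity of $D$ transfers to genericity of $E$ (immediate, as above) and that a general degree-$1$ class on a curve of genus at least $2$ is ineffective (a dimension count on the Abel--Jacobi image). Both are standard, so the proof should be very short.
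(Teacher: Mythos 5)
Your proof is correct and takes essentially the same route as the paper: the paper invokes the birationality of the Abel--Jacobi map $\sym^3 C \to \pic^3 C$ together with Riemann--Roch for existence, which is exactly the content of your computation $h^0(E) = 1$ for a general degree-$3$ class $E = 3K_C - D$. The only difference is cosmetic — you spell out the Riemann--Roch step $h^0(E) - h^0(K_C - E) = 1$ and the vanishing of $h^0(K_C - E)$ by a dimension count, whereas the paper cites the birationality statement wholesale.
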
 

\begin{proof}
In general, if $X$ is a smooth genus $g$ curve, the natural map \[J \colon \sym^{g}C \to \pic^{g}C\] is a birational map.

In our setting, if $D$ is a general degree $9$ divisor class, $3K_C - D$ will be a general degree $3$ divisor class, and therefore can be represented by a unique degree three divisor class $P+Q+R$. Of course, by Riemann-Roch,
every degree three divisor class is effective.
\end{proof}

\begin{lemma}\label{lem:sexticcontainingC}
Let $\Gamma_{11} \subset \bp^{6}$ be general, and let $C$ be general among the degree $9$, genus $3$ curves containing $C$.  Then there is a smooth sextic del Pezzo surface containing $C$.
\end{lemma}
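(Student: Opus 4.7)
The plan is to construct a sextic del Pezzo containing $C$ by realizing $C$ as the strict transform of its canonical plane model under a blowup of $\bp^{2}$ at three points, using the classical presentation of a sextic del Pezzo as $\blow_{3}\bp^{2}$ anticanonically embedded.

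For general $C$ the abstract curve is non-hyperelliptic, so the canonical system embeds $C$ into $\bp^{2}$ as a smooth plane quartic $\bar{C}$. Let $D := \sco_{\bp^{6}}(1)|_{C}$ be the degree $9$ line bundle giving the embedding $C \hookrightarrow \bp^{6}$, and apply \autoref{lem:p+q+r} to produce the unique effective divisor $P+Q+R$ on $C$ with $D \sim 3K_{C} - (P+Q+R)$. Let $\bar{P},\bar{Q},\bar{R} \in \bp^{2}$ denote the canonical images of these three points; for generic $C$ they are distinct and noncollinear. Let $\pi \colon X' \to \bp^{2}$ be the blowup at $\bar{P},\bar{Q},\bar{R}$, with exceptional divisors $E_{1},E_{2},E_{3}$. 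Noncollinearity ensures that $-K_{X'} = 3H - E_{1} - E_{2} - E_{3}$ is very ample and embeds $X'$ as a smooth sextic del Pezzo in $\bp^{6}$. The strict transform $\tilde{C}$ of $\bar{C}$ has class $4H - E_{1} - E_{2} - E_{3}$, and $\pi|_{\tilde{C}}$ is an isomorphism onto $\bar{C}$, so I identify $\tilde{C} \cong C$.

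Next I would check that $X'$ contains $C$ in its given embedding. Under the identification $\tilde{C} \cong C$, one has $H|_{\tilde{C}} \cong K_{C}$, while each $E_{i}|_{\tilde{C}}$ is the single point of $C$ corresponding to one of $P,Q,R$, so
\[
-K_{X'}|_{\tilde{C}} \;\cong\; 3K_{C} - (P+Q+R) \;\cong\; D.
\]
The vanishing $H^{1}(X', -H) = 0$ shows $H^{0}(X',-K_{X'}) \twoheadrightarrow H^{0}(\tilde{C}, D)$, so $\tilde{C} \hookrightarrow X' \hookrightarrow \bp^{6}$ is a linearly normal embedding of $C$ by $D$. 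Any two such embeddings differ by an element of $\pgl_{7}$, and applying such an element to $X'$ produces a smooth sextic del Pezzo in $\bp^{6}$ containing $C$ in its prescribed embedding.

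The main obstacle will be verifying the genericity statements on $(P,Q,R)$, namely that the three points are distinct and that their canonical images are noncollinear. Both are nonempty Zariski-open conditions on $C$. The cleanest verification is a dimension count: the incidence variety of pairs $(X', \tilde{C})$, with $X'$ a smooth sextic del Pezzo in $\bp^{6}$ and $\tilde{C} \in |4H - E_{1} - E_{2} - E_{3}|$, has dimension $46 + 11 = 57$, matching the dimension of the Hilbert scheme of smooth degree $9$, genus $3$ curves in $\bp^{6}$; the forgetful map is dominant by the construction above, and since $C$ general through general $\Gamma_{11}$ is also generic in this Hilbert scheme, the construction applies.
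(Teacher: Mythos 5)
Your proof is correct and follows essentially the same route as the paper: both realize the sextic del Pezzo as $\bp^{2}$ blown up at the three canonical images of the divisor $P+Q+R$ from \autoref{lem:p+q+r}, anticanonically embedded --- which is exactly the paper's ``image of $\bp^{2}$ under cubics through three noncollinear points.'' You make explicit the linear-normality and $\pgl_{7}$ steps, and replace the paper's assertion that $P,Q,R$ are distinct and noncollinear for general $C$ with a dimension count, but the core construction is identical.
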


\begin{proof}
Embed $C \subset \bp^{2}$ via its canonical series $|K_{C}|$. The linear system $|3K_{C} - (P + Q + R)|$ on $C$ is cut out by plane cubics passing through the three points $P + Q + R$. Under the generality conditions, we can assume $P,Q,R$ are not collinear in $\bp^{2}$.  

The linear system of plane cubics through three noncollinear points maps $\bp^{2}$ birationally to a smooth sextic del Pezzo surface in $\bp^{6}$.
\end{proof}

\begin{theorem}\label{thm:sexticDPweakinterpolation}
Sextic del Pezzo surfaces satisfy weak interpolation.
\end{theorem}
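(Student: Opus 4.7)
The plan is to combine the two preceding lemmas directly. By \autoref{table:del-Pezzo-conditions}, sextic del Pezzo surfaces have $\dim \hilb X = 46$ and codimension $4$ in $\bp^6$, so weak interpolation amounts to exhibiting a sextic del Pezzo through an arbitrary configuration $\Gamma_{11}$ of $11$ general points in $\bp^6$. The strategy I would take is the ``build by a curve'' method already established in \autoref{sec:degree-6}: first interpolate a curve through $\Gamma_{11}$, then find a surface containing the curve.

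First, I would apply \autoref{lem:genus3degree9} to produce a smooth degree $9$, genus $3$ curve $C$ passing through $\Gamma_{11}$. Second, I would invoke \autoref{lem:sexticcontainingC} to obtain a smooth sextic del Pezzo surface $X$ containing $C$; then $\Gamma_{11} \subset C \subset X$ exhibits the desired surface. By \autoref{theorem:equivalent-conditions-of-interpolation} (condition \ref{interpolation-naive} adapted to the weak case), this shows that the component of the Hilbert scheme of sextic del Pezzo surfaces satisfies weak interpolation.

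The only subtlety, and the step where I would be most careful, is verifying that the curve $C$ we pick can be taken to be general among degree $9$, genus $3$ curves through $\Gamma_{11}$, in the sense required by \autoref{lem:sexticcontainingC}. Concretely, I would argue as follows: \autoref{lem:genus3degree9} (itself an instance of \autoref{thm:NaskoEricDavid}) gives a positive-dimensional family of such curves through $\Gamma_{11}$, since the Hilbert scheme of nonspecial degree $9$ genus $3$ curves in $\bp^6$ has dimension $10 \cdot 9 + (1-3)(6-3) = 90 - 6 = 84 > 11 \cdot 4 = 44$. Sweeping the incidence correspondence over $(\bp^6)^{11}$, by irreducibility (\autoref{proposition:irreducible-incidence}), the fiber over a general $\Gamma_{11}$ meets a dense open subset of the curve Hilbert scheme, so in particular contains a $C$ whose divisor class $D$ in $\pic^9 C$ is general — which is all \autoref{lem:p+q+r} and \autoref{lem:sexticcontainingC} require to produce a noncollinear triple $P+Q+R$ and hence a smooth sextic del Pezzo $X$ containing $C$.

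The hard work has already been front-loaded into \autoref{lem:genus3degree9} (via Atanasov--Larson--Yang) and \autoref{lem:sexticcontainingC} (via the classical realization of sextic del Pezzos as blowups of $\bp^2$ at three noncollinear points), so the theorem itself is essentially a two-line concatenation; the only real content left to check is the genericity propagation described above, which is routine.
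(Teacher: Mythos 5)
Your proof takes essentially the same approach as the paper: combine \autoref{lem:genus3degree9} and \autoref{lem:sexticcontainingC} to first interpolate a degree $9$, genus $3$ curve through $\Gamma_{11}$ and then a sextic del Pezzo containing that curve. Your extra discussion of genericity propagation is a reasonable refinement that the paper leaves implicit in the hypothesis of \autoref{lem:sexticcontainingC}, though it contains two arithmetic slips that happen not to affect the conclusion: the Hilbert scheme of nonspecial degree $9$ genus $3$ curves in $\bp^6$ has dimension $(r+1)d - (r-3)(g-1) = 7\cdot 9 - 3\cdot 2 = 57$, not $84$; and since a curve in $\bp^6$ has codimension $5$, the relevant comparison is $57 > 11 \cdot 5 = 55$, not $84 > 11 \cdot 4 = 44$ (the latter $44$ is the number of conditions the points impose on the \emph{surface}, not the curve), so the family of such curves through $\Gamma_{11}$ is only $2$-dimensional rather than the large family your numbers suggest, but still positive-dimensional as needed.
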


\begin{proof}
To show a sextic del Pezzo satisfies interpolation,
by \autoref{table:del-Pezzo-conditions}, it suffices to show it passes through $11$ general points.
By ~\autoref{lem:genus3degree9}, there is a degree $9$ genus $3$
curve through $11$ general points in $\bp^6$. By \autoref{lem:sexticcontainingC},
there is a sextic del Pezzo containing a degree $3$ genus $9$ curve in
$\bp^6$.
\end{proof}

\section{The degree $8$, type 0 del Pezzos}
\label{sec:degree-8-type-0}
Next we consider $\bp^{1} \times \bp^{1} \subset \bp^{8}$ embedded by the linear system of $(2,2)$-curves. To prove weak interpolation, by \autoref{table:del-Pezzo-conditions}, we want to show there is such a surfaces passing through $12$ general points $\Gamma_{12} \subset \bp^{8}$.  As in the sextic del Pezzo case, we will again ``build'' a surface starting from a curve.

\begin{lemma}\label{lem:genus2degree10}
Through $12$ general points $\Gamma_{12} \subset \bp^{8}$ there passes a smooth genus $2$ curve of degree $10$.
\end{lemma}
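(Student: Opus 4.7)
The plan is to invoke \autoref{thm:NaskoEricDavid} (the Atanasov--Larson--Yang theorem) directly. A smooth genus $2$ curve $C$ of degree $10$ in $\mathbb{P}^8$ is embedded by a line bundle $L$ of degree $10$, which is nonspecial since $\deg L = 10 \geq 2g-1 = 3$. By Riemann--Roch, $h^0(C,L) = 10 - 2 + 1 = 9$, so the embedding is by the complete linear series into $\mathbb{P}^8$; in particular $C$ is nonspecial and linearly normal, so it lies on the main component of the Hilbert scheme considered in \autoref{thm:NaskoEricDavid}.

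First I would verify the triple $(d,g,r) = (10,2,8)$ is not among the exceptional cases $\{(5,2,3),(6,2,4),(7,2,5)\}$ excluded in \autoref{thm:NaskoEricDavid}. Since $(10,2,8)$ is not on this list, the main component parameterizing smooth nonspecial genus $2$ degree $10$ curves in $\mathbb{P}^8$ satisfies strong interpolation. A dimension count confirms that the numerics are compatible with passing through $12$ points: the main component has dimension
\[
(r+1)d - (r-3)(g-1) = 9\cdot 10 - 5\cdot 1 = 85,
\]
while containing $12$ general points imposes $12\cdot(r-1) = 12\cdot 7 = 84$ conditions, leaving a one-parameter family. In particular, weak interpolation (passing through $12$ general points) is a special case of the strong interpolation guaranteed by \autoref{thm:NaskoEricDavid}.

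Finally, I would remark on smoothness: the general member of the main component is by definition smooth, so smoothness is an open condition on the Hilbert scheme. Since strong interpolation gives a positive-dimensional family of curves in this main component passing through our $12$ general points, a general member of this family is smooth, yielding the desired smooth genus $2$, degree $10$ curve through $\Gamma_{12}$. The ``hard part'' here is entirely absorbed into the cited theorem of Atanasov--Larson--Yang; no additional work (no degeneration, no association argument) is needed at this step, which is why the lemma is so compact.
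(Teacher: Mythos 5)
Your proof is correct and takes exactly the same route as the paper, which simply cites \autoref{thm:NaskoEricDavid} as a black box; your additional verifications (nonspeciality, the Riemann--Roch count giving $h^0 = 9$ so the curve lives in $\bp^8$, exclusion of the exceptional triples, and the dimension count $85 - 12\cdot 7 = 1 \geq 0$) are all accurate and simply spell out why the cited theorem applies.
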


\begin{proof}
This is a special case of \autoref{thm:NaskoEricDavid}.
\end{proof}

\begin{lemma}\label{lem:degree10divisors}
A general degree $5$ divisor class $D$ on a smooth genus $2$ curve may be written uniquely as $K_{C} + A$, where $A$ is a basepoint free degree $3$ divisor class.  A general degree $10$ divisor class $E$ can be expressed as $2(D)$ for $2^{4}$ distinct degree $5$ divisor classes $D$.
\end{lemma}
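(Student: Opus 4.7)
The plan is to treat the two assertions separately, as they are essentially independent consequences of (i) Riemann--Roch plus Clifford on a genus $2$ curve and (ii) the structure of the multiplication-by-$2$ map on the Jacobian.

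For the first assertion, given a degree $5$ divisor class $D$ on $C$, define $A := D - K_C$, which is a uniquely determined degree $3$ class since $K_C$ is fixed. Existence and uniqueness of the decomposition $D = K_C + A$ are therefore immediate; the content of the statement is that $A$ is basepoint free for general $D$. Since translation by $K_C$ is an isomorphism $\pic^3(C) \to \pic^5(C)$, it suffices to show that a general degree $3$ class $A$ on $C$ is basepoint free. By Riemann--Roch, any degree $3$ class has $h^0(A) = 2$, so $|A|$ defines a $g^1_3$. A point $p \in C$ is a basepoint of $|A|$ exactly when $h^0(A - p) = 2$; but $A - p$ has degree $2$, so Clifford's theorem (in its equality case) forces $A - p \sim K_C$, i.e.\ $A \sim K_C + p$. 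Hence the locus of classes in $\pic^3(C)$ whose linear series has a basepoint is the image of the $1$-dimensional map $C \to \pic^3(C)$, $p \mapsto K_C + p$, which is a proper closed subset of the $2$-dimensional variety $\pic^3(C)$. Thus a general $A$, and hence a general $D$, is basepoint free.

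For the second assertion, consider the ``multiplication by $2$'' map
\begin{equation*}
  \mu \colon \pic^5(C) \longrightarrow \pic^{10}(C), \qquad D \mapsto 2D.
\end{equation*}
After choosing a basepoint, $\pic^5(C)$ and $\pic^{10}(C)$ are torsors under $J := \pic^0(C)$, and $\mu$ becomes identified with the multiplication-by-$2$ isogeny $[2] \colon J \to J$. Since we are in characteristic $0$, $[2]$ is \'etale of degree $\deg[2] = 2^{2g} = 2^4$, with kernel the $2$-torsion subgroup $J[2] \cong (\bz/2)^{4}$. Every fiber of $\mu$ therefore has exactly $16 = 2^4$ geometric points, which is precisely the claim.

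Neither step presents a genuine obstacle: the first is a direct application of Clifford's theorem together with a dimension count in $\pic^3(C)$, and the second is a standard property of isogenies of abelian varieties in characteristic $0$. The only point that deserves a brief word of justification in the write-up is the equality case of Clifford, which is where the condition $C$ smooth of genus $2$ (so that $K_C$ is the unique degree $2$ class with $h^0 = 2$) is used.
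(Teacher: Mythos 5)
Your proof is correct, and since the paper merely says ``Similar to \autoref{lem:p+q+r}.\ We leave the details to the reader,'' your write-up supplies precisely the details that were omitted. It is worth noting that the referenced \autoref{lem:p+q+r} relies on the Abel--Jacobi map $\sym^{g}C \to \pic^{g}C$ being birational, which is the case in genus~$3$ for degree~$3$ classes but does \emph{not} apply to degree~$3$ classes on a genus~$2$ curve (there $h^{0}(A)=2$ always, so the Abel--Jacobi map $\sym^{3}C\to\pic^{3}C$ has one-dimensional fibers). Your Riemann--Roch argument identifying the basepoint locus with the image of $p\mapsto K_{C}+p$ (equivalently with the Clifford equality locus, which in genus~$2$ forces $A-p\sim K_{C}$) is therefore the right substitute; it pins the basepoint locus down as a curve inside the surface $\pic^{3}(C)$, which is exactly what the generality hypothesis needs. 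The second assertion, about the $2^{4}$ preimages under squaring, is not addressed in \autoref{lem:p+q+r} at all, and your observation that multiplication by $2$ on $\pic^{0}(C)\cong J$ is a degree $2^{2g}=2^{4}$ \'etale isogeny in characteristic $0$ is the standard, correct way to get both the count and the distinctness. In short: correct, complete, and cleaner than a literal transcription of the paper's reference argument would be.
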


\begin{proof}
Similar to \autoref{lem:p+q+r}. We leave the details to the reader.
\end{proof}

\begin{lemma}\label{lem:buildP1P1}
The general genus $2$, degree $10$ curve $C \subset \bp^{8}$ is contained in a $\bp^{1} \times \bp^{1}$ embedded via the linear system of $(2,2)$ curves.
\end{lemma}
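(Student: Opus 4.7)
The plan is to build the surface from the curve by choosing a square root of the hyperplane class on $C$ and exhibiting $C$ as a $(2,3)$-divisor on a $\bp^{1}\times\bp^{1}$ embedded via $|(2,2)|$. Let $E := \sco_{\bp^{8}}(1)|_{C}$ denote the degree $10$ hyperplane class, so the embedding $C \hookrightarrow \bp^{8}$ is by a complete linear series $|E|$ of dimension $8$. By \autoref{lem:degree10divisors}, we may write $E \sim 2D$ for one of $16$ degree $5$ divisor classes $D$, and furthermore $D \sim K_{C} + A$ for a unique basepoint-free degree $3$ divisor class $A$.

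First I would verify the numerics. Since $\deg K_{C} = 2$ and $K_{C}$ is basepoint free, the canonical series gives the hyperelliptic map $\phi_{1} : C \to \bp^{1}$ of degree $2$. By Riemann--Roch, $h^{0}(A) = 2$ (as $A$ is nonspecial of degree $3$), and by assumption $|A|$ is basepoint free, so it yields a degree $3$ map $\phi_{2} : C \to \bp^{1}$. Crucially, $\phi_{2}$ does not factor through $\phi_{1}$: a pullback from the hyperelliptic quotient has even degree, whereas $\deg A = 3$. Hence the product $\phi := (\phi_{1},\phi_{2}) : C \to \bp^{1}\times\bp^{1}$ is birational onto its image. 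By construction, the image is a divisor of bidegree $(2,3)$, and its arithmetic genus $(2-1)(3-1) = 2$ matches the geometric genus of $C$, so $\phi(C)$ is smooth for a generic choice of the data (a standard Bertini/dimension-count argument, using genericity of $C$ and $A$, shows the $(2,3)$-image can be made smooth). Thus $\phi$ is an embedding.

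Next I would match the two embeddings into $\bp^{8}$. Composing $\phi$ with the anticanonical embedding $\iota : \bp^{1}\times\bp^{1} \hookrightarrow \bp^{8}$ given by $|(2,2)|$, the pulled-back linear series on $C$ is
\[
\iota^{*}\sco_{\bp^{1}\times\bp^{1}}(2,2)\big|_{C} \;\cong\; \sco_{C}(2K_{C} + 2A) \;\cong\; \sco_{C}(2D) \;\cong\; E.
\]
Since $h^{0}(E) = 9$ and $|(2,2)|$ on $\bp^{1}\times\bp^{1}$ has the same dimension, the composition $\iota\circ\phi$ is given by the complete linear series $|E|$. Therefore $\iota\circ\phi$ and the original inclusion $C \hookrightarrow \bp^{8}$ differ only by an element of $\mathrm{PGL}_{9}$, and applying this projective automorphism to $\iota(\bp^{1}\times\bp^{1})$ produces the desired $\bp^{1}\times\bp^{1} \subset \bp^{8}$ containing $C$.

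The main obstacle I anticipate is the smoothness/embedding step: one must genuinely verify that for the generic $C$ (and thus generic $A$) the product map $\phi$ is a closed immersion rather than merely birational. The ingredients are already in place---the numerical coincidence of genera and the non-factorization through the hyperelliptic quotient---so I would carry this out by a short Bertini-type argument, noting that the locus of singular $(2,3)$-curves forms a proper closed subset of $|(2,3)|$ and invoking genericity of $E$ to avoid it. Once this is established, the construction produces a $\bp^{1}\times\bp^{1}$ containing $C$, completing the proof.
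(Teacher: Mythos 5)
Your proof takes essentially the same approach as the paper: decompose the hyperplane class $H = 2D = 2(K_C + A)$ via \autoref{lem:degree10divisors}, map $C \to \bp^1 \times \bp^1$ by $(|K_C|, |A|)$ to get a $(2,3)$-curve, and then observe that $|(2,2)|$ restricts to $|H|$ so the anticanonical embedding of this $\bp^1 \times \bp^1$ recovers the original embedding of $C$ up to $\mathrm{PGL}_9$. You supply more detail than the paper on why $\phi$ is a closed immersion, and that detail is correct; in fact your genus computation already settles it cleanly (if $\phi$ is birational onto a bidegree $(2,3)$ curve, then the image has $p_a = 2 = g(C)$, forcing the image to be smooth and $\phi$ to be an isomorphism onto it), so the extra Bertini-type hedge in your final paragraph is unnecessary once you have established birationality from the non-factorization of $\phi_2$ through the hyperelliptic quotient.
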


\begin{proof}
Let $H$ denote the degree $10$ hyperplane divisor class of $C \subset \bp^{8}$. Write $H = 2D$ for some degree five divisor class $D$, and write $D \sim K_{C} + A$ for a unique degree $3$ divisor class $A$. By generality assumptions, $A$ is basepoint free, and we obtain a map \[f \colon C \to \bp^{1} \times \bp^{1}\] given by the pair of series $(|K_{C}|, |A|)$. This map embeds $C$ as a $(2,3)$ curve.

The linear system $|(2,2)|$ on this $\bp^{1} \times \bp^{1}$ restricts to the complete linear system $2D$ on $C$, and therefore induces the original embedding $C \subset \bp^{8}$.  The image of $\bp^{1} \times \bp^{1}$ under the system $|(2,2)|$ is therefore the surface we desire.
\end{proof} 

\begin{theorem}\label{thm:weakinterpolationP1xP1}
$\bp^{1} \times \bp^{1} \subset \bp^{8}$ embedded via the linear system of $(2,2)$ curves satisfies weak interpolation.
\end{theorem}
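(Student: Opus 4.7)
The plan is to chain together the two preceding lemmas. By \autoref{lem:genus2degree10}, through $12$ general points $\Gamma_{12} \subset \bp^8$ there passes a smooth genus $2$ curve $C$ of degree $10$. By \autoref{lem:buildP1P1}, a general such curve is contained in a $\bp^1\times\bp^1 \hookrightarrow \bp^8$ embedded by the complete linear system of $(2,2)$-curves. Combining these gives a surface of the required type through $\Gamma_{12}$, which by \autoref{table:del-Pezzo-conditions} is precisely the content of weak interpolation.

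First I would fix $\Gamma_{12}$ general and apply \autoref{lem:genus2degree10} to produce a smooth genus $2$, degree $10$ curve $C$ with $\Gamma_{12} \subset C \subset \bp^8$. The only genuine subtlety is ensuring that the curve $C$ produced this way is \emph{sufficiently general} among genus $2$, degree $10$ curves in $\bp^8$ so that the hypotheses implicit in \autoref{lem:buildP1P1} (namely that the degree $5$ divisor class $D$ with $H \sim 2D$ exists, and the associated degree $3$ divisor class $A$ in $D \sim K_C + A$ is basepoint free) are satisfied. This follows because the conditions used in \autoref{lem:degree10divisors} and \autoref{lem:buildP1P1} are open conditions on the component of the Hilbert scheme of such curves, and by \autoref{thm:NaskoEricDavid} the map from this component to $(\bp^8)^{12}$ is dominant, so the preimage of a general point of $(\bp^8)^{12}$ can be taken to lie in any given dense open locus.

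Next I would invoke \autoref{lem:buildP1P1} to produce a $\bp^1 \times \bp^1 \subset \bp^8$, embedded by $|(2,2)|$, containing $C$. Since $\Gamma_{12} \subset C$, this surface automatically contains $\Gamma_{12}$. By the dimension count in \autoref{table:del-Pezzo-conditions} (the relevant Hilbert scheme component has dimension $74 = 12 \cdot 6 + 2$, and contains through a general point a $6$-codimensional subfamily), containing $12$ general points is exactly the condition of weak interpolation for this component, completing the argument.

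The main obstacle is really just the genericity bookkeeping described above; the construction of both the curve and the surface has been carried out in the preceding lemmas, so no new geometric input is required. I do not expect any difficulty in verifying that the curve built from a general $\Gamma_{12}$ lands in the open locus where \autoref{lem:buildP1P1} applies, since the relevant conditions (existence of a square root of the hyperplane class, basepoint freeness of $A$) cut out nonempty open subsets of the parameter space of $(C, H)$.
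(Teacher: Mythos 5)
Your proposal is correct and follows exactly the same route as the paper: combine \autoref{lem:genus2degree10} with \autoref{lem:buildP1P1}. The paper's proof is simply that one-line combination, so your extra paragraph tracking the genericity (that the curve through a general $\Gamma_{12}$ lies in the open locus where \autoref{lem:buildP1P1} applies, via dominance from \autoref{thm:NaskoEricDavid}) is a welcome bit of bookkeeping the paper leaves implicit.
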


\begin{proof}
	This follows by combining \autoref{lem:genus2degree10} and \autoref{lem:buildP1P1}.
\end{proof}

\begin{remark}
An interesting feature of this solution to our interpolation problem is that the surfaces we've constructed through the $12$ general points $\Gamma_{12}$ are in fact {\sl special} among the two dimensional family of surfaces passing through these points.  Indeed, the set $\Gamma_{12}$ is contained in a $(2,3)$ curve on the surfaces we've constructed, but a general set of twelve points on $\bp^{1}\times \bp^{1}$ does not lie on any $(2,3)$ curve!
\end{remark}

\section{Interlude: association} 
\label{section:association}

This section is meant to provide the reader with basic familiarity with {\sl association}, also known as the {\sl Gale transform}.  Association will be a recurring tool in the rest of the appendix. We closely follow the exposition in \cite{eisenbudP:the-projective-geometry-of-the-gale-transform}.

\subsection{Preliminaries} Throughout this section, we let $\Gamma$ be a Gorenstein scheme, finite over $\bk$ of length $\gamma = r + s + 2$, $L$ an invertible sheaf on $\Gamma$, and $V \subset H^{0}(\Gamma, L)$ a vector space of dimension $r+1$.  In practice, $\Gamma$ will be given as embedded in projective space $\bp^{r}$, $L$ will be $\so_{\Gamma}(1)$, and $V$ will be the image of the restriction map \[H^{0}(\bp^{r}, \so_{\bp^{r}}(1) ) \to H^{0}(\Gamma, \so_{\Gamma}(1)).\] 
For brevity, we will often refer to the data of the pair $(V, L)$ as a {\sl linear system} on $\Gamma$. For clarity, we will sometimes put subscripts on $\Gamma$ emphasizing the number of points.

The Gorenstein hypothesis on $\Gamma$ says that the dualizing sheaf $\omega_{\Gamma}$ is a line bundle, and furthermore Serre duality holds: There is a trace map $t \colon H^{0}(\Gamma, \omega_{\Gamma}) \to \bk$, and for any line bundle $L$ the {\sl trace pairing}  \[H^{0}(\Gamma, L) \otimes H^{0}(\Gamma, L^\vee \otimes \omega_{\Gamma}) \to \bk\] is nondegenerate.  

Therefore if $V$ is a $r+1$ dimensional subspace of $H^{0}(\Gamma, L)$, we obtain a natural $s+1$-dimensional subspace \[V^{\perp} \subset H^{0}(\Gamma,L^\vee \otimes \omega_{\Gamma}),\] namely the orthogonal complement of $V$ under the trace pairing.

\begin{definition}\label{def:associatedseries}
Let $\Gamma$ be a length $\gamma$ Gorenstein scheme over $\bk$, and let $(V, L)$ be a linear system on $\Gamma$. Then we say $(V^{\perp}, L^\vee \otimes \omega_{\Gamma})$ is the {\sl associated linear system} of $(V,L)$.
\end{definition} 

Notice that association provides a correspondence between {\sl vector spaces} $V \leftrightarrow V^{\perp}$, and not between vector spaces with chosen bases. Geometrically this means association gives a bijection between the $\pgl_{r+1}(\bk)$--orbits of Gorenstein $\Gamma \subset \bp^{r}$ and $\pgl_{s+1}(\bk)$--orbits of Gorenstein $\Gamma \subset \bp^{s}$.  Given this, in the future when we refer to ``the associated set,'' we really mean the $\pgl_{s}$--orbit.  Moreover, it is known that association provides an isomorphism of GIT quotients \[(\bp^{r})^{\gamma}//\pgl_{r+1}(\bk) \xrightarrow{\sim} (\bp^{s})^{\gamma}//\pgl_{s+1}(\bk),\] and therefore takes general subsets to general subsets.

\subsection{Inducing association from an ambient linear system} Association is a very algebraic construction. Therefore, it is interesting to find geometric constructions which ``induce'' association for a set $\Gamma \subset \bp^{r}$.  To see many examples of the geometry underlying association, we refer to \cite{eisenbudP:the-projective-geometry-of-the-gale-transform}. 

In \cite[p.\ 2]{coble:associated-sets-of-points}, Coble asks, in less modern language, whether there exists a linear system $W^{s+1} \subset H^{0}(\bp^{r}, \so(d))$ whose base locus is disjoint from $\Gamma$, and which restricts on $\Gamma$ to the associated linear system.

A linear system $W^{s+1} \subset H^{0}(\bp^{r}, \so(d))$ yields a rational map \[\phi_{W} \colon \bp^{r} \dashrightarrow \bp^{s}.\] 

\begin{definition}
Let $\Gamma \subset \bp^{r}$ be a Gorenstein scheme of degree $\gamma = r+s+2$. 
An {\sl ambient linear system} is any vector space $V \subset  H^{0}(\bp^{r}, \so(d))$.
An ambient linear system $W^{s+1} \subset  H^{0}(\bp^{r}, \so(d))$  {\sl induces association for} $\Gamma$ if its base locus is disjoint from $\Gamma$ and if the image $\phi_{W}(\Gamma) \subset \bp^{s}$ is the associated set of $\Gamma$.  
\end{definition}
  It is important to note, as Coble does, that an ambient system inducing association won't be unique in general.

  When association is induced from an ambient system, we automatically get a variety $\phi_{W}(\bp^{r}) \subset \bp^{s}$ containing $\Gamma$. Our task is ultimately to find an ambient linear system $W$ which induces association  for $\Gamma$, and such that $\phi_{W}(\bp^{r})$ is a prescribed type of variety, e.g. Veronese images, del Pezzo surfaces, etc. 

\subsection{Goppa's Theorem} Goppa's theorem is frequently useful when looking for ambient systems inducing association.  

\begin{theorem}[Goppa's Theorem]\label{thm:goppa}
Let $f \colon B \to \bp^{r}$ be a map from a smooth curve given by a nonspecial, complete linear system $|H|$. Let $\Gamma \subset B$ be a scheme of length $\gamma = r+s+2$.  Then association for $\Gamma$ is induced by the restriction of the linear system $|K_{B}+\Gamma-H|$ to $\Gamma$.
\end{theorem}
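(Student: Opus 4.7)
The plan is to identify the restriction map
$$\rho\colon H^{0}(B, K_{B}+\Gamma-H) \to H^{0}(\Gamma, (K_{B}+\Gamma-H)|_{\Gamma})$$
with the linear series on $\Gamma$ associated to $V := \im(H^{0}(B,H) \to H^{0}(\Gamma, L))$, where $L := H|_{\Gamma}$. By adjunction on the smooth curve $B$, the dualizing sheaf satisfies $\omega_{\Gamma} = (K_{B}+\Gamma)|_{\Gamma}$, so the target of $\rho$ is naturally $H^{0}(\Gamma, L^{\vee}\otimes \omega_{\Gamma})$, the correct home for the associated system per \autoref{def:associatedseries}. Throughout, I will use the standard nondegeneracy hypothesis $h^{0}(B, H-\Gamma) = 0$, equivalent to saying $\Gamma$ imposes independent conditions on $|H|$, which ensures that $V$ has full dimension $r+1$ and that $\Gamma$ injects into $\bp^{r}$ under $f$.

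The first step is to establish $\im(\rho) \subseteq V^{\perp}$ using the residue theorem on $B$. Given any $\tilde{u} \in H^{0}(B, K_{B}+\Gamma-H)$ and $\tilde{t} \in H^{0}(B, H)$, the product $\tilde{t}\cdot\tilde{u}$ is a global section of $\sco_{B}(K_{B}+\Gamma)$, which we view as a meromorphic differential on $B$ whose only poles lie along $\Gamma$. The sum of residues of any meromorphic differential on $B$ vanishes, and the abstract trace map $t\colon H^{0}(\Gamma, \omega_{\Gamma}) \to \bk$ may be identified with the sum of residues at points of $\Gamma$ via the adjunction isomorphism $\omega_{\Gamma} \cong \omega_{B}(\Gamma)|_{\Gamma}$; hence $t(\tilde{t}|_{\Gamma} \cdot \rho(\tilde{u})) = 0$, which says precisely that $\rho(\tilde{u}) \in V^{\perp}$.

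The second step will be a dimension count upgrading this inclusion to equality. From the short exact sequence
$$0 \to \sco_{B}(K_{B}-H) \to \sco_{B}(K_{B}+\Gamma-H) \to \sco_{\Gamma}(K_{B}+\Gamma-H) \to 0$$
and Serre duality $H^{0}(B, K_{B}-H) \cong H^{1}(B, H)^{\vee} = 0$ (by nonspeciality of $|H|$), the map $\rho$ is injective. A Riemann--Roch calculation, using $\deg H = r+g$ (forced by $h^{0}(B,H) = r+1$ and nonspeciality), gives $\chi(K_{B}+\Gamma-H) = s+1$; the vanishing $h^{1}(B, K_{B}+\Gamma-H) = h^{0}(B, H-\Gamma)^{\vee} = 0$ is exactly our nondegeneracy hypothesis, so $h^{0}(B, K_{B}+\Gamma-H) = s+1$. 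On the other side, $\dim V^{\perp} = h^{0}(\Gamma, L) - \dim V = (r+s+2) - (r+1) = s+1$. Matching these dimensions against the inclusion from Step 1 will force $\im(\rho) = V^{\perp}$, completing the proof.

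The main obstacle I expect is the rigorous identification of the abstract Serre-duality trace pairing on the Gorenstein scheme $\Gamma$ with the residue pairing inherited from the ambient curve $B$, particularly when $\Gamma$ is nonreduced. For reduced $\Gamma$ this is a direct consequence of the classical one-variable residue theorem; in general one must invoke the fundamental local isomorphism identifying $\omega_{\Gamma}$ with $(K_{B}+\Gamma)|_{\Gamma}$ in a way compatible with traces, which is standard but requires care. Everything else reduces to routine cohomology and Riemann--Roch bookkeeping.
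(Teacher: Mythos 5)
The paper does not actually prove \autoref{thm:goppa}; it is stated as a known result (Goppa's theorem) and the surrounding section defers to \cite{eisenbudP:the-projective-geometry-of-the-gale-transform} for the theory of association. So there is no in-paper argument to compare against. Your proposal follows the standard route for this theorem and is essentially correct: the residue theorem gives the inclusion $\im(\rho)\subseteq V^{\perp}$, and Riemann--Roch plus Serre duality matches dimensions. The numerics all check out: $\deg H = r+g$, $\chi(K_{B}+\Gamma-H)=s+1$, $h^{0}(K_{B}-H)=h^{1}(H)^{\vee}=0$ gives injectivity of $\rho$, $h^{1}(K_{B}+\Gamma-H)=h^{0}(H-\Gamma)^{\vee}=0$, and $\dim V^{\perp}=\gamma-(r+1)=s+1$.

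Two small remarks. First, you are right that the hypothesis $h^{0}(B,H-\Gamma)=0$ is not written in the theorem statement but must be assumed; it is exactly what makes $\dim V = r+1$, without which ``association for $\Gamma$'' is not even defined in the sense of the paper's \autoref{def:associatedseries}. Flagging this is the right instinct. Second, your parenthetical that this hypothesis ``ensures $\Gamma$ injects into $\bp^{r}$'' is a slight overstatement --- imposing independent conditions on $|H|$ is equivalent to $\dim V = r+1$, not to $f|_{\Gamma}$ being a closed immersion --- but this plays no role in the argument (association is an intrinsic statement about $(\Gamma, L, V)$ and does not require $\Gamma\hookrightarrow\bp^{r}$), so it does not affect the proof. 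The caveat you raise about identifying the abstract Gorenstein trace with the residue pairing via the exact sequence $0\to\omega_{B}\to\omega_{B}(\Gamma)\to\omega_{\Gamma}\to 0$ is genuine but standard; the key point is that your $\tilde t\cdot\tilde u$ lives in the image of $H^{0}(B,\omega_{B}(\Gamma))\to H^{0}(\Gamma,\omega_{\Gamma})$, hence is killed by the connecting map to $H^{1}(B,\omega_{B})\cong\bk$, which is the trace.
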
 

In practice, we will typically find a curve $B \subset \bp^{r}$ passing through $\Gamma$, and will try to induce association by realizing the linear system $|K_{B} + \Gamma - H|$ on $B$ via an ambient system on $\bp^{r}$.

\section{Degree $9$ del Pezzos}
\label{sec:degree-7-8-9}
This section establishes weak interpolation for degree $9$ Del Pezzo surfaces, which are $3$-Veronese images of $\bp^{2}$ in $\bp^{9}$.  As we will see
in \autoref{ssec:degrees-7-and-8},
weak interpolation for degree $8$, type 1, and degree $7$ Del Pezzo surfaces immediately follow from the proof for degree $9$. We will also see in
\autoref{ssec:degrees-7-and-8} that tricanonical genus $3$ curves satisfy
interpolation.

\subsection{Results}
The main result of this section is:

\begin{theorem}[Existence]\label{thm:3veroneseexistence}
Let $\Gamma \subset \bp^{9}$ be thirteen general points. Then there exists a Veronese $3$-Veronese surface  containing $\Gamma$.
\end{theorem}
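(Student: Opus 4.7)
The plan is to use the Gale transform (\autoref{section:association}). I aim to show that the $3$-Veronese embedding $\nu_3 : \bp^2 \to \bp^9$ is \emph{self-associated}: for $13$ general points $\Gamma_1 \subset \bp^2$, the Gale dual of $\nu_3(\Gamma_1) \subset \bp^9$ is $\mathrm{PGL}_3$-equivalent to $\Gamma_1$. Granting this, for any $13$ general points $\Gamma \subset \bp^9$ with Gale dual $\Gamma^* \subset \bp^2$ (which consists of $13$ general points by the $\mathrm{PGL}$-equivariance of association), the involution property of Gale duality gives $\Gamma = \mathrm{Gale}(\Gamma^*) = \nu_3(\Gamma^*)$ up to $\mathrm{PGL}_{10}$, placing $\Gamma$ on a $\mathrm{PGL}_{10}$-translate of $\nu_3(\bp^2)$.

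To establish self-association, I would apply Goppa's theorem (\autoref{thm:goppa}) along a carefully chosen intermediate curve. Through $13$ general points $\Gamma_1 \subset \bp^2$ there is a pencil of plane quartics, since $\dim (I_{\Gamma_1})_4 = h^0(\bp^2, \sco(4)) - 13 = 2$. I would choose a smooth quartic $C$ in this pencil (by Bertini) and denote by $E \subset C$ the three residual base points, so that $\Gamma_1 + E \sim 4H_C$ as divisor classes on $C$. Then $\nu_3(C) \subset \bp^9$ is a smooth, nondegenerate, projectively normal curve of degree $12$ and genus $3$, whose hyperplane class is nonspecial since $12 > 2g - 2 = 4$. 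By Goppa's theorem, the associated linear system of $\nu_3(\Gamma_1) \subset \bp^9$ is induced by the $3$-dimensional system $|K_C + \Gamma_1 - 3H_C| = |2H_C - E|$ on $C$, which is cut out by the plane conics through $E$. Evaluating this system at $\Gamma_1$ identifies the Gale dual with the image $\phi_E(\Gamma_1) \subset \bp^2$ under the quadratic Cremona transformation $\phi_E: \bp^2 \dashrightarrow \bp^2$ based at $E$.

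The final and hardest step is to verify the reciprocity $\phi_E(\Gamma_1) \sim \Gamma_1$ in $\mathrm{PGL}_3$-orbits, a form of Coble's classical self-association theorem \cite{coble:associated-sets-of-points} for the Veronese. I expect to prove this by exploiting the mutual-residual symmetry $\Gamma_1 + E = C \cap D$ for two pencil quartics $C, D$ (so that $\Gamma_1$ and $E$ determine each other), and then invoking the involutivity of $\phi_E$ together with the independence of the Gale dual from the choice of $C$ within the pencil. Once this reciprocity is established, the preceding reductions close the argument and produce a $3$-Veronese surface through $\Gamma$.
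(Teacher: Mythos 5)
Your preliminary analysis is sound and in fact rediscovers the key observation underlying the paper's approach. Applying Goppa's theorem along a smooth quartic $C$ through $\Gamma_1$ with residual base points $E$, you correctly compute that the associated linear system of $\nu_3(\Gamma_1)$ is $|K_C + \Gamma_1 - 3H_C| = |2H_C - E|$, cut out by conics through $E$, so that the Gale dual of $\nu_3(\Gamma_1)$ is the Cremona image $\phi_E(\Gamma_1)$. This is precisely what the paper establishes (from the other side of the Cremona, using a nodal quintic rather than a smooth quartic) in \autoref{sssec:howsingtriadsArise} and \autoref{prop:6h-3x-3y-3z}: the ``naive'' cubics on the source $\bp^2$ induce association not for $\Gamma_1$ but for its Cremona transform.

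The fatal gap is the ``self-association'' claim $\phi_E(\Gamma_1) \sim_{\mathrm{PGL}_3} \Gamma_1$, which is false. Suppose $\phi_E(\Gamma_1) = g(\Gamma_1)$ as ordered configurations for some $g \in \mathrm{PGL}_3$. Then $\psi := g^{-1} \circ \phi_E$ is a quadratic Cremona (base locus $E$, disjoint from $\Gamma_1$) fixing all thirteen points of $\Gamma_1$. But the fixed locus of a degree-two self-map of $\bp^2$ is either a curve of degree at most $3$ (a common component of the three defining cubic minors) or a zero-dimensional scheme of total degree $7$ including the base points, leaving at most four honest fixed points off the base locus. Thirteen general points lie on no cubic, so neither possibility can accommodate them, and $\psi$ cannot fix $\Gamma_1$. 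Equivalently, if self-association held there would be a unique $3$-Veronese through $13$ general points, contradicting \autoref{remark:630}, which exhibits at least $630$. Coble's classical self-association result, which you invoke, concerns the $2$-Veronese (nine points in $\bp^5$, a genuinely self-associated range); it does not extend to the $3$-Veronese.

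Once you accept that $\phi_E(\Gamma_1)$ is genuinely different from $\Gamma_1$, the problem becomes: given $\Gamma^* = A(\Gamma)$, does there exist $\Gamma_1$ and a Cremona $\phi_E$ with $\phi_E(\Gamma_1) = \Gamma^*$? Since the exceptional triangle $T$ of $\phi_E$ is automatically a singular triad for $\phi_E(\Gamma_1)$, this is equivalent to asking that $\Gamma^*$ admit a singular triad. That is exactly the content of \autoref{theorem:DegenerateConfigurationInClosure}, which is the technical heart of the paper and is proved by a delicate degeneration argument (specializing the thirteen points so that six lie on the triangle, four on a line, with a seventh free, and tracking the limits of the residual base points of the quintic pencil). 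Your proposal, had you pursued the ``involutivity'' idea carefully, would have led you to the same reduction; as stated, it attempts to shortcut the hard step by a reciprocity that does not hold.
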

\begin{proof}[Proof assuming \autoref{thm:bijectionTriadsVeronese} and \autoref{theorem:DegenerateConfigurationInClosure}]
	By \autoref{thm:bijectionTriadsVeronese}, for a general
	$\Gamma_{13} \in \Hilb_{13} \bp^2$, there is a bijection
	between singular triads for $\Gamma_{13}$
	and $3$-Veronese surfaces containing the associated set $A(\Gamma_{13}) \subset \bp^{9}$. 
	By \autoref{theorem:DegenerateConfigurationInClosure},
	every such $\Gamma_{13}$ indeed possesses a singular triad.
\end{proof}

The essential tool used in proving \autoref{thm:3veroneseexistence} is association. 

Our next result relates the number of Veronese surfaces through $13$ general points to another, more tractable  enumerative problem.  Before stating it, we must make a definition. 

\begin{definition}\label{def:singulartriad}
Let $\Gamma \subset \bp^2$ be a general set of thirteen points in the plane. A subset $T = \{x,y,z\} \subset \bp^2$ of three distinct points is a {\sl singular triad} for $\Gamma$ if \[h^{0}(\bp^{2}, \so_{\bp^{2}}(5)\otimes \sci_{T}^{2}\sci_{\Gamma}) = 2.\]
\end{definition}

\begin{remark}
	In other words, $T = \{x,y,z\}$ is a singular triad for $\Gamma$ if there exists a pencil of quintic curves through $\Gamma$ and singular at $x, y,$ and $z$. A dimension count shows that we expect finitely many singular triads for a general set of thirteen points $\Gamma$, as is done in \autoref{lem:dimensionPhi}.
\end{remark}

Our second result is:

\begin{theorem}[Enumeration]\label{thm:3veronesenumber}
The number of $3$-Veronese surfaces through a general set of thirteen points in $\bp^{9}$ is equal to the number of singular triads for a general set of thirteen points in $\bp^{2}$.
\end{theorem}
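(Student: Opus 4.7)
The plan is to deduce this enumeration from the bijection result \autoref{thm:bijectionTriadsVeronese} combined with the fact that association is compatible with genericity. First I would recall that \autoref{thm:bijectionTriadsVeronese} provides, for a general set $\Gamma_{13}\subset\bp^2$, a bijection between the singular triads of $\Gamma_{13}$ and the $3$-Veronese surfaces in $\bp^9$ containing the associated set $A(\Gamma_{13})\subset\bp^9$. So if we knew that every general set of thirteen points $\Delta\subset\bp^9$ arises as $A(\Gamma_{13})$ for some general $\Gamma_{13}\subset\bp^2$, the two enumerative counts would be tautologically equal.

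Hence the key step is to transport genericity across association. Here I would invoke the discussion in \autoref{section:association}: association realizes an isomorphism of GIT quotients
\begin{equation*}
    (\bp^2)^{13}/\!/\pgl_3(\bk) \xrightarrow{\ \sim\ } (\bp^9)^{13}/\!/\pgl_{10}(\bk),
\end{equation*}
and in particular takes dense open subsets to dense open subsets. Both notions being counted are invariant under the respective projective linear groups: the number of $3$-Veronese surfaces through $\Delta\subset\bp^9$ depends only on the $\pgl_{10}$-orbit of $\Delta$, and the number of singular triads of $\Gamma_{13}\subset\bp^2$ depends only on the $\pgl_3$-orbit of $\Gamma_{13}$. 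Choosing a general $\Delta\subset\bp^9$, its associated point $\Gamma_{13}:=A^{-1}(\Delta)\subset\bp^2$ is also general, and is general enough to apply \autoref{thm:bijectionTriadsVeronese}.

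The conclusion is then immediate: the number of $3$-Veronese surfaces through $\Delta$ equals the number of $3$-Veronese surfaces containing $A(\Gamma_{13})$, which by \autoref{thm:bijectionTriadsVeronese} equals the number of singular triads of $\Gamma_{13}$, which by genericity equals the number of singular triads of a general thirteen-tuple in $\bp^2$. The main potential obstacle is verifying that the generic locus in $(\bp^9)^{13}$ on which our counting question is well-behaved, and the generic locus in $(\bp^2)^{13}$ on which \autoref{thm:bijectionTriadsVeronese} applies, are matched under association; but since both are nonempty open conditions and association is an isomorphism of GIT quotients, their preimages in the other quotient are also dense open, so their intersection is nonempty and the counting statement transfers. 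No further calculations are required beyond the two cited theorems and the formal properties of association reviewed in \autoref{section:association}.
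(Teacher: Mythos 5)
Your argument is correct and is essentially the same as the paper's own (largely implicit) reasoning: Theorem~\ref{thm:3veronesenumber} is an immediate consequence of Theorem~\ref{thm:bijectionTriadsVeronese} once one notes, as you do, that association takes general configurations to general configurations and that both enumerative counts are invariant under the respective projective linear groups. The only cosmetic difference is the orientation of the bijection theorem: the paper states \autoref{thm:bijectionTriadsVeronese} for a general $\Gamma_{13}\subset\bp^{9}$ with singular triads on the associated side $A(\Gamma_{13})\subset\bp^{2}$, whereas you phrase it with $\Gamma_{13}\subset\bp^{2}$; since association is involutive on the relevant GIT quotients this is a harmless reparameterization and does not affect the argument.
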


\autoref{thm:3veronesenumber} points to an interesting enumerative problem on the Hilbert scheme ${\rm Hilb}_{3}(\bp^{2})$ of degree $3$, zero dimensional subschemes of the plane.  We discuss this problem at the end of the section, in 
\autoref{ssec:enumerating-singular-triads}.

\subsection{How singular triads arise}\label{sssec:howsingtriadsArise} For the benefit of the reader, we briefly explain how singular triads arise in the problem of enumerating $3$-Veronese surfaces through a general set $\Gamma_{13}$.  

Suppose $V_{3} \subset \bp^{9}$ is a $3$-Veronese containing $\Gamma_{13}$.   If we consider $V_{3}$ as isomorphic to $\bp^{2}$, it is tempting to think that the linear system $|3H|$ on $\bp^{2}$ would induce association for $\Gamma_{13} \subset \bp^{2}$.  However, this turns out not to be the case. 

In $\bp^{2}$, there is a unique pencil of quartic curves $Q_{t} \subset \bp^{2}$, $t \in \bp^{1}$, containing $\Gamma_{13}$.  Assuming the configuration $\Gamma_{13}$ is general, the pencil $Q_{t}$ will have three remaining, noncollinear basepoints $\{p,q,r\}$.  Let \[\alpha_{\{p,q,r\}} \colon \bp^{2} \dashrightarrow \bp^{2}\] be the Cremona transformation centered on the set $\{p,q,r\}$, and let $T = \{x,y,z\}$ be the exceptional set 
in the target $\bp^{2}$. Then $\alpha(Q_{t})$ is a pencil of quintic curves, singular at $T$ and containing $\alpha(\Gamma_{13})$ in its base locus.  In other words, $T$ forms a singular triad for $\alpha(\Gamma_{13})$. In the next section, we will show that the ambient system of sextics triple at $x,y$, and $z$ induces association for $\alpha(\Gamma_{13})$.  In other words, the ``naive'' system of cubics on the source $\bp^{2}$ induces association {\sl not} for $\Gamma_{13}$, but rather for $\alpha(\Gamma_{13})$.

\subsection{Inducing association from a singular triad} 

\begin{lemma}\label{lem:quinticbaselocus}
	Assume $\Gamma_{13} \subset \bp^2$ is a general set of $13$ points, and suppose $T = \{x,y,z\}$ is a singular triad for $\Gamma_{13}$, i.e. there exists a pencil of quintics $Q_{t}$ through $\Gamma_{13}$ and double at $x,y,z$. Furthermore, assume that the general element of the pencil has a smooth genus $3$ normalization, and has ordinary nodes at $x,y,z$. In particular, this implies $T$ is not contained in a line.  
Then, the scheme theoretic base locus of the pencil $Q_{t}$ consists of $\Gamma_{13}$ and three length four schemes supported on $x,y$ and $z$.
\end{lemma}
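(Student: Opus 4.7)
The plan is to apply B\'ezout's theorem to two general members of the pencil and use the multiplicity hypotheses to pin the base locus down exactly on the nose. Let $C_1, C_2$ be two general members of the pencil $Q_t$, with defining equations $f_1, f_2$. Scheme-theoretically, the base locus of the pencil is $V(f_1, f_2) = C_1 \cap C_2$ (this is independent of which two members are chosen). Since, by hypothesis, the general member of the pencil has smooth connected normalization of genus $3$, it is in particular irreducible; hence $C_1$ and $C_2$ share no common component, and B\'ezout gives
\[
\sum_{p \in C_1 \cap C_2} (C_1 \cdot C_2)_p \;=\; 5 \cdot 5 \;=\; 25.
\]

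Next I would produce lower bounds on the local intersection numbers at the known base points. Since $\Gamma_{13}$ lies in the base locus of the pencil and, for general $\Gamma_{13}$, is disjoint from $T$, the general member is smooth at each point of $\Gamma_{13}$. Hence both $C_1$ and $C_2$ pass smoothly through each $p \in \Gamma_{13}$, so $(C_1 \cdot C_2)_p \ge 1$. At each of the three points $q \in T$, both $C_1$ and $C_2$ have multiplicity $2$, so the standard inequality $(C_1 \cdot C_2)_q \ge m_q(C_1) \cdot m_q(C_2)$ yields $(C_1 \cdot C_2)_q \ge 4$. Summing these contributions gives
\[
25 \;=\; \sum_p (C_1 \cdot C_2)_p \;\ge\; 13 \cdot 1 \;+\; 3 \cdot 4 \;=\; 25,
\]
so every inequality above must be an equality, and the intersection scheme is supported exactly on $\Gamma_{13} \cup T$.

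Now I would translate the equalities into the desired scheme-theoretic description. At each $p \in \Gamma_{13}$, $(C_1 \cdot C_2)_p = 1$ forces $C_1$ and $C_2$ to meet transversely at $p$, so the base locus is reduced at $p$. At each $q \in T$, the equality $(C_1 \cdot C_2)_q = m_q(C_1) m_q(C_2) = 4$ forces the tangent cones of $C_1$ and $C_2$ at $q$ to share no line (otherwise the local intersection multiplicity would be strictly larger than $4$). Choosing local coordinates at $q$ in which the tangent cones of $C_1$ and $C_2$ are, respectively, $xy$ and $x^2-y^2$, an elementary computation shows that $k[[x,y]]/(f_1, f_2)$ has dimension $4$ over $k$, so the local component of the base locus at $q$ is a length-$4$ scheme supported at $q$.

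The main potential obstacle is the verification that the lower bound $(C_1 \cdot C_2)_q \ge 4$ is actually attained rather than exceeded, since a priori the tangent cones of $C_1$ and $C_2$ at $q$ might share a line. However this is exactly what the B\'ezout bookkeeping above forces: once the count $13 + 12 = 25$ saturates B\'ezout, \emph{every} other contribution has to shrink to its minimum. This is why the argument is essentially a one-line application of B\'ezout once the multiplicity hypotheses at $\Gamma_{13}$ and $T$ are in hand, and why the genus-$3$/ordinary-node assumption (which ensures irreducibility of $C_1, C_2$ and smoothness away from $T$) is indispensable.
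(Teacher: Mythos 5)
Your argument is correct: once the total Bézout count $5\cdot 5 = 25$ is forced to saturate by the lower bounds $13\cdot 1$ at $\Gamma_{13}$ and $3\cdot 4$ at the nodes, every local contribution is pinned to its minimum, which gives reducedness at $\Gamma_{13}$, distinct tangent cones and local length exactly $4$ at each node, and no further base points. The paper itself dismisses this lemma as ``Clear'' with no argument, so there is no alternative approach to compare against; your Bézout bookkeeping is precisely the standard justification one would expect to supply here. One small tidying remark: once you know the tangent cones at a node share no line, the equality case of the local Bézout inequality already gives local length $m_q(C_1)m_q(C_2)=4$, so the explicit $k[[x,y]]/(f_1,f_2)$ computation is not needed.
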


\begin{proof}
Clear.
\end{proof}

\begin{proposition}\label{prop:6h-3x-3y-3z}
	In the setting of \autoref{lem:quinticbaselocus}, the ten dimensional vector space \[W := H^{0}(\bp^{2}, \sci_{T}^{3}(6)) \subset H^{0}(\bp^{2}, \so(6))\] consisting of sextics triple at $x,y$ and $z$ induces association for $\Gamma_{13}$.
\end{proposition}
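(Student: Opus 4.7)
The plan is to apply Goppa's theorem (\autoref{thm:goppa}) to the normalization of a general quintic in the pencil $\{Q_t\}$ arising from the singular triad. Let $B$ be the normalization of a general $Q \in |Q_t|$; by the assumptions of \autoref{lem:quinticbaselocus}, $B$ is smooth of genus $3$ and the composition $B \to Q \hookrightarrow \bp^2$ is given by a line bundle $H$ of degree $5$. Since $\deg H > 2g_B - 2 = 4$, $H$ is nonspecial and Riemann--Roch gives $h^0(B,H) = 3$, so $|H|$ is complete. By generality $\Gamma_{13}$ lifts to $13$ distinct points on $B$ disjoint from the preimages of the nodes, and Goppa's theorem will then say that association for $\Gamma_{13}$ is induced by the complete linear system $|K_B + \Gamma_{13} - H|$ on $B$.

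I would then identify this divisor class concretely. Writing $E = x_1 + x_2 + y_1 + y_2 + z_1 + z_2$ for the sum of the six preimages of the nodes in $B$, adjunction for the nodal quintic gives $K_B = 2H - E$. Moreover a second general quintic $Q' \in |Q_t|$ meets $Q$ along $\Gamma_{13} + 2E$ on $B$ (both quintics being double at each of $x, y, z$), yielding $\Gamma_{13} \sim 5H - 2E$. Combining,
\[
K_B + \Gamma_{13} - H \;=\; 2H - E + 5H - 2E - H \;=\; 6H - 3E,
\]
which has degree $12$ and, being nonspecial, satisfies $h^0(B, 6H - 3E) = 10$.

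The key step will be to prove that the natural restriction $W = H^0(\bp^2, \sci_T^3(6)) \to H^0(B, 6H - 3E)$ is an isomorphism. Well-definedness follows from the local fact that a section lying in $\fm_p^3$ at an ordinary node pulls back under normalization to a section vanishing to order $3$ at each of the two preimages. For injectivity, a sextic $F \in W$ restricting to zero on $Q$ (hence on $B$) must be of the form $Q \cdot L$ for some line $L$, and triple vanishing at the three non-collinear points of $T$ forces $L = 0$. For the dimension equality, the naive count gives $\dim W \geq 28 - 3 \cdot 6 = 10$, while the injection forces $\dim W \leq 10$; so the map is a dimension-preserving injection, hence an isomorphism.

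Goppa's theorem then guarantees that the further restriction $H^0(B, 6H - 3E) \to H^0(\Gamma_{13}, \cdot)$ cuts out the associated linear system $V^{\perp}$, so the composition $W \to H^0(\Gamma_{13}, \sco_{\Gamma_{13}}(6))$ does the same, which is exactly the statement that $W$ induces association for $\Gamma_{13}$. To finish I would also verify that the base locus of $W$ is disjoint from $\Gamma_{13}$, which will follow from the fact that $V^{\perp}$ is base-point-free on a general set of $13$ points (otherwise the associated set in $\bp^9$ would fail to consist of $13$ distinct points). The hard part will be the careful bookkeeping at the nodes in steps two and three, but this reduces to an elementary local computation on an ordinary double point.
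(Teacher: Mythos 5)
Your proof is correct and follows essentially the same approach as the paper: apply Goppa's theorem to the normalization of a general quintic in the pencil, and identify the linear system $|K_{\widetilde Q}+\Gamma_{13}-H|$ with the restriction of sextics triple at $T$. Where the paper's proof works somewhat informally, saying that the adjoint conics through the nodes cut out $|K_{\widetilde Q}|$ and a quintic double at the nodes cuts out $\Gamma_{13}$, so that sextics triple at the nodes cut out $|K_{\widetilde Q}+\Gamma_{13}-H|$, you make the divisor-class bookkeeping explicit via $K_B = 2H - E$ and $\Gamma_{13}\sim 5H - 2E$, arriving at $6H - 3E$, and you supplement the paper's injectivity argument (which both proofs carry out identically, via noncollinearity of $T$) with a clean dimension count ($28 - 18 = 10$ in $\bp^2$ against $h^0(B, 6H-3E)=10$ by Riemann--Roch) to conclude surjectivity rather than inferring it from the cutting-out description. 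The extra explicitness buys nothing essentially new but is a genuine improvement in rigor; the paper's version is slightly terser because it takes the surjectivity onto the complete system as following from the two ``cutting out'' statements plus the injectivity observation.
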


\begin{proof}
	We use Goppa's theorem, \autoref{thm:goppa}. Pick a general quintic $Q$ in the pencil $Q_{t}$, and let $\nu \colon \widetilde{Q} \to Q$ denote the smooth genus $3$ normalization.
	Let $H$ denote the hyperplane divisor class on $\bp^{2}$. 
	Note that by degree considerations and Riemann-Roch, the divisor class $H$ is nonspecial on $\widetilde{Q}$, and $\widetilde{Q}$ is mapped 
		via the complete linear series $|H|$. We claim that the linear system $|K_{\widetilde{Q}} + \Gamma_{13} - H|$ from Goppa's theorem is induced by sextic curves triple at $x,y$ and $z$.

Indeed, the canonical series $|K_{\widetilde{Q}}|$ is cut out by the adjoint series consisting of conics passing through the nodes $x,y,$ and $z$.  By \autoref{lem:quinticbaselocus}, the divisor $\Gamma_{13}$ is cut out by a quintic double at $x,y,z$.  Putting these together says that sextics triple at $x,y,$ and $z$ cut out divisors in the linear system $|K_{\widetilde{Q}} + \Gamma_{13} - H|$ on $\widetilde{Q}$. 

Finally, notice that there cannot be a sextic triple at $x,y$ and $z$ which also vanishes identically on $Q$ -- the residual curve would be a line containing $T$, but we are assuming $x,y,z$ are not collinear. Therefore, the system of sextics triple at $x,y,$ and $z$ cuts out the complete linear system $|K_{\widetilde{Q}} + \Gamma_{13} - H|$. 
\end{proof}

\subsection{The bijection between singular triads and Veroneses} Let $\Gamma_{13} \subset \bp^{9}$ be thirteen general points, and let $A(\Gamma_{13}) \subset \bp^{2}$ denote the associated set. 

We have already seen in ~\autoref{sssec:howsingtriadsArise} that a Veronese $3$-Veronese $V_{3}$ containing $\Gamma_{13}$ arises from a singular triad $T$ for $A(\Gamma_{13})$. Now show that distinct triads provide distinct Veroneses

\begin{proposition}\label{prop:distinctTriads}
Maintain the setting above. Distinct triads $T$ and $T'$ for $A(\Gamma_{13})$ give rise to distinct Veronese surfaces $V_{3}$ and $V'_{3}$ containing $\Gamma_{13}$. 
\end{proposition}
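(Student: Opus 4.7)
The plan is to recover the triad $T$ from the pair $(V_{3}, \Gamma_{13})$ by inverting the construction of \autoref{prop:6h-3x-3y-3z}. For each singular triad $T$, let $W_{T} := H^{0}(\bp^{2}, \sci_{T}^{3}(6))$ and let $\phi_{W_{T}} \colon \bp^{2} \dashrightarrow \bp^{9}$ be the induced rational map, whose image is $V_{3}$. By \autoref{prop:6h-3x-3y-3z}, $W_{T}$ induces association for $A(\Gamma_{13})$, and since the associated linear system attached to the hyperplane class on $A(\Gamma_{13})$ depends only on $A(\Gamma_{13})$ and not on $T$, the restriction $\phi_{W_{T}}|_{A(\Gamma_{13})}$ is, as a map to $\bp^{9}$, independent of $T$ up to the action of $\pgl_{10}(\bk)$. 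For general $\Gamma_{13}$ the stabilizer in $\pgl_{10}(\bk)$ is trivial, so there is a unique element of $\pgl_{10}(\bk)$ carrying $\phi_{W_{T}}(A(\Gamma_{13}))$ onto the fixed labelled $\Gamma_{13} \subset \bp^{9}$. Composing yields a normalized map $\widetilde{\phi}_{T} \colon \bp^{2} \dashrightarrow V_{3}$ whose restriction to $A(\Gamma_{13})$ is the same map $A(\Gamma_{13}) \to \Gamma_{13}$ for every singular triad $T$.

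Suppose now that $T'$ is a second singular triad with $V'_{3} = V_{3}$. Then $\widetilde{\phi}_{T}$ and $\widetilde{\phi}_{T'}$ are two birational parametrizations of the same Veronese surface $V_{3} \cong \bp^{2}$ that coincide pointwise on $A(\Gamma_{13})$. The composition $\sigma := \widetilde{\phi}_{T'}^{-1} \circ \widetilde{\phi}_{T}$ is an element of $\pgl_{3}(\bk)$ fixing $A(\Gamma_{13})$ pointwise. Any four points of the general configuration $A(\Gamma_{13})$ form a projective frame in $\bp^{2}$, so the fundamental theorem of projective geometry forces $\sigma = \mathrm{id}$. Consequently $\widetilde{\phi}_{T} = \widetilde{\phi}_{T'}$, and pulling back the hyperplane system on $\bp^{9}$ along this common map recovers $W_{T} = W_{T'}$ as subsystems of $|\so_{\bp^{2}}(6)|$. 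The triad $T$ is then read off as the reduced support of the base scheme of $W_{T}$, so $T = T'$, contradicting the assumption that the triads were distinct.

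The main obstacle is arranging the normalization in the first step so that $\widetilde{\phi}_{T}$ and $\widetilde{\phi}_{T'}$ truly coincide pointwise on $A(\Gamma_{13})$ and not merely up to global $\pgl_{10}(\bk)$-action. This hinges on two generality inputs: that the associated map on $A(\Gamma_{13})$ depends only on $A(\Gamma_{13})$, so the two unnormalized maps already agree on $A(\Gamma_{13})$ up to $\pgl_{10}(\bk)$; and that $\Gamma_{13}$ is sufficiently general to have trivial $\pgl_{10}(\bk)$-stabilizer, so the normalizing element is unique. Once pointwise agreement on $A(\Gamma_{13})$ is secured, the rest is a standard rigidity argument for $\pgl_{3}(\bk)$ acting on any subset of $\bp^{2}$ containing a projective frame.
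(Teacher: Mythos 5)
Your overall framework — normalize both parametrizations via association so they agree on $A(\Gamma_{13})$, then argue they coincide everywhere and recover $T$ from the base locus — matches the paper's, and the front end is sound: invoking \autoref{prop:6h-3x-3y-3z} to identify the restrictions of $W_{T}$ and $W_{T'}$ to $A(\Gamma_{13})$ with the canonical associated system, and using that a general labelled $\Gamma_{13}$ has trivial $\pgl_{10}(\bk)$-stabilizer, is exactly right.

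The gap is the assertion that $\sigma := \widetilde{\phi}_{T'}^{-1}\circ\widetilde{\phi}_{T}$ lies in $\pgl_{3}(\bk)$. Each of $\widetilde{\phi}_{T}$ and $\widetilde{\phi}_{T'}$ factors as a quadratic Cremona transformation $\iota_{T}$ (resp.\ $\iota_{T'}$) centered on the corresponding triad, followed by a Veronese isomorphism onto $V_{3}$. Hence $\sigma = \iota_{T'}^{-1}\circ\alpha\circ\iota_{T}$ for some $\alpha\in\pgl_{3}(\bk)$, a composition of two quadratic Cremona transformations. When $T\neq T'$ with no special incidences, this has degree $4$ as a plane Cremona map; indeed $\sigma$ is linear exactly when the two Cremonas cancel, which is in effect the very statement $T = T'$ that you are trying to prove. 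Once $\sigma$ can be nonlinear, the appeal to the fundamental theorem of projective geometry is unavailable, and the rigidity step genuinely fails: a nonidentity plane Cremona transformation of degree $>1$ can fix a plane curve pointwise (for $\sigma$ of degree $4$ the fixed locus is cut out by quintics, and a fixed curve of degree up to $4$ is possible), so $A(\Gamma_{13})\subset\operatorname{Fix}(\sigma)$ does not on its own force $\sigma = \id$.

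The paper never forms $\sigma$. After identifying $\sym^{3}W$ and $\sym^{3}W'$ with the associated linear system, it recasts equality of the two Veronese surfaces as equality of the rational maps $\nu'\circ\alpha\circ\iota$ and $\nu'\circ\iota'$ from $\bp^{2}$ to $\bp^{9}$, and then compares their indeterminacy loci: the former is undefined exactly along $T$ (because $\nu'$ and $\alpha$ are regular) while the latter is undefined exactly along $T'$. That comparison of indeterminacy loci is what yields $T = T'$, and it sidesteps having to control the degree of any composition of Cremona maps. To repair your argument you should either adopt that comparison directly, or supply a separate lemma that a composition $\iota_{T'}^{-1}\circ\alpha\circ\iota_{T}$ constrained to fix $A(\Gamma_{13})$ pointwise must already be linear — but that lemma is essentially a restatement of the proposition itself.
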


\begin{proof}
Let $W = H^{0}(\bp^{2}, \so_{\bp^{2}}(2)\otimes\sci_{T})$ and $W' = H^{0}(\bp^{2}, \so_{\bp^{2}}(2)\otimes\sci_{T'})$ be the vector spaces of conics passing through $T$ and $T'$ respectively.  

Denote by $\iota \colon \bp^{2} \dashrightarrow \bp(W)$ and $\iota' \colon \bp^{2} \dashrightarrow \bp(W')$ the Cremona maps associated to $W$ and $W'$. 

By \autoref{prop:6h-3x-3y-3z}, the vector spaces $\sym^{3}W$ and $\sym^{3}W'$ both induce association for $A(\Gamma_{13})$, so we identify them as the ten dimensional vector space $V$ giving the original embedding $\Gamma_{13} \subset \bp^{9}$. 

Let $\nu \colon \bp(W) \hookrightarrow \bp^{9}$ and $\nu' \colon \bp(W') \hookrightarrow \bp^{9}$ denote the respective Veronese maps. (Note that the target $\bp^{9}$ for the maps $\nu$ and $\nu'$ are the ``same'' given the previous paragraph.) 

The two Veronese surfaces $\bp(W)$ and $\bp(W')$ would be the same if and only if there existed an isomorphism $\alpha \colon \bp(W) \to \bp(W')$ such that $\nu' \circ \alpha \circ \iota = \nu' \circ \iota'$ as rational maps from $\bp^{2}$ to $\bp^{9}$. 

But the indeterminacy locus of a rational map is determined by the map, and the indeterminacy locus of $\nu' \circ \alpha \circ \iota$ is $T$, whereas the indeterminacy locus of $\nu' \circ \iota'$ is $T'$. This completes the proof.
\end{proof}

\begin{theorem}\label{thm:bijectionTriadsVeronese}
Let $\Gamma_{13} \subset \bp^{9}$ be a general set of thirteen points.  Then the Veronese $3$-Veronese surfaces containing $\Gamma_{13}$ are in bijection with the singular triads for $A(\Gamma_{13}) \subset \bp^{2}$.
\end{theorem}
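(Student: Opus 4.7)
The plan is to establish the bijection by constructing mutually inverse maps. For the forward direction, given a singular triad $T = \{x,y,z\}$ for $A(\Gamma_{13}) \subset \bp^2$, let $W := H^0(\bp^2, \sci_T^3(6))$, which is $10$-dimensional. Since conics through $T$ form a $3$-dimensional linear system and the multiplication map is injective, a dimension count yields the factorization $W = \sym^3 H^0(\bp^2, \sci_T(2))$; hence the rational map $\phi_W \colon \bp^2 \dashrightarrow \bp^9$ factors as the Cremona transformation centered at $T$ followed by the $3$-Veronese embedding, so $\phi_W(\bp^2)$ is a $3$-Veronese surface $V_T \subset \bp^9$. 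By \autoref{prop:6h-3x-3y-3z}, $W$ induces association for $A(\Gamma_{13})$, so $\phi_W(A(\Gamma_{13})) = A(A(\Gamma_{13})) = \Gamma_{13}$ after absorbing a $PGL_{10}$ ambiguity, and in particular $\Gamma_{13} \subset V_T$. Injectivity of $T \mapsto V_T$ is exactly \autoref{prop:distinctTriads}.

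For the reverse direction, let $V \subset \bp^9$ be a $3$-Veronese containing $\Gamma_{13}$, and identify $V \cong \bp^2_{\mathrm{tgt}}$ via the $3$-Veronese embedding. Let $\Sigma \subset \bp^2_{\mathrm{tgt}}$ denote the $13$ preimages of $\Gamma_{13}$. For $\Gamma_{13}$ general, the points $\Sigma$ impose independent conditions on quartics, so $h^0(\sci_\Sigma(4)) = 2$; B\'ezout then implies the base locus of this pencil is $\Sigma \cup \{p,q,r\}$ for three distinct non-collinear points $\{p,q,r\}$. Define $\alpha \colon \bp^2_{\mathrm{src}} \dashrightarrow \bp^2_{\mathrm{tgt}}$ to be the Cremona transformation whose inverse is centered at $\{p,q,r\}$, and let $T := \{x,y,z\} \subset \bp^2_{\mathrm{src}}$ be the exceptional set of $\alpha$. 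The reverse map assigns $T$ to $V$.

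The main obstacle will be verifying that this $T$ is genuinely a singular triad for $A(\Gamma_{13})$. A standard intersection calculation on the blow-up resolving $\alpha$ shows that quartics on $\bp^2_{\mathrm{tgt}}$ vanishing on $\Sigma \cup \{p,q,r\}$ pull back to quintics on $\bp^2_{\mathrm{src}}$ passing through $\alpha^{-1}(\Sigma)$ and double at $T$, so $T$ is a singular triad for $\alpha^{-1}(\Sigma)$. To identify $\alpha^{-1}(\Sigma)$ with $A(\Gamma_{13})$, consider the composition $\bp^2_{\mathrm{src}} \xrightarrow{\alpha} \bp^2_{\mathrm{tgt}} \hookrightarrow \bp^9$, which by the factorization above is $\phi_W$ for $W := H^0(\bp^2_{\mathrm{src}}, \sci_T^3(6))$. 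By \autoref{prop:6h-3x-3y-3z} applied to the singular triad $T$ for $\alpha^{-1}(\Sigma)$, this composition realizes association for $\alpha^{-1}(\Sigma)$; by construction, it also sends $\alpha^{-1}(\Sigma) \mapsto \Sigma \mapsto \Gamma_{13}$. Comparing gives $A(\alpha^{-1}(\Sigma)) = \Gamma_{13}$, and involutivity of association yields $\alpha^{-1}(\Sigma) = A(\Gamma_{13})$, as required.

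Finally, showing the two maps are mutually inverse is essentially a matter of unwinding: in each direction, the triad $T$ arises as the exceptional set of the Cremona transformation intertwining the source and target copies of $\bp^2$, and this Cremona is determined by either the three extra base points of the quartic pencil on the target or by the singular triad on the source. Genericity of $\Gamma_{13}$ enters only through open conditions: for every $V$ containing $\Gamma_{13}$ the preimage set $\Sigma$ must be general enough to have a unique quartic pencil with three distinct non-collinear extra base points, and the resulting pencil of quintics must satisfy the smoothness hypotheses of \autoref{lem:quinticbaselocus}.
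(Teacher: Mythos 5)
Your proof is correct and follows essentially the same route as the paper. The paper's own proof is terser, simply citing the construction in Subsection \ref{sssec:howsingtriadsArise} (which realizes a Veronese containing $\Gamma_{13}$ as coming from a singular triad via the quartic-pencil/Cremona argument) together with Proposition \ref{prop:distinctTriads} for injectivity; you reconstruct exactly this, merely being more explicit about both directions of the bijection. The one piece you spell out that the paper leaves implicit is the factorization $W = \sym^3 H^0(\bp^2,\sci_T(2))$, which exhibits $\phi_W$ as Cremona-then-Veronese and hence identifies the image as a $3$-Veronese surface; the paper's Proposition \ref{prop:distinctTriads} uses this identification without remarking on it (there $W$ denotes the space of conics and $\sym^3 W$ the sextics, whereas you call the sextics $W$, so take care with notation). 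Your closing remark that the genericity of $\Gamma_{13}$ must propagate to the preimage $\Sigma$ under any Veronese containing it is a legitimate subtlety, and your framing of it as an open condition is the right one; the paper handles this implicitly through the degeneration argument in \autoref{theorem:DegenerateConfigurationInClosure}.
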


\begin{proof}
This follows immediately from ~\autoref{sssec:howsingtriadsArise} and \autoref{prop:distinctTriads}.
\end{proof}

\subsection{Existence of singular triads}
\label{ssec:existence-of-singular-triads}

\begin{definition}
Define $\Phi \subset \Hilb_{3}\bp^{2} \times \Hilb_{13}\bp^{2}$ to be the closure of the set of 
pairs 
$(\{x,y,z\}, \Gamma_{13}) \subset \Hilb_3\bp^2 \times \Hilb_{13} \bp^2$ for which $\left\{ x, y, z \right\}$ is disjoint from the support of $\Gamma_{13}$, and for which there exists a pencil of quintics singular at $x,y,z$ whose base locus is precisely $\left\{ x,y,z \right\} \cup \Gamma_{13}$. 
Define the projections
\begin{equation}
	\nonumber
	\begin{tikzcd}
		\qquad & \Phi \ar {ld}{\pi_1} \ar {rd}{\pi_2} & \\
		\Hilb_3 \bp^2 && \Hilb_{13} \bp^2.
	 \end{tikzcd}\end{equation}
\end{definition}

\begin{theorem}\label{theorem:DegenerateConfigurationInClosure} There exists a point $(\{x,y,z\}, A_{13}) \in \Phi$ which is isolated in its fiber under the second projection $\pi_{2} \colon \Phi \to \Hilb_{13}\bp^{2}$. In particular, $\pi_{2}$ is dominant, and a general set $\Gamma_{13}$ possesses a singular triad.
\end{theorem}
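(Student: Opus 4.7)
The plan is to use the equality $\dim \Phi = \dim \Hilb_{13}\bp^2 = 26$ to reduce the theorem to exhibiting a single point $(T, A_{13}) \in \Phi$ whose $\pi_2$-fiber has $(T, A_{13})$ as an isolated point. Such an isolated point would, by irreducibility of $\Phi$ together with equality of dimensions, force $\pi_2$ to be dominant and generically finite, yielding both existence and finiteness of singular triads for a general $\Gamma_{13}$. The candidate isolated point will be constructed via the Cremona transformation approach of \autoref{sssec:howsingtriadsArise}.

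To compute $\dim \Phi$, I would use the first projection $\pi_1 \colon \Phi \to \Hilb_3 \bp^2$: since $h^0(\bp^2, \sco(5) \otimes \sci_T^2) = 21 - 9 = 12$ for a generic triad $T$, the fiber of $\pi_1$ is an open subset of the Grassmannian $G(2,12)$ of dimension $20$, giving $\dim \Phi = 6 + 20 = 26$; irreducibility of $\Phi$ then follows from \autoref{lemma:irreducible-base-and-fibers}. To construct the candidate point, I would start from a general $\Gamma_{13} \subset \bp^2$: by the dimension count $h^0(\bp^2, \sco(4)) - 13 = 2$, $\Gamma_{13}$ lies on a unique pencil $Q_t$ of quartics with three noncollinear residual base points $\{p,q,r\}$. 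Letting $\alpha_{\{p,q,r\}} \colon \bp^2 \dashrightarrow \bp^2$ be the corresponding Cremona transformation and $T := \{x,y,z\}$ its exceptional triad in the target, the image pencil $\alpha(Q_t)$ consists of quintics doubly singular at $T$ with base locus $T \cup \alpha(\Gamma_{13})$. Setting $A_{13} := \alpha(\Gamma_{13})$ produces a point $(T, A_{13}) \in \Phi$.

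The main technical work is verifying that $(T, A_{13})$ is isolated in $\pi_2^{-1}(A_{13})$. My approach would be a tangent-space calculation showing that $d\pi_2$ at $(T, A_{13})$ has trivial kernel; since source and target tangent spaces both have dimension $26$ at smooth points, such vanishing would upgrade to an isomorphism and hence isolation. A tangent vector to $\pi_2^{-1}(A_{13})$ corresponds to a first-order deformation of $T$ together with a compatible deformation of the quintic pencil inside $H^0(\bp^2, \sco(5) \otimes \sci_T^2)$ preserving the $13$-point residual base locus $A_{13}$; via the Cremona dictionary, this translates to a first-order deformation of the source data $(\{p,q,r\}, Q_t)$ compatible with the image $A_{13}$. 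The rigidity of the unique quartic pencil through a general set of $13$ points should then rule out nontrivial deformations. The main obstacle is making this tangent-space analysis fully rigorous---one must carefully identify $T_{(T,A_{13})}\Phi$ cohomologically, interpret $d\pi_2$ in these terms, and verify that the Cremona transformation intertwines the two deformation problems. As a more computational alternative, one could specialize to a highly symmetric configuration (e.g., $T$ equal to the three coordinate points of $\bp^2$ together with an explicit toric pencil of quintics doubly singular at the coordinate triangle) and directly verify finiteness of the triad locus for the resulting $A_{13}$ via explicit equations or computer algebra.
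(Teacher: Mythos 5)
Your proposal diverges from the paper's argument in a material way, and the divergence is exactly where the real work lies.

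Your dimension count for $\Phi$ is correct and essentially equivalent to the paper's \autoref{lem:dimensionPhi}; both compute $\dim\Phi = 6 + 20 = 26$ by fibering over $\Hilb_3\bp^2$. The irreducibility claim via \autoref{lemma:irreducible-base-and-fibers} is also fine. The candidate point you construct via the Cremona transformation (unique quartic pencil through general $\Gamma_{13}$, residual base triad $\{p,q,r\}$, Cremona image $(T, A_{13})$) genuinely lies in $\Phi$; this is the content of \autoref{sssec:howsingtriadsArise}, which the paper uses to set up the bijection with Veronese surfaces, not to prove existence.

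The gap is in the isolation step, and it is not a small one. You propose a tangent-space computation showing $d\pi_2$ has trivial kernel at $(T, A_{13})$, but you do not carry it out and you flag it yourself as the ``main obstacle.'' There is a further subtlety you do not address: the configuration $A_{13} = \alpha(\Gamma_{13})$ is \emph{not} a priori general, even if $\Gamma_{13}$ is. It is the image of a general configuration under a Cremona map that itself depends on the configuration, and one would need a separate argument (e.g.\ dominance of the rational self-map $\Gamma_{13} \mapsto A_{13}$ of $\Hilb_{13}\bp^2$) to conclude generality. Without this, there is no reason $A_{13}$ cannot support a positive-dimensional family of singular triads, which would defeat your isolation claim at that point. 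Your fallback of choosing a symmetric configuration and checking by computer is in spirit closer to what the paper does, but it too is only named, not executed.

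The paper avoids the tangent-space analysis entirely by degenerating to a very special $\Gamma_{13}$: ten of the thirteen points are placed on the three sides of a triangle, a line $M$, and one free point, and the residual three points of the quartic pencil are tracked through the degeneration (\autoref{cor:closureofresidual}) and shown to limit onto $M$. Because six points of the resulting $\Gamma_{13}$ then lie on a line, every quintic through $\Gamma_{13}$ contains $M$, so the singular-triad problem reduces to a pencil of quartics singular at three points with seven general residual base points, which by degree considerations must have a curve in its base locus. A short case analysis then bounds the possible triads to a finite combinatorial list. This is concrete and self-contained, whereas your proposed route, while plausible in outline, leaves the hard finiteness step unproved. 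If you want to pursue your Cremona route, you would need either to establish dominance of $\Gamma_{13}\mapsto A_{13}$ directly, or to actually perform the differential computation you sketch.
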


The rest of the section is devoted to the proof of \autoref{theorem:DegenerateConfigurationInClosure}.  Before we proceed with the proof in \autoref{sssec:degeneration-proof}, we set some notation and outline the idea of the proof in
\ref{sssec:degeneration-proof-idea}

\begin{definition}
	\label{definition:}
	Let $x_0, x_1, x_2$ denote three fixed non-collinear points in $\bp^2$ and set $l_{i,j} :=\overline{x_{i}x_{j}}$ forming the coordinate triangle. 

Let $X := \blow_{\{x_{0},x_{1},x_{2}\}}\bp^{2}$, and let $E_{i}$ denote the exceptional divisor over $x_{i}$, $i = 1,2,3$. Set $L_{i,j}$ to be the proper transforms of the lines $l_{i,j} := \overline{x_i, x_j}$.   We let $H$ denote the hyperplane class on $\bp^{2}$ and its pullback on $X$.  By a {\sl line} in $X$, we mean an element of the linear system $|H|$ on $X$.
\end{definition}

\sssec{Idea of Proof of \autoref{theorem:DegenerateConfigurationInClosure}.}
\label{sssec:degeneration-proof-idea}

In order to prove \autoref{theorem:DegenerateConfigurationInClosure}, we will
construct a particular set $[\Gamma_{13}] \in \Hilb_{13} \bp^2$ which
we will be able to see is isolated in its fiber under the map $\pi_2$.
The construction is as follows. 
Start by choosing a general line $M$ and a general point $p_{7}$
not on $M$. Then, choose points 
\begin{align*}
	p_1, p_2 &\in \ell_{0,1} \\
	p_3, p_4 &\in \ell_{0,2} \\
	p_5, p_6 &\in \ell_{1,2} \\
	p_8, p_9, p_{10} &\in M
\end{align*}
all general with respect to the above conditions.
We will then see that there is an element $(\left( x_0, y_0, z_0 \right), \Gamma_{13}) \in \Phi$ so that $p_1 \cup \cdots \cup p_{10} \subset \Gamma_{13}$,
and further that the remaining degree three scheme of $\Gamma_{13}$ is
supported on $M$. The hard part of the proof will be seeing that this
configuration lies in $\Phi$. This is done in \autoref{cor:closureofresidual}.
Once we know this configuration does lie in $\Phi$, it is not difficult
to see it is isolated. Since $\Gamma_{13}$ intersects $M$ with degree $6$,
every quintic containing $\Gamma_{13}$ must contain $M$. We are then looking
for a pencil of quartics with base locus containing $p_1, \ldots, p_6, p_7$
and having three additional singular nodes.
If the three singular nodes
do not lie on $M$, then this can only happen if the pencil of quartics
contains curves in its base locus. A case by case analysis shows
that if the three nodes are not collinear, the only possibility, up to permutation of the points, is that the base locus of this pencil of quartics is
$\ell_{0,1} \cup \ell_{0,2} \cup \ell_{1,2} \cup p_{7}$ and the moving part
of this pencil is the pencil of lines containing $p_{7}$. This
will be isolated in its fiber. 
Then, this means $\pi_2$ is dominant
because both varieties are irreducible and
$\dim \Phi = 26 = \dim \Hilb_{13}\bp^2$, as shown in \autoref{lem:dimensionPhi}.

\begin{lemma}\label{lem:generalquinticpencil}
Let $\Gamma_{10} \subset X$ be ten general points. Then there is a unique pencil in the linear system $|5H - 2E_{1} - 2E_{2} - 2E_{3}|$ containing $\Gamma_{10}$  in its base locus.  Furthermore, the base locus of this pencil consists of the union of  $\Gamma_{10}$, and three residual points $\{a,b,c\} \subset S$ disjoint from $\Gamma_{10}$.

\end{lemma}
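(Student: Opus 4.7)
The plan is to analyze the linear system $|5H - 2E_1 - 2E_2 - 2E_3|$ on $X$ directly, combining a dimension count, an intersection-theoretic computation of the degree of two members' intersection, and a Cremona-transform interpretation of the system to pin down the residual base locus.

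First, I would compute $h^0(X, \sco_X(5H - 2E_1 - 2E_2 - 2E_3)) = 12$. Pulling back identifies this with plane quintics of multiplicity at least two at each of the three noncollinear points $x_0, x_1, x_2$. Each such double-point condition imposes $3$ independent conditions on $H^0(\bp^2, \sco(5))$, and because the three points are noncollinear, the nine total conditions are independent, cutting the $21$-dimensional space of plane quintics down to $12$. Thus $|5H - 2E_1 - 2E_2 - 2E_3|$ is an $11$-dimensional projective linear system on $X$.

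Second, I would argue that ten general points on $X$ impose independent conditions on this $12$-dimensional vector space, so that the subsystem through $\Gamma_{10}$ is a pencil. This can be verified by an inductive ``one point at a time'' argument: the system has no base components (it contains, for instance, the proper transforms of irreducible nodal quintics with the prescribed nodes at $x_0, x_1, x_2$), so a general new point kills exactly one dimension at each step.

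Third, a straightforward intersection-theory computation on $X$ gives
\[
(5H - 2E_1 - 2E_2 - 2E_3)^2 = 25 - 4 - 4 - 4 = 13.
\]
Hence two distinct members of the pencil, provided they share no curve component, meet in a zero-dimensional scheme of length $13$. Irreducibility of a general element of $|5H - 2E_1 - 2E_2 - 2E_3|$ (seen from the nodal example above) rules out shared components for two general members of the pencil, so the base scheme of the pencil has length exactly $13$. Removing the contribution of $\Gamma_{10}$ leaves a residual scheme of length three.

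Finally, I would identify this residual length-three scheme with a reduced triple of points $\{a,b,c\}$ disjoint from $\Gamma_{10}$ via the standard Cremona transformation $\alpha_{\{x_0,x_1,x_2\}}\colon \bp^2\dashrightarrow \bp^2$. Under $\alpha_{\{x_0,x_1,x_2\}}$, the linear system $|5H - 2E_1 - 2E_2 - 2E_3|$ on $X$ corresponds to the system of plane cubics on the target $\bp^2$, and the pencil of quintics through $\Gamma_{10}$ corresponds to the pencil of cubics through the nine image points $\alpha_{\{x_0,x_1,x_2\}}(\Gamma_{10})$ — which is a general pencil of plane cubics. The base locus of a general pencil of plane cubics consists of nine distinct points, so pulling back, the base locus of our quintic pencil consists of $\Gamma_{10}$ together with the three exceptional points $\{a,b,c\}$ of the inverse Cremona, which are distinct and disjoint from $\Gamma_{10}$ as desired. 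The main obstacle is this last step: verifying reducedness and disjointness of the residual locus for \emph{generic} $\Gamma_{10}$, and the Cremona interpretation is what makes it transparent, since it reduces the question to the elementary fact that a general pencil of plane cubics has nine distinct, assigned basepoints.
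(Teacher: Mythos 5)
The paper itself offers no argument here --- its proof reads simply ``Clear'' --- so there is nothing to compare against. Your outline is the right shape: compute $h^{0}(X,\sco_X(5H-2E_1-2E_2-2E_3))=12$, impose ten independent point conditions to cut down to a pencil, compute the self-intersection $(5H-2E_1-2E_2-2E_3)^2=13$ to get a degree-$13$ base scheme, and then identify the residual length-$3$ scheme via Cremona. Steps $1$--$4$ are correct, and this is a reasonable way to fill the paper's gap.

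However, the Cremona step as written is wrong. Under $\alpha_{\{x_0,x_1,x_2\}}$, a plane quintic with ordinary double points at $x_0,x_1,x_2$ goes to a curve of degree $2\cdot 5 - 2 - 2 - 2 = 4$ with multiplicity $5 - 2 - 2 = 1$ at each of the three fundamental points of the inverse Cremona, \emph{not} to a cubic. Equivalently, writing $H' = 2H - E_1 - E_2 - E_3$ for the hyperplane class on the target and $E'_k = L_{ij}$ for the new exceptional classes, one checks $5H - 2E_1 - 2E_2 - 2E_3 = 4H' - E'_1 - E'_2 - E'_3$. A quick sanity check confirms the error: $h^{0}(\sco_{\bp^2}(3)) = 10 \neq 12 = h^{0}(X,\sco_X(5H-2E_1-2E_2-2E_3))$, whereas $h^{0}$ of quartics through three general points is indeed $15-3 = 12$. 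Relatedly, ``the nine image points $\alpha(\Gamma_{10})$'' is a miscount: $\Gamma_{10}$ has ten points, and the relevant constraint set on the target is the thirteen points consisting of the three Cremona exceptional points together with the ten images of $\Gamma_{10}$, a pencil of quartics through which has $16-13 = 3$ residual base points. This is consistent with your self-intersection count, but it also means the final reduction is not to the elementary nine-points-on-a-cubic fact; you still owe an argument that, for thirteen general points in $\bp^2$, the pencil of quartics through them has three further base points that are distinct and disjoint from the thirteen (for instance, by exhibiting one such configuration --- say thirteen of the sixteen transverse intersection points of two general quartics --- and arguing by semicontinuity).

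One smaller loose end: you infer ``no common component of two general pencil members'' from irreducibility of a general member of the \emph{full} system $|5H-2E_1-2E_2-2E_3|$, but a priori the sub-pencil through ten prescribed points could have a fixed curve even when the ambient system does not. You should either exhibit an irreducible member of the sub-pencil for a suitable $\Gamma_{10}$ and specialize, or note that the locus of $\Gamma_{10}$ whose pencil acquires a fixed curve is a proper closed subset of $\Hilb_{10}X$.
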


\begin{proof}
	Clear. 
\end{proof}
\begin{lemma}\label{lem:dimensionPhi}
$\Phi$ is $26$-dimensional.
\end{lemma}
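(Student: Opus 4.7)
The plan is to exhibit $\Phi$ as birational to a natural incidence correspondence whose dimension is visibly $26$. Set
\[
\mathcal{U} := \bigl\{(T,\mathcal{P}) : T \in \Hilb_{3}\bp^{2},\ \mathcal{P} \in G\bigl(2,\, H^{0}(\bp^{2},\sci_{T}^{2}(5))\bigr)\bigr\},
\]
parameterizing a triad together with a pencil of quintics double along $T$. The first step is to compute the fiber dimension of $\mathcal{U} \to \Hilb_{3}\bp^{2}$: for a general $T=\{x,y,z\}$ (in particular, supported at three non-collinear reduced points), imposing a double point at each $x_{i}$ costs $3$ linear conditions on the $21$-dimensional space $H^{0}(\bp^{2},\so(5))$, and I would check that these $9$ conditions are independent for general $T$ (standard, e.g.\ by exhibiting an explicit quintic satisfying any $8$ but not the ninth). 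Thus $h^{0}(\bp^{2},\sci_{T}^{2}(5)) = 12$ generically, so the Grassmannian fiber has dimension $2(12-2)=20$ and $\dim \mathcal{U} = 6+20 = 26$.

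Next I would define the residual-base-locus map $r \colon \mathcal{U} \dashrightarrow \Hilb_{3}\bp^{2} \times \Hilb_{13}\bp^{2}$ sending $(T,\mathcal{P})$ to $(T,\Gamma_{13})$, where $\Gamma_{13}$ is the residual to $T$ in the base locus of $\mathcal{P}$. By Bezout, two general members of $\mathcal{P}$ meet in $25$ points counted with multiplicity; at each point of $T$, the intersection multiplicity of two curves sharing a node is exactly $4$ for the general pair, contributing total length $12$ along $T$. I would check that for a general pair $(T,\mathcal{P})$ the residual scheme of length $25-12=13$ is reduced and disjoint from $T$ (which can be verified by producing one explicit pencil with this property; such pencils are abundant, e.g.\ via the construction sketched in \autoref{sssec:howsingtriadsArise} applied to a general configuration). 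This places $r(T,\mathcal{P})$ in $\Phi$, so $r$ defines a dominant rational map $\mathcal{U} \dashrightarrow \Phi$.

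Finally, I would argue that $r$ is birational onto $\Phi$. Given a general $(T,\Gamma_{13}) \in \Phi$, the pencil $\mathcal{P}$ is uniquely recovered as the two-dimensional vector space $H^{0}(\bp^{2},\sci_{T}^{2}\sci_{\Gamma_{13}}(5))$, whose dimension equals $2$ by the defining condition that the base locus be precisely $T \cup \Gamma_{13}$ (cf.\ \autoref{def:singulartriad}, which captures exactly this condition). Thus $r$ has a rational inverse, so $\dim \Phi = \dim \mathcal{U} = 26$.

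The only real obstacle is the numerical verification that the nine conditions from the triple of double points are independent for general $T$, and that the residual base locus is reduced for a general pencil; both are mild genericity statements that I would dispatch by writing down one explicit triad-plus-pencil and invoking upper semicontinuity. Everything else is a clean dimension count in an incidence correspondence.
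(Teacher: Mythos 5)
Your proof is correct and arrives at the same answer, but it parametrizes $\Phi$ by a different $26$-dimensional incidence variety than the paper does. You take $\mathcal{U} = \{(T, \mathcal{P})\}$ where $T$ is a triad ($6$ dimensions) and $\mathcal{P}$ is a pencil of quintics double along $T$ ($\dim G(2,12) = 20$ dimensions, once you check the $9$ node conditions are independent); the paper instead parametrizes by pairs $(T, \Gamma_{10})$ with $\Gamma_{10}$ ten free points ($6 + 2\cdot 10 = 26$), invoking \autoref{lem:generalquinticpencil} to say a general such pair determines a unique pencil and hence a unique residual scheme $\Gamma_{13}$. The two parametrizations are related by a finite correspondence (forgetting the pencil versus forgetting $3$ of the $13$ residual points), and both are legitimate.

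The one place your writeup is thinner than the paper's is the generic-finiteness of the map $r \colon \mathcal{U} \dashrightarrow \Phi$. You assert that $h^{0}(\bp^{2}, \sci_{T}^{2}\sci_{\Gamma_{13}}(5)) = 2$ "by the defining condition that the base locus be precisely $T \cup \Gamma_{13}$," but that condition (a $16$-point set-theoretic base locus) does not by itself preclude $h^{0} \geq 3$: a larger linear system can still have a small set-theoretic base locus. What you actually need is the statement of \autoref{lem:generalquinticpencil} — that for ten general points $\Gamma_{10}$ there is a \emph{unique} pencil through them double at a general $T$ — because it shows a generic fiber of $r$ is finite: any other pencil in the fiber would also contain $\Gamma_{10}$, forcing it to coincide with $\mathcal{P}$. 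So your argument works, but the burden you flagged as a "mild genericity statement" is really where the paper's key lemma is doing the work, and you should cite it (or reprove it) rather than deduce $h^{0}=2$ from the base-locus condition alone. Your Bezout bookkeeping ($25 = 3\cdot 4 + 13$ with intersection multiplicity $4$ at a shared general node) is correct and matches the paper's \autoref{lem:quinticbaselocus}.
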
 
\begin{proof}
	First select three general points $\{x,y,z\}$ in $\bp^{2}$, giving $6$ dimensions. Using \autoref{lem:generalquinticpencil}, a general pencil of quintics singular at $\{x,y,z\}$ is determined by choosing ten general points to be in its base locus. The remaining three points of the base locus are determined
	by the initial choice of 10, by \autoref{lem:generalquinticpencil}. In total, we have that $\Phi$ is $26 = 6 + 2 \cdot 10$ dimensional.
\end{proof}

Let $\Gamma_{10}(t) = \{p_{1}(t), p_{2}(t), ... , p_{10}(t)\} \subset X \times \Delta$ be a family of ten points, parameterized by $\Delta := \spec \bk[[t]]$, general among those with the following properties:

\begin{enumerate}
\item Over the generic point $\eta \in \Delta$, the points $p_{i}(\eta)$ are general in the sense of \autoref{lem:generalquinticpencil}. 

\item Over the special point $t=0$, the ten points $p_{i}(0)$ are situated as follows: 

\begin{enumerate}
\item $p_{1}(0), p_{2}(0)$ are general in $L_{0,1}$.
\item $p_{3}(0), p_{4}(0)$ are general in $L_{0,2}$.
\item $p_{5}(0), p_{6}(0)$ are general in $L_{1,2}$.
\item $p_{7}(0)$ is general in $X$.
\item $p_{8}(0), p_{9}(0), p_{10}(0)$ are general on a general line $M \subset X$.

\end{enumerate}

\end{enumerate}

By \autoref{lem:generalquinticpencil}, there are three residual points $\{a(\eta), b(\eta), c(\eta)\}$ defined by the ten points $\{p_{i}(\eta)\}_{i=1, ... , 10}$. We let $\{a(t), b(t), c(t)\}$ denote the closures of these points.  (Note: a base change may be required to say the three residual basepoints $\{p_{i}(\eta)\}_{i=1, ... , 10}$ are defined over $\spec \bk((t))$. Performing such a base change does not affect the rest of the arguments.)

Now let $\scx$ be the threefold which is the blow up of $X \times \Delta$ at the union of the three curves $L_{i,j} \subset X \times \{0\}$, and let $b \colon \scx \to X \times \Delta$ be the blow up map.  Let $f \colon \scx \to \Delta$ denote the composition of $b$ with the projection onto the second factor of $X \times \Delta$.  Let $\scx_{\eta}$ and $\scx_{0}$ denote the general and special fibers of $f$.  Note that $\scx_{\eta} = X_{\eta}$.

There are three exceptional divisors $F_{i,j}$ lying over the corresponding curves $L_{i,j} \subset X \times \{0\}$.  The map $f$ is a flat family of surfaces, with generic fiber $\scx_{\eta} = X_{\eta}$ and with special fiber $\scx_{0}$ a simple normal crossing union of four surfaces: 
the exceptional divisors $F_{i,j}$, and $X$.  Their incidence is as follows: The surfaces $F_{i,j}$ are pairwise disjoint and $F_{i,j} \cap X = L_{i,j}$.  

Each exceptional divisor $F_{i,j}$ is isomorphic to the Hirzebruch surface $\bbf_{1}$. This is because each rational curve $L_{i,j} \subset X_{0}$ has self-intersection $(-1)$, and therefore has normal bundle $N_{L_{i,j}/X \times \Delta} \cong \so(-1)\oplus \so.$

On $\bbf_{1}$, we let $S$ denote the divisor class of a {\sl codirectrix}, a section class having self-intersection $+1$.
	We denote by $R$ the ruling line class.  We let $S_{i,j}$ and $R_{i,j}$ denote the corresponding divisor classes on $F_{i,j}$.  

Let $\scl$ be the line bundle $b^{*}(\so_{X \times \Delta}(5H - 2E_{1}-2E_{2}-2E_{3}))$, and let $p'_{i}(t) \subset \scx$ denote the lifts of $p_{i}(t)$ to $\scx$. In other words, $\{p'_{i}(t) \}_{i=1, ... 10}$ are the closures of the points $\{p_{i}(\eta)\} \in X_{\eta} = \scx_{\eta}$ in $\scx$. 

By the generality assumptions on the $1$-parameter family of points $\{p_{i}(t)\}_{i=1, ... , 10}$ in $X \times \Delta$, we may assume the following about the central configuration of points $p'_{i}(0)$ in $\scx_{0}$:
\begin{enumerate}
\item The points $p'_{1}(0), p'_{2}(0)$ are general in $F_{0,1}$.
\item The points $p'_{3}(0), p'_{4}(0)$ are general in $F_{0,2}$.
\item The points $p'_{5}(0), p'_{6}(0)$ are general in $F_{1,2}$.
\item The points $m_{i,j} := M \cap L_{i,j}$ are general in $F_{i,j}$ with respect to the other two points mentioned in each part above. 
\end{enumerate}

\begin{figure}
\begin{equation}
	\nonumber
  \begin{tikzcd} 
    F_{i,j} \ar{d} \arrow[hookrightarrow]{r} & \scx := Bl_{L_{i,j} \times \left\{ 0 \right\}} \left(  X \times \Delta\right) \ar {d}{b}  & p'_i(t) \arrow[hookrightarrow]{l} \\
     L_{i,j} \times \left\{ 0 \right\} \arrow[hookrightarrow]{r} & X \times \Delta 
     & p_i(t) \arrow[hookrightarrow]{l} \arrow[-,double equal sign distance]{u} 
  \end{tikzcd}\end{equation}
\caption{A pictorial summary of relevant schemes}
\end{figure}

Set \[\scl ' := \scl(-F_{0,1} - F_{0,2} - F_{1,2}).\]

\begin{lemma}\label{lem:restrictionsofLprime}
The line bundle $\scl '$ restricts to $\so_{F_{i,j}}(S_{i,j} + R_{i,j})$ on the exceptional divisors $F_{i,j} \subset \scx_{0}$ and restricts to $\so_{X}(2H)$ on $X \subset \scx_{0}$.
\end{lemma}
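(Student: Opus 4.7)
The plan is to compute the two restrictions by combining simple intersection theory on $X$ with the class identity $\so_{\scx}(X + F_{0,1} + F_{0,2} + F_{1,2}) \cong \so_{\scx}$ that comes from $\scx_0 = f^{-1}(0)$ being a principal divisor. Throughout I would repeatedly use the relations $L_{i,j} = H - E_i - E_j$ in $\Pic(X)$ (so $L_{0,1} + L_{0,2} + L_{1,2} = 3H - 2E_0 - 2E_1 - 2E_2$) and the fact that $F_{0,1}, F_{0,2}, F_{1,2}$ are pairwise disjoint in $\scx$.

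First I would handle the restriction to $X$. Because the centers $L_{i,j} \times \{0\}$ are Cartier divisors of the surface $X \times \{0\}$, the proper transform $X \subset \scx$ is mapped isomorphically onto $X \times \{0\}$ by $b$, so $\scl|_X = \so_X(5H - 2E_0 - 2E_1 - 2E_2)$. Each exceptional divisor restricts as $F_{i,j}|_X = F_{i,j} \cap X = L_{i,j}$, so
\[
\scl'|_X = \so_X\bigl(5H - 2E_0 - 2E_1 - 2E_2 - (L_{0,1} + L_{0,2} + L_{1,2})\bigr) = \so_X(2H),
\]
using the identity above.

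For the restriction to $F_{i,j}$, I would first compute $\scl|_{F_{i,j}}$. Since $b|_{F_{i,j}} \colon F_{i,j} \to L_{i,j}$ is the ruling, this restriction is the pullback from $L_{i,j}$ of $\so_X(5H - 2E_0 - 2E_1 - 2E_2)|_{L_{i,j}}$. A one-line intersection computation on $X$ (say for $L_{0,1}$: $H \cdot L_{0,1} = E_0 \cdot L_{0,1} = E_1 \cdot L_{0,1} = 1$ and $E_2 \cdot L_{0,1} = 0$) gives degree $5 - 2 - 2 = 1$, so $\scl|_{F_{i,j}} = \so_{F_{i,j}}(R_{i,j})$.

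The main obstacle is identifying $F_{i,j}|_{F_{i,j}}$, i.e. the class of the normal bundle of $F_{i,j}$ in $\scx$. Here I would use the key identity $\so_\scx(X + F_{0,1} + F_{0,2} + F_{1,2}) \cong \so_\scx$ coming from $f^*\so_{\Delta}(0) \cong \so_\scx$. Restricting to $F_{0,1}$ and using disjointness of the three exceptional divisors gives $X|_{F_{0,1}} + F_{0,1}|_{F_{0,1}} = 0$ in $\Pic(F_{0,1})$. It remains to see that the curve $X \cap F_{0,1} = L_{0,1}$, viewed as a section of the $\bbf_1$-ruling $F_{0,1} \to L_{0,1}$, lies in the codirectrix class $S_{0,1}$. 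This I would verify by computing its self-intersection in $F_{0,1}$: from the transverse splitting $N_{L_{0,1}/\scx} = N_{L_{0,1}/X} \oplus N_{L_{0,1}/F_{0,1}}$ together with the short exact sequence $0 \to N_{L_{0,1}/X} \to N_{L_{0,1}/\scx} \to N_{X/\scx}|_{L_{0,1}} \to 0$ and the computation $N_{X/\scx}|_{L_{0,1}} = \so_X(-L_{0,1} - L_{0,2} - L_{1,2})|_{L_{0,1}}$ of degree $1$ (again using the class identity to express $\so_\scx(X) \cong \so_\scx(-F_{0,1} - F_{0,2} - F_{1,2})$), one concludes $N_{L_{0,1}/F_{0,1}} = \so(1)$, so $L_{0,1}^2 = +1$ inside $F_{0,1}$ and hence $X|_{F_{0,1}} = S_{0,1}$. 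Therefore $F_{0,1}|_{F_{0,1}} = -S_{0,1}$, and assembling the pieces gives
\[
\scl'|_{F_{0,1}} = \scl|_{F_{0,1}} - F_{0,1}|_{F_{0,1}} = R_{0,1} + S_{0,1},
\]
exactly as claimed; the same argument applies verbatim to $F_{0,2}$ and $F_{1,2}$.
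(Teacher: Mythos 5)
Your argument is correct. The paper's own ``proof'' of this lemma is the single word ``Straightforward,'' so there is no detailed argument to compare against; your computation fills that gap cleanly. The restriction to $X$ is handled as one would expect, using $F_{i,j}|_X = L_{i,j}$ and the identity $L_{0,1}+L_{0,2}+L_{1,2} = 3H - 2E_0 - 2E_1 - 2E_2$ in $\pic(X)$. The restriction to $F_{i,j}$ is the genuinely nontrivial part, and you correctly split it into two pieces: $\scl|_{F_{i,j}} = R_{i,j}$ because $b|_{F_{i,j}}$ factors through the inclusion of $L_{i,j}$ and the line bundle $\so_X(5H-2E_0-2E_1-2E_2)$ restricts to degree $1$ on $L_{i,j}$; and $-F_{i,j}|_{F_{i,j}} = S_{i,j}$, which you get from the principal-divisor relation $X + F_{0,1}+F_{0,2}+F_{1,2} \sim 0$ together with the identification of $X\cap F_{i,j}=L_{i,j}$ as a $(+1)$-section, which you in turn verify via $N_{L_{i,j}/F_{i,j}} \cong N_{X/\scx}|_{L_{i,j}}$ and the degree computation $-(L_{0,1}+L_{0,2}+L_{1,2})\cdot L_{0,1} = 1$ (using $L_{0,1}^2 = -1$ and the pairwise disjointness of the $L_{i,j}$ on $X$). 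All intersection numbers and normal-bundle identifications check out, so the argument is complete and correct.
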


\begin{proof}
Straightforward.
\end{proof}

\begin{remark}
For the benefit of the reader, we give an alternate description of the linear system $|S + R|$ on $\bbf_{1}$ appearing in the above lemma.  If we view $\bbf_{1}$ as the blow up of $\bp^{2}$ at a point $q \in \bp^{2}$, then the linear system $|S+R|$ is the system of conics through the point $q$.  

In particular, if three more general points are chosen on $\bbf_{1}$, there will be a unique pencil of curves in $|S+R|$ containing them.
\end{remark}

Now consider the sheaf $\scf := \sci_{\{p'_{i}(t)\}_{i=1, ..., 10}} \otimes \scl '$ . 

\begin{lemma}\label{lem:cohomologybasechange}
The $\bk[[t]]$-module $H^{0}(\scx, \scf)$ is free of rank $2$.  Furthermore, the restriction map \[H^{0}(\scx, \scf) \to H^{0}(\scx_{0}, \scf|_{\scx_{0}})\] is surjective.
\end{lemma}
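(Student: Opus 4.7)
The plan is to prove the lemma as a consequence of the standard cohomology and base change theorems. Since $\Delta = \spec \bk[[t]]$ is local, it suffices to show both that $h^0(\scx_t, \scf|_{\scx_t}) = 2$ is constant in $t$ and that $h^1(\scx_0, \scf|_{\scx_0}) = 0$: the second statement forces $f_* \scf$ to be locally free of rank $2$ by Grauert's theorem and guarantees that formation of $f_* \scf$ commutes with base change, which is precisely the surjectivity assertion. The generic fiber count $h^0(\scx_\eta, \scf|_{\scx_\eta}) = 2$ is immediate from \autoref{lem:generalquinticpencil}, since $\scl'$ agrees with $\scl$ away from the special fiber; the entire content of the proof is therefore the central fiber calculation.

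To compute $h^0$ and $h^1$ on $\scx_0$, I would exploit the structure of $\scx_0$ as a simple normal crossing union $X \cup F_{0,1} \cup F_{0,2} \cup F_{1,2}$, with pairwise intersections the disjoint smooth rational curves $L_{i,j} = X \cap F_{i,j}$ and no triple intersections. Each point $p'_i(0)$ lies in the smooth locus of a single component and off every double curve, so $\scf|_{\scx_0}$ is a locally free twist of a sheaf to which the normalization sequence
\[
0 \to \scf|_{\scx_0} \to \scf|_X \oplus \bigoplus_{(i,j)} \scf|_{F_{i,j}} \to \bigoplus_{(i,j)} \scf|_{L_{i,j}} \to 0
\]
applies directly. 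Using \autoref{lem:restrictionsofLprime} together with elementary dimension counts one finds $(h^0, h^1)$ equal to $(2,0)$ on $X$ (the three collinear points $p'_8(0), p'_9(0), p'_{10}(0)$ on $M$ force every conic in $|2H|$ vanishing on them to contain $M$, and $p'_7(0)$ cuts the residual pencil of lines through it), $(3,0)$ on each $F_{i,j} \cong \bbf_1$ (two general points impose independent conditions on the basepoint-free $5$-dimensional system $|S_{i,j}+R_{i,j}|$), and $(3,0)$ on each $L_{i,j} \cong \bp^1$ (since $\scl'|_{L_{i,j}} = \so_{\bp^1}(2)$).

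The crux of the proof is to verify that the middle map of the normalization sequence is surjective on global sections, which immediately yields both $h^0(\scf|_{\scx_0}) = (2 + 3\cdot 3) - 3\cdot 3 = 2$ and $h^1(\scf|_{\scx_0}) = 0$ from the long exact sequence. Because the $L_{i,j}$ are pairwise disjoint, surjectivity decomposes into three independent checks, one for each $L_{i,j}$, and it suffices to show that restriction from $F_{i,j}$ alone surjects onto $H^0(\scf|_{L_{i,j}})$. This reduces to the assertion that the two-dimensional space $H^0(\bbf_1, \so(R_{i,j})) = \ker\bigl(H^0(\so(S_{i,j}+R_{i,j})) \to H^0(L_{i,j}, \so(2))\bigr)$ becomes zero upon imposing the two general point conditions coming from the pair $p'_{2k-1}(0), p'_{2k}(0)$ on $F_{i,j}$; this is the expected independence of two generic conditions on a pencil of rulings. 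This last independence statement is the main obstacle; once it is in place, the long exact cohomology sequence and cohomology and base change assemble into a complete proof.
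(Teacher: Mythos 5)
Your proposal is correct and reaches the same numerical conclusion as the paper, but it is organized around a different computational framework.  The paper argues directly that the restriction map $H^{0}(\scx_{0}, \scf|_{\scx_{0}}) \to H^{0}(X, \scf|_{X})$ is an isomorphism: it classifies the possible restrictions of a global section to each component (a degenerate conic $M \cup N$ on $X$, a curve in $|S_{i,j}+R_{i,j}|$ through the two marked points on each $F_{i,j}$), and observes that the $F_{i,j}$-piece of a section is forced once its restriction to $L_{i,j}$ is prescribed.  You instead invoke the normalization (Mayer--Vietoris) sequence for the SNC fiber $\scx_{0} = X \cup F_{0,1} \cup F_{0,2} \cup F_{1,2}$, compute $(h^{0}, h^{1})$ on each component and double curve, and reduce everything to surjectivity of the middle map on global sections.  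The essential ingredient is actually the same in both: the restriction $H^{0}(F_{i,j}, \scf|_{F_{i,j}}) \to H^{0}(L_{i,j}, \scf|_{L_{i,j}})$ is an isomorphism between two $3$-dimensional spaces because $H^{0}(\bbf_{1}, \so(R_{i,j}) \otimes \sci_{\{p'_{2k-1}(0), p'_{2k}(0)\}}) = 0$ (no ruling passes through two general points).  The paper uses this to see that a section on $\scx_{0}$ is determined by its trace on $X$; you use it to kill the cokernel term in the long exact sequence.  What your route buys is a tidier bookkeeping of the dimension count and the vanishing $h^{1}(\scx_{0}, \scf|_{\scx_{0}}) = 0$ for free, which is reassuring even though both proofs ultimately quote Grauert via constancy of $h^{0}$ to conclude local freeness and base change.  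One small caveat in your write-up: in the first paragraph you attribute local freeness of $f_{*}\scf$ to the vanishing $h^{1}(\scx_{0}) = 0$ "by Grauert," but Grauert's criterion in fact requires constancy of $h^{0}$; what $h^{1} = 0$ buys is the surjectivity of the base-change map through the cohomology-and-base-change theorem.  This is a misattribution rather than a gap, since you establish both facts anyway.
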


\begin{proof}
Since $\scf$ is a torsion free sheaf, $H^{0}(\scx, \scf)$ is a torsion free $\bk[[t]]$-module, i.e. it is free. \autoref{lem:generalquinticpencil} tells us that the rank must be $2$.

By Grauert's theorem, it suffices to show \[h^{0}(\scx_{0}, \scf|_{\scx_{0}}) = 2.\]

A section $s$ of $\scf|_{\scx_{0}}$ is a section of $\scl ' |_{\scx_{0}}$ vanishing at the ten points $p'_{i}(0)$. We will now analyze what the zero locus of $s$ must be on each of the four components of $\scx_{0}$, beginning with $X$. 

The restriction $s|_{X}$ vanishes on a conic containing $p'_{7}(0), p'_{8}(0), p'_{9}(0),$ and $p'_{10}(0)$. Since the latter three points are collinear lying on the line $M$, such a conic is degenerate, of the form $M \cup N$, where $N$ is any line containing $p'_{7}(0)$. 

The restriction $s|_{F_{0,1}}$ vanishes on a divisor of class $|S_{0,1} + R_{0,1}|$ containing the pair of points $p'_{1}(0), p'_{2}(0)$.  Similar descriptions hold for the remaining two components. 

A section $s$ of $\scf |_{\scx_{0}}$ consists of sections on each component which agree on the intersection curves $L_{i,j}$. 
We claim that such a global section is determined, up to scaling, by its restriction to the component $X$.  Indeed, by choosing a conic of the form $M \cup N$, we determine two points $m_{i,j}, n_{i,j}$ on each line $L_{i,j}$, namely the intersections $M \cap L_{i,j}$, $N \cap L_{i,j}$.  

From the generality assumptions we have imposed, we get that there is a unique curve in the class $|S_{0,1} + R_{0,1}|$ containing the four points  $p'_{1}(0), p'_{2}(0), m_{0,1},$ and $n_{0,1}$.  Similarly for the other components $F_{i,j}$. 
It follows that any global section of $\scf$ is determined, up to scaling, by its restriction to $X$. But the restriction to $X$ is a degenerate conic of the form $M \cup N$ as described above, and therefore $h^{0}(\scx_{0}, \scf|_{\scx_{0}}) = 2$, as we claimed. 
\end{proof}

\begin{lemma}\label{lem:centralbaselocus}
The common zero locus of all sections of $\scf |_{\scx_{0}}$ is  the scheme $M \cup \{p'_{1}(0) , ... , p'_{6}(0) , p'_{7}(0)\}$.
\end{lemma}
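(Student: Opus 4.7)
The plan is to use the description of sections of $\scf|_{\scx_0}$ developed in the proof of \autoref{lem:cohomologybasechange} and then compute the base locus of the pencil component-by-component on $\scx_0$. Recall that every section $s$ of $\scf|_{\scx_0}$ is determined, up to scaling, by its restriction to $X$, which must be a degenerate conic $M \cup N$ where $N$ is a line through $p'_7(0)$; once the line $N$ is fixed, the points $n_{i,j} := N \cap L_{i,j}$ determine $s|_{F_{i,j}}$ as the unique curve in $|S_{i,j} + R_{i,j}|$ passing through the four points $p'_{2i-1}(0), p'_{2i}(0), m_{i,j}, n_{i,j}$. So the pencil is parametrized by the pencil of lines $N$ through $p'_7(0)$, and I will track the base locus on each component of $\scx_0$ as $N$ varies.

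On $X$, the divisors cut out are $M \cup N$ as $N$ varies through lines containing $p'_7(0)$. Since $M$ is common to every member and the lines $N$ share only the basepoint $p'_7(0)$, the base locus of the pencil on $X$ is exactly $M \cup \{p'_7(0)\}$. On $F_{i,j}$, the curves in the pencil all lie in the class $|S_{i,j} + R_{i,j}|$, share the three fixed points $p'_{2i-1}(0), p'_{2i}(0), m_{i,j}$, and have a fourth intersection point $n_{i,j}$ that varies along $L_{i,j}$ as $N$ varies. Because two distinct curves in $|S+R|$ on $\mathbb{F}_1$ meet in $(S+R)^2 = S^2 + 2S\cdot R + R^2 = 1+2+0 = 3$ points, the base locus of this pencil on $F_{i,j}$ consists of exactly these three fixed points.

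Taking the union over the four components of $\scx_0$, the common zero locus is
\[
\bigl(M \cup \{p'_7(0)\}\bigr) \;\cup\; \bigcup_{i<j} \bigl\{p'_{2i-1}(0),\, p'_{2i}(0),\, m_{i,j}\bigr\}.
\]
Since each $m_{i,j} = M \cap L_{i,j}$ is already contained in $M$, and the six points $p'_1(0),\ldots,p'_6(0)$ account for the pairs indexed by $(i,j)$, this union simplifies to $M \cup \{p'_1(0),\ldots,p'_7(0)\}$, which is exactly the asserted common zero locus.

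The only step that requires any genuine input is the base-locus computation on the $F_{i,j}$; once one verifies that the pencil has no embedded or unexpected base points beyond the three fixed intersection points (which follows from the generality of $p'_{2i-1}(0), p'_{2i}(0)$ and the intersection number $(S+R)^2 = 3$ combined with the generic choice of the varying point $n_{i,j}$), the rest is bookkeeping. I expect no serious obstacle, as the main geometric content has already been extracted in \autoref{lem:cohomologybasechange}.
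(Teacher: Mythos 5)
Your proof is correct and follows the same approach the paper intends: the paper's proof of this lemma is a one-line reference back to the component-by-component description of zero loci developed in the proof of \autoref{lem:cohomologybasechange}, and you have simply carried out that bookkeeping explicitly. The intersection-number observation $(S+R)^2=3$ is a clean way to confirm that the base locus on each $F_{i,j}$ is exactly the three fixed points, which the paper leaves implicit.
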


\begin{proof}
This follows from the description of the zero loci of sections of  $\scf |_{\scx_{0}}$ found in the proof of \autoref{lem:cohomologybasechange}.
\end{proof} 

Let $\langle f_{1}, f_{2} \rangle$ be a $\bk [[t]]$-basis for $H^{0}(\scx, \scf)$.  In particular, $\langle f_{1}, f_{2} \rangle$ restricts to a basis of $H^{0}(\scx|_{\scx}, \scf|_{\scx_{0}})$ by \autoref{lem:cohomologybasechange}.

\begin{lemma}\label{lem:familybaseloci}
Maintain the notation above, and let $\scy \subset \scx$ defined by $f_{1} = f_{2} = 0$ be the common zero scheme.  Then, as schemes, $\scy \cap \scx_{0} =  M \cup \{p'_{1}(0) , ... , p'_{6}(0) , p'_{7}(0)\}$ and $\scy \cap \scx_{\eta} = \{p_{1}(\eta), ... , p_{10}(\eta), a(\eta), b(\eta), c(\eta)\}$.
\end{lemma}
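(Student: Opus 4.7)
My plan is to verify the scheme-theoretic identifications on the generic and special fibers separately, using Lemma \ref{lem:cohomologybasechange} together with Lemmas \ref{lem:generalquinticpencil} and \ref{lem:centralbaselocus}, and then do a careful component-by-component analysis on $\scx_{0}$ to pin down the scheme structure.

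First I would handle the generic fiber. Since $\scx_{\eta} = X_{\eta}$ and the divisors $F_{i,j}$ are supported over $t = 0$, the line bundle $\scl'|_{\scx_{\eta}}$ agrees with $\so_{X_{\eta}}(5H - 2E_{1} - 2E_{2} - 2E_{3})$. Hence $f_{1}|_{\scx_{\eta}}$ and $f_{2}|_{\scx_{\eta}}$ give a pencil in $|5H - 2E_{1} - 2E_{2} - 2E_{3}|$ containing $\{p_{i}(\eta)\}_{i=1, \ldots, 10}$ in its base locus. By Lemma~\ref{lem:generalquinticpencil}, this pencil is unique, and its scheme-theoretic base locus is the thirteen points $\{p_{i}(\eta)\}_{i=1,\ldots,10} \cup \{a(\eta), b(\eta), c(\eta)\}$. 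Reducedness of this base scheme is automatic for a general such pencil, since two general members are smooth curves meeting transversally at each base point (alternatively, by Bertini applied to the pencil away from its (reduced) base locus, combined with the fact that $\scy \cap \scx_{\eta}$ must have length equal to the expected intersection number).

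Next I would handle the special fiber. Lemma \ref{lem:cohomologybasechange} guarantees that the restrictions $f_{1}|_{\scx_{0}}, f_{2}|_{\scx_{0}}$ form a $\bk$-basis for $H^{0}(\scx_{0}, \scf|_{\scx_{0}})$, so that $\scy \cap \scx_{0}$ equals the scheme-theoretic base locus of this two-dimensional system on $\scx_{0}$. By Lemma~\ref{lem:centralbaselocus}, the support of this base locus is $M \cup \{p'_{1}(0), \ldots, p'_{7}(0)\}$, so only the scheme structure remains to be verified.

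For the scheme structure on $\scx_{0}$, I would analyze the common vanishing of $f_{1}|_{\scx_{0}}$ and $f_{2}|_{\scx_{0}}$ on each of the four irreducible components of $\scx_{0}$, using the description of sections from the proof of Lemma~\ref{lem:cohomologybasechange}. On $X$, each section restricts to a reducible conic of the form $M \cup N_{k}$ with $N_{k}$ a line through $p'_{7}(0)$. Since the two lines $N_{1}, N_{2}$ are distinct for generic choices, the ideal $(M N_{1}, M N_{2}) = (M) \cap (N_{1}, N_{2})$ cuts out $M \cup \{p'_{7}(0)\}$ reducedly. On each $F_{i,j} \cong \bbf_{1}$, the two sections lie in $|S_{i,j} + R_{i,j}|$ and share exactly the three points $p'_{i}(0), p'_{j}(0), m_{i,j}$, where I am relabeling the two points on $L_{i,j}$ appropriately. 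Because $(S+R)^{2} = 3$, this forces the scheme-theoretic intersection on $F_{i,j}$ to be exactly these three reduced points. Finally, these component-wise descriptions glue compatibly along the double curves $L_{i,j}$: the scheme cut out on $L_{i,j}$ from the $X$-side is $\{m_{i,j}\}$ (the intersection $M \cap L_{i,j}$), and from the $F_{i,j}$-side is also $\{m_{i,j}\}$.

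The main obstacle will be the last step—keeping the bookkeeping straight on the simple normal crossing variety $\scx_{0}$ and confirming that the degree count on each $F_{i,j}$ really forces transversality (so no unexpected embedded points appear). Once that is settled, the two identifications $\scy \cap \scx_{\eta}$ and $\scy \cap \scx_{0}$ follow immediately and the lemma is proved.
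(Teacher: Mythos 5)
Your proof is correct and follows essentially the same route as the paper's — citing Lemma~\ref{lem:generalquinticpencil} for the generic fiber and Lemmas~\ref{lem:cohomologybasechange} and~\ref{lem:centralbaselocus} for the special fiber — but you fill in considerably more detail, and it is worth noting where you diverge from what is strictly needed. The paper's proof is two sentences: it asserts that generality and Lemma~\ref{lem:generalquinticpencil} handle $\scy \cap \scx_{\eta}$, and that $\scy \cap \scx_{0}$ follows from Lemma~\ref{lem:centralbaselocus}. Your version makes explicit (i) why $f_{1}|_{\scx_{\eta}}, f_{2}|_{\scx_{\eta}}$ give precisely the pencil of Lemma~\ref{lem:generalquinticpencil} and why its base locus is reduced, and (ii) that Lemma~\ref{lem:cohomologybasechange}'s surjectivity is what lets you pass from the zero locus of $f_{1}|_{\scx_{0}}, f_{2}|_{\scx_{0}}$ to the common zero locus of \emph{all} sections of $\scf|_{\scx_0}$ — a step the paper uses silently.

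One small redundancy: you cite Lemma~\ref{lem:centralbaselocus} only for the \emph{support} of $\scy \cap \scx_{0}$ and then re-derive the scheme structure component by component. But as stated, Lemma~\ref{lem:centralbaselocus} already asserts the scheme-theoretic identification (``is the scheme $M \cup \{p'_1(0),\ldots,p'_7(0)\}$''), so your component-wise analysis is really re-proving that lemma rather than applying it. That said, since the paper's own proof of Lemma~\ref{lem:centralbaselocus} is itself a one-line appeal to the proof of Lemma~\ref{lem:cohomologybasechange}, your explicit computation of the ideals on each component (the factorization $(MN_1, MN_2) = (M) \cap (N_1, N_2)$ on $X$, and the $(S+R)^2 = 3$ count forcing transversality on each $F_{i,j}$) is a genuine improvement in rigor; in particular the observation that sharing three of the three intersection points automatically forces transversality at each is a clean way to avoid separately arguing Bertini-type generality on $F_{i,j}$. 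The only thing you should make explicit there is that the two restricted sections on $F_{i,j}$ have no common component (true by generality, since they must pass through distinct fourth points $n_{i,j}^{(1)} \neq n_{i,j}^{(2)}$ determined by the distinct lines $N_1, N_2$), otherwise the degree count alone would not close the argument.
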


\begin{proof}
The generality assumptions on the original family of points $p_{i}(t)$ and \autoref{lem:generalquinticpencil} ensure the statement regarding $\scy \cap \scx_{\eta}$. Then, $\scy \cap \scx_{0} =  M \cup \{p'_{1}(0) , ... , p'_{6}(0) , p'_{7}(0)\}$ follows from \autoref{lem:centralbaselocus}.
\end{proof}

Now let $\{a'(t), b'(t), c'(t)\}$ denote the closures of $\{a(\eta), b(\eta), c(\eta)\}$ in $\scx$.

\begin{corollary}\label{cor:closureofresidual}
The scheme $\{p'_{8}(t), p'_{9}(t), p'_{10}(t), a'(t), b'(t), c'(t)\} \cap \scx_{0}$ is contained in the line $M \subset X \subset \scx_{0}$. 
\end{corollary}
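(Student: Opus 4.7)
My plan is to extract a flat family of degree-$13$ zero-dimensional subschemes of $\scx$ from inside $\scy$ and then exploit the local structure of $\scy_0$ at each of its seven isolated points. First, I would let $\scz \subset \scx$ denote the scheme-theoretic closure of $\scy_\eta$; equivalently, $\scz$ is the scheme-theoretic union of the thirteen horizontal curves $Z_1, \ldots, Z_{10}, Z_a, Z_b, Z_c$. Each of these components dominates $\Delta$, so $\scz$ has no associated points lying over $t = 0$; since $\Delta$ is a DVR, this forces $\scz \to \Delta$ to be flat, and the relative length equals $13$ by inspection of the generic fiber. Because $\scz_\eta = \scy_\eta$ and $\scy$ is closed in $\scx$, we have $\scz \subset \scy$, so $\scz_0 \subset \scy_0 = M \cup \{p'_1(0), \ldots, p'_7(0)\}$.

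Under the generality assumptions in the setup, the seven points $p'_1(0), \ldots, p'_7(0)$ are disjoint from $M$, so $\scz_0$ decomposes scheme-theoretically as a disjoint union $\scz_0 = \scz_0^M \sqcup \scz_0^{\mathrm{iso}}$, with $\scz_0^M \subset M$ and $\scz_0^{\mathrm{iso}}$ supported on the seven isolated points. In particular, $\ell(\scz_0^M) + \ell(\scz_0^{\mathrm{iso}}) = 13$.

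The heart of the argument is the claim that $\scy_0$ is reduced of length one at each isolated point. Near $p'_i(0)$ on the exceptional surface $F_{\alpha,\beta}$ containing it, $\scy_0$ coincides with the scheme-theoretic base locus on $F_{\alpha,\beta}$ of the pencil of $|S_{\alpha,\beta} + R_{\alpha,\beta}|$ curves analyzed in the proof of Lemma~\ref{lem:cohomologybasechange}. Any two distinct members of this pencil meet in a zero-dimensional scheme of degree $(S_{\alpha,\beta} + R_{\alpha,\beta})^2 = 3$, supported on the three set-theoretically distinct base points thanks to the generality of the configuration; hence each base point contributes length exactly one. It follows that any closed subscheme of $\scy_0$ carries length at most one at each isolated point, and therefore $\ell(\scz_0^{\mathrm{iso}}) \leq 7$.

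On the other hand, for $i = 1, \ldots, 7$ the horizontal curve $Z_i$ passes through $p'_i(0)$, so each isolated point lies in $\scz_0^{\mathrm{iso}}$, giving the reverse bound $\ell(\scz_0^{\mathrm{iso}}) \geq 7$. Combining, $\ell(\scz_0^{\mathrm{iso}}) = 7$, and the isolated locus is already saturated by the curves $Z_1, \ldots, Z_7$ alone — leaving no room for any of $Z_a, Z_b, Z_c$ to meet an isolated point. Each of these three curves must therefore specialize into $M$, so $a'(0), b'(0), c'(0) \in M$; together with $p'_8(0), p'_9(0), p'_{10}(0) \in M$ from the setup, this proves the corollary. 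The main obstacle I anticipate is the reducedness claim: one must verify that the genericity of the family $\{p_i(t)\}$ on $X$ really does propagate to genuine general position of the three induced base points on each exceptional surface $F_{\alpha,\beta}$, so that no two of them coincide and no pair of pencil members share a common tangent at a base point.
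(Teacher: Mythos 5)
Your proposal is correct and follows the same basic strategy as the paper's proof, but it supplies a genuine refinement.  The paper simply cites Lemma~\ref{lem:familybaseloci} and asserts that the seven isolated points ``occur with multiplicity one'' in $\scy \cap \scx_0$; the chain of references (back through Lemma~\ref{lem:centralbaselocus} to the proof of Lemma~\ref{lem:cohomologybasechange}) never explicitly verifies the scheme structure at those points. Your argument fills exactly this gap: you derive the reducedness at the isolated base points on the exceptional surfaces from the intersection number $(S_{\alpha,\beta} + R_{\alpha,\beta})^2 = 3$ together with the observation that the three base points $p'_\alpha(0), p'_\beta(0), m_{\alpha,\beta}$ are distinct for a general configuration, which forces local intersection multiplicity one. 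You then package the degeneration cleanly by introducing the flat degree-$13$ subscheme $\scz = \overline{\scy_\eta}$ and balancing lengths, which is more transparent than the paper's implicit appeal to semicontinuity. (Your worry about shared tangent directions is actually unnecessary: once you know the base locus has three distinct points and total degree three, each is automatically reduced by Bezout, regardless of tangency.)

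The one real gap is $p'_7(0)$. You write ``Near $p'_i(0)$ on the exceptional surface $F_{\alpha,\beta}$ containing it'' for $i = 1, \dots, 7$, but $p'_7(0)$ does not lie on any $F_{i,j}$; it is a general point of the component $X \subset \scx_0$. The intersection-theoretic argument you give therefore only covers $p'_1(0), \dots, p'_6(0)$. The case of $p'_7(0)$ needs its own (easier) check: near $p'_7(0)$, $\scx_0 = X$ and $\scy_0$ is cut out by the pencil of degenerate conics $M \cup N_t$ with $N_t$ ranging over lines through $p_7(0)$; since $p_7(0) \notin M$, locally at $p_7(0)$ this pencil is just two distinct lines $N_{t_1}, N_{t_2}$ meeting transversely, so $\scy_0$ is reduced of length one there. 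Adding this sentence closes the argument. Note also that you should record (as you do implicitly) that $p'_8(0), p'_9(0), p'_{10}(0)$ already lie on $M$ by the setup, since the blowup $b$ is an isomorphism near these points; your length argument only governs $a', b', c'$.
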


\begin{proof}
This follows from \autoref{lem:familybaseloci}.  Indeed, \[\{p'_{8}(t), p'_{9}(t), p'_{10}(t), a'(t), b'(t), c'(t)\} \cap \scx_{0}\] must be a subscheme of $\scy \cap \scx_{0} =  M \cup \{p'_{1}(0) , ... , p'_{6}(0) , p'_{7}(0)\}$.  The sections $\{a'(t), b'(t), c'(t)\}$ cannot limit to any of the seven isolated points $\{p'_{1}, ... , p'_{7}\}$, since these seven points occur with multiplicity one in the scheme $\scy \cap \scx_{0}$. Therefore, the points $\{a'(0), b'(0), c'(0)\}$  must limit to $M$. 
\end{proof}

\ssec{Proof of \autoref{theorem:DegenerateConfigurationInClosure}.}
\label{sssec:degeneration-proof}

\begin{proof}	
	A one parameter family of thirteen points \[\{p_{1}(t), ... , p_{10}(t), a(t), b(t), c(t)\}\] discussed above limits, at $t=0$, to a configuration which we call $\Gamma_{13} \subset \bp^{2}$.  (Technically, $\Gamma_{13}$ is a set in $X$, but we view it as a set in $\bp^{2}$, since $\Gamma_{13}$ avoids the exceptional divisors in $X$.)

Now we argue that the pair $(\{x_{0},y_{0},z_{0}\}, \Gamma_{13})$ is isolated in its fiber under the projection $\pi_{2} \colon \Phi \to \Hilb_{13}\bp^2$.

It suffices to show that there are only finitely many noncollinear triads $T \subset \bp^{2}$ disjoint from $\Gamma_{13}$ for which there is a pencil of quintics $C_{t}$ all singular at $T$ and containing $\Gamma_{13}$.

Any pencil of quintics containing $\Gamma_{13}$ must contain the line $M$ in its base locus, since $6$ of the points of $\Gamma_{13}$, $\{p_{8}(0), p_{9}(0), p_{10}(0), a(0), b(0), c(0)\}$ lie on this line (\autoref{cor:closureofresidual}). Therefore, the residual quartic curves of the pencil, denoted $C'_{t}$, form a pencil of curves singular at $T$, and containing $\{p_{1}(0), ... , p_{6}(0), p_{7}(0)\}$ in its base locus.  Note that the set $\{p_{1}(0), ... , p_{6}(0), p_{7}(0)\}$ is a general set of seven points in the plane.

By degree considerations, a pencil of quartics $C'_{t}$ singular at $T$ and having $7$ remaining points in its base locus is forced to have an entire curve $B$ in its base locus.   The curve $B$ must have degree $1, 2,$ or $3$.  

A straightforward combinatorial check shows that if the three points of $T$ are not collinear, the curve $B$ must be the union of three lines joined by three pairs of points among the set $\{p_{1}(0) ,... ,p_{6}(0), p_{7}(0)\}$, and the triad $T$ is the vertices of the triangle $B$.  All told, there are only finitely many possibilities for $T$, which in turn implies that $(\{x_{0},y_{0},z_{0}\}, \Gamma_{13})$ is isolated in its fiber under projection $\pi_{2} \colon \Phi \to \Hilb_{13}\bp^2$. 
\end{proof}

\begin{remark}\label{remark:630}
The method of proof for \autoref{theorem:DegenerateConfigurationInClosure} actually shows that there are at least $630$  $3$-Veronese surfaces through thirteen general points in $\bp^{9}$. The reason for this is that we made several
choices in constructing an isolated point of the incidence correspondence.
We choose one of the points $p_1, \ldots, p_6, p_{10}$ to not lie
in the triangle containing the nodal base locus, and we then chose a division
of the remaining six points into three pairs of two points.
In total there are $7 \cdot \frac{6!}{2! \cdot 2! \cdot 2!} = 630$
such choices, and hence at least $630$ isolated points.
Then it follows that there are at least $630$ $3$-Veronese surfaces
through a general set of $13$ points by
\cite[II.6.3, Theorem 3]{shafarevich:basic-algebraic-geometry-1}.
\end{remark}

\section[The remaining del Pezzo surfaces]{The remaining del Pezzo surfaces and tricanonical genus 3 curves} 
\label{ssec:degrees-7-and-8}
\subsection{Degree $8$.} 
Weak interpolation for degree $8$, type $1$ del Pezzo surfaces asks
 whether such surfaces pass through $12$ general points $\Gamma_{12} \subset \bp^{8}$,
by \autoref{table:del-Pezzo-conditions}. In fact, weak interpolation
for degree $8$, type $1$ del Pezzo surfaces follows almost immediately from our knowledge of interpolation for degree $9$ del Pezzos. 

\begin{corollary}
  \label{corollary:degree-8-type-1-interpolation}
Degree $8$ del Pezzos isomorphic to the Hirzebruch surface $\bbf_{1}$ satisfy weak interpolation.
\end{corollary}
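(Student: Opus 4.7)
The plan is to deduce weak interpolation for degree 8, type 1 del Pezzo surfaces from $\autoref{thm:3veroneseexistence}$ via a projection argument. Recall that a degree 8 type 1 del Pezzo in $\bp^8$ is abstractly isomorphic to $\bbf_1 = \blow_q \bp^2$ embedded by $|3H - E|$, and arises as the image of a smooth $3$-Veronese $V \subset \bp^9$ under the projection $\pi_r$ from a smooth point $r \in V$. Indeed, identifying $V \cong \bp^2$, projection from $r$ corresponds to the linear system $|3H - q|$, where $q$ is the preimage of $r$; this factors through $\blow_q \bp^2 \hookrightarrow \bp^8$ via $|3H - E|$, which is exactly a degree $8$ del Pezzo of type $1$.

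Given $12$ general points $p_1, \ldots, p_{12} \in \bp^8$, the plan is to first embed $\bp^8$ as a hyperplane $H \subset \bp^9$ and pick a general point $r \in \bp^9 \setminus H$. Since $\pi_r \colon \bp^9 \dashrightarrow H$ restricts to the identity on $H$ (as $\pi_r(x) = \overline{rx} \cap H = x$ for $x \in H$), it suffices to find a smooth $3$-Veronese $V \subset \bp^9$, smooth at $r$, containing the $13$ points $r, p_1, \ldots, p_{12}$: then $\pi_r(V) \subset H \cong \bp^8$ is automatically a degree $8$ type $1$ del Pezzo containing each $p_i = \pi_r(p_i)$.

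The main technical obstacle is the production of this $3$-Veronese $V$. $\autoref{thm:3veroneseexistence}$ only directly gives a $3$-Veronese through a generic $13$-tuple of points in $\bp^9$, whereas our configuration is special: twelve of the thirteen points lie on the fixed hyperplane $H$, a codimension-$3$ condition in $(\bp^9)^{13}$. To handle this, I would exhibit a concrete configuration in the specialized locus
\begin{align*}
W = \{(r, p_1, \ldots, p_{12}) \in (\bp^9)^{13} : p_i \in H, \ r \notin H\} \subset (\bp^9)^{13}
\end{align*}
which admits a smooth $3$-Veronese through it: take any smooth $3$-Veronese $V_0 \subset \bp^9$ meeting some hyperplane $H_0$ transversely along a smooth elliptic normal curve $E_0 \subset H_0$ of degree $9$, pick $12$ distinct points on $E_0$ and a $13$th point on $V_0 \setminus H_0$. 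Combining this explicit example with a dimension count (one checks $\dim W = 105$, while the incidence correspondence $\Psi \to (\bp^9)^{13}$ from the proof of $\autoref{thm:3veroneseexistence}$ is generically finite and dominant of equal dimensions), the properness of $\Psi$, and openness of smoothness of members of the universal family, one concludes that a generic point of $W$ also has a smooth $3$-Veronese in its preimage. Projecting such a $V$ from $r$ then yields the desired degree $8$ type $1$ del Pezzo through $p_1, \ldots, p_{12}$. The hardest part of the argument is the careful verification of genericity in the specialization locus $W$, but it reduces to standard upper-semicontinuity and the exhibition of a single smooth specimen.
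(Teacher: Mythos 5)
The projection idea has a fundamental flaw that makes the central construction impossible. You want a smooth $3$-Veronese $V \subset \bp^9$ passing through $r$ and through the twelve points $p_1, \ldots, p_{12}$, where the $p_i$ lie in the hyperplane $H$. But since $V \not\subset H$, the intersection $V \cap H$ is a hyperplane section of a $3$-Veronese, hence a degree $9$ genus $1$ curve in $H \cong \bp^8$. All twelve $p_i$ would therefore have to lie on a single genus $1$ curve of degree $9$ in $\bp^8$. By \autoref{thm:NaskoEricDavid} the Hilbert scheme of such curves has dimension $81$ and passing through a point imposes $7$ conditions, so a degree $9$ genus $1$ curve passes through at most $11$ general points of $\bp^8$; twelve general points of $\bp^8$ lie on no such curve. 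This is exactly why your explicit example has the $p_i$ constrained to the elliptic curve $E_0 = V_0 \cap H_0$ — that configuration lies in a codimension-$3$ sublocus of $(\bp^8)^{12}$, and hence in a proper closed subset of your $W$. No amount of upper-semicontinuity or dimension counting can promote a point of a proper closed subvariety of $W$ to a general point of $W$, because the general point genuinely fails the property: the claim ``a generic point of $W$ has a smooth $3$-Veronese in its preimage'' is simply false.

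The underlying idea — that degree $8$ type $1$ del Pezzos are projections of $3$-Veroneses — is sound, but the correct formulation is that $V$ should meet each of the lines $\overline{rp_i}$, not contain the $p_i$ themselves: the del Pezzo $S = \pi_r(V)$ passes through $p_i$ as long as some $\tilde p_i \in V$ satisfies $\pi_r(\tilde p_i) = p_i$, i.e. $\tilde p_i \in \overline{rp_i} \cap V$, and these $\tilde p_i$ are generally not in $H$. That is an interpolation problem with twelve line conditions rather than twelve point conditions, which \autoref{thm:3veroneseexistence} does not directly address. The paper sidesteps all of this by working via association in the plane: it appends an auxiliary point $p$ to $A(\Gamma_{12}) \subset \bp^2$ to form $B_{13}$, applies \autoref{prop:6h-3x-3y-3z} to get a sextic system inducing association for $B_{13}$, and then passes to the subsystem of sextics with a basepoint at $p$. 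Association automatically produces the correct relationship between the twelve points of $\Gamma_{12}$ and the points on the resulting degree $8$ surface, avoiding the mismatch between ``points of $V \cap H$'' and ``points of $\pi_r(V)$'' that breaks your construction.
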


\begin{proof}

Indeed, Let $A(\Gamma_{12}) \subset \bp^{2}$ be the associated set. Now append a general thirteenth point $p \in \bp^{2}$ and let $B_{13} \subset \bp^{2}$ be the union. 

As follows from \autoref{prop:6h-3x-3y-3z}, association for $B_{13}$ is induced by the linear system of sextics triple at a singular triad for $B_{13}$.  Now we take the subsystem of such sextics with further basepoint at the chosen point $p$.  The resulting subsystem induces association for $A(\Gamma_{12})$, and maps $\bp^{2}$ birationally to a degree $8$ del Pezzo containing $\Gamma_{12}$, abstractly isomorphic to the Hirzebruch surface $\bbf_{1}$.
\end{proof} 

\begin{remark}
The parameter count suggests that there will be a two dimensional family of del Pezzo $8$'s through a general $\Gamma_{12}$. The argument above also has two dimensions of freedom in the choice of auxiliary point $p$.
\end{remark}

\subsection{Degree $7$ del Pezzo surfaces} 

\begin{corollary}
  \label{corollary:degree-7-interpolation}
Degree $7$ del Pezzo surfaces satisfy weak interpolation.
\end{corollary}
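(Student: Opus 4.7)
The plan is to imitate the method used in \autoref{corollary:degree-8-type-1-interpolation} for the degree $8$, type $1$ case, now adding two auxiliary points instead of one. Since $\dim \hilb X = 59$ and $n - k = 5$, weak interpolation for degree $7$ del Pezzos amounts to exhibiting a degree $7$ del Pezzo surface through $\lfloor 59/5 \rfloor = 11$ general points $\Gamma_{11} \subset \bp^{7}$.

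First I would form the associated set $A(\Gamma_{11}) \subset \bp^{2}$, which consists of $11$ points since $11 = 7 + 2 + 2$. Choose two general auxiliary points $p_{1}, p_{2} \in \bp^{2}$ and let $B_{13} := A(\Gamma_{11}) \cup \{p_{1}, p_{2}\}$. By \autoref{theorem:DegenerateConfigurationInClosure}, since $B_{13}$ is a general configuration of $13$ points in $\bp^{2}$, it possesses a singular triad $T = \{x,y,z\}$. Then by \autoref{prop:6h-3x-3y-3z}, the $10$-dimensional linear system of sextics triple at $T$ induces association for $B_{13}$.

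Next, I would take the sub-system $W \subset H^{0}(\bp^{2}, \sci_{T}^{3}(6))$ consisting of sextics triple at $T$ and also vanishing at $p_{1}$ and $p_{2}$. Imposing the two further basepoints is generically two independent conditions, so $\dim W = 10 - 2 = 8$, giving a rational map $\phi_{W} \colon \bp^{2} \dashrightarrow \bp^{7}$. The sub-system $W$ cuts out on $B_{13}$ exactly the subspace of the associated linear system of $B_{13}$ that vanishes on $\{p_{1}, p_{2}\}$, which, by construction of association via the trace pairing, is precisely the associated linear system of $A(\Gamma_{11})$. Hence $\phi_{W}$ sends $A(\Gamma_{11})$ to the original configuration $\Gamma_{11} \subset \bp^{7}$, so the image of $\phi_{W}$ is a surface in $\bp^{7}$ containing $\Gamma_{11}$.

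It remains to identify this image as a degree $7$ del Pezzo. On the blow-up $Y := \blow_{\{x,y,z,p_{1},p_{2}\}} \bp^{2}$ the linear system $W$ corresponds to the divisor class $D := 6H - 3E_{x} - 3E_{y} - 3E_{z} - E_{p_{1}} - E_{p_{2}}$, with self-intersection $D^{2} = 36 - 27 - 2 = 7$. Performing the Cremona transformation centered at the triad $\{x,y,z\}$, the class $D$ pulls back to $3H' - E_{p'_{1}} - E'_{p'_{2}}$ on the blow-up of $\bp^{2}$ at the images $p'_{1}, p'_{2}$ of $p_{1}, p_{2}$; this is the anticanonical class of $\blow_{2}\bp^{2}$, so $\phi_{W}$ realizes the image as a smooth degree $7$ del Pezzo containing $\Gamma_{11}$. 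The main technical step to verify carefully will be the Cremona identification and the resulting smoothness of the image for general choice of $p_{1}, p_{2}$; all generality assumptions needed for \autoref{prop:6h-3x-3y-3z} and for $p_{1}, p_{2}$ to be in general position relative to the singular triad follow because $\Gamma_{11}$ and then $p_{1}, p_{2}$ were chosen generally.
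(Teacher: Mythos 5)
Your proposal is correct and follows the same approach the paper uses: append two general auxiliary points to the associated set $A(\Gamma_{11}) \subset \bp^{2}$, find a singular triad for the resulting $13$-point configuration, and pass to the subsystem of sextics triple at the triad that also vanish at the auxiliary points. The paper's own proof is considerably terser (essentially "proceed analogously to the degree $8$ case"), whereas you have carried out the Cremona/self-intersection computation identifying the image as the anticanonical model of $\blow_{2}\bp^{2}$, which is a useful expansion of the same argument.
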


\begin{proof}
The parameter count says that weak interpolation for such surfaces is equivalent to asking them to pass through $11$ general points $\Gamma_{11} \subset \bp^{7}$.  We now proceed analogously to the previous case: We now append two general auxiliary points $p,q \in \bp^{2}$ to the associated set $A(\Gamma_{11}) \subset \bp^{2}$. 
\end{proof}

\begin{remark}
Paralleling the degree $8$ case, the dimension of del Pezzo 7's through eleven general points is four dimensional, as is the dimension of the space of auxiliary pairs $p,q \in \bp^{2}$. 
\end{remark}

\begin{remark}
The reason why this method fails for degree $6$ del Pezzos is that the number of points required by weak interpolation is not $10$, as the current pattern would suggest.  Rather, the required number of points is eleven, and therefore we needed a separate argument. 
\end{remark}

\ssec{Genus 3 tricanonical curves}

As a bonus, we show that the closed locus of the Hilbert scheme of
degree $12$ genus $3$ curves in $\bp^9$ which are tricanonically
embedded satisfy interpolation.

\begin{corollary}
	\label{corollary:tricanonical}
	The closed locus of the Hilbert scheme of degree $12$ genus $3$
	curves in $\bp^9$ which are tricanonically embedded satisfy interpolation.
\end{corollary}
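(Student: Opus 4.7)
A smooth tricanonical genus $3$ curve in $\bp^9$ has degree $12$, and the tricanonical locus has dimension $6 + 99 = 105$, coming from the $6$-dimensional moduli space of genus $3$ curves and the $99$-dimensional action of $\pgl_{10}$ on $\bp^9$. Writing $105 = 13 \cdot 8 + 1$, interpolation amounts to producing such a curve through $13$ general points $\Gamma_{13} \subset \bp^9$ while meeting a general $\bp^7 \cong \Lambda \subset \bp^9$. My plan is to reduce this to a plane geometry problem via the $3$-Veronese map, leveraging \autoref{thm:3veroneseexistence}.

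The key observation is that if $C \subset \bp^2$ is a smooth plane quartic, then by adjunction $K_C \cong \sco_C(1)$, so $|3K_C|$ is cut out on $C$ by the complete linear system $|\sco_{\bp^2}(3)|$. Consequently, the image $\nu_3(C) \subset \bp^9$ under the $3$-Veronese embedding is a tricanonically embedded genus $3$ curve. Thus every $3$-Veronese surface in $\bp^9$ contains a two-parameter family of tricanonical genus $3$ curves, parametrized by plane quartics on the source $\bp^2$.

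The construction then proceeds as follows. First, apply \autoref{thm:3veroneseexistence} to produce a $3$-Veronese surface $V_3 \supset \Gamma_{13}$; identifying $V_3 \cong \bp^2$ gives $13$ general points $\Gamma'_{13} \subset \bp^2$. Next, note that the intersection $V_3 \cap \Lambda$ is a length-$9$ scheme (cut out on $V_3$ by two hyperplanes which pull back to two cubics in $\bp^2$ whose complete intersection consists of $9$ points). Choose any point $p$ in this length-$9$ scheme. Since $h^0(\bp^2, \sco_{\bp^2}(4)) = 15$ and $\Gamma'_{13} \cup \{p\}$ is a set of $14$ general points imposing independent conditions on quartics, there is a unique plane quartic $Q$ through these $14$ points, and by Bertini $Q$ is smooth for generic choices. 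Then $\nu_3(Q) \subset V_3 \subset \bp^9$ is a smooth tricanonical curve containing $\Gamma_{13}$ and meeting $\Lambda$ at $\nu_3(p)$.

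The main obstacle is really \autoref{thm:3veroneseexistence} itself, which occupies most of \autoref{sec:degree-7-8-9}; granted that existence result, the remaining steps are essentially linear algebra in $\bp^2$. To conclude interpolation formally in the sense of the paper, one invokes \autoref{theorem:equivalent-conditions-of-interpolation}: the construction above shows that the projection from the associated incidence correspondence to the parameter space $(\bp^9)^{13} \times G(8,10)$ hits a general point, hence is surjective, since source and target are irreducible projective varieties of the same dimension.
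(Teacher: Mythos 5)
Your proposal is correct and follows the same essential route as the paper: both rely on \autoref{thm:3veroneseexistence} to produce a $3$-Veronese surface through the $13$ general points, together with the adjunction observation that tricanonical genus-$3$ curves on a $3$-Veronese correspond exactly to plane quartics. The only difference is bookkeeping — the paper invokes the ``sweep'' formulation \ref{interpolation-sweep} (the pencil of quartics through the $13$ plane points sweeps out the Veronese surface, which is $2 = \dim X + r$ dimensional), while you use the ``naive'' formulation \ref{interpolation-naive} by explicitly choosing a point $p$ of $V_3 \cap \Lambda$ and passing the unique quartic through $\Gamma'_{13} \cup \{p\}$; by \autoref{theorem:equivalent-conditions-of-interpolation} these are equivalent and the underlying geometry is identical.
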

\begin{proof}
	First, note that there is a $105 = 99 + 6$ dimensional space of
	tricanonically embedded curve, where $99 = \dim \pgl_{10}$ and
	$6 = \dim \mathscr M_3$. In this case, by \ref{interpolation-sweep},
	we have to show that there is a 1 dimensional
	family of such curves through $13$ points, sweeping out a surface.
	But, since we know a $3$-Veronese surface passes through these $13$
	points, we have a 1 dimensional family of tricanonical genus $3$ curves
	sweeping out this Veronese surface passing through $13$ points, as desired.
\end{proof}

\section[Enumerating singular triads]{Enumerating singular triads: observations and obstacles}
\label{ssec:enumerating-singular-triads}
We now discuss the obstacle we face in the computation of the number of singular triads for a general set $\Gamma_{13} \subset \bp^{2}$.  Set $S = \blow_{\Gamma_{13}}\bp^{2}$, and let $\scl = \so_{S}(5H - E_{1} - ... -E_{13})$.

We should set up the problem on a compact, smooth space.  A natural choice is the Hilbert scheme $\Hilb_{3}S$ parameterizing length three subschemes of $S$. 

The universal scheme $\scz \subset \Hilb_{3}S \times S$ 
has two obvious projections   $\pi_{1} \colon \scz \to \Hilb_{3}S$ and $\pi_{2} \colon \scz \to S$.  Next, we consider the sheaf \[\scf = \pi_{1*}(\pi_{2}^{*}\scl/(\sci^{2}_{\scz} \otimes \pi_{2}^{*}\scl)).\]

Unfortunately, the sheaf $\scf$, which has generic rank $9$, fails to be locally free precisely along the locus $F \subset \Hilb_{3}S$ parameterizing degree $3$ schemes of the form $\spec \bk[x,y]/(x^{2},xy,y^{2})$, also known as the ``fat points''. 

There is a natural restriction map \[\rho \colon \so_{\Hilb_{3}S}^{\oplus 8} \to \scf.\]  If $\scf$ were locally free of rank $9$, we could attempt to use Porteous' formula to find the locus where the rank of $\rho$ drops to $6$. Since $\scf$ is not locally free, this approach fails from the outset.

One fix would be to work on a blow up of $\Hilb_{3}S$ along the locus $F$, but then it's unclear what should replace the sheaf $\scf$.  What's more, we would need to identify the Chern classes of the replacement sheaf in the Chow ring of $\blow_{F}(\Hilb_{3}S)$, which is challenging in its own right. See \cite{elencwajg:l-anneau-de-chow-des-triangles-du-plan}.

Another potential fix would be to work in the {\sl nested} Hilbert scheme $\Hilb_{2,3}S$ parameterizing, $X_{2} \subset X_{3} \subset S$, pairs of length $2$ subschemes contained in length $3$ subschemes. It is known that $\Hilb_{2,3}S$ is smooth, and it has a generically finite, degree $3$ map to $\Hilb_{3}S$ given by forgetting $X_{2}$, which has one dimensional fibers (isomorphic to $\bp^{1}$) precisely over $F \subset \Hilb_{3}S$. This space $\Hilb_{2,3}S$ might be better suited for replacing the problematic sheaf $\scf$ above.  Finding a solution to these issues is the subject of ongoing work. 
As further references for enumerative geometry in the Hilbert scheme of three points, see \cite{russell:counting-singular-plane-curves-via-hilbert-schemes}, \cite{russell:degenerations-of-monomial-ideals}, and \cite{harrisP:severi-degrees-in-cogenus-3}.

\bibliographystyle{alpha}
\bibliography{master}

\end{document}